\def\CT{\mathcal{T}}
\newcommand{\fraks}{\mathfrak{s}}
\newcommand{\eps}{\varepsilon}
\newcommand{\eqdef}{\stackrel{\mbox{\rm\tiny def}}{=}}
\def\SetL{\mathrm{Set}_\mfL}
\def\proj{\hat{\otimes}_\pi}
\def\Ob{\mathrm{Ob}}
\def\hom{\mathrm{Hom}}
\def\sttau{{\langle \boldsymbol{\tau} \rangle}}
\def\sttaun#1{{\langle \boldsymbol{\tau}_#1 \rangle}}
\def\sssigma{{\langle \boldsymbol{\sigma} \rangle}}
\def\sssigman#1{{\langle \boldsymbol{\sigma}_{#1} \rangle}}
\def\SSet{\mathbf{SSet}}
\def\btau{{\boldsymbol{\tau}}}
\def\bsigma{{\boldsymbol{\sigma}}}
\def\ng{\not \ge}
\def\tC{\tilde{C}}
\def\dC{C^\dagger}
\newcommand{\eq}{\mathrm{eq}}
\newcommand{\Ban}{\mathrm{Ban}}
\newcommand{\vertiii}[1]{{\left\vert\kern-0.25ex\left\vert\kern-0.25ex\left\vert #1 
		\right\vert\kern-0.25ex\right\vert\kern-0.25ex\right\vert}}
\newtheorem{theorem}{Theorem}[section]
\newtheorem{corollary}[theorem]{Corollary}
\newtheorem{lemma}[theorem]{Lemma}
\newtheorem{definition}[theorem]{Definition}
\newtheorem{prop}[theorem]{Proposition}
\newtheorem{assumption}[theorem]{Assumption}
\newtheorem{example}[theorem]{Example}
\newtheorem{remark}[theorem]{Remark}
\newcommand{\id}{\mathrm{id}}
\numberwithin{equation}{section} 
\numberwithin{figure}{section} 
\numberwithin{table}{section}
\tikzset{
	eps/.style={circle,fill=white,draw=symbols,inner sep=0pt,minimum size=0.8mm},
	}
\def\DeclareSymbol#1#2#3{\expandafter\gdef\csname MH@symb@#1\endcsname{\tikz[baseline=#2,scale=0.15,draw=symbols]{#3}}}
\def\<#1>{\csname MH@symb@#1\endcsname}
\DeclareRobustCommand{\TitleEquation}[2]{\texorpdfstring{\StrLeft{\f@series}{1}[\@firstchar]$\if%
		b\@firstchar\boldsymbol{#1}\else#1\fi$}{#2}}
\newcommand{\horrule}[1]{\rule{\linewidth}{#1}} 
\title{	
	\vspace{-5em}
\horrule{0.5pt} \\[0.4cm] 
\large Renormalised Models for Variable Coefficient Singular SPDEs
 \\ 
\horrule{0.5pt} \\[0.5cm] 
}
\author{Lucas Broux$^1$\orcidlink{0009-0007-0716-8059},Harprit Singh$^2$\orcidlink{0000-0002-9991-8393} and Rhys Steele$^3$\orcidlink{0000-0001-6546-5800}}
\institute{
Max Planck Institute for Mathematics in the Sciences, Germany, \email{broux@mis.mpg.de}\and
University of Vienna, AT, \email{singhh92@univie.ac.at}\and Max Planck Institute for Mathematics in the Sciences, Germany, \email{steele@mis.mpg.de}}
\begin{document}

\maketitle 
\begin{abstract}
In this work we prove convergence of renormalised models in the framework of regularity structures \cite{Hai14} for a wide class of variable coefficient singular SPDEs in their full subcritical regimes. In particular, we provide for the first time an extension of the main results of \cite{CH16, HS23, BH23} beyond the translation invariant setting. In the non-translation invariant setting, it is necessary to introduce renormalisation functions rather than renormalisation constants. We show
that under a very general assumption, which we prove covers the case of second order parabolic operators, these renormalisation functions can be chosen to be local in the sense that their space-time dependence enters only through a finite order jet of the coefficient field of the differential operator at the given space-time point. Furthermore we show that the models we construct depend continuously on the coefficient field.
\end{abstract}
\tableofcontents

\section{Introduction}
	
Initiated by the breakthrough works \cite{Hai14, GIP15}, the past decade has seen remarkable progress on the well-posedness for (systems of)  singular SPDEs of the form
\begin{align}\label{eq:a48}
	L u = F((\partial^n u)_{|n| \le N}) + G((\partial^n u)_{|n| \le N}) \xi
\end{align}
where $L$ is a uniformly parabolic operator, $F,G$ are (appropriate) non-linearities and $\xi$ is a random and distribution-valued noise of 
regularity such that the equation is subcritical but singular.

In the case where $L$ has constant coefficients, the initial work of Hairer on regularity structures \cite{Hai14} has been developed over the papers \cite{CH16, BHZ19, BCCH20} (see also \cite{Bru18, BB21, HS23, BH23}) into a systematic machinery that provides a local in time well-posedness theory for a wide class of equations\footnote{posed on the quotient of $\mathbb{R}^d$ by a crystallographic group (which includes flat compact manifolds, c.f.\  \cite{charlap2012bieberbach}).} of the type of \eqref{eq:a48} in their entire subcritical regimes.  Furthermore, the past few years have seen the development of several alternative approaches capable of providing solution theories for equations in their entire subcritical regimes including the flow equation approach of Duch  \cite{Duc21,Duc22,DGR23,CF24a, CF24b} (see also the related works of Kupianen \cite{Kup16})
 and the multi-index based alternative to Hairer's tree based regularity structures developed by Otto and collaborators \cite{OSSW, LOT, LOTT, BL24, Tem24, BOS25}. 

Nonetheless all of these systematic approaches include an assumption that the operator $L$ is a constant coefficient operator. At first sight, and especially from a PDE perspective, this seems like a purely technical assumption. However, the passage from the constant to the variable coefficient setting leads to a meaningful qualitative change in the equation \eqref{eq:a48}. Indeed, due to the singular nature of the equation, the correct interpretation of a solution to \eqref{eq:a48} is as the limit of a sequence of solutions to the equations
\begin{align}\label{eq:a48b}
	Lu^\varepsilon = F((\partial^n u^\varepsilon)_{|n| \le N}) +  G((\partial^n u^\varepsilon)_{|n| \le N}) \xi^\varepsilon + \sum_\tau c_\varepsilon^\tau H_{\tau} ((\partial^n u^\varepsilon)_{|n| \le N})
\end{align} 
where $\xi^\varepsilon$ is obtained from $\xi$ by regularisation at a length-scale $\varepsilon$ and for a suitable choice of `counterterms' $c_\varepsilon^{\tau}H_{\tau} ((\partial^n u^\varepsilon)_{|n| \le N})$ .
 The fact that $L$ is a constant coefficient operator, in combination with an assumed translation invariance of the noise, leads to a formal translation invariance for the equation \eqref{eq:a48}. In combination with formal scaling symmetries, the translation invariance allows one to heuristically argue that in the renormalised equation \eqref{eq:a48b} we should have that $c_\varepsilon^{\tau}$ is a deterministic constant. In the setting where $L$ is a variable coefficient operator, translation invariance  
is replaced with translation equivariance so that the same heuristic argument only leads to the conclusion that $c_\varepsilon^\tau$ should be a local function of the space-time point in the sense that it depends on the space-time point $z$ only through $(\partial^n a(z))_{|n| \le M}$ for some value of $M$.  

The qualitative change in the form of the expected counterterm then has knock on effects in any of the systematic approaches to obtaining a solution theory to \eqref{eq:a48}. Indeed, large parts of the literature providing the algebraic machinery underpinning regularity structures \cite{BHZ19, BL24} rely on the assumption that renormalisation will involve only renormalisation constants. Furthermore, systematic approaches to obtaining the required stochastic estimates \cite{CH16, LOTT, HS23, BH23} all use the fact that the counterterms are constant through either the algebraic underpinnings or through the assumption that the stochastic objects that need to be estimated are translation invariant in law. We remark that a similar assumption is also present in the flow equation approach of Duch \cite{Duc21}.

As a consequence, the state of the art in terms of local well-posedness for equations of the form \eqref{eq:a48} in the situation where the operator has variable coefficients is limited to treating individual equations by hand and far from their critical threshold, see e.g.\ \cite{Sin23, HZZZ24}. Such results fall quite a way short of the results of the systematic framework available in the constant coefficient setting. Nonetheless, some ingredients required for such a systematic result in the framework of regularity structures are available in the variable coefficient setting in the literature.

We recall that roughly speaking, the approach of regularity structures is subdivided into four main parts.
In the first `algebraic' part, one constructs a regularity structure, which typically includes trees describing the successive Picard iterates of the mild formulation of \eqref{eq:a48}. In addition, for a smooth driving noise (e.g.\ obtained via an ultraviolet cut-off of a distributional noise), one has to provide a suitable algebraic recipe for the construction of renormalised models (at this fixed ultraviolet cut-off). In the constant coefficient setting, this task was originally tackled in the work \cite{BHZ19}. Whilst the construction of regularity structures given in \cite{BHZ19} applies in the same way to the variable coefficient setting, the construction of renormalised models does not (since the framework encodes the fact that renormalisation will be by renormalisation constants). However the more recursive approach to algebraic renormalisation provided in \cite{Bru18} does adapt very cleanly to the variable coefficient settings as was demonstrated in \cite{BB21}.

In the `analytic' part of the machinery, which is purely deterministic and pathwise, one formulates and solves an `abstract' fixed-point formulation of \eqref{eq:a48} in a space of modelled distributions assuming the model is provided. This part of the machinery is provided in \cite{Hai14} (see also \cite[Sections 2 and 5]{BCCH20}). We remark that the results of \cite{Hai14} also apply with minimal adaptations in the variable coefficient setting.

With these first two steps in hand, in a further `algebraic' part, it is necessary to identify how renormalisation of the model affects the corresponding equation. In the constant coefficient setting, this was first done in \cite{BCCH20}. In the variable coefficient setting, the analogue of this result is again provided in \cite{BB21}.

To complete the picture, in a `probabilistic' part of the machinery,
it remains to show convergence of suitably renormalised models as the ultraviolet cut-off is removed. The first systematic result in this direction was the work \cite{CH16} which is based on Feynman diagram techniques adapted from the QFT literature. Unfortunately, whilst the result of this paper is extremely flexible, the techniques of the proof are very involved and have not been simplified in the years following the paper's appearance. As such, it seems to be a very challenging technical task to adapt those techniques to more complicated settings such as the one considered here.

In very recent years, an alternative approach to obtaining stochastic estimates for models based on the spectral gap inequality has appeared.  This line of research was initiated by \cite{LOTT} which provides a recursive approach to the stochastic estimates for the model for a particular quasilinear equation in its full subcritical regime and in the multi-index based setting. The idea of obtaining stochastic estimates recursively via the spectral gap inequality was subsequently adopted in the tree based setting to obtain a result of similar practical scope to that of \cite{CH16} in both of the works \cite{HS23, BH23}. Unfortunately, all of these works use translation invariance at essentially the same point in their respective arguments in what seems to be a crucial way. Indeed, when applying the spectral gap inequality, in addition to estimating the Fr\'echet  derivative term, it is necessary to show that the expectation of the model is suitably controlled. In all of \cite{LOTT, HS23, BH23} this is done decomposing this quantity into an integral or a sum over contributions at various scales. The resulting boundary term can be easily controlled as the result of a qualitative renormalisation condition whilst the bulk term must be controlled only in terms of objects that appeared previously in terms of an inductive argument. This is achieved by appealing to translation invariance in law of the model and the fact that the bulk term involves integrating against a test function that kills constants. It is unclear how one would proceed in dealing with this bulk term without the assumption of translation invariance.

Therefore, the missing ingredient for a systematic small-time well-posedness result for \eqref{eq:a48} in the setting where the operator $L$ is a variable coefficient operator is the existence of a choice of renormalisation procedure such that the resulting models converge as the ultraviolet cut-off is removed. Historically, this part of the programme seems to form the bottleneck in the machinery. We mention here that the remaining parts of the machinery have been extended to a number of other settings including to the setting of discrete approximations \cite{EH16} and to the setting of SPDEs on manifolds \cite{HS23m}. However in each of these contexts the corresponding general stochastic estimates are missing.

The main goal of this paper is then to bridge this gap in the setting of variable coefficient operators thus for the first time providing a significant generalisation in scope to the original result of \cite{CH16}. 
 We refer the reader to Theorem~\ref{theo:main_result_1} for a statement of our main result which provides stochastic estimates for suitably renormalised models corresponding to variable coefficient differential operators. In addition to the problem of providing stochastic estimates, we also tackle algebraic problems pertaining to the form of the counterterm. As remarked upon earlier, it is important that rather than just exhibiting an arbitrary convergent sequence of models that one exhibits a convergent sequence of models that correspond to local counterterms at the level of the resulting renormalised SPDE. This issue is somewhat subtle since the naive BPHZ choice of renormalisation which is standard in the literature  will involve iterated integrals in which the coefficient field is evaluated at a space-time point appearing as a variable of integration. Therefore this choice of renormalisation will not lead to local counterterms in general. We refer the reader to Corollary~\ref{theo:main_result_2} for a statement of the locality result that we obtain in this work.

At a very high level, our approach in this work is akin to a `freezing of coefficients' argument at the level of the model. Therefore, we obtain a result that is very flexible with regards to the probabilistic assumptions required; essentially generalising all existing BPHZ theorems in regularity structures to the variable coefficient setting simultaneously. For a more detailed description of our strategy of proof and its advantages, we refer the reader to Section~\ref{ss:prf_strat} below. 

We remark also that beyond the immediate context of obtaining local in time well-posedness for variable coefficient singular SPDEs, the construction of renormalised models given here has other implications in the literature.

We mention that in the context of quasilinear SPDEs, the work \cite{BHK24} constructs solutions to a quasilinear version of the generalised KPZ equation viewed as a perturbation of a suitable variable coefficient analogue of the corresponding semilinear equation. As a result of this viewpoint, that paper needs to assume the input of suitably renormalised models corresponding to a variable coefficient operator. We remark that this assumption is not quite an immediate corollary of our main result due to the presence of a distinct infinite dimensional component in that problem coming from the fact that the perturbation to the semilinear problem arising from the quasilinearity involves an operator whose inverse is formally $0$-regularising. Nonetheless, we expect that the main ideas needed to provide the required input for that paper at a high level of generality are already present in this work.

The present work also has implications for singular SPDEs in geometric settings. 
It relates to singular SPDEs on Riemannian manifolds (see \cite{BB16,DDD19,BB19, Mou22,BDFT23a,BDFT23b,EMR24, CO25}) since (scalar valued) SPDEs  
on manifolds whose universal cover is $\mathbb{R}^d$
can be lifted as a non-constant coefficient equations on $\mathbb{R}^d$. Therefore, we see this work as a
significant step towards a geometric analogue of the black box machinery of \cite{Hai14,BHZ19,BCCH20,CH16}, since the only remaining ingredient in the setting of \cite{HS23m} is a geometric BPHZ-type theorem. 
We mention also that there has been recent work on geometric singular SPDE beyond the Riemannian setting
\cite{BOTW23, MS23, BAUD25}.

Finally, we mention that this work also relates to an ongoing program of homogenisation theory for singular SPDEs 
\cite{CX23,CFX23,HS23per}. Indeed, a robust understanding of variable coefficient problems is an obvious prerequisite to study homogenisation problems for singular SPDEs in full generality.

\subsection*{Acknowledgements}
HS would like to thank the Max Planck Institute for Mathematics in the Sciences for productive and pleasant visits during the course of the preparation of this paper. HS and RS would like to thank the Bernoulli Center at EPFL and Martin Hairer respectively for their hospitality and financial support for concurrent research visits during which part of this work was carried out. In addition, all authors would like to thank the Oberwolfach Research Fellows (OWRF) program for supporting a research stay during which parts of this work were finalised. RS would like to thank Francesco Pedull\`a for valuable discussions on minimal sets of trees for the construction of renormalised models. HS gratefully acknowledges financial support from the Swiss National Science
Foundation (SNSF), grant number 225606, and previously from Ilya Chevyrev’s New Investigator Award EP/X015688/1.

\subsection{Notations and Conventions} \label{ss:notations}

We record here some notation that will be used in the paper that is more standard in the literature. 
We refer the reader to
Appendix~\ref{s:index_notations} 
for an index of the less standardised notation introduced in this work.

We will endow $\mathbb{R}^d$ with a given scaling $\mfs = ( \mfs_1, \cdots, \mfs_d )$, and consider the induced distance $| y - x |_{\mfs} = \sum_{i=1}^d | x_i - y_i |^{1/\mfs_i}$
and the induced degree $|k|_{\mfs} = \sum_i \mfs_i k_i$ of a multi-index $k \in \mathbb{N}^d$. For convenience, we will assume that the pairwise ratios between the $\mfs_i$ are rational.\footnote{This assumption is implicitly used to allow access to wavelet techniques in technical proofs in the paper. We expect that this assumption is non-essential.}
We will denote by $\mfK$ a generic compact set in $\mathbb{R}^d$ and by $\bar{\mfK}$ its 1-fattening.
 
Some of our statements will be uniform in test  functions belonging to
$\mcB^{r} = \lbrace \phi \in C_c^{\infty} : \mathrm{supp} ( \phi ) \subset B_{\mfs} ( 0, 1 ), \| \phi \|_{C^r} \leq 1 \rbrace$, where $B_{\mfs} ( 0,1 )$ denotes the unit ball with respect to $| \cdot |_{\mfs}$
and where by $C^r$ we mean the space of functions having continuous partial derivatives of all orders $k$ with $| k |_{\mfs} \leq r$.
In most cases, statements involving $\mcB^r$ will involve an ambient regularity structure which will be clear from context.
In these cases (and unless explicitly stated) the value of $r$ is prescribed to be the smallest  non-negative integer such that $r + \min \mcA > 0$, where $\mcA$ is the grading of that regularity structure.

Given a set $\mcS$ we will denote by $\mcP(S)$ the power set of $\mcS$ and by $\langle \mcS \rangle$ the span of $\mcS$ (namely the free $\mathbb{R}$-vector space generated by $\mcS$). When $\mcS$ is a subset of an ambient vector space $V$ we will identify $\langle \mcS \rangle$ as a subspace of $V$ in the obvious way.
We will also use the notation $L ( E, F)$ to denote the space of bounded linear maps between given normed spaces $E$ and $F$.

\section{Setting and Main Results} \label{sec: main_results}

To keep the exposition here relatively short, we assume that the reader has some familiarity with the contents of \cite{Hai14, BHZ19}. For an overview of the output of \cite{BHZ19} that is suitable for our purposes we refer to \cite[Sections 2.1 and 2.2]{HS23}. 

As a prototypical example of the context for this paper, we consider a system of uniformly parabolic
semilinear subcritical SPDEs of the form
\begin{align} \label{eq:SPDEsystem} 
L_{\mft} u_{\mft} =  F^0_{\mft}(\mathbf{u}) + \sum_{\mathfrak{l} \in \mathfrak{L}_-} F^{\mathfrak{l}}_{\mft}(\mathbf{u}) \xi_{\mathfrak{l}} \quad \mft \in \mathfrak{L}_+
\end{align}
 where $\mfL = \mathfrak{L}_+ \sqcup \mfL_-$ is a finite set of types indexing components of the system of equations and the set of driving noises, $L_\mathfrak{t}$ are uniformly parabolic differential operators,  $F_{\mathfrak{t}}^\mathfrak{l}$ are smooth functions and $\mathbf{u}$ denotes the vector generated by the solution $u$ and the lower than leading order derivatives of $u$. 
 
\label{def:deg_assignment}
 More precisely, we will assume to have fixed a type set $\mathfrak{L}$ equipped with a degree assignment $|\cdot| _\mfs : \mfL \to \mathbb{R}$ such that $|\mfL_\pm| \subset \mathbb{R}_\pm$ alongside a complete, subcritical rule $R$ in the sense of \cite[Definitions 5.7, 5.14, 5.22]{BHZ19} which encodes the necessary combinatorial information appearing in the form of the non-linearity of \eqref{eq:SPDEsystem}.
 
 In addition to the definitions of \cite{BHZ19}, we will make the following simplifying technical assumption which is satisfied by all equations of interest that we are aware of. 
\begin{assumption}\label{ass:reg}
	We assume that the rule $R$ is such that no tree that conforms to $R$ contains a kernel type edge $e$ such that $|\mfl(e)|_\mfs - |\mfe(e)|_\mfs < 0$. 
\end{assumption}
This assumption says that we never encounter a situation where one convolves with $D^k K$ for a regularising kernel $K$ with a $k$ so large that $D^k K$ is no longer a regularising kernel. This assumption is also often already implicitly used in the literature. For example, in the approach to renormalisation of models via preparation maps of \cite{Bru18} (on which we build in Sections~\ref{s:Alg_Models} and \ref{s:Kolmogorov}), the assumption is used when dealing with the integration case of \cite[Proposition 3.7]{Bru18}. 
 
 \begin{definition}\label{def:BHZreduced}
 	We write $(\mathcal{T}^\eq, \mcG^\eq)$ for the reduced regularity structure corresponding to $R$ (as defined in\footnote{We emphasise that in this paper the extended decoration $\mfo$ appearing in \cite{BHZ19} will play no role} \cite[Definition 6.23]{BHZ19}). We write $\mcT_-^\eq$ for the subspace of $\mcT^\eq$ spanned by trees of negative degree.
 \end{definition}
  The main goal of this paper is to construct suitably renormalised models on $\mcT^\mathrm{eq}$.

\begin{definition}\label{def: model}
	A model over $\mathbb{R}^d$ on a regularity structure $(\mcT, \mcG)$ consists of a pair of maps $(\Pi, \Gamma)$ where
	\begin{enumerate}
		\item $\Gamma: \mathbb{R}^d \times \mathbb{R}^d \to \mcG$ 
			is such that $\Gamma_{x y} \Gamma_{y z} = \Gamma_{x z}$.
		\item $\Pi: \mathbb{R}^d \times \mcT \to \mcD^\prime(\mathbb{R}^d)$ is such that $\Pi_x \Gamma_{xy} = \Pi_y$.
	\end{enumerate}
	We additionally assume that the following analytic bounds hold for all $\gamma > 0$ and compact sets $\mfK \subseteq \mathbb{R}^d$:
	\begin{equs}\label{eq:mod_bounds_1}
		\|\Pi\|_{\gamma; \mfK} & \eqdef \sup_{\varphi \in \mcB^r} \sup_{\alpha < \gamma} \sup_{x \in \mfK} \sup_{\lambda \in (0,1]} \sup_{\tau \in \CT_\alpha} \lambda^{-\alpha} \|\tau\|^{-1} |\Pi_x \tau (\varphi_x^\lambda) | < \infty,
		\\
		\label{eq:mod_bounds_2}
		\|\Gamma\|_{\gamma; \mfK} & \eqdef \sup_{\beta < \alpha < \gamma} \sup_{x,y \in \mfK} \sup_{\tau \in \CT_\alpha} |x-y|_\mfs^{\beta-\alpha} \|\tau\|^{-1} \|\Gamma_{xy} \tau \|_\beta < \infty.
	\end{equs}
	We say that a model is smooth\footnote{We use the word smooth instead of continuous since smooth models as defined here will play the same role in this paper as smooth models do elsewhere in the literature} if $\Pi_x$ takes values in $C(\mathbb{R}^d)$ for each $x \in \mathbb{R}^d$. As usual, we measure the distance between two models $(\Pi^i, \Gamma^i)$ for $i = 1,2$ by replacing $\Pi, \Gamma$ in \eqref{eq:mod_bounds_1} and \eqref{eq:mod_bounds_2} by $\Pi^1 - \Pi^2$ and $\Gamma^1 - \Gamma^2$ respectively.
\end{definition}

An important role will be played by regularising kernels in the spirit of \cite[Assumption~5.1]{Hai14} (see also \cite[Definition~2.3]{BCZ24}).
These kernels are obtained from the Green's kernels of differential operators in terms of a suitable dyadic decomposition. As in \cite{Sin23}, it is natural in this paper to track continuity in the coefficient field of the differential operator which translates to tracking continuity properties with respect to kernels.

We start by defining  spaces $\mcK_\mco^{\beta}$ of translation invariant $\beta$-regularising kernels. Throughout this paper, given a sequence of Banach spaces $(X_n)_{n \ge 0}$ we write $\bigoplus_{n \in \mathbb{N}} X_n$ for their direct sum with the inductive limit topology and $\hat{\bigoplus}_{n \in \mathbb{N}} X_n$ for the completion of this vector space under the alternative topology induced by the norm $\|x\| = \sup_{n \in \mathbb{N}} \|x\|_{X_n}$. 
\begin{definition} \label{d:rk2n}
Let $\beta > 0$ and $\mco \in \mathbb{N}$.
For $n \in \mathbb{N}$ we define 
$\mcK_{\mco, n}^{\beta}$ to be the Banach space of functions 
$F \in C^{2\mco} ( \mathbb{R}^{d} , \mathbb{R} )$
that are supported in 
$B_{\mfs} ( 0, 2^{-n} )
= \lbrace z : | z |_{\mfs} \leq 2^{-n} \rbrace$, 
equipped with the norm
	\begin{align*}
		\| F \|_{\mcK_{\mco,n}^{\beta}} 
		& = \sup_{| k |_\mfs \leq 2 \mco} \sup_{z \in \mathbb{R}^d} \frac{\left| \partial^k F ( z ) \right| }{2^{(| \mfs | - \beta + | k |_{\mathfrak{s}})n}} . 
	\end{align*}
A translation invariant $\beta$-regularising kernel of order $\mco \in \mathbb{N}$
is an element of 
the Banach space
	\begin{align*}
		\mcK_{\mco}^{\beta}
		= 
		\hat{\bigoplus}_{n \in \mathbb{N}} \mcK_{\mco, n}^{\beta} .
	\end{align*}
\end{definition}

\begin{remark}
The reason for the appearance of $2\mco$ instead of $\mco$ in 
Definition~\ref{d:rk2n}
is for consistency 
with the definitions of  \cite[Assumption 5.1]{Hai14} and  \cite[Definition 2.3]{BCZ24} 
\end{remark}

\begin{remark} \label{rem:dec2ker}
Given $(K_n)_{n \in \mathbb{N}} \in \mcK_{\mco}^\beta$, one can obtain a $C^{2\mco}$-function $K: \mathbb{R}^d \setminus \{0\} \to \mathbb{R}$ via $K(x) = \sum_{n \ge 0} K_n(x)$.
This justifies the slight abuse of notation of identifiying an element of $\mcK_{\mco}^\beta$ with a function $K$ which comes with a distinguished decomposition $(K_n)_{n \in \mathbb{N}}$. 
\end{remark}

\begin{definition} \label{d:rk3}
A $\beta$-regularising kernel of order $\mco \in \mathbb{N}$
is an element of 
the Banach space
	\begin{align*}
		C^{\mco}(\mathcal{K}_{\mco}^\beta) = \hat{\bigoplus}_{n \in \mathbb{N}} C^{\mco}(\mcK_{\mco,n}^\beta) .
	\end{align*}
\end{definition}

We will write
a typical element of $C^{\mco}(\mcK_{\mco}^\beta)$ in the form $\mathbb{R}^d \ni w \mapsto K^w \in \mcK_{\mco}^\beta$,
and think of it as a kernel $G$ via the transformation $G ( z, \bar{z} ) \coloneqq K^{z} ( z - \bar{z} )$. We emphasise that $G$ is a regularising kernel in the sense of \cite[Assumption 5.1]{Hai14} up to the fact that we only assume control on finitely many derivatives. Furthermore, one can check that the map $K \mapsto G$ is continuous so long as one measures $G$ in norms similar to those of  \cite[Def.~1]{Sin23}.
The main difference between our definition and that of Hairer is that we require
uniformity i n the `upper slot' $w$. This condition captures the uniform ellipticity of the coefficient field and finitely many of its derivatives.

For a kernel $K \in C^{\mco} ( \mcK_{\mco}^{\beta} )$, 
we define the convolution $Kf$ 
as the distribution
	\begin{align*}
		Kf ( \phi ) 
		= \sum_{n \in \mathbb{N}} f \left( \int_{\mathbb{R}^d} \mathrm{d} z \, \phi ( z ) K_n^z ( z - \cdot ) \right).
	\end{align*}
This definition is such that under the identification of $K$ with $G$ as above, the convolution at the level of $K$ induces the usual convolution at the level of $G$.

\begin{definition} \label{d:kernel_ass_teq}
    A kernel assignment
	of order $\mcO \in \mathbb{N}$
	on $\mcT^{\mathrm{eq}}$
    is an element of
    the Banach space
    $\mathcal{A}_{+}^{\eq; \mcO} \coloneqq \bigoplus_{\mfl \in \mfL_+} C^{\mcO} ( \mcK_{\mcO}^{| \mfl |} )$, 
    equipped with the norm
		\begin{align*}
		 \| K \| _{\mathcal{A}_+^{\mathrm{eq}; \mcO}} 
		 \coloneqq \sum_{\mfl \in \mfL_+} \, 
			 \| K_{\mfl} \|_{C^{\mcO} ( \mcK_{\mcO}^{| \mfl |} )} . 
		\end{align*}
\end{definition}

\begin{definition} \label{d:noise_ass}
	A noise assignment is an element of the space 
	$\mcA_-^{\eq} = \bigoplus_{\mfl \in \mfL_-} \mcD^\prime(\mathbb{R}^d)$
	whilst a smooth noise assignment is an element of the subspace
	$\mcA_{-}^{\infty, \eq} = \bigoplus_{\mfl \in \mfL_-} C^\infty(\mathbb{R}^d)$. 
\end{definition}

\begin{remark}
	Given a noise assignment $\xi$ and a mollifier $\rho$, one naturally obtains a sequence of smooth noise assignments $\xi^\varepsilon$ that converges to $\xi$ in the space of noise assignments via $\xi_\mfl^\varepsilon = \varrho^\varepsilon \ast \xi_\mfl$.
\end{remark}

\begin{definition} \label{def:admissible_model_Teq}
    We say that a model $(\Pi, \Gamma)$ on a sector $W$ of $\mcT^\eq$ is admissible 
    (with respect to a kernel assignment $K$ and noise assignment $\xi$) 
    if it satisfies the relations
    \begin{align*}
        \Pi_x \Xi_\mfl &= \xi_\mfl, \qquad \qquad
        \Pi_x (X^k v) = (\cdot - x)^k \Pi_x v, \qquad \qquad \Pi_x D^k v = D^k \Pi_x v
    \end{align*}
    for all $\mfl \in \mfL_-$ such that $\Xi_\mfl \in W$ and for all $k,j \in \mbN$ and $v \in W$ such that $X^k v, D^j v \in W$. We also require that for $\mfl \in \mfL_+$ and $\tau \in W$ such that $\mcI_\mfl \tau \in W$ we have that
    \begin{align*}
        \Pi_x \mcI_{\mfl} \tau &= 
        K_\mfl \Pi_x \tau - \sum_{|k| \le |\mcI^\mfl \tau|} \frac{(\cdot - x)^k}{k!} D^k ( K_{\mfl} \Pi_x \tau)(x) .
    \end{align*}
We write $\mcM_0(W)$ for the space of all smooth admissible models on $W$ and $\mcM(W)$ for the space of all admissible models on $W$ with the topology arising from the seminorms of Definition~\ref{def: model} on the regularity structure $(W, \mcG)$.
\end{definition}

\subsection{Statement of Main Result}
We recall that \cite{BB21} shows that given a (state-space dependent) preparation map $P: \mathbb{R}^d \times \mcT^\mathrm{eq} \to \mcT^\mathrm{eq}$ (see \cite[Definition 5.1]{BB21}), a smooth noise assignment and a kernel assignment there exists a corresponding renormalised model. We note that our notion of kernel assignment differs to theirs with the meaningful difference being that we only control finitely many derivatives (which corresponds to assuming less regularity of the coefficient field). However, it can be shown that the model constructed in \cite{BB21} still makes sense on suitable sectors even under these weaker assumptions. We perform a construction of this type in a related setting in Section~\ref{ss: alg renorm} and sketch the adaptations to \cite{BB21} required in Section~\ref{ss: mod_eq}.

 \begin{theorem}\label{theo:main_result_1}
 	Let $\mcB$ be a finite set of trees forming the basis of a historic\footnote{This term is defined in Definition~\ref{def:historic} below. The reader should view it as a technical condition that says $\mcB$ contains enough trees to construct the renormalised model recursively} sector in $\mcT^\eq$. Fix a random noise assignment $\xi: \Omega \to \mcA_-^\eq$ and a mollifier $\rho$ and suppose that the pair $(\mcT^\eq, \xi)$ satisfies the assumptions of at least one of \cite{CH16, HS23, BH23}. Suppose also that Assumption~\ref{ass:reg} holds.
 	
 	Then there exists\footnote{The requirements on $\mcO$ depend on $\mcT^\eq$, $\mcB$ and which of \cite{CH16, HS23, BH23} is assumed as input.} an $\mcO > 0$ such that for each $K \in \mcA_+^{\eq; \mcO}$ there exists a sequence of state-space dependent preparation maps $P^\varepsilon$ on the sector $\langle \mcB \rangle$ with the property that if $Z^\varepsilon = (\Pi^\varepsilon, \Gamma^\varepsilon)$ denotes the model on $\langle \mcB \rangle$ associated to the preparation map $P^\varepsilon$, the noise assignment $\xi^\varepsilon$ and the kernel assignment $K$ then there exists a random model $Z = (\Pi, \Gamma)$ such that for each $p \in [1,\infty)$ and for each compact set $\mfK$, we have that
 	\begin{align*}
 		\mathbb{E}[\|Z^\varepsilon; Z\|_{\langle \mcB \rangle; \mfK}^p] \to 0
 	\end{align*}
 	as $\varepsilon \to 0$. In particular, $Z^\varepsilon \to Z$ in probability in $\mcM(\langle \mcB \rangle)$. Furthermore, we have that the map $K \mapsto Z$ is locally Lipschitz continuous from $\mcA_+^{\eq; \mcO}$ into the space of random variables valued in $\mcM(\langle \mcB \rangle)$ equipped with the (extended) pseudometric $d(Z^1, Z^2) = \mathbb{E}^{1/p}[\|Z^1, Z^2\|_{\langle \mcB \rangle; \mfK}^p]$.
 	
 	Furthermore, whilst the choice of $P^\varepsilon$ depends on the mollifier $\rho$, the limiting model $Z$ is independent of this choice. 
 	
 	Finally, there exists an integer $m$ such that $P^\varepsilon(z, \tau)$ has the explicit form $(\ell_z^\varepsilon \otimes \id )\Delta_r^- \tau$ for a family of linear functionals $\ell_z^\varepsilon$ on $\langle \mcB \rangle \cap \mcT_-^\eq$ that depend on $z$ only via $(\partial^k K^z)_{|k| < m}$ where the derivative is with respect to the variable $z$. Here $\Delta_r^-$ is the map defined in \cite[Definition 4.2]{Bru18}.
 	
 \end{theorem}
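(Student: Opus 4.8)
The plan is to reduce everything to the translation-invariant BPHZ theorems of \cite{CH16, HS23, BH23} by a ``freezing of coefficients'' argument carried out at the level of the model. First I would set up the frozen models: for each $w \in \mathbb{R}^d$, evaluating the upper slot of $K \in \mcA_+^{\eq;\mcO}$ at $w$ produces a translation-invariant kernel assignment $(K_\mfl^w)_{\mfl \in \mfL_+}$, whence the canonical smooth admissible model driven by $\xi^\eps$ together with its BPHZ renormalisation $(\Pi^{\eps,w}, \Gamma^{\eps,w})$; by whichever of \cite{CH16, HS23, BH23} is in force, this converges in $L^p$ on compacts to a limit $(\Pi^w, \Gamma^w)$. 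What is actually needed, and what I would establish by rerunning the Kolmogorov argument of Section~\ref{s:Kolmogorov}, is a version of this convergence that is uniform over $w$ in a compact set and locally Lipschitz in $K$: since $w \mapsto K_\mfl^w$ is continuous into $\mcK_\mcO^{|\mfl|}$, the family $\{(K_\mfl^w)_\mfl : w \in \bar{\mfK}\}$ is compact in $\mcA_+^{\eq;\mcO}$, and the stochastic bounds underlying those theorems depend on the kernels only through the $\mcA_+^{\eq;\mcO}$-norm. It is here that the restriction to finitely many controlled derivatives of $K$ (Definition~\ref{d:rk3}) and the uniform ellipticity it encodes are used.

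Next I would build the preparation map and verify the locality statement. For a base point $z$ I would take $\ell_z^\eps$ on $\langle\mcB\rangle \cap \mcT_-^\eq$ to be the BPHZ character of the frozen model at $z$, \emph{corrected} by the finitely many additional contributions obtained by replacing, inside a divergent subtree, a kernel $G(z',z'') = K^{z'}(z'-z'')$ not by $K^z(z'-z'')$ but by the terms of order $|k| < m$ in the Taylor expansion of $w \mapsto K^w(z'-z'')$ about $w=z$, retaining only those $k$ for which the resulting integral is still not absolutely convergent; here $m$ is an integer fixed by the data of $\mcT^\eq$ (essentially by $\min \mcA$ and the kernel orders) and $\mcO$ is to be chosen at least $m$. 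By construction $\ell_z^\eps$ depends on $z$ only through $(\partial^k K^z)_{|k|<m}$. One then checks that $P^\eps(z,\tau) \eqdef (\ell_z^\eps \otimes \id)\Delta_r^- \tau$ obeys the axioms of a state-space dependent preparation map on $\langle\mcB\rangle$ --- the historic-sector hypothesis on $\mcB$ (Definition~\ref{def:historic}) is precisely what closes the recursion --- so that $Z^\eps$ is well defined by the construction of Sections~\ref{ss: alg renorm} and~\ref{ss: mod_eq}, adapting \cite{BB21}.

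Then I would prove convergence by a freezing comparison. Fixing $\tau \in \mcB$ and $x \in \mfK$, I would expand $\Pi^\eps_x\tau - \Pi^{\eps,x}_x\tau$ by successively rewriting each kernel $K^{z'}(z'-z'')$ occurring in $\tau$ as $K^x(z'-z'')$ plus its low-order Taylor terms about $x$ plus a remainder. The fully frozen contributions reproduce $\Pi^{\eps,x}_x\tau$ together with the counterterms bundled into $\ell_x^\eps$ (rather than the plain BPHZ character) and so carry all the divergences, while every other term acquires extra factors $|z'-x|_\mfs$ and is thus an absolutely convergent modification of a frozen-model quantity for a tree of strictly larger degree; by the uniform-in-$w$ bounds of the first step such terms are controlled uniformly in $\eps$ and are Cauchy as $\eps\to 0$, with constants depending on $K$ only through $\|K\|_{\mcA_+^{\eq;\mcO}}$. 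An analogous but simpler computation handles $\Gamma^\eps$. Feeding these estimates and their $\eps$-increments into the Kolmogorov criterion of Section~\ref{s:Kolmogorov} produces the limiting random model $Z$ and the claimed $L^p$ convergence; mollifier-independence and local Lipschitz dependence on $K$ are then inherited from the corresponding properties of the frozen limits, using in addition that $\rho$ enters only through the $\eps$-dependent counterterms of $P^\eps$.

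The step I expect to be the main obstacle is the bookkeeping in this last comparison: one must show simultaneously that the terms retained in $\ell_z^\eps$ renormalise both the frozen model and its kept low-order kernel perturbations, that every discarded term genuinely gains homogeneity, that the telescoping over all edges of a tree is compatible with the recursive structure of \cite{Bru18} (so the preparation-map formalism applies and an error introduced at one edge cannot spoil the renormalisation at another), and that all of this is uniform over the freezing point $x$ --- this uniformity being exactly what stands in for the translation-invariance-in-law step of \cite{CH16, HS23, BH23}. The kernel-uniform strengthening of those input theorems used in the first step is a second, less delicate but still genuine, ingredient.
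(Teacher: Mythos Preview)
Your high-level plan---freeze coefficients and transfer from the translation-invariant BPHZ theorems---matches the paper's, but the implementations differ, and your sketch has gaps at precisely the points you flag as hardest.

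The paper does not compare $\Pi_x^\eps\btau$ directly with a family $(\Pi^{\eps,w})_w$ of frozen models via hand-computed Taylor expansions. It instead packages all frozen models as a single BPHZ model on an infinite-dimensional structure $\mcT^\Ban(C_w^\alpha)$ whose edge spaces are distributions in the freezing parameter (Sections~\ref{s:Reg_Struc}--\ref{s:Kolmogorov}): annealed estimates there are lifted from auxiliary scalar structures $T^\tau$ (Lemma~\ref{l:annealed_mb}), and quenched ones follow from a duality identity (Lemma~\ref{l:semi-annealedII}). Passage to $\mcT^\eq$ is not by direct kernel expansion but by realising $\Pi_x^\eps\btau$ as the reconstruction of a pointed modelled distribution $f_x^\btau$ on $\mcT^\Ban$, built with a variable-coefficient abstract integration operator (Theorem~\ref{theo: VML Pointed Schauder 2}); the stochastic bounds then follow from local Lipschitz continuity of this construction in the $\mcT^\Ban$-model (Lemma~\ref{lem:transfer_down}). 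The preparation map is \emph{derived}, not postulated: an algebraic description of $f_x^\btau$ via a new coproduct $\tilde\Delta$ (Lemma~\ref{lemma:f_and_delta}) and a combinatorial identity between $\tilde\Delta$ and $\Delta_r^-$ (Lemma~\ref{lem: Delta_comb}) identify $\Pi^\eps$ as the model of $P^\eps=(\ell^\Ban\circ\iota_z\circ\mfD\otimes\id)\Delta_r^-$, with $\mfD$ explicit in~\eqref{eq:D_def}. The pointed-modelled-distribution calculus is precisely the systematic bookkeeping device for the ``telescoping over all edges compatible with the recursion'' that you worry about.

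Two concrete gaps in your route. First, ``BPHZ character of the frozen model, corrected by Taylor terms kept when still divergent'' is not a definition of $\ell_z^\eps$: specifying which edge of which divergent subtree receives which order of Taylor correction, and then verifying both that the result is a preparation map and that your $\Pi^\eps$ really is the first component of its associated model, is the content of $\mfD$ together with the lengthy proofs of Proposition~\ref{prop:f_algebraic_renormalisation} and Lemma~\ref{lem: Delta_comb}; your sketch gives no indication of how to carry this out directly. Second, the ``trees of strictly larger degree'' produced by your Taylor remainders need not conform to $R$: moving derivatives between the upper-slot expansion and the polynomial factor (via the Leibniz rule implicit in your freezing comparison) generates trees in the derivative completion $\bar R$ of Definition~\ref{def: derivative_completion}, and the paper builds $\mcT^\Ban$ over $\mcT(\bar R)$ precisely to accommodate them; without this enlargement the frozen-model quantities you want to compare against are not available.
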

\begin{remark}
We recall that statements of the type of Theorem~\ref{theo:main_result_1} can be post-processed to obtain convergence of renormalised models associated to approximations $\xi^\varepsilon \to \xi$ that do not come from mollification via a `diagonal argument' using the fact that the map sending a smooth noise to the renormalised model is continuous in a strong enough topology on the space of smooth noises. For brevity, we do not perform this post-processing in this work and instead refer the reader to \cite[Proposition 4.7]{GH21} for a suitable abstract formulation of the diagonal argument and to e.g. \cite[Theorem 1.4]{Tem24} for an implementation of this principle in a related setting. 

In the case where $K \in \mcA_+^{\eq, \infty}$, it is also possible to post-process the convergence statement above to one on all of $\mcT^\eq$ by taking any $\mcB$ as in the statement containing all trees of negative degree and then making use of \cite[Proposition 3.31, Theorem 5.14]{Hai14} in a similar way to the proof of \cite[Theorem 10.7]{Hai14}. 
\end{remark}
 \begin{remark}%[On the Assumption on the Driving Noise]
 	We note that since all of \cite{CH16, HS23, BH23} assume that the noise $\xi$ is translation invariant and has moments of all orders, our main result also implicitly makes this assumption. Otherwise, the role of these papers is to ensure that we have the stochastic estimates for translation invariant models corresponding to constant coefficient operators in a sufficiently robust manner. The precise estimates we require as input are simply estimates on the right hand side of the inequalities in Lemma~\ref{l:annealed_mb}. We emphasise that as a result this paper does not rely on a particular choice of assumption amongst spectral gap inequalities or moment-cumulant bounds for the noise. We view this flexibility as a significant benefit of our approach.
 \end{remark}
 
The relevance of the final claim in Theorem~\ref{theo:main_result_1} is that in the case where the rule corresponds to the system \eqref{eq:SPDEsystem}, the results of \cite{BB21} show that for fixed $\varepsilon > 0$, the solution of the fixed point problem for modelled distributions corresponding to \eqref{eq:SPDEsystem} has a reconstruction which solves a renormalised PDE with a counterterm that can be written explicitly in terms of $\ell^\varepsilon$. In particular, the final claim of Theorem~\ref{theo:main_result_1} provides a tool to probe the form of the counterterm. This leads us to formulate the following assumption under which the counterterm is local in the sense that it depends on $z$ only through the value of the coefficient field $a$ and finitely many of its derivatives at the point $z$.
\begin{assumption}\label{ass:locality}
	We assume that the kernel assignment $K$ is such that there exists an $N$ such that for each $k$ with $|k| < m$, we can write $\partial^k K_\mfl^z = f_k((\partial^j a(z))_{|j| \le N})$ for some function $f_k$ valued in $\mcK_\mco^{|\mfl|_\mfs}$.
\end{assumption}

We recall that the kernel assignments appearing in this context are typically given by decomposing the Green's kernel for the given operator into a singular part and a sufficiently smooth remainder and including only the singular part of the kernel in the kernel assignment since this is the only part that causes the need for renormalisation. In Section~\ref{s:green_kernel}, we verify that it is possible to exhibit a kernel decomposition for operators of the form $L = \partial_t - \sum_{i,j = 1}^d a_{i j}(t,x) \partial_i \partial_j$ for a suitably `nice' coefficient field $a$ (see Proposition~\ref{prop:ker_dec} for a precise statement) so that the singular part satisfies Assumption~\ref{ass:locality}. We expect that the techniques of this section should generalise to higher order uniformly parabolic operators. Combining Theorem~\ref{theo:main_result_1} and Assumption~\ref{ass:locality} immediately yields the following corollary.
 \begin{corollary}\label{theo:main_result_2}
	  Suppose that the assumptions of Theorem~\ref{theo:main_result_1} and Assumption~\ref{ass:locality} hold. Then the map $z \mapsto \ell_z^\varepsilon$  can be rewritten as a function of $(\partial^j a(z))_{|j| < m}$.
 \end{corollary}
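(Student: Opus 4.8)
The plan is to obtain the statement as a formal consequence of the final claim of Theorem~\ref{theo:main_result_1} and of Assumption~\ref{ass:locality}, by composing the two functional dependencies they provide; no further analytic or algebraic input is needed, the substance having been supplied upstream --- the kernel-jet dependence of $\ell_z^\varepsilon$ in Theorem~\ref{theo:main_result_1}, and the coefficient-jet dependence of the kernel decomposition, which is Assumption~\ref{ass:locality} (verified for second order parabolic operators in Section~\ref{s:green_kernel}, cf.\ Proposition~\ref{prop:ker_dec}).

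First I would unpack the final sentence of Theorem~\ref{theo:main_result_1}. That $\ell_z^\varepsilon$ ``depends on $z$ only via $(\partial^k K^z)_{|k|<m}$'' means exactly that $(\partial^k K_\mfl^z)_{|k|<m,\,\mfl\in\mfL_+} = (\partial^k K_\mfl^{z'})_{|k|<m,\,\mfl\in\mfL_+}$ implies $\ell_z^\varepsilon = \ell_{z'}^\varepsilon$; equivalently, there is a map $\Phi^\varepsilon$, defined on the image of $z\mapsto(\partial^k K_\mfl^z)_{|k|<m,\,\mfl\in\mfL_+}$ and valued in the finite-dimensional dual $(\langle \mcB \rangle\cap\mcT_-^\eq)^*$, with $\ell_z^\varepsilon = \Phi^\varepsilon\big((\partial^k K_\mfl^z)_{|k|<m,\,\mfl\in\mfL_+}\big)$ (only the values of $\Phi^\varepsilon$ on that image matter, so any extension may be fixed). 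Next, Assumption~\ref{ass:locality} gives an $N$ and, for each $\mfl\in\mfL_+$ and each $k$ with $|k|<m$, a map $f_{k,\mfl}$ valued in $\mcK_\mco^{|\mfl|_\mfs}$ with $\partial^k K_\mfl^z = f_{k,\mfl}\big((\partial^j a(z))_{|j|\le N}\big)$; bundling these into $F \coloneqq (f_{k,\mfl})_{|k|<m,\,\mfl\in\mfL_+}$ yields $(\partial^k K_\mfl^z)_{|k|<m,\,\mfl\in\mfL_+} = F\big((\partial^j a(z))_{|j|\le N}\big)$, whence $\ell_z^\varepsilon = (\Phi^\varepsilon\circ F)\big((\partial^j a(z))_{|j|\le N}\big)$. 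Since $a$ takes values in a fixed finite-dimensional space and there are only finitely many multi-indices of degree at most $N$, the jet $(\partial^j a(z))_{|j|\le N}$ varies in a finite-dimensional parameter space, so this genuinely exhibits $z\mapsto\ell_z^\varepsilon$ as a function of that jet --- the asserted locality, with $N$ in the role of the bound appearing in the statement (one may always pass to a larger jet, so the precise index is immaterial).

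The hard part will be essentially nonexistent: the corollary is ``immediate'' exactly because it chains two already-established functional dependencies. The only points requiring care are bookkeeping ones --- that $\Phi^\varepsilon$ is a priori only partially defined (harmless, as above), and that one should record explicitly that the parameter $(\partial^j a(z))_{|j|\le N}$ lives in a finite-dimensional space, so that the conclusion is a bona fide locality statement and not a tautology. The genuine work lies, as the text indicates, in Theorem~\ref{theo:main_result_1} itself and in Section~\ref{s:green_kernel}.
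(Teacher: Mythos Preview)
Your proposal is correct and matches the paper's approach exactly: the paper states that the corollary follows ``immediately'' from combining Theorem~\ref{theo:main_result_1} and Assumption~\ref{ass:locality}, and your argument simply spells out that composition of functional dependencies in detail. No separate proof is given in the paper, so your unpacking is a faithful elaboration of what the authors intend.
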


 In particular, if $\varepsilon > 0$ and $U^\varepsilon$ denotes the solution of the lift of equation \eqref{eq:SPDEsystem} to the level of modelled distributions with respect to the model $Z^\varepsilon$ then combining the main results of \cite{BB21} and Corollary~\ref{theo:main_result_2} yields the existence of functions $c_\tau^\varepsilon$ such that $u^\varepsilon = \mcR^\varepsilon U^\varepsilon$ solves
 \begin{align*}
 	L_\mft u_\mft^\varepsilon(z) = F_\mft^0(\boldsymbol{u}^\varepsilon(z)) + \sum_{\mfl \in \mfL_-} F_\mft^\mfl(\boldsymbol{u}(z)) \xi_\mfl^\varepsilon(z) + \sum_{\tau \in \mcT_-(R)} c_\tau^\varepsilon[(\partial^j a(z))_{|j|< N}] \frac{\Upsilon_\mft[\tau, \boldsymbol{u}(z)]}{S(\tau)}
 \end{align*}
 where $\Upsilon$ and $S$ denote the standard `elementary differentials' and `symmetry factors' as were originally defined in \cite{BCCH20}.
 
 In the following subsection, we provide a high level overview of the strategy of proof of Theorem~\ref{theo:main_result_1}. In Subsection~\ref{ss:overview}, we describe in more detail the contributions of the individual sections of the paper. In particular, it is in this subsection that we describe more carefully the challenges addressed in the implementation of the proof strategy and the comparison to the existing literature.
 
\subsection{Strategy of Proof} \label{ss:prf_strat}

Rather than immediately working directly on $\mcT^\eq$, inpired by \cite{GH19}, we instead construct a regularity structure $\mcT^\Ban$ whose components are infinite dimensional Banach spaces which is designed to be large enough to support a family of abstract integration maps realising $K^z$ for each $z \in \mbR^d$. Our strategy is then to show that given a smooth driving noise, it is possible to associate renormalised models to a preparation map on $\mcT^\Ban$ and that there exists a choice of preparation map (which can be thought of as the BPHZ choice) such that the annealed form of the required stochastic estimates for this model can be lifted from the corresponding annealed stochastic estimates for the BPHZ models on a sufficiently large class of suitably constructed regularity structures with one-dimensional components as considered in \cite{CH16, HS23, BH23}. We then prove a Kolmogorov criterion for models on $\mcT^\Ban$ that allows us to obtain quenched estimates for the model on $\mcT^\Ban$.

In order to transfer this model to a model on $\mcT^\eq$, we adapt the framework of pointed modelled distributions introduced in \cite{HS23}. More precisely, we construct an operator on spaces of pointed modelled distributions on $\mcT^\Ban$ of H\"older type that lifts the convolution with the non-translation invariant kernel assignment $K$. In particular, in combination with the calculus of pointed modelled distributions already present in \cite{HS23}, this allows us to construct a family of pointed modelled distributions indexed by $\mcB$ whose reconstructions satisfy the requirements for the first component of an admissible model on $\mcT^\eq$. Furthermore, we show that the map sending a model on $\mcT^\Ban$ to this first component is locally Lipschitz continuous in suitable topologies. 

In order to complete the proof, it then remains to show that the object constructed in this way is in fact the first component of a model on $\mcT^\eq$ associated to a suitable preparation map. In order to do this, we provide an alternative and purely algebraic description of the pointed modelled distributions constructed in the previous step based on a family of characters on a suitable space of trees and a corresponding coproduct-type operation that is reminiscent of the coproduct used in positive renormalisation. This algebraic description is sufficiently strong to determine the corresponding preparation map at the level of $\mcT^\eq$. One notable feature is that the resulting preparation map does not agree with the naive BPHZ choice which seems to be essential in order to obtain the subsequent locality statement.

Let us highlight what we believe are advantages of our approach.
\begin{enumerate}
	\item We do not need to rely on a choice of probabilistic technique for the stochastic estimates, but rather we are able to take the BPHZ estimates of any of \cite{CH16,HS23,BH23} directly \emph{as an input}.
	This widens the range of applicability of our result since those papers are based on different (and not comparable) sets of assumptions on the driving noise.
	
	\item We further develop the function space theory for modelled distributions \cite{Hai14} and pointed modelled distributions \cite{HS23}, which we believe to be of independent interest. In particular, pointed modelled distributions are demonstrated to be a robust tool for constructing one `model-like object' from another.
	
	\item Whilst our strategy of proof involves working with regularity structures with infinite dimensional components, the output is a model on the usual (scalar-valued) regularity structure associated to a given equation. In this way, more technical considerations involving the infinite dimensional nature of $\mcT^\Ban$ can be viewed as hidden in the `back-end of the machinery' and therefore should not arise when working with the solution to the equation itself. In particular, we expect the output of this approach would allow one to more easily extend other results that are known in the constant coefficient setting (such as pathwise approaches to global existence via regularity structures \cite{MW20, CMW23, EW24, CFW24}) 
	 to the variable coefficient setting. 

	\item 
	As mentioned above, in the setting of non-constant coefficient operators it is well-understood that renormalisation functions are required in general and it is desirable that these functions should satisfy constraints such as the locality condition described above (which also ensures translation equivariance of the renormalised equation).	Our construction has the advantage that this property comes built in (assuming appropriate asymptotic expansions\footnote{
	Roughly speaking, we use as input that the non-constant coefficient heat kernel $\Gamma$ close to some point $z_0$ can be well approximated by constant coefficient kernels depending only on the jet of the coefficient field at $z_0$. While such a decomposition is to be expected, we do provide a detailed proof in the case of non-constant coefficent heat operators. 
	} of the Green's kernel) and does not require extended algebraic post-processing.
	\item Finally, we mention that tracking the appearance of the counterterm through our construction yields an exact formula for the counterterm $c_\tau^\varepsilon[(\partial^j a(z))_{|j|< r_0}] $ in terms of renormalisation constants $\bar{c}_\tau$ corresponding to models on a suitable family of regularity structures which are admissible for constant coefficient operators. We expect that this property may be useful for deducing properties of the renormalised equation. 
\end{enumerate}

\begin{remark}
	In order to obtain local-in-time well-posedness for \eqref{eq:SPDEsystem} an alternative approach would be to attempt to freeze coefficients at the level of the equation rather than at the level of the model. This would essentially lead to attempting to lift the SPDE to an appropriate fixed point problem for modelled distributions on $\mcT^\Ban$ with the initial difficulty being the task of providing a setting coming with an appropriate notion of abstract integration operator encoding `variable coefficient convolution' whilst using only the frozen coefficient models.
	
	We note that (up to technical details involving including appropriate weights near the $(t=0)$-hyperplane to allow the treatment of problems with non-trivial initial data) the contents of Section~\ref{sec:mod_dist} provides these analytic tools. In addition, Section~\ref{s:Kolmogorov} provides the renormalised models that would be needed to drive this fixed point problem.
	
	We do not pursue this alternative strategy since its output would interface more poorly with the existing literature due to the fact that the modelled distributions produced as solutions live on $\mcT^\Ban$ rather than the completely standard $\mcT^\eq$. In particular, in order to deduce the form of the renormalised equation, it would be necessary to prove an analogue of the results of \cite{BB21, BCCH20} in this infinite dimensional setting rather than simply taking their result `off the shelf'. Furthermore, we would expect any future attempt to deduce properties of the solution (see the third point above) to be significantly more technically involved when working with the output of this strategy.
\end{remark}

\subsection{Article Structure}\label{ss:overview}
 
 We now provide an overview of the contributions of the individual sections of this paper and a description of how they compare to the literature.
 
 \subsubsection*{Section~\ref{s:Reg_Struc}: Construction of Regularity Structures}
 
 In Section~\ref{s:Reg_Struc}, we construct the required regularity structures supporting abstract integration maps realising $K^z$ for each $z \in \mbR^d$ simultaneously. Here we are heavily inspired by \cite{GH19}. Roughly speaking, the idea adopted in that work is that whereas standard tree-based regularity structures associate to each tree a copy of $\mbR$ and think of each edge as representing a fixed kernel, one now wants to associate to each edge of a given tree a suitable Banach space (say of regularising kernels or distributions). In particular, each tree should now correspond to a (projective) tensor product of potentially infinite dimensional Banach spaces rather than a single copy of $\mbR$. Whilst the constructions of \cite{GH19} are relatively lightweight, they have two deficiencies from the perspective of this work. 
 
 Firstly, the construction of renormalised models in \cite{GH19} proceeds by directly patching together models on more standard regularity structures, rather than by adapting an algebraic construction of renormalised models to the infinite-dimensional setting. This construction, whilst short and certainly sufficient for the purposes of \cite{GH19}, would not carry enough information to identify the resulting model on $\mcT^\eq$ after the remaining constructions of this paper. To this end, it is essential that we develop a stronger algebraic machinery describing models on $\mcT^\Ban$ that is more reminiscent of \cite{BHZ19, Bru18}. 
 
 Secondly, an unfortunate side effect of the lightweight construction of \cite{GH19} is that the tree product necessarily becomes non-commutative due to the fact that the tensor products appearing do not reflect the symmetries of a given tree. This means that product operations that ought to be commutative do not automatically have that property and instead this must be enforced by hand. Whilst doing this would be possible, it would also be quite tedious to do at each of the many steps in the machinery of this paper. As a result, it is convenient to already enforce the appropriate symmetries at the level of the regularity structure itself by constructing the required algebraic operations on appropriately symmetrised (projective) tensor products of infinite dimensional Banach spaces.
 
 Both of these issues were previously addressed in the setting in which the Banach spaces `glued' into edges are finite dimensional in \cite{CCHS22}. This framework was subsequently applied in \cite{HS23m, GHM24}. In \cite{CCHS22}, the authors define a category $\SSet$ of `symmetric sets' with a class of objects that naturally includes the combinatorial trees appearing in regularity structures and a class of morphisms that respect the required symmetries. They then show that operations such as the tree product can be realised at the combinatorial level as morphisms in the category $\SSet$ and furthermore that there exists a functor from $\SSet$ into the category $\mathbf{TVec}$ of topological vector spaces that sends each tree $\btau$ to the corresponding symmetrised tensor product $V^{\otimes \sttau}$. The functoriality of this construction allows them to push symmetric operations such as the coproducts required to define the structure group that are defined at the level of combinatorial trees along the functor to the vector-valued setting.
 
In addition to more minor technical adaptations needed for our infinite dimensional setting, a more significant limitation of this construction is that not all maps that one might want to construct are morphisms that lie in the image of this functor. In particular, when constructing models recursively (as is done in the preparation map  approach of \cite{Bru18, BB21}), it is essential to have a calculus for maps $V^{\otimes \sttau} \to C^{\infty}(\mbR^d)$. Since it does not seem possible to express $C^{\infty}(\mbR^d)$ as a symmetric tensor product corresponding to a symmetric set supporting a rich enough class of morphisms, we would not expect to be able to construct such a calculus only by appealing to functoriality. 

In any of \cite{CCHS22, HS23m, GHM24} this issue is circumvented by writing down a non-recursive definition of the canonical lift and viewing all renormalisation as acting on trees as in \cite{BHZ19}. Whilst it would be possible to adopt this approach here, we would run into significant difficulties when trying to pass the construction back down to $\mcT^\eq$ since in that latter non-translation invariant setting an analogue of \cite{BHZ19} does not exist in the literature. In comparison, the approach of preparation maps given in \cite{Bru18} adapts very cleanly to the non-translation invariant setting, as demonstrated in \cite{BB21}. 

In order to work with compatible algebraic approaches at the level of both $\mcT^\Ban$ and $\mcT^\eq$, we therefore provide an alternative to the category theory based constructions of \cite{CCHS22} that is based on characterising the space $V^{\proj \sttau}$ via a suitable universal property in $\mathbf{TVec}$ directly. This universal property is sufficiently powerful to define all of the maps needed to construct renormalised models in the approach of either of \cite{BHZ19} or \cite{Bru18}. 

In Remark~\ref{rem: univ_colim}, we explain that the fact that $V^{\proj \sttau}$ satisfies our universal property can be interpreted as the fact that the functor of \cite{CCHS22} preserves a certain class of colimits. Whilst this observation would allow us to write down a hybrid of the two approaches, we consider it an advantage that the universal property allows us to circumvent categorical constructions such as the construction of the larger category $\mathbf{TStruc}$ needed in \cite{CCHS22}. As a result, we adopt a more hands on approach using the universal property throughout Sections~\ref{s:Reg_Struc} and \ref{s:Alg_Models}.
 
The structure of this section is as follows. In Section~\ref{s: tensor} we introduce the universal property defining $V^{\proj \sttau}$. Since we will not need properties of $V^{\proj \sttau}$ beyond those contained in the universal property, we defer the construction of the (unique) space satisfying this property to Appendix~\ref{app: tensor}. In Section~\ref{s: tensor_oper}, we then apply the universal property to construct the tree product, gradient and the coproduct realising the structure group in the infinite dimensional setting. We further show that these operations satisfy all of the necessary properties to yield a regularity structure via the usual Hopf algebraic construction.

 \subsubsection*{Section~\ref{s:Alg_Models}: Renormalised Models on $\mcT^\Ban$}
 
 In Section~\ref{s:Alg_Models}, we perform the construction of renormalised models on $\mcT^\Ban$ via the universal property of Definition~\ref{def:tensor_product}. In particular, in Section~\ref{ss:Adm_mod} we introduce an appropriate notion of admissible model for the setting and in Section~\ref{ss: alg renorm} we show that it is possible to construct such models from generic preparation maps. One technical detail tackled in these sections is that since we do not assume the coefficient field of our operator to be smooth, we only have control on finitely many derivatives of the Green's kernels appearing and therefore cannot construct models on the full regularity structure. Since the construction of models followed here is recursive, it is therefore necessary to construct a minimal set of trees needed in that construction in order to codify the correct assumptions on the Green's kernel. Beyond the setting considered here, we would expect this construction to be of independent interest in other situations where more care is needed when constructing the model such as in situations where the Green's kernel has more limited decay at infinity (as is the case for fractional heat operators) or in situations where the regularisation mechanism used in the ultraviolet cut-off only yields limited smoothness of the driving noise. 
 
 \subsubsection*{Section~\ref{s:Kolmogorov}: Stochastic Estimates for the BPHZ Model}
 
 In Section~\ref{ss:annealed}, we specialise to preparation maps built from the coproduct type operation $\Delta_r^-$ introduced at the level of combinatorial trees in \cite{Bru18}. We establish the existence and uniqueness of a `BPHZ model' on $\mcT^\Ban$ in Proposition~\ref{lem: BPHZ exists}. Since the BPHZ preparation map is not defined a priori but is rather constructed during the induction defining the corresponding model, this again requires some care with regards to the minimal set of trees needed to define the BPHZ preparation map. To this end, we introduce a notion of the `history' of a set of trees and show that it is possible to construct simultaneously the BPHZ preparation map and model with respect to a quantity we call the `age of a tree'.
 
 In Lemma~\ref{l:annealed_mb} we show that it is possible to lift stochastic estimates for the BPHZ model in their annealed form from their scalar-valued analogues. This is done by constructing a suitable class of scalar-valued regularity structures and showing in Lemma~\ref{lem:up_down} that  the BPHZ model on $\mcT^\Ban$ evaluated at a fixed `elementary symmetric tensor' coincides with the BPHZ model on an appropriate choice of such structure for a particular choice of kernel assignment.
 
 In Section~\ref{ss:quenched}, we then argue that it is possible to post-process the annealed estimates obtained via Lemma~\ref{l:annealed_mb} to their quenched form. In comparison to the usual scalar-valued setting, the main subtlety here is the presence of an additional supremum over $v \in V^{\proj \sttau}$ which leads to a situation where one would desire a Kolmogorov criterion for stochastic processes indexed by the elements of an infinite dimensional Banach space. Since such results are typically quite subtle, we adopt the idea from \cite{GH19} of obtaining the quenched estimates from the annealed estimates on a larger structure by showing that the BPHZ model on the smaller structure can be viewed as a continuous and deterministic function of a process on the larger structure that is indexed by $\mbR^d$.

 \subsubsection*{Section~\ref{sec:mod_dist}: (Pointed) Modelled Distributions}
 
 With the tools to obtain stochastic estimates on $\mcT^\Ban$ in hand, in Section~\ref{sec:mod_dist}, we turn to the development of the analytic tools needed to transfer the estimates to $\mcT^\eq$. To that end, we adapt the framework of Pointed Modelled Distributions first introduced in \cite{HS23}. In comparison to \cite{HS23}, there are two main differences in this paper. The first is that here we work purely in the setting of modelled distributions of H\"older type. Whilst this is a special case of the setting of \cite{HS23}, due to our stronger assumptions, it is possible to remove several parameter restrictions appearing in that paper. Since removing those parameter restrictions is essential in our application of the framework in Section~\ref{s:ren_models}, we provide the necessary adaptations of the statements including a proof of a new reconstruction theorem for H\"older pointed modelled distributions in Theorem~\ref{theo: pointed recon}. 
 
 The second main difference is that we require an abstract integration operator on modelled distributions on $\mcT^\Ban$ that realises convolution with an element of $C^\mcO(\mcK_\mcO^\beta)$, rather than the analogue considered in \cite{HS23} in which for both the model and the modelled distribution one works with the same translation invariant kernel. In Section~\ref{ss:VML_Schauder} we construct such operators and show that they have the desired mapping properties. Our definition of these operators is in spirit similar to that of \cite[Equation 4.1]{GH19}, though is conceptually different due to the method of freezing in the coefficient. In \cite{GH19}, a non-linear transformation of the equation under consideration is made so as to allow to freeze at the value of the coefficient field and still obtain an appropriate convolution operator. Since there is not an analogous transformation here, an attempt to freeze in the coefficient field and still obtain the right convolution operator would instead require freezing a sufficiently large jet of derivatives of the coefficient field in a way that depends non-trivially on the specific structure of the kernel decomposition under consideration. In order to remain more generic and to track fewer parameters, we instead choose to freeze in the space-time point which leads to changes in both the form of the definition and the spaces of distributions that we are required to work with.
 
 A final more minor difference with \cite{HS23} is that we are again more careful in the assumptions we make on kernel assignments here. In particular, we also carefully track the continuity of various operations in the kernel assignment in order to eventually obtain continuity of the BPHZ model as a function of the coefficient-field.
 
 \subsubsection*{Section~\ref{s:ren_models}: Renormalised Models on $\mcT^\eq$}
 
 In Section~\ref{s:ren_models}, we perform the construction of the preparation map on $\mcT^\eq$ which corresponds to the model for which we will eventually obtain stochastic estimates.
 
 In particular, we first apply the results of Section~\ref{sec:mod_dist} to construct a family of modelled distributions (indexed by trees in $\mcT^\eq$) whose reconstruction will yield the first component $\Pi$ of a model on $\mcT^\eq$. Whilst this construction immediately yields the desired estimates due to continuity properties established in Section~\ref{sec:mod_dist}, it does not provide the more algebraic ingredients such as a suitable recentering map $\Gamma$ or a sufficiently strong algebraic description to identify the form of the counterterm in the renormalised equation that would result from driving a fixed point problem via this model. Therefore, in the remainder of Section~\ref{s:ren_models} our goal is to show that $\Pi$ constructed in this way is in fact the first component of a model associated to a certain preparation map defined in Section~\ref{ss: mod_eq}. 
 
 To achieve this, we first show in Section~\ref{ss:alg_rep} that the recursively constructed family of modelled distributions mentioned above admits an alternative and purely algebraic description in terms of a coproduct type operation $\tilde{\Delta}$. In Section~\ref{ss:mod_ident}, we then show an approximate commutation type relation between $\Delta_r^-$ and $\tilde{\Delta}$ that allows us to identify $\Pi$ as the first component of the model associated to the desired preparation map in Proposition~\ref{prop:f_algebraic_renormalisation}. 
 
\subsubsection*{Section~\ref{s:Proof_Main}: Proof of Theorem~\ref{theo:main_result_1}}

With all of the ingredients assembled, in Section~\ref{s:Proof_Main} we provide a proof of Theorem~\ref{theo:main_result_1}. Since by this point all of the hard work has been done, this section is relatively short.

\subsubsection*{Section~\ref{s:green_kernel}: Decompositions of Green's Kernels for Local Counterterms}

Whilst Theorem~\ref{theo:main_result_1} provides a BPHZ statement for a sequence of models on $\mcT^\eq$, it does not show that the counterterms in a corresponding renormalised equation would be local ones. To achieve this, one also has to assume that Assumption~\ref{ass:locality} holds. In Section~\ref{s:green_kernel} we show that for operators of the form $$ \partial_t - \sum_{i,j = 1}^d a_{i j}(t,x) \partial_i \partial_j - \sum_{i = 1}^d b_{i}(t,x) \partial_i - c (t,x)$$ a decomposition of the corresponding Green's kernel $\Gamma$ with singular part satisfying Assumption~\ref{ass:elliptic_and_bounds} does exist under suitable assumptions on the coefficient fields that are expressed in Assumption~\ref{ass:elliptic_and_bounds}. This means that in combination with \cite{Hai14, BB21}, Theorem~\ref{theo:main_result_1} yields a local in time well-posedness result with local renormalisation functions for a general class of second order parabolic subcritical singular SPDEs with sufficiently nice coefficients.

Our approach comes in two parts. Firstly we briefly provide a construction of $\Gamma$ as the sum of a certain Volterra series $\sum_k Z \ast (-E)^{\ast k}$ for suitably chosen $Z$ and $E$. Whilst such constructions are standard in the literature (see e.g.\ \cite{Fri08, Grieser}), we find it useful to recall some of the details to fix notation and to obtain finer information on the terms in the Volterra series. To this end, we find it convenient to implement the approach taken in \cite{Grieser}. However, in comparison to \cite{Grieser} our setting has several technical differences that we account for. Firstly, rather than working with the Laplace-Beltrami operator on a compact Riemannian manifold, we work on all of $\mbR^{1+d}$ (in particular, allowing the coefficient field to be time dependent which requires some technical adaptations). Secondly, rather than assuming smoothness of the coefficients, we assume only a limited amount of H\"older regularity. Whilst we don't expect the regularity assumption we make to be optimal, it is useful to track this finer information since it allows us to show that the objects constructed in this section depend continuously on $(a,b,c)$ as measured in a suitable H\"older topology. 

In the second part of our approach (which is not present in standard constructions of the heat kernel),  we postprocess the construction of $\Gamma$ as a Volterra series to provide a decomposition satisfying Assumption~\ref{ass:locality}. Since $Z \ast (-E)^{\ast k}$ will typically not be in a form compatible with this assumption, we proceed by a Taylor expansion argument in the `upper slot' in both $Z$ and $E$.  We then show that at the level of $Z \ast (-E)^{\ast k}$ this results in a decomposition where all terms arising purely from the main part of the Taylor expansion of $Z$ and $E$ are of a form that is compatible with Assumption~\ref{ass:locality} whilst all terms that contain at least one instance of a Taylor remainder are much more regularising and thus can be safely absorbed into the remainder part of the kernel decomposition $\Gamma = K + R$ since they will not cause a need for additional renormalisation.
\section{Regularity Structures from Tensor Products of Banach Spaces}\label{s:Reg_Struc}

In this section, we construct regularity structures whose components are partially symmetrised tensor products of infinite dimensional Banach spaces. For a discussion of how our approach in this section relates to the literature, we refer the reader to Section~\ref{ss:overview}.

We begin by recalling some definitions from \cite{CCHS22} that will be used in what follows. We recall that we fixed a set of types $\mathfrak{L}$ in Section~\ref{sec: main_results}.

\begin{definition}
	A typed set $(T, \mfl)$ is a
	(finite) 
	set $T$ equipped with a type map $\mfl: T \to \mfL$. An isomorphism of typed sets is a type-preserving bijection.
\end{definition}
We write $\SetL$ for the category of typed sets with morphisms being the isomorphisms of typed sets. 

\begin{definition}\label{def: SymSet}
	A symmetric set is a connected groupoid in $\SetL$.
\end{definition}

\begin{remark}
This definition is equivalent to 
\cite[Definition 5.3]{CCHS22}, see Remark 5.3 therein. 
Thus a symmetric set is a collection of typed sets $\{a: a \in \Ob(\mcs)\}$ together with a collection of morphisms $(\hom_{\mcs} (a, b) )_{a, b \in \Ob(\mcs)}$
	such that for each $a,b \in \Ob(\mcs)$, $\hom_{\mcs} (a, b)$
	is a non-empty set of isomorphisms from $a$ to $b$ such that 
		\begin{equs}
			\gamma \in \hom_{\mcs} (a, b) & \implies \gamma^{-1} \in \hom_{\mcs} (b, a)
			\\
			\gamma \in \hom_{\mcs} (a,b), \bar{\gamma} \in \hom_{\mcs} (b,c) & \implies \bar{\gamma} \circ \gamma \in \hom_{\mcs} (a,c) .
		\end{equs}
	We will sometimes refer to the maps 
	$\hom_{\mcs} (a, b)$
	as `symmetries' in $\mcs$. 
	
	Given a symmetric set $\mcs$ and $a \in \mathrm{Ob}(\mcs)$, we shall write $a_\mcs$ for the symmetric set with a single object $a$ and morphisms $\hom_\mcs(a,a)$.
\end{remark}

We next recall the main example of symmetric sets that we will be interested in, namely those corresponding to decorated trees. In order to avoid set theoretic issues, we restrict ourselves to trees `drawn' on $\mbN$. Since the precise vertex set does not play an important role in the theory, this has no meaningful impact beyond avoiding technical issues.

\begin{definition}\label{def:decorated_tree}
	A decorated tree is an acyclic graph $\tau$ with vertex set $N(\tau) \subset \mbN$ and edge set $E(\tau) \subset N(\tau)^2$ which is equipped with a distinguished vertex $\rho_\tau$ called its root, a type map $\mft : E(\tau) \to \mfL$, node decoration $\mfn : N(\tau) \to \mbN^d$ and $\mfe : E(\tau) \to \mbN^d$. An isomorphism of decorated trees is a graph isomorphism that preserves the root, type map and decorations.
	
	We write $E_+(\tau)$ for the set of edges $e \in E(\tau)$ such that $\mft(e) \in \mfL_+$ and call such edges kernel edges. Edges that are not kernel edges are called noise edges and we write $E_-(\tau)$ for the subset of $E(\tau)$ consisting of noise edges. 
\end{definition}

Given a decorated tree $\tau$, we define a corresponding symmetric set $\sttau$ as follows.

\begin{definition}\label{def:Tree_SSet}
	We write $\btau$ for the isomorphism class\footnote{Our convention that trees are drawn on $\mbN$ ensures that this isomorphism class is a set.} of the tree $\tau$. We will often refer to elements of $\btau$ as `drawings' of $\btau$. We write $\sttau$ for the symmetric set whose object set is $\{(E(\tau), \mft): \tau = (N(\tau), E(\tau), \mft, \mfn, \mfe) \in \btau\}$ and whose morphisms are simply the isomorphisms between elements of $\btau$. 
\end{definition}
We note that each of the trees $X^k$ consists of a single node and hence correspond to the symmetric set whose only object is the empty set and whose only morphism is the trivial map $\emptyset \to \emptyset$. This definition will ensure that in the constructions that follow the component of our regularity structures which corresponds to $X^k$ will always be a copy of $\mbR$.
\begin{remark}
	As in \cite{CCHS22}, we pay more attention than is typical in the literature to the difference between a tree $\tau$ and its isomorphism class $\btau$. The reason to do this is that the operation informally described as `gluing' vector spaces into edges (that we make rigorous later) requires one to be able to talk concretely about the edge set of the tree and thus to work with the distinguished drawing when defining the tensor products we work with. On the other hand, operations between these tensor products ought to be symmetric and thus depend only on the isomorphism class. This makes both perspectives valuable in the paper. For clarity, we adopt the convention that concrete `drawings' of trees are denoted by symbols such as $\tau, \sigma$ whilst the corresponding isomorphism classes are denoted by the boldface analogues $\btau, \bsigma$.
\end{remark}
	
\subsection{Tensor Products of Banach Spaces Associated to Symmetric Sets}\label{s: tensor}

With a suitable notion of symmetric set in hand, we now turn to the corresponding notion of tensor product. We first fix the Banach spaces we wish to take tensor products of.
\begin{definition}\label{def:Ban_Ass}
	A Banach space assignment is a collection of Banach spaces $V = (V_\mfl)_{\mfl \in \mfL}$.  Given a typed set $a$, we write
	$V^{\times a} = \prod_{x \in a} V_{\mfl(x)}$.
\end{definition}

 We note that given a pair of typed sets $a,b$ and an isomorphism of typed sets $\gamma : a \to b$, there is a corresponding map $\gamma : V^{\times a} \to V^{\times b}$ given by $\gamma((v_x)_{x \in a}) = (v_{\gamma^{-1}(y)})_{y \in b}$.
Given a symmetric set $\mcs$ and a Banach space $Z$, we say that a family of maps $(f^a: V^{\times a} \to Z)_{a \in \Ob(\mcs)}$ is $\mcs$-symmetric if for every $a,b \in \Ob(\mcs)$ and $\gamma \in \hom_\mcs(a,b)$, $f^b \circ \gamma = f^a$. We note that if $(f^a)_{a \in \Ob(\mcs)}$ is $\mcs$-symmetric and multilinear then the operator norm $\|f^a\|$ is independent of $a$. 
			
	\begin{definition}\label{def:tensor_product}
Given a symmetric set $\mcs$ and a Banach space assignment $V$, the $\mcs$-symmetric tensor product is a pair $(V^{\proj \mcs}, (\otimes_\mcs^a)_{a \in \mcs})$ where $V^{\proj \mcs}$ is a Banach space and  $(\otimes_\mcs^a)_{a \in \mcs}$ is an $\mcs$-symmetric family of norm $1$ multilinear maps such that for any complete locally convex topological vector space $Z$ and any $\mcs$-symmetric family $(f^a: V^{\times a} \to Z)_{a \in \Ob(\mcs)}$ of continuous multilinear maps, there exists a unique continuous linear map $f: V^{\proj \mcs} \to Z$ that has the property that $\|f\| = \|f^a\|$ in the case where $Z$ is a Banach space and is such that furthermore for each $a\in \Ob(\mcs))$ the following diagram commutes.
 \begin{center}
			\begin{tikzcd}
				V^{\times a} \arrow[dr, "f^a"'] \arrow[r, "\otimes_\mcs^a"] & V^{\proj \mcs} \arrow[d, "f"] \\
				&  Z
			\end{tikzcd}
		\end{center}
\end{definition}

 In this work, we take the point of view that the only necessary properties of the tensor product associated to a symmetric set $\mcs$ are encoded in the universal property. In particular, this allows us to avoid working by hand with a construction of the symmetric tensor product as a certain subspace of $\prod_{a \in \Ob(\mcs)} V^{\proj a}$ where $V^{\proj a}$ is the usual projective tensor product (which is in particular a Banach space). 

Nonetheless, it is of course necessary to show that the definition is non-vacuous. In the setting where the Banach space assignment is finite dimensional, this can be done by noticing that the tensor product constructed in \cite{CCHS22} satisfies our universal property. With minor adaptations to take care of the need for completions and more careful statements about norm boundedness, this construction provides existence of a suitable tensor product in general. Since the precise presentation of the symmetric tensor products does not matter in the rest of this work (other than through the universal property), we satisfy ourselves here by stating an existence result and deferring the adaptation of the construction of \cite{CCHS22} to Appendix~\ref{app: tensor}.
	
	\begin{prop}\label{prop:Symm_Char} 
		Given a Banach space assignment $V$ and a symmetric set $\mcs$, there exists a unique (up to unique isometric isomorphism)
$\mcs$-symmetric tensor product $(V^{\proj \mcs}, (\otimes_\mcs^a)_{a \in \Ob(\mcs)})$. Furthermore, 
		\begin{align*}
			\|v\|_{V^{\proj \mcs}} = \inf \left \{ \sum_{i = 1}^\infty \prod_{x \in a} \|v_x^i \|_{V_{\mfl(x)}} : v = \sum_{i = 1}^\infty \otimes_\mcs^a (v_x^i)_{x \in a} \right \}.
		\end{align*}
	\end{prop}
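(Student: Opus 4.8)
The plan is to reduce everything to the finite-dimensional construction of \cite{CCHS22}, upgraded with a completion step, and to verify that the resulting Banach space satisfies the universal property of Definition~\ref{def:tensor_product}. First I would address \emph{uniqueness}: this is the standard argument for objects defined by a universal property. Given two $\mcs$-symmetric tensor products $(V^{\proj\mcs}, (\otimes_\mcs^a))$ and $(W, (\boxtimes^a))$, the family $(\boxtimes^a)$ is $\mcs$-symmetric and multilinear into the Banach space $W$, so it induces a unique continuous linear $\Phi: V^{\proj\mcs}\to W$ with $\boxtimes^a = \Phi\circ\otimes_\mcs^a$ and $\|\Phi\|=\|\boxtimes^a\|=1$; symmetrically one obtains $\Psi: W\to V^{\proj\mcs}$ with $\|\Psi\|=1$. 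The composites $\Psi\circ\Phi$ and $\mathrm{id}$ both satisfy the universal property for the datum $(\otimes_\mcs^a)$ into $V^{\proj\mcs}$, so by the uniqueness clause they coincide; similarly $\Phi\circ\Psi=\mathrm{id}$. Since $\|\Phi\|,\|\Psi\|\le 1$ and they are mutually inverse, both are isometries, giving the unique isometric isomorphism.

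For \emph{existence}, I would fix a drawing $a_0\in\Ob(\mcs)$ and consider the ordinary (completed) projective tensor product $V^{\proj a_0}=\hat\bigotimes_{x\in a_0}V_{\mfl(x)}$, which is a Banach space with its universal property for continuous multilinear maps out of $V^{\times a_0}$. The finite stabiliser group $G=\hom_\mcs(a_0,a_0)$ acts on $V^{\times a_0}$ by permuting-and-retyping coordinates, hence acts isometrically on $V^{\proj a_0}$; I would define $V^{\proj\mcs}$ as the closed subspace of $G$-invariant vectors (equivalently, the image of the norm-one averaging projection $\pi=\frac{1}{|G|}\sum_{g\in G}g$, which is bounded since each $g$ is an isometry — here one uses that $\SetL$-isomorphisms are type-preserving so the induced maps on $V^{\proj a_0}$ are genuine isometries), and set $\otimes_\mcs^{a_0}=\pi\circ\otimes^{a_0}$ where $\otimes^{a_0}$ is the canonical map into $V^{\proj a_0}$. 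For a general object $b\in\Ob(\mcs)$, pick any $\gamma\in\hom_\mcs(a_0,b)$ and define $\otimes_\mcs^b = \otimes_\mcs^{a_0}\circ\gamma^{-1}$; connectedness of the groupoid guarantees such a $\gamma$ exists, and $G$-invariance of the range makes this independent of the choice of $\gamma$, which is exactly the $\mcs$-symmetry condition $\otimes_\mcs^b\circ\gamma=\otimes_\mcs^{a_0}$. To verify the universal property, given an $\mcs$-symmetric family $(f^a:V^{\times a}\to Z)$ into a complete locally convex $Z$, the map $f^{a_0}$ factors through the ordinary projective tensor product as $\bar f:V^{\proj a_0}\to Z$; $\mcs$-symmetry of $(f^a)$ forces $\bar f$ to be $G$-invariant, i.e. $\bar f\circ\pi=\bar f$, so $\bar f$ restricts to $f:=\bar f|_{V^{\proj\mcs}}$ with $f\circ\otimes_\mcs^{a_0}=f^{a_0}$ and, chasing $\gamma$'s, $f\circ\otimes_\mcs^b=f^b$ for all $b$; uniqueness of $f$ follows because the image of $\bigcup_b \otimes_\mcs^b$ spans a dense subspace of $V^{\proj\mcs}$. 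When $Z$ is Banach, the norm identity $\|f\|=\|f^a\|$ holds because restriction to a $1$-complemented subspace does not increase norm while the factorisation through the projective tensor product is norm-preserving on the relevant quantities; combined with $\|\otimes_\mcs^a\|\le 1$ this pins down $\|f\|=\|f^a\|$ exactly.

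Finally, the \emph{explicit norm formula}: the stated infimum defines a seminorm $p$ on the algebraic span of the elementary tensors, and the quotient-completion with respect to $p$ is readily checked to satisfy the same universal property (elementary tensors map to elementary tensors with the right norm bound, and any $\mcs$-symmetric multilinear family extends by the triangle inequality and the defining infimum), so by the already-established uniqueness it is isometrically the same space; hence $\|\cdot\|_{V^{\proj\mcs}}=p$. Concretely, one shows $\|\otimes_\mcs^a(v_x)_{x\in a}\|\le\prod_x\|v_x\|$ directly from $\|\pi\|\le 1$ and the projective tensor norm bound, giving ``$\le$'' in the formula; and ``$\ge$'' follows by testing against norming multilinear functionals built from the dual $G$-averaging, using the universal property with $Z=\mathbb{R}$.

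\textbf{Main obstacle.} The one genuinely delicate point is the \emph{exact} norm identity $\|f\|=\|f^a\|$ (rather than just $\|f\|\le\|f^a\|$ or $\ge$): one must simultaneously control the norm of the averaging projection $\pi$ (which requires that every symmetry in $\mcs$ act as an isometry, hence the insistence that morphisms in $\SetL$ be type-preserving bijections so that $V_{\mfl(x)}$ is carried isometrically to $V_{\mfl(\gamma(x))}$), the norm of the canonical map into the completed projective tensor product, and the fact that restricting a functional from $V^{\proj a_0}$ to the invariant subspace loses nothing when the functional is itself invariant. Threading these three estimates together so that they yield equality — and doing so uniformly in $a\in\Ob(\mcs)$ — is where the care lies; the rest is bookkeeping with the universal property. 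The details of this are what is deferred to Appendix~\ref{app: tensor}.
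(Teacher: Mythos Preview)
Your proposal is correct and follows essentially the same approach as the paper: the uniqueness argument is identical, and your fixed-basepoint construction (the $G$-invariant subspace of $V^{\proj a_0}$ for $G=\hom_\mcs(a_0,a_0)$) is equivalent to the paper's coordinate-free version (the subspace of $\prod_{a\in\Ob(\mcs)} V^{\proj a}$ consisting of symmetry-compatible tuples, with norm $\|(v_a)\|=\|v_b\|_{V^{\proj b}}$), the two being identified via $(v_a)_a\mapsto v_{a_0}$. One small correction on the norm formula: for the ``$\ge$'' direction the paper does not argue by duality but by the direct observation that any projective representation $v_{a_0}=\sum_i\otimes_{a_0}(v_x^i)$ of a $G$-invariant vector yields, after applying the symmetrisation $\pi$, a symmetric representation $(v_a)=\sum_i\otimes_\mcs^{a_0}(v_x^i)$ with the same cost $\sum_i\prod_x\|v_x^i\|$, so the symmetric infimum is at most the projective norm; your first (universal-property-plus-uniqueness) argument for the norm formula is fine, but the dual-functional sketch does not obviously produce representations and should be replaced by this observation.
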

	\begin{remark}
	In fact, we will also not need the formula for the norm in this work. However, even though it takes a small amount of work to derive it from the construction, we chose to include it in the proposition above since it takes the same form as the norm for the usual projective tensor product up to replacing the usual tensor map with the tensor $\otimes_\mcs^a$ associated to $\mcs$. We hope that this serves to reassure the reader that the universal property really does capture the right notion of ``partially symmetrised projective tensor product''.
	\end{remark}
		  
		\begin{remark}\label{rem: univ_colim}
			We now compare the construction given in Definition~\ref{def:tensor_product} to the one in \cite{CCHS22}. This remark is intended for the interested reader and is not essential for the remainder of the paper.
			
			For this remark, we assume that the Banach space assignment is finite dimensional we are in the setting of \cite[Section 5]{CCHS22}. We also assume familiarity with that setting in this remark. In particular, the choice of cross norm defining the tensor product will play no role in this remark. We note that we could reformulate the universal property above purely in terms of linear maps by replacing $V^{\times a}$ with the usual projective tensor product $V^{\proj a}$. Saying that $(f^a)_{a \in \Ob(\mcs)}$ is $\mcs$-symmetric is then the same as saying that for each $a, b \in \Ob(\mcs)$ and each $\gamma_{b,a} \in \hom_{\mcs}(a,b)$, the following diagram commutes. 
			\begin{center}
				\begin{tikzcd}
					V^{\proj a} \arrow[rr, swap,  "\gamma_{b,a}"'] \arrow[dr, "f^a"'] & &  V^{\proj b} \arrow[dl, swap,  "f^b"'] \\ & Z &
				\end{tikzcd}
			\end{center}
			In the language of category theory this says that $(Z, (f^a)_{a \in \Ob(\mcs)})$ is a cocone over the diagram with objects $\{V^{\proj a}: a \in \Ob(\mcs)\}$ and morphisms $\{ \gamma_{b,a} \in \hom_{\mcs}(a,b): a, b \in \Ob(\mcs)\}$. Similarly, $(V^{\proj \mcs}, (\otimes_{\mcs}^a)_{a \in \Ob(\mcs)})$ is a cocone over the same diagram. The existence of a unique factoring map $f$ for each cocone $(Z, (f^a)_{a \in \Ob(\mcs)})$ is then the assertion that $(V^{\proj \mcs}, (\otimes_{\mcs}^a)_{a \in \Ob(\mcs)})$ is the colimit in $\mathbf{TVec}$ of the diagram defined above.
			
			We furthermore note that if $\mcF$ is the functor of \cite{CCHS22} then $V^{\proj a} = \mcF(\{a\})$ where $\{a\}$ is the symmetric set whose only object is $a$ and whose only morphism is the identity. Given a symmetric set $\mcs$, one can check that $\mcs$ is the colimit in $\SSet$ of the diagram whose objects are $\{\{a\}: a \in \Ob(\mcs)\}$ and whose morphisms are $\{\gamma_{b,a} \in \hom_{\mcs}(a,b): a,b \in \Ob(\mcs)\}$ where $\gamma_{b,a}$ is interpreted as a morphism in $\SSet$. The image of this diagram under the functor $\mcF$ is the diagram in $\mathbf{TVec}$ considered above. Since $V^{\proj \mcs} = \mcF(\mcs)$, this means that (apart from the precise statements about norms) the fact that our universal property holds can be interpreted in the language of \cite{CCHS22} as saying that the functor $\mcF$ preserves a certain class of colimits.
		\end{remark}
\subsection{Operations on Tensor Products associated to Decorated Trees}\label{s: tensor_oper}

Our goal is now to use the construction of symmetrised tensor products according to their universal property to show that the usual operations on a regularity structure (abstract integration, abstract gradients, the tree product and suitable coproducts) are well-defined in our setting. We first define the the relevant spaces on which we will define these operations.
\begin{definition}\label{def:sets_of_trees}

 We denote by $\mcT$ the set of isomorphism classes of decorated trees. Given a complete, subcritical rule $R$, we denote by $\mcT(R)$ the set of elements of $\mcT$ that strongly conform to $R$ in the sense of \cite[Definition 5.8]{BHZ19}.
	
	We then let $\mcT_+(R)$ be the unital monoid (for the tree product) generated by
	$$\{X^k : k \in \mbN^d\} \sqcup \{ \mcI_{\mfl}^k \tau_i: \tau_i \in \mcT(R), |\tau_i|_\mfs + |\mfl|_\mfs - |k|_\mfs > 0\}$$
	where $|\cdot|_\mfs$ is the usual degree assignment on trees induced by the degree assignment on $\mfL$. We let $\mcT_-(R)$ be the set of elements of $\mcT(R)$ of negative degree. 
	
	For $\mcW \subseteq \mcT$, we let 
	\begin{equ}\label{eq:banach_space over sets of trees}
		V_\mcW = \bigoplus_{\boldsymbol{f} \in \mcW} V^{\proj \langle \boldsymbol{f} \rangle}
	\end{equ}
	equipped with the inductive limit topology (which in particular makes $V_\mcW$ a complete locally convex topological vector space). 
\end{definition} 

\begin{remark}
	It follows from \cite[Proposition 5.15]{BHZ19} that 
	$$V_{\mcT(R)} = \bigoplus_{\alpha \in A} \bigoplus_{\substack{\boldsymbol{\tau} \in \mcT(R), \\ |\boldsymbol{\tau}|_\mfs = \alpha}} V^{\proj \sttau}$$
	where $A$ is a subset of $\mbR$ which is locally finite and bounded below. Furthermore, $V_{\mcT_+(R)}$ is graded for $|\cdot|_\mfs$ by a locally finite subset of $[0, \infty]$.
\end{remark}
\begin{remark}
	There is an obvious set of canonical inclusions $V_{\mcT_-(R)} \hookrightarrow V_{\mcT(R)} \hookrightarrow V_\mcT$ and $V_{\mcT_+(R)} \hookrightarrow V_{\mcT}$. In what follows, we will often identify these spaces with their image under these inclusions without comment.
\end{remark}

\subsubsection{The Tree Product, Integration and Gradients}

Our goal in this subsection is to define in that order the operations of products, integration and gradients on tensor products arising from symmetric sets coming from combinatorial trees. 

We begin with the case of products. We recall from \cite{CCHS22} that there is a notion of tensor product of symmetric sets given by defining $\Ob(\mcs_1 \otimes \mcs_2) = \Ob(\mcs_1) \times \Ob(\mcs_2)$ and $$\hom_{\mcs_1 \otimes \mcs_2}((a_1, a_2), (b_1, b_2)) = \hom_{\mcs_1}(a_1, b_1) \times \hom_{\mcs_2}(a_2, b_2).$$ At the level of symmetric sets coming from combinatorial trees, this corresponds to drawing a forest as a disjoint union of two trees and declaring the symmetries of the forest to be exactly
 those symmetries arising from the symmetries of the individual trees. In this case, we will denote $(\tau, \sigma) \in \Ob(\sttau \otimes \sssigma)$ by $\tau \sqcup \sigma$.

The following lemma is then immediate from the fact that any pair of symmetries in $\sttau$ and $\sssigma$ respectively naturally induce a symmetry in $\langle \boldsymbol{\tau \sigma} \rangle$ by identifying the obvious copies of $\tau$ and $\sigma$ inside of the tree product $\tau \sigma$.
\begin{lemma}\label{lem:prod_set_up}
		Let $\boldsymbol{\tau}, \boldsymbol{\sigma} \in \mcT$. Then $(\otimes_{\langle \boldsymbol{\tau \sigma} \rangle}^{\tau \sigma})_{\tau \sqcup \sigma \in \Ob(\sttau \otimes \sssigma)}$ is a $\sttau \otimes \sssigma$-symmetric family of norm $1$ multilinear maps.
\end{lemma}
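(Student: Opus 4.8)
The plan is to verify the two assertions in Lemma~\ref{lem:prod_set_up} directly from the definitions: first that each $\otimes_{\langle \boldsymbol{\tau\sigma}\rangle}^{\tau\sigma}$ is a well-defined norm $1$ multilinear map on $V^{\times(\tau\sqcup\sigma)}$, and second that the family is $\sttau\otimes\sssigma$-symmetric. For the first point, I would note that a drawing $\tau\sqcup\sigma\in\Ob(\sttau\otimes\sssigma)$ is a pair consisting of a drawing $\tau\in\btau$ and a drawing $\sigma\in\bsigma$ with disjoint vertex sets, and that the tree product $\tau\sigma$ is formed by gluing the roots. This gives a canonical identification of typed sets $E_+(\tau\sigma)\cong E(\tau)\sqcup E(\sigma)$ (more precisely, on edge sets; note the product of two trees drawn on $\mbN$ can always be taken to again be drawn on $\mbN$ after relabelling, which does not affect the isomorphism class). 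Hence $V^{\times(\tau\sqcup\sigma)} = V^{\times\tau}\times V^{\times\sigma}$ is canonically identified with $V^{\times(\tau\sigma)}$, and under this identification $\otimes_{\langle\boldsymbol{\tau\sigma}\rangle}^{\tau\sigma}$ is precisely the tensor map of Definition~\ref{def:tensor_product} associated to the drawing $\tau\sigma\in\boldsymbol{\tau\sigma}$, which by that definition is multilinear of norm $1$.

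For the symmetry claim, I would take $\tau\sqcup\sigma, \tau'\sqcup\sigma'\in\Ob(\sttau\otimes\sssigma)$ and a morphism $(\gamma,\delta)\in\hom_{\sttau\otimes\sssigma}((\tau\sqcup\sigma),(\tau'\sqcup\sigma')) = \hom_{\sttau}(\tau,\tau')\times\hom_{\sssigma}(\sigma,\sigma')$. As indicated in the sentence preceding the lemma, $\gamma$ and $\delta$ are isomorphisms of decorated trees that together induce an isomorphism $\gamma\sqcup\delta\colon \tau\sigma\to\tau'\sigma'$: on the edge sets this is just $\gamma$ on the copy of $E(\tau)$ and $\delta$ on the copy of $E(\sigma)$, and it preserves root, types and decorations because $\gamma$ and $\delta$ do and the root of a tree product is the glued root. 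Thus $\gamma\sqcup\delta\in\hom_{\langle\boldsymbol{\tau\sigma}\rangle}(\tau\sigma,\tau'\sigma')$, so it is one of the symmetries with respect to which the family $(\otimes_{\langle\boldsymbol{\tau\sigma}\rangle}^{c})_{c\in\Ob(\langle\boldsymbol{\tau\sigma}\rangle)}$ is symmetric by the very definition of $\langle\boldsymbol{\tau\sigma}\rangle$-symmetry of the canonical tensor maps (Definition~\ref{def:tensor_product}). It therefore suffices to check that the map $V^{\times(\tau\sqcup\sigma)}\to V^{\times(\tau'\sqcup\sigma')}$ induced by $(\gamma,\delta)$ agrees, under the identifications of the previous paragraph, with the map $V^{\times(\tau\sigma)}\to V^{\times(\tau'\sigma')}$ induced by $\gamma\sqcup\delta$; this is immediate from the formula $\gamma((v_x)_x) = (v_{\gamma^{-1}(y)})_y$ since $(\gamma\sqcup\delta)^{-1}$ restricts to $\gamma^{-1}$ and $\delta^{-1}$ on the two blocks. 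Combining, $\otimes_{\langle\boldsymbol{\tau\sigma}\rangle}^{\tau'\sigma'}\circ(\gamma,\delta) = \otimes_{\langle\boldsymbol{\tau\sigma}\rangle}^{\tau\sigma}$, which is exactly the required $\sttau\otimes\sssigma$-symmetry.

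The only genuinely delicate point — and the step I would expect to need the most care — is the bookkeeping around drawings: the tree product of two trees each drawn on $\mbN$ need not literally be drawn on $\mbN$ without a relabelling, and one must check that all the identifications above are independent of the chosen relabelling, which holds because any two such relabellings differ by an isomorphism of decorated trees and hence by a morphism in $\langle\boldsymbol{\tau\sigma}\rangle$, with respect to which the tensor maps are already symmetric. Everything else is a routine unwinding of Definitions~\ref{def:decorated_tree}, \ref{def:Tree_SSet} and \ref{def:tensor_product} together with the definition of $\otimes$ of symmetric sets recalled just above, so I would keep the exposition brief and mostly point to the fact that, at the combinatorial level, the inclusions of $\tau$ and $\sigma$ into $\tau\sigma$ identify the edge set of the forest with the kernel-edge set of the product compatibly with all symmetries.
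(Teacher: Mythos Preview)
Your proposal is correct and follows exactly the approach the paper takes: the paper's entire proof is the one-sentence remark preceding the lemma that any pair of symmetries in $\sttau$ and $\sssigma$ induces a symmetry in $\langle\boldsymbol{\tau\sigma}\rangle$ via the obvious inclusions, and you have simply unpacked this in detail. Two small slips worth correcting: the identification is $E(\tau\sigma)\cong E(\tau)\sqcup E(\sigma)$ (all edges, not just $E_+$), and likewise your closing sentence should say ``edge set'' rather than ``kernel-edge set''.
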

We then note that it follows from the definition of the tensor product of symmetric sets that $V^{\proj (\mcs_1 \otimes \mcs_2)} = V^{\proj \mcs_1} \proj  V^{\proj \mcs_2}$ and $\otimes_{\mcs_1 \otimes \mcs_2}^{(a,b)} = \left ( \otimes_{\mcs_1}^a \right ) \otimes \left ( \otimes_{\mcs_2}^b \right )$. As a consequence, by applying the universal property to the family of multilinear maps given in Lemma~\ref{lem:prod_set_up}, we obtain our analogue of the tree product. 
\begin{definition}
	We define $\star: V_\mcT \proj V_\mcT \to V_\mcT$ to be the unique map such that for each $\boldsymbol{\tau}, \boldsymbol{\sigma} \in \mcT$, $\star|_{V^{\proj \sttau}\proj V^{\proj \sssigma}}$ factors the family $(\otimes_{\langle \boldsymbol{\tau \sigma} \rangle}^{\tau \sigma})_{\tau \sqcup \sigma \in \Ob(\sttau \otimes \sssigma)}$.
\end{definition}

\begin{lemma}\label{lemma: product}
	$\star$ is commutative, associative and unital with unit $1 \in V^{\proj \langle \mathbf{1} \rangle} = \mbR$. Furthermore, the map $\star |_{V^{\proj \sttau} \otimes V^{\proj \sssigma}}$ has norm $1$.
\end{lemma}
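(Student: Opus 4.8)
The plan is to verify the three algebraic identities (commutativity, associativity, unitality) and the norm bound by exploiting the universal property of Definition~\ref{def:tensor_product} throughout, rather than ever working with an explicit construction of $V^{\proj \mcs}$. The guiding principle is that whenever two continuous linear maps out of some symmetrised tensor product $V^{\proj \mcs}$ agree after precomposition with \emph{all} of the tensor maps $(\otimes_\mcs^a)_{a \in \Ob(\mcs)}$, they must coincide, by the uniqueness clause in the universal property. So each identity reduces to checking a corresponding identity for the (multilinear, set-level) maps built out of the $\otimes^\bullet$'s, which in turn is a purely combinatorial statement about trees and their symmetries.

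First I would dispose of \textbf{unitality}. The tree $\mathbf{1}$ (the single-node tree with trivial decoration) has $\sttau$ the symmetric set whose only object is $\emptyset$, so $V^{\proj \langle \mathbf{1}\rangle} = V^{\proj \emptyset} = \mbR$ as noted after Definition~\ref{def:Tree_SSet}. For any $\boldsymbol\tau$, the tree product $\tau \cdot \mathbf{1}$ is (isomorphic to) $\tau$ itself — grafting an empty tree changes nothing — so $\langle \boldsymbol{\tau \mathbf{1}}\rangle = \sttau$ and under this identification $\otimes^{\tau\mathbf{1}}_{\langle\boldsymbol{\tau\mathbf 1}\rangle}(v, 1) = \otimes^\tau_{\sttau}(v)$. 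Hence the map $v \mapsto \star(v \otimes 1)$ factors the family $(\otimes^\tau_\sttau)_a$, and so does $\id_{V^{\proj\sttau}}$; by uniqueness $\star(v\otimes 1) = v$, and symmetrically on the other side.

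Next, \textbf{commutativity}. There is a canonical isomorphism of symmetric sets $\sttau \otimes \sssigma \cong \sssigma \otimes \sttau$ swapping factors, and a canonical isomorphism of trees $\tau\sigma \cong \sigma\tau$ (the tree product is defined by identifying roots, which is manifestly symmetric), and these are compatible with the respective tensor maps. So $\star$ and $\star \circ \mathrm{swap}$ both factor $(\otimes^{\tau\sigma})$ (using that the family $(\otimes^{\sigma\tau})$ pulls back along the swap to $(\otimes^{\tau\sigma})$, by $\sttau\otimes\sssigma$-symmetry), hence agree by uniqueness on each $V^{\proj\sttau}\proj V^{\proj\sssigma}$ and therefore on all of $V_\mcT \proj V_\mcT$. \textbf{Associativity} is analogous but the bookkeeping is heavier: one checks that $\star\circ(\star\otimes\id)$ and $\star\circ(\id\otimes\star)$, as maps out of $V^{\proj\sttau}\proj V^{\proj\sssigma}\proj V^{\proj\langle\boldsymbol\mu\rangle} = V^{\proj(\sttau\otimes\sssigma\otimes\langle\boldsymbol\mu\rangle)}$, both factor the family $(\otimes^{\tau\sigma\mu}_{\langle\boldsymbol{\tau\sigma\mu}\rangle})$ indexed by $\Ob(\sttau\otimes\sssigma\otimes\langle\boldsymbol\mu\rangle)$; this uses the identities $V^{\proj(\mcs_1\otimes\mcs_2)} = V^{\proj\mcs_1}\proj V^{\proj\mcs_2}$ and $\otimes^{(a,b)}_{\mcs_1\otimes\mcs_2} = \otimes^a_{\mcs_1}\otimes\otimes^b_{\mcs_2}$ recorded just before the definition of $\star$, together with associativity of the ordinary projective tensor product. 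The key combinatorial fact underlying both is that iterated tree products are well-defined up to canonical isomorphism independent of the order and parenthesisation of grafting, so the three copies of $\tau$, $\sigma$, $\mu$ sit canonically inside $\tau\sigma\mu$.

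Finally the \textbf{norm bound}: the restriction $\star|_{V^{\proj\sttau}\proj V^{\proj\sssigma}}$ is, by construction, the map factoring the $\sttau\otimes\sssigma$-symmetric family of norm-$1$ multilinear maps $(\otimes^{\tau\sigma}_{\langle\boldsymbol{\tau\sigma}\rangle})$ produced in Lemma~\ref{lem:prod_set_up}; the universal property in Definition~\ref{def:tensor_product} asserts precisely that the factoring map has $\|f\| = \|f^a\| = 1$ when the target is a Banach space, and $V^{\proj\langle\boldsymbol{\tau\sigma}\rangle}$ is a Banach space. (Strictly this gives that the restriction to each graded piece has norm $1$; since $V_\mcT$ carries the inductive limit topology this is the natural statement and is what is recorded in the lemma.) I expect the main obstacle to be purely organisational: being careful and explicit about the canonical identifications of symmetric sets and of trees — in particular checking that a symmetry of $\tau\sigma$ really does restrict to a pair of symmetries of $\tau$ and $\sigma$ and conversely, so that the relevant families genuinely are symmetric for the product symmetric set — since these are the points at which the ``obvious'' combinatorics of CCHS-style symmetric sets must actually be pinned down. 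Once those identifications are fixed, each of the four assertions is a one-line application of uniqueness in the universal property.
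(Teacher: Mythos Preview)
Your proposal is correct and follows essentially the same approach as the paper: in each case you reduce the algebraic identity to showing that two continuous linear maps factor the same $\mcs$-symmetric family of multilinear maps and then invoke uniqueness in the universal property, exactly as the paper does (with associativity handled via the triple family $(\otimes_{\langle\boldsymbol{\tau_1\tau_2\tau_3}\rangle}^{\tau_1\tau_2\tau_3})$ and the norm statement read off directly from Definition~\ref{def:tensor_product}). One small point of care: your parenthetical about ``a symmetry of $\tau\sigma$ restricting to a pair of symmetries of $\tau$ and $\sigma$'' is stated in the wrong direction (symmetries of $\tau\sigma$ can swap isomorphic factors), but the direction actually needed---that pairs of symmetries induce symmetries of $\tau\sigma$---is precisely the content of Lemma~\ref{lem:prod_set_up}, which you correctly cite.
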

\begin{proof}
The identification of the norm is immediate from the construction via the universal property. It remains to show the algebraic properties.
	Commutativity follows from the fact that the tree product for combinatorial trees is commutative and the canonical identification $V^{\proj \sttau} \proj V^{\proj \sssigma} \simeq V^{\proj \sssigma} \proj V^{\proj \sttau}$. That $\star$ is unital with unit $1$ follows from the fact that $\mathbf{1}$ is the unit for the tree product at the level of combinatorial trees, along with the canonical identification $V^{\proj \sttau} \proj \mbR \simeq V^{\proj \sttau}$.
	
	It then only remains to check that $\star$ is associative. It suffices to consider the action on $V^{\proj \sttaun{1}} \otimes V^{\proj \sttaun{2}} \otimes V^{\proj \sttaun{3}}$. We claim that acting on this space, $\star(\star \otimes \id)$ and $\star(\id \otimes \star)$ both factor the symmetric multilinear family $(\otimes_{\langle \boldsymbol{\tau_1 \tau_2 \tau_3} \rangle }^{\tau_1 \tau_2 \tau_3})_{\tau_1 \sqcup \tau_2 \sqcup \tau_3 \in \Ob(\sttaun{1} \otimes \sttaun{2} \otimes \sttaun{3})}$ through $V^{\proj (\sttaun{1} \otimes \sttaun{2} \otimes \sttaun{3})}$ so that by uniqueness of the factoring map in the universal property, they coincide. In the case of $\star(\star \otimes \id)$, we write
	\begin{align*}
		\star(\star \otimes \id) \otimes_{\sttaun{1} \otimes \sttaun{2} \otimes \sttaun{3}}^{\tau_1 \sqcup \tau_2 \sqcup \tau_3} = \star \left (\left ( \star \circ \otimes_{\sttaun{1} \otimes \sttaun{2}}^{\tau_1 \sqcup \tau_2} \right ) \otimes \left ( \otimes_{\sttaun{3}}^{\tau_3} \right )  \right ) 
	\end{align*}
	By the definition of $\star$, we have that
		$\star \circ  \otimes_{\sttaun{1} \otimes \sttaun{2}}^{\tau_1 \sqcup \tau_2} = \otimes_{\langle \boldsymbol{\tau_1 \tau_2} \rangle}^{\tau_1 \tau_2}$
	so that
		\begin{align*}
		\star(\star \otimes \id) \otimes_{\sttaun{1} \otimes \sttaun{2} \otimes \sttaun{3}}^{\tau_1 \sqcup \tau_2 \sqcup \tau_3} &= \star \circ \otimes_{\langle \boldsymbol{\tau_1 \tau_2} \rangle \otimes \sttaun{3}}^{\tau_1 \tau_2 \sqcup \tau_3} = \otimes_{\langle \boldsymbol{\tau_1 \tau_2 \tau_3} \rangle }^{\tau_1 \tau_2 \tau_3}
	\end{align*}
	where we again applied the definition of $\star$ to obtain the last equality. This completes the proof that $\star(\star \otimes \id)$ factors the aforementioned family of multilinear maps. The proof that $\star(\id \otimes \star)$ factors the same family is almost identical (since the underlying product of combinatorial trees is associative) so we omit the details. 
\end{proof}
	Next we define suitable abstract integration maps on $V_\mcT$. For given $\mfl \in \mfL$, we view $\mfl$ as a singleton typed set with the obvious type map. We denote by $\langle \mfl \rangle$ the corresponding one object symmetric set whose only symmetry is the identity map. We note that at the level of combinatorial trees, symmetries of $\mcI_\mfl \tau$ are in bijection with symmetries of $\tau$. Indeed, any symmetry of the former tree preserves incidence at the root and thus fixes the trunk. It then follows that $({\otimes}_{\langle \mathfrak{l} \rangle \otimes \sttau}^{\mathfrak{l} \sqcup \tau})_{ \sqcup \tau \in \text{Ob}(\langle \mathfrak{l} \rangle \otimes \sttau)}$ is a $\langle \mathfrak{l} \rangle \otimes \sttau$-symmetric family of maps.
\begin{definition} \label{d:int_tban}
	For each $\mfl \in \mfL$, we define $\mcI_\mfl: V_\mfl \proj V_\mcT \to V_\mcT$ componentwise via the universal property of $V^{\proj \langle \mfl \rangle \otimes \sttau}$ applied to the diagrams
	\begin{center}
		\begin{tikzcd}
			V_\mfl \times V^{\times \tau} \arrow[dr, "{\otimes}_{\langle \boldsymbol{ \mcI_\mfl \tau} \rangle}^{\mcI_\mfl \tau}"'] \arrow[r, "{\otimes}_{\langle \mfl \rangle \otimes \sttau}^{\mfl \sqcup \tau}"] & V^{\proj \langle \mfl \rangle \otimes \sttau} \arrow[d, dotted, "\mcI_\mfl"]
			\\
			& V^{\proj \langle \boldsymbol{\mcI_\mfl \tau} \rangle} \ .
		\end{tikzcd}
	\end{center}
	For $\zeta \in V_\mfl$, we define $\mcI^\zeta : V_\mcT \to V_\mcT$ via $\mcI^\zeta = \mcI_\mfl (\zeta \otimes \cdot)$. We remark that $\mcI_\mfl$ is a contraction by construction, whilst $\mcI^\zeta$ has norm $\|\zeta\|_{V_\mfl}.$
\end{definition}

In this subsection, it remains to provide suitable abstract gradients in our infinite dimensional setting. In the scalar valued setting, it is typical to define the gradient as acting only on planted trees $\mcI_\mfl \tau$ or monomials $X^k$. We will see later that in our setting it will be necessary to also take gradients of trees of the form $X^k \mcI_\mfl \tau$. Of course, the usual gradient in the scalar setting trivially extends to such trees via the Leibniz rule. We denote by $\mcT_{\text{plant}}$ the set of trees of the form $X^k \mcI_\mfl \tau$. 
	\begin{definition} \label{def:der_of_planted}
		Let $k \in \mathbb{N}^d$. We define $D^k : V_{\mcT_\mathrm{plant}} \to V_{\mcT_{\mathrm{plant}}}$ componentwise by the universal property of $V^{\proj \sttau}$ applied to the diagrams
		\begin{center}
			\begin{tikzcd}
				V^{\times \tau} \arrow[dr, "\hat{\otimes}_{\langle \boldsymbol{ D^k \tau} \rangle}^{D^k \tau}"'] \arrow[r, "\hat{\otimes}_{\sttau}^{ \tau}"] & V^{\sttau} \arrow[d, dotted, "D^k"]
				\\
				& V^{\proj \langle \boldsymbol{D^k \tau} \rangle}
			\end{tikzcd}
		\end{center}
		where we made use of the fact $V^{\times \tau} = V^{\times D^k \tau}$. We note that it is immediate that $D^k$ is an isometry.
	\end{definition}
	
	\subsubsection{Coproducts}
	
	In this subsection, we realise the Hopf algebra/comodule structure that is standard in the scalar valued setting (see \cite{BHZ19}) at the level of our symmetric tensor products. Here the main remaining ingredient is a suitable coaction $\Delta$ and coproduct $\Delta^+$ (defined on appropriate spaces). 
	
	Such coactions and coproducts are realised as contraction/extraction operations. With the viewpoint of our symmetric tensor products as gluing elements of the Banach spaces onto edges of the tree, it is convenient to have a mechanism that remembers which element to glue to which edge after the operation of contraction and extraction. To this end, for a fixed $\tau \in \boldsymbol{\tau} \in \mcT$ we have the following notion of $\tau$-identified trees.
		\begin{definition}\label{def:identified_tree}
			A $\tau$-identified tree is a pair $(\sigma, \iota)$ where $\sigma$ is a tree and  $\iota: E(\sigma) \to E(\tau)$ is an injective map which preserves edge types. We write $T_\tau$ for the free vector space generated by the set of $\tau$-identified trees.
		\end{definition}
		
	The $\tau$-identification is precisely the information needed to appropriately `glue in' components of an element of $V^{\times \tau}$ to yield an element of $V^{\proj \sssigma}$. In particular, we have the following definition of lifting maps.
	\begin{definition}
		We define $\mcL_\tau^{(k)}: T_\tau^{\otimes k} \to \{f: V^{\times \tau} \to V_\mcT^{\proj k} \}$ to be the unique linear map such that 
		$$\mcL_\tau^{(k)}\Big ( \bigotimes_{i=1}^k (\sigma_i, \iota_i) \Big )[(\zeta_e)_{e \in E(\tau)}] = \bigotimes_{i=1}^k \otimes_{\langle \boldsymbol{\sigma}_i \rangle}^{\sigma_i} [(\zeta_{\iota_i(e)})_{e \in E(\sigma_i)}].$$
	\end{definition}
	One important feature of these lifting maps is that they are invariant under a suitable notion of isomorphism between identified trees. This will be used to see that the lift of operations defined at the level of trees is independent of the choice of representative of the isomorphism class of a tree.
	
	\begin{definition}
		Let $\tau, \bar{\tau} \in \sttau \in \mathcal{T}$ and let $(\sigma, \iota), (\bar{\sigma}, \bar{\iota}) $ be $\tau$, $\bar{\tau}$ identified trees respectively. An isomorphism of identified trees between $(\sigma, \iota), (\bar{\sigma}, \bar{\iota})$ is a pair of tree isomorphisms $(\gamma, \eta)$ where $\gamma: \tau \to \bar{\tau}$ and $\eta: \sigma \to \bar{\sigma}$ are such that following diagram commutes.
		\begin{center}
			\begin{tikzcd}
				\bar{\tau} \arrow[r, "\gamma^{-1}"] & \tau \\
				\bar{\sigma} \arrow[u, "\bar{\iota}"] \arrow[r, "\eta^{-1}"] & \sigma \arrow[u, "\iota"]
			\end{tikzcd}
		\end{center}
		We will refer to the map $\gamma$ as the base map of the isomorphism.
	\end{definition}
We note that the diagram above is completely equivalent to a diagram involving $\gamma$ and $\eta$ rather than their inverses. The reason to write the diagram in this way is that the compositions appearing in this representation are precisely the ones that appear in the proof of Proposition~\ref{prop:lift_equiv} below.
	\begin{prop}\label{prop:lift_equiv}
			Suppose that for $i = 1, \dots, k$, $(\sigma_i, \iota_i), (\bar{\sigma}_i, \bar{\iota}_i)$ are $\tau$-identified and $\bar{\tau}$-identified trees respectively such that $(\sigma_i, \iota_i)$ is isomorphic to $(\bar{\sigma}_i, \bar{\iota}_i)$ via $(\gamma, \eta_i)$. Then	
			\begin{align*}
				\mcL_{\bar{\tau}}^{(k)} \left [\bigotimes_{i=1}^k (\bar{\sigma}_i, \bar{\iota}_i) \right ] \circ \gamma = \mcL_\tau^{(k)} \left [\bigotimes_{i=1}^k (\sigma_i, \iota_i) \right ].
			\end{align*}  
			In addition, if $(\sigma_1, \iota_1), \dots, (\sigma_k, \iota_k)$ are $\tau$-identified trees such that $\{\iota_i(E(\sigma_i)): i = 1, \dots, k\}$ forms a partition of $E(\tau)$ then $\mcL_\tau^{(k)} [ \bigotimes_{i=1}^k (\sigma_i, \iota_i)]$ is a
			continuous 
			multilinear map.
	\end{prop}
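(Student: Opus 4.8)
The plan is to verify the two assertions separately, with both reductions driven by the universal property of Definition~\ref{def:tensor_product} together with the explicit formula for $\mcL_\tau^{(k)}$.

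First I would handle the invariance statement. The content is a chase of the defining formula for $\mcL_\tau^{(k)}$. Fix $(\zeta_e)_{e \in E(\bar\tau)} \in V^{\times \bar\tau}$. By definition, $\left(\mcL_{\bar\tau}^{(k)}[\bigotimes_i (\bar\sigma_i,\bar\iota_i)] \circ \gamma\right)[(\zeta_e)_{e \in E(\bar\tau)}]$ is obtained by first applying $\gamma: V^{\times \bar\tau} \to V^{\times \tau}$, giving the family $(\zeta_{\gamma(e)})_{e \in E(\tau)}$ (using $V^{\times\tau} = V^{\times\bar\tau}$ componentwise via the edge-type-preserving maps and the convention for the induced map on products), and then applying $\mcL_{\bar\tau}^{(k)}[\bigotimes_i (\bar\sigma_i,\bar\iota_i)]$, which is $\bigotimes_{i} \otimes_{\langle\bar\sigma_i\rangle}^{\bar\sigma_i}[(\zeta_{\bar\iota_i(e)})_{e\in E(\bar\sigma_i)}]$. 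I would then use the commuting square: $\bar\iota_i = \gamma \circ \iota_i \circ \eta_i^{-1}$, so that reindexing the $\bar\sigma_i$-slots along $\eta_i$ turns the input family into $(\zeta_{\gamma(\iota_i(e))})_{e \in E(\sigma_i)}$, and $\sttau$-symmetry of the family $(\otimes_{\langle\bsigma_i\rangle}^{\sigma_i})_{\sigma_i}$ — specifically invariance under the tree isomorphism $\eta_i:\sigma_i \to \bar\sigma_i$ — lets us replace $\otimes^{\bar\sigma_i}_{\langle\bar\sigma_i\rangle} \circ \eta_i$ by $\otimes^{\sigma_i}_{\langle\sigma_i\rangle}$. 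Matching this against the definition of $\mcL_\tau^{(k)}[\bigotimes_i(\sigma_i,\iota_i)]$ applied to $(\zeta_{\gamma(e)})_{e\in E(\tau)}$ — which is what we must get, since on the left the data $(\zeta_e)_{e\in E(\bar\tau)}$ is transported to $\tau$ precisely via $\gamma$ — finishes this part. The main bookkeeping hazard is keeping straight the direction of the reindexing maps and the identification $V_{\mfl(e)} = V_{\mfl(\gamma(e))}$, which is exactly why the authors phrased the commuting square with $\gamma^{-1}$ and $\eta^{-1}$.

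For the continuity (and multilinearity) assertion, multilinearity is immediate from the formula since each $\otimes_{\langle\bsigma_i\rangle}^{\sigma_i}$ is multilinear and, under the partition hypothesis, each edge $e \in E(\tau)$ feeds exactly one slot of exactly one factor, so no slot receives a product of two of the input variables. For continuity I would estimate the norm directly: for $(\zeta_e)_{e\in E(\tau)}$ we have, by Proposition~\ref{prop:Symm_Char} (or just the fact that each $\otimes^{a}_\mcs$ has norm $1$ and the projective cross-norm property on $V_\mcT^{\proj k}$), that $\|\mcL_\tau^{(k)}[\bigotimes_i(\sigma_i,\iota_i)][(\zeta_e)_e]\| \le \prod_{i=1}^k \prod_{e \in E(\sigma_i)} \|\zeta_{\iota_i(e)}\|_{V_{\mfl(\iota_i(e))}} = \prod_{e \in E(\tau)} \|\zeta_e\|_{V_{\mfl(e)}}$, where the last equality is exactly the partition hypothesis (each $e$ appears once). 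This is the bound defining continuity of a multilinear map on $\prod_{e\in E(\tau)} V_{\mfl(e)} = V^{\times\tau}$, so we are done.

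The step I expect to be the main obstacle is the first one: the isomorphism-invariance chase is where all the subtlety lives, because one has to invoke $\mcs$-symmetry of the elementary tensor families at the level of the individual sub-trees $\sigma_i$ rather than of $\tau$ itself, and to see that the injections $\iota_i$, $\bar\iota_i$ intertwine correctly with both the base isomorphism $\gamma$ and the sub-tree isomorphisms $\eta_i$ simultaneously. The continuity half is genuinely routine once the partition hypothesis is used to decouple the variables.
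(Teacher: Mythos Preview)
Your approach is exactly the paper's: unpack the definition of $\mcL_\tau^{(k)}$, reindex via the commuting square $\gamma^{-1}\circ\bar\iota_i=\iota_i\circ\eta_i^{-1}$, and invoke the $\sssigman{i}$-symmetry of the family $(\otimes_{\sssigman{i}}^{\sigma_i})$ to pass from $\bar\sigma_i$ to $\sigma_i$; the paper dispatches continuity and multilinearity in a single sentence as ``immediate from the definition,'' while you spell out the norm bound explicitly. One bookkeeping slip to fix: since $\gamma:V^{\times\tau}\to V^{\times\bar\tau}$ in the paper's convention, the composite $\mcL_{\bar\tau}^{(k)}[\cdots]\circ\gamma$ has domain $V^{\times\tau}$, so you should start with $(\zeta_e)_{e\in E(\tau)}$ and send it to $(\zeta_{\gamma^{-1}(e)})_{e\in E(\bar\tau)}$, exactly as the paper does --- your formulas as written have the direction of $\gamma$ reversed.
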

	\begin{proof}
		For the isomorphism equivariance, we take $(\zeta_e)_{e \in E(\tau)} \in V^{\times \tau}$ and unpack the definitions to write
		\begin{align*}
			&	\mcL_{\bar{\tau}}^{(k)} \left [\bigotimes_{i=1}^k (\bar{\sigma}_i, \bar{\iota}_i) \right ] \circ \gamma \left ( (\zeta_e)_{e \in E(\tau)} \right )  = \mcL_{\bar{\tau}}^{(k)} \left [ \bigotimes_{i=1}^k (\bar{\sigma}_i, \bar{\iota}_i) \right ] (\zeta_{\gamma^{-1}(e)} )_{e \in E(\bar{\tau})}		
				\\
				& = \bigotimes_{i=1}^k \otimes_{\langle \boldsymbol{\bar{\sigma}}_i \rangle}^{\bar{\sigma}_i} (\zeta_{\gamma^{-1}( \bar{\iota}_i(e))})_{e \in E(\bar{\sigma}_i)}
				 =  \bigotimes_{i=1}^k \otimes_{\langle \boldsymbol{\bar{\sigma}}_i \rangle}^{\bar{\sigma}_i} (\zeta_{ \iota_i(\eta_i^{-1}(e))})_{e \in E({\sigma}_i)}
			 = \bigotimes_{i=1}^k \otimes_{\langle \boldsymbol{\bar{\sigma}}_i \rangle}^{\bar{\sigma}_i} \circ \eta_i (\zeta_{ \iota_i(e)})_{e \in E(\bar{\sigma}_i)}
	\\
				&				
				= \bigotimes_{i=1}^k \otimes_{\langle \boldsymbol{{\sigma}}_i \rangle}^{{\sigma}_i} (\zeta_{ \iota_i(e)})_{e \in E({\sigma}_i)}
				 = \mcL_\tau^{(k)} \left [\bigotimes_{i=1}^k ({\sigma}_i, {\iota}_i) \right ] \left ( (\zeta_e)_{e \in E(\tau)} \right )
		\end{align*} 
		where the third equality followed by the definition of an isomorphism of identified trees and the fifth equality follows by symmetry of the maps $\otimes_{\langle \boldsymbol{{\sigma}}_i \rangle}^{\sigma_i}$.
		The claims about continuity and multilinearity are immediate from the definition and the multilinearity of $\bigotimes_{i=1}^k$ and $\otimes_{\langle \boldsymbol{\sigma_i} \rangle}^{\sigma_i}$.
	\end{proof}
	
	We now recall the definition of the coaction $\Delta$ at the level of combinatorial trees. We find it convenient to reformulate the more common definition in the literature in which one sums over certain subtrees containing the root as a definition in which one sums over cuts. These two formulations only differ aesthetically, however we find that the latter type of formulation will make computations in Section~\ref{s:ren_models} clearer and hence we already adopt it here for consistency. 
	
	\label{def:cut}
	Given $\tau \in \boldsymbol{\tau}$, we say that $C \subset E(\tau)$ is a cut of $\tau$ if any path in $E(\tau)$ with the root of $\tau$ as an endpoint contains at most one edge in $C$. We denote by $\mathbb{C}[\tau]$ the set of all cuts of $\tau$ and for $C \in \mathbb{C}[\tau]$ we write $\tau_{\ng C}$ for the subgraph consisting of those edges $e \in E(\tau)$ such that there does not exist $\tilde{e} \in C$ with $e \ge \tilde{e}$.
	We define $\underline{\mcT}(R) = \bigcup_{\boldsymbol{\tau} \in \mcT(R)} \boldsymbol{\tau}$ to consist of all drawings of (isomorphism classes of) trees in $\mcT(R)$ and define $\underline{\mcT}_+(R)$ similarly. We define $\underline{\Delta}: \langle \underline{\mcT}(R) \rangle \to \langle \underline{\mcT}(R) \rangle \otimes \langle \underline{\mcT}_+(R) \rangle$ via 
	\label{eq:underline_delta}
	\begin{align*}
		\underline{\Delta} (\tau, \mfn, \mfe) = \sum_{C \in \mathbb{C}[\tau]} \sum_{n_{\ng C}, \varepsilon_C} \frac{1}{\varepsilon_C !} { \mfn \choose n_{\ng C}} (\tau_{\ng C}, n_{\ng C} + \pi \varepsilon_C, \mfe) \otimes P_{\underline{\mcT}_+(R)}(\tau/ \tau_{\ng C}, [n-n_{\ng C}], \mfe + \varepsilon_C)
	\end{align*}
	where the sum over $n_{\ng C}$ is over node decorations $n_{\ng C}: N(\tau_{\ng C}) \to \mathbb{N}^d$, the sum over $\varepsilon_C$ is a sum over edge decorations $\varepsilon_C : C \to \mathbb{N}^d$, $\pi \varepsilon_C : N(\tau_{\ng C}) \to \mathbb{N}^d$ is the node decoration obtained via
	\begin{align} \label{eq:defpi1}
		\pi \varepsilon_C(v) = \sum_{e \in C: v \in e} \varepsilon_C(v),
	\end{align}
	$P_{\underline{\mcT}_+(R)}$ denotes the usual projection onto $\underline{\mcT}_+(R)$, $\tau/ \tau_{\ng C}$ denotes the quotient graph\footnote{To be completely precise, we should specify how to draw the quotient graph on $\mbN$ (though this detail is not too important at the level of exposition given here). One choice that ensures the properties of operations we need at the level of drawings is to define $N(\tau/\sigma) = \{\min N(\sigma)\} \sqcup (N(\tau) \setminus N(\sigma))$ and to move all edges that were ingoing to $\sigma$ to be connected to $\min N(\sigma)$.}of $\tau$ by $\tau_{\ng C}$ and $[n-n_{\ng C}]$ is the node decoration on this quotient graph defined via
	\begin{align*}
		[n-n_{\ng C}](v) = \sum_{w \in N(\tau): v = [w]} (n-n_{\ng C})(w)
	\end{align*}
	where $[w]$ denotes the equivalence class of $w$ viewed as a vertex in the quotient graph. We remark that each of the two trees appearing in a fixed summand of the right hand side of the definition of $\underline{\Delta}(\tau, \mfn, \mfe)$ can be viewed as being $\tau$-identified since their edge sets can naturally be viewed as forming a partition of $E(\tau)$. 
	
	\begin{prop}
		Given $\boldsymbol{\tau} \in \mcT(R)$, the family of maps $(\mcL_\tau^{(2)}[\underline{\Delta} \tau])_{\tau \in \boldsymbol{\tau}}$ is continuous, multilinear and $\sttau$-symmetric.
	\end{prop}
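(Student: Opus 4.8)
The plan is to verify the three properties — multilinearity, continuity and $\sttau$-symmetry — in turn, with the first two being essentially immediate from Proposition~\ref{prop:lift_equiv} and the bulk of the work going into symmetry. First I would observe that for each fixed drawing $\tau \in \boldsymbol{\tau}$, the definition of $\underline{\Delta}\tau$ expresses it as a finite linear combination of tensors $(\sigma_1, \iota_1) \otimes (\sigma_2, \iota_2)$ of $\tau$-identified trees, and as remarked immediately after the definition of $\underline{\Delta}$, in each such summand the edge sets $\iota_1(E(\sigma_1))$ and $\iota_2(E(\sigma_2))$ partition $E(\tau)$ (here $\sigma_1 = \tau_{\ng C}$ and $\sigma_2$ is the projection of the quotient $\tau/\tau_{\ng C}$, and the node/edge decorations do not affect the underlying edge-identification structure). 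Hence by the second assertion of Proposition~\ref{prop:lift_equiv}, each $\mcL_\tau^{(2)}[(\sigma_1,\iota_1)\otimes(\sigma_2,\iota_2)]$ is a continuous multilinear map $V^{\times\tau}\to V_\mcT^{\proj 2}$, and a finite linear combination of continuous multilinear maps is again continuous and multilinear. This disposes of the first two properties.

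For $\sttau$-symmetry, I need to show that for any two drawings $\tau, \bar\tau \in \boldsymbol{\tau}$ and any isomorphism $\gamma : \tau \to \bar\tau$, one has $\mcL_{\bar\tau}^{(2)}[\underline{\Delta}\bar\tau] \circ \gamma = \mcL_\tau^{(2)}[\underline{\Delta}\tau]$. The key structural input is that $\underline{\Delta}$ is natural with respect to tree isomorphisms: a graph isomorphism $\gamma$ carries cuts of $\tau$ bijectively to cuts of $\bar\tau$ (via $C \mapsto \gamma(C)$), carries the subgraph $\tau_{\ng C}$ isomorphically onto $\bar\tau_{\ng \gamma(C)}$ and the quotient $\tau/\tau_{\ng C}$ isomorphically onto $\bar\tau/\bar\tau_{\ng\gamma(C)}$, and matches up the sums over node decorations $n_{\ng C}$ and edge decorations $\varepsilon_C$ bijectively (with the binomial and factorial coefficients preserved, since they depend only on the decorations, not the drawing). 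Consequently, for each summand $(\sigma_1, \iota_1) \otimes (\sigma_2, \iota_2)$ of $\underline{\Delta}\tau$ there is a corresponding summand $(\bar\sigma_1, \bar\iota_1) \otimes (\bar\sigma_2, \bar\iota_2)$ of $\underline{\Delta}\bar\tau$, and the two are isomorphic as identified trees via a pair $(\gamma, \eta_i)$ where $\eta_i$ is the tree isomorphism induced by $\gamma$ on the respective subtree/quotient — precisely the commuting-square condition $\bar\iota_i \circ \eta_i = \gamma \circ \iota_i$ is what the naturality of the contraction/extraction construction under $\gamma$ gives. Then I would apply the first assertion of Proposition~\ref{prop:lift_equiv} summand-by-summand to conclude $\mcL_{\bar\tau}^{(2)}[(\bar\sigma_1,\bar\iota_1)\otimes(\bar\sigma_2,\bar\iota_2)] \circ \gamma = \mcL_\tau^{(2)}[(\sigma_1,\iota_1)\otimes(\sigma_2,\iota_2)]$, and sum over the (bijectively matched) summands.

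The main obstacle I anticipate is bookkeeping rather than conceptual: one must check carefully that the bijection between summands of $\underline{\Delta}\tau$ and $\underline{\Delta}\bar\tau$ induced by $\gamma$ really does respect all the data — in particular that the quotient-graph drawing convention specified in the footnote (sending $N(\tau/\sigma) = \{\min N(\sigma)\} \sqcup (N(\tau)\setminus N(\sigma))$) is compatible with $\gamma$ up to a further canonical relabelling, and that the induced edge-identification maps $\bar\iota_i$ are the ones obtained by transporting $\iota_i$ along $\gamma$. Since the $\mcL_\tau^{(2)}$ only sees the edge sets, the identification maps, and the symmetric-set structure of the $\sssigma_i$'s (and not the particular vertex labels of the quotient), the relabelling ambiguity in drawing $\tau/\tau_{\ng C}$ on $\mbN$ is harmless: any two choices differ by an isomorphism of identified trees with base map the identity, so Proposition~\ref{prop:lift_equiv} again absorbs it. I would state this observation explicitly as the one genuinely technical point and otherwise present the argument as a direct summand-wise application of Proposition~\ref{prop:lift_equiv}.
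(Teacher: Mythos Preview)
Your proposal is correct and follows essentially the same approach as the paper: continuity and multilinearity come directly from the partition assertion in Proposition~\ref{prop:lift_equiv}, while $\sttau$-symmetry is obtained by observing that an isomorphism $\gamma$ induces bijections between cuts, decoration sums, and identified-tree summands (with matching coefficients), so that Proposition~\ref{prop:lift_equiv} applies summand-by-summand. Your discussion of the quotient-drawing convention is more explicit than the paper's treatment, but the underlying argument is identical.
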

	\begin{proof}
		Multilinearity and continuity are immediate from Proposition~\ref{prop:lift_equiv}. Therefore it remains only to check $\sttau$-symmetry. To this end, we note that if $\tau, \bar{\tau} \in \boldsymbol{\tau}$ and $\gamma \in \hom_\sttau(\tau, \bar{\tau})$ then $\gamma$ induces a bijection $\gamma_\mathbb{C}: \mathbb{C}[\tau] \to \mathbb{C}[\bar{\tau}]$. Furthermore, for a fixed cut $C \in \mathbb{C}[\tau]$ if we set $\bar{C} = \gamma_{\mathbb{C}}(C)$ then we see that $\gamma$ induces also bijections $n_{\ng C} \mapsto n_{\ng  \bar{C}}$ and $\varepsilon_C \mapsto \varepsilon_{\bar{C}}$ appearing in the definition of $\underline{\Delta}^+$. Finally, one can check that if $C, n_{\ng C}, \varepsilon_{C}$ correspond to $\bar{C}, n_{\ng \bar{C}}, \varepsilon_{\bar{C}}$ via the above bijections then $\gamma$ also induces (via restriction or quotient) isomorphisms $$\eta_{\ng C}: (\tau_{\ng C}, n_{\ng C} + \pi \varepsilon_C, \mfe) \to (\bar{\tau}_{\ng \bar{C}}, n_{\ng \bar{C}} + \pi \varepsilon_{\bar{C}}, \bar{\mfe})$$ and $$\eta_{\tau/\tau_{\ng C}}: (\tau/ \tau_{\ng C}, [n-n_{\ng C}], \mfe + \varepsilon_C) \to (\bar{\tau}/ \bar{\tau}_{\ng \bar{C}}, [\bar{n}-n_{\ng \bar{C}}], \bar{\mfe} + \varepsilon_{\bar{C}})$$
		which are furthermore isomorphisms of identified trees with base maps $\gamma$.
		
		We are then in the situation of the first part of Proposition~\ref{prop:lift_equiv}. Indeed, we can write
		\begin{align*}
			\mcL_{\bar{\tau}}^{(2)}[\underline{\Delta} \bar \tau]\circ \gamma & = \sum_{\bar{C} \in \mathbb{C}[\bar{\tau}]} \sum_{n_{\ng \bar{C}}, \varepsilon_{\bar{C}}} \frac{1}{\varepsilon_{\bar{C}}} { \mfn \choose n_{\ng {\bar{C}}}} \mcL_{\bar{\tau}}^{(2)} \left [(\bar{\tau}_{\ng {\bar{C}}}, n_{\ng {\bar{C}}} + \pi \varepsilon_{\bar{C}}, \bar{\mfe}) \otimes P_{\underline{\mcT}_+(R)}(\bar{\tau}/ \bar{\tau}_{\ng {\bar{C}}}, [\bar{n}-n_{\ng {\bar{C}}}], \bar{\mfe} + \varepsilon_{\bar{C}}) \right ] \circ \gamma
			\\
			& = \sum_{C \in \mathbb{C}[\tau]} \sum_{n_{\ng C}, \varepsilon_C} \frac{1}{\varepsilon_C} { \mfn \choose n_{\ng C}} \mcL_\tau^{(2)} \left [ (\tau_{\ng C}, n_{\ng C} + \pi \varepsilon_C, \mfe) \otimes P_{\underline{\mcT}_+(R)}(\tau/ \tau_{\ng C}, [n-n_{\ng C}], \mfe + \varepsilon_C) \right ]
			\\
			& = \mcL_\tau^{(2)}[\underline{\Delta} \tau]
		\end{align*}
		where the second line follows by Proposition~\ref{prop:lift_equiv} and a reindexing of the sums based on the induced bijections between cuts and decorations.
	\end{proof}
	The following definition is then meaningful as a consequence of the universal property for $V^{\proj \sttau}$. 
	\begin{definition}\label{def:triangle}
		We define $\Delta: V_{\mcT(R)} \to V_{\mcT(R)} \hat{\otimes}_\pi V_{\mcT_+(R)}$ to be the unique continuous linear map such that for each $\tau \in \boldsymbol{\tau} \in \mcT(R)$, we have that the following diagram commutes.
		\begin{center}
			\begin{tikzcd}
				V^{\times \tau} \arrow[r, "\otimes_{\sttau}^\tau"] \arrow[dr, "\mcL_\tau^{(2)} (\underline{\Delta} \tau)"'] & V^{\proj \sttau} \arrow[d, dotted, "\Delta"]
				\\
				& V_{\mcT(R)} \hat{\otimes}_\pi V_{\mcT_+(R)}
			\end{tikzcd}
		\end{center}
	\end{definition}
	
	We similarly define a map $\underline{\Delta}^+ : \langle \underline{\mcT}_+(R) \rangle \to  \langle \underline{\mcT}_+(R) \rangle \otimes \langle \underline{\mcT}_+(R) \rangle$ via
	\begin{align*}
		\underline{\Delta}^+ (\tau, \mfn, \mfe) = \sum_{C \in \mathbb{C}[\tau]} \sum_{n_{\ng C}, \varepsilon_C} \frac{1}{\varepsilon_C} { \mfn \choose n_{\ng C}} P_{\underline{\mcT}_+(R)} (\tau_{\ng C}, n_{\ng C} + \pi \varepsilon_C, \mfe) \otimes P_{\underline{\mcT}_+(R)}(\tau/ \tau_{\ng C}, [n-n_{\ng C}], \mfe + \varepsilon_C) .
	\end{align*}
\begin{remark}
The above definition does coincide with that of \cite{BHZ19} despite the projections appearing to differ. This is due to the fact that in \cite{BHZ19}, the formula for $\underline{\Delta}$ and $\underline{\Delta}^+$ were unified by having the projections implicitly specified by the choice of codomain.
\end{remark}	
It follows in a similar way to the case of $\Delta$ that the following definition makes sense via the universal property.
	\begin{definition}\label{def:triangle+}
		We define $\Delta^+: V_{\mcT_+(R)} \to V_{\mcT_+(R)} \proj V_{\mcT_+(R)}$ to be the unique continuous linear map such that for each $\tau \in \boldsymbol{\tau} \in \mcT_+(R)$, we have that the following diagram commutes.
		\begin{center}
			\begin{tikzcd}
				V^{\times \tau} \arrow[r, "\otimes_{\sttau}^\tau"] \arrow[dr, "\mcL_\tau^{(2)} (\underline{\Delta}^+ \tau)"'] & V^{\proj \sttau} \arrow[d, dotted, "\Delta^+"]
				\\
				& V_{\mcT_+(R)} \proj V_{\mcT_+(R)}
			\end{tikzcd}
		\end{center}
	\end{definition}
	
	Our next order of business is to show that the fact that $V_{\mcT_+(R)}$ is a Hopf algebra with $V_{\mcT(R)}$ as a Hopf comodule can be lifted from the combinatorial setting. 
	
	\begin{lemma}\label{lem: coass}
		We have the relations
		\begin{align*}
			(\Delta^+ \otimes \id) \Delta^+ &= (\id \otimes \Delta^+) \Delta^+, \qquad 
			(\Delta \otimes \id)\Delta = (\id \otimes \Delta^+)\Delta.
		\end{align*}
	\end{lemma}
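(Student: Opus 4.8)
The strategy is to reduce the coassociativity/coaction identities on the infinite-dimensional symmetric tensor products to the corresponding identities at the level of combinatorial trees, which are already known to hold in the scalar-valued setting of \cite{BHZ19}. The key tool is the uniqueness clause in the universal property of Definition~\ref{def:tensor_product}: two continuous linear maps out of $V^{\proj \sttau}$ agree as soon as they factor the same $\sttau$-symmetric family of multilinear maps through $\otimes_\sttau^\tau$. Throughout, I would work componentwise over a fixed isomorphism class $\btau \in \mcT_+(R)$ (resp.\ $\mcT(R)$) and fix a drawing $\tau \in \btau$.

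First I would treat $(\Delta^+ \otimes \id)\Delta^+ = (\id \otimes \Delta^+)\Delta^+$. Precomposing each side with $\otimes_\sttau^\tau$ and unfolding Definition~\ref{def:triangle+}, both sides become lifts via $\mcL_\tau^{(3)}$ of the threefold combinatorial coproduct obtained by iterating $\underline{\Delta}^+$; that is, one shows $(\Delta^+ \otimes \id)\Delta^+ \circ \otimes_\sttau^\tau = \mcL_\tau^{(3)}\big[(\underline{\Delta}^+ \otimes \id)\underline{\Delta}^+ \tau\big]$ and similarly for the right-hand side with $(\id \otimes \underline{\Delta}^+)\underline{\Delta}^+$. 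The essential point making this work is a compatibility lemma: the lifting maps $\mcL_\tau$ are "functorial" under the bookkeeping provided by $\tau$-identifications, so that applying $\Delta^+$ to one output factor of $\mcL_\tau^{(2)}[\underline{\Delta}^+\tau]$ corresponds, at the combinatorial level, to further cutting that subtree while keeping track via composition of identification maps $E(\sigma') \to E(\sigma) \to E(\tau)$. Once both sides are identified with $\mcL_\tau^{(3)}$ applied to the two iterated combinatorial coproducts, coassociativity of $\underline{\Delta}^+$ (i.e.\ the classical identity of \cite[Thm.~5.37 or similar]{BHZ19}, rephrased in the cut formulation) gives the equality of the combinatorial inputs, hence equality of the lifts, hence — by uniqueness in the universal property — equality of $(\Delta^+\otimes\id)\Delta^+$ and $(\id\otimes\Delta^+)\Delta^+$. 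The second identity $(\Delta \otimes \id)\Delta = (\id \otimes \Delta^+)\Delta$ is handled in exactly the same way, now starting from a drawing $\tau \in \btau \in \mcT(R)$, using Definition~\ref{def:triangle} for the first application of $\Delta$ and the comodule coassociativity of $\underline{\Delta}$ over $\underline{\Delta}^+$ from \cite{BHZ19} as the combinatorial input.

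The main obstacle I anticipate is precisely the compatibility lemma for iterated lifts: one needs that $(\mathrm{id}\proj\Delta^+)$ (or $(\Delta^+\proj\id)$, or $(\Delta\proj\id)$) applied to $\mcL_\tau^{(2)}[\underline{\Delta}^\bullet \tau]$ equals $\mcL_\tau^{(3)}$ of the corresponding iterated combinatorial coproduct. Unwinding this requires chasing the definition of $\mcL_\tau^{(k)}$ through the universal property used to define $\Delta^+$ on the second tensor factor — in effect, showing that "cut, then cut again" at the level of $\tau$-identified trees is intertwined by $\mcL$ with "glue Banach-space elements in, then apply $\Delta^+$ to one slot". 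The subtlety is that the second coproduct acts on a tree $\sigma$ that carries its own symmetries, and one must check that the $\tau$-identification $\iota: E(\sigma)\to E(\tau)$ is compatible with the $\bsigma$-symmetric family used in Definition~\ref{def:triangle+} for $\sigma$; this is exactly the content of the isomorphism-equivariance statement in Proposition~\ref{prop:lift_equiv}, which should make the chase go through. Everything else — continuity, multilinearity, the $\sttau$-symmetry needed to even invoke the universal property — has already been established in the preceding propositions, so once the compatibility lemma is in place the two identities follow by a short diagram chase combined with the corresponding (known) combinatorial identities.
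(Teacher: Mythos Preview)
Your proposal is correct and follows essentially the same approach as the paper: both reduce componentwise to the combinatorial coassociativity of $\underline{\Delta}^+$ (resp.\ the comodule identity for $\underline{\Delta}$) from \cite{BHZ19} by showing that each side, precomposed with $\otimes_\sttau^\tau$, equals $\mcL_\tau^{(3)}$ of the iterated combinatorial coproduct, and then invoke uniqueness in the universal property. The only minor difference is that the paper obtains continuity, multilinearity and $\sttau$-symmetry of $\mcL_\tau^{(3)}[(\underline{\Delta}^+\otimes\id)\underline{\Delta}^+\tau]$ \emph{a posteriori} from the factorisation $(\Delta^+\otimes\id)\Delta^+\circ\otimes_\sttau^\tau$ itself (as a composition of continuous linear maps with the symmetric family $\otimes_\sttau^\tau$), rather than as a separate preliminary check.
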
	
	\begin{proof}
		Since the proofs are similar, we prove only the first equality. We recall from \cite{BHZ19} that the corresponding results hold for the underlined versions of $\Delta, \Delta^+$ acting on combinatorial trees.
		
		 It is enough for us to consider the restriction to a component $V^{\proj \sttau}$. The idea of the proof is to leverage the underlying combinatorial result by showing that both the sides of the equality claimed in the lemma factor the family of maps
		\begin{align*}
			\mcL_\tau^{(3)} \left ((\underline{\Delta}^+ \otimes \id) \underline{\Delta}^+ \tau \right ) = \mcL_\tau^{(3)} \left ((\id \otimes \underline{\Delta}^+) \underline{\Delta}^+ \tau \right ).
		\end{align*}
		Upon showing that this family is continuous, multilinear and $\sttau$-symmetric, the result will then follow by uniqueness of the factorisation. 
		
		We first show that both sides do indeed factor this family. We start by considering $(\Delta^+ \otimes \id)\Delta^+$. To fix notations, we write 
		$$\underline{\Delta}^+ \tau = \sum_{i=1}^n c_i (\tau_i \otimes \tau^i)$$
		where $\tau_i$ and $\tau^i$ are $\tau$-identified trees. We then write
		\begin{align*}
			(\Delta^+ \otimes \id) \Delta^+ \otimes_{\sttau}^\tau [ (\zeta_e)_{e \in E(\tau)}] &= (\Delta^+ \otimes \id)\mcL_{\tau}^{(2)} [ \underline{\Delta}^+ \tau] [ (\zeta_e)_{e \in E(\tau)}]
			\\
			& = \sum_{i=1}^n c_i  (\Delta^+ \otimes \id)  \left[ \bigotimes_{\sttaun{i}}^{\tau_i} [(\zeta_{\iota_{\tau_i}(e)})_{e \in E(\tau_i)}] \otimes \bigotimes_{\langle \boldsymbol{\tau}^i \rangle}^{\tau^i} [(\zeta_{\iota_{\tau^i}(e)})_{e \in E(\tau^i)}]\right ]
			\\ & = \sum_{i=1}^n c_i \mcL_{\tau_i}^{(2)}[\underline{\Delta}^+ \tau_i][(\zeta_{\iota_{\tau_i}(e)})_{e \in E(\tau_i)}] \otimes \bigotimes_{\langle \boldsymbol{\tau}^i \rangle}^{\tau^i} [(\zeta_{\iota_{\tau^i}(e)})_{e \in E(\tau^i)}]
			\\ & = \sum_{i=1}^n \sum_{j = 1}^k c_i c^j \bigotimes_{\langle \boldsymbol{\tau}_{ij}\rangle}^{\tau_{ij}} [(\zeta_{\iota_{\tau_i} \circ \iota_{\tau_{ij}}(e)})_{e \in E(\tau_{ij})}] \otimes \bigotimes_{\langle \boldsymbol{\tau}_{i}^j\rangle}^{\tau_{i}^j} [(\zeta_{\iota_{\tau_i} \circ \iota_{\tau_{i}^j}(e)})_{e \in E(\tau_{i}^j)}] \otimes \bigotimes_{\langle \boldsymbol{\tau}^i \rangle}^{\tau^i} [(\zeta_{\iota_{\tau^i}(e)})_{e \in E(\tau^i)}]
			\\& = \mcL_\tau^{(3)}[(\underline{\Delta}^+ \otimes \id) \underline{\Delta}^+ \tau][(\zeta_e)_{e \in E(\tau)}]
		\end{align*}
		where we wrote $\underline{\Delta}^+ \tau_i = \sum_{j = 1}^k c^j \tau_{ij} \otimes \tau_i^j$ and noted that e.g. $(\tau_{ij}, \iota_{\tau_{ij}})$ is a $\tau_i$-identified tree which induces the $\tau$-identified tree $(\tau_{ij}, \iota_{\tau_i} \circ \iota_{\tau_{ij}})$. The latter $\tau$-identified tree is precisely the one appearing in $(\underline{\Delta}^+ \otimes \id) \underline{\Delta}^+ \tau$. Since the claim for $(\id \otimes \Delta^+) \Delta^+$ has an almost identical proof, only changing the order of some terms everywhere we omit this detail.
		
		It remains only to check that $\mcL_\tau^{(3)} \left ((\underline{\Delta}^+ \otimes \id) \underline{\Delta}^+ \tau \right )$ is continuous, multilinear and $\sttau$-symmetric. This follows from the factorisation property above. Indeed, for continuity and multilinearity we note that e.g. $\mcL_\tau^{(3)}[(\underline{\Delta}^+ \otimes \id) \underline{\Delta}^+ \tau] = (\Delta^+ \otimes \id) \Delta^+ \otimes_{\sttau}^\tau$ is the composition of a continuous multilinear map with two continuous linear maps. For $\sttau$-symmetry, we simply note that $\otimes_{\sttau}^\tau$ is a $\sttau$-symmetric family.
		\end{proof}
		
		We define $\varepsilon: V_{\mcT_+(R)} \to \mathbb{R}$ to be the projection onto $V^{\proj \langle \mathbf{1} \rangle} \simeq \mathbb{R}$. 
		
		\begin{lemma}\label{lem:Hopf}
			 $(V_{\mcT_+(R)}, \ast, 1, \Delta^+, \eps)$ is a topological Hopf algebra\footnote{since this terminology seems to not be completely standardised in the literature, we refer the reader to Appendix~\ref{Hopf} for a definition and some useful properties.} with $V_{\mcT(R)}$ as a comodule.
		\end{lemma}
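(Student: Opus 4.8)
The goal is to transfer the Hopf algebra and comodule axioms from the well-known combinatorial setting (as established in \cite{BHZ19} for $\underline{\Delta}^+$ and $\underline{\Delta}$ acting on $\langle \underline{\mcT}_+(R) \rangle$ and $\langle \underline{\mcT}(R) \rangle$) to the symmetrised tensor product setting. The strategy is identical in spirit to the proof of Lemma~\ref{lem: coass}: for each axiom, one shows that both sides of the identity, when restricted to a component $V^{\proj \sttau}$ and precomposed with $\otimes_{\sttau}^\tau$, factor one and the same $\sttau$-symmetric, continuous, multilinear family of maps of the form $\mcL_\tau^{(k)}[\cdots]$, whence they coincide by the uniqueness clause of the universal property (Definition~\ref{def:tensor_product}). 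The coassociativity relations have already been handled in Lemma~\ref{lem: coass}, so what remains is: (i) counitality, $(\eps \otimes \id)\Delta^+ = \id = (\id \otimes \eps)\Delta^+$ and the analogous comodule counit relation $(\eps \otimes \id)\Delta = \id$ on $V_{\mcT(R)}$; (ii) compatibility of $\Delta^+$ with the product $\ast$, i.e.\ that $\Delta^+$ is an algebra morphism for the tensor product algebra structure on $V_{\mcT_+(R)} \proj V_{\mcT_+(R)}$ (equivalently $\Delta^+(\sigma \ast \tau) = \Delta^+\sigma \ast \Delta^+\tau$), and likewise that $\Delta$ is a module morphism; (iii) existence of an antipode $\mcA^+ : V_{\mcT_+(R)} \to V_{\mcT_+(R)}$ satisfying $\ast(\mcA^+ \otimes \id)\Delta^+ = \ast(\id \otimes \mcA^+)\Delta^+ = 1 \circ \eps$; and (iv) the requisite continuity of all these maps so that the structure is genuinely \emph{topological} in the sense of Appendix~\ref{Hopf}.

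For (i), counitality is essentially immediate: at the combinatorial level the term in $\underline{\Delta}^+(\tau,\mfn,\mfe)$ surviving the projection $\eps$ on the left (resp.\ right) factor is the one with the empty cut (resp.\ the full cut), which returns $(\tau,\mfn,\mfe)$ itself; lifting via $\mcL_\tau^{(1)}$ and using that $\mcL_\tau^{(1)}[(\tau,\id_{E(\tau)})] = \otimes_{\sttau}^\tau$ gives the claim after invoking uniqueness in the universal property. For (ii), the key combinatorial input is that $\underline{\Delta}^+$ is multiplicative with respect to the forest product on $\langle \underline{\mcT}_+(R)\rangle$ (again from \cite{BHZ19}): one writes $\underline{\Delta}^+\tau = \sum c_i\, \tau_i \otimes \tau^i$ and $\underline{\Delta}^+\sigma = \sum d_j\, \sigma_j \otimes \sigma^j$ as identified trees, notes that a cut of the product tree $\tau\sigma$ restricts to a cut of each factor, and checks that the induced identified-tree structure on the product of the pieces is the concatenation of the induced structures on the factors; one then runs the same bookkeeping as in Lemma~\ref{lem: coass}, with $\mcL_{\tau\sigma}^{(2)}$ expressed through $\star$ and the identity $V^{\proj(\mcs_1\otimes\mcs_2)} = V^{\proj\mcs_1}\proj V^{\proj\mcs_2}$, to see that both $\Delta^+(\star\otimes\id)$-type composites factor $\mcL_{\tau\sigma}^{(2)}[\underline{\Delta}^+(\tau\sigma)]$. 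The comodule compatibility $(\Delta \otimes \id)$ versus the $V_{\mcT_+(R)}$-action on $V_{\mcT(R)}$ is handled by the same template. For (iii), one defines $\mcA^+$ componentwise via the universal property applied to $\mcL_\tau^{(1)}[\underline{\mcA}^+\tau]$ where $\underline{\mcA}^+$ is the combinatorial antipode of \cite{BHZ19} (which exists since $V_{\mcT_+(R)}$ is connected graded by $|\cdot|_\mfs$, as recorded in the remark after Definition~\ref{def:sets_of_trees}); one must first check, exactly as in the propositions preceding Definition~\ref{def:triangle}, that $(\mcL_\tau^{(1)}[\underline{\mcA}^+\tau])_{\tau\in\btau}$ is a $\sttau$-symmetric continuous multilinear family --- this uses that a tree isomorphism $\gamma$ intertwines $\underline{\mcA}^+$ with itself, which follows from the functoriality of the recursive/Takeuchi formula for $\underline{\mcA}^+$ under relabellings. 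The antipode identity then follows from its combinatorial counterpart by the now-standard factor-the-same-family argument applied to $\mcL_\tau^{(1)}$.

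For (iv), continuity of $\Delta^+, \Delta, \mcA^+, \eps$ and of the product and unit has already been arranged: $\Delta^+$ and $\Delta$ are continuous by construction in Definitions~\ref{def:triangle} and \ref{def:triangle+}, $\eps$ is a continuous projection, $\star$ is continuous by Lemma~\ref{lemma: product}, and $\mcA^+$ will be continuous by the same universal-property construction once the symmetric family is in place; one should also note that the gradings of $V_{\mcT_+(R)}$ and $V_{\mcT(R)}$ recorded after Definition~\ref{def:sets_of_trees} ensure that the relevant completed tensor products behave well and that the defining sums are locally finite, so no convergence subtleties arise. I expect the main obstacle to be purely organisational rather than conceptual: the one genuinely delicate point is (ii), namely keeping careful track of how the $\tau$-identification data interacts with the forest product --- specifically, verifying that the identified tree induced on a piece of $\tau\sigma$ really is the ``disjoint union'' of the identified trees induced on the corresponding pieces of $\tau$ and of $\sigma$, so that $\mcL_{\tau\sigma}^{(2)}$ factorises as a tensor product of $\mcL_\tau^{(2)}$- and $\mcL_\sigma^{(2)}$-type maps under the identification $\otimes_{\sttau\otimes\sssigma}^{(a,b)} = (\otimes_\sttau^a)\otimes(\otimes_\sssigma^b)$. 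Once this compatibility is spelled out, every remaining verification reduces to the combinatorial identities of \cite{BHZ19} together with the uniqueness clause of the universal property, in precisely the manner already illustrated in the proof of Lemma~\ref{lem: coass}.
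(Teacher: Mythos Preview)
Your overall strategy for the bialgebra part (items (i), (ii), (iv)) is essentially the same as the paper's: show that both sides of each axiom factor the same $\sttau$-symmetric family $\mcL_\tau^{(k)}[\cdots]$ and invoke uniqueness. The paper carries out precisely this argument for multiplicativity of $\Delta^+$ and for the counit, in the spirit of Lemma~\ref{lem: coass}.

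Where you differ is in obtaining the antipode. You propose to lift the combinatorial antipode $\underline{\mcA}^+$ directly via $\mcL_\tau^{(1)}[\underline{\mcA}^+\tau]$. The paper instead reduces the problem to showing that $V_{\mcT_+(R)}$ is a topological \emph{bialgebra}, and then invokes an abstract result (Lemma~\ref{bi_to_hopf} in Appendix~\ref{Hopf}): a graded topological bialgebra whose degree-zero component is the free commutative topological algebra on finitely many primitive generators is automatically Hopf. Crucially, the paper grades by \emph{number of edges} rather than by $|\cdot|_\mfs$, so that the degree-zero part is $V_{\mcT_{\mathrm{poly}}}$, i.e.\ the polynomial algebra on the primitive $X_i$'s (Remark~\ref{rem:pol_bi}). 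This buys the antipode and its continuity without any further symmetric-family bookkeeping. Your direct lifting approach should also work, but it requires you to check that every term appearing in $\underline{\mcA}^+\tau$ (e.g.\ via the Takeuchi formula) is naturally $\tau$-identified with edge set partitioning $E(\tau)$, so that $\mcL_\tau^{(1)}$ applies; this is true but deserves a sentence.

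Two minor corrections. First, since $\Delta \colon V_{\mcT(R)} \to V_{\mcT(R)} \proj V_{\mcT_+(R)}$, the comodule counit axiom is $(\id \otimes \eps)\Delta = \id$, not $(\eps \otimes \id)\Delta = \id$ as you wrote. Second, there is no ``$\Delta$ is a module morphism'' condition to check: $V_{\mcT(R)}$ is not an algebra, and the comodule axioms are exhausted by coassociativity (already in Lemma~\ref{lem: coass}) together with the counit relation.
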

		\begin{proof}
			We note that by Lemma~\ref{bi_to_hopf} and Remark~\ref{rem:pol_bi}, for the first claim it suffices to prove that $V_{\mcT_+(R)}$ is a bialgebra. For this we need to check that $\Delta^+$ is an algebra morphism and that $\varepsilon$ satisfies the requirements for a counit.
			
			   We start by checking that $\Delta^+$ is multiplicative. We need to show that $$\Delta^+ \circ \star = (\star \otimes \star) \rho (\Delta^+ \otimes \Delta^+)$$ where $\rho : V_\mcT^{\proj 4} \to V_{\mcT}^{\proj 4}$ is the map that swaps the second and third components of elementary tensors.
			
			To do this, we again leverage the corresponding combinatorial result. Namely, we claim that when acting on $V^{\proj \sttau} \proj V^{\proj \sssigma}$ the two maps factor the families $\mcL_{\tau \sigma}^{(2)}[\underline{\Delta}^+ \tau \sigma]$ and $\mcL_{\tau \sigma}^{(2)}[(\mcM \otimes \mcM) \circ \underline{\rho} \circ (\underline{\Delta}^+ \otimes \underline \Delta^+)(\tau \otimes \sigma)]$ respectively where $\mcM$ is the (combinatorial) tree product and $\underline{\rho}$ is the combinatorial analogue of $\rho$. The result will then follow from uniqueness of the factorisation since the fact that $\underline{\Delta}^+$ is an algebra morphism implies that 
			$$\mcL_{\tau \sigma}^{(2)}[\underline{\Delta}^+ \tau \sigma] = \mcL_{\tau \sigma}^{(2)}[(\mcM \otimes \mcM) \circ \underline{\rho} \circ (\underline{\Delta}^+ \otimes \underline \Delta^+)(\tau \otimes \sigma)].$$
			
			We remark that it will follow from the factorisation that e.g. $\mcL_{\tau \sigma}^{(2)}[\underline{\Delta}^+ \tau \sigma]$ is continuous, multilinear and $\sttau \otimes \sssigma$-symmetric in the same way as in the corresponding part of the proof of Lemma~\ref{lem: coass}.
			
			We start by showing that $\Delta^+ \circ \star$ factors the left hand side. For this, we write
			\begin{equs}
				\Delta^+ \circ \star \circ \bigotimes_{\sttau \otimes \sssigma}^{\tau \sqcup \sigma} [(\zeta_e)_{e \in E(\tau \sqcup \sigma)}] &= \Delta^+ \otimes_{\langle \boldsymbol{\tau \sigma} \rangle}^{\tau \sigma} [(\zeta_e)_{e \in E(\tau \sqcup \sigma)}]
				 = \mcL_{\tau \sigma}^{(2)}[\underline{\Delta}^+ \tau \sigma] [(\zeta_e)_{e \in E(\tau \sqcup \sigma)}]
			\end{equs}
			as desired.
			
			We now turn to the corresponding result for the right hand side. We write
			\begin{equs}
				(\star \otimes \star)  \rho  (\Delta^+ \otimes \Delta^+) \otimes_{\sttau \otimes \sssigma}^{\tau \sqcup \sigma} [(\zeta_e)_{e \in E(\tau \sqcup \sigma)}] & =  (\star \otimes \star)  \rho (\Delta^+ \otimes_{\sttau}^\tau \otimes \Delta^+ \otimes_{\sssigma}^\sigma)[(\zeta_e)_{e \in E(\tau \sqcup \sigma)}]
				\\ & = (\star \otimes \star)\rho  (\mcL_\tau^{(2)}[\underline{\Delta}^+ \tau](\zeta_e)_{e \in E(\tau)} \otimes \mcL_\sigma^{(2)}[\underline{\Delta}^+ \sigma] (\zeta_e)_{e \in E(\sigma)}).
			\end{equs}
			Now we note that the term in the rightmost bracket can be written as
			\begin{equs}
				\sum_{i=1}^n \sum_{j = 1}^k c_i c^j 
				\bigotimes_{\sttaun{i}}^{\tau_i}(\zeta_{\iota_{\tau_i}(e)})_{e \in E(\tau)} 
				\otimes \bigotimes_{\langle \boldsymbol{\tau}^i \rangle}^{\tau^i}(\zeta_{\iota_{\tau^i}(e)})_{e \in E(\tau)}
				\otimes \bigotimes_{\langle \boldsymbol{\sigma}_j \rangle}^{\sigma_j}(\zeta_{\iota_{\sigma_j}(e)})_{e \in E(\sigma)}
				\otimes \bigotimes_{\langle \boldsymbol{\sigma}^j \rangle}^{\sigma^j}(\zeta_{\iota_{\sigma^j}(e)})_{e \in E(\sigma)}
			\end{equs}
			so that applying $\rho$ yields
			\begin{equs}
				\sum_{i=1}^n \sum_{j = 1}^k c_i c^j 
				\bigotimes_{\sttaun{i}}^{\tau_i}(\zeta_{\iota_{\tau_i}(e)})_{e \in E(\tau)}
				\otimes \bigotimes_{\langle \boldsymbol{\sigma}_j \rangle}^{\sigma_j}(\zeta_{\iota_{\sigma_j}(e)})_{e \in E(\sigma)}
				\otimes \bigotimes_{\langle \boldsymbol{\tau}^i \rangle}^{\tau^i}(\zeta_{\iota_{\tau^i}(e)})_{e \in E(\tau)}
				\otimes \bigotimes_{\langle \boldsymbol{\sigma}^j \rangle}^{\sigma^j}(\zeta_{\iota_{\sigma^j}(e)})_{e \in E(\sigma)} .
			\end{equs}
			We then note that
			\begin{equs}
				\star \left (\bigotimes_{\sttaun{i}}^{\tau_i}(\zeta_{\iota_{\tau_i}(e)})_{e \in E(\tau)} \otimes \bigotimes_{\langle \boldsymbol{\sigma}_j \rangle}^{\sigma_j}(\zeta_{\iota_{\sigma_j}(e)})_{e \in E(\sigma)} \right ) 
				& = \star \left ( \bigotimes_{\sttaun{i} \otimes \langle \boldsymbol{\sigma_j}}^{\tau_i \sqcup \sigma_j} [(\zeta_{\iota_{\tau_i} \sqcup \iota_{\sigma_j}(e)})_{e \in E(\tau \sqcup \sigma)}] \right )
				\\ & = \bigotimes_{\langle \boldsymbol{\tau_i \sigma_j} \rangle}^{\tau_i \sigma_j} [(\zeta_{\iota_{\tau_i} \sqcup \iota_{\sigma_j}(e)})_{e \in E(\tau \sqcup \sigma)}].
			\end{equs}
			Treating the second instance of $\star$ similarly we see that 
			\begin{equs}
				(\star \otimes \star)  \rho  (\Delta^+ \otimes \Delta^+) \otimes_{\sttau \otimes \sssigma}^{\tau \sqcup \sigma} [(\zeta_e)_{e \in E(\tau \sqcup \sigma)}] &= \sum_i \sum_j c_i c^j \bigotimes_{\langle \boldsymbol{\tau_i \sigma_j} \rangle}^{\tau_i \sigma_j} [(\zeta_{\iota_{\tau_i} \sqcup \iota_{\sigma_j}(e)})_{e \in E(\tau \sqcup \sigma)}] \otimes \bigotimes_{\langle \boldsymbol{\tau^i \sigma^j} \rangle}^{\tau^i \sigma^j} [(\zeta_{\iota_{\tau^i} \sqcup \iota_{\sigma^j}(e)})_{e \in E(\tau \sqcup \sigma)}]
				\\ & = \mcL_{\tau \sigma}^{(2)}[(\mcM \otimes \mcM)  \underline{\rho}  (\underline{\Delta}^+ \otimes \underline{\Delta}^+)(\tau \otimes \sigma)] (\zeta_e)_{e \in E(\tau \sqcup \sigma)}
			\end{equs}
			as required.
			
			It now remains to check that the counit $\varepsilon$ is a counitary algebra morphism. The algebra morphism property is immediate from the definitions. For the counitary property, we note that
			\begin{equs}
				(\varepsilon \otimes \id) \Delta^+ \otimes_{\sttau}^\tau [(\zeta_e)] & = (\varepsilon \otimes \id) \mcL_\tau^{(2)} [\underline{\Delta}^+ \tau] (\zeta_e)
				 = \sum_{i=1}^n c_i (\varepsilon \otimes \id) \left ( \bigotimes_{\langle \boldsymbol{\tau}_i \rangle}^{{\tau_i}} (\zeta_e) \otimes \bigotimes_{\langle \boldsymbol{\tau}^i \rangle}^{{\tau^i}} (\zeta_e) \right ) 
				 = \otimes_{\sttau}^\tau [(\zeta_e)]
			\end{equs}
			since the projection onto the linear span of $\boldsymbol{1}$ is the counit for $\underline{\Delta}^+$. The result for $(\id \otimes \eps)\Delta^+$ is similar.
			
			We now turn to the claim that $V_{\mcT(R)}$ is a comodule over $V_{\mcT_+(R)}$. Since Lemma~\ref{lem: coass} includes the fact that $(\id \otimes \Delta^+)\Delta = (\Delta \otimes \id)\Delta$, we need only show that $(\id \otimes \varepsilon)\Delta = \id$. This latter claim follows in a similar way to the proof of the counitary property above. For brevity, we omit the details.
			\end{proof}
			
			In particular, we now recover a regularity structure in the sense of the original definition given in \cite[Definition 2.1]{Hai14} which we emphasise is general enough to allow for the infinite dimensional components seen here. 
			
			\begin{definition} \label{d:tban}
				We define the regularity structure $\mcT^\mathrm{Ban} = (V_{\mcT(R)}, \mcG)$ where $\mcG$ is the group of continuous characters on $V_{\mcT_+(R)}$ acting on $V_{\mcT(R)}$ via $\Gamma_g v = (\id \otimes g) \Delta^+ v$.
			\end{definition}
			\begin{remark}
				The fact that $(g, v) \mapsto \Gamma_g v$ defines a (left) group action of $\mcG$ on $V_{\mcT_+(R)}$ follows from the comodule structure given in Lemma~\ref{lem:Hopf}
			\end{remark}

			It is clear that $\mcT^\mathrm{Ban}$ comes with a product $\star$ and a family of integration maps $\mcI^\zeta$ indexed by $\zeta \in V_\mfl$, $\mfl \in \mfL_+$. In the latter case, it still remains for us to check that these integration maps interact correctly with the action of the structure group.
			
			\begin{lemma}\label{lem:Gamma_Poly}
				For every $g \in \mcG$, $\mfl \in \mfL_+$, $\zeta \in V_\mfl$ and $\boldsymbol{\tau} \in \mcT(R)$ such that $\mcI_\mfl \boldsymbol{\tau} \in \mcT(R)$ we have that
				\begin{align*}
				\mcI^\zeta \Gamma_g - \Gamma_g \mcI^\zeta : V^{\proj \sttau} \to V_{\mcT_{\mathrm{poly}}} ,
				\qquad \text{where}
				\quad \mcT_{\mathrm{poly}} = \{X^k: k \in \mbN^d\}.
				\end{align*}
			\end{lemma}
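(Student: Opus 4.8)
The plan is to lift to $V^{\proj\sttau}$, via the universal property of Definition~\ref{def:tensor_product} and in the style of the proofs of Lemmas~\ref{lem: coass} and~\ref{lem:Hopf}, the standard scalar identity relating $\underline{\Delta}$ and abstract integration, and then to apply the character $(\id\otimes g)$; here $\Gamma_g=(\id\otimes g)\circ\Delta$ with $\Delta$ the coaction of Definition~\ref{def:triangle}. First I would reduce to elementary tensors: both $\mcI^\zeta\Gamma_g$ and $\Gamma_g\mcI^\zeta$ are continuous linear maps $V^{\proj\sttau}\to V_{\mcT}$ (for the second, the hypothesis $\mcI_\mfl\btau\in\mcT(R)$ ensures $\mcI^\zeta$ maps $V^{\proj\sttau}$ into $V_{\mcT(R)}$, where $\Gamma_g$ acts), and $V_{\mcT_{\mathrm{poly}}}$ is a closed subspace of $V_{\mcT}$; by the norm formula of Proposition~\ref{prop:Symm_Char} the elements $v=\otimes_{\sttau}^{\tau}[(\zeta_e)_{e\in E(\tau)}]$, for a fixed drawing $\tau\in\btau$ and $(\zeta_e)_{e\in E(\tau)}\in V^{\times\tau}$, span a dense subspace of $V^{\proj\sttau}$, so it suffices to prove $\mcI^\zeta\Gamma_g v-\Gamma_g\mcI^\zeta v\in V_{\mcT_{\mathrm{poly}}}$ for such $v$.

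Fix such a $v$ and let $e_0$ denote the trunk edge of $\mcI_\mfl\tau$; by Definition~\ref{d:int_tban}, $\mcI^\zeta v$ is the elementary tensor in $V^{\proj\langle\boldsymbol{\mcI_\mfl\tau}\rangle}$ obtained by gluing $\zeta$ onto $e_0$ and $\zeta_e$ onto each $e\in E(\tau)$. The cuts of $\mcI_\mfl\tau$ are exactly $\{e_0\}$ together with the cuts of $\tau$ (none of which meets $e_0$); feeding this into the formula for $\underline{\Delta}$ from Section~\ref{s: tensor_oper}, and using $(\mcI_\mfl\tau)_{\ng C}=\mcI_\mfl(\tau_{\ng C})$, $(\mcI_\mfl\tau)/(\mcI_\mfl\tau)_{\ng C}=\tau/\tau_{\ng C}$ and the fact that $\pi\varepsilon_C$ leaves the root decoration of $\mcI_\mfl\tau$ untouched when $e_0\notin C$, one recovers the combinatorial identity (which underlies the scalar construction of the structure group in \cite{BHZ19})
\[ \underline{\Delta}\,\mcI_\mfl(\tau,\mfn,\mfe)=(\mcI_\mfl\otimes\id)\,\underline{\Delta}(\tau,\mfn,\mfe)+\sum_{k\in\mbN^d}\tfrac{1}{k!}\,X^k\otimes P_{\underline{\mcT}_+(R)}\bigl(\mcI_\mfl^{k}(\tau,\mfn,\mfe)\bigr), \]
the last terms collecting the contribution of the cut $\{e_0\}$. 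Applying the lifting map $\mcL_{\mcI_\mfl\tau}^{(2)}$, specialising the $e_0$-slot to $\zeta$, and unwinding Definitions~\ref{d:int_tban} and~\ref{def:triangle} exactly as in the proof of Lemma~\ref{lem:Hopf} (Proposition~\ref{prop:lift_equiv} applies since in each term the identified trees partition $E(\mcI_\mfl\tau)$), the first summand lifts to $(\mcI^\zeta\otimes\id)\,\Delta v$ while the lift of the second, call it $R(v)$, lies in $V_{\mcT_{\mathrm{poly}}}\proj V_{\mcT_+(R)}$; hence $\Delta\,\mcI^\zeta v=(\mcI^\zeta\otimes\id)\,\Delta v+R(v)$.

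Applying $(\id\otimes g)$ to this identity and using $(\id\otimes g)(\mcI^\zeta\otimes\id)=\mcI^\zeta(\id\otimes g)$ together with the fact that $(\id\otimes g)$ maps $V_{\mcT_{\mathrm{poly}}}\proj V_{\mcT_+(R)}$ into $V_{\mcT_{\mathrm{poly}}}$ ($g$ being scalar-valued and $V_{\mcT_{\mathrm{poly}}}$ closed), one obtains $\Gamma_g\,\mcI^\zeta v=\mcI^\zeta\,\Gamma_g v+(\id\otimes g)R(v)$, so that $\mcI^\zeta\Gamma_g v-\Gamma_g\mcI^\zeta v=-(\id\otimes g)R(v)\in V_{\mcT_{\mathrm{poly}}}$, which is the claim. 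I expect the only genuinely delicate point to be the combinatorial bookkeeping behind the displayed identity --- tracking the decoration sums $n_{\ng C},\varepsilon_C$ and the projection $P_{\underline{\mcT}_+(R)}$ through the splitting of cuts of $\mcI_\mfl\tau$ --- but this is exactly the identity already established for combinatorial trees in \cite{BHZ19} and transfers verbatim; everything else is the routine universal-property lifting already carried out in Lemmas~\ref{lem: coass} and~\ref{lem:Hopf}.
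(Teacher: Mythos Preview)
Your proposal is correct and follows essentially the same route as the paper's own proof: reduce to elementary symmetric tensors, invoke the combinatorial identity $\underline{\Delta}\,\mcI_\mfl\tau=(\mcI_\mfl\otimes\id)\,\underline{\Delta}\tau+\sum_k\frac{X^k}{k!}\otimes\mcI_\mfl^k\tau$ (with the sum restricted by the projection onto $\underline{\mcT}_+(R)$), lift it via $\mcL_{\mcI_\mfl\tau}^{(2)}$ to obtain $\Delta\,\mcI^\zeta v=(\mcI^\zeta\otimes\id)\Delta v+R(v)$ with $R(v)$ landing in $V_{\mcT_{\mathrm{poly}}}\proj V_{\mcT_+(R)}$, and then apply $(\id\otimes g)$. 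The paper is slightly terser---it simply recalls the combinatorial identity and writes out the lifted equality \eqref{comb_int_ss} directly---but the argument is the same.
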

			\begin{proof}
				It suffices to check the result for elements of the form $\otimes_{\sttau}^\tau [(\zeta_e)_{e \in E(\tau)}]$. We recall that 
				\begin{equ}
					\underline{\Delta} \mcI_\mfl \tau = (\mcI_\mfl \otimes \id) \underline{\Delta}^+ \tau + \sum_{|k|_{\mfs} < |\mcI_\mfl \tau|_\mfs} \frac{X^k}{k!} \otimes \mcI_{\mfl}^k \tau.
				\end{equ}
				As a result, we note that (writing $\zeta$ for the distribution associated to the trunk)
				\begin{equs}
					& \Delta \bigotimes_{\langle \mcI_\mfl \boldsymbol{\tau} \rangle}^{\mcI_\mfl \tau} [(\zeta_e)_{e \in E(\tau)} \sqcup \{\zeta\}] = \mcL_{\mcI_\mfl \tau}^{(2)} [ \underline{\Delta} \mcI_\mfl \tau] [(\zeta_e)_{e \in E(\tau)} \sqcup \{\zeta\}]
					\\
					& = \mcL_{\mcI_\mfl \tau}^{(2)}[(\mcI_\mfl \otimes \id) \underline{\Delta} \tau]  [(\zeta_e)_{e \in E(\tau)} \sqcup \{\zeta\}] + \sum_{k : |\mcI_{\mfl}^k \tau|_\mfs > 0} \mcL_{\mcI_\mfl \tau}^{(2)} \left [\frac{X^k}{k!} \otimes \mcJ_\mfl^k \tau \right ]  [(\zeta_e)_{e \in E(\tau)} \sqcup \{\zeta\}]
					\\
					& = \zeta \otimes \mcL_\tau^{(2)}[\underline{\Delta} \tau] (\zeta_e) + \sum_{k} \frac{X^k}{k!} \otimes \bigotimes_{\langle \mcI_{\mfl}^k \btau \rangle}^{\mcI_\mfl^k \tau} [(\zeta_e) \sqcup \{\zeta\}]
					\\
					& = (\mcI^{\zeta} \otimes \id) \Delta \bigotimes_{\sttau}^\tau [(\zeta_e)] + \sum_{k} \frac{X^k}{k!} \otimes \bigotimes_{\langle \mcI_{\mfl}^k \btau \rangle}^{\mcI_\mfl^k \tau} [(\zeta_e) \sqcup \{\zeta\}]. \label{comb_int_ss}
				\end{equs}
				Upon applying $(\id \otimes g)$ to both sides, the result immediately follows.
			\end{proof}
	
\section{Models on Regularity Structures with Infinite Dimensional Components}\label{s:Alg_Models}
In this section we recursively construct algebraically renormalised models on $\mcT^{\mathrm{Ban}}$ in the spirit of \cite{Bru18}. 
Here we consider only smooth models constructed from a fixed noise assignment $\xi$.

\subsection{Admissible Models on \texorpdfstring{$\mcT^\mathrm{Ban}$}{T}}\label{ss:Adm_mod}
We now define a notion of admissible model (see \cite[Definition 6.9]{BHZ19} for the usual scalar valued definition) that is well adapted to our setting. Throughout this section we will assume that a Banach space assignment $V$ has been fixed and will omit it from the notation for simplicity. 

We first fix an appropriate notion of kernel assignment that we wish to be realised by the abstract integration maps $\{\mcI^\zeta\}_{\zeta \in V_\mfl}$ that we constructed in the previous section. This should be nothing other than a way of assigning a translation invariant integral kernel to each element of $V_\mfl$ for each $\mfl \in \mfL_+$.

\begin{definition}\label{def:ka}
    A kernel assignment
	of order $\mco \in \mathbb{N}$
	on $\mcT^{\Ban} ( V )$
    is a collection of bounded linear maps $A_\mfl: V_\mfl \to \mcK_{\mco}^{|\mfl|}$ indexed by $\mfl \in \mfL_+$.
	We write $\mathcal{A}_+^\mco = \bigoplus_{\mfl \in \mfL_+} L ( V_\mfl, \mcK_{\mco}^{|\mfl|} )$ for the Banach space of all kernel assignments of order $\mco$
	equipped with the norm
		\begin{align*}
			 \| A \| _{\mathcal{A}_+^\mco} \coloneqq \sum_{\mfl \in \mfL_+} \, 
				\sup_{\substack{\zeta \in V_{\mfl},\\  \| \zeta \|_{V_{\mfl}} \leq 1}} \,
				\| A_{\mfl} ( \zeta ) \|_{ \mcK_{\mco}^{|\mfl|}}.
			\end{align*}
		Given $\zeta \in V_\mfl$, we write $A_\mfl(\zeta) = \sum_{n \ge 0} A_\mfl^{(n)}(\zeta)$ for the decomposition as in Remark~\ref{rem:dec2ker}.
		\end{definition}

We also introduce a corresponding notion of noise assignment on $\mcT^\Ban$. We will eventually be interested in specialising to the case where the Banach space assignment $V$ satisfies $V_\mfl = \mbR$ for each $\mfl \in \mfL_-$ in which case the notion of noise assignment introduced below will coincide with our notion of noise assignment on $\mcT^\eq$. However, since it introduces very little extra work to allow for more general noise assignments in this section and it is reasonable to imagine applications where one would wish to freeze coefficients at the level of the noise, we allow for the extra generality here.

\begin{definition} \label{d:noise_ass_TBan}
	A noise assignment on $\mcT^\Ban(V)$ is a collection of continuous linear maps $N_{\mfl} : V_\mfl \to \mcD'(\mbR^d)$ indexed by $\mfl \in \mfL_-$. We write $\mcA_-$ for the space of all noise assignments. A smooth noise assignment on $\mcT^\Ban(V)$ is a noise assignment such that $N_\mfl$ is a continuous map into $C^\infty(\mbR^d)$ with its usual Fr\'echet topology. We write $\mcA_-^\infty$ for the space of all smooth noise assignments.
\end{definition}
 We also recall that given a sector $W$ of a regularity structure $(\mcT, \mcG)$ then $(W,  \mcG)$ is also a regularity structure. This means that it makes sense to talk about models on a sector of a regularity structure. 
\begin{definition} \label{d:adm_on_tban}
	We say that a model $(\Pi, \Gamma)$ on a sector $W$ in $\mcT^\mathrm{Ban}$ is admissible with respect to a kernel assignment $A$ and a noise assignment $N$ if it satisfies the following three sets of relations. Firstly we require that
	    \begin{align*}
		        \Pi_x v &= N_\mfl(v) , \qquad \qquad
		        \Pi_x X^k = (\cdot - x)^k 
		    \end{align*}
	    for all $\mfl \in \mfL_-$, for all $v \in V^{\proj \langle \Xi_\mfl \rangle}  \cap W$ and all $k \in \mathbb{N}^d$ such that $X^k \in W$. Secondly, we require that 
	    \begin{align*}
	    	\Pi_x (X^k v) = (\cdot - x)^k \Pi_x v, \qquad \Pi_x D_i v = D_i \Pi_x v
	    \end{align*}
	    whenever $k \in \mathbb{N}^d$, $i \in \{1, \dots, d\}$ and $v \in W$ are such that $X^k v, D_i v \in W$. Finally
	    we require that for $\mfl \in \mfL_+$, we have that
	    \begin{align}\label{eq:adm_int}
		        \Pi_x \mcI^\zeta v &= A_\mfl(\zeta) \ast \Pi_x v - \sum_{|k|_\mfs < |\mcI^\zeta v|_\mfs} \frac{(\cdot - x)^k}{k!} (D^k A_\mfl(\zeta) \ast \Pi_x v)(x)
		 \end{align}
		 for every $v \in W$ and $\zeta \in V_\mfl$ such that $\mcI^\zeta v \in W$.    
\end{definition}
\begin{remark}\label{rem:order}
	We recall from \cite[Remark 5.10]{Hai14} that the correct way to interpret \eqref{eq:adm_int} is via	
	$$\Pi_x \mcI^\zeta v(\phi) = \sum_{n \ge 0} \int \mathrm{d} y  \, \phi(y) \, \Pi_x v \left ( A_\mfl^{(n)}(\zeta)(y - \cdot) - \sum_{|k|_\mfs< |\mcI^\zeta v|_\mfs} \frac{(y-x)^k}{k!} D^k A_\mfl^{(n)}(\zeta)(x - \cdot) \right ).$$
	In order for this expression to be well-defined for a generic model $(\Pi, \Gamma)$, it is necessary to require that $\mco > |k|_\mfs - a$ where $a$ is the lowest degree of a component in the smallest sector containing $v$. In particular, due to \eqref{eq:adm_int}, the definition of admissibility on a sector $W$ is a meaningful one only if $\mco$ is sufficiently large.  
\end{remark}
\subsection{Algebraic Renormalisation on \texorpdfstring{$\mcT^\mathrm{Ban}$}{T}}\label{ss: alg renorm}
 
Our goal in this section is to show that the construction of a model associated to a smooth driving noise from a preparation map as introduced in \cite{Bru18} can be adapted to our setting. This provides a generalisation of the results of \cite{Bru18} in two ways. Firstly, we do not assume that the kernel assignment is of infinite order. As discussed in Remark~\ref{rem:order}, this poses problems when constructing a model on the full regularity structure. One option to circumvent this issue is to exploit smoothness of the noise, translation invariance of the kernel and an integration by parts argument to move all derivatives in \eqref{eq:adm_int} onto $\Pi_x v$. This would lead to a new definition of admissibility for a smooth model that would allow for the construction of a model on the full regularity structure. This would have two disadvantages. Firstly, we will later want to talk about models built from preparation maps and non-translation invariant regularising kernels of finite order on $\mcT^\mathrm{eq}$. In this setting, the argument based on integration by parts is not available. Secondly, when later obaining quantitative estimates we will need tools such as the Extension Theorem provided in \cite[Theorem 5.14]{Hai14}. This result would not apply with the new definition of admissibility. In both cases, it would therefore still be necessary to restrict to a smaller sector. Therefore, rather than introducing a new notion of admissibility, we prefer to show that it is possible to construct models from preparation maps on suitable sectors rather than on the whole regularity structure immediately.

The second and more significant generalisation of \cite{Bru18} comes with our infinite dimensional setting. A first difference is that one no longer has access to a distinguished basis in the form of trees. Since elementary tensors do not form a linearly independent set, this means that more care is required when checking that the natural definition actually yields a well-defined object. The second difference is that the appropriate continuity in terms of the norm on $V^{\proj \sttau}$ is no longer a consequence of mere linearity and thus requires again more care. Both of these differences will be handled by appealing to the universal property given in Definition~\ref{def:tensor_product}.

We will use the following notations for projection operators on $\mcT^{\Ban} ( V )$.
Given $\btau \in \mcT ( R )$ we denote by $\mcQ_{\btau}$ the projection onto $V^{\proj \sttau}$.
Given $\eta \in \mathbb{R}$ we denote by $\mcQ_{\eta}$ the projection onto $\oplus_{| \btau |_{\mfs} = \eta} V^{\proj \sttau}$ 
and we define $\mcQ_{< \eta}, \mcQ_{\leq \eta}$, etc, analogously.

\begin{definition}\label{def:prep}
    A preparation map on $V_{\mcT(R)}$ on a sector $W$ is a bounded linear map $P: W \to V_{\mcT(R)}$ such that 
    $\mcQ_{\bsigma} (P - \operatorname{Id}) |_{V^{\proj \sttau}} \neq 0$ implies that
    \begin{enumerate}
	        \item $|\boldsymbol{\tau}|_{\mfs} < |\boldsymbol{\sigma}|_{\mfs}$,
	        \item $|\boldsymbol{\tau}|_{\#} > |\boldsymbol{\sigma}|_{\#}$ where $|\boldsymbol{\tau}|_{\#}$ denotes the number of noise edges in $\boldsymbol{\tau}$,
	        \item $\boldsymbol{\tau} \neq \boldsymbol{\Xi}_{\mfl}, X^k, \mcI_\mfl^k \boldsymbol{\boldsymbol{\sigma}}$
	    \end{enumerate}
    such that $P$ additionally has the commutativity properties
    $$P|_{V^{\proj \langle X^k \boldsymbol{\tau} \rangle}} = X^k P|_{V^{\proj \sttau}}, \qquad P \Gamma = \Gamma P \text{ for all } \Gamma \in G_+.$$
\end{definition}

In order to perform a recursive construction of a model with respect to a preparation map on a sector $W$, it will be important that $P : W \to W$. Since it will eventually be important to allow for different models to be constructed on the same sector (for example when obtaining convergence results), we here introduce a notion of sector that is appropriate for the constructions that follow in this section.

\begin{definition}\label{def:pgood}
	We say that a sector $W$ in $V_{\mcT(R)}$ is good if it can be written as $W = \bigoplus_{\boldsymbol{\tau} \in \mcB_W} V^{\proj \sttau}$ for some set of trees $\mcB_W \subset \mcT(R)$ (which we refer to as the generating set for $W$) with the property that $\mcI_\mfl^k \btau \in \mcB_W$ implies $\btau\in \mcB_W$ and $\btau \bsigma \in \mcB_W$ implies that $\btau \in \mcB_W$. Given a preparation map $P$ defined on $W$, we say that the sector $W$ is $P$-good if it is good and $P : W \to W$.
\end{definition}
We note that it is automatic that if $W$ is a $P$-good sector in $\mcT^\mathrm{Ban}$ then the set of trees $\mcB_W$ in the above definition spans a sector in $\mcT^\mathrm{eq}$. 
\begin{remark}
	If $\mcB$ is a finite subset of $\mcT(R)$ then it is possible to prove that if $W$ is the smallest $P$-good sector containing $\bigoplus_{\sttau \in \mcB} V^{\proj \sttau}$ then $\mcB_W$ is a finite set of trees. Since we will not explicitly need this fact in this section and will need a similar fact for a related construction adapated to the type of preparation maps that are actually used in practice, we do not provide a proof here and refer the interested reader to the related
	Lemma~\ref{l:finite}. 
\end{remark}

The following definition encodes the analogues in our setting of the usual recursive relations used to define a preparation map. The main difference to \cite{Bru18} is that the recursive definitions are not only imposed on a distinguished basis. 
\begin{definition}\label{def:mod_from_Pmap}
	Given a preparation map $P$ on a $P$-good sector $W$, a kernel assignment $A \in \mcA_+^\mco$ and a smooth noise assignment $\xi \in \mcA_-^\infty$, we say that a triple $(\Pi^P, \Pi^{P, \times}, \Gamma^P)$ is associated to $(P,A,\xi)$ on $W$ if
	\begin{enumerate}
		\item $\Pi_x^P, \Pi_x^{P, \times} : W \to C^\infty(\mathbb{R}^d)$ and $\Gamma_{xy}^P : W \to W$,
		\item $\Pi_x^{P, \times}v = N_\mfl(v)$ for each $\mfl$ such that $\Xi_\mfl \in \mcB_W$ and each $v \in V^{\proj \langle \Xi_\mfl \rangle}$,
		\item $\Pi_x^{P, \times} X^k = (\cdot - x)^k$ for each $k$ such that $X^k \in W$,
		\item $\Pi_x^{P, \times} (v \star w) = \Pi_x^{P, \times} v \cdot \Pi_x^{P, \times} w$ whenever $v, w, v\ast w  \in W$,
		\item If $v \in W$, $\zeta \in V_\mfl$ are such that $\mcI^{\zeta,k} v \in W$ then
		\begin{align*}
				\Pi_x^{P, \times} \mcI^{\zeta,k} v = D^k A_\mfl(\zeta) \ast \Pi_x^P v - \sum_{|j|_{\mfs} < |\mcI^{\zeta,k} v|_{\mfs}} \frac{(\cdot - x)^j}{j!} D^{k+j} A_\mfl(\zeta) \ast \Pi_x^P v(x),
		\end{align*}
		\item $\Pi_x^P = \Pi_x^{P, \times} P$,
		\item $\Gamma_{xy}^P = (\id \otimes (g_x^P)^{-1} (g_y^P)) \Delta$ where each $g_x^P \in \mcG$ satisfies $g_x^P(X_i) = -x_i$, and whenever $v \in W$, $\zeta \in V_\mfl$ and $k \in \mathbb{N}^d$ are such that $\mcI^{\zeta, k} v \in V_{\mcT_+(R)}$
		\begin{align*}
			g_x^P(\mcI^{\zeta,k} v) = - \sum_{| j |_{\mfs} \le |\mcI_\mfl^{\zeta,k} v|_\mfs} \frac{(-x)^j}{j!} D^{j+k} A_\mfl(\zeta_\mfl) \ast \Pi_x^P v (x) .
		\end{align*}
		\end{enumerate}
\end{definition}
\begin{remark}
	The final point of the last definition specifies the value of $g_x^P$ only on $V^{\proj \sttau}$ for $\sttau$ lying in the subalgebra generated by 
	\begin{align*}
		\bar{\mcB}_W = \{X^k: k \in \mathbb{N}^d\} \cup \{ \mcI_k^\mfl \boldsymbol{\tau} \in \mcT_+(R) : \boldsymbol{\tau} \in \mcB_W\}.
	\end{align*}
	However, any extension of $g_x^P$ defined in this way to an element of $\mcG$ will have the same action on $W$. Since the imporant object is the map $\Gamma_{xy}^P$ and not the corresponding characters, we do not distinguish between such extensions. This corresponds to viewing $W$ as a regularity structure whose structure group is the quotient of $\mcG$ by the kernel of the action of $\mcG$ on $W$. 
	
	One should of course check that at least one extension of $g_x^P$ to an element of $\mcG$ exists. To this end, we note that $\bar{\mcB}_W^c \coloneqq \mcT_+(R) \setminus \bar{\mcB}_W$ is such that
	\begin{align*}
		V_{\mcT_+(R)} = \left ( \bigoplus_{\boldsymbol{\tau} \in \bar{\mcB}_W} V^{\proj \sttau} \right ) \oplus \left ( \bigoplus_{\boldsymbol{\tau} \in \bar{\mcB}_W^c} V^{\proj \sttau}  \right )
	\end{align*}
	with $\bigoplus_{\boldsymbol{\tau} \in \bar{\mcB}_W^c} V^{\proj \sttau}$ being an algebra ideal in $V_{\mcT_+(R)}$. In particular, it follows that the projection $\pi$ onto $\bigoplus_{\boldsymbol{\tau} \in \bar{\mcB}_W} V^{\proj \sttau}$ along $\bigoplus_{\boldsymbol{\tau} \in \bar{\mcB}_W^c} V^{\proj \sttau} $ is continuous and multiplicative so that $g_x^P \circ \pi$ is a suitable extension.
	 \end{remark}
The following definition will be used to ensure that we assume a high enough order of kernel assignment to apply the Extension Theorem given in \cite[Theorem 5.14]{Hai14} on the sector $W$.
	 \begin{definition}\label{def:goodasdf}
	 	If $W$ is a good sector then we define its order via\footnote{The presence of the term $\max ( \mfs )$ may look surprising at first. When tracking the order of the kernels used in the Extension Theorem of \cite[Theorem 5.14]{Hai14}, this term comes from applying the anisotropic Taylor formula (see e.g.\  Lemma~\ref{Tay} or \cite[Proposition A.1]{Hai14}). We expect that this condition could be improved at the cost of replacing the norm on regularising kernels by suitable H\"older norms.}
	 	\begin{align*}
	 	\mathrm{ord}(W) 
	 	\coloneqq 
	 	\max \Big\lbrace 
					| \boldsymbol{\tau} |_{\mfs} + | \mfl |_{\mfs} + \max ( \mfs ) , 
				 	| k |_{\mfs}  - \lfloor \min \mcA_{W} \rfloor :
			\mcI_{\mfl}^k \boldsymbol{\tau} \in \mcB_W	 		
	 		\Big\rbrace .
	 	\end{align*}
	 	Here we recall that $\mcA_W$ denotes the set of degrees appearing in the sector $W$.
	 \end{definition} 
	 Our next goal is to prove the existence and uniqueness of a triple $(\Pi^P, \Pi^{P, \times}, \Gamma^P)$ as in Definition~\ref{def:mod_from_Pmap}. To do this, we adapt the construction from \cite{Bru18} in the scalar valued setting. The approach of \cite{Bru18} is to construct the model by induction with respect to the ordering $\prec$ defined below.
	 \begin{definition} \label{def:prec}
	 	We define a partial order $\preceq$ on $\mcT$ by declaring that
	 	\begin{align*}
	 		\boldsymbol{\sigma} \prec \boldsymbol{\tau} \iff |	\boldsymbol{\sigma}|_\# < |\boldsymbol{\tau}|_\# \text{ or }  |	\boldsymbol{\sigma}|_\# = |\boldsymbol{\tau}|_\#, |\boldsymbol{\sigma}|_\mfs < |\boldsymbol{\tau}|_\mfs .
	 	\end{align*}
	  \end{definition}
	  	\begin{remark}
	  			It is not the case that for any fixed tree $\btau$ there are finitely many trees $\bsigma$ such that $\bsigma \prec \btau$. For example, any tree with two noises is preceded by all trees with one noise. The reader may then worry that induction with respect to $\prec$ is not justified. However we note that it is the case that $\prec$ is a well-founded binary relation. Indeed, for any non-empty set of trees $\mcS_0$ one can first consider the subset $\mcS_1$ consisting of all trees in $\mcS_0$ with the fewest number of noises. Then one can consider the subset $\mcS_2$ of $\mcS_1$ consisting of those trees of least degree (this is possible by the assumption that $\mcA$ is locally finite and bounded below). Any element $\boldsymbol{\tau} \in \mcS_2$ then has the property that $\boldsymbol{\sigma} \prec \boldsymbol{\tau}$ implies that $\boldsymbol{\sigma} \not \in \mcS_0$ as required. This means that in place of usual induction, in this type of context it is possible to instead appeal to Noetherian induction.
	  		\end{remark}
	  		
	  		\begin{prop}\label{prop:mod_prep_1}
	  			Suppose that $P$ is a preparation map defined on a $P$-good sector $W$, $A \in \mcA_+^\mco$ with $\mco > \mathrm{ord}(W)$ and that $\xi \in \mcA_-^\infty$. Then there exists a unique triple $(\Pi^P, \Pi^{P, \times}, \Gamma^P)$ associated to $(P, A, \xi)$ on $W$.	 
	  		\end{prop}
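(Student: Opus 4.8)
The plan is to proceed by Noetherian induction with respect to the well-founded relation $\prec$ of Definition~\ref{def:prec}, closely following the scalar-valued argument of \cite{Bru18} but using the universal property of Definition~\ref{def:tensor_product} at every step where one would classically appeal to the tree basis. The induction hypothesis, stated for a tree $\btau \in \mcB_W$, will be that $\Pi_x^P, \Pi_x^{P,\times}$ and $g_x^P$ are already uniquely defined (and have the required continuity in the relevant $V^{\proj \bsigma}$-norms, smoothly in $x$) on all components $V^{\proj \bsigma}$ with $\bsigma \prec \btau$. Since $P$ is a preparation map on the $P$-good sector $W$, by the three defining properties of Definition~\ref{def:prep} we have $(P - \id)|_{V^{\proj \sttau}}$ lands in $\bigoplus_{\bsigma : \bsigma \prec \btau} V^{\proj \bsigma}$ (the degree increase together with the decrease in noise number forces $\prec$-smallness; here Assumption~\ref{ass:reg} is what guarantees the relevant kernel degrees behave as expected), so $\Pi_x^P|_{V^{\proj \sttau}} := \Pi_x^{P,\times} \circ P|_{V^{\proj \sttau}}$ is determined by item (6) of Definition~\ref{def:mod_from_Pmap} as soon as $\Pi_x^{P,\times}$ is defined on $V^{\proj \sttau}$ and on the lower components.

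The heart of the inductive step is to define $\Pi_x^{P,\times}$ on $V^{\proj \sttau}$ and to check it is well-defined. Here one splits into the cases of Definition~\ref{def:mod_from_Pmap}: if $\btau = \Xi_\mfl$ use (2); if $\btau = X^k$ use (3); if $\btau = \btau_1 \btau_2$ is a nontrivial tree product use (4), writing $\btau$ as a product in $\mcT_+(R)$ with both factors in $\mcB_W$ (possible since $W$ is good) and both $\prec \btau$; if $\btau = \mcI_\mfl^{\zeta,k}\bsigma$ is planted use (5), noting $\bsigma \prec \btau$ and that the formula in (5) makes sense because $\mco > \mathrm{ord}(W) \ge |k|_\mfs - \lfloor \min \mcA_W\rfloor$ guarantees the convolutions $D^{k+j}A_\mfl(\zeta)\ast \Pi_x^P v$ are classically meaningful (cf. Remark~\ref{rem:order}). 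The subtlety absent in \cite{Bru18} is that elementary tensors are not linearly independent, so one cannot simply declare values on a basis; instead, for each drawing $\tau \in \btau$ one writes down the candidate map $V^{\times \tau} \to C^\infty(\mbR^d)$ given by the appropriate one of the formulas (2)--(5) applied to $\otimes_\sttau^\tau[(\zeta_e)_e]$, checks it is continuous and multilinear, checks the family over drawings is $\sttau$-symmetric (using that the model quantities on lower trees are already symmetric by the induction hypothesis and that the combinatorial operations $\star, \mcI_\mfl^k$ respect symmetries), and then invokes the universal property of $V^{\proj \sttau}$ to get a unique continuous linear $\Pi_x^{P,\times}|_{V^{\proj \sttau}}$; uniqueness of the factoring map is exactly what gives both well-definedness and uniqueness of the triple. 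The case analysis must be shown to be exhaustive and consistent on overlaps (e.g. when $\btau$ is simultaneously a product and planted only in the trivial sense $\mathbf{1}\cdot\btau$), which is routine.

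Next one defines $g_x^P$ on the subalgebra generated by $\bar{\mcB}_W$ via item (7), again drawing-by-drawing and factoring through the universal property applied to each $V^{\proj \sttau}$ with $\sttau$ a planted tree in $\mcT_+(R)$; here one uses the remark following Definition~\ref{def:mod_from_Pmap} to extend $g_x^P$ to a genuine character on $V_{\mcT_+(R)}$ via composition with the continuous multiplicative projection $\pi$, so that $\Gamma_{xy}^P = (\id \otimes (g_x^P)^{-1}g_y^P)\Delta$ makes sense and maps $W \to W$ (using $P$-goodness and the comodule structure from Lemma~\ref{lem:Hopf}). One then verifies the algebraic identities $\Pi_x^{P,\times}$ multiplicative, $\Pi_x^P = \Pi_x^{P,\times}P$, and $\Pi_x^P\Gamma_{xy}^P = \Pi_y^P$ on $W$ by reducing each to its combinatorial counterpart on trees — precisely the strategy used repeatedly in Section~\ref{s: tensor_oper} (Lemmas~\ref{lem: coass}, \ref{lem:Hopf}, \ref{lem:Gamma_Poly}): both sides factor the same $\sttau$-symmetric family of multilinear maps, hence coincide by uniqueness in the universal property, the combinatorial versions being exactly the identities proved in \cite{Bru18}. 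The analytic bounds \eqref{eq:mod_bounds_1}--\eqref{eq:mod_bounds_2} making $(\Pi^P,\Gamma^P)$ a bona fide model are not claimed in this proposition (it only asserts existence and uniqueness of the \emph{triple}), so they need not be addressed here; they will come later, via a Kolmogorov-type argument, once the BPHZ choice of $P$ is made in Section~\ref{s:Kolmogorov}.

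The main obstacle, and the only genuinely new point relative to \cite{Bru18}, is the well-definedness issue: because $V^{\times \tau} \to V^{\proj \sttau}$ is far from injective, every recursive formula must be checked to descend through the universal property, which in turn forces one to verify $\sttau$-symmetry of the candidate families at each stage — and this symmetry is not automatic but relies on carrying an enriched induction hypothesis asserting that all previously-constructed quantities are themselves symmetric in the appropriate sense. Organising the induction so that this symmetry statement is available exactly when needed (and checking that the combinatorial operations $\mcL_\tau^{(k)}$, $\star$, $\mcI_\mfl^k$, together with the $\prec$-recursion in $P$, all preserve it) is where the care must be concentrated; the remaining verifications are direct transcriptions of the scalar-valued arguments.
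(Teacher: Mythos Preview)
Your proposal is correct and follows essentially the same approach as the paper: Noetherian induction over $\mcB_W$ with respect to $\prec$, case-splitting into noises/polynomials, planted trees, and products, and at each step invoking the universal property of Definition~\ref{def:tensor_product} to factor the candidate $\sttau$-symmetric multilinear family into a well-defined continuous linear map on $V^{\proj\sttau}$. The paper isolates the product case into a standalone Lemma~\ref{lem:multiplication} (which you describe inline) and spells out the continuity bound for the planted case via Young's convolution inequality, but the architecture is identical.

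One small overreach: you include verification of $\Pi_x^P\Gamma_{xy}^P = \Pi_y^P$ as part of this proposition, but Definition~\ref{def:mod_from_Pmap} does not require it, and the paper establishes it separately afterward (by introducing the uncentered $\boldsymbol{\Pi}^P$ of Definition~\ref{def:pre_mod} and showing $\Pi_x^P = \boldsymbol{\Pi}^P\Gamma_{g_x^P}$), not by the direct combinatorial-reduction you sketch.
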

	  		
	  		In preparation for the proof of this result, it will be useful to have prepared a result allowing us to construct multiplicative maps via the universal property contained in Definition~\ref{def:tensor_product}.
	  		
	  		\begin{lemma}\label{lem:multiplication}
	  		    Suppose that $Z$ is a complete commutative topological algebra and that $\boldsymbol{\tau} = \prod_{i=0}^n \boldsymbol{\tau}_i$ with $\boldsymbol{\tau}_i = \mcI_{\mfl_i}^{k_i} \boldsymbol{\sigma}_i$ for $i = 1, \dots, n$ and $\boldsymbol{\tau}_0 = X^k$. Suppose further that for each $i = 0, \dots, n$ we are given a continuous linear map $f_i : V^{\proj \sttaun{i}} \to Z$ such that $\sttaun{i} = \sttaun{j}$ implies that $f_i = f_j$. Then there exists a unique continuous linear
	  		    map $f^{\boldsymbol{\tau}}$ such that the following diagram commutes.
	  		    \begin{center}
	  			        \begin{tikzcd}
	  				            V^{\times \tau} \arrow[r, "\otimes_{\sttau}^\tau"] \arrow[dr, "\mcM_Z^{\otimes n } \circ \bigotimes_{i=0}^n (f_i \otimes_{\sttaun{i}}^{\tau_i})_{i = 0}^n"'] & V^{\proj \sttau} \arrow[d, dotted, "f^{\boldsymbol{\tau}}"] \\ & Z
	  				        \end{tikzcd}
	  			    \end{center}
	  		    Furthermore, $f^{\boldsymbol{\tau}}$ defined in this way is multiplicative in the stronger sense that if $\boldsymbol{\tau} = \boldsymbol{\sigma}_1 \boldsymbol{\sigma}_2$ and $v_i \in V^{\proj \langle \boldsymbol{\sigma}_i \rangle }$ then $f^{\boldsymbol{\tau}}(v_1 \star v_2) = f^{\boldsymbol{\sigma}_1}(v_1) f^{\boldsymbol{\sigma}_2}(v_2)$.
	  		\end{lemma}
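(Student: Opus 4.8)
The plan is to obtain $f^{\boldsymbol{\tau}}$ by a single application of the universal property of $V^{\proj\sttau}$ from Definition~\ref{def:tensor_product}, in the same spirit as the construction of the coproducts in Section~\ref{s: tensor_oper}. Writing
\[
g^\tau \coloneqq \mcM_Z^{\otimes n}\circ\bigotimes_{i=0}^n\big(f_i\circ\otimes_{\sttaun{i}}^{\tau_i}\big)\colon V^{\times\tau}\to Z,\qquad \tau\in\btau,
\]
for the slanted arrow appearing in the diagram, it suffices to check that the family $(g^\tau)_{\tau\in\btau}$ is a $\sttau$-symmetric family of continuous multilinear maps into the complete locally convex topological vector space $Z$; the universal property then yields a unique continuous linear $f^{\boldsymbol{\tau}}\colon V^{\proj\sttau}\to Z$ with $f^{\boldsymbol{\tau}}\circ\otimes_{\sttau}^\tau=g^\tau$ for every drawing $\tau$, which is exactly the assertion of the lemma, uniqueness included.

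First I would make sense of $g^\tau$ invariantly. Since $\boldsymbol{\tau}=\prod_{i=0}^n\boldsymbol{\tau}_i$ with $\boldsymbol{\tau}_0=X^k$ a pure monomial, each drawing $\tau\in\btau$ has a single root, and the edges incident to it cut $\tau$ into a bare decorated node (carrying the $X^k$ part) together with $n$ branches, each a drawing of one of the $\boldsymbol{\tau}_i$, $i\ge 1$. Assigning to each branch an index $i$ with $\sttaun{i}$ equal to its isomorphism class involves a choice when some of the $\boldsymbol{\tau}_i$ coincide, but any two such choices differ by a permutation $\pi$ of $\{1,\dots,n\}$ with $\sttaun{i}=\sttaun{\pi(i)}$, hence with $f_i=f_{\pi(i)}$ by hypothesis; since $Z$ is commutative and $\mcM_Z^{\otimes n}$ is the iterated multiplication, $g^\tau$ does not depend on the choice. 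Continuity and multilinearity of $g^\tau$ are then immediate, each $\otimes_{\sttaun{i}}^{\tau_i}$ being norm-$1$ multilinear, each $f_i$ continuous linear, and multiplication on $Z$ being jointly continuous and multilinear.

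The main work — and the step I expect to require the most delicate bookkeeping — is $\sttau$-symmetry. Given drawings $\tau,\bar\tau\in\btau$ and $\gamma\in\hom_{\sttau}(\tau,\bar\tau)$, the isomorphism $\gamma$ fixes the root and therefore maps the root-incident edges of $\tau$ bijectively onto those of $\bar\tau$, inducing a permutation $\pi$ of $\{1,\dots,n\}$ with $\sttaun{i}=\sttaun{\pi(i)}$ and tree isomorphisms $\gamma_i\colon\tau_i\to\bar\tau_{\pi(i)}$ lying in $\hom_{\sttaun{i}}(\tau_i,\bar\tau_{\pi(i)})$ (with $\gamma_0$ the trivial map on the monomial part). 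Unfolding $g^{\bar\tau}\circ\gamma$ on a tuple $(\zeta_e)_{e\in E(\tau)}$, I would use the $\sttaun{i}$-symmetry of the families $(\otimes_{\sttaun{i}}^a)_a$, i.e.\ $\otimes_{\sttaun{\pi(i)}}^{\bar\tau_{\pi(i)}}\circ\gamma_i=\otimes_{\sttaun{i}}^{\tau_i}$, together with $f_{\pi(i)}=f_i$, to match the $i$-th factor exactly; the reordering of the $n$ factors induced by $\pi$ is absorbed by commutativity of $Z$. This gives $g^{\bar\tau}\circ\gamma=g^\tau$, so the family is $\sttau$-symmetric and the universal property produces $f^{\boldsymbol{\tau}}$.

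Finally, for the strong multiplicativity both sides of $f^{\boldsymbol{\tau}}(v_1\star v_2)=f^{\boldsymbol{\sigma}_1}(v_1)\,f^{\boldsymbol{\sigma}_2}(v_2)$ are separately continuous and bilinear in $(v_1,v_2)$, so it suffices to check the identity on elementary tensors $v_j=\otimes_{\langle\boldsymbol{\sigma}_j\rangle}^{\sigma_j}\big((\zeta_e)_{e\in E(\sigma_j)}\big)$, where $\boldsymbol{\sigma}_1,\boldsymbol{\sigma}_2$ are the two sub-products (so that each $f^{\boldsymbol{\sigma}_j}$ is again of the form just constructed). By the defining property of $\star$ on elementary tensors (the computation used in the proof of Lemma~\ref{lemma: product}), $v_1\star v_2=\otimes_{\sttau}^{\sigma_1\sigma_2}\big((\zeta_e)_{e\in E(\sigma_1\sqcup\sigma_2)}\big)$; applying $f^{\boldsymbol{\tau}}$ and unfolding $g^{\sigma_1\sigma_2}$, the product over the branches of $\sigma_1\sigma_2$ splits — again using commutativity of $Z$ — into the product over the branches of $\sigma_1$ times that over the branches of $\sigma_2$, which is precisely $f^{\boldsymbol{\sigma}_1}(v_1)\,f^{\boldsymbol{\sigma}_2}(v_2)$. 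The only point needing a word of care is the monomial factor $X^k$, which may be distributed across $\boldsymbol{\sigma}_1$ and $\boldsymbol{\sigma}_2$; this is harmless since in every application the map on the polynomial part is chosen multiplicatively (indeed $1\mapsto(\cdot-x)^k$ in the construction of models), so I would either record this as a standing compatibility of the data $(f_i)$ on the polynomial factor or simply state the claim for splittings leaving $X^k$ undivided, which is all that is used in the sequel.
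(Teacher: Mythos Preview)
Your proof is correct and follows essentially the same route as the paper: construct $f^{\btau}$ via the universal property by verifying that the family $(g^\tau)_{\tau\in\btau}$ is multilinear, continuous and $\sttau$-symmetric, decomposing a root-preserving isomorphism $\gamma$ into a permutation of branches and branchwise isomorphisms to handle symmetry. For the strong multiplicativity the paper argues via uniqueness of the factoring map through $V^{\proj(\sssigman{1}\otimes\sssigman{2})}$ rather than your density-plus-continuity reduction to elementary tensors, but this is only a cosmetic difference; your observation about the monomial factor $X^k$ possibly splitting between $\bsigma_1$ and $\bsigma_2$ is a genuine subtlety that the paper also leaves implicit, and your suggested resolution (compatibility of $f_0$ on the polynomial part, which holds in every application) is the right one.
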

	  		\begin{remark}
	  		    Since $\Xi_\mfl = \mcI_{\mfl} 1$, the above lemma covers all trees in $\mcT$. 
	  		\end{remark}
	  		\begin{proof}
	  		    We remark that by commutativity of $\ast_Z$, the leftmost arrow does not depend on the order of the $\tau_i$ and so is completely specified by the information $\tau \in \boldsymbol{\tau}$.
	  		
	  		    By appealing to the universal property of $V^{\proj \sttau}$, to show that $f$ is well-defined we only need to show that the $\tau$-indexed family of leftmost arrows is multilinear, continuous and $\sttau$-symmetric.
	  		
	  		    Multilinearity is clear due to the bilinearity of $\ast_Z$, linearity of $f_i$ and multilinearity of $\otimes_{\sttaun{i}}^{\tau_i}$. Continuity is clear by componentwise continuity and the continuity of the multiplication on $Z$. This leaves us only to consider symmetry. 
	  		    We fix $\tau, \bar \tau \in \boldsymbol{\tau}$ and 
	  		    $\gamma \in \hom_{\sttau} (\tau, \bar{\tau} )$. 
	  		    We need to show that 
	  		    \begin{align*}
	  			        \mcM_Z^{\otimes n }\bigotimes_{i=0}^n \left ( f_i \otimes_{\langle \bar{\boldsymbol{\tau}}_i \rangle}^{\bar{\tau}_i} \right ) \circ \gamma = \mcM_Z^{\otimes n }\bigotimes_{i=0}^n \left( f_i \otimes_{\sttaun{i}}^{\tau_i} \right).
	  			    \end{align*}
	  		    To that end, we write $\tau = \prod_{i = 0}^n \tau_i, \bar{\tau} = \prod_{i = 0}^n \bar{\tau}_i$ with $\tau_0 = \bar{\tau}_0 = X^k$, $\tau_i = \mcI_{\mfl_i}^{k_i} \sigma_i$ and $\bar{\tau}_i = \mcI_{\bar{\mfl}_i}^{\bar{k}_i} \bar{\sigma}_i$. We note that since $\gamma$ preserves incidency at the root, we can equivalently think of $\gamma$ as being specified by a bijection $\gamma_\rho: \{1, \dots, n\} \to \{1, \dots, n\}$ and a family of isomorphisms $\gamma_i \in \hom(\tau_{\gamma_\rho^{-1}(i)}, \bar{\tau}_i)$. The equivalence is obtained via the relation $\gamma(e) = \sum_{i} \gamma_i(e) \mathbf{1}_{e \in E(\tau_{\gamma_\rho^{-1}(i)})}$. With this notation in hand, we can compute
	  		    \begin{align*}
	  			    	\mcM_Z^{\otimes n }\bigotimes_{i=0}^n \left ( f_i \otimes_{\langle \bar{\boldsymbol{\tau}}_i \rangle}^{\bar{\tau}_i} \right ) \circ \gamma \left ( \zeta_e \right )_{e \in E(\tau)} &= \mcM_Z^{\otimes n}\bigotimes_{i=0}^n \left ( f_i \otimes_{\langle \bar{\boldsymbol{\tau}}_i \rangle}^{\bar{\tau}_i} \left ( \zeta_{\gamma_i^{-1}(e)} \right )_{e \in E(\tau_{\gamma_\rho^{-1}(i)})} \right )
	  			    	\\
	  			    	& = \mcM_Z^{\otimes n }\bigotimes_{i=0}^n \left ( f_i \otimes_{\langle {\boldsymbol{\tau}}_{\gamma_\rho^{-1}(i)} \rangle}^{{\tau}_{\gamma_\rho^{-1}(i)}} \left ( \zeta_{\gamma_e} \right )_{e \in E(\tau_{\gamma_\rho^{-1}(i)})} \right )
	  			    \end{align*}
	  		    where to reach the second line, we used $\sttau$-symmetry of the maps $\left ( \otimes_\sttau^\tau \right )_{\tau \in \boldsymbol{\tau}}$. By assumption, $f_i = f_{\gamma_\rho^{-1} (i)}$. Therefore
	  		      \begin{align*}
	  			    	\mcM_Z^{\otimes n }\bigotimes_{i=0}^n \left ( f_i \otimes_{\langle \bar{\boldsymbol{\tau}}_i \rangle}^{\bar{\tau}_i} \right ) \circ \gamma \left ( \zeta_e \right )_{e \in E(\tau)} 
	  			    	& = \mcM_Z^{\otimes n }\bigotimes_{i=0}^n \left ( f_{\gamma_\rho^{-1}(i)} \otimes_{\langle {\boldsymbol{\tau}}_{\gamma_\rho^{-1}(i)} \rangle}^{{\tau}_{\gamma_\rho^{-1}(i)}} \left ( \zeta_{\gamma_e} \right )_{e \in E(\tau_{\gamma_\rho^{-1}(i)})} \right )
	  			    	\\ & = \mcM_Z^{\otimes n} \bigotimes_{i=0}^n \left( f_i \otimes_{\sttaun{i}}^{\tau_i} \right) \left ( \zeta_e \right )_{e \in E(\tau)}    \end{align*}
	  		    as desired, where to pass to the second line we used commutativity of the product in $Z$.
	  		    
	  		    It remains to prove the stronger form of multiplicativity. For this, by appealing to the uniqueness part of the definition of $V^{\proj (\sssigman{1} \otimes \sssigman{2})}$, we need to show that
	  		    $f^{\boldsymbol{\tau}} \circ \star \circ \bigotimes_{\sssigman{1} \otimes \sssigman{2}}^{\sigma_1 \sqcup \sigma_2} = \mcM_Z (f^{\boldsymbol{\sigma}_1} \otimes f^{\boldsymbol{\sigma}_2})\circ \bigotimes_{\sssigman{1} \otimes \sssigman{2}}^{\sigma_1 \sqcup \sigma_2}$. We note that by definition of $\star$, we have that $\star \circ \bigotimes_{\sssigman{1} \otimes \sssigman{2}}^{\sigma_1 \sqcup \sigma_2} = \bigotimes_\sttau^\tau$ so that by definition of $f^{\boldsymbol{\tau}}$, the left hand side is nothing other than $\mcM_Z^{\otimes n} \big ( \otimes_{i=0}^n f_i \bigotimes_{\sttaun{i}}^{\tau_i} \big )$. Meanwhile, by construction of the tensor product of symmetric sets, the right hand side is $\mcM_Z (f^{\boldsymbol{\sigma}_1} \bigotimes_\sssigman{1}^{\sigma_1} \otimes f^{\boldsymbol{\sigma}_2} \bigotimes_\sssigman{2}^{\sigma_2})$. The result then follows from the definition of $f^{\boldsymbol{\sigma}_i}$ and the associativity of the product in $Z$.
	  		\end{proof}
	  		
	  		With this multiplication result in hand, we are now ready to proceed with the proof of Proposition~\ref{prop:mod_prep_1}.
	  		\begin{proof}[Proof of Proposition~\ref{prop:mod_prep_1}]
	  		 In a first part of the proof, we consider the maps $\Pi^P, \Pi^{P, \times}$. We define the statement $\Phi(\btau)$ to be that there exists a unique pair of continuous linear maps $$\Pi_x^P, \Pi_x^{P, \times} : \bigoplus_{\substack{\boldsymbol{\sigma} \in \mcB_W \\ \boldsymbol{\sigma} \preceq \boldsymbol{\tau}}}V^{\proj \sssigma} \to C^\infty(\mathbb{R}^d)$$ that satisfies the relations specified in Definition~\ref{def:mod_from_Pmap}.  We remark that it is straightforward to check that this set of relations indeed only refers to components corresponding to trees satisfying $\boldsymbol{\sigma} \prec \boldsymbol{\tau}$. We prove by Noetherian induction that $\Phi(\btau)$ holds for all $\btau \in \mcB_W$. This means we have to show that
	  			$$\big (\forall \boldsymbol{\sigma} \prec \boldsymbol{\tau}, \Phi(\boldsymbol{\sigma}) \big ) \implies \Phi(\boldsymbol{\tau}).$$
	  			It is straightforward to show that the set of statements $\Phi(\boldsymbol{\sigma})$ for $\bsigma \prec \btau$ implies the existence of a unique pair of maps $\Pi_x^P, \Pi_x^{P, \times}$ satisfying the recursive relations and defined on $\bigoplus_{\bsigma \prec \btau} V^{\proj \sssigma}$. Therefore, it suffices to show that each of these maps is uniquely defined and continuous on $V^{\proj \sttau}$. Furthermore, since $\mcQ_{\bsigma} (P - \id) v \ne 0$ for $v \in V^{\proj \sttau}$ implies that $\bsigma \prec \btau$, the relation $\Pi_x^P = \Pi_x^{P, \times} + \Pi_x^{P, \times} (P-\id)$ implies that $\Pi_x^P$ is uniquely defined and continuous on $V^{\proj \sttau}$ as soon as $\Pi_x^{P, \times}$ is. Therefore it remains only to consider $\Pi_x^{P, \times}$.
	  				  			
	  			As in the scalar valued setting, we can divide into the cases where $\boldsymbol{\tau} \in \{X^k, \Xi_\mfl\}$, where $\boldsymbol{\tau}$ is planted and where $\boldsymbol{\tau}$ is a non-trivial product of planted trees. The first case is immediate from the definitions so we focus on the latter two. In fact, the case of multiplication is precisely the situation of Lemma ~\ref{lem:multiplication} so that it remains to consider the case of planted trees.
	  			
	  			Our strategy is now to show that the recursive definition uniquely defines the required map via the universal property stated in Definition~\ref{def:tensor_product}. We write $\btau = \mcI_\mfl^k \bsigma$ and we will show that the family of maps
	  			\begin{align}\label{eq:a45}
	  				(\zeta_*) \sqcup (\zeta_e)_{e \in E(\sigma)} \mapsto \Pi_x^{P, \times} \mcI^{\zeta_*, k} \bigotimes_{\sssigma}^\sigma (\zeta_e)_{e \in E(\sigma)}
	  			\end{align}
	  			indexed by $\tau = \mcI_\mfl^k \sigma \in \btau$ is a multilinear, continuous, $\sttau$-symmetric family. Once this is proven, the universal property will yield $\Pi_x^{P, \times} : V^{\proj \sttau} \to C^\infty(\mbR^d)$ as the unique factoring map. We note that it follows by a simple argument using linearity, density and continuity that the required recursive relation then holds on an arbitrary element of the form $\mcI^\zeta v \in V^{\proj \sttau}$.
	  			
	  			Multilinearity follows from the fact that convolution is bilinear, $\otimes_{\sssigma}^\sigma$ is multilinear and $A_\mfl$ and $\Pi_x^P \boldsymbol{\sigma}$ are both linear. Symmetry follows from the fact that any symmetry between $\tau = \mcI_\mfl \sigma$ and $\bar{\tau} = \mcI_\mfl \bar{\sigma}$ maps the trunk of $\tau$ (the unique edge incident to the root of $\tau$) to the trunk of $\bar{\tau}$ and induces a symmetry from $\sigma$ to $\bar{\sigma}$. The result then follows from the fact that $\otimes_{\sssigma}^\sigma$ is a $\sssigma$-symmetric family of maps.
	  			    
	  			To complete the first part of the proof, we have to show that \eqref{eq:a45} is continuous from $V^{\times \tau} = V^{\times \sigma} \times V_{\mfl}$ to\footnote{We recall that we endow $C^{\infty} ( \mathbb{R}^d )$ with its natural Fr\'echet topology induced by the family of seminorms $\| \cdot \|_{C^r (\mfK )}$, where $r \in \mathbb{N}$ and $\mfK \subset \mathbb{R}^d$ runs through any (countable) exhaustion of compact sets.}
	  			$C^{\infty} ( \mathbb{R}^d )$.
	  			Since the map \eqref{eq:a45} is multilinear, it suffices to prove that the right-hand side thereof is bounded in each of the $\| \cdot \|_{C^r ( \mathfrak{K} )}$ norms by a constant multiple of $\| \zeta_{*}\| \prod_{e \in E ( \sigma )} \| \zeta_e \|$.
	  			To that effect, it suffices to prove that for all $r, \mathfrak{K}$, 
	  				\begin{align} \label{eq:a46}
	  						\| A_\mfl(\zeta_*) \ast D^k \Pi_x^P \bigotimes_\sssigma^\sigma (\zeta_e)_{e \in E(\sigma)}] \|_{C^r ( \mathfrak{K} )}
	  						& \lesssim_{r, \mathfrak{K}} \| \zeta_{*}\| \prod_{e \in E ( \sigma )} \| \zeta_e \| ,
	  					\end{align}
	  			since this is enough to control all terms appearing in the recursive definition of the right-hand side of \eqref{eq:a45}.
	  			For that purpose, on the one hand we know by assumption that $\Pi_x^P \bigotimes_\sssigma^\sigma \colon V^{\times \sigma} \to C^{\infty} ( \mathbb{R}^d )$ itself is continuous, so that for all $r, \mathfrak{K}$, 
	  				\begin{align*}
	  						\| D^k \Pi_x^P \bigotimes_\sssigma^\sigma (\zeta_e)_{e \in E(\sigma)}\|_{C^r ( \mathfrak{K} )}
	  						& \lesssim_{r, \mathfrak{K}} \prod_{e \in E ( \sigma )} \| \zeta_e \| .
	  					\end{align*}
	  			On the other hand, 
	  			by the Definition~\ref{d:rk3}, we obtain that given any kernel $K \in \mathcal{K}_0^{|\mfl|}$, for all $f \in C^{\infty} ( \mathbb{R}^d )$ 
	  			$\| K * f \|_{C^r ( \mfK)} \leq \| K \|_{L^1} \, \| f \|_{C^{r} ( \bar \mfK )} \lesssim \| K \|_{\mathcal{K}_0^{|\mfl|}} \, \| f \|_{C^{r} ( \bar \mfK )}$ by Young's convolution inequality.
	  			Applying this to $K = A_\mfl(\zeta_\mfl)$ and $f = D^k \Pi_x^P \bigotimes_\sssigma^\sigma (\zeta_e)_{e \in E(\sigma)}$, we get the desired inequality \eqref{eq:a46}, since by Definition~\ref{def:ka} we have $\| A_\mfl(\zeta_\mfl) \|_{\mathcal{K}_0^{|\mfl|}} \lesssim \| \zeta_{\mfl} \|$. This completes the first part of the proof.
	  			
	  			It remains to show that the maps $\Gamma^P$ are uniquely well-defined on $W$ by the last point in Definition~\ref{def:mod_from_Pmap}. This follows by a similar Noetherian induction to the case of $\Pi_x^P, \Pi_x^{P, \times}$. Since the new ideas in comparison to the scalar valued case are already demonstrated in the argument above, for brevity we omit the details here.
	  			\end{proof}
	  			
	  			It now remains to show that if $(\Pi^P, \Pi^{P, \times}, \Gamma^P)$ is the unique triple associated to $(P, A, \xi)$ on a $P$-good sector $W$ then $(\Pi^P, \Gamma^P)$ is in fact a model. To prove this, we have to check that the algebraic relations for $\Gamma^P$ hold and that both $\Pi^P$ and $\Gamma^P$ satisfy the required analytic estimates. At this stage these estimates are allowed to leverage the smoothness of the driving noise but importantly must be quantitative in the norm on $V^{\proj \sttau}$.
	  			
	  			In order to check that $\Pi_x^P \Gamma_{xy}^P = \Pi_y^P$, it is convenient to construct an intermediate functional $\boldsymbol{\Pi}^P$ such that $\Pi_x^P = \boldsymbol{\Pi} \Gamma_{g_x^P}$. Since $\boldsymbol{\Pi}^P$ (and its multiplicative analogue) will also be used to define the BPHZ model, we introduce it here. 
	  			
	  			\begin{definition}\label{def:pre_mod}
	  				Given a preparation map $P$ on a $P$-good sector $W$, a kernel assignment $A \in \mcA_+^\mco$ and a smooth noise assignment $\xi \in \mcA_-^\infty$, we say that a pair $(\boldsymbol{\Pi}^P, \boldsymbol{\Pi}^{P, \times})$ is associated to $(P,A,\xi)$ on $W$ if
	  				\begin{enumerate}
	  					\item $\boldsymbol{\Pi}^P, \boldsymbol{\Pi}^{P, \times} : W \to C^\infty(\mathbb{R}^d)$,
	  					\item $\boldsymbol{\Pi}^{P, \times} v = N_\mfl(v)$ for each $\mfl$ such that $\Xi_\mfl \in \mcB_W$ and each $v \in V^{\proj \langle \Xi_\mfl \rangle}$,
	  					\item $\boldsymbol{\Pi}^{P, \times} X^k = (\cdot - x)^k$ for each $k$ such that $X^k \in W$,
	  					\item $\boldsymbol{\Pi}^{P, \times} (v \star w) = \boldsymbol{\Pi}^{P, \times} v \cdot \boldsymbol{\Pi}^{P, \times} w$ whenever $v, w, v\ast w  \in W$,
	  					\item If $v \in W$, $\zeta \in V_\mfl$ are such that $\mcI^{\zeta,k} v \in W$ then
	  					$
	  						\boldsymbol{\Pi}^{P, \times} \mcI^{\zeta,k} v = D^k A_\mfl(\zeta) \ast \boldsymbol{\Pi}^P v 
	  					$,
	  					\item $\boldsymbol{\Pi}^P = \boldsymbol{\Pi}^{P, \times} P$.
	  				\end{enumerate}
	  			\end{definition}
	  			
	  			Similarly to Proposition~\ref{prop:mod_prep_1}, we have the following existence and uniqueness result for such pairs. 
	  			\begin{prop}\label{prop:mod_prep_2}
	  					Suppose that $P$ is a preparation map defined on a $P$-good sector $W$ and that $A \in \mcA_+^\mco$ with $\mco > \mathrm{ord}(W)$ and that $\xi \in \mcA_-^\infty$. Then there exists a unique pair $(\boldsymbol{\Pi}^P, \boldsymbol{\Pi}^{P, \times})$ associated to $(P, A, \xi)$ on $W$.	 
	  			\end{prop}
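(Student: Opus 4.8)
The plan is to mirror the proof of Proposition~\ref{prop:mod_prep_1}, of which the present statement is a genuine simplification: the recursive relations of Definition~\ref{def:pre_mod} are exactly those of Definition~\ref{def:mod_from_Pmap} with the Taylor-polynomial correction in the integration rule removed and with the maps $\Gamma^P$ and the characters $g_x^P$ dropped altogether. Accordingly, I would construct $\boldsymbol{\Pi}_x^{P,\times}$, and then put $\boldsymbol{\Pi}_x^P \coloneqq \boldsymbol{\Pi}_x^{P,\times}P$, by Noetherian induction with respect to the well-founded relation $\prec$ of Definition~\ref{def:prec}. For $\btau\in\mcB_W$, let $\Psi(\btau)$ be the statement that there is a unique pair of continuous linear maps $\boldsymbol{\Pi}_x^P,\boldsymbol{\Pi}_x^{P,\times}\colon\bigoplus_{\bsigma\in\mcB_W,\,\bsigma\preceq\btau}V^{\proj\sssigma}\to C^\infty(\mathbb{R}^d)$ obeying the relations of Definition~\ref{def:pre_mod}; exactly as in the proof of Proposition~\ref{prop:mod_prep_1}, one first checks that those relations refer only to components indexed by trees that strictly precede $\btau$ for $\prec$, so the set-up is amenable to Noetherian induction.

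For the induction step, the hypotheses $\Psi(\bsigma)$ for $\bsigma\prec\btau$ yield $\boldsymbol{\Pi}_x^P,\boldsymbol{\Pi}_x^{P,\times}$ on $\bigoplus_{\bsigma\prec\btau}V^{\proj\sssigma}$, and since $P$ is a preparation map the condition $\mcQ_\bsigma(P-\id)|_{V^{\proj\sttau}}\neq 0$ forces $\bsigma\prec\btau$ (Definition~\ref{def:prep}), so the identity $\boldsymbol{\Pi}_x^P=\boldsymbol{\Pi}_x^{P,\times}+\boldsymbol{\Pi}_x^{P,\times}(P-\id)$ reduces everything to extending $\boldsymbol{\Pi}_x^{P,\times}$ uniquely and continuously to $V^{\proj\sttau}$. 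I would then split, as in Proposition~\ref{prop:mod_prep_1}, into the cases $\btau\in\{X^k,\Xi_\mfl\}$ (immediate from the second and third relations of Definition~\ref{def:pre_mod}), $\btau$ a non-trivial product of planted trees (which is precisely the situation of Lemma~\ref{lem:multiplication}, applied with $Z=C^\infty(\mathbb{R}^d)$ equipped with its Fr\'echet topology and with the maps already built on the planted factors, which agree on equal symmetric sets by the inductive uniqueness), and $\btau=\mcI_\mfl^k\bsigma$ planted.

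In the planted case the plan is to apply the universal property of $V^{\proj\sttau}$ from Definition~\ref{def:tensor_product} to the family indexed by $\tau=\mcI_\mfl^k\sigma\in\btau$ given by
\[
	(\zeta_*)\sqcup(\zeta_e)_{e\in E(\sigma)} \longmapsto D^k A_\mfl(\zeta_*)\ast\boldsymbol{\Pi}_x^P\bigotimes_{\sssigma}^{\sigma}(\zeta_e)_{e\in E(\sigma)}.
\]
Multilinearity is clear from bilinearity of convolution, linearity of $A_\mfl$ and multilinearity of $\bigotimes_{\sssigma}^{\sigma}$, while $\sttau$-symmetry follows since any symmetry of $\mcI_\mfl^k\sigma$ fixes the trunk and restricts to a symmetry of $\sigma$, combined with $\sssigma$-symmetry of the family $(\bigotimes_{\sssigma}^{\sigma})_{\sigma\in\bsigma}$. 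For continuity from $V^{\times\sigma}\times V_\mfl$ into $C^\infty(\mathbb{R}^d)$, I would move the derivative onto the smooth factor, writing $D^k A_\mfl(\zeta_*)\ast\boldsymbol{\Pi}_x^P\bigotimes_{\sssigma}^{\sigma}(\zeta_e)=A_\mfl(\zeta_*)\ast D^k\boldsymbol{\Pi}_x^P\bigotimes_{\sssigma}^{\sigma}(\zeta_e)$, and then bound each seminorm $\|\cdot\|_{C^r(\mfK)}$ by a constant multiple of $\|\zeta_*\|\prod_{e\in E(\sigma)}\|\zeta_e\|$ using the inductively assumed continuity of $\boldsymbol{\Pi}_x^P\bigotimes_{\sssigma}^{\sigma}\colon V^{\times\sigma}\to C^\infty(\mathbb{R}^d)$, Young's convolution inequality, and the bound $\|A_\mfl(\zeta_*)\|_{\mcK_0^{|\mfl|}}\lesssim\|\zeta_*\|$ supplied by Definition~\ref{def:ka}; this step is strictly simpler than its counterpart in the proof of Proposition~\ref{prop:mod_prep_1} because no polynomial correction term is present. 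The universal property then produces the unique continuous factoring map $\boldsymbol{\Pi}_x^{P,\times}\colon V^{\proj\sttau}\to C^\infty(\mathbb{R}^d)$, and a routine argument by linearity, density and continuity shows that the recursive relation holds on an arbitrary element $\mcI^\zeta v\in V^{\proj\sttau}$, which closes the induction. I do not anticipate a genuinely new obstacle: the only slightly delicate point, namely arranging the recursion so that each relation refers to $\prec$-smaller trees, is inherited verbatim from the proof of Proposition~\ref{prop:mod_prep_1}, and the treatment of the infinite-dimensional components through the universal property is, if anything, lighter here than there.
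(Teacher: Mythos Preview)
Your proposal is correct and follows exactly the approach the paper intends: the paper omits the proof entirely, stating only that it is ``very similar to that of Proposition~\ref{prop:mod_prep_1}'', and your detailed adaptation of that argument is precisely what is meant. One cosmetic point: the maps $\boldsymbol{\Pi}^P,\boldsymbol{\Pi}^{P,\times}$ of Definition~\ref{def:pre_mod} are the uncentred objects and carry no basepoint subscript, so you should drop the subscript $x$ throughout your write-up.
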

	  			Since the proof is very similar to that of Proposition~\ref{prop:mod_prep_1}, we omit the details here.
	  			\begin{lemma}
	  			    With the definitions above $\Pi_x^P \Gamma_{xy}^P = \Pi_y^P$. In addition, the map $\Gamma_{xy}^P$ is multiplicative with respect to $\star$ and satisfies the relations $\Gamma_{xy}^P \Gamma_{yz}^P = \Gamma_{xz}^P$ and $\Gamma_{xx}^P = \operatorname{Id}$.
	  			\end{lemma}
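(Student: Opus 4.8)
The plan is to introduce the point-independent functional $\boldsymbol{\Pi}^P$ of Definition~\ref{def:pre_mod}, to establish the identity $\Pi_x^P = \boldsymbol{\Pi}^P\,\Gamma_{g_x^P}$ (where $\Gamma_g v = (\id\otimes g)\Delta v$ denotes the structure group action of Definition~\ref{d:tban}), and then to read off all four assertions from the Hopf-algebraic bookkeeping already in place. By Proposition~\ref{prop:mod_prep_2} there is a unique pair $(\boldsymbol{\Pi}^P,\boldsymbol{\Pi}^{P,\times})$ associated to $(P,A,\xi)$ on $W$. Since $P$ commutes with every element of $G_+$ (Definition~\ref{def:prep}) and $\Pi_x^P = \Pi_x^{P,\times}P$, $\boldsymbol{\Pi}^P = \boldsymbol{\Pi}^{P,\times}P$, the identity $\Pi_x^P = \boldsymbol{\Pi}^P\Gamma_{g_x^P}$ reduces to showing $\Pi_x^{P,\times} = \boldsymbol{\Pi}^{P,\times}\Gamma_{g_x^P}$ on $W$.

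First I would prove this reduced identity by Noetherian induction over $\mcB_W$ with respect to $\prec$ (Definition~\ref{def:prec}), paralleling the proof of Proposition~\ref{prop:mod_prep_1} and checking agreement on each component $V^{\proj\sttau}$. For $\btau\in\{X^k,\Xi_\mfl\}$ this is immediate from Definitions~\ref{def:mod_from_Pmap} and \ref{def:pre_mod} together with $g_x^P(X_i)=-x_i$, since then $\Gamma_{g_x^P}X^k=(X-x)^k$ and $\Gamma_{g_x^P}\Xi_\mfl=\Xi_\mfl$. For $\btau$ a non-trivial product of planted trees, both $\Pi_x^{P,\times}$ and $\boldsymbol{\Pi}^{P,\times}\Gamma_{g_x^P}$ are multiplicative — the latter because $\boldsymbol{\Pi}^{P,\times}$ is multiplicative by construction and $\Gamma_{g_x^P}$ is an algebra morphism of $(V_{\mcT(R)},\star)$ (see the next paragraph) — so the claim follows from the inductive hypothesis on the factors. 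The remaining case $\btau=\mcI_\mfl^k\bsigma$ is the only genuinely computational one: one expands $\Gamma_{g_x^P}\mcI^{\zeta,k}v$ using the combinatorial expression for $\underline{\Delta}$ on planted trees recalled in the proof of Lemma~\ref{lem:Gamma_Poly}, applies the inductive hypothesis to the subtree $\bsigma$, and checks that the prescribed value of $g_x^P$ on the trees $\mcI_\mfl^{k+j}(\cdot)$ in Definition~\ref{def:mod_from_Pmap} produces exactly the monomials-times-Taylor-coefficients-at-$x$ that make up the subtraction in the recursion for $\Pi_x^{P,\times}\mcI^{\zeta,k}v$. Structurally this is the same matching of Taylor remainders as in the scalar setting of \cite{Bru18}.

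Granting $\Pi_x^P = \boldsymbol{\Pi}^P\Gamma_{g_x^P}$, the remaining assertions are formal. Writing $\ast$ for the convolution of characters dual to $\Delta^+$, Definition~\ref{def:mod_from_Pmap} reads $\Gamma_{xy}^P = \Gamma_{(g_x^P)^{-1}\ast g_y^P}$, and since $(g,v)\mapsto\Gamma_g v$ is a left action of $\mcG$ (the remark following Definition~\ref{d:tban}) one has $\Gamma_g\Gamma_h=\Gamma_{g\ast h}$; hence $\Gamma_{g_x^P}\Gamma_{xy}^P=\Gamma_{g_x^P\ast(g_x^P)^{-1}\ast g_y^P}=\Gamma_{g_y^P}$ and therefore $\Pi_x^P\Gamma_{xy}^P=\boldsymbol{\Pi}^P\Gamma_{g_x^P}\Gamma_{xy}^P=\boldsymbol{\Pi}^P\Gamma_{g_y^P}=\Pi_y^P$. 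Using in addition associativity of $\ast$ (equivalently coassociativity of $\Delta^+$, Lemma~\ref{lem: coass}) and the counit relation $(\id\otimes\eps)\Delta=\id$ (Lemma~\ref{lem:Hopf}) gives $\Gamma_{xy}^P\Gamma_{yz}^P=\Gamma_{(g_x^P)^{-1}\ast g_y^P\ast(g_y^P)^{-1}\ast g_z^P}=\Gamma_{(g_x^P)^{-1}\ast g_z^P}=\Gamma_{xz}^P$ and $\Gamma_{xx}^P=\Gamma_{(g_x^P)^{-1}\ast g_x^P}=\Gamma_\eps=\id$. Finally, $\Gamma_{xy}^P$ is multiplicative with respect to $\star$ because it equals $\Gamma_g$ for the character $g=(g_x^P)^{-1}\ast g_y^P\in\mcG$, and the action of any character is an algebra morphism of $(V_{\mcT(R)},\star)$: this amounts to $\Delta$ being an algebra morphism, which lifts from the multiplicativity of $\underline{\Delta}$ on combinatorial trees via the universal property of Definition~\ref{def:tensor_product}, in exactly the way the multiplicativity of $\Delta^+$ was obtained in the proof of Lemma~\ref{lem:Hopf}, combined with $g$ being a character.

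The main obstacle is the planted-tree case of the induction in the second paragraph; everything else is manipulation of the already-established Hopf and comodule structure. The delicate point there is the anisotropic multi-index combinatorics ensuring that the monomial terms generated by $g_x^P$ on $\mcI_\mfl^{k+j}(\cdot)$ reassemble precisely the Taylor remainder at $x$ appearing in Definition~\ref{def:mod_from_Pmap}; in comparison with \cite{Bru18} the only new feature is that this verification must be performed at the level of the universal property of $V^{\proj\sttau}$ rather than on a distinguished basis of trees, which is, however, exactly the kind of argument already carried out in the proof of Proposition~\ref{prop:mod_prep_1}.
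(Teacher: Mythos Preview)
Your proposal is correct and follows essentially the same route as the paper: both introduce $\boldsymbol{\Pi}^P$, establish $\Pi_x^P=\boldsymbol{\Pi}^P\Gamma_{g_x^P}$ by Noetherian induction over $\mcB_W$ with respect to $\prec$ (with the same case split into $\{X^k,\Xi_\mfl\}$, products, planted trees, the last case handled via the formula from Lemma~\ref{lem:Gamma_Poly}), and then read off all four assertions from the Hopf-/comodule-structure. The only cosmetic difference is that the paper carries both $\Pi_x^P=\boldsymbol{\Pi}^P\Gamma_{g_x^P}$ and $\Pi_x^{P,\times}=\boldsymbol{\Pi}^{P,\times}\Gamma_{g_x^P}$ through the induction simultaneously, whereas you reduce the former to the latter globally via $P\Gamma=\Gamma P$ before starting; this is equivalent, since in the planted step you still need $\Pi_x^P=\boldsymbol{\Pi}^P\Gamma_{g_x^P}$ on the subtree, which your reduction delivers from the $\times$-version on all $\bsigma'\preceq\bsigma\prec\btau$.
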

	  			\begin{proof}
	  			    The latter two claims follow from the fact that the precomposition by the antipode is the inverse map for the character group of a Hopf algebra equipped with the convolution product. Multiplicativity of $\Gamma_{xy}^P$ follows by writing 
	  			    $$\Gamma_{xy}^P = (\id \otimes g_x^P \mcA) \Delta^+ (\id \otimes g_y^P) \Delta^+$$ 
	  			    and exploiting multiplicativity of each of the constituent maps.
	  			    
	  			    Therefore we check only the first claim. To do this, we use the usual idea of showing that 
	  			    \begin{align}\label{eq:Pi_center}
	  			    \Pi_x^P = \boldsymbol{\Pi}^P \Gamma_{g_x^P}
	  			    \end{align}
	  			    for every $x$ which implies that
	  			    $\Pi_x^P \Gamma_{xy}^P = \boldsymbol{\Pi}^P \Gamma_{g_x^P \mcA \circ g_y^P} = \Pi_y^P.$
	  			
	  			    We show \eqref{eq:Pi_center} by Noetherian induction over $\mcB_W$ with respect to $\prec$. In fact, we prove also the claim that $\Pi_x^{P, \times} = \boldsymbol{\Pi}^{P, \times} \Gamma_{g_x^P}$. As before, we separate the cases where $\boldsymbol{\tau} \in \{X^k, \Xi_\mfl\}$, where $\boldsymbol{\tau}$ is planted and where $\boldsymbol{\tau}$ is a product of planted trees. The first case is straightforward since $\Delta^+ X_i = X_i \otimes 1 + 1 \otimes X_i, \Delta^+ v = v \otimes 1$ for each $v \in V^{\proj \langle \Xi_\mfl \rangle}$ (as in the scalar valued case) and $P$ acts trivially.
	  			We next consider the case where $\tau$ is a product of planted trees. We first consider the result for $\Pi_x^{P, \times}$. By the universal property, it is sufficient to consider elements of the form $\bigotimes_{\sttau}^\tau (\zeta_e)$. We then utilise first the definition of $\star$ and then the multiplicativity of $\boldsymbol{\Pi}^{P, \times}$ and $\Gamma_{g_x^P}$ to write  \begin{equs}
	  				    	\boldsymbol{\Pi}^{P, \times} \Gamma_{g_x^P} \bigotimes_{\sttau}^\tau  = \boldsymbol{\Pi}^{P, \times} \Gamma_{g_x^P} \star^{\otimes (n-1)} \bigotimes_{\otimes_{i=1}^n \sttaun{i}}^{\sqcup_{i=1}^n \tau_i} = \prod_{i=1}^n \boldsymbol{\Pi}^{P, \times} \Gamma_{g_x^P} \bigotimes_{\sttaun{i}}^{\tau_i}.
	  				    \end{equs}
	  			    Since $\boldsymbol{\tau}_i \prec \boldsymbol{\tau}$ for each $i$, by multiplicativity of $\Pi_x^{P, \times}$ we conclude that
	  			    \begin{equs}
	  				    		\boldsymbol{\Pi}^{P, \times} \Gamma_{g_x^P} \bigotimes_{\sttau}^\tau=\prod_{i=1}^n \Pi_x^{P, \times} \bigotimes_{\sttaun{i}}^{\tau_i}  = \Pi_x^{P, \times} \bigotimes_{\sttau}^\tau.
	  				    \end{equs}
	  			    
	  			     For the claim involving $\Pi_x^P$, we use the fact $$\boldsymbol{\Pi}^{P} \Gamma_{g_x^P} = \boldsymbol{\Pi}^{P, \times} \Gamma_{g_x^P} P$$
	  			    since $P$ commutes with the structure group. We then write $P = (P - \id) + \id$ and consider the resulting terms separately. The result for the second term is exactly the previous claim so it suffices to consider only the first. Here, we use the second enumerated point in Definition~\ref{def:prep} and the fact that we may assume the result for all $\boldsymbol{\sigma} \prec \boldsymbol{\tau}$ to conclude that
	  			    $\boldsymbol{\Pi} ^{P, \times} \Gamma_{g_x^P} (P - \id) = \Pi_x^{P, \times}(P - \id).$
	  			     Therefore, in this case we obtain
	  			    $\boldsymbol{\Pi}^{P, \times} \Gamma_{g_x^P}\bigotimes_{\sttau}^\tau = \Pi_x^{P, \times}\bigotimes_{\sttau}^\tau$
	  			    which implies that
	  			    $$\boldsymbol{\Pi}^P \Gamma_{g_x^P}\bigotimes_{\sttau}^\tau = \boldsymbol{\Pi}^{P, \times} P \Gamma_{g_x^P}\bigotimes_{\sttau}^\tau = \boldsymbol{\Pi}^{P, \times} \Gamma_{g_x^P} P\bigotimes_{\sttau}^\tau = \Pi_x^{P, \times} P\bigotimes_{\sttau}^\tau = \Pi_x^P\bigotimes_{\sttau}^\tau.$$
	  			    It remains to consider the case where $\boldsymbol{\tau} = \mcI_\mfl^j \boldsymbol{\sigma}$. By the universal property, it is again enough to consider elements of the form $\bigotimes_{\sttau}^\tau [(\zeta_e)]$. 
	  			
	  			    By applying the formula for $\Gamma_g \bigotimes_{\langle \boldsymbol{\mcI_\mfl^j \tau} \rangle}^{\mcI_\mfl^j \tau} [\{\zeta_\mfl\} \sqcup (\zeta_e)]$ which follows immediately from the equality \eqref{comb_int_ss} obtained in the proof of Lemma~\ref{lem:Gamma_Poly},
	  			    we see that
	  			    \begin{equs}
	  				        \boldsymbol{\Pi}^P \Gamma_{g_x^P} \bigotimes_{\langle \mcI_\mfl^j \boldsymbol{\tau} \rangle}^{\mcI_\mfl^j \tau} [\{\zeta_\mfl\} \sqcup (\zeta_e)]& = D^j A_\mfl(\zeta_\mfl) \ast \boldsymbol{\Pi}^P \Gamma_{g_x^P} \bigotimes_{\sttau}^\tau [(\zeta_e)] + \sum_k \frac{(\cdot)^k}{k!} g_x^P \left (\bigotimes_{\langle \mcI_{\mfl}^{k+j} \btau \rangle}^{\mcI_\mfl^{k+j}\tau } [\{\zeta_\mfl\} \sqcup (\zeta_e)]\right)
	  				        \\
	  				        &= D^j A_\mfl(\zeta_\mfl) \ast \Pi_x^P\bigotimes_{\sttau}^\tau [(\zeta_e)] - \sum_{k} \sum_{\ell} \frac{(\cdot)^k (-x)^\ell}{\ell! k!} D^{j + k+l} A_\mfl(\zeta_\mfl) \ast \Pi_x^P \bigotimes_{\sttau}^\tau [(\zeta_e)](x)
	  				        \\
	  				        &= D^j A_\mfl(\zeta_\mfl) \ast \Pi_x^P \bigotimes_{\sttau}^\tau [(\zeta_e)] - \sum_k \frac{(\cdot - x)^k}{k!} D^{k+j} A_\mfl(\zeta_\mfl) \ast \Pi_x^P \bigotimes_{\sttau}^\tau [(\zeta_e)](x)
	  				        \\ 
	  				        &= \Pi_x^P \bigotimes_{\langle \mcI_\mfl^j \boldsymbol{\tau} \rangle}^{\mcI_\mfl^j \tau} [\{\zeta_\mfl\} \sqcup (\zeta_e)]
	  				    \end{equs}
	  			    where the second line follows by the result for all $\boldsymbol{\sigma} \prec \boldsymbol{\tau}$. The result for $\Pi_x^{P, \times}$ follows in the same way because $P$ acts trivially on planted trees.
	  			\end{proof}
	  			
	  			In order to conclude the section on algebraic renormalisation it remains to show that $(\Pi^P, \Gamma^P)$ satisfy the analytic requirements for a model. We emphasise that at this stage, this is necessary only for a fixed smooth driving noise. In particular no uniformity with respect to something like a mollification scale is needed. However, we also point out that in comparison to the scalar valued setting some more care is needed since we cannot exploit finite dimensionality and linearity to get the right dependence on $\|v\|_{V^{\proj \sttau}}$. We will first prove the bounds on $\Pi^P$ and $\Pi^{P, \times}$ and then show that bounds on $\Gamma^P$ follow as a corollary in a more generic way.
	  			
	  			It is convenient to subdivide the model norm into its components corresponding to individual trees. To this end, we introduce
	  			\begin{align}
	  					\|\Pi\|_{\btau; \mfK} &= \sup_{\varphi \in \mcB^r} \sup_{\lambda \in (0,1]} \sup_{x \in \mfK} \sup_{v \in V^{\proj \sttau}} \frac{|\Pi_x v (\varphi_x^\lambda)|}{\lambda^{|\btau|_\mfs} \|v\|_{V^{\proj \sttau}}} \label{refined_pi_norm}
	  					\\
	  					\|\Gamma\|_{\btau; \mfK} &= \sup_{\substack{x,y \in \mfK \\ 0 < |x-y|_{\mfs} < 1}} \sup_{\alpha < |\btau|_\mfs} \sup_{v \in V^{\proj \sttau}} \frac{\|\Gamma_{xy}v \|_{\alpha}}{|x-y|_\mfs^{|\btau|_{\mfs} - \alpha} \|v\|_{V^{\proj \sttau}}} \label{refined_gamma_norm}
	  		\end{align}
	  		which are analogues of the usual model norms restricted to $V^{\proj \sttau}$, 
	  		and where we denoted $\| \cdot \|_{\alpha} = \| \mcQ_{\alpha} \cdot \|_{\oplus_{|\bsigma|_{\mfs} = \alpha} V^{\proj \sssigma}}$. In order to conclude the bounds on $\Gamma^P$, we will need the following lemma which will be useful several times in this paper.
	  		\begin{lemma}\label{lem:Gam_from_Pi}  				Suppose that for $i = 1, 2$, $(\Pi^i, \Gamma^i)$  satisfy all of the requirements of an admissible model on a good sector $W$ of order $\mathrm{ord}(W) < \mco$ except for possibly the analytic bounds where admissibility is with respect to 
			a pair of noise assignments $\xi^i \in \mcA_-$
	  		and a pair of kernel assignments $A^i \in \mcA_+^\mco$. Then for every $\btau \in \mcB_W$ there exists a finite set $\mcB_{\btau} \subset \mcB_W$ such that $\bsigma \in \mcB_{\btau}$ implies that $\bsigma \prec \btau$ and such that there exists $C_{\btau}, R_{\btau}, k_{\btau} > 0$ such that
	  				$$\|\Gamma^1\|_{\btau; \mfK} \lesssim C_{\btau} \Big (1 + \max_{\bsigma \in \mcB_{\btau}} \big ( \|\Pi^1\|_{\bsigma; \mfK+ B(R_{\btau})} + \|\Gamma^1\|_{\bsigma; \mfK+ B(R_\btau)} \big ) \Big  )^{k_{\btau}}.$$
	  				
	  				Furthermore for each $\btau \in \mcB_W$ 
	  				\begin{align*}
	  					\|\Gamma^1 -  \Gamma^2\|_{\btau; \mfK} 
	  					\lesssim 
	  					\|A^1 - A^2\|_{\mcA_+^\mco} +  \max_{\bsigma \in \mcB_W} \|\Pi^1 - \Pi^2\|_{\bsigma; \mfK + B(R_{\btau})} + \|\Gamma^{1} - \Gamma^2\|_{\bsigma, \mfK + B(R_{\btau})} ,
	  				\end{align*}
	  		where the implicit constant can be chosen 
to depend polynomially on
					\begin{align*}
						\max_{\bsigma \in \mcB_{\btau}}\max_i 
						\big( \|\Pi^i\|_{\bsigma, \mfK+B(R_{\btau})} + \|\Gamma^i\|_{\bsigma; \mfK+ B(R_{\btau})} + \|A^i\|_{\mcA_+^\mco} \big) .
					\end{align*}
	  			\end{lemma}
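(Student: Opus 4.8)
The plan is to run the classical argument — going back to \cite{Hai14} — showing that for an admissible model the analytic bounds on $\Gamma$ follow from those on $\Pi$, while adapting it to the Banach-valued setting by the same universal-property bookkeeping used in Section~\ref{ss: alg renorm}. The first step is the standard reduction to characters: since the structure group of $\mcT^{\mathrm{Ban}}$ consists of characters on $V_{\mcT_+(R)}$, for each model we may write $\Gamma^i_{xy}=(\id\otimes\gamma^i_{xy})\Delta$ for a character $\gamma^i_{xy}$; by multiplicativity of $\star$ (Lemma~\ref{lemma: product}) and the fact that $\Delta$ is a fixed continuous linear map preserving the $|\cdot|_\mfs$-grading, it suffices to bound $\gamma^i_{xy}$ on the algebra generators $\{X^k\}\cup\{\mcI_\mfl^k\bsigma\in\bar{\mcB}_W\}$, proving for each such generator $\btau'$ an estimate of the form $|\gamma^i_{xy}(v)|\lesssim|x-y|_\mfs^{|\btau'|_\mfs}\,\|v\|_{V^{\proj\langle\btau'\rangle}}$ for $v\in V^{\proj\langle\btau'\rangle}$ and $x,y\in\mfK$. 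On $X^k$ one gets $\gamma^i_{xy}(X^k)=(y-x)^k$ directly from $\Pi^i_xX^k=(\cdot-x)^k$, $\Pi^i_x\Gamma^i_{xy}=\Pi^i_y$ and $\Gamma^i_{xx}=\id$, which is $O(|x-y|_\mfs^{|k|_\mfs})$.

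For planted generators we proceed by Noetherian induction with respect to $\prec$ (Definition~\ref{def:prec}). Combining the character representation with $\Pi^i_x\Gamma^i_{xy}=\Pi^i_y$, the $\mcI^\zeta$-admissibility relation of Definition~\ref{d:adm_on_tban} (with kernel assignment $A^i$), and the polynomial-correction identity $\mcI^\zeta\Gamma_g-\Gamma_g\mcI^\zeta:V^{\proj\sttau}\to V_{\mcT_{\mathrm{poly}}}$ of Lemma~\ref{lem:Gamma_Poly}, one finds that $\gamma^i_{xy}(\mcI_\mfl^k\bsigma)$, evaluated on an elementary tensor $\bigotimes^{\tau}_{\sttau}(\zeta_e)$, is an explicit finite combination of anisotropic Taylor remainders built out of expressions $(D^{j+k}A^i_\mfl(\zeta)\ast\Pi^i_z\bsigma)(z)$ with $z\in\{x,y\}$, with coefficients polynomial in $x,y$ and in the lower-order characters $\gamma^i$. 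Each such remainder is controlled by the dyadic kernel bounds of Definition~\ref{d:rk2n} (which, since $\mco>\mathrm{ord}(W)$, make all the derivatives and convolutions appearing well defined and bounded), Young's convolution inequality, the hypothesis that $\|\Pi^i\|_{\bsigma;\mfK+B(R_\btau)}<\infty$, and the anisotropic Taylor remainder estimate (Lemma~\ref{Tay}); the summation over dyadic scales converges because $|\mcI_\mfl^k\bsigma|_\mfs>0$, using Assumption~\ref{ass:reg} to control the kernel exponents. The factor $\|\Gamma^1\|_{\bsigma;\mfK+B(R_\btau)}$ enters precisely because comparing $\Pi^1_x\bsigma$ with $\Pi^1_y\bsigma$ in these remainders is done through the consistency relation $\Pi^1_x=\Pi^1_y\Gamma^1_{yx}$. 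The subtrees $\bsigma$ that arise are proper parts of $\btau$ and satisfy $\bsigma\prec\btau$ (Assumption~\ref{ass:reg} again excludes the borderline case), they lie in $\mcB_W$ by the good-sector property of Definition~\ref{def:pgood}, and the finitely many of them produced by fully unfolding $\Delta$, $\Delta^+$ and the antipode on the fixed tree $\btau$ constitute $\mcB_\btau$; the power $k_\btau$ and the constant $C_\btau$ come from the products generated by this unfolding, and $R_\btau$ may be taken equal to $1$. Throughout, the passage from scalar-valued estimates to estimates quantitative in $\|v\|_{V^{\proj\sttau}}$ is carried out exactly as in Section~\ref{ss: alg renorm}: one checks that the relevant families of multilinear maps are continuous and $\sttau$-symmetric and invokes the universal property of Definition~\ref{def:tensor_product}, using that the $\Pi^i_x$ and the algebraic maps $\Delta,\Delta^+,\star$ and the antipode are all continuous.

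The ``difference'' statement is obtained by running the same induction in parallel for both models. Writing $\Gamma^1_{xy}-\Gamma^2_{xy}=(\id\otimes(\gamma^1_{xy}-\gamma^2_{xy}))\Delta$ reduces matters to estimating $\gamma^1_{xy}(v)-\gamma^2_{xy}(v)$, for which one subtracts the two explicit Taylor-remainder expressions of the previous paragraph and splits each difference through $A^1_\mfl(\zeta)\ast\Pi^1-A^2_\mfl(\zeta)\ast\Pi^2=(A^1_\mfl(\zeta)-A^2_\mfl(\zeta))\ast\Pi^1+A^2_\mfl(\zeta)\ast(\Pi^1-\Pi^2)$. This produces the terms $\|A^1-A^2\|_{\mcA_+^\mco}$, $\|\Pi^1-\Pi^2\|_{\bsigma;\mfK+B(R_\btau)}$ and — from unfolding $\gamma_{xy}=(g_x)^{-1}g_y$ in the character group — $\|\Gamma^1-\Gamma^2\|_{\bsigma;\mfK+B(R_\btau)}$ with $\bsigma\prec\btau$, while the accompanying multiplicative factors are products of the first-kind quantities $\|\Pi^i\|,\|\Gamma^i\|,\|A^i\|$, so that the implicit constant depends polynomially on $\max_i\max_{\bsigma\in\mcB_\btau}\big(\|\Pi^i\|_{\bsigma;\mfK+B(R_\btau)}+\|\Gamma^i\|_{\bsigma;\mfK+B(R_\btau)}+\|A^i\|_{\mcA_+^\mco}\big)$, as claimed.

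I expect the only genuinely non-routine point to be the planted-generator estimate: one cannot merely bound the individual characters $g^i_x$ (which grow polynomially in $x$) and subtract, since the cancellation producing the sharp $|x-y|_\mfs^{|\mcI_\mfl^k\bsigma|_\mfs}$ scaling must be made explicit by recognising the relevant combination as an anisotropic Taylor remainder — and this has to be done while keeping everything linear and bounded in the infinite-dimensional variable $v\in V^{\proj\sttau}$, which is exactly where the universal-property machinery of Section~\ref{s: tensor_oper} does the work. The surrounding bookkeeping — the reduction via $\Delta$, the multiplicativity, the Noetherian induction, and the continuity in $A$ — is, once the scalar-valued estimate is in hand, essentially identical to the classical case.
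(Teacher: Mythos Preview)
Your proposal is correct and covers all the necessary ingredients, but it is organised somewhat differently from the paper's proof and is more explicit in one place where the paper appeals to a black box.

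The paper argues directly at the level of $\Gamma^i_{xy}$ acting on $V^{\proj\sttau}$ by Noetherian induction in $\prec$, splitting into the cases $\btau\in\{X^k,\Xi_\mfl\}$, $\btau$ a product, and $\btau$ planted. The noise/polynomial and product cases are handled essentially as you describe (multiplicativity of $\Gamma_{xy}$ together with $\|\star\|=1$ reduces products to their factors). For planted trees $\btau=\mcI_\mfl^k\bsigma$, however, the paper does not carry out the explicit Taylor-remainder / dyadic-kernel analysis you outline; it simply observes that this is precisely the situation of the Extension Theorem \cite[Theorem~5.14]{Hai14} (which goes through under the weaker hypothesis $\mco>\mathrm{ord}(W)$), and takes $\mcB_{\mcI_\mfl^k\bsigma}$ to be the basis of the smallest sector containing $\bsigma$. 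The two-model/two-kernel statement is then obtained by noting that the Extension Theorem admits a minor adaptation when the kernels differ, which is what produces the $\|A^1-A^2\|_{\mcA_+^\mco}$ term. Your route---passing first to the characters $\gamma^i_{xy}$ on algebra generators and then reproving the planted estimate by hand via anisotropic Taylor remainders---amounts to unpacking that Extension-Theorem argument explicitly in the Banach-valued setting; it is self-contained but longer, whereas the paper's version is shorter at the cost of deferring the analytic work to \cite{Hai14}.
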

	  			\begin{proof}
	  				We prove both claims by Noetherian induction over $\mcB_W$ with respect to $\prec$. As usual we split into the cases where $\btau \in \{X^k, \Xi_\mfl\}$, where $\btau$ is planted and where $\btau$ is a product of planted trees. 
	  				
	  				In the first case, if $\btau = \Xi_\mfl$ it follows from the fact that $\Gamma_{xy}^i \in \mcG$ that $\Gamma_{xy}^i$ acts as the identity on $V^{\proj \sttau}$. In the case $\btau = X^k$, it follows from the fact that $\Gamma_{xy}^i \in \mcG$ and $\Pi_x \Gamma_{xy} = \Pi_y$ that $\Gamma_{xy}^i X^k = (X + x - y)^k$. Both claims for such $\btau$ then follow immediately. It remains to consider the cases where $\btau$ is planted or is a product of planted trees. Since $\mcQ_{\alpha} \Gamma_{xy}^1$ factors the multilinear, continuous and $\btau$-symmetric family $\mcQ_{\alpha} \Gamma_{xy}^1 \bigotimes_{\sttau}^\tau$, for the first claim it suffices to bound $\mcQ_{\alpha} \Gamma_{xy}^1 \bigotimes_{\sttau}^\tau$ as a multilinear map. A similar claim also holds for $\Gamma^1 - \Gamma^2$. 
	  				
	  				In the case where $\btau = \prod_{i=0}^n \btau_i$ is a product of planted trees, we use the definition of $\star$ to write
	  				\begin{align*}
	  					\Big\| \Gamma_{xy}^1 \bigotimes_\sttau^\tau (\zeta_e) \Big\|_{\alpha} 
	  					& = \Big \| \Gamma_{xy}^1 \star^{\otimes n} \Big (\bigotimes_{\sttaun{i}}^{\tau_i} (\zeta_e) \Big )_{i=0}^n \Big \|_{\alpha}
	  					= \Big \| \star^{\otimes n} \Gamma_{xy}^1 \Big (\bigotimes_{\sttaun{i}}^{\tau_i} (\zeta_e) \Big )_{i=0}^n \Big \|_{\alpha} \le \sum_{\alpha = \alpha_0 + \dots + \alpha_n} \prod_{i=0}^n \Big \| \Gamma_{xy}^1\bigotimes_{\sttaun{i}}^{\tau_i} (\zeta_e) \Big \|_{\alpha_i}
	  				\end{align*}
	  				where to obtain the final inequality we used the fact that $\star$ respects the grading and is a norm-$1$ bilinear map. Since each $\btau_i$ satisfies $\btau_i \prec \btau$, in this case we may set $\mcB_{\btau} = \{\btau_i: i = 0, \dots, n\}$ so that the inequality above immediately implies the desired bound.		
	  				For the difference $\Gamma^1 - \Gamma^2$ one argues similarly.
	  				It remains to consider the case where $\btau$ is a planted tree. As above, we may appeal to the universal property to reduce to obtaining bounds for symmetric elementary tensors. Since $\bigotimes_{\langle \mcI_\mfl^k \btau \rangle}^{\mcI_\mfl^k \tau} (\zeta_e)_{e \in E(\mcI_\mfl^k \tau)} = \mcI^{\zeta; k} \bigotimes_\sttau^\tau (\zeta_e)_{e \in E(\tau)}$ where $\zeta$ is the entry in $(\zeta_e)_{e \in E(\mcI_\mfl^k \tau)}$ associated to the trunk, this means that the bound for such a tree is essentially the situation of the Extension Theorem\footnote{We note that \cite[Theorem 5.14]{Hai14} assumes to be given a kernel of infinite order however examining the proof shows that on the sector $W$, $\mco > \mathrm{ord}(W)$ is all that is actually used.} given in \cite[Theorem 5.4]{Hai14} so that we can take $\mcB_{\mcI_\mfl^k \btau}$ to be the basis of trees for the smallest sector containing $\btau$ in $\mcT(R)$.  Whilst \cite[Theorem 5.14]{Hai14} does not cover the case of two models which are admissible with respect to different kernels, the result in this case is a minor adaptation which contributes the term $\|\mcA^1 - \mcA^2\|_{\mcA_+^\mco}$ above. Since the adaptation is straightforward, for brevity we omit the details.
	  				\end{proof}
	  			\begin{lemma} \label{l:model_from_pm}
	  			    	Suppose that $P$ is a preparation map defined on a $P$-good sector $W$, that $A \in \mcA_+^\mco$ with $\mco > \mathrm{ord}(W)$ and that $\xi \in \mcA_-^\infty$. If $(\Pi^P, \Pi^{P, \times}, \Gamma^P)$ is the unique triple associated to $(P, A, \xi)$ as in Proposition~\ref{prop:mod_prep_1} then $(\Pi^P, \Gamma^P)$ defined above is an admissible model on $W$ and $(\Pi^{P, \times}, \Gamma^P)$ is a model on $W$.
	  			\end{lemma}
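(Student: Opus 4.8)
The plan is to peel off the three ingredients in turn. The algebraic identities $\Gamma^P_{xy}\Gamma^P_{yz}=\Gamma^P_{xz}$, $\Gamma^P_{xx}=\mathrm{Id}$, multiplicativity of $\Gamma^P_{xy}$ and $\Pi^P_x\Gamma^P_{xy}=\Pi^P_y$ (and likewise $\Pi^{P,\times}_x\Gamma^P_{xy}=\Pi^{P,\times}_y$, obtained from $\Pi^{P,\times}_x=\boldsymbol{\Pi}^{P,\times}\Gamma_{g^P_x}$) were established in the preceding lemma, so nothing further is needed there. The admissibility relations of Definition~\ref{d:adm_on_tban} for $(\Pi^P,\Gamma^P)$ are obtained by unwinding Definition~\ref{def:mod_from_Pmap} together with $\Pi^P_x=\Pi^{P,\times}_xP$ and the structural constraints on $P$ in Definition~\ref{def:prep}: since $P$ acts as the identity on $V^{\proj\langle\Xi_\mfl\rangle}$, $V^{\proj\langle X^k\rangle}$ and on every $V^{\proj\langle\mcI_\mfl^k\bsigma\rangle}$, points (2), (3) and (5) of Definition~\ref{def:mod_from_Pmap} (the last with outer multi-index $0$) immediately yield $\Pi^P_x\Xi_\mfl=N_\mfl$, $\Pi^P_xX^k=(\cdot-x)^k$ and the integration identity \eqref{eq:adm_int}, while $\Pi^P_x(X^kv)=(\cdot-x)^k\Pi^P_xv$ and $\Pi^P_xD_iv=D_i\Pi^P_xv$ follow from multiplicativity of $\Pi^{P,\times}$, the intertwining $PX^k=X^kP$, and the Leibniz rule together with a short reindexing of the Taylor sum in Definition~\ref{def:mod_from_Pmap}(5). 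The real work is the analytic bounds, and by definition of the model seminorms it suffices to control the per-tree quantities \eqref{refined_pi_norm} and \eqref{refined_gamma_norm} for each $\btau\in\mcB_W$.

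I would establish the bounds on $\|\Pi^P\|_{\btau;\mfK}$ and $\|\Pi^{P,\times}\|_{\btau;\mfK}$ by a single (joint) Noetherian induction over $\mcB_W$ with respect to the well-founded relation $\prec$ of Definition~\ref{def:prec}, following the scalar argument of \cite{Bru18} but with the universal property of Definition~\ref{def:tensor_product} playing the role that finite-dimensionality plays there. The crucial observation is that the functional $v\mapsto\Pi^{P,\times}_xv(\varphi^\lambda_x)$ on $V^{\proj\sttau}$ is, by uniqueness in Definition~\ref{def:tensor_product}, exactly the factoring map of the multilinear, continuous, $\sttau$-symmetric family $\tau\mapsto\bigl((\zeta_e)_{e\in E(\tau)}\mapsto\Pi^{P,\times}_x\bigotimes_\sttau^\tau(\zeta_e)(\varphi^\lambda_x)\bigr)$, hence has the \emph{same} norm; so it is enough, in each inductive step, to prove a bound of the form $\bigl|\Pi^{P,\times}_x\bigotimes_\sttau^\tau(\zeta_e)(\varphi^\lambda_x)\bigr|\lesssim\lambda^{|\btau|_\mfs}\prod_e\|\zeta_e\|$ on elementary tensors, which then upgrades automatically to the desired bound in $\|v\|_{V^{\proj\sttau}}$. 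The cases $\btau=X^k$ (exact) and $\btau=\Xi_\mfl$ (a fixed smooth function, with the crude bound $|N_\mfl(v)(\varphi^\lambda_x)|\lesssim_\mfK\|v\|\le\lambda^{|\btau|_\mfs}\|v\|$ valid because $|\btau|_\mfs<0$ and $\lambda\le1$) are immediate; the case of a nontrivial product of planted trees uses multiplicativity of $\Pi^{P,\times}$ together with the standard fact that a product of smooth functions obeying such scaling bounds again obeys the product bound, which is licensed by the conformity of the trees involved to $R$ exactly as in the construction of the canonical model in \cite{Hai14}. Finally $\Pi^P_x=\Pi^{P,\times}_xP$ together with the fact that $P-\mathrm{Id}$ strictly raises the $\mfs$-degree (Definition~\ref{def:prep}(1)) converts the bound on $\Pi^{P,\times}$ into one on $\Pi^P$ at the same tree.

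The remaining and genuinely substantial case is that of a planted tree $\btau=\mcI_\mfl^k\bsigma$, where $P$ acts trivially so that Definition~\ref{def:mod_from_Pmap}(5) reduces to the admissibility identity \eqref{eq:adm_int}: here the required estimate is precisely the convolution bound underlying the Extension Theorem of \cite[Theorem~5.14]{Hai14}. Decomposing $A_\mfl(\zeta)=\sum_nA^{(n)}_\mfl(\zeta)$ by scales, the contributions of scales $2^{-n}\lesssim\lambda$ are controlled directly by the inductive bound on $\Pi^P\bigotimes_{\langle\bsigma\rangle}^\sigma$ tested at scale $\lambda$, whereas for scales $2^{-n}\gtrsim\lambda$ one must keep the Taylor subtraction in \eqref{eq:adm_int} to recover the missing powers of $\lambda$ (this subtraction is what makes positive-degree targets work); the bookkeeping is as in \cite{Hai14} except that only $\mco$ derivatives of the kernel are available, which is exactly why one assumes $\mco>\mathrm{ord}(W)$ (Definition~\ref{def:goodasdf}) and works on the sector $W$ — the same mechanism as in the proof of Lemma~\ref{lem:Gam_from_Pi}. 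Carried out multilinearly in the tensor factors and combined with the universal property as above, this yields the bound on $\|\Pi^P\|_{\mcI_\mfl^k\btau;\mfK}$ (and hence on $\|\Pi^{P,\times}\|$, $P$ being trivial here). With all $\Pi$-bounds in hand, the bounds on $\|\Gamma^P\|_{\btau;\mfK}$ follow from Lemma~\ref{lem:Gam_from_Pi} by a further Noetherian induction, since $(\Pi^P,\Gamma^P)$ has by this point been shown to satisfy every requirement of an admissible model apart from the analytic bounds. Assembling the per-tree bounds over $\mcB_W$ gives $\|\Pi^P\|_{\gamma;\mfK},\|\Pi^{P,\times}\|_{\gamma;\mfK},\|\Gamma^P\|_{\gamma;\mfK}<\infty$ for all $\gamma,\mfK$, so that $(\Pi^P,\Gamma^P)$ is an admissible model on $W$ and $(\Pi^{P,\times},\Gamma^P)$ is a model on $W$.

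I expect the planted-tree estimate to be the main obstacle: it has to reproduce the full strength of the Schauder bound in the Extension Theorem while simultaneously (i) tracking only the finitely many kernel derivatives allowed by Definition~\ref{def:ka} and (ii) being performed multilinearly, so that the universal property can be invoked to obtain the correct (linear) dependence on $\|v\|_{V^{\proj\sttau}}$. The product case is the other point at which, unlike in the scalar setting, one cannot simply appeal to linearity and must argue at the level of elementary tensors.
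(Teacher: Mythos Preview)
Your overall plan---Noetherian induction on $\mcB_W$ with respect to $\prec$ for the $\Pi$-bounds, then Lemma~\ref{lem:Gam_from_Pi} for the $\Gamma$-bounds---matches the paper, and your use of the universal property to reduce to elementary tensors is exactly right. The difference lies in what bound you carry through the induction and, as a consequence, in how you treat the planted case.

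The paper proves the \emph{pointwise} bound $|\Pi^P_x v(y)| + |\Pi^{P,\times}_x v(y)| \le C_\btau \|v\|_{V^{\proj\sttau}}\, |y-x|_\mfs^{|\btau|_\mfs}$ rather than a bound after testing against $\varphi_x^\lambda$. This is natural because $\xi\in\mcA_-^\infty$ is smooth, so by the construction in Proposition~\ref{prop:mod_prep_1} each $\Pi^P_x\colon V^{\proj\sttau}\to C^\infty(\mathbb{R}^d)$ is continuous. With this, the planted case $\btau=\mcI_\mfl^k\bsigma$ becomes a one-line application of Taylor's theorem: the admissibility relation \eqref{eq:adm_int} subtracts precisely the Taylor jet of the smooth function $A_\mfl(\zeta)\ast\Pi^P_x w$ at $x$ to order $|\btau|_\mfs$, and continuity into $C^\infty$ supplies the prefactor $\|v\|$. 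No dyadic decomposition, no Extension Theorem---what you flagged as the main obstacle is in fact the easy step once one works pointwise. (Your route via \cite[Theorem~5.14]{Hai14} would moreover need $\Gamma$-bounds on the sector containing $\bsigma$ \emph{before} treating $\mcI_\mfl^k\bsigma$, forcing you to interleave the two inductions rather than run them in sequence as you describe.)

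Conversely, your tested-bound formulation creates a genuine tension in the product step. A bound on each $\Pi^{P,\times}_x v_i(\varphi_x^\lambda)$ does not control $\int \prod_i \Pi^{P,\times}_x v_i(y)\,\varphi_x^\lambda(y)\,\mathrm{d}y$; for that you need pointwise control of each factor, which is the ``standard fact'' you invoke but have not set up in your inductive hypothesis. The paper's pointwise framing makes the product case immediate from multiplicativity of $\Pi^{P,\times}_x$ and the norm-$1$ property of $\otimes_\sttau^\tau$. The fix is simply to switch the induction to the pointwise bound: the planted case then simplifies dramatically, and the product case becomes honest.
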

	  			\begin{proof}
	  			It remains only to prove the analytic bounds. We start by considering the bounds for $\Pi^P$. Here, we will prove by Noetherian induction over $\mcB_W$ with respect to $\prec$ that the stronger bound
	  			$$|\Pi_x^P v(y)| + |\Pi_x^{P, \times} v(y)| \le C_\tau \|v\|_{V^{\proj \sttau}} |y-x|_{\mfs}^{|\boldsymbol{\tau}|_\mfs}$$
	  			holds
	  			uniformly over $v \in V^{\proj \sttau}$, locally uniformly in $x$ and uniformly over $y$ such that $|y-x|_{\mfs} \le 1$ for some constant $C_\tau$. 
	  			
	  			As usual, we divide into the cases where $\boldsymbol{\tau} \in \{\Xi, X^k\}$, $\btau$ is planted and where $\btau$ is a product of planted trees. We note that the case where $\boldsymbol{\tau} = X^k$ coincides with the scalar valued case and the case where $\btau = \Xi_\mfl$ follows immediately from the fact that for each compact set $\mfK$, $\sup_{y \in \mfK} |N_\mfl(v)(y)| \lesssim \|v\|_{V^{\proj \sttau}}$ by continuity of the map $N_\mfl: V_\mfl \to C^\infty(\mbR^d)$. Therefore, we need only consider the cases of planted trees and products of planted trees in what follows.
	  			
	  			For the case where $\btau$ is planted, $\Pi_x^P$ and $\Pi_x^{P, \times}$ coincide on $V^{\proj \sttau}$ so it suffices to consider only the first of them.  We want to simply appeal to Taylor's Theorem. To get the correct dependence on $\|v\|_{V^{\proj \sttau}}$ we recall that by construction, 
	  			for all $x \in \mathbb{R}^d$, $\boldsymbol{\tau} \in \mathcal{T} ( R )$ 
	  			the map $\Pi_x^P \boldsymbol{\tau} \colon V^{\proj \sttau} \to C^{\infty} ( \mathbb{R}^d )$ is linear and continuous,
	  			that is, 
	  			for all $r \in \mathbb{N}$, all compacts $\mathfrak{K} \subset \mathbb{R}^d$,
	  			and $v \in V^{\proj \sttau}$
	  				\begin{align*}
	  						\| \Pi_x^P v \|_{C^{r} ( \mfK )} 
	  						\lesssim_{r, \mfK, \boldsymbol{\tau}} \| v \|_{V^{\proj \sttau}} .
	  					\end{align*}
	  			Therefore it is indeed the case that the desired bound in this case follows from Taylor's Theorem since the definition of an admissible model involves exactly the right Taylor jet for trees of this type. Therefore, it remains only to treat the case where $\boldsymbol{\tau} = \prod_{i=0}^n \boldsymbol{\tau}_i$ where $\boldsymbol{\tau}_0 = X^k$ and for $i \ge 1$, $\boldsymbol{\tau}_i = \mcI_\mfl^k \boldsymbol{\sigma}_i$. We first consider $\Pi_x^{P, \times}$ and we note that if $\mathrm{ev}_y: C^\infty(\mbR^d) \to \mbR$ is the evaluation at $y$ then $\mathrm{ev}_y \circ \Pi_x^{P, \times} : V^{\proj \sttau} \to \mbR$ factors the multilinear, continuous and $\sttau$-symmetric family $\Big ( \mathrm{ev}_y \circ \Pi_x^{P, \times} \circ \bigotimes_\sttau^\tau \Big )_{\tau \in \btau}$. Therefore, by appealing to the statement regarding norms in Definition~\ref{def:tensor_product} it suffices to consider the case where $v$ is an elementary symmetric tensor. Therefore we assume that $v = \otimes_{\sttau}^\tau (\zeta_e)_{e \in E(\tau)}$. In this case, we apply multiplicativity and the induction hypothesis to write
	  			\begin{align*}
	  					|\Pi_x^{P, \times} \otimes_\sttau^\tau (\zeta_e)_{e \in E(\tau)} (y) | & = \left | \prod_{i=1}^n \Pi_x^{P, \times} \otimes_{\sttaun{i}}^{\tau_i} (\zeta_e)_{E(\tau_i)}(y) \right | .
	  			\end{align*}
	  			Since each $\boldsymbol{\tau}_i$ satisfies $\boldsymbol{\tau}_i \prec \boldsymbol{\tau}$, we obtain 
	  			\begin{align*}
	  						|\Pi_x^{P, \times} \otimes_\sttau^\tau (\zeta_e)_{e \in E(\tau)} (y) |  \lesssim \prod_{i=1}^n |y-x|_{\mfs}^{|\boldsymbol{\tau}_i|} \| \otimes_{\sttaun{i}}^{\tau_i} (\zeta_e)_{e \in E(\tau_i)} \|_{V^{\proj \sttaun{i}}}  \lesssim  |y-x|_{\mfs}^{|\boldsymbol{\tau}|} \prod_{e \in E(\tau)} \|\zeta_e\|_{V_{\mfl(e)}}
	  			\end{align*}
	  			where the last inequality follows since $\otimes_{\sttaun{i}}^{\tau_i}$ is a norm-$1$ multilinear map.
	  			
	  			For the corresponding bound for $\Pi_x^P v(y)$ note that
	  			\begin{align*}
	  				\Pi_x^P v(y) = \Pi_x^{P, \times} v(y) + \Pi_x^{P, \times} (P- \id) v(y).
	  			\end{align*}
	  			Since we already obtained the required bound for $\Pi_x^{P, \times}$, it suffices to consider the latter term on the right hand side. For this, we note that by the definition of a preparation map, we can write
	  			\begin{align*}
	  				|\Pi_x^{P, \times} (P- \id) v(y)| \le \sum_{\bsigma} |\Pi_x^{P, \times} \mcQ_{\bsigma} P v (y)|
	  			\end{align*}
	  			where the sum runs over those $\bsigma$ such that $\mcQ_{\bsigma} (P- \id) v \ne 0$. Then since $P$ is continuous and such $\bsigma$ necessarily satisfy $\bsigma \prec \btau$, we obtain by induction hypothesis that
	  			\begin{align*}
	  				|\Pi_x^{P, \times} (P - \id)v(y)| \lesssim \sum_{\bsigma} \|v\|_{V^{\proj \sttau}} |y-x|_{\mfs}^{|\bsigma|_\mfs} \lesssim \|v\|_{V^{\proj \sttau}} |y-x|_{\mfs}^{|\btau|_\mfs}
	  			\end{align*}
	  			where the last inequality follows from the assumption that $|y-x|_{\mfs} \le 1$. 
	  			
	  			It now remains to prove the corresponding bound for $\Gamma_{xy}^P$. However these follow by a straightforward (Noetherian) induction from the corresponding bounds on $\Pi^P$ by use of Lemma~\ref{lem:Gam_from_Pi}.
\end{proof}

\section{Stochastic Estimates for Models on \texorpdfstring{$\mcT^\Ban$}{T Ban}}\label{s:Kolmogorov}

In this section, our goal is to show that stochastic estimates for a particular choice of renormalised model on $\mcT^\Ban$ (the BPHZ model) can be lifted from versions of the same estimate for suitably constructed scalar valued regularity structures. Since this requires a more precise choice of kernel and noise assignment we specialise here to the setting of Theorem~\ref{theo:main_result_1}.
From this point onwards
 we work only with scalar noise assignments, i.e.\ we make the assumption that $V_\mfl = \mbR$ for each $\mfl \in \mfL_-$. We will often view $\otimes_{\sttau}^\tau$ as a map with domain $\prod_{e \in E_+(\tau)} V_\mfl$ by abuse of notation by writing $\otimes_{\sttau}^\tau (\zeta_e)_{e \in E_+(\tau)} \coloneq \otimes_{\sttau}^\tau (\bar{\zeta}_e)_{e \in E(\tau)}$ where 
$$\bar{\zeta}_e = \begin{cases} \zeta_e, \qquad e \in E_+(\tau), \\ 1 \qquad \text{otherwise.} \end{cases}$$
In particular, this implies that the notions of noise assignment $\mcA_-$ on $\mcT^\Ban$ and $\mcA_-^{\eq}$ on $\mcT^\eq$ coincide. 
We expect that a similar strategy with suitable higher dimensional noise assignments could be implemented in future work.

\subsection{Stochastic Estimates for the BPHZ Model on \texorpdfstring{$\mcT^\mathrm{Ban}$}{T Ban} -- Annealed Estimates} \label{ss:annealed}

We begin by defining a particular class of preparation maps that will include the BPHZ choice. As in the scalar valued setting (cf \cite{Bru18, BB21}), this is done by introducing an analogue of the extraction/contraction map used in \cite{BHZ19} that only extracts subtrees at the root. Since by this stage we have made many similar applications of our universal property, we omit the proof that this map is well-defined.

\begin{definition} \label{d:delta_r_-}
	We define $\Delta_r^-: V_\mcT \to V_\mcT \proj V_\mcT$ to be the unique
	continuous
	linear map such that for each $\tau \in \boldsymbol{\tau} \in \mcT$ the following diagram commutes 
	\begin{center}
		\begin{tikzcd}
			V^{\times \tau} \arrow[r, "\otimes_{\sttau}^\tau"] \arrow[dr, "\mcL_\tau^{(2)} (\underline{\Delta}_r^- \tau)"'] & V^{\proj \sttau} \arrow[d, dotted, "\Delta_r^-"]
			\\
			& V_\mcT \proj V_\mcT
		\end{tikzcd}
	\end{center}
	where $\underline{\Delta}_r^-$ is the coproduct defined at the level of combinatorial trees by
	\begin{equ}
		\underline{\Delta}_r^- (\tau, \mfn, \mfe) = \sum_{C \in \mathbb{C}[\tau]} \sum_{n_{\ng C}, \varepsilon_C} \frac{1}{\varepsilon_C} { \mfn \choose n_{\ng C}} P_{\underline{\mcT}_-(R)} (\tau_{\ng C}, n_{\ng C} + \pi \varepsilon_C, \mfe) \otimes (\tau/ \tau_{\ng C}, [n-n_{\ng C}], \mfe + \varepsilon_C)
	\end{equ}
	where we adopt the same notations as in the definition of $\underline{\Delta}$ and $\underline{\Delta}^+$ and further have defined $\underline{\mcT}_-(R) = \bigcup_{\boldsymbol{\tau} \in \mcT(R)} \boldsymbol{\tau}$.
\end{definition}

\begin{remark} 
	The notation $\underline{\Delta}_r^-$ is reserved for the coproduct acting on drawing of trees.
	Note however that by taking the `trivial' Banach space assignment $V = ( \mathbb{R} )_{\mfl \in \mfL}$, the above yields the version of this coproduct acting on isomorphism classes of trees, which we will also henceforth denote by $\Delta_r^- \colon \langle \mcT ( R) \rangle \to  \langle \mcT_-( R) \rangle \otimes \langle \mcT ( R) \rangle$.
\end{remark}

The BPHZ model on $\mcT^{\Ban}$ will be an instance of a preparation map of the form 
$(\ell \otimes \id ) \Delta_r^-$, with $\ell$ a suitable functional on $\mcT_- ( R )$.
However,
if we wish to appeal to the results of
Section~\ref{ss: alg renorm},
we have to be careful about the following points.
On the one hand, the choice of preparation map will typically depend on a noise assignment which in turn in practice depends on the choice of ultraviolet cut-off at scale $\varepsilon$. In order to make sense of convergence of the corresponding models, it is necessary to construct them on the same sector thus necessitating the construction of a sector that is simultaneously $P^\varepsilon$-good for every $\varepsilon$.
A second technical issue comes from the fact that the BPHZ preparation map $\hat{P}$ (at fixed $\varepsilon$) is not given a priori but rather is constructed in tandem with the BPHZ model $\hat{\Pi}$. 

This leads us to introduce the notion of the \emph{history} of a set of trees in $\mcT(R)$. The definition is designed to provide a sector that is $P$-good for every $P$ of the form $(\ell \otimes \id)\Delta_r^-$ such that the sector is also closed under the operation $(\id \otimes \mathbf{1})\Delta_r^-$ where $\mathbf{1}$ is the constant map. The latter point is required for the intertwined construction of the BPHZ model and preparation map. 

\begin{definition} \label{d:history}
	Given a set of trees $\mcS \subset \mcT(R)$, we denote by $\mathrm{Hist}(\mcS)$ the smallest set of trees such that the following properties hold:
	\begin{enumerate}
		\item $\mcS \subset \mathrm{Hist} (\mcS)$.
		\item $\mathrm{Hist}(\mcS)$ is the generating set of a good sector in $\mcT^\mathrm{Ban}$.
		\item For all $\ell \in \langle \mcT ( R ) \rangle^*$, $\big [(\id \otimes \ell) \Delta_r^- ( \mathrm{Hist}(\mcS)) \big ] \cup \big [ (\ell \otimes \id ) \Delta_r^- ( \mathrm{Hist}(\mcS)) \big ] \subset \langle \mathrm{Hist}(\mcS) \rangle$.
	\end{enumerate}
\end{definition}

\begin{definition}\label{def:historic}
	We say that a set $\mcS \subset \mcT(R)$ is historic if $\mathrm{Hist} (\mcS) = \mcS$.
	Analogously, we say that a good sector $W$ of $\mcT^{\Ban}$ 
	is historic if $\mathrm{Hist} ( \mcB_W) = \mcB_W$.
\end{definition}

\begin{lemma} \label{l:finite}
	If $\mcS \subset \mcT ( R )$ is finite then so is $\mathrm{Hist}(\mcS)$. 
\end{lemma}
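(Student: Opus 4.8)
The plan is to build $\mathrm{Hist}(\mcS)$ explicitly as an increasing union of finite sets and show the process stabilises. First I would isolate the combinatorial mechanisms that can enlarge a set of trees under the three closure conditions: (i) passing to subtrees and to factors of tree products (needed for good sectors, Definition~\ref{def:pgood}), and (ii) applying $(\id\otimes\ell)\Delta_r^-$ and $(\ell\otimes\id)\Delta_r^-$ for arbitrary linear functionals $\ell$, which by the formula in Definition~\ref{d:delta_r_-} amounts to extracting from $\tau$ all subtrees $\tau_{\ng C}$ at the root that conform to $R$ and lie in $\mcT_-(R)$, together with the corresponding contracted quotients $\tau/\tau_{\ng C}$ (with node decorations redistributed and edge decorations $\varepsilon_C$ added). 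The key point is that all of these operations produce trees that are, in an appropriate sense, \emph{not larger} than the input tree: the extracted piece is a subtree, and the contracted quotient has strictly fewer edges (hence strictly fewer nodes up to the decorations) than $\tau$ whenever the cut $C$ is non-trivial, while the surviving node decorations $[n - n_{\ng C}]$ are bounded by $\mfn$ and the new edge decorations $\varepsilon_C$ are constrained by subcriticality.

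The main obstacle is that the edge decorations $\varepsilon_C$ appearing in $\Delta_r^-$ are, a priori, summed over \emph{all} of $\mbN^d$, so naively the quotient trees $(\tau/\tau_{\ng C}, [n-n_{\ng C}], \mfe+\varepsilon_C)$ form an infinite family. This is resolved exactly as in \cite{BHZ19, Bru18}: because $R$ is a complete subcritical rule, only finitely many of these decorated quotients actually \emph{conform to} $R$ (equivalently, lie in $\mcT(R)$), and by \cite[Proposition 5.15]{BHZ19} the degrees of trees conforming to $R$ with a bounded number of edges form a locally finite set bounded below, forcing $|\varepsilon_C|_\mfs$ to be bounded. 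Since $P_{\underline{\mcT}_-(R)}$ in the definition of $\Delta_r^-$ kills everything not conforming to $R$, and since (by Definition~\ref{d:history}(3)) we only need $(\id\otimes\ell)\Delta_r^-(\mathrm{Hist}(\mcS))$ and $(\ell\otimes\id)\Delta_r^-(\mathrm{Hist}(\mcS))$ to land in $\langle\mathrm{Hist}(\mcS)\rangle$ — i.e.\ the relevant trees are those appearing with a non-zero coefficient after projection — only finitely many new trees are produced from each tree.

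I would then make this quantitative by introducing a \emph{size} functional on trees, e.g.\ $s(\tau) = (|E_-(\tau)|, |E(\tau)|, \sum_{v\in N(\tau)}|\mfn(v)|_\mfs + \sum_{e\in E(\tau)}|\mfe(e)|_\mfs)$ ordered lexicographically (or any similar well-founded gadget), and check that each of the closure operations either leaves $s$ unchanged or strictly decreases one of its components — extraction of a proper subtree decreases $|E(\tau)|$; contraction by a non-trivial cut decreases $|E(\tau)|$ as well; adding $\varepsilon_C$ while contracting keeps the total decoration weight bounded by the original one plus the (bounded, by subcriticality) contribution of $\varepsilon_C$, and in fact one checks that conformity to $R$ plus Assumption~\ref{ass:reg} bounds the total decoration weight of any tree conforming to $R$ with a given edge count. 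Since for each fixed value of $s$ there are only finitely many isomorphism classes of decorated trees in $\mcT(R)$ (finitely many edge sets of bounded cardinality drawn on $\mbN$ up to isomorphism, finitely many type maps, finitely many decorations of bounded weight), a standard induction on $s$ shows that the closure procedure terminates: define $\mcS_0 = \mcS$ and $\mcS_{k+1}$ to be the union of $\mcS_k$ with all subtrees and product-factors of its elements and all trees appearing in $(\id\otimes\ell)\Delta_r^-$ or $(\ell\otimes\id)\Delta_r^-$ applied to $\mcS_k$ over all $\ell$; each $\mcS_k$ is finite, the sizes $s$ occurring are bounded by $\max_{\tau\in\mcS} s(\tau)$ (in the lexicographic order), and hence $\bigcup_k \mcS_k$ is finite and equals $\mathrm{Hist}(\mcS)$.
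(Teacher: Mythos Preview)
Your overall strategy—build $\mathrm{Hist}(\mcS)$ as an increasing union $\bigcup_k \mcS_k$ of finite sets and argue termination via a well-founded order—is exactly the paper's approach. However, your execution has two genuine gaps.

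First, you omit a closure operation. Definition~\ref{def:pgood} begins ``We say that a \emph{sector} $W$ \ldots is good if \ldots'', so the generating set of a good sector must in particular generate a sector, i.e.\ be closed under the action of the structure group $\mcG$. Your list of mechanisms (subtrees, product factors, $\Delta_r^-$-extractions) does not account for this: the first components of $\Delta\tau$ that arise from $\Gamma_g\tau$ include subtrees with increased polynomial decoration but \emph{positive} degree, and these are not produced by $(\id\otimes\ell)\Delta_r^-$ (which projects onto $\mcT_-(R)$). The paper's iterative construction explicitly contains a step $\mcH_{3k+1}$ that closes under $\mcG$.

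Second, your termination argument via the size functional $s(\tau)=(|E_-(\tau)|,|E(\tau)|,\text{decoration weight})$ does not close. The claim that ``conformity to $R$ plus Assumption~\ref{ass:reg} bounds the total decoration weight of any tree conforming to $R$ with a given edge count'' is false for \emph{node} decorations: for instance $X^k$ conforms to $R$ for every $k$. So being bounded in lexicographic order by $\max_{\tau\in\mcS}s(\tau)$ does not give a finite set. The paper avoids this by using the pair $(|\cdot|_\#,|\cdot|_\mfs)$ with the order $\prec$ of Definition~\ref{def:prec}: the key point is that the degree set $\mcA$ of $\mcT(R)$ is locally finite and bounded below (by subcriticality), so $\prec$ is well-founded. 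One then checks by case analysis (using Assumption~\ref{ass:reg} for the extracted subtree to contain at least one noise) that every tree in $\mcH_{n+1}\setminus\mcH_n$ is strictly $\prec$-smaller than some tree in $\mcH_n$, which forces the sequence $k\mapsto\mcH_k$ to stabilise.
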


\begin{proof}
	$\mathrm{Hist}(\mcS)$ can be constructed explicitly as
	\begin{align} \label{eq:constr_hist}
		\mathrm{Hist}(\mcS) = \bigcup_{n \ge 0} \mcH_n(\mcS)
	\end{align}
	where $\mcH_0(\mcS) = \mcS$ and for $k \ge 0$, 
	\begin{align*}
		\mcH_{3k+1} (\mcS) &= \{ \boldsymbol{\sigma} \in \mcT(R): \exists \Gamma \in \mcG, \boldsymbol{\tau} \in \mcH_{3k}(\mcS) \text{ such that } \mcQ_{\boldsymbol{\sigma}} \Gamma \boldsymbol{\tau} \ne 0 \} \\ 
		\mcH_{3k+2} (\mcS) &= \{ \boldsymbol{\sigma}  \in \mcT(R): \exists \ell \in \langle \mcT ( R ) \rangle^* \text{ and } \boldsymbol{\tau} \in \mcH_{3k+1}(\mcS) \text{ such that } \mcQ_{\boldsymbol{\sigma}} (\ell \otimes \id ) \Delta_r^- \boldsymbol{\tau} \ne 0 \} \\ 
		& \quad \cup \{ \boldsymbol{\sigma} \in \mcT(R): \exists \ell \in \langle \mcT ( R ) \rangle^* \text{ and }  \boldsymbol{\tau} \in \mcH_{3k+1}(\mcS) \text{ such that } \mcQ_{\boldsymbol{\sigma}} (\id \otimes \ell) \Delta_r^- \boldsymbol{\tau} \ne 0 \} \\
		\mcH_{3k+3} (\mcS) &= \{  \boldsymbol{\sigma}  \in \mcT(R): \exists \mfl, k \text{ such that } \mcI_\mfl^k \boldsymbol{\sigma} \in \mcH_{3k+2}(\mcS) \} 
		\\ & \quad \cup \{ \boldsymbol{\sigma} \in \mcT(R): \exists \btau  \text{ such that }  \boldsymbol{\tau} \boldsymbol{\sigma} \in \mcH_{3k+2}(\mcS) \} .
	\end{align*} 
	We first claim that the map $k \mapsto \mcH_k(\mcS)$ is eventually constant. If not, there would exist an infinite sequence $\btau_j \in \mcH_{k_{j+1}}(\mcS) \setminus \mcH_{k_j}(\mcS)$ for some strictly increasing sequence $j \mapsto k_j$. It is then easy to check by division of cases that the map $j \mapsto (|\btau_j|_\#, |\btau_j|_\mfs)$ is a strictly decreasing sequence in $\mathbb{N}^2$ with the lexicographic ordering. But no such sequence exists, leading to a contradiction. The stated result then follows by noting that a straightforward induction (dividing into cases) implies that if $\mcS$ is finite then so is $\mcH_{k+1}(\mcS) \setminus \mcH_k(\mcS)$. 
\end{proof}

\begin{lemma} \label{l:pfl}
	Let $W$ be a historic sector on $\mcT^{\Ban}$.
	Suppose that $\ell \in ( V_{\mcT_-(R)} \cap W )^*$ is such that $\ell \restriction_{V^{\proj \langle \boldsymbol{\tau} \rangle}} = 0$ for every $\boldsymbol{\tau} \in \mcT(R)$ of the form $\mcI \boldsymbol{\sigma}, X^k \boldsymbol{\sigma}$. 
	Then $P_{\ell} \coloneqq (\ell \otimes \id) \Delta_r^-$ is a preparation map 
	on $W$.
	Furthermore, the sector $W$ is $P_{\ell}$-good as per Definition~\ref{def:pgood}.
\end{lemma}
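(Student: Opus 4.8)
The plan is to verify the two defining properties of a preparation map from Definition~\ref{def:prep} for $P_\ell = (\ell \otimes \id)\Delta_r^-$, and then separately check $P_\ell$-goodness. First I would recall the combinatorial structure of $\Delta_r^-$: on a tree $\btau$, the term $(\ell \otimes \id)\Delta_r^-\btau$ is a sum over cuts $C$ at the root together with decorations of contributions $\ell(\text{extracted negative-degree subtree}) \cdot (\text{quotient tree})$. The key observation is that the only cut giving back $\btau$ itself in the second slot is the empty cut, whose first-slot contribution is $\mathbf{1}$ (up to node decorations); all other cuts produce a quotient tree $\btau/\btau_{\not\ge C}$ strictly smaller in the sense that it has strictly fewer noise edges and strictly larger degree than $\btau$ (the extracted piece carries away at least one noise edge since $\ell$ vanishes on trees of the form $\mcI\bsigma$ and $X^k\bsigma$, hence on any tree without a noise edge at a cut edge — more precisely the extracted subtree has negative degree, which by Assumption~\ref{ass:reg} forces it to contain a noise edge). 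This gives exactly conditions (1) and (2) of Definition~\ref{def:prep}, and condition (3) follows since a nontrivial quotient is never of the form $\Xi_\mfl$, $X^k$ or $\mcI_\mfl^k\bsigma$.

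Next I would check the commutativity properties. The relation $P_\ell|_{V^{\proj\langle X^k\btau\rangle}} = X^k P_\ell|_{V^{\proj\sttau}}$ follows from the fact that $\underline{\Delta}_r^-$ is compatible with multiplication by $X^k$ at the root (the root monomial decoration plays no role in the root cuts, and $\ell$ vanishes on $X^k\bsigma$ so the extracted piece never touches the $X^k$ factor), combined with the universal-property characterisation of $\star$ and $\mcI_\mfl$ established in Section~\ref{s: tensor_oper}. The relation $P_\ell \Gamma = \Gamma P_\ell$ for $\Gamma \in G_+$ is the statement that $(\ell\otimes\id)\Delta_r^-$ commutes with positive renormalisation; at the combinatorial level this is the compatibility between $\underline{\Delta}_r^-$ and $\underline{\Delta}^+$ recorded in \cite{Bru18, BHZ19}, and one lifts it to $\mcT^\Ban$ by the now-familiar factorisation argument: both $P_\ell\Gamma$ and $\Gamma P_\ell$ restricted to $V^{\proj\sttau}$ factor the same multilinear, continuous, $\sttau$-symmetric family $\mcL_\tau^{(\cdot)}$ through $V^{\proj\sttau}$, so they coincide by uniqueness in Definition~\ref{def:tensor_product}. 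One also needs $P_\ell$ bounded, which is immediate since $\Delta_r^-$ is continuous and $\ell$ is a (continuous) functional on the finite set of negative-degree components meeting $W$.

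It then remains to verify $P_\ell$-goodness, i.e.\ that $W$ is good and $P_\ell: W \to W$. Goodness of $W$ is part of the hypothesis that $W$ is historic (Definition~\ref{def:historic} together with property~(2) of Definition~\ref{d:history}). For $P_\ell: W\to W$, I would use property~(3) of Definition~\ref{d:history}: for any $\btau\in\mcB_W$, the trees appearing in $(\ell\otimes\id)\Delta_r^-\btau$ all lie in $\langle\mathrm{Hist}(\mcB_W)\rangle = \langle\mcB_W\rangle$, so $P_\ell$ maps the component $V^{\proj\sttau}$ into $W$; summing over $\btau\in\mcB_W$ gives $P_\ell(W)\subseteq W$. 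I expect the main obstacle to be bookkeeping rather than conceptual: carefully justifying that the extracted subtrees on which $\ell$ is evaluated are precisely those of negative degree and that, because $\ell$ kills $\mcI\bsigma$ and $X^k\bsigma$, every nonzero contribution genuinely removes a noise edge — this is what simultaneously secures conditions (1)--(3) and is the one place where the hypothesis on $\ell$ and Assumption~\ref{ass:reg} are both used. Everything else is a routine transfer of known combinatorial identities to the Banach-valued setting via the universal property, exactly as in the proofs of Lemmas~\ref{lem: coass} and \ref{lem:Hopf}.
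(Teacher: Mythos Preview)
Your approach is correct and aligns with the paper's. The paper's proof is extremely terse: it only spells out the commutativity with the structure group, reducing it to the combinatorial identity $(\Delta_r^- \otimes \id)\Delta^+ = (\id \otimes \Delta^+)\Delta_r^-$ from \cite[Proposition~4.6]{Bru18} and then lifting to $\mcT^\Ban$ via the universal-property factorisation exactly as in Lemma~\ref{lem: coass}; $P_\ell$-goodness is declared immediate from the historic hypothesis. Your proposal does the same but also sketches the verifications (triangularity conditions (1)--(3), $X^k$-commutativity, boundedness) that the paper leaves entirely implicit.

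One genuine slip: your justification of condition~(3) is backwards. Condition~(3) in Definition~\ref{def:prep} constrains the \emph{input} tree $\btau$, not the quotient $\bsigma$: it says that $(P_\ell - \id)$ vanishes on $V^{\proj\sttau}$ whenever $\btau$ is of the form $\Xi_\mfl$, $X^k$, or $\mcI_\mfl^k\bsigma$. The correct argument is that for such $\btau$, every negative-degree subtree at the root is itself of the form $X^j\bsigma'$ or $\mcI_\mfl^{k'}\bsigma'$, and these are exactly the trees on which $\ell$ is assumed to vanish --- so all non-identity contributions to $(\ell\otimes\id)\Delta_r^-\btau$ are killed. Your statement that ``a nontrivial quotient is never of the form $\Xi_\mfl$, $X^k$ or $\mcI_\mfl^k\bsigma$'' is both the wrong direction and false (extracting the whole tree yields quotient $\mathbf{1} = X^0$). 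This is easily repaired and does not affect the rest of your argument.
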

\begin{proof}
	We start with the claim that $P_{\ell}$ is a preparation map on $W$.
	For brevity, 
	we demonstrate only that $P_\ell$ commutes with the structure group. It suffices to show that 
	$(\Delta_r^- \otimes \id) \Delta^+ = (\id \otimes \Delta^+) \Delta_r^-.$
	This follows from the purely combinatorial analogue of this result (which is the contents of \cite[Proposition 4.6]{Bru18}) in the same way as in the proof of coassociativity for $\Delta^+$ so we omit the details.
	Finally, 
	the fact that $W$ is $P_{\ell}$-good 
	directly follows from the assumption that $W$ is historic.    
\end{proof}

We are now ready to introduce the partial order with respect to which we will inductively construct the BPHZ model on $\mcT^{\Ban}$.

\begin{definition}\label{d:age}
	We define the age of a tree $\boldsymbol{\tau} \in \mcT ( R )$ to be the length of the longest chain in 
	the (finite) poset 
	$(\mathrm{Hist} ( \lbrace \boldsymbol{\tau} \rbrace ), \prec)$.
	More precisely, 
	\begin{align*}
		\mathrm{Age} ( \boldsymbol{\tau} ) = \max \big\lbrace n \in \mathbb{N} : \exists \boldsymbol{\sigma}_1, \cdots , \boldsymbol{\sigma}_n  \in \mathrm{Hist} ( \lbrace \boldsymbol{\tau} \rbrace ) \text{ with } \boldsymbol{\sigma}_1 \prec \cdots \prec \boldsymbol{\sigma}_n \big\rbrace .
	\end{align*}
\end{definition}

This notion comes with the following convenient inductive properties.

\begin{lemma} \label{l:age_ind_prop}
	Let $\boldsymbol{\tau} \in \mcT ( R )$.
	For all $\boldsymbol{\sigma} \in \mathrm{Hist} ( \lbrace \boldsymbol{\tau} \rbrace ) \setminus \{\btau\}$, $\boldsymbol{\sigma} \prec \boldsymbol{\tau}$ and therefore  $\mathrm{Age} ( \boldsymbol{\sigma} ) < \mathrm{Age} ( \boldsymbol{\tau} )$.
	In particular, the following properties hold. 
	\begin{enumerate}
		\item \label{it:ap} If $\boldsymbol{\tau} = \prod_{i=0}^n \boldsymbol{\tau}_{i}$ then $\mathrm{Age} ( \boldsymbol{\tau}_{i} ) < \mathrm{Age} ( \boldsymbol{\tau} )$ for all $i \in \lbrace 0, \cdots , n \rbrace$.
		
		\item \label{it:ai} If $\boldsymbol{\tau} = \mcI_\mfl^k \boldsymbol{\sigma}$ for some $\mfl \in \mfL_+$, $k \in \mathbb{N}^d$, then 
		$\mathrm{Age} ( \boldsymbol{\sigma} ) < \mathrm{Age} ( \boldsymbol{\tau} )$.
		
		\item \label{it:ad} If $\bsigma$ is of the form $\bsigma= (\ell \otimes \id ) \mathring{\Delta}_r^- \btau$ 
		or
		$\bsigma= (\id \otimes \ell) \mathring{\Delta}_r^- \btau$ for some $\ell \in \langle \mcT ( R ) \rangle^*$, 
		then $\mathrm{Age} ( \boldsymbol{\sigma} ) < \mathrm{Age} ( \boldsymbol{\tau} )$.
		Here we have written 
		$\mathring{\Delta}_r^- = \Delta_r^- - (\id \otimes \mathbf{1}) - (\mathbf{1} \otimes \id)$.
	\end{enumerate}
\end{lemma}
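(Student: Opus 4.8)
\textbf{Proof plan for Lemma~\ref{l:age_ind_prop}.}

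The plan is to first establish the main claim---that every $\boldsymbol{\sigma} \in \mathrm{Hist}(\{\boldsymbol{\tau}\}) \setminus \{\boldsymbol{\tau}\}$ satisfies $\boldsymbol{\sigma} \prec \boldsymbol{\tau}$---and then observe that all three enumerated properties follow from it together with the inductive description of $\mathrm{Hist}$ given in the proof of Lemma~\ref{l:finite}. For the main claim, I would argue directly from the explicit construction $\mathrm{Hist}(\{\boldsymbol{\tau}\}) = \bigcup_n \mcH_n(\{\boldsymbol{\tau}\})$ of \eqref{eq:constr_hist}, proceeding by induction on $n$ and checking case by case that each of the three generating operations---applying a structure group element $\Gamma$, applying $(\ell \otimes \id)\Delta_r^-$ or $(\id \otimes \ell)\Delta_r^-$, and passing to a subtree $\boldsymbol{\sigma}$ when $\mcI_\mfl^k \boldsymbol{\sigma}$ or $\boldsymbol{\tau}' \boldsymbol{\sigma}$ lies in the set---produces only trees $\boldsymbol{\sigma}'$ with $(|\boldsymbol{\sigma}'|_\#, |\boldsymbol{\sigma}'|_\mfs) \le (|\boldsymbol{\tau}'|_\#, |\boldsymbol{\tau}'|_\mfs)$ in the lexicographic order, with strict inequality as soon as $\boldsymbol{\sigma}' \ne \boldsymbol{\tau}'$. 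Concretely: the structure group preserves both the number of noise edges and the degree (it acts within a fixed graded component and fixed $|\cdot|_\#$, and acts as the identity on $V^{\proj \sttau}$ up to lower-degree terms that still have the same number of noises); the operations $\Delta_r^-$ strip off a subtree containing the root, which can only decrease or preserve $|\cdot|_\#$, and when it preserves $|\cdot|_\#$ it must decrease the degree unless the extracted part is trivial; and passing to a proper subtree of a tree conforming to $R$ strictly decreases $|\cdot|_\#$ or, in the $X^k$ case, strictly decreases the degree while preserving $|\cdot|_\#$. One has to be slightly careful with the decoration sums in $\Delta_r^-$ and with the polynomial decorations moved around, but the bookkeeping is the same as in the combinatorial setting of \cite{Bru18, BHZ19}.

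Once the main claim is in hand, the statement $\mathrm{Age}(\boldsymbol{\sigma}) < \mathrm{Age}(\boldsymbol{\tau})$ for $\boldsymbol{\sigma} \in \mathrm{Hist}(\{\boldsymbol{\tau}\}) \setminus \{\boldsymbol{\tau}\}$ follows because $\mathrm{Hist}(\{\boldsymbol{\sigma}\}) \subseteq \mathrm{Hist}(\{\boldsymbol{\tau}\})$ (by minimality of $\mathrm{Hist}$ applied to the fact that $\mathrm{Hist}(\{\boldsymbol{\tau}\})$ is itself historic and contains $\boldsymbol{\sigma}$, hence contains $\mathrm{Hist}(\{\boldsymbol{\sigma}\})$), so any $\prec$-chain in $\mathrm{Hist}(\{\boldsymbol{\sigma}\})$ is a $\prec$-chain in $\mathrm{Hist}(\{\boldsymbol{\tau}\})$, and moreover such a chain can be extended by appending $\boldsymbol{\tau}$ itself at the top since $\boldsymbol{\sigma} \prec \boldsymbol{\tau}$ and $\boldsymbol{\sigma}$ is the maximal element of any longest chain realizing $\mathrm{Age}(\boldsymbol{\sigma})$---more precisely, if $\boldsymbol{\sigma}_1 \prec \cdots \prec \boldsymbol{\sigma}_n$ realizes $\mathrm{Age}(\boldsymbol{\sigma})$ with $\boldsymbol{\sigma}_n = \boldsymbol{\sigma}$ (which one may arrange, or handle by transitivity of $\prec$), then $\boldsymbol{\sigma}_1 \prec \cdots \prec \boldsymbol{\sigma}_n \prec \boldsymbol{\tau}$ is a chain in $\mathrm{Hist}(\{\boldsymbol{\tau}\})$ of length $\mathrm{Age}(\boldsymbol{\sigma})+1$, so $\mathrm{Age}(\boldsymbol{\tau}) \ge \mathrm{Age}(\boldsymbol{\sigma}) + 1$.

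For the three enumerated consequences, I would note: in case \eqref{it:ap}, each factor $\boldsymbol{\tau}_i$ lies in $\mcH_3(\{\boldsymbol{\tau}\}) \subseteq \mathrm{Hist}(\{\boldsymbol{\tau}\})$ via the subtree-of-a-product rule (with $\boldsymbol{\tau}_i = \boldsymbol{\tau}$ only in the degenerate case $n=0$), and $\boldsymbol{\tau}_i \ne \boldsymbol{\tau}$ for a genuine product, so the main claim applies; in case \eqref{it:ai}, $\boldsymbol{\sigma}$ lies in $\mathrm{Hist}(\{\boldsymbol{\tau}\})$ via the subtree-under-$\mcI_\mfl^k$ rule and $\boldsymbol{\sigma} \ne \boldsymbol{\tau}$ since $\mcI_\mfl^k$ strictly increases the tree; and in case \eqref{it:ad}, $\boldsymbol{\sigma}$ lies in $\mathrm{Hist}(\{\boldsymbol{\tau}\})$ because $\mathring{\Delta}_r^-\boldsymbol{\tau}$ differs from $\Delta_r^-\boldsymbol{\tau}$ only by the terms $\boldsymbol{\tau} \otimes \mathbf{1}$ and $\mathbf{1} \otimes \boldsymbol{\tau}$, so every tree appearing in $(\ell \otimes \id)\mathring{\Delta}_r^-\boldsymbol{\tau}$ or $(\id \otimes \ell)\mathring{\Delta}_r^-\boldsymbol{\tau}$ is a tree appearing in the corresponding $\Delta_r^-$ expression other than $\boldsymbol{\tau}$ itself, hence lies in $\mathrm{Hist}(\{\boldsymbol{\tau}\}) \setminus \{\boldsymbol{\tau}\}$. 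I expect the main obstacle to be nothing deep but rather the care needed in the case analysis for the main claim: one must verify the monotonicity of $(|\cdot|_\#, |\cdot|_\mfs)$ under $\Delta_r^-$ including the node- and edge-decoration sums, and handle the boundary cases (trivial cuts, the purely polynomial trees $X^k$, and the identity action of the structure group) so that strictness is genuinely obtained whenever $\boldsymbol{\sigma}' \ne \boldsymbol{\tau}'$. This is essentially the same combinatorial verification that underpins the well-foundedness discussion for $\prec$ in the remark following Definition~\ref{def:prec} and the finiteness argument of Lemma~\ref{l:finite}, so I would present it compactly by reference to those.
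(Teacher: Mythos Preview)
Your approach is essentially the paper's: induct along the explicit construction $\mathrm{Hist}(\{\btau\}) = \bigcup_n \mcH_n(\{\btau\})$, verify by case analysis that each generating operation weakly decreases $(|\cdot|_\#,|\cdot|_\mfs)$ in lex order (strictly unless the tree is unchanged), deduce the Age inequality by the chain-extension argument you describe, and then read off items~\ref{it:ap}--\ref{it:ad} from membership in $\mathrm{Hist}(\{\btau\})$.

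One point in your case analysis needs correcting. For the step $\mcI_\mfl^k\bsigma \mapsto \bsigma$, the trunk is a kernel edge, so $|\bsigma|_\# = |\mcI_\mfl^k\bsigma|_\#$; your blanket claim that ``passing to a proper subtree strictly decreases $|\cdot|_\#$'' fails here. The degree drops by $|\mfl|_\mfs - |k|_\mfs$, and one needs Assumption~\ref{ass:reg} to know this is non-negative. The paper flags exactly this in a footnote; you should invoke it explicitly rather than folding the integration case into the product case. (Your remark that structure-group terms ``still have the same number of noises'' is also slightly off---they have at most as many---but this does not affect the conclusion since either way $\bsigma \prec \btau'$.)
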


\begin{proof}
	The first claim follows by the construction \eqref{eq:constr_hist} of $\mathrm{Hist} ( \lbrace \boldsymbol{\tau} \rbrace )$, since by case separation one readily establishes that if $\boldsymbol{\sigma} \in \mcH_{n+1} \setminus \mcH_n$ then\footnote{We note that here we use Assumption~\ref{ass:reg} to deal with the case of integration.} $\boldsymbol{\sigma} \prec \mcH_n$.
	To obtain the second claim, it suffices to note that by the first claim, if $\boldsymbol{\sigma}_1 \prec \cdots \prec \boldsymbol{\sigma}_n$ is any chain in $\mathrm{Hist} ( \lbrace \boldsymbol{\sigma} \rbrace )$, then $\boldsymbol{\sigma}_1 \prec \cdots \prec \boldsymbol{\sigma}_n \prec \boldsymbol{\tau}$ is a strictly longer chain in $\mathrm{Hist} ( \lbrace \boldsymbol{\tau} \rbrace )$. Finally, the items~\ref{it:ap} to \ref{it:ad} are an immediate consequence of the second claim and the Definition~\ref{d:history} of $\mathrm{Hist} ( \lbrace \boldsymbol{\tau} \rbrace )$.
\end{proof}

Given a historic sector $W$ of $\mcT^{\Ban}$, 
it will be convenient to `filter' it with respect to the age as follows.

\begin{definition}\label{def:W_n}
	Let $\mcB_W$ be the set of generating trees of a historic sector $W$ of $\mcT^{\Ban}$. We decompose
	$W = \bigcup_{n \in \mathbb{N}} W_n$
	by setting $W_n$ to be the historic sector with generating set $\mcB_{W_n} \coloneqq \big\lbrace \boldsymbol{\tau} \in \mcB_W : \mathrm{Age} ( \boldsymbol{\tau} ) \leq n \big\rbrace$.
\end{definition}

Thus far, we have worked in a purely deterministic setting. We now turn to defining the BPHZ model associated to a random smooth noise assignment. 
\begin{definition} \label{def:BPHZ}
	Let $W$ be a historic sector, $A \in \mcA_+^{\mco}$ be a kernel assignment 
	and $\xi: \Omega \to  \mcA_-^{\eq, \infty}$ be a random smooth noise assignment.
	We say that a deterministic functional $\ell \in ( V_{\mcT_-(R)} \cap W )^*$ 
	is the BPHZ functional associated to $(A, \xi)$ on $W$ 
	if $\ell \restriction_{V^{\proj \langle \boldsymbol{\tau} \rangle}} = 0$ for every $\boldsymbol{\tau} \in \mcT_-(R) \cap \mcB_W$ of the form $\mcI \boldsymbol{\sigma}, X^k \boldsymbol{\sigma}$, 
	and if furthermore $\mbE[ \boldsymbol{\Pi}^{P_\ell} v (0) ] = 0$ for all $v\in V^{\proj \sttau}$ with $\boldsymbol{\tau} \in \mcT_- ( R ) \cap \mcB_W$.
\end{definition}

Since this definition
requires that the expectation actually makes sense, we need to show that integrability of the driving noise propagates to integrability of $\boldsymbol{\Pi}^{P}$. Since $\boldsymbol{\Pi}^P$ was constructed realisation-by-realisation, it is useful to have a result that connects realisation-by-realisation constructions using the universal property to their $L^p(\Omega)$ analogues (where $(\Omega, \mathcal{F}, \mathbb{P})$ will always denote the underlying probability space).

\begin{lemma}\label{lem:UP_in_Lp}
	Suppose that $X$ is a Banach space, that $\mcs$ is a symmetric set and that for each $\omega \in \Omega$, $(f^a(\omega, \cdot): V^{\times a} \to X)_{a \in \Ob(\mcs)}$ is a $\mcs$-symmetric family of continuous and multilinear maps. Suppose additionally that for each $a$, $f^a: V^{\times a} \to L^p(\Omega; X)$ is continuous. For each $\omega \in \Omega$, let $f(\omega, \cdot)$ be the unique map such that for each $a \in \Ob(\mcs)$, the following diagram commutes.
	\begin{center}
		\begin{tikzcd}
			V^{\times a} \arrow[r, "\otimes_{\mcs}^a"] \arrow[dr, "{f^a(\omega, \cdot)}"'] & V^{\proj \mcs} \arrow[d, dotted, "{f(\omega, \cdot)}"]
			\\
			& X
		\end{tikzcd}
	\end{center}
	Then  $f : V^{\proj \mcs} \to L^p(\Omega; X)$ continuously and for each $a \in \Ob(\mcs)$ the following diagram commutes.
	\begin{center}
		\begin{tikzcd}
			V^{\times a} \arrow[r, "\otimes_{\mcs}^a"] \arrow[dr, "{f^a}"'] & V^{\proj \mcs} \arrow[d, dotted, "{f}"]
			\\
			& L^p(\Omega; X)
		\end{tikzcd}
	\end{center}
\end{lemma}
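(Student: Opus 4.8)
The plan is to leverage the universal property of $V^{\proj \mcs}$ twice: once (fibrewise, for each fixed $\omega$) to know that $f(\omega,\cdot)$ exists and is linear, and once at the level of the Banach space $L^p(\Omega;X)$ to produce the map $f$ appearing in the second diagram. The key observation is that the hypotheses say precisely that the family $(f^a \colon V^{\times a} \to L^p(\Omega;X))_{a \in \Ob(\mcs)}$ is $\mcs$-symmetric and consists of continuous multilinear maps, since the symmetry relation $f^b \circ \gamma = f^a$ holds pointwise in $\omega$ and hence holds in $L^p(\Omega;X)$. Applying the universal property (Definition~\ref{def:tensor_product}) with $Z = L^p(\Omega;X)$, which is a Banach (hence complete locally convex) space, yields a unique continuous linear map $g \colon V^{\proj \mcs} \to L^p(\Omega;X)$ with $g \circ \otimes_\mcs^a = f^a$ for every $a$, and with $\|g\| = \|f^a\|$. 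It then remains only to identify $g$ with the pointwise-defined $f$, i.e.\ to show $g(v) = f(\cdot, v)$ in $L^p(\Omega;X)$ for every $v \in V^{\proj \mcs}$.

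To carry out this identification, first I would note that on elementary symmetric tensors $v = \otimes_\mcs^a(v_x)_{x \in a}$ the two maps agree by construction: $g(v) = f^a((v_x)_x)$ by the defining property of $g$, while $f(\omega, v) = f^a(\omega, (v_x)_x)$ for every $\omega$ by the fibrewise commuting diagram, so $f(\cdot,v)$ as an element of $L^p(\Omega;X)$ equals $f^a((v_x)_x) = g(v)$. Next, both $g$ and the assignment $v \mapsto f(\cdot, v)$ are linear on $V^{\proj \mcs}$ — linearity of $g$ is part of the universal property, and linearity of $v \mapsto f(\cdot,v)$ follows since $f(\omega,\cdot)$ is linear for each $\omega$. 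Hence the two maps agree on the linear span of elementary symmetric tensors. Finally, by Proposition~\ref{prop:Symm_Char} (or directly from the norm formula stated there) this span is dense in $V^{\proj \mcs}$; since $g$ is continuous and $v \mapsto f(\cdot,v)$ is also continuous (it is continuous on each $V^{\times a}$ by hypothesis and factors through $\otimes_\mcs^a$, or alternatively one observes it must coincide with the continuous map $g$ on a dense set), the two coincide everywhere. This gives $f = g$, so $f$ is continuous linear $V^{\proj \mcs} \to L^p(\Omega;X)$ and makes the second diagram commute, which is exactly the claim.

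The one point requiring a little care — and the main (mild) obstacle — is the continuity of $v \mapsto f(\cdot, v)$ as a map into $L^p(\Omega;X)$ \emph{before} the identification with $g$ is available, since a priori we only know pointwise-in-$\omega$ continuity of $f(\omega,\cdot)$ and continuity of each $f^a$ into $L^p(\Omega;X)$. The clean way around this is to not try to prove continuity of $v \mapsto f(\cdot,v)$ independently at all: instead, define $g$ via the universal property as above, prove $g = f(\cdot,\cdot)$ only on the dense subspace spanned by elementary symmetric tensors (where no continuity of $f$ is needed, just the fibrewise diagrams and linearity), and then simply \emph{declare} that $f$, extended by continuity, equals $g$; the pointwise-defined $f(\omega,\cdot)$ then agrees with this extension because for any $v$ one can pick a sequence $v_k \to v$ of tensors in the span with $f^{a}$-norms controlled so that $f(\omega, v_k) \to f(\omega,v)$ for a.e.\ $\omega$ along a subsequence while $f(\cdot,v_k) = g(v_k) \to g(v)$ in $L^p$, forcing $f(\cdot,v) = g(v)$ a.e. This is a routine measure-theoretic argument, so I would keep it brief.
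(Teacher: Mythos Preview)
Your proposal is correct and follows essentially the same approach as the paper: define $g$ via the universal property with target $L^p(\Omega;X)$, verify $g = f(\cdot,\cdot)$ on elementary symmetric tensors using the fibrewise diagrams, extend by linearity to their span, and then pass to general $v$ by density combined with the subsequence/a.e.\ convergence argument you describe in your final paragraph. The paper's proof is exactly this, including the same handling of the ``point requiring a little care'' via extracting an almost surely convergent subsequence.
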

\begin{proof}
	By the universal property applied to the $\mcs$-symmetric family of continuous multilinear maps $(f^a)_{a \in \mcs}$, we see that there exists a unique continuous map $g: V^{\proj \mcs} \to L^p(\Omega; X)$ such that the diagram
	\begin{center}
		\begin{tikzcd}
			V^{\times a} \arrow[r, "\otimes_{\mcs}^a"] \arrow[dr, "{f^a}"'] & V^{\proj \mcs} \arrow[d, dotted, "{g}"]
			\\
			& L^p(\Omega; X)
		\end{tikzcd}
	\end{center}
	commutes. We simply need to show that $f(v) = g(v)$ almost surely for each $v \in V^{\proj \mcs}$. It follows from the definitions that for each $(\zeta_e)_{e \in \mcs}$, we have that $g(\otimes_\mcs^a (\zeta_e))(\omega) = f(\omega,  \otimes_\mcs^a (\zeta_e))$ for almost all $\omega \in \Omega$. Then by linearity, we see that for each $v \in \operatorname{span} \operatorname{Ran} (\otimes_\mcs^a)$, we have that $g(v)(\omega) = f(\omega, v)$ for almost all $\omega \in \Omega$ (where we emphasise that the nullset is allowed to depend on $v$). Finally, for generic $v \in V^{\proj \sttau}$, we appeal to density of $\operatorname{span} \operatorname{Ran} (\otimes_\mcs^a)$ to find a sequence $v_n  \in \operatorname{span} \operatorname{Ran} (\otimes_\mcs^a)$ such that $v_n \to v$ in $V^{\proj \mcs}$. It then follows that $g(v_n) \to g(v)$ in $L^p(\Omega; X)$. By passing to a subsequence and relabelling we may assume that this convergence holds almost surely. We also know that for each fixed $\omega \in \Omega$, $f(\omega, v_n) \to f(\omega, v)$. Since $g(v_n) = f(\cdot, v_n)$ almost surely, we conclude that also $g(v) = f(\cdot, v)$ almost surely, as was required.
\end{proof}

We are now ready to show existence and the uniqueness of the BPHZ preparation map on a historic sector. 
\begin{prop}\label{lem: BPHZ exists}
	Let $W$ be a historic sector.
	Fix a kernel assignment $A \in \mcA_+^{\mco}$ with $\mco > \mathrm{ord} ( W )$.
	Fix a random smooth noise assignment $\xi: \Omega \to \mcA_-^{\eq; \infty}$ such that for all $z \in \mathbb{R}^d$ 
	and $\mfl \in \mfL_-$, the random variable 
	$\xi_{\mfl} ( z )$
	has moments of all orders and such that the field $\xi_{\mfl}$ is translation invariant in law. 
	Then there exists a unique 
	BPHZ functional $\hat{\ell}$ associated to $(A, \xi)$ on $W$.
\end{prop}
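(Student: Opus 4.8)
The plan is to construct the BPHZ functional $\hat \ell$ by Noetherian induction with respect to the age of a tree, leveraging the inductive properties recorded in Lemma~\ref{l:age_ind_prop}. The point is that the condition $\mbE[\boldsymbol{\Pi}^{P_{\hat\ell}} v(0)] = 0$ for $v \in V^{\proj\sttau}$ with $\btau \in \mcT_-(R) \cap \mcB_W$ must be turned into a recursive characterisation of $\hat\ell \restriction_{V^{\proj\sttau}}$ in terms of the values of $\hat\ell$ on trees of strictly smaller age. First I would observe that, by Lemma~\ref{l:age_ind_prop}\eqref{it:ad}, the preparation map $P_{\hat\ell}$ only sees the values of $\hat\ell$ on $(\hat\ell \otimes \id)\mathring\Delta_r^-\btau$ and $(\id \otimes \hat\ell)\mathring\Delta_r^-\btau$, which are supported on trees of age strictly less than $\mathrm{Age}(\btau)$; so when building $\boldsymbol{\Pi}^{P_{\hat\ell}}$ on $V^{\proj\sttau}$ the only genuinely new piece of data is $\hat\ell \restriction_{V^{\proj\sttau}}$ itself, entering through the single term $(\hat\ell \otimes \id)(\mathbf 1 \otimes \id)\Delta_r^-\btau = \hat\ell(\btau) \cdot \mathbf 1$ in the expansion $\Delta_r^- = \mathring\Delta_r^- + (\id\otimes\mathbf 1) + (\mathbf 1\otimes\id)$. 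Unwinding the definition of $\boldsymbol{\Pi}^{P_{\hat\ell}}$ (Definition~\ref{def:pre_mod}) this contributes exactly $\hat\ell(v)$ to $\boldsymbol{\Pi}^{P_{\hat\ell}}_x v$ (since $\boldsymbol{\Pi}^{P,\times}\mathbf 1 = 1$), so the defining equation $\mbE[\boldsymbol{\Pi}^{P_{\hat\ell}}v(0)] = 0$ can be solved \emph{uniquely} for $\hat\ell(v)$: one sets
\begin{equ}
	\hat\ell(v) = - \mbE\big[ \boldsymbol{\Pi}^{\hat P}_0 \big( (P_{\hat\ell} - \hat\ell(\cdot)\mathbf 1 - \id) v \big)(0) + \text{(contributions from strictly younger trees)} \big],
\end{equ}
where the right-hand side depends only on $\hat\ell$ restricted to trees of age $< \mathrm{Age}(\btau)$. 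This gives both existence and uniqueness at each stage of the induction, once we check that the right-hand side is well-defined, i.e.\ that the relevant expectations exist.

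The second main ingredient is therefore integrability: we must show that $\boldsymbol{\Pi}^{P}_x v(y)$, constructed realisation-by-realisation in Proposition~\ref{prop:mod_prep_2}, actually lies in $L^p(\Omega)$ for every $p$ and is a continuous function of $v \in V^{\proj\sttau}$ into $L^p(\Omega)$. This is where Lemma~\ref{lem:UP_in_Lp} is used: I would run the same Noetherian induction over $\mcB_W$ with respect to $\prec$ that underlies Proposition~\ref{prop:mod_prep_2}, but carrying the stronger inductive hypothesis that $\boldsymbol{\Pi}^{P}_x\btau \colon V^{\proj\sttau}\to L^p(\Omega;C^\infty(\mbR^d))$ continuously for all $p<\infty$. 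For $\btau = \Xi_\mfl$ this is the hypothesis that $\xi_\mfl(z)$ has moments of all orders (together with continuity of $N_\mfl$); for products of planted trees one uses H\"older's inequality in $\Omega$ to split the product of the factors across the exponents $p$; for planted trees $\mcI^\zeta$ one convolves with the (deterministic) kernel and notes that convolution commutes with taking $L^p(\Omega)$ norms by Minkowski's integral inequality, so the bound propagates. At each step Lemma~\ref{lem:UP_in_Lp} upgrades the realisation-wise factoring through $V^{\proj\sttau}$ to a factoring in the $L^p(\Omega)$ sense, which is exactly what is needed to make sense of $\mbE[\boldsymbol{\Pi}^P v(0)]$ as a continuous linear functional of $v$ (hence to see that the recursively-defined $\hat\ell$ restricted to each $V^{\proj\sttau}$ is indeed an element of $(V^{\proj\sttau})^*$, and that its vanishing on trees of the form $\mcI\bsigma$, $X^k\bsigma$ is imposed by fiat as in Definition~\ref{def:BPHZ}).

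Putting these together: I would fix an enumeration of $\mcB_W \cap \mcT_-(R)$ compatible with age, and inductively (i) invoke the integrability lemma to know the expectation below is defined and continuous in $v$; (ii) for trees of the form $\mcI\bsigma$ or $X^k\bsigma$ set $\hat\ell\restriction_{V^{\proj\sttau}} = 0$ as required; (iii) for all other $\btau \in \mcT_-(R)\cap\mcB_W$, by the analysis above, extract the unique $\hat\ell\restriction_{V^{\proj\sttau}}$ solving $\mbE[\boldsymbol{\Pi}^{P_{\hat\ell}} v(0)] = 0$; then (iv) observe that $P_{\hat\ell}$ so defined is a preparation map on $W$ and $W$ is $P_{\hat\ell}$-good by Lemma~\ref{l:pfl} (whose hypotheses — that $\hat\ell$ kills $\mcI\bsigma$ and $X^k\bsigma$ — hold by construction), so $\boldsymbol{\Pi}^{P_{\hat\ell}}$ makes sense via Proposition~\ref{prop:mod_prep_2} and the construction is consistent. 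Uniqueness follows because at each age level the equation pins down $\hat\ell$ with no freedom. I expect the main obstacle to be the bookkeeping in step (iii): one must verify carefully that, in the expansion of $\boldsymbol{\Pi}^{P_{\hat\ell}}_0 v(0)$ using $\boldsymbol{\Pi}^{P} = \boldsymbol{\Pi}^{P,\times}P$ and the recursive definition of $\boldsymbol{\Pi}^{P,\times}$, the coefficient of $\hat\ell\restriction_{V^{\proj\sttau}}$ really is $1$ (and not some model-dependent quantity that could vanish), which is where the precise form $\Delta_r^- = \mathring\Delta_r^- + (\id\otimes\mathbf 1) + (\mathbf 1\otimes\id)$ and the normalisation $\boldsymbol{\Pi}^{P,\times}\mathbf 1 = 1$ are essential; the translation invariance in law of $\xi$ does not enter the existence/uniqueness argument here (it is used later, for the stochastic estimates) beyond guaranteeing that the choice $\hat\ell$ is the natural BPHZ one.
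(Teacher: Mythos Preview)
Your overall architecture matches the paper's: induct on the age filtration $W_n$, use the decomposition $\Delta_r^- = \mathring{\Delta}_r^- + (\id\otimes\mathbf 1) + (\mathbf 1\otimes\id)$ to isolate $\hat\ell(v)$ as the only new unknown at each stage, and establish integrability via Lemma~\ref{lem:UP_in_Lp} so that the recursive formula makes sense as a continuous linear functional. The paper packages the inductive hypothesis as continuity of $v\mapsto D^k\boldsymbol{\Pi}^{P_{\ell_n}(,\times)}v(0)$ into $L^p(\mathrm d\mathbb P)$ and arrives at the explicit formula
\[
\ell_{n+1}(v)=-\mathbb E[\boldsymbol{\Pi}^{P_{\ell_n},\times}v(0)]-(\ell_n\otimes\mathbb E[\boldsymbol{\Pi}^{P_{\ell_n},\times}\cdot(0)])\mathring\Delta_r^-(v),
\]
which is what your vaguer display is gesturing at.

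There is, however, a genuine gap in your treatment of planted trees, and your claim that translation invariance ``does not enter the existence/uniqueness argument'' is wrong. For $\btau=\mcI_\mfl^k\bsigma$ one has $\boldsymbol{\Pi}^{P,\times}\mcI^{\zeta,k}v(0)=\int D^kA_\mfl(\zeta)(-y)\,\boldsymbol{\Pi}^{P}v(y)\,\mathrm dy$, so Minkowski only yields
\[
\mathbb E^{1/p}\big[|\boldsymbol{\Pi}^{P,\times}\mcI^{\zeta,k}v(0)|^p\big]\le\int|D^kA_\mfl(\zeta)(-y)|\,\mathbb E^{1/p}\big[|\boldsymbol{\Pi}^{P}v(y)|^p\big]\,\mathrm dy,
\]
and you need control of $\mathbb E^{1/p}[|\boldsymbol{\Pi}^{P}v(y)|^p]$ for \emph{all} $y$ in the support of the kernel. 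The paper's inductive hypothesis controls only evaluation at $0$ and then invokes translation invariance in law of $\boldsymbol{\Pi}^{P}v$ (inherited from $\xi$ and the translation-invariant kernels $A_\mfl(\zeta)$) to replace $y$ by $0$. Your proposed remedy---carrying the stronger hypothesis of continuity into $L^p(\Omega;C^\infty(\mathbb R^d))$---fails already at the base case $\btau=\Xi_\mfl$: this would require $\mathbb E\big[\sup_{y\in\mfK}|\xi_\mfl(y)|^p\big]<\infty$, but the assumption of the proposition gives only pointwise moments $\mathbb E[|\xi_\mfl(z)|^p]<\infty$; translation invariance makes these uniform in $z$ but does not furnish moments of the supremum. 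So translation invariance is exactly what lets the argument run on the weaker pointwise hypothesis, and you should use it as the paper does rather than attempting to strengthen the inductive claim.
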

We will call the preparation map
given by $\hat{P} = (\hat{\ell} \otimes \id) \Delta_r^-$
the BPHZ preparation map associated to $(A, \xi)$ on $W$ 
and the corresponding model $(\hat{\Pi}, \hat{\Gamma})$ the BPHZ model on $W$.
\begin{proof}
	We let $W_n$ be as in Definition~\ref{def:W_n}. We prove by induction in $n \in \mathbb{N}$ that
	\begin{enumerate}
		\item \label{it:bphz1} there is a unique BPHZ functional $\ell_n$ on $W_n$,
		\item \label{it:bphz2} the map $v \mapsto D^k \boldsymbol{\Pi}^{P_{\ell_{n}}, \times} v (0)$ is continuous from $V^{\proj \sttau}$ to $L^p(d\mathbb{P})$ for every $p \ge 1$, $k \in \mathbb{N}^d$ and $\boldsymbol{\tau} \in \mcB_{W_n}$,
		\item \label{it:bphz3} the map $v \mapsto D^k \boldsymbol{\Pi}^{P_{\ell_{n}}} v (0)$ is continuous from $V^{\proj \sttau}$ to $L^p(d\mathbb{P})$ for every $p \ge 1$, $k \in \mathbb{N}^d$ and $\boldsymbol{\tau} \in \mcB_{W_n}$
	\end{enumerate}
	where $(\boldsymbol{\Pi}^{P_{\ell_n}}, \boldsymbol{\Pi}^{P_{\ell_n}, \times})$ is the pair associated to $(P_{\ell_n},A,\xi)$ on $W_{n}$ as defined in
	Definition~\ref{def:pre_mod}.
	
	In the base case $n=1$, we note that $W_1 \subseteq V^{\proj \langle \mathbf{1} \rangle} \simeq \mbR$ so the result is clear. 
	We turn to the induction step, assuming the statements for $k \le n$ and establishing them for $n+1$. 
	We first establish the second of the induction hypotheses by noting that the definition of $\boldsymbol{\Pi}^{P_{\ell_{n+1}}, \times}$ on $W_{n+1}$ given in Definition~\ref{def:pre_mod} depends only on $\ell_n$ so that it remains to prove the continuity property.
	Let $\boldsymbol{\tau} \in \mcB_{W_{n+1}}$.
	If $\boldsymbol{\tau}$ is a noise or a polynomial, the result is clear, so that we may assume that $\btau$ is planted or a product of planted trees.
	Furthermore, we note that for each fixed $\omega \in \Omega$, since
	evaluation at $0$ is a continuous map from $C^\infty(\mathbb{R})$ to $\mathbb{R}$, we have that 
	$\boldsymbol{\Pi}^{M_{\ell_{n+1}}, \times} \cdot (0)$ factors the $\sttau$-symmetric family of continuous multilinear maps $\boldsymbol{\Pi}^{P_{\ell_{n+1}}, \times} \otimes_\sttau^\tau \cdot (0)$. By Lemma~\ref{lem:UP_in_Lp} with $f^\tau = \boldsymbol{\Pi}^{P_{\ell_{n+1}}, \times} \otimes_{\sttau}^\tau \cdot (0)$ it thus suffices to  check that for each $\tau \in \boldsymbol{\tau}$, we have that $(\zeta_e)_{e \in E_+(\tau)} \mapsto \boldsymbol{\Pi}^{P_{\ell_{n+1}}, \times} \bigotimes_{\sttau}^\tau (\zeta_e)_{e \in E_+(\tau)} (0)$ is a continuous map from $V^{\times \tau}$ to $L^p(\Omega)$ (and by multilinearity it in turn suffices to check only boundedness).
	
	To check this boundedness, we divide as usual into cases.
	In the case where $\tau = \prod_{i = 1}^n \tau_i$ with $i > 1$ and $\tau_i \ne 1$,
	then we have $\boldsymbol{\tau_i} \in \mcB_{W_{n}}$ and we note that 
	\begin{equ}
		\boldsymbol{\Pi}^{P_{\ell_{n+1}}, \times} \bigotimes_\sttau^\tau (\zeta_e)_{e \in E_+(\tau)} (0) = \prod_{i=1}^n \boldsymbol{\Pi}^{P_{\ell_{n}}, \times} \bigotimes_{\langle \boldsymbol{\tau_i} \rangle}^{\tau_i} (\zeta_e)_{e \in E_+(\tau_i)}(0)
	\end{equ}
	so that the result follows from continuity of the product map $\prod_{i=1}^n L^{p_i} \to L^p$ for $p^{-1} = \sum_{i=1}^n p_i^{-1}$.
	
	It remains only to consider the case where $\tau$ is planted. In this case, 
	\begin{equs}
		\mathbb{E}^{\frac{1}{p}} \Big[ \Big| \boldsymbol{\Pi}^{P_{\ell_{n+1}}, \times} \bigotimes_{\langle \boldsymbol{\mcI_\mfl^k \tau} \rangle}^{\mcI_\mfl^k \tau} \left ( (\zeta_e)_{e \in E_+(\tau)}\sqcup \{\zeta\} \right ) (0) \Big|^p \Big] 
		& \le \int \big| D^k A_\mfl(\zeta)(-y) \big| \mathrm{d}y \, \, \mathbb{E}^{\frac{1}{p}} \Big[ \Big| \boldsymbol{\Pi}^{P_{\ell_{n}}} \bigotimes_{\sttau}^{\tau} (\zeta_e)_{e \in E_+(\tau)} (0) \Big|^p \Big]
	\end{equs}
	where we used Minkowski's integral inequality and translation invariance of $\boldsymbol{\Pi}^{P, \times} \bigotimes_{\sttau}^{\tau} (\zeta_e)_{e \in E_+(\tau)}$.
	The result then follows from the induction hypothesis and the fact that $\|D^k A_{\mfl}\|_{L^1} \lesssim \| A \|_{\mcA_+^{\mco}} < \infty$ by definition of a kernel assignment and Assumption~\ref{ass:reg}.
	
	Still in the induction step, we turn to the proof of item~\ref{it:bphz1}. 
	For that purpose, it suffices to see that $\ell_{n+1}$ is uniquely characterised on $V_{\mcT_-} \cap W_{n+1}$ by the expression
	\begin{equ}
		\ell_{n+1} (v) = - \mathbb{E}[\boldsymbol{\Pi}^{P_{\ell_n}, \times} v (0) ] - (\ell_n \otimes \mathbb{E}[\boldsymbol{\Pi}^{P_{\ell_n}, \times} \cdot (0) ]) \mathring{\Delta}_r^- (v) ,
	\end{equ}
	which defines a continuous linear form by item~\ref{it:bphz2} of the induction loop.
	Here we use the fact that $\mathring{\Delta}_r^-$ strictly reduces the age to appeal to the induction hypothesis.
	
	Finally, item~\ref{it:bphz3}
	follows from $\boldsymbol{\Pi}^{P_{\ell_{n+1}}} = \boldsymbol{\Pi}^{P_{\ell_{n+1}}, \times} (\ell_{n+1} \otimes \id) \Delta_r^-$
	and the fact that $(\ell_{n+1} \otimes \id) \Delta_r^-$ is continuous since $\ell_{n+1}$ is.
\end{proof}

Whilst the above result shows that the BPHZ lift of a smooth driving noise is well-defined in our infinite dimensional setting, it does not show the required stochastic estimates to obtain stability as a smoothing mechanism is removed.
Our goal for the remainder of this subsection is to obtain the annealed forms of such stochastic estimates for the BPHZ model on $\mcT^\mathrm{Ban}$ for an arbitrary Banach space assignment $V$ and kernel assignment $A$.

Our strategy to do this is to lift the statement from the corresponding scalar valued results given in any of \cite{CH16, HS23, BH23}. The key idea is to construct for each tree $\btau$ a scalar-valued regularity structures according to the recipe of \cite{BHZ19} which contains a sufficient amount of information to obtain the estimates on the range of $\otimes_{\btau}^\tau$. We now do this for fixed  $\tau \in \boldsymbol{\tau} \in \mcT(R)$. We define a new set of edge types 
$$\mfL(\tau) = \{(\mfl, e): e \in E_+(\tau), \mfl = \mfl_\tau(e)\}$$
equipped with the degree map induced by the degree map on $\mfL$
and rule
$$R_\tau(\mfl, e) = \big\{ \big((\alpha_1, k_1), \dots, (\alpha_n , k_n)\big): n \in \mbN, k_i \in \mbN^d, \alpha_i = (\mfl_i, e_i) \in \mfL(\tau), \big((\mfl_1, k_1), \dots,(\mfl_n, k_n) \big) \in R(\mfl)\big\}$$
which we note is automatically subcritical, normal and complete so long as the same was true of the rule $R$. We denote by $T^\tau$ the reduced regularity structure (in the sense of \cite{BHZ19}) corresponding to this type set/rule pair. This construction is designed in such a way that if $\sigma \in \boldsymbol{\sigma} \in \mcT(R)$ is such that $$\mfl(E(\sigma)) \subseteq \mfl(E(\tau))$$
then $T^\tau$ contains a distinct copy of $\sigma$ for each way of assigning an edge of $\tau$ of type $\mfl$ to each edge of $\sigma$ of type $\mfl$. In particular, if $\sigma \in \bsigma \in \mcT(R)$ is equipped with a $\tau$-identification $\iota: \sigma \to \tau$ then we obtain an element $\sigma^*$ of $T^\tau$ by assigning to each edge $e$ of $\sigma$ the new type $(\mfl(e), \iota(e))$ and then defining $\sigma^*$ to be the resulting isomorphism\footnote{We note that for elements of $T^\tau$ we have broken our convention that isomorphism classes receive bold symbols. This is justified by the fact that we will only be interested in trees in $T^\tau$ in which each edge receives a distinct type so that each drawing has no non-trivial symmetries.} class.

We recall that a kernel assignment
of order $\mco$
on $T^\tau$ is simply a choice of $A^\tau(\mfl,e)  \in \mcK_{\mco}^{|\mfl|}$ for each $(\mfl, e) \in \mfL_+(\tau)$. Given $(\zeta_e)_{e \in E_+(\tau)} \in V^{\times \tau}$ and a kernel assignment $A \in \mcA_{+}^{\mco}$, 
we obtain an induced kernel assignment of order $\mco$ and noise assignment
on $T^\tau$ by setting 
\begin{equ}\label{eq:induced assigment}
	A^\tau(\mfl, e) = A_\mfl(\zeta_e), \qquad \xi(\mfl, e)= \xi_\mfl\ .
\end{equ}

Let $W$ be a historic sector of $\mcT^{\Ban}$ of order $\mathrm{ord} (W) < \mco$ and $\tau \in \btau \in \mcB_{W}$.
For a fixed tuple $(\zeta_e)_{e \in E_+(\tau)}$, a kernel assignment $A \in \mcA_{+}^{\mco}$ 
and a 
random smooth noise assignment $\xi \colon \Omega \to \mcA_{-}^{\infty}$ satisfying the assumptions of Proposition~\ref{lem: BPHZ exists}, we define $(\hat{\Pi}^\tau, \hat{\Gamma}^\tau)$ to be the BPHZ model for the kernel assignment $A^\tau$ and noise $\xi$ on the regularity structure $T^\tau$ as defined in \cite[Theorem 6.18]{BHZ19}. 
The purpose of this construction is that the distribution $\hat{\Pi}_x \otimes_\sttau^\tau (\zeta_e)_{e \in E_+(\tau)}$ can be recovered from $\hat{\Pi}^\tau$.

Finally, we observe that for each tree $\sigma^*\in \mathrm{Hist}(\tau^*)$ and each drawing $\sigma \in \sigma^*$  the map 
\begin{equ}\label{eq: identification}
	E(\sigma)\to E(\tau),\qquad e\mapsto \pi_2(\mfl_{\sigma}(e))
\end{equ}
is injective. Therefore each drawing $\sigma \in \sigma^*$ induces a $\tau$-identified tree $(\bar{\sigma}, \iota_{\bar{\sigma}})$
where $\bar{\sigma}$ has the same edge set as $\sigma$ but equipped with the $\mfL$-valued type map $\pi_1\circ \mfl_{\sigma}$ and where the $\tau$-identification $\iota_{\bar{\sigma}}$ is the map given in \eqref{eq: identification}.

\begin{lemma}\label{lem:up_down}
	Let $W$ be a historic sector of $\mcT^{\Ban}$, let $\tau \in \btau \in \mcB_{W}$
	and let $(\zeta_e)_{e \in E_+(\tau)} \in V^{\times \tau}$.
	Fix a kernel assignment $A \in \mcA_+^{\mco}$ with $\mco > \mathrm{ord} ( W)$ and a random smooth noise assignment $\xi \colon \Omega \to \mcA_-^{\infty; \eq}$ on $\mcT^{\Ban}$ satisfying the assumptions of Proposition~\ref{lem: BPHZ exists}.
	Fix the induced noise assignment and kernel assignment on $T^\tau$ as in \eqref{eq:induced assigment}. 
	Then, the BPHZ model $(\hat{\Pi}^\tau, \hat{\Gamma}^\tau)$ on $\langle \mathrm{Hist}(\tau^*) \rangle \subset T^\tau$ is well defined and it holds that for any drawing  $\sigma$ of $\sigma^* \in \langle \mathrm{Hist}(\tau^*) \rangle $
	\begin{equ}\label{eq:local eq bphz}
		\hat{\Pi}_x^\tau \sigma^*=\hat{\Pi}_x \otimes_{\langle{\bar{\bsigma}}\rangle}^{\bar{\sigma}} (\zeta_{\iota_{\bar{\sigma}}(e)})_{e \in E_+(\bar{\sigma})} \ .
	\end{equ}	  
\end{lemma}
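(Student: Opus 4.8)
The plan is to prove \eqref{eq:local eq bphz} by a Noetherian induction on $\mathrm{Age}(\bsigma^*)$, mirroring the inductive construction of the BPHZ model on $\mcT^{\Ban}$ in Proposition~\ref{lem: BPHZ exists} and the recursive relations of Definition~\ref{def:mod_from_Pmap}/Definition~\ref{def:pre_mod}. The key point is that both $(\hat\Pi^\tau,\hat\Gamma^\tau)$ and $(\hat\Pi,\hat\Gamma)$ are built from the \emph{same} combinatorial recipe of \cite{Bru18,BB21}, once one identifies the data: the kernel assignment on $T^\tau$ is $A^\tau(\mfl,e)=A_\mfl(\zeta_e)$, which is exactly the kernel that gets convolved when one evaluates $\hat\Pi_x\otimes_{\langle\bar\bsigma\rangle}^{\bar\sigma}$ at the entry $\zeta_e$ sitting on the edge $e$; and the noise assignments agree by $\xi(\mfl,e)=\xi_\mfl$. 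Since on $T^\tau$ every drawing of $\sigma^*$ has only trivial symmetries, the symmetrised tensor product $V^{\proj\langle\bar\bsigma\rangle}$ with the $\tau$-identification $\iota_{\bar\sigma}$ is precisely the bookkeeping device that records \emph{which} copy of the kernel (indexed by $e\in E_+(\tau)$) lives on which edge of $\bar\sigma$; this is the content of the discussion preceding Lemma~\ref{lem:prod_set_up} and of the lifting maps $\mcL_\tau^{(k)}$.

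First I would check that $(\hat\Pi^\tau,\hat\Gamma^\tau)$ on $\langle\mathrm{Hist}(\tau^*)\rangle\subset T^\tau$ is well-defined: this is just Proposition~\ref{lem: BPHZ exists} applied to the scalar-valued historic sector $\langle\mathrm{Hist}(\tau^*)\rangle$ (note $\mathrm{Hist}$ makes sense here by the same Definition~\ref{d:history} with the trivial Banach space assignment), using $\mco>\mathrm{ord}(W)\ge\mathrm{ord}$ of this sector since the degree map on $\mfL(\tau)$ is induced from that on $\mfL$ and edge types are only duplicated, not changed in degree. Then the heart of the induction: fix $\sigma^*\in\mathrm{Hist}(\tau^*)$, drawing $\sigma$, inducing $(\bar\sigma,\iota_{\bar\sigma})$. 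In the base cases ($\sigma^*$ a noise $\Xi_{(\mfl,e)}$, with $\bar\sigma=\Xi_\mfl$; or $\sigma^*=X^k$) both sides equal $\xi_\mfl$ resp.\ $(\cdot-x)^k$ by the definitions of admissibility; for $\sigma^*$ a product of planted trees one uses multiplicativity of $\hat\Pi^{\tau,\times}$ and of $\hat\Pi^\times\circ\otimes_{\langle\cdot\rangle}^{\cdot}$ (the latter via the definition of $\star$ and item~(4) of Definition~\ref{def:pre_mod}), together with the fact that $\iota_{\bar\sigma}$ restricted to each factor gives the $\tau$-identification of that factor; and for $\sigma^*=\mcI_{(\mfl,e)}^k\varsigma^*$ planted one uses item~(5), i.e.\ $\hat\Pi^{\tau,\times}_x\mcI_{(\mfl,e)}^k\varsigma^* = D^kA^\tau(\mfl,e)*\hat\Pi^\tau_x\varsigma^* - (\text{Taylor jet})$ against the analogous relation for $\hat\Pi_x\mcI^{\zeta_e,k}\otimes_{\langle\bar\bvarsigma\rangle}^{\bar\varsigma}$ from Definition~\ref{def:mod_from_Pmap}, and $A^\tau(\mfl,e)=A_\mfl(\zeta_e)$ matches by construction. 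In each case $\mathrm{Age}$ of the sub-objects strictly decreases by Lemma~\ref{l:age_ind_prop}, so the induction hypothesis applies; one must also verify that the BPHZ \emph{functionals} match, i.e.\ that $\hat\ell^\tau$ on $T^\tau$ corresponds to $\hat\ell$ on $\mcT^{\Ban}$ under the identification, which follows from the defining expression for $\ell_{n+1}$ in the proof of Proposition~\ref{lem: BPHZ exists} (it only involves $\mbE[\boldsymbol\Pi^{\times}\cdot(0)]$ and $\mathring\Delta_r^-$, both of which intertwine with the identification of edges, using that $\mathring\Delta_r^-$ acts `locally' on subtrees and the $\tau$-identification of a contracted tree is inherited from that of its parent — this is the isomorphism-equivariance of $\mcL_\tau^{(k)}$ in Proposition~\ref{prop:lift_equiv}).

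The main obstacle I expect is bookkeeping the $\tau$-identifications through the extraction/contraction $\mathring\Delta_r^-$ that defines the BPHZ preparation map: one needs that when a subtree is extracted from $\bar\sigma$, the resulting trees are $\tau$-identified in a way compatible with the types $(\mfl,e)$ on $T^\tau$, and that the coproduct $\Delta_r^-$ on $T^\tau$ acting on $\sigma^*$ produces exactly the $T^\tau$-labelled versions of these pieces. This is morally the statement that $\Delta_r^-$ `commutes with the type-refinement map $\bsigma\mapsto\sigma^*$', which holds because $\Delta_r^-$ only ever remembers the edge type, and the refinement $\mfl\rightsquigarrow(\mfl,e)$ is injective on $E(\sigma)$ by \eqref{eq: identification}; making this precise cleanly is the one place where some care (rather than routine computation) is required. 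Everything else — the analytic estimates, the contraction property of $\mcI^\zeta$, the multiplicativity — is exactly parallel to the scalar-valued arguments already given and transfers through Lemma~\ref{lem:UP_in_Lp} and the universal property, so I would present those steps briefly and concentrate the writeup on the identification of the two BPHZ functionals and the compatibility of $\mathring\Delta_r^-$ with type-refinement.
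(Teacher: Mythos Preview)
Your proposal is correct and follows essentially the same approach as the paper: an induction on $\mathrm{Age}(\sigma^*)$ that simultaneously tracks the identities for $\hat\Pi_x^\times$, $\hat{\boldsymbol\Pi}^\times$, the BPHZ functional $\hat\ell$, $\hat{\boldsymbol\Pi}$ and $\hat\Pi_x$, splitting into the noise/polynomial, planted, and product-of-planted cases and using Lemma~\ref{l:age_ind_prop} to justify the inductive appeal through $\mathring\Delta_r^-$. The paper makes the five parallel claims explicit as a numbered list and verifies them in order for each case, which you should do as well for clarity, but the content is the same.
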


\begin{proof}
We prove by standard induction with respect to $\mathrm{Age}(\sigma^*)$ the following five claims.
\begin{enumerate}
	\item $\hat{\Pi}_x^{\times} \otimes_{\langle{\bar{\bsigma}}\rangle}^{\bar{\sigma}} (\zeta_{\iota_{\bar{\sigma}}(e)})_{e \in E_+(\bar{\sigma})} = \hat{\Pi}_x^{\tau, \times} \sigma^*$,
	\item $\hat{\boldsymbol{\Pi}}^{\times} \otimes_{\langle{\bar{\bsigma}}\rangle}^{\bar{\sigma}} (\zeta_{\iota_{\bar{\sigma}}(e)})_{e \in E_+(\bar{\sigma})}= \hat{\boldsymbol{\Pi}}^{\tau, \times} \sigma^*$,
	\item if $\sigma^*$ is of negative degree then $\hat{\ell} \big (\otimes_{\langle{\bar{\bsigma}}\rangle}^{\bar{\sigma}} (\zeta_{\iota_{\bar{\sigma}}(e)})_{e \in E_+(\bar{\sigma})}\big )= \hat{\ell}^\tau(\sigma^*)$ where $\hat{\ell}, \hat{\ell}^\tau$ denote the BPHZ 
	functionals
	on $\mcT^{\mathrm{Ban}}$ and $T^\tau$ respectively,
	\item $\hat{\boldsymbol{\Pi}} \otimes_{\langle{\bar{\bsigma}}\rangle}^{\bar{\sigma}} (\zeta_{\iota_{\bar{\sigma}}(e)})_{e \in E_+(\bar{\sigma})} = \hat{\boldsymbol{\Pi}}^{\tau} \sigma^*$,
	\item $\hat{\Pi}_x \otimes_{\langle{\bar{\bsigma}}\rangle}^{\bar{\sigma}} (\zeta_{\iota_{\bar{\sigma}}(e)})_{e \in E_+(\bar{\sigma})}= \hat{\Pi}_x^{\tau} \sigma^*$.
\end{enumerate}

When either  $\sigma = X^k$ for some $k \in \mathbb{N}^d$ (which includes the base case $\sigma=\mathbf{1}$) or $\sigma = \Xi_\mft$ for some $\mft \in \mfL_-$ all five parts of the induction hypothesis follows immediately from admissibility of all models involved and the fact that the 
BPHZ functionals
vanish on such trees by assumption.

In the case where $\sigma^*$ is planted, we can write $\sigma = \mcI_\mfl^k w$ for some $\mfl \in \mfL_+$, $k \in \mathbb{N}^d$. We note that the first and the second points coincide with the fourth and the fifth points respectively. Furthermore, the third condition is trivial since the 
functionals
again vanish by assumption. We demonstrate how to show the fourth part of the induction hypothesis in this case since the proof of the fifth is similar. It follows from admissibility and the induction hypothesis applied to $w$ that
\begin{align*}
	\hat{\boldsymbol{\Pi}} \otimes_{\langle{\bar{\bsigma}}\rangle}^{\bar{\sigma}} (\zeta_{\iota_{\bar{\sigma}}(e)})_{e \in E_+(\bar{\sigma})} = D^k A_\mfl(\zeta_{\iota_{\bar{\sigma}}(e_*)}) \ast \hat{\boldsymbol{\Pi}} 
	\otimes_{\langle{\bar{\pmb{w}}}\rangle}^{\bar{w}} (\zeta_{\iota_{\bar{w}}(e)})_{e \in E_+(\bar{w})}
	= D^k A^\tau(\mfl, \iota_{\bar{\sigma}}(e_*)) \ast \hat{\boldsymbol{\Pi}}^\tau w^*
	= \hat{\boldsymbol{\Pi}}^\tau \sigma^*.
\end{align*}
where $e_*$ denotes the trunk of $\sigma$ which we recall is of type $(\mfl, \iota_{\bar{\sigma}}(e_*)) \in \mfL(\tau)$ by definition of $\iota_{\bar{\sigma}}$. 

It now remains only to treat the case where $\sigma = \prod_{i = 0}^n \sigma_i$. Here we prove each of the induction hypotheses in the order they are presented. Since the second is similar to the first and the fourth is similar to the fifth, we will omit the details for the second and fourth claims. For the first of the induction hypotheses, we note that due to its definition via Lemma~\ref{lem:multiplication}, we can write 
\begin{align*}
	\hat{\Pi}_x^\times \otimes_{\langle{\bar{\bsigma}}\rangle}^{\bar{\sigma}} (\zeta_{\iota_{\bar{\sigma}}(e)})_{e \in E_+(\bar{\sigma})} 
	&= \prod_{i = 0}^n \hat{\Pi}_x^\times  \otimes_{\langle{\bar{\bsigma}_i}\rangle}^{\bar{\sigma}_i} (\zeta_{\iota_{\bar{\sigma}_i}(e)})_{e \in E_+(\bar{\sigma}_i)} 
	= \prod_{i = 0}^n \hat{\Pi}_x^{\tau, \times} \sigma_i^* 
	= \hat{\Pi}_x^{\tau, \times} \sigma^*
\end{align*}
where the second equality follows from the induction hypothesis and Lemma~\ref{l:age_ind_prop}.

For the third part of the induction hypothesis, we note that by definition we have that
\begin{align}\label{ud01}
	\hat{\ell}\big ( \otimes_{\langle{\bar{\bsigma}}\rangle}^{\bar{\sigma}} (\zeta_{\iota_{\bar{\sigma}}(e)})_{e \in E_+(\bar{\sigma})} \big ) 
	= - \mathbb{E}[ \hat{\boldsymbol{\Pi}}^\times \otimes_{\langle{\bar{\bsigma}_i}\rangle}^{\bar{\sigma}_i} (\zeta_{\iota_{\bar{\sigma}_i}(e)})_{e \in E_+(\bar{\sigma}_i)} (0)] 
	- (\hat{\ell} \otimes \mathbb{E}[ \hat{\boldsymbol{\Pi}}^\times \cdot (0)]) \mathring{\Delta}_r^- \otimes_\sttaun{i}^{\tau_i} (\zeta_e)_{e \in E_+(\tau_i)}.
\end{align}
By the second claim in the induction hypothesis, we can immediately identify the first term as $- \mathbb{E}[ \hat{\boldsymbol{\Pi}}^{\tau, \times} \sigma^* (0)]$. To conclude that 
the second terms agrees with
$ 	(\hat{\ell}^\tau \otimes \mathbb{E}[ \hat{\boldsymbol{\Pi}}^{\tau, \times} \cdot (0)]) \mathring{{\Delta}}_r^- \sigma^*$, we simply appeal to the induction hypothesis and the third point of Lemma~\ref{l:age_ind_prop}. This allows us to conclude that
$
\hat{\ell}\big ( \otimes_{\langle{\bar{\bsigma}}\rangle}^{\bar{\sigma}} (\zeta_{\iota_{\bar{\sigma}}(e)})_{e \in E_+(\bar{\sigma})} \big ) 
= \hat{\ell}^\tau(\sigma^*)
$
as required.	
It remains to prove the fifth claim in the induction hypothesis in the case $\sigma = \prod_{i = 0}^n \sigma_i$. This now follows from the definitions and the previous hypotheses since
\begin{align*}
	\hat{\Pi}_x  \otimes_{\langle{\bar{\bsigma}}\rangle}^{\bar{\sigma}} (\zeta_{\iota_{\bar{\sigma}}(e)})_{e \in E_+(\bar{\sigma})} = (\hat{\ell} \otimes \hat{\Pi}_x^\times) \Delta_r^-  \otimes_{\langle{\bar{\bsigma}}\rangle}^{\bar{\sigma}} (\zeta_{\iota_{\bar{\sigma}}(e)})_{e \in E_+(\bar{\sigma})}= (\hat{\ell}^\tau \otimes \hat{\Pi}_x^{\tau, \times}) {\Delta}_r^- \sigma^* = \hat{\Pi}_x^\times \sigma^*.
\end{align*}
\end{proof}

With this identification in hand, the annealed forms of the stochastic estimates can be lifted as a corollary of their scalar valued counterparts, so long as those scalar valued counterparts are assumed to be suitably uniform in the kernel assignment and to hold on the regularity structures $T^\tau$ defined above rather than only on $\mcT^\mathrm{eq}$.

\begin{lemma}\label{l:annealed_mb} 
Let $W$ be a historic sector of $\mcT^{\Ban}$, let $\tau \in \btau \in \mcB_{W}$
and let $(\zeta_e)_{e \in E_+(\tau)} \in V^{\times \tau}$.
Fix a kernel assignment $A \in \mcA_+^{\mco}$ with $\mco > \mathrm{ord} ( W)$ and a random smooth noise assignment $\xi \colon \Omega \to \mcA_-^{\infty; \eq}$ satisfying the assumptions of Proposition~\ref{lem: BPHZ exists}.
The BPHZ model on $W$
satisfies the estimate 
\begin{equ} \label{eq:annealed_1}
	\sup_{v \in V^{\proj \sttau}} \frac{\mathbb{E}^{\frac{1}{p}} \left [ \big| \hat{\Pi}_x v \big( \phi_x^{\lambda} \big) \big|^p \right ]}{\| v \|_{V^{\proj \sttau}}} 
	\le \|A\|_{\mcA_+^{\mco}}^{|E_+(\tau)|} \sup_{A^\tau} \frac{\mathbb{E}^{\frac{1}{p}} \left [ \big| \hat{\Pi}_x^\tau \tau^* \big( \phi_x^{\lambda} \big) \big|^p \right ]}{ \prod_{e \in E_+(\tau)} \|A^\tau(\mfl(e))\|_{\mathcal{K}_{\mco}^{|\mfl(e)|}}}
\end{equ}
for each 
$x \in \mathfrak{K}, \phi \in \mathcal{B}^r, \lambda \in (0,1]$,  $1 \le p < \infty$ and for each compact $\mfK \subset \mathbb{R}^d$
where the supremum over $A^\tau$ on the right hand side runs over all kernel assignments on $T^\tau$ and $\hat{\Pi}_x^\tau$ denotes the BPHZ model on $T^\tau$ for the kernel assignment $A^\tau$. 

Furthermore, given a pair of 
random smooth noise assignments 
$\xi^1, \xi^2 \colon \Omega \to \mcA_-^{\infty; \eq}$
and a fixed kernel assignment $A \in \mathcal{A}_+^{\mco}$, writing now $\hat{\Pi}_x^i$ for the BPHZ models on $\mcT^\mathrm{Ban}$ with respect to the noise assignments $\xi^i$ and kernel assignment $A$, we have that 
\begin{align}\label{eq:annealed_2}
	\sup_{v \in V^{\proj \sttau}} \frac{\mathbb{E}^{\frac{1}{p}} \left [ \big| (\hat{\Pi}_x^1 - \hat{\Pi}_x^2) v \big( \phi_x^{\lambda} \big) \big|^p \right ]}{\| v \|_{V^{\proj \sttau}}} 
	\le \|A\|_{\mcA_+^{\mco}}^{|E_+(\tau)|} \sup_{A^\tau} \frac{\mathbb{E}^{\frac{1}{p}} \left [ \big| (\hat{\Pi}_x^{\tau,1} - \hat{\Pi}_x^{\tau, 2}) \tau^* \big( \phi_x^{\lambda} \big) \big|^p \right ]}{ \prod_{e \in E_+(\tau)} \|A^\tau(\mfl(e))\|_{\mathcal{K}_{\mco}^{|\mfl(e)|}}}
\end{align}
where $\hat{\Pi}_x^{\tau, i}$ denotes the BPHZ model on $T^\tau$ for the kernel assignment $A$ and noise assignment $\xi^i$. 
\end{lemma}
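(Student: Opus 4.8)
\textbf{Proof strategy for Lemma~\ref{l:annealed_mb}.}

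The plan is to deduce \eqref{eq:annealed_1} directly from the identification established in Lemma~\ref{lem:up_down}, combined with the bilinearity and norm-$1$ property of the maps $\otimes_\sttau^\tau$ together with the formula for the projective cross-norm on $V^{\proj \sttau}$ from Proposition~\ref{prop:Symm_Char}. First I would fix $\btau \in \mcB_W$, a drawing $\tau \in \btau$ and $(\zeta_e)_{e \in E_+(\tau)} \in V^{\times \tau}$, and apply \eqref{eq:local eq bphz} in the special case $\sigma^* = \tau^*$ (so that $\bar\sigma = \tau$ and $\iota_{\bar\sigma} = \id$), which gives the exact equality
\begin{equ}
	\hat\Pi_x \otimes_\sttau^\tau (\zeta_e)_{e \in E_+(\tau)} = \hat\Pi_x^\tau \tau^*
\end{equ}
where on the right $\hat\Pi^\tau$ is built from the induced kernel assignment $A^\tau(\mfl,e) = A_\mfl(\zeta_e)$. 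Testing both sides against $\phi_x^\lambda$, taking $\mathbb{E}^{1/p}[|\cdot|^p]$, and dividing by $\prod_{e \in E_+(\tau)}\|A_\mfl(\zeta_e)\|_{\mcK_\mco^{|\mfl(e)|}}$, the right-hand side is bounded by the supremum over all kernel assignments $A^\tau$ on $T^\tau$ appearing in \eqref{eq:annealed_1} (since $A^\tau$ defined via the induced assignment is one admissible choice). Using $\|A_\mfl(\zeta_e)\|_{\mcK_\mco^{|\mfl|}} \le \|A\|_{\mcA_+^\mco}\|\zeta_e\|_{V_{\mfl(e)}}$ from Definition~\ref{def:ka}, this yields
\begin{equ}
	\mathbb{E}^{1/p}\left[\big|\hat\Pi_x \otimes_\sttau^\tau (\zeta_e)(\phi_x^\lambda)\big|^p\right] \le \|A\|_{\mcA_+^\mco}^{|E_+(\tau)|}\Big(\prod_{e \in E_+(\tau)}\|\zeta_e\|_{V_{\mfl(e)}}\Big) \sup_{A^\tau}\frac{\mathbb{E}^{1/p}[|\hat\Pi_x^\tau \tau^*(\phi_x^\lambda)|^p]}{\prod_e \|A^\tau(\mfl(e))\|_{\mcK_\mco^{|\mfl(e)|}}}.
\end{equ}

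The next step is to pass from elementary symmetric tensors to a general $v \in V^{\proj \sttau}$. The cleanest route is to observe that for fixed $x, \phi, \lambda, p$, the map $v \mapsto \mathbb{E}^{1/p}[|\hat\Pi_x v(\phi_x^\lambda)|^p]$ is a seminorm on $V^{\proj \sttau}$ (by Minkowski in $L^p(\Omega)$ and linearity of $\hat\Pi_x$), so it suffices to bound it on a spanning subset with control by $\|v\|_{V^{\proj \sttau}}$ and then invoke the infimum formula of Proposition~\ref{prop:Symm_Char}: for $v = \sum_{i\ge 1}\otimes_\sttau^\tau(v_x^i)_{x \in E(\tau)}$ (understood with the convention identifying noise slots with $1$), one has $\mathbb{E}^{1/p}[|\hat\Pi_x v(\phi_x^\lambda)|^p] \le \sum_i \|A\|_{\mcA_+^\mco}^{|E_+(\tau)|}\big(\prod_e \|v_e^i\|\big)\sup_{A^\tau}(\cdots)$, and taking the infimum over such representations gives exactly $\|v\|_{V^{\proj \sttau}}$ times the supremum, which is \eqref{eq:annealed_1}. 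One small technical point to address here is convergence/measurability of the infinite sum $\hat\Pi_x v = \sum_i \hat\Pi_x \otimes_\sttau^\tau(v_x^i)$; this is handled exactly as in Lemma~\ref{lem:UP_in_Lp}, since $v \mapsto \hat\Pi_x v(\phi_x^\lambda)$ is the continuous linear map factoring the $\sttau$-symmetric family $v \mapsto \hat\Pi_x \otimes_\sttau^\tau (v_x)(\phi_x^\lambda)$ into $L^p(\Omega)$ — continuity into $L^p(\Omega)$ being the content of Proposition~\ref{lem: BPHZ exists}, item~\ref{it:bphz3}, combined with the extension/admissibility bounds.

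For the second inequality \eqref{eq:annealed_2}, I would run the identical argument with $\hat\Pi_x$ replaced by the difference $\hat\Pi_x^1 - \hat\Pi_x^2$ of the two BPHZ models corresponding to $\xi^1, \xi^2$ (with the \emph{same} kernel assignment $A$, so the induced kernel assignments on $T^\tau$ also coincide and only the noise differs). Lemma~\ref{lem:up_down} applied to each of $\xi^1,\xi^2$ gives $\hat\Pi_x^i \otimes_\sttau^\tau(\zeta_e) = \hat\Pi_x^{\tau,i}\tau^*$, hence $(\hat\Pi_x^1-\hat\Pi_x^2)\otimes_\sttau^\tau(\zeta_e) = (\hat\Pi_x^{\tau,1}-\hat\Pi_x^{\tau,2})\tau^*$, and the same seminorm/infimum argument concludes. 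The step I expect to be the most delicate — though still routine given the machinery already in place — is the justification that the seminorm bound obtained on the dense span of elementary symmetric tensors extends to all of $V^{\proj \sttau}$ with the correct constant: this is not automatic from the universal property alone (which only gives continuity of $\hat\Pi_x$, not the precise cross-norm bound), so one genuinely needs the explicit infimum formula from Proposition~\ref{prop:Symm_Char} together with the fact that the right-hand side of \eqref{eq:annealed_1} is independent of the choice of representative $(\zeta_e)$ and scales multiplicatively in the $\|\zeta_e\|$.
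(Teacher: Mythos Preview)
Your proof is correct and follows essentially the same approach as the paper: reduce to elementary symmetric tensors via Lemma~\ref{lem:up_down}, bound the kernel norms by $\|A\|_{\mcA_+^\mco}\|\zeta_e\|$, and then extend to general $v$. The one point worth correcting is your remark that the extension step ``is not automatic from the universal property alone (which only gives continuity of $\hat\Pi_x$, not the precise cross-norm bound)'': in fact Definition~\ref{def:tensor_product} explicitly requires that the factoring map satisfy $\|f\| = \|f^a\|$ when $Z$ is a Banach space, so the universal property \emph{does} give the exact norm bound directly. The paper exploits precisely this by invoking Lemma~\ref{lem:UP_in_Lp} (applied with $Z = L^p(\Omega)$), which makes your detour through the infimum formula of Proposition~\ref{prop:Symm_Char} unnecessary, though of course still valid.
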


\begin{proof}
We start with the first claim where we recognise the left hand side as the norm of the map $v \mapsto \hat{\Pi}_x v (\phi_x^\lambda) \in L^p(\Omega)$. By Lemma~\ref{lem:UP_in_Lp}, it then suffices to prove the bound in the case where $v = \otimes_\sttau^\tau (\zeta_e)$ is a symmetric elementary tensor and with $\|v\|_{V^{\proj \sttau}}$ replaced by $\prod_{e \in E_+(\tau)} \|\zeta_e\|_{V_{\mfl(e)}}$.	
To do this we note that if we write $\hat{\Pi}_x^\tau[A^\tau]$ for the BPHZ model on $T^\tau$ with kernel assignment $A^\tau(\mfl, e) = A_\mfl(\zeta_e)$
of order $\mco$,	
then by Lemma~\ref{lem:up_down} we have that
\begin{align*}
	\mathbb{E}^{\frac{1}{p}} [ |\hat{\Pi}_x v (\phi_x^\lambda)|^p] 
	&= \mathbb{E}^{\frac{1}{p}} [|\hat{\Pi}_x^\tau [A^\tau] \tau^* (\phi_x^\lambda)|^p]
	\le C \prod_{e \in E_+(\tau)} \| A_\mfl ( \zeta_e) \|_{\mcK_{\mco}^{|\mfl(e)|}} 
	\le C \|A\|_{\mcA_+^{\mco}}^{|E_+(\tau)|} \prod_{e \in E_+(\tau)} \|\zeta_e\|_{V_{\mfl(e)}} 
\end{align*}
for all $x \in \mfK$ and $\lambda \in (0,1]$ where $C$ denotes the right hand side of \eqref{eq:annealed_1}. This yields the desired estimate immediately. 

Since the case of two noise assignments follows almost exactly as in the case of a single model, we omit the details of this case. 
\end{proof}

\subsection{Stochastic Estimates for the BPHZ Model on \texorpdfstring{$\mcT^\mathrm{Ban}$}{T Ban} -- Quenched Estimates} \label{ss:quenched}

Let 
$\mco \in \mathbb{N}$.
We now consider the 
Banach space assignment 
$V = ( \mathcal{K}_{\mco}^{| \mfl |} )_{\mfl \in \mfL_+}$,
and the kernel assignment of order $\mco$ defined by
$A_{\mfl} = \mathrm{Id} \colon \mathcal{K}_{\mco}^{| \mfl |} \to \mathcal{K}_{\mco}^{| \mfl |}$
on the regularity structure
%$\mathcal{T}^{\mathrm{Ban}} \big( ( \mathcal{K}_{\mco}^{| \mfl |} )_{\mfl \in \mfL_+} \big)$.
$\mcT^{\Ban} ( V )$.
Let $W$ be a historic sector of order $\mathrm{ord} (W)<\mco$
in $\mcT^{\Ban} ( V )$.

The goal of this section is to deduce quenched, i.e.\ almost sure, estimates in the model seminorms on $(\hat{\Pi}, \hat{\Gamma})$
and on $(\hat{\Pi}^1 - \hat{\Pi}^2, \hat{\Gamma}^1 - \hat{\Gamma}^2)$, in terms of the suprema over $x, \lambda$ and $\phi$ of the left-hand side of \eqref{eq:annealed_1} and \eqref{eq:annealed_2} respectively. This essentially involves passing suprema through expectations.
For the suprema over $x$, $\lambda$, and $\phi$ we will be able to proceed similarly to as in the scalar-valued case, see for instance the Kolmovorov-type theorem for models in \cite[Theorem~10.7]{Hai14} 
or \cite[Theorem~B.1]{HS23}.
Here the main difficulty comes from the supremum over $v$, which needs to be pulled inside the expectation.

Let us briefly describe our strategy, which is inspired by the one employed in \cite{GH19}.
Since Kolmogorov criteria for stochastic processes indexed by Banach spaces are very subtle, our almost sure estimate will not be uniform over $v \in \mathcal{K}_{\mco}^{|\mfl|}$.
Instead, we now fix throughout this section a kernel assignment $K \in \mcA_+^{\eq; \mcO}$ of order $\mcO \in \mathbb{N}$ on $\mcT^{\eq} ( R )$. 
Within $\mathcal{K}_{\mcO}^{|\mfl|}$, we consider the curve $(K_{\mfl}^c)_{c \in \mathbb{R}^d}$, which we embed into a linear space by duality, i.e.\ by considering the
vector space $\lbrace \zeta ( K_{\mfl}^{(\cdot)} ) \rbrace$ spanned by a suitable space of distributions $\zeta$ acting on the upper variable.

One of the reasons to restrict ourselves to the curve 
$(K_{\mfl}^c)_{c \in \mathbb{R}^d}$
in kernel space 
is the following identity, 
rigorously established
in Lemma~\ref{l:semi-annealedII} 
below,
\begin{align} \label{eq:a39}
	\hat{\Pi}_x v 
	& = v \Big( c \mapsto \hat{\Pi}_x \bigotimes_{\sttau}^{\tau} \big( K_{\mfl_{e}}^{c_e} \big)_{e \in E_+ ( \tau )} \Big) ,
	\qquad \text{for } v \in  (C_w^{\alpha} )^{\proj \sttau}
\end{align}
where in the right-hand side $\hat{\Pi}$ denotes the BPHZ model on
(an appropriate historic sector of)
the regularity structure
$\mathcal{T}^{\mathrm{Ban}} \big( ( \mcK_{\mco}^{| \mfl |} )_{\mfl \in \mfL_+} )$, 
while in the left-hand side $\hat{\Pi}$ denotes the BPHZ model on
(an appropriate historic sector of)
$\mathcal{T}^{\mathrm{Ban}} \big( ( C_w^{\alpha} )_{\mfl \in \mfL_+} )$ for an appropriate choice of $\alpha$. Here $w$ is a weight\footnote{For notational consistency, from Lemma~\ref{l:pa} onwards, the letter $w$ will be typically reserved for weights which decay at infinity, whilst $w^*$ will denote weights which grow at infinity.} of the form 
$w(x) \coloneqq (1 + |x|_\mfs)^l$ where $l \in \mbR$ and the corresponding spaces $C_w^\alpha$ are defined below.
\begin{definition}
\label{def:wHoel}
Let $\alpha \in \mathbb{R} \setminus \mathbb{N}$.
We define $C_{w}^{\alpha} \subset \mathcal{D}^{\prime} ( \mathbb{R}^d )$ to be
the 
Banach space 
of distributions $\zeta$ when $\alpha < 0$
and function $g$ when $\alpha > 0$, 
such that, 
with $r \coloneqq \min \lbrace j \in \mathbb{N} : j >-\alpha \rbrace$,
\begin{alignat*}{2}
	\| \zeta \|_{C_{w}^{\alpha}} & \coloneqq \sup_{x \in \mathbb{R}^d} \sup_{\lambda \in (0, 1]} \sup_{\varphi \in \mathcal{B}^r} \frac{| \zeta ( \varphi_x^{\lambda} ) |}{w ( x ) \, \lambda^{\alpha}}< +\infty , \quad && \text{ if $\alpha < 0$ } , \\
	\| g \|_{C_{w}^{\alpha}} & \coloneqq \sup_{x \in \mathbb{R}^d} \frac{| g ( x ) |}{w ( x )} 
		+ \sup_{x \in \mathbb{R}^d} \sup_{| h |_{\mfs} \leq 1} \frac{\big| g ( x + h ) - \sum_{| {k} |_{\mfs} < \alpha} \frac{1}{{k} !} \partial^{k} g ( x ) \, h^{k} \big|}{w (x ) \, | h |_{\mfs}^{\alpha}}< + \infty , \quad && \text{ if $\alpha > 0$} .
\end{alignat*}
\end{definition}

The proof of the following lemma is standard, e.g.\ using (Daubechies) wavelets.
\begin{lemma}
\label{l:pa}
Let $\alpha < 0$, $\alpha^* > 0$ along with weights $w, w^*$ satisfying
$
	\alpha + \alpha^* > 0$ and $\int_{\mathbb{R}^d} \mathrm{d} x \, w ( x ) w^* ( x ) < \infty .
$
Then the usual distributional pairing extends (uniquely) to a pairing such that for $\zeta \in C_w^{\alpha}$, $g \in C_{w^*}^{\alpha^*}$
\begin{align*}
	| \zeta ( g ) | 
	\lesssim
	\| \zeta \|_{C_{w}^{\alpha}} \| g \|_{C^{\alpha^*}_{w^*}} . 
\end{align*}
\end{lemma}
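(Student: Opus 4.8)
\textbf{Proof plan for Lemma~\ref{l:pa}.}

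The plan is to reduce the bound on the pairing $\zeta(g)$ to a summable dyadic estimate via a wavelet (Littlewood--Paley) decomposition, exactly as one does for the analogous statement with unweighted H\"older--Besov spaces (cf.\ the multiplication/duality results in \cite{Hai14}). First I would fix a compactly supported scaling function $\varphi$ and wavelets $\psi^{(i)}$ generating a multiresolution analysis adapted to the scaling $\mfs$, so that for $n \in \mathbb{N}$ one has the reproduction/telescoping identity writing any distribution as a sum over dyadic scales of rescaled-and-translated copies of $\varphi$ and the $\psi^{(i)}$. The key point is that $\varphi$ and the $\psi^{(i)}$, suitably rescaled at scale $2^{-n}$ and translated to the lattice points $x \in 2^{-n}\mathbb{Z}^d$ (with the anisotropic rescaling), lie, up to a fixed multiplicative constant, in $\mathcal{B}^r$; hence testing $\zeta$ against them is controlled by $\|\zeta\|_{C_w^\alpha}$, while $g$ being $C_{w^*}^{\alpha^*}$ controls its wavelet coefficients against $g(x) \approx$ (coefficient)$\cdot w^*(x) 2^{-n\alpha^*}$ plus the usual Taylor-remainder gain.

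Next I would estimate $\zeta(g)$ scale by scale. At dyadic scale $2^{-n}$ the contribution is a sum over lattice points $x \in 2^{-n}\mathbb{Z}^d$ of a product of: (i) the pairing of $\zeta$ with a wavelet at scale $2^{-n}$ centred at $x$, bounded by $\|\zeta\|_{C_w^\alpha}\, w(x)\, 2^{-n\alpha}$ (up to the scaling Jacobian $2^{-n|\mfs|}$ built into the normalisation of $\varphi_x^\lambda$); and (ii) the wavelet coefficient of $g$, bounded using $\|g\|_{C_{w^*}^{\alpha^*}}$ by $w^*(x)\, 2^{-n\alpha^*}$ (here the vanishing-moment property of $\psi^{(i)}$ together with the Taylor expansion in Definition~\ref{def:wHoel} produces the $2^{-n\alpha^*}$ decay; for the scaling-function part of the lowest scale the bound $|g(x)| \le \|g\|_{C^{\alpha^*}_{w^*}} w^*(x)$ suffices). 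Multiplying, the scale-$n$ sum is bounded by $\|\zeta\|_{C_w^\alpha}\|g\|_{C_{w^*}^{\alpha^*}}\, 2^{-n(\alpha+\alpha^*)} \sum_{x \in 2^{-n}\mathbb{Z}^d} w(x) w^*(x) 2^{-n|\mfs|}$. The inner sum is a Riemann sum for $\int_{\mathbb{R}^d} w(x) w^*(x)\, \mathrm{d}x$, which is finite by hypothesis and (since $w,w^*$ are polynomial weights, hence slowly varying) uniformly bounded in $n$. Summing the geometric series in $n$, which converges precisely because $\alpha + \alpha^* > 0$, gives $|\zeta(g)| \lesssim \|\zeta\|_{C_w^\alpha}\|g\|_{C_{w^*}^{\alpha^*}}$.

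To see that the pairing is well defined and is the unique continuous extension of the distributional pairing, I would note that for $g \in C_{w^*}^{\alpha^*} \cap C_c^\infty$ the wavelet series converges to $g$ in a topology strong enough that the usual pairing $\zeta(g)$ agrees with the series expression above; since such $g$ are dense in $C_{w^*}^{\alpha^*}$ in a topology in which the (already established) bilinear estimate is continuous, the extension is unique. The one mild subtlety --- the \emph{main obstacle} --- is the bookkeeping of the weights: one must check that $w(x) w^*(x)$ evaluated on the dyadic lattice $2^{-n}\mathbb{Z}^d$, multiplied by the cell volume $2^{-n|\mfs|}$, stays comparable to $\int w w^*$ uniformly in $n$ and that the localisation of the wavelets means only lattice points within $O(2^{-n})$ of the support of the test object contribute with the correct weight (so that $w(x) \sim w(y)$ for $y$ in that cell, using that $w$ is a polynomial weight). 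This is routine but is the place where the hypothesis on $w, w^*$ is actually used, and it is where I would be most careful; everything else is the standard wavelet proof of a paraproduct-type duality estimate.
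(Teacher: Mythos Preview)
Your proposal is correct and matches the paper's own approach: the paper does not give a detailed proof but simply states that the result ``is standard, e.g.\ using (Daubechies) wavelets,'' which is precisely the decomposition you carry out. One small caveat worth tightening is the uniqueness/density step: $C_c^\infty$ is not dense in $C_{w^*}^{\alpha^*}$ for the norm topology, so you should either define the pairing directly by the wavelet series (and check consistency on test functions), or invoke density in a weaker topology (e.g.\ $C_{w^*}^{\alpha^*-\varepsilon}$) in which the estimate still holds.
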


We come back to our discussion of $(\hat{\Pi}, \hat{\Gamma})$.
We shall show that
the BPHZ model
$(\hat{\Pi}, \hat{\Gamma})$, 
defined on 
an appropriate historic sector of
$\mathcal{T}^{\mathrm{Ban}} \big( ( \mathcal{K}_{\mco}^{|\mfl|} )_{\mfl \in \mfL_+} \big)$,
gives rise to an almost surely admissible model on (historic sectors of) the \emph{smaller} regularity structure $\mathcal{T}^{\mathrm{Ban}} \big( ( C_w^{\alpha} )_{\mfl \in \mfL_+} \big)$ 
(for  $\alpha, w$ to be fixed later)
over the Banach space assignment $(C_w^{\alpha})_{\mfl \in \mfL_+}$,
with respect to the kernel assignment 
\begin{align} \label{eq:a34b} 
\begin{array}[t]{lrcl}
	A_{\mfl} : & C_w^{\alpha} & \longrightarrow & \mathcal{K}_{\mco}^{|\mfl|} \\
	& \zeta & \longmapsto & 
	K_{\mfl}^{\zeta} ,
\end{array}
\qquad \text{where} 
\qquad 
\begin{array}[t]{lrcl}
	K_{\mfl}^{\zeta} : & \mathbb{R}^d & \longrightarrow & \mathbb{R} \\
	& x & \longmapsto & \zeta ( K_{\mfl}^{(\cdot)} ( x ) ) .
\end{array}
\end{align}
More precisely, 
$K_{\mfl}^{\zeta}$ is the kernel decomposition with components 
$K_{\mfl, n}^{\zeta} ( x ) = \zeta ( K_{\mfl, n}^{(\cdot)} ( x ) )$
for $n \in \mathbb{N}$.

\begin{lemma} \label{l:ka}
Assume that
 $\mcO \geq \mco + \max (\mfs)$, $\mcO \geq - \alpha + \max (\mfs)$ and $\int_{\mathbb{R}^d} \mathrm{d} x \, w ( x ) < \infty .$
Then 
$A$
defined in \eqref{eq:a34b} is a kernel assignment of order $\mco$ on $\mathcal{T}^{\mathrm{Ban}} \big( ( C_w^{\alpha} )_{\mfl \in \mfL_+} \big) $ 
in the sense of Definition~\ref{def:ka}.
Furthermore, the
linear map 
$\mcA_+^{\eq; \mcO} \ni K\mapsto A \in\mcA_+^{\mco}$
is continuous.
\end{lemma}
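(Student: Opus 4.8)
\textbf{Plan of proof for Lemma~\ref{l:ka}.}

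The plan is to verify the two claims in turn: first that $A$ as defined in \eqref{eq:a34b} is a bona fide kernel assignment of order $\mco$ in the sense of Definition~\ref{def:ka}, and second that the assembled map $K \mapsto A$ is continuous and linear from $\mcA_+^{\eq;\mcO}$ to $\mcA_+^{\mco}$. Throughout, the key tool is the duality pairing of Lemma~\ref{l:pa}, applied with the negative-regularity slot $\alpha$ and the positive-regularity slot provided by the smoothness of the kernel decomposition $(K_{\mfl,n}^{(\cdot)})$ viewed as a function of its upper variable.

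First I would fix $\mfl \in \mfL_+$ and $\zeta \in C_w^\alpha$ and estimate $\|A_\mfl(\zeta)\|_{\mcK_\mco^{|\mfl|}}$. By Definition~\ref{d:rk2n}, this amounts to bounding, for each $n \in \mbN$ and each multi-index $k$ with $|k|_\mfs \le 2\mco$, the quantity $2^{-(|\mfs| - |\mfl| + |k|_\mfs)n} |\partial^k K_{\mfl,n}^\zeta(x)|$ uniformly in $x$, and to checking the support condition. The support condition is immediate: since each $K_{\mfl,n}^{(\cdot)}(x)$ is supported (in its lower slot, which is what matters after the identification $G(z,\bar z) = K^z(z-\bar z)$) in $B_\mfs(0, 2^{-n})$, the function $x \mapsto \zeta(K_{\mfl,n}^{(\cdot)}(x))$ inherits the same support. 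For the derivative bounds, I would differentiate under the pairing: $\partial_x^k K_{\mfl,n}^\zeta(x) = \zeta\big(\partial_x^k K_{\mfl,n}^{(\cdot)}(x)\big)$, which is justified because $K \in \mcA_+^{\eq;\mcO}$ means $x \mapsto K_{\mfl}^x \in \mcK_\mcO^{|\mfl|}$ has $\mcO$ derivatives in the upper slot, so the difference quotients converge in $C_{w^*}^{\alpha^*}$ for a suitable companion weight $w^*$ and exponent $\alpha^*$ as soon as $\mcO$ is large enough (this is where $\mcO \ge -\alpha + \max(\mfs)$ enters — we need enough upper-slot regularity that $\partial_x^k K_{\mfl,n}^{(\cdot)}(x)$ lies in $C^{\alpha^*}$ with $\alpha^* > -\alpha$, taking into account the anisotropic Taylor loss of $\max(\mfs)$). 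Then Lemma~\ref{l:pa} gives $|\partial_x^k K_{\mfl,n}^\zeta(x)| \lesssim \|\zeta\|_{C_w^\alpha} \|\partial_x^k K_{\mfl,n}^{(\cdot)}(x)\|_{C_{w^*}^{\alpha^*}}$; here the integrability hypothesis $\int w < \infty$ (together with the fact that $w^*$ can be taken bounded, or at worst of controlled polynomial growth absorbed against $w$) supplies the $\int w w^* < \infty$ condition of Lemma~\ref{l:pa}. Finally, unwinding the definition of $\mcK_{\mcO,n}^{|\mfl|}$ and the norm on $C^\mcO(\mcK_{\mcO,n}^{|\mfl|})$, one sees that $\|\partial_x^k K_{\mfl,n}^{(\cdot)}(x)\|_{C_{w^*}^{\alpha^*}} \lesssim \|K_\mfl\|_{C^\mcO(\mcK_\mcO^{|\mfl|})} \, 2^{(|\mfs| - |\mfl| + |k|_\mfs)n}$ — the scaling exponent is exactly what is built into Definition~\ref{d:rk2n} at level $\mcO$ and propagates down to level $\mco$ using $\mcO \ge \mco + \max(\mfs)$ to absorb any Taylor remainders. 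Collecting these, $\|A_\mfl(\zeta)\|_{\mcK_\mco^{|\mfl|}} \lesssim \|K_\mfl\|_{C^\mcO(\mcK_\mcO^{|\mfl|})} \|\zeta\|_{C_w^\alpha}$.

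The bound just obtained is linear in $\zeta$ and so shows simultaneously that each $A_\mfl$ is a bounded linear map $C_w^\alpha \to \mcK_\mco^{|\mfl|}$ (hence $A \in \mcA_+^{\mco}$, the first claim) and, upon summing over $\mfl \in \mfL_+$ and taking the supremum over $\|\zeta\|_{C_w^\alpha} \le 1$, that $\|A\|_{\mcA_+^{\mco}} \lesssim \sum_{\mfl \in \mfL_+} \|K_\mfl\|_{C^\mcO(\mcK_\mcO^{|\mfl|})} = \|K\|_{\mcA_+^{\eq;\mcO}}$. Linearity of $K \mapsto A$ is clear from the formula $K_\mfl^\zeta(x) = \zeta(K_\mfl^{(\cdot)}(x))$, which depends linearly on $K_\mfl^{(\cdot)}$, so continuity follows from this operator-norm bound. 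I expect the main obstacle to be the bookkeeping around the exponents and the Taylor/anisotropy losses — precisely checking that the hypotheses $\mcO \ge \mco + \max(\mfs)$ and $\mcO \ge -\alpha + \max(\mfs)$ are exactly what is needed for (i) the upper-slot regularity $\alpha^*$ to beat $-\alpha$ after the loss incurred in applying the anisotropic Taylor formula (cf. the footnote to Definition~\ref{def:goodasdf} and \cite[Proposition A.1]{Hai14}), and (ii) the level-$\mcO$ kernel norm to dominate the level-$\mco$ one with the correct $n$-dependent scaling; everything else is a routine application of Lemma~\ref{l:pa} and the definitions.
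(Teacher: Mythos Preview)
Your proposal is correct and follows essentially the same route as the paper's proof: both apply Lemma~\ref{l:pa} with $\zeta \in C_w^\alpha$ paired against the function $c \mapsto \partial_x^k K_{\mfl,n}^c(x)$, take $w^* \equiv 1$ (so that $\int w < \infty$ gives the integrability condition), choose $\alpha^*>0$ small with $\alpha^*+\alpha>0$, and read off the single bound $\|A_\mfl(\zeta)\|_{\mcK_\mco^{|\mfl|}} \lesssim \|K\|_{\mcA_+^{\eq;\mcO}}\|\zeta\|_{C_w^\alpha}$ from which both claims follow. One small point of bookkeeping: in the paper the condition $\mcO \geq \mco + \max(\mfs)$ is used (via the anisotropic Taylor formula) to justify that $A_n \in C^{2\mco}$ with $\partial^k A_n(x) = \zeta(\partial^k K_n^{(\cdot)}(x))$, while $\mcO \geq -\alpha + \max(\mfs)$ is what guarantees $\partial^k K_n^{(\cdot)}(x) \in C_{w^*}^{\alpha^*}$ --- your attribution of the roles is slightly loose but the substance is the same.
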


\begin{proof}
We fix $\mfl \in \mfL_+$ and suppress appearances of $\mfl$ in the notation in this proof.
Let $\zeta \in C_w^{\alpha}$.
We estimate the kernel norm $\| A \|_{\mcK^{| \mfl|}_{\mco}}$.
By the anisotropic Taylor formula of Lemma~\ref{Tay}
and the fact that $K \in C^{\mcO} ( \mcK_{\mcO}^{\mfl} )$, 
we see that
the $n$-th dyadic component
$A_n \coloneqq K_{\mfl, n}^{\zeta}$ is  
of regularity $C^{2\mco}$
provided $\mcO\geq\mco + \max(\mfs)$,
and
\begin{align*}
	\partial^k A_n ( x )
	& = \zeta \big( \partial^k K_n^{(\cdot)} ( x ) \big)
	\qquad \text{for $| k |_{\mfs} \leq 2 \mco$} .
\end{align*}
Furthermore,
let $w^* \equiv 1$, and choose $\alpha^*>0$ with $\alpha^* + \alpha > 0$ sufficiently small so that by the assumption $\mcO \geq - \alpha + \max ( s )$ and the anisotropic Taylor formula of Lemma~\ref{Tay}, 
we have for any $x \in \mathbb{R}^d$, 
\begin{align*}
	\| \partial^k K_n^{(\cdot)} ( x ) \|_{C_{w^*}^{\alpha^*}}
	& \lesssim \| \partial^k K_n^{(\cdot)} ( x ) \|_{C^{\mcO}} 
	\lesssim \| K \|_{C^{\mcO} ( \mathcal{K}_{\mcO}^{| \mfl | } )} \, 2^{(| \mfs | - |\mfl| + | k |_{\mathfrak{s}}) n} .
\end{align*}
In turn, by the pairing Lemma~\ref{l:pa}, $| \partial^k A_n ( x ) |
	\lesssim \| K \|_{\mcA_+^{\eq; \mcO}} \, \| \zeta \|_{C_w^{\alpha}} \, 2^{(| \mfs | - |\mfl| + | k |_{\mathfrak{s}}) n}$
which implies that
$
	\| A ( \zeta ) \|_{\mcK^{| \mfl|}_{\mco}}
 \lesssim \| K \|_{\mcA_+^{\eq; \mcO}} \| \zeta \|_{C_w^{\alpha}} .
$
This yields the desired boundedness of both linear maps $\zeta \mapsto A (\zeta)$ and $K \mapsto A$.
\end{proof}

In preparation for our almost-sure model estimate, let us now recall a version of Kolmogorov's continuity theorem in weighted spaces, the proof of which we omit since it can be obtained in the same fashion as the usual (local) Kolmogorov Continuity Theorem.
\begin{lemma}
\label{l:wk}
Let $N \in \mathbb{N}$ and let $\mfs \in \mathbb{R}_+^N$ be a scaling in $\mathbb{R}^N$.
Let $\beta > 0$, $p> \max( 1, 2|\fraks|/\beta)$,
and let $w^*$ be a weight such that $\int_{\mathbb{R}^N} \mathrm{d} x \, w^* ( x )^{- p} < \infty.$ 
Let $X \colon \mathbb{R}^N \to \mathbb{R}$ be a random function such that 
$\partial^k X_c$ exists for all $|k|_{\mfs} < \beta$.
Then, 
\begin{align} \label{a8}
	\mathbb{E}^{\frac{1}{p}} \Big[ \| X \|_{C_{w^*}^{\beta- 2|\fraks|/p}}^p \Big] 
	& \lesssim
	\sup_{c \in \mathbb{R}^N} \mathbb{E}^{\frac{1}{p}} [ | X_c |^{p} ] 
		+ \sup_{c \in \mathbb{R}^N} \sup_{| h |_{\mfs} \leq 1}  
		\frac{ \mathbb{E}^{\frac{1}{p}} \big[ \big| X_{c + h} - \sum_{| {k} |_{\mfs} < \beta} \frac{1}{{k} !} \partial^{k} X_c \, h^{k} \big|^p \big]}{ | h |_{\mfs}^{\beta}} . 
\end{align}
\end{lemma}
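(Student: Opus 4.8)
The plan is to deduce the weighted Kolmogorov bound \eqref{a8} from the classical local Kolmogorov continuity theorem by a dyadic decomposition of $\mathbb{R}^N$ together with the weight. First I would reduce to proving the estimate for the two pieces of the $C_{w^*}^{\beta - 2|\fraks|/p}$-norm separately: the ``zeroth order'' part $\sup_c |X_c|/w^*(c)$ and the ``H\"older'' part controlling the anisotropic Taylor remainder $X_{c+h} - \sum_{|k|_\mfs < \beta} \tfrac{1}{k!}\partial^k X_c\, h^k$ against $w^*(c)|h|_\mfs^{\beta - 2|\fraks|/p}$. Both are handled by covering $\mathbb{R}^N$ by unit (anisotropic) cubes $Q_j$ indexed by a countable set, say centred at points $c_j$ of a lattice, so that $w^*$ is comparable to a constant $w^*_j \sim w^*(c_j)$ on the $2$-fattening of $Q_j$.

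On each fixed cube $Q_j$, the usual (unweighted, local) Kolmogorov continuity theorem applied to the random field $X$ and its derivatives $\partial^k X$ for $|k|_\mfs < \beta$ gives
\begin{align*}
\mathbb{E}\big[\|X\|_{C^{\beta - 2|\fraks|/p}(Q_j)}^p\big]
&\lesssim \sup_{c \in 2Q_j}\mathbb{E}[|X_c|^p] + \sup_{c \in 2Q_j}\sup_{|h|_\mfs \le 1, c+h \in 2Q_j} \frac{\mathbb{E}\big[\big|X_{c+h} - \sum_{|k|_\mfs<\beta}\tfrac{1}{k!}\partial^k X_c\, h^k\big|^p\big]}{|h|_\mfs^{\beta p}},
\end{align*}
where the hypothesis $p > \max(1, 2|\fraks|/\beta)$ is exactly what is needed for the exponent loss $2|\fraks|/p$ to leave a positive H\"older exponent and for the Besov-type embedding underlying Kolmogorov's theorem to apply. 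Dividing through by $(w^*_j)^p$ and bounding the right-hand side by the (global) suprema appearing in \eqref{a8} — using that on $2Q_j$ one has $|h|_\mfs \le 1$ and $w^*(c) \sim w^*_j$ — yields $\mathbb{E}\big[\|X/w^*\|_{C^{\beta-2|\fraks|/p}(Q_j)}^p\big] \lesssim \mathfrak{M}^p$, where $\mathfrak{M}$ denotes the right-hand side of \eqref{a8}, with an implicit constant uniform in $j$.

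It then remains to sum over $j$. For the zeroth order part this is immediate since $\sup_c |X_c|/w^*(c) \le \sup_j \sup_{c \in Q_j}|X_c|/w^*_j$ up to constants, and
\begin{align*}
\mathbb{E}\Big[\sup_j \big(\sup_{c\in Q_j}|X_c|/w^*_j\big)^p\Big] \le \sum_j \mathbb{E}\big[\big(\sup_{c\in Q_j}|X_c|/w^*_j\big)^p\big] \lesssim \mathfrak{M}^p \sum_j \Big(\frac{w^*_j}{w^*_j}\Big)^p,
\end{align*}
which is not quite right — the correct bookkeeping is to note $\sum_j (w^*_j)^{-p}\,\mathbb{E}[\sup_{c\in Q_j}|X_c|^p] \lesssim \mathfrak{M}^p \sum_j (w^*_j)^{-p} \lesssim \mathfrak{M}^p \int w^*(x)^{-p}\,\mathrm{d}x < \infty$ by the integrability hypothesis, and then $\mathbb{E}[\sup_c(|X_c|/w^*(c))^p] \lesssim \sum_j (w^*_j)^{-p}\mathbb{E}[\sup_{c\in Q_j}|X_c|^p]$. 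The H\"older seminorm requires slightly more care: for $c, c+h$ in the same or neighbouring cubes the local estimates suffice, while for $c, c+h$ far apart one has $|h|_\mfs \gtrsim 1$, so the Taylor remainder is controlled by the zeroth order and derivative sup-norms on the relevant cubes (here one uses $|k|_\mfs < \beta$ so all derivatives entering the Taylor polynomial are themselves controlled), and these are again summable against $\sum_j (w^*_j)^{-p}$. The main obstacle is precisely this last gluing step — carefully handling the H\"older increments across different dyadic cubes and verifying that the constants from the local Kolmogorov theorem can be taken uniform in the cube (which holds by translation, after absorbing the weight into a constant on each cube) — but since the excerpt explicitly states that the proof is obtained ``in the same fashion as the usual (local) Kolmogorov Continuity Theorem'', I would present this as a routine adaptation and omit the detailed increment bookkeeping.
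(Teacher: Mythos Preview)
The paper does not give a proof of this lemma at all: it simply states that the proof ``can be obtained in the same fashion as the usual (local) Kolmogorov Continuity Theorem'' and omits it. Your strategy --- cover $\mathbb{R}^N$ by unit anisotropic cubes, apply the local Kolmogorov theorem on each, and then sum using $\int w^*(x)^{-p}\,\mathrm{d}x < \infty$ --- is exactly the routine adaptation the paper has in mind, and is correct.

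One small simplification: your worry about the ``far apart'' case in the H\"older seminorm is unnecessary. Look at the paper's Definition~\ref{def:wHoel} of $\|\cdot\|_{C_{w^*}^\alpha}$ for $\alpha > 0$: the increment part is a supremum over $|h|_\mfs \le 1$ only. So once you cover by unit cubes, any $c$ and $c+h$ with $|h|_\mfs \le 1$ lie in the same cube or in adjacent ones, and the local estimates (on $2$-fattenings) already cover all cases. There is no need to invoke the derivative sup-norms to handle large $|h|_\mfs$. You should also note, for the cube argument to work cleanly, that the weights in this paper are explicitly of the form $w^*(x) = (1+|x|_\mfs)^l$, so $w^*$ is indeed comparable to a constant on each unit cube; this is implicit in your write-up but worth stating since the lemma as phrased allows a general weight.
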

We now apply the above Kolmogorov Continuity Theorem in the situation of interest here.
\begin{lemma} \label{l:semi-annealed}
Let $W$ be a historic sector in $\mathcal{T}^{\mathrm{Ban}} \big( ( \mathcal{K}_{\mco}^{| \mfl |} )_{\mfl \in \mfL_+} \big)$
of order $\mathrm{ord} ( W ) < \mco$.
Let $( \hat{\Pi}, \hat{\Gamma} )$ be the BPHZ model on $W$ associated to the identity kernel assignment and a random smooth noise assignment $\xi \colon \Omega \to \mcA_-^{\infty; \eq}$.
Let $K \in \mcA_+^{\eq; \mcO}$ and $\tau \in \boldsymbol{\tau}\in \mcB_W$.
Assume that 
$\mcO \geq \mco + \max ( \mfs )$ 
and 
let $\alpha^* > 0$, $p \geq 1$ be such that 
$\alpha^* \leq \mcO - \max ( \mfs) - 2 | \mfs| | E_+ ( \tau ) |/p$.
Let $w^*$ be a weight such that 
$\int_{\mathbb{R}^{d | E_+ ( \tau ) |}} \mathrm{d} x \, w^* ( x )^{- p} < \infty$.
Then for all 
$x \in \mathbb{R}^d$,
$\lambda \in ( 0, 1]$
and $\phi \in \mcB^r$,
\begin{align} \label{eq:a37}
	& \mathbb{E}^{\frac{1}{p}} \bigg[ \Big\| (\mathbb{R}^d )^{E_+ ( \tau )} \ni c \mapsto \hat{\Pi}_x \bigotimes_{\sttau}^{\tau} \big( K_{\mfl_{e}}^{c_e} \big)_{e \in E_+ ( \tau )} \big( \phi_x^{\lambda} \big) \Big\|_{C_{w^*}^{\alpha^*}}^p \bigg]
	\lesssim
	\| K \|_{\mcA_+^{\eq; \mcO}}^{| E_+ ( \tau ) |} 
	\sup_{v \in (\mathcal{K}_{\mco}^{| \mfl |})^{\proj \sttau}} \frac{\mathbb{E}^{\frac{1}{p}} \big[ \big| \hat{\Pi}_x v \big( \phi_x^{\lambda} \big) \big|^p \big]}{\| v \|_{(\mathcal{K}_{\mco}^{|\mfl|})^{\proj \sttau}} } .
\end{align}
Moreover, if $\hat{\Pi}^1$
and $\hat{\Pi}^2$ denote the BPHZ models on 
$W$
with respect to 
two smooth noise assignments $\xi^1, \xi^2$,
and if $K^1,K^2 \in \mcA_+^{\eq; \mcO}$,
then 
\begin{align} \label{eq:a37b}
	& \mathbb{E}^{\frac{1}{p}} \bigg[ \Big\| (\mathbb{R}^d )^{E_+ ( \tau )} \ni c \mapsto \Big( \hat{\Pi}_x^1 \bigotimes_{\sttau}^{\tau} \big( K_{\mfl_{e}}^{c_e; 1} \big)_{e \in E_+ ( \tau )} - \hat{\Pi}_x^2 \bigotimes_{\sttau}^{\tau} \big( K_{\mfl_{e}}^{c_e; 2} \big)_{e \in E_+ ( \tau )}  \Big) \big( \phi_x^{\lambda} \big) \Big\|_{C_{w^*}^{\alpha^*}}^p \bigg] \\
	& \lesssim
	\| K^1 - K^2 \|_{\mcA_+^{\eq; \mcO}} 
	+ \sup_{v \in (\mathcal{K}_{\mco}^{| \mfl |})^{\proj \sttau}} \frac{\mathbb{E}^{\frac{1}{p}} \big[ \big| \big( \hat{\Pi}_x^1 - \hat{\Pi}_x^2 \big) v \big( \phi_x^{\lambda} \big) \big|^p \big]}{\| v \|_{(\mathcal{K}_{\mco}^{|\mfl|})^{\proj \sttau}} } ,
	\nonumber
\end{align}
where the implicit constant can be chosen to depend polynomially on
\begin{align*}
	\| K^1 \|_{\mcA_+^{\eq; \mcO}} + \| K^2 \|_{\mcA_+^{\eq; \mcO}}
	+ \sup_{v \in (\mathcal{K}_{\mco}^{| \mfl |})^{\proj \sttau}} \frac{\mathbb{E}^{\frac{1}{p}} \big[ \big| \hat{\Pi}_x^1 v \big( \phi_x^{\lambda} \big) \big|^p \big]}{\| v \|_{(\mathcal{K}_{\mco}^{|\mfl|})^{\proj \sttau}} } 
	+ \sup_{v \in (\mathcal{K}_{\mco}^{| \mfl |})^{\proj \sttau}} \frac{\mathbb{E}^{\frac{1}{p}} \big[ \big| \hat{\Pi}_x^2 v \big( \phi_x^{\lambda} \big) \big|^p \big]}{\| v \|_{(\mathcal{K}_{\mco}^{|\mfl|})^{\proj \sttau}} }.
\end{align*}
\end{lemma}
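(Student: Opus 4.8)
\textbf{Proof strategy for Lemma~\ref{l:semi-annealed}.} The plan is to apply the weighted Kolmogorov Continuity Theorem of Lemma~\ref{l:wk} to the random field
\[
X_c \coloneqq \hat{\Pi}_x \bigotimes_{\sttau}^{\tau} \big( K_{\mfl_{e}}^{c_e} \big)_{e \in E_+(\tau)} \big( \phi_x^{\lambda} \big), \qquad c \in (\mathbb{R}^d)^{E_+(\tau)},
\]
viewed as a process indexed by $N = d |E_+(\tau)|$ variables with the product scaling $\fraks^{\times E_+(\tau)}$. The output Hölder exponent will be $\beta - 2|\fraks|/p$ with $\beta = \mcO - \max(\fraks)$, and the hypothesis $\alpha^* \le \mcO - \max(\fraks) - 2|\fraks| |E_+(\tau)|/p$ is exactly what is needed so that $C_{w^*}^{\alpha^*}$ embeds into the resulting weighted Hölder space. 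Thus the two ingredients to verify are: (i) that $c \mapsto X_c$ is sufficiently differentiable, with derivatives obtained by differentiating $K^{c_e}_{\mfl_e}$ in the upper slot (which holds since $K \in C^{\mcO}(\mcK_\mcO^{|\mfl|})$ and by linearity and continuity of all the operations $\otimes_\sttau^\tau$, $\hat{\Pi}_x$, and evaluation against $\phi_x^\lambda$); and (ii) the pointwise and Taylor-remainder moment bounds on the right-hand side of \eqref{a8}.

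For (ii), the key observation is that for any fixed multi-index $k$ with $|k|_\fraks < \beta$, the derivative $\partial^k_c X_c$ and the Taylor remainder $X_{c+h} - \sum_{|k|_\fraks < \beta} \frac{1}{k!}\partial^k_c X_c\, h^k$ are of the form $\hat{\Pi}_x v(\phi_x^\lambda)$ for a suitable element $v \in (\mcK_\mco^{|\mfl|})^{\proj \sttau}$: namely $v = \bigotimes_\sttau^\tau(\zeta_e)_{e \in E_+(\tau)}$ (or a finite combination thereof) where each $\zeta_e$ is either $K^{c_e}_{\mfl_e}$, a derivative $\partial^{k_e} K^{c_e}_{\mfl_e}$, or an anisotropic Taylor remainder of $K^{c_e}_{\mfl_e}$ in the upper slot. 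Using the anisotropic Taylor formula (Lemma~\ref{Tay}) together with $\mcO \ge \mco + \max(\fraks)$, each such $\zeta_e$ satisfies $\|\zeta_e\|_{\mcK_\mco^{|\mfl_e|}} \lesssim \|K\|_{\mcA_+^{\eq;\mcO}}$ in the derivative case and $\lesssim \|K\|_{\mcA_+^{\eq;\mcO}} |h_e|_\fraks^{\beta - |k_e|_\fraks}$ in the remainder case; multiplying over $e \in E_+(\tau)$ and using that $\otimes_\sttau^\tau$ is a norm-$1$ multilinear map yields $\|v\|_{(\mcK_\mco^{|\mfl|})^{\proj\sttau}} \lesssim \|K\|_{\mcA_+^{\eq;\mcO}}^{|E_+(\tau)|} |h|_\fraks^{\beta}$ (or the analogous bound without the $|h|$ factor for the pointwise term). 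Dividing by $\|v\|$ and taking the supremum inside the expectation produces exactly the annealed quantity on the right-hand side of \eqref{eq:a37}, and feeding these two estimates into \eqref{a8} gives \eqref{eq:a37}. The weight hypothesis $\int w^*(x)^{-p} < \infty$ and $p \ge 1$ (combined with the standing moment assumptions, which via Lemma~\ref{l:annealed_mb} and the scalar input of \cite{CH16,HS23,BH23} guarantee finiteness of the annealed seminorm for all $p$) are what make Lemma~\ref{l:wk} applicable.

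For the difference estimate \eqref{eq:a37b}, I would decompose
\[
\hat{\Pi}_x^1 \bigotimes_{\sttau}^{\tau}(K^{c_e;1}_{\mfl_e})_e - \hat{\Pi}_x^2 \bigotimes_{\sttau}^{\tau}(K^{c_e;2}_{\mfl_e})_e
= \hat{\Pi}_x^1 \bigotimes_{\sttau}^{\tau}(K^{c_e;1}_{\mfl_e})_e - \hat{\Pi}_x^1 \bigotimes_{\sttau}^{\tau}(K^{c_e;2}_{\mfl_e})_e + (\hat{\Pi}_x^1 - \hat{\Pi}_x^2)\bigotimes_{\sttau}^{\tau}(K^{c_e;2}_{\mfl_e})_e,
\]
and treat the two groups separately. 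The second group is handled exactly as above but with $\hat{\Pi}^1 - \hat{\Pi}^2$ in place of $\hat{\Pi}$, producing the annealed difference quantity; the analogue of Lemma~\ref{l:annealed_mb} for the difference of models again guarantees finiteness of moments. The first group is multilinear in the $K^{\cdot;i}$ slots, so a telescoping sum over the $E_+(\tau)$ edges expresses it as a sum of terms each involving one factor $K^{c_e;1}_{\mfl_e} - K^{c_e;2}_{\mfl_e}$ and the remaining factors from $K^1$ or $K^2$; the Kolmogorov argument above then bounds each such term by $\|K^1 - K^2\|_{\mcA_+^{\eq;\mcO}}$ times a product of $\|K^i\|_{\mcA_+^{\eq;\mcO}}$'s times the annealed seminorm of $\hat{\Pi}^1$, giving the claimed polynomial dependence. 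The main obstacle, as in \cite{GH19}, is conceptual rather than computational: one must be careful that the right-hand sides of \eqref{eq:a37} and \eqref{eq:a37b}, which a priori are only finite for the specific $p$ appearing, are in fact finite — this is where the scalar input theorems and Lemma~\ref{l:annealed_mb} are essential — and that the Kolmogorov theorem is applied on the \emph{larger} curve-indexed structure so that no supremum over an infinite-dimensional Banach space ball is ever needed; the identity \eqref{eq:a39} of Lemma~\ref{l:semi-annealedII} is what ultimately lets one descend back to the genuine model.
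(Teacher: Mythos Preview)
Your proposal is correct and follows essentially the same approach as the paper: apply the weighted Kolmogorov continuity theorem of Lemma~\ref{l:wk} to the random field $X_c = \hat{\Pi}_x \bigotimes_{\sttau}^{\tau}(K_{\mfl_e}^{c_e})_{e}(\phi_x^\lambda)$, combine with the anisotropic Taylor formula of Lemma~\ref{Tay} to control the remainder by derivatives $\partial_{c_e}^{k_e} K_{\mfl_e}^{c_e}$ with $|k_e|_\mfs \le \mcO$, and then use that $\otimes_\sttau^\tau$ is a norm-$1$ multilinear map together with $\|\partial_{c_e}^{k_e} K_{\mfl_e}^{c_e}\|_{\mcK_\mco^{|\mfl_e|}} \le \|K\|_{\mcA_+^{\eq;\mcO}}$ to extract the factor $\|K\|_{\mcA_+^{\eq;\mcO}}^{|E_+(\tau)|}$ times the annealed seminorm. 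Your treatment of \eqref{eq:a37b} via a telescoping decomposition over edges (to isolate $K^1-K^2$) plus the model-difference term is exactly what the paper means by ``an analogous argument using the multilinearity of $\hat{\Pi}_x \bigotimes_\sttau^\tau$''; the extra remarks about finiteness of the right-hand side via Lemma~\ref{l:annealed_mb} are not needed for the inequality itself (which is vacuous if the right-hand side is infinite) but are harmless context.
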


\begin{proof}
We only prove \eqref{eq:a37} since \eqref{eq:a37b} can be proved with an analogous argument using the multilinearity of the map $\hat{\Pi}_x \bigotimes_{\sttau}^{\tau} \colon ( (\mathcal{K}_{\mco}^{| \mfl |})_{\mfl \in \mfL_+} )^{\times \tau} \to \mathcal{D}^{\prime}$.
Hence, we consider the random field
\begin{align*}
	X \colon (\mathbb{R}^d )^{E_+ ( \tau )} \ni c \mapsto \hat{\Pi}_x \bigotimes_{\sttau}^{\tau} \big( K_{\mfl_{e}}^{c_e} \big)_{e \in E_+ ( \tau )} \big( \phi_x^{\lambda} \big) ,
\end{align*}
to which we apply the Kolmogorov continuity theorem of Lemma~\ref{l:wk}, 
where we identify $(\mathbb{R}^d )^{E_+ ( \tau )} \simeq \mathbb{R}^{N \coloneqq d | E_+ ( \tau ) |}$,
in combination with the anisotropic Taylor formula of Lemma~\ref{Tay},
yielding
	\begin{align*}
	\mathbb{E}^{\frac{1}{p}} \Big[ \| X \|_{C_{w^*}^{\alpha^*}}^p \Big] 
	& \lesssim
	\max_{| k_{e} |_{\mfs} \leq \mcO}
	\sup_{c \in \mathbb{R}^N} \mathbb{E}^{\frac{1}{p}} \Big[ \Big| \hat{\Pi}_x \bigotimes_{\sttau}^{\tau} \big( \partial_{c_e}^{k_e} K_{\mfl_{e}}^{c_e} \big)_{e \in E_+ ( \tau )} \big( \phi_x^{\lambda} \big) \Big|^{p} \Big] . 
	\end{align*}
To conclude, it suffices 
to appeal to
the fact that the symmetric set tensor product $(\otimes_\mcs^a)_{a \in \mcs}$ is a collection of norm $1$ multilinear maps 
(recall Definition~\ref{def:tensor_product}),
so that for all $c \in \mathbb{R}^{d | E_+ ( \tau ) |}$,
and $(k_e)_{e \in E_+ ( \tau )}$ with $| k_e |_{\mfs} \leq \mcO$,
\begin{align*}
	\Big\| \bigotimes_{\sttau}^{\tau} \big( \partial_{c_e}^{k_e} K_{\mfl_{e}}^{c_e} \big)_{e \in E_+ ( \tau )} \Big \|_{( (\mathcal{K}_{\mco}^{| \mfl |})_{\mfl \in \mfL_+} )^{\proj \sttau}}
	& \leq \prod_{e \in E_+ ( \tau )} \big\| \partial_{c_e}^{k_e} K_{\mfl_{e}}^{c_e} \big\|_{\mcK_{\mco}^{| \mfl |}}
	\leq \| K \|_{\mcA_+^{\eq; \mcO}}^{| E_+ (\tau) |} .
\end{align*}
This yields the claimed bound \eqref{eq:a37}.
\end{proof}
We now
recover and estimate the BPHZ model
on 
(appropriate historic sectors of)
the regularity structure $\mathcal{T}^{\mathrm{Ban}} ( C_w^{\alpha} )$ with the kernel assignment \eqref{eq:a34b}.
For that purpose, we will use the following pairing between symmetric-set tensor products of weighted H\"older spaces.
\begin{lemma}
\label{l:pa2}
Let $\mcs$ be a symmetric set and $a \in \Ob(\mcs)$.
Let $\alpha < 0$, $\alpha^* >0$ along with weights $w \colon \mathbb{R}^d \to \mathbb{R}$, $w^* \colon \mathbb{R}^{d | a |} \to \mathbb{R}$, satisfying
\begin{align*}
\alpha^* + | a | \alpha > 0 , 
\qquad \int_{\mathbb{R}^{d | a |}} \mathrm{d} x \, w^{\otimes | a |} ( x ) \, w^{*} ( x ) < \infty .
\end{align*}
Then there exists a unique (bilinear) pairing 
\begin{align} \label{eq:a47}
(C_w^{\alpha} ( \mathbb{R}^d ))^{\proj \mcs} \times C_{w^*}^{\alpha^*} ( \mathbb{R}^{d | a |} ) \to \mathbb{R} ,
\end{align}
making the following family of diagrams indexed by $g \in C_{w^*}^{\alpha^*} ( \mathbb{R}^{d | a |} )$ commute
\begin{center}
\begin{tikzcd}
	(C_w^{\alpha})^{\times a}
	\arrow[dr, "(\pi_{a} ( \otimes \cdot )) ( g )"'] 
	\arrow[r, "\hat\otimes_\mcs^{a}"] 
	& (C_w^{\alpha})^{\proj \mcs} 
	\arrow[d, dotted, "\cdot ( g )"]
	\\ & \mathbb{R}
\end{tikzcd}
\end{center}
where 
$\pi_{a} = | \hom_{\mcs} (a, a) |^{-1} \sum_{\gamma \in \hom_{\mcs} ( a, a)} \gamma$
is the symmetrisation map from $\mcs$.
Furthermore
\begin{align} \label{eq:a42}
| \zeta ( g ) | 
& \lesssim
\| \zeta \|_{(C_w^{\alpha})^{\proj \mcs}} \, 
\| g \|_{C_{w^*}^{\alpha^*}},
\end{align}
uniformly over $\zeta \in (C_w^{\alpha} (\mathbb{R}^d))^{\proj \mcs}$ and $g \in C_{w^*}^{\alpha^*} ( \mathbb{R}^{d | a |} ) $.
\end{lemma}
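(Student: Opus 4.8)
The statement is an analogue of the scalar pairing Lemma~\ref{l:pa} ``tensored up'' along a symmetric set, so the natural strategy is to construct the pairing \eqref{eq:a47} by appealing to the universal property of Definition~\ref{def:tensor_product} and then to deduce the bound \eqref{eq:a42} from the corresponding uncurried multilinear estimate. First I would fix $g \in C_{w^*}^{\alpha^*}(\mathbb{R}^{d|a|})$ and define, for each $b \in \Ob(\mcs)$, the multilinear map $f^b_g \colon (C_w^\alpha)^{\times b} \to \mathbb{R}$ obtained by first symmetrising and then pairing against $g$; concretely $f^b_g = (\pi_b(\otimes \,\cdot\,))(g)$ where $\pi_b = |\hom_\mcs(b,b)|^{-1}\sum_{\gamma \in \hom_\mcs(b,b)}\gamma$ is transported to $b$ via any morphism $a \to b$ (one checks this is independent of the choice since the $\hom$-sets compose). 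The key point is that $(f^b_g)_{b \in \Ob(\mcs)}$ is an $\mcs$-symmetric family: for $\gamma \in \hom_\mcs(b,c)$ one has $f^c_g \circ \gamma = f^b_g$ because pre-composing the symmetrisation by $\gamma$ permutes the sum over $\hom_\mcs(c,c)$ and (after the identification of $\hom$-sets) leaves it unchanged. Granting $\mcs$-symmetry, multilinearity and continuity of this family, the universal property produces a unique continuous linear map $(C_w^\alpha)^{\proj\mcs}\to\mathbb{R}$, which by construction makes the displayed diagram commute; this is the sought pairing $\zeta \mapsto \zeta(g)$, and uniqueness of the pairing (for fixed $g$) is exactly the uniqueness of the factoring map. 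Bilinearity in $(\zeta,g)$ is then automatic from linearity of $g \mapsto f^b_g$ together with the uniqueness clause.

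\textbf{The estimate.} For \eqref{eq:a42} I would argue on elementary symmetric tensors and use density. By the norm formula in Proposition~\ref{prop:Symm_Char} (or directly the statement $\|f\| = \|f^a\|$ in Definition~\ref{def:tensor_product}), it suffices to bound $|f^a_g((\zeta_x)_{x\in a})|$ by $C\,\|g\|_{C_{w^*}^{\alpha^*}}\prod_{x\in a}\|\zeta_x\|_{C_w^\alpha}$. Writing out $f^a_g$, each summand over $\gamma\in\hom_\mcs(a,a)$ is a permuted tensor $\bigotimes_{x\in a}\zeta_x$ paired with $g$; this is a genuine multilinear distributional pairing on $\mathbb{R}^{d|a|}$ and the hypothesis $\alpha^* + |a|\alpha > 0$ together with $\int_{\mathbb{R}^{d|a|}} w^{\otimes|a|}w^* < \infty$ is precisely what is needed to extend it, by the same wavelet argument that proves Lemma~\ref{l:pa} applied in dimension $d|a|$ with the product scaling and the product weight $w^{\otimes|a|}$ against $w^*$. (One views $g \otimes_{x} \zeta_x$ as a tensor of a positive-regularity function with a product of negative-regularity distributions; expanding each $\zeta_x$ in a Daubechies wavelet basis and using that $g$ has enough regularity relative to $-|a|\alpha$ gives an absolutely summable multiresolution expansion, with the weights controlling the spatial sum.) Since $\hom_\mcs(a,a)$ is finite, the symmetrisation costs only a constant, and this yields the claimed bound.

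\textbf{Main obstacle.} The genuinely substantive step is the multi-dimensional pairing estimate underlying $f^a_g$: one has a positive-regularity object in $d|a|$ variables paired against a \emph{tensor product} of $|a|$ negative-regularity distributions, not a single distribution on $\mathbb{R}^{d|a|}$, so the $C_w^\alpha$-bounds are available only ``slotwise''. The point is to check that slotwise negative-regularity control of the $\zeta_x$'s, combined with the integrability of $w^{\otimes|a|}w^*$, suffices to make sense of (and bound) the full pairing with $g$ — this is where the condition $\alpha^* + |a|\alpha > 0$ (rather than merely $\alpha^* + \alpha > 0$) enters, since $g$ must absorb the regularity deficit of \emph{all} $|a|$ slots simultaneously. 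Technically this is a direct wavelet computation in the spirit of Lemma~\ref{l:pa}, performing the expansion in each of the $|a|$ groups of variables and summing, but it must be done with the product weight and product scaling, and one must verify the anisotropic conventions match. The remaining verifications — $\mcs$-symmetry of $(f^b_g)_b$, independence of the construction of the transported symmetrisation, and that the pairing agrees with the usual one on elementary tensors built from smooth functions — are routine bookkeeping with the universal property.
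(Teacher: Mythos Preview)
Your proposal is correct and follows essentially the same route as the paper: invoke the universal property of Definition~\ref{def:tensor_product} after checking that $(\pi_a(\otimes\,\cdot\,))(g)$ is $\mcs$-symmetric, multilinear and continuous, with the continuity bound coming from a wavelet-based tensorisation argument. The paper organises the key technical step slightly differently --- it first records that $(C_w^\alpha)^{\otimes n} \subset C_{w^{\otimes n}}^{n\alpha}$ (citing the wavelet characterisation of weighted H\"older spaces) so that $\pi_a(\otimes \zeta)$ is a genuine element of $C_{w^{\otimes n}}^{n\alpha}$, and then applies the scalar pairing Lemma~\ref{l:pa} directly --- but this is exactly your ``slotwise'' wavelet computation in packaged form.
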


We note that in this statement we have performed a slight abuse of notation in writing $\mathbb{R}^{d | a |}$,
since the order of the variables does matter to identify the pairing and should be considered to be fixed.
\begin{proof}
By our universal property of Definition~\ref{def:tensor_product}
it suffices to prove that the maps $(\pi_{a} \otimes \cdot) ( g )$ are well-defined, $\mcs$-symmetric, multilinear and continuous.
To prove that it is well-defined, we appeal to the pairing Lemma~\ref{l:pa}: it suffices to prove that given 
$\zeta = (\zeta_1, \cdots , \zeta_{n \coloneqq | a |}) \in (C_w^{\alpha})^{\times a}$, one has $\pi_{a} (\otimes f) \in C_{\tilde{w}}^{\tilde{\alpha}}$ for $\tilde{w} = w^{\otimes n}$ and $\tilde{\alpha} = n \alpha$.
By definition, for such a $\zeta$, 
\begin{align} \label{eq:a40}
\pi_{a} ( \otimes \zeta )
& = | \hom_{\mcs} (a, a) |^{-1} \sum_{\gamma \in \hom_{\mcs} (a, a)} \zeta_{\gamma ( 1 )} \otimes \cdots \otimes \zeta_{\gamma ( n )} \in (C_{w}^{\alpha})^{\otimes n} .
\end{align}
Recall e.g.\ the wavelet characterisation
of $C_{w}^{\alpha}$ \cite[Proposition~2.4]{HL15},
which in particular implies a 
`tensorisation property' of the H\"older norms, 
namely for $\alpha < 0$ that $(C_{w}^{\alpha})^{\otimes n} \subset C_{w^{\otimes n}}^{n \alpha}$ with 
\begin{align*}
\| \zeta_1 \otimes \cdots \otimes \zeta_n \|_{C_{w^{\otimes n}}^{n \alpha}} 
\lesssim
\prod_{i = 1}^n \| \zeta_i \|_{C_{w}^{\alpha}} .
\end{align*}		
We deduce that $\pi_{a} (\otimes \zeta) \in C_{w^{\otimes n}}^{n \alpha}$ with 
\begin{align*}
\| \pi_{a} (\otimes \zeta) \|_{C_{w^{\otimes n}}^{n \alpha}} 
\lesssim
\prod_{i = 1}^n \| \zeta_i \|_{C_{w}^{\alpha}} .
\end{align*}
It follows that the map $(\pi_{a} (\otimes \cdot)) ( g )$ is well-defined.
It is $\mcs$-symmetric by \eqref{eq:a40}.
It is also clearly multilinear and by Lemma~\ref{l:pa} bounded with norm
\begin{align*}
\| (\pi_{a} (\otimes \cdot)) ( g ) \|
\lesssim
\| g \|_{C_{w^*}^{\alpha^*}} ,
\end{align*}
so that the continuity estimate \eqref{eq:a42} 
follows from the fact that the factoring map
$\cdot (g)$
has the same norm as each of the factored maps 
$\big( (\pi_{a} (\otimes \cdot)) ( g ) \big)_{a \in \Ob(\mcs)}$, recall Definition~\ref{def:tensor_product}.

By construction, the pairing \eqref{eq:a47} is linear in the first variable.
The fact that it is also linear in the second variable is a direct consequence of the uniqueness of the factoring map in Definition~\ref{def:tensor_product}.
\end{proof}
One simple but useful observation about the pairing defined above is that if $g \in C_{w^*}^{\alpha^*}(\mathbb{R}^{d|a|})$ is $\mcs$-symmetric then the pairing takes a particularly simple form.
\begin{lemma}\label{lem:symm_test}
Suppose that $\mcs$ is a symmetric set and $a \in \Ob(\mcs)$. If $g \in C_{w^*}^{\alpha^*}(\mathbb{R}^{d|a|})$ is $\mcs$-symmetric in the sense that $g \circ \gamma = g$ for all $\gamma \in \hom_\mcs(a,a)$ then for $(\zeta_e)_{e \in a} \subset C_w^\alpha(\mathbb{R}^d)$,
\begin{align*}
\bigotimes_\mcs^a (\zeta_e) (g) = \bigotimes_{e \in a} (\zeta_e) (g).
\end{align*}
\end{lemma}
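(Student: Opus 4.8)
The plan is to unwind the definitions and to exploit the uniqueness clause in the universal property of Definition~\ref{def:tensor_product}, exactly as in the proof of Lemma~\ref{l:pa2}. Fix $g \in C_{w^*}^{\alpha^*}(\mathbb{R}^{d|a|})$ which is $\mcs$-symmetric. Recall from the proof of Lemma~\ref{l:pa2} that the pairing $\zeta \mapsto \zeta(g)$ on $(C_w^\alpha)^{\proj \mcs}$ is the unique continuous linear map factoring, through $\hat\otimes_\mcs^a$, the multilinear family $(C_w^\alpha)^{\times a} \ni (\zeta_e) \mapsto \big(\pi_a(\otimes (\zeta_e))\big)(g)$, where $\pi_a = |\hom_\mcs(a,a)|^{-1} \sum_{\gamma \in \hom_\mcs(a,a)} \gamma$ is the symmetrisation map. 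So by the commuting diagram in Lemma~\ref{l:pa2} we have, for $(\zeta_e)_{e \in a} \subset C_w^\alpha(\mathbb{R}^d)$,
\begin{align*}
\bigotimes_\mcs^a (\zeta_e)(g) = \big(\pi_a(\otimes(\zeta_e))\big)(g).
\end{align*}

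The remaining point is purely a computation with the ordinary distributional pairing on $(C_w^\alpha)^{\otimes n}$, $n = |a|$. Using the explicit form \eqref{eq:a40},
\begin{align*}
\big(\pi_a(\otimes(\zeta_e))\big)(g) = |\hom_\mcs(a,a)|^{-1} \sum_{\gamma \in \hom_\mcs(a,a)} \big(\zeta_{\gamma(1)} \otimes \cdots \otimes \zeta_{\gamma(n)}\big)(g).
\end{align*}
Now for each fixed $\gamma \in \hom_\mcs(a,a)$, the map $\gamma$ permutes the coordinates of $\mathbb{R}^{d|a|}$, and since the ordinary pairing is covariant under this relabelling one has $\big(\zeta_{\gamma(1)} \otimes \cdots \otimes \zeta_{\gamma(n)}\big)(g) = \big(\zeta_1 \otimes \cdots \otimes \zeta_n\big)(g \circ \gamma)$. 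By the assumed $\mcs$-symmetry of $g$, $g \circ \gamma = g$, so every summand equals $\big(\bigotimes_{e \in a}(\zeta_e)\big)(g)$, and averaging over the $|\hom_\mcs(a,a)|$ summands gives the claim. (One should check the covariance identity just stated is legitimate: this follows from the density of finite linear combinations of tensors of wavelets together with the boundedness supplied by Lemma~\ref{l:pa}, or simply from the wavelet tensorisation estimate recalled in the proof of Lemma~\ref{l:pa2}, which makes the pairing continuous and hence determined by its values on elementary tensors of smooth compactly supported functions, where the change of variables is classical.)

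I do not expect any serious obstacle here; the statement is essentially bookkeeping. The only mild subtlety is making precise the covariance of the iterated pairing under the coordinate permutation induced by $\gamma \in \hom_\mcs(a,a)$ — one must be careful that the identification $\mathbb{R}^{d|a|} \simeq \prod_{e \in a} \mathbb{R}^d$ is the one fixed once and for all (as flagged in the remark after Lemma~\ref{l:pa2}) and that the permutation acts on the $|a|$ blocks of $\mathbb{R}^d$ consistently with the way $\gamma$ permutes the index set $a$. Once that convention is pinned down, the identity $g \circ \gamma = g$ is exactly the hypothesis and the conclusion is immediate. I would keep the write-up to a few lines, citing Lemma~\ref{l:pa2} for the factoring property and \eqref{eq:a40} for the explicit symmetriser.
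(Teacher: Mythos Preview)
Your proposal is correct and follows essentially the same route as the paper: both expand the pairing via the symmetriser $\pi_a$ (the paper writes this step directly from the definition rather than by citing the commuting diagram in Lemma~\ref{l:pa2}, but it is the same identity), then shift the permutation $\gamma$ from the tensor factors onto $g$ and invoke $g\circ\gamma=g$. The paper's version is a three-line display omitting the parenthetical justification of the covariance identity; your extra care there is harmless but not needed for the write-up.
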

\begin{proof}
By its definition we can write
\begin{align*}
\bigotimes_\mcs^a (\zeta_e) (g) &= |\hom_\mcs(a,a)|^{-1} \sum_{\gamma \in \hom_\mcs(a,a)} \bigotimes_{e \in a} (\zeta_{\gamma^{-1}(e)}) (g) =|\hom_\mcs(a,a)|^{-1} \sum_{\gamma \in \hom_\mcs(a,a)} \bigotimes_{e \in a} (\zeta_{e}) (g \circ \gamma)
\\
&= \bigotimes_{e \in a} (\zeta_{e}) (g ) ,
\end{align*}
as claimed.
\end{proof}

Equipped with the pairing of Lemma~\ref{l:pa2}, 
we now prove that the BPHZ models on 
appropriate 
historic sectors of
$\mathcal{T}^{\mathrm{Ban}} ( \mcK_{\mco}^{| \mfl |} )$
and
$\mathcal{T}^{\mathrm{Ban}} ( C_w^{\alpha} )$ 
are related by \eqref{eq:a39},
and furthermore establish a corresponding estimate.
In order to clarify the notations due to working in these two structures, in the remainder of this section, we will decorate by the superscripts $\mcK$ and $C$
objects that correspond to 
these respective regularity structures.

More precisely, we fix a historic sector $W = W^{\mcK}$ of order $\mathrm{ord} (W^{\mcK}) < \mco$ in $\mathcal{T}^{\mathrm{Ban}} \big( ( \mathcal{K}_{\mco}^{| \mfl |} )_{\mfl \in \mfL_+} \big)$.
We will use the notation $(\hat{\Pi}^{\mcK}, \hat{\Gamma}^{\mcK})$ to denote the BPHZ model on $W$ with identity kernel assigment and a given random smooth noise assignment $\xi \colon \Omega \to \mcA_-^{\infty; \eq}$.
Furthermore, given a kernel assigment $K \in \mcA_+^{\eq; \mcO}$ and parameters $\alpha, w$ satisfying the assumptions of Lemma~\ref{l:ka}, 
we will use the notation 
$(\hat{\Pi}^{C}, \hat{\Gamma}^C )$ to denote the BPHZ model on the historic sector 
(which we will denote by $W^{C}$)
of 
$\mathcal{T}^{\mathrm{Ban}} \big( ( C_w^{\alpha} )_{\mfl \in \mfL_+} )$ with
generating set of trees $\mcB_W$, 
with respect to the kernel assignment \eqref{eq:a34b} and noise assignment $\xi$. 
Similarly, when considering two noise assigments $\xi^1, \xi^2$ and two kernel assignments $K^1, K^2 \in \mcA_+^{\eq; \mcO}$, we will use the notations 
$\hat{\Pi}^{C; 1}, \hat{\Pi}^{C; 2}$
and 
$\hat{\Pi}^{\mcK; 1}, \hat{\Pi}^{\mcK; 2}$
to denote the corresponding models. 

\begin{lemma} \label{l:semi-annealedII}
Let $K \in \mcA_+^{\eq; \mcO}$ and $\tau \in \boldsymbol{\tau}\in \mcB_W$.
Assume that 
$\mcO \geq \mco + \max ( \mfs )$,
and
\begin{align} \label{eq:a43}
0 > \alpha > \frac{\max ( \mfs ) - \mcO}{| E_+ ( \tau ) |} + 2 | \mfs | ,
\qquad \int_{\mathbb{R}^d} \mathrm{d} x \, w ( x ) \, (1 + | x |_{\mfs})^{2 | \mfs |} < \infty .
\end{align}
Then for all 
$x \in \mathbb{R}^d$, 
$\lambda \in (0, 1]$, 
$\phi \in \mcB^r$
and $v \in (C_w^{\alpha} )^{\proj \sttau}$
one has the relation 
\begin{align} \label{eq:a39b}
\hat{\Pi}_x^{C} v ( \phi )
& = v \Big( c \mapsto \hat{\Pi}_x^{\mcK} \bigotimes_{\sttau}^{\tau} \big( K_{\mfl_{e}}^{c_e} \big)_{e \in E_+ ( \tau )} \big( \phi \big) \Big) ,
\end{align}
and moreover, for all $1 \leq p < \infty$
\begin{align} \label{eq:a41}
\mathbb{E}^{\frac{1}{p}} \left[ \left| \sup_{v \in ( C_w^{\alpha} )^{\proj \sttau}} \frac{| \hat{\Pi}_x^{C} v ( \phi_x^{\lambda} ) |}{\| v \|_{( C_w^{\alpha} )^{\proj \sttau}}} \right|^p \right]
& \lesssim
\| K \|_{\mcA_+^{\eq; \mcO}}^{| E_+ ( \tau ) |} 
\sup_{v \in (\mathcal{K}_{\mco}^{| \mfl |})^{\proj \sttau}} \frac{\mathbb{E}^{\frac{1}{p}} \big[ \big| \hat{\Pi}_x^{\mcK} v \big( \phi_x^{\lambda} \big) \big|^p \big]}{\| v \|_{(\mathcal{K}_{\mco}^{|\mfl|})^{\proj \sttau}} }.
\end{align}
Furthermore, 
in the case of two models, 
\begin{align} \label{eq:a41b}
& \mathbb{E}^{\frac{1}{p}} \left[ \left| \sup_{v \in ( C_w^{\alpha} )^{\proj \sttau}} \frac{| ( \hat{\Pi}_x^{C; 1} - \hat{\Pi}_x^{C; 2} ) v ( \phi_x^{\lambda} ) |}{\| v \|_{( C_w^{\alpha} )^{\proj \sttau}}} \right|^p \right]
\lesssim
\| K^1 - K^2 \|_{\mcA_+^{\eq; \mcO}}
+ \sup_{v \in (\mathcal{K}_{\mco}^{| \mfl |})^{\proj \sttau}} \frac{\mathbb{E}^{\frac{1}{p}} \big| \big( \hat{\Pi}_x^{\mcK; 1} - \hat{\Pi}_x^{\mcK; 2} \big) v \big( \phi_x^{\lambda} \big) \big|^p}{\| v \|_{(\mathcal{K}_{\mco}^{|\mfl|})^{\proj \sttau}} } ,
\end{align}
where the implicit constant can be chosen 
to depend polynomially on
\begin{align*}
\| K^1 \|_{\mcA_+^{\eq; \mcO}} + \| K^2 \|_{\mcA_+^{\eq; \mcO}} 
+ \sup_{v \in (\mathcal{K}_{\mco}^{| \mfl |})^{\proj \sttau}} \frac{\mathbb{E}^{\frac{1}{p}} \big[ \big| \hat{\Pi}_x^{\mcK; 1} v \big( \phi_x^{\lambda} \big) \big|^p \big]}{\| v \|_{(\mathcal{K}_{\mco}^{|\mfl|})^{\proj \sttau}} } 
+ \sup_{v \in (\mathcal{K}_{\mco}^{| \mfl |})^{\proj \sttau}} \frac{\mathbb{E}^{\frac{1}{p}} \big[ \big| \hat{\Pi}_x^{\mcK; 2} v \big( \phi_x^{\lambda} \big) \big|^p \big]}{\| v \|_{(\mathcal{K}_{\mco}^{|\mfl|})^{\proj \sttau}} }.
\end{align*}
\end{lemma}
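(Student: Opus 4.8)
The plan is to prove \eqref{eq:a39b} first by a straightforward induction over $\mathrm{Age}(\btau)$ and then deduce the quantitative estimates \eqref{eq:a41} and \eqref{eq:a41b} by combining this identity with the semi-annealed estimate of Lemma~\ref{l:semi-annealed} and the pairing estimate \eqref{eq:a42} of Lemma~\ref{l:pa2}. The key point is that the two BPHZ models $(\hat\Pi^C,\hat\Gamma^C)$ and $(\hat\Pi^{\mcK},\hat\Gamma^{\mcK})$ are built from the \emph{same} underlying smooth noise assignment $\xi$ and kernel assignments related by \eqref{eq:a34b}: concretely, $A_\mfl^{\mcK}(\zeta) = \zeta(K_\mfl^{(\cdot)})$ for $\zeta \in C_w^\alpha$, which is exactly the composition of the identity kernel assignment on $\mcT^{\mathrm{Ban}}(\mcK_\mco^{|\mfl|})$ with the map $\zeta \mapsto K_\mfl^\zeta$. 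Thus for a symmetric elementary tensor $v = \bigotimes_\sttau^\tau(\zeta_e)_{e\in E_+(\tau)} \in (C_w^\alpha)^{\proj\sttau}$, the defining recursive relations for $\hat\Pi^C_x v$ (admissibility, multiplicativity, the BPHZ functional) match term-by-term with the relations for $c \mapsto \hat\Pi^{\mcK}_x \bigotimes_\sttau^\tau(K^{c_e}_{\mfl_e})_e$ once one pairs against the test function $c \mapsto \cdot(\phi)$; the only subtlety is that on the right one is pairing against the $\sttau$-symmetric function $c \mapsto \hat\Pi^{\mcK}_x\bigotimes_\sttau^\tau(K^{c_e}_{\mfl_e})(\phi)$, so Lemma~\ref{lem:symm_test} applies and the $\sttau$-symmetrised pairing collapses to the naive one. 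One checks the five familiar cases ($\btau = X^k$ or $\Xi_\mfl$, $\btau$ planted, $\btau$ a product of planted trees), using Lemma~\ref{l:age_ind_prop} to feed the induction through $\mathring\Delta_r^-$ when computing the BPHZ functional, and using the continuity of the pairing (so that both sides are continuous in $v$ and it suffices to check on elementary tensors, then extend by density exactly as in Lemma~\ref{lem:UP_in_Lp}). We must verify along the way that the hypotheses \eqref{eq:a43} ensure all pairings in sight are legitimate; the assumption $\alpha > (\max(\mfs) - \mcO)/|E_+(\tau)| + 2|\mfs|$ together with Lemma~\ref{l:ka} (whose hypotheses $\mcO \geq \mco + \max(\mfs)$, $\mcO \geq -\alpha + \max(\mfs)$, $\int w < \infty$ follow from \eqref{eq:a43}) guarantees that $A$ of \eqref{eq:a34b} is a genuine kernel assignment of order $\mco$ on $\mathcal{T}^{\mathrm{Ban}}(C_w^\alpha)$, so that $(\hat\Pi^C,\hat\Gamma^C)$ exists by Proposition~\ref{lem: BPHZ exists}.

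\textbf{From \eqref{eq:a39b} to the estimates.} Granting the identity, fix $x,\lambda,\phi$ and write $g := \big(c \mapsto \hat\Pi^{\mcK}_x\bigotimes_\sttau^\tau(K^{c_e}_{\mfl_e})_{e\in E_+(\tau)}(\phi^\lambda_x)\big)$, a random element of $C^{\alpha^*}_{w^*}(\mathbb{R}^{d|E_+(\tau)|})$. By \eqref{eq:a39b}, $\hat\Pi^C_x v(\phi^\lambda_x) = v(g)$ for every $v \in (C_w^\alpha)^{\proj\sttau}$, so the pairing estimate \eqref{eq:a42} gives pointwise in $\omega$
\begin{equation*}
	\sup_{v \in (C_w^\alpha)^{\proj\sttau}} \frac{|\hat\Pi^C_x v(\phi^\lambda_x)|}{\|v\|_{(C_w^\alpha)^{\proj\sttau}}} \lesssim \|g\|_{C^{\alpha^*}_{w^*}} .
\end{equation*}
Here one must choose $\alpha^* > 0$ and the weight $w^*$ on $\mathbb{R}^{d|E_+(\tau)|}$ so that Lemma~\ref{l:pa2} applies, i.e.\ $\alpha^* + |E_+(\tau)|\alpha > 0$ and $\int w^{\otimes|E_+(\tau)|}w^* < \infty$, and \emph{simultaneously} so that the Kolmogorov-type Lemma~\ref{l:semi-annealed} applies, i.e.\ $\alpha^* \leq \mcO - \max(\mfs) - 2|\mfs||E_+(\tau)|/p$ and $\int (w^*)^{-p} < \infty$. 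The constraint \eqref{eq:a43} is precisely what makes the interval of admissible $\alpha^*$ nonempty (using $\alpha > (\max(\mfs)-\mcO)/|E_+(\tau)| + 2|\mfs|$, which rearranges to $|E_+(\tau)|\alpha - |E_+(\tau)| \cdot 2|\mfs| > \max(\mfs) - \mcO \geq -(\mcO - \max(\mfs))$ — so one can take $\alpha^*$ slightly below $-|E_+(\tau)|\alpha$ and still below $\mcO - \max(\mfs) - 2|\mfs||E_+(\tau)|/p$ for $p$ large; for small $p$ one uses Jensen to reduce to large $p$). Taking $L^p(\Omega)$-norms and invoking Lemma~\ref{l:semi-annealed} then yields
\begin{equation*}
	\mathbb{E}^{1/p}\Big[\Big|\sup_{v} \tfrac{|\hat\Pi^C_x v(\phi^\lambda_x)|}{\|v\|}\Big|^p\Big] \lesssim \mathbb{E}^{1/p}\big[\|g\|_{C^{\alpha^*}_{w^*}}^p\big] \lesssim \|K\|_{\mcA_+^{\eq;\mcO}}^{|E_+(\tau)|} \sup_{v \in (\mcK_\mco^{|\mfl|})^{\proj\sttau}} \frac{\mathbb{E}^{1/p}[|\hat\Pi^{\mcK}_x v(\phi^\lambda_x)|^p]}{\|v\|_{(\mcK_\mco^{|\mfl|})^{\proj\sttau}}},
\end{equation*}
which is \eqref{eq:a41}. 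For \eqref{eq:a41b} one repeats the argument with the difference of models: write the difference $\hat\Pi^{C;1}_x v(\phi^\lambda_x) - \hat\Pi^{C;2}_x v(\phi^\lambda_x) = v(g^1 - g^2)$ using \eqref{eq:a39b} for each model (with its own kernel $K^i$), bound $\sup_v |v(g^1-g^2)|/\|v\|$ by $\|g^1-g^2\|_{C^{\alpha^*}_{w^*}}$, take $L^p$-norms, and apply the second half of Lemma~\ref{l:semi-annealed}; the multilinearity of $\hat\Pi_x\bigotimes_\sttau^\tau$ lets one telescope $\bigotimes(K^{c_e;1}) - \bigotimes(K^{c_e;2})$ into a sum of terms each containing one factor $K^{c_e;1}-K^{c_e;2}$, which produces the $\|K^1-K^2\|_{\mcA_+^{\eq;\mcO}}$ term and the stated polynomial dependence on the remaining quantities.

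\textbf{Main obstacle.} The bulk of the work — and where care is genuinely required — is the bookkeeping in the inductive proof of \eqref{eq:a39b}: one is asserting an equality between an object living in a tensor product over $C_w^\alpha$ and a \emph{function} $c \mapsto (\cdots)$ valued in distributions, and making this literal requires threading the pairing of Lemma~\ref{l:pa2} through every recursive step, checking at each stage that the function $c \mapsto \hat\Pi^{\mcK}_x\bigotimes(K^{c_e}_{\mfl_e})$ lies in the right weighted Hölder class with the right norm bounds (this is where the anisotropic Taylor formula, Lemma~\ref{Tay}, and the kernel estimate from Lemma~\ref{l:ka} enter), and that the $\sttau$-symmetrisation in the pairing is harmless by Lemma~\ref{lem:symm_test}. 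In particular the convolution step in the admissibility relation \eqref{eq:adm_int} — where $A^{\mcK}_\mfl(K^{c_e}_{\mfl}) = K^{c_e}_\mfl$ and the subtracted Taylor polynomial must be matched on both sides — needs the order condition $\mco > \mathrm{ord}(W)$ to guarantee the truncated jet is the correct one, and one must check that the $c$-dependence of these Taylor coefficients is itself sufficiently regular to survive the later application of Lemma~\ref{l:semi-annealed}. Once the identity is in place the probabilistic estimates are a clean corollary; there is no hard probability beyond the two already-proven Kolmogorov-type and semi-annealed lemmas. A minor additional check is that the historic sector $W^C$ with the same generating set $\mcB_W$ genuinely is a well-defined $P$-good historic sector for the relevant BPHZ preparation map on $\mathcal{T}^{\mathrm{Ban}}(C_w^\alpha)$, which follows from the fact that \textquotedblleft historic\textquotedblright{} and \textquotedblleft good\textquotedblright{} are properties of the set of trees $\mcB_W$ and not of the Banach space assignment, together with Lemma~\ref{l:pfl}.
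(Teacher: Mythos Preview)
Your proposal is correct and follows essentially the same approach as the paper. The paper also derives the bounds \eqref{eq:a41}, \eqref{eq:a41b} by combining Lemma~\ref{l:semi-annealed} with the pairing estimate of Lemma~\ref{l:pa2} (making the explicit choice $w^*(x_1,\ldots,x_{|E_+(\tau)|}) = \prod_i (1+|x_i|_\mfs)^{|\mfs|}$), and then proves the identity \eqref{eq:a39b} by Noetherian induction with respect to $\prec$, establishing the same five auxiliary claims (for $\hat\Pi_x^\times$, $\hat{\boldsymbol{\Pi}}^\times$, $\hat\ell$, $\hat{\boldsymbol{\Pi}}$, $\hat\Pi_x$) on elementary symmetric tensors that you outline, with Lemma~\ref{lem:symm_test} invoked at the product step exactly as you anticipate.
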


\begin{proof}
The fact that 
the right-hand side of
\eqref{eq:a39} is well-defined with the bounds 
\eqref{eq:a41} and \eqref{eq:a41b}
follows from combining Lemma~\ref{l:semi-annealed} and Lemma~\ref{l:pa2}, where we have chosen $w^* ( x_1, \cdots, x_{| E_+ ( \tau ) |} ) = \prod_{i=1}^{| E_+ ( \tau ) |} ( 1 + | x_i |_{\mfs} )^{| \mfs |}$ therein.

It remains to prove that $\eqref{eq:a39}$ coincides with the BPHZ model on
the historic sector of $\mathcal{T}^{\mathrm{Ban}} ( C_w^{\alpha} )$ with generating set of trees $\mcB_W$.
More precisely we will prove by Noetherian induction 
over $\mcB_W$ with respect to $\prec$
that for every elementary symmetric tensor $\otimes_{\sttau}^\tau (\zeta_e)$ we have that
\begin{enumerate}
\item $\hat{\Pi}_x^{\times} \otimes_{\sttau}^\tau (\zeta_e)(y) = \bigotimes_{\sttau}^\tau (\zeta_e) \left [ c \mapsto \hat{\Pi}_x^{\times} \otimes_{\sttau}^\tau (K_{\mfl_e}^{c_e})(y) \right ]$,
\item $\hat{\boldsymbol{\Pi}}^\times \otimes_{\sttau}^\tau (\zeta_e)(y) = \bigotimes_{\sttau}^\tau (\zeta_e) \left [ c \mapsto \hat{\boldsymbol{\Pi}}^\times \otimes_{\sttau}^\tau (K_{\mfl_e}^{c_e})(y) \right ]$,
\item $\hat{\ell}(\bigotimes_\sttau^\tau (\zeta_e)) = \otimes_{\sttau}^\tau (\zeta_e) \left [ c \mapsto \hat{\ell}( \otimes_\sttau^\tau (K_{\mfl_e}^{c_e}) \right ]$,
\item $\hat{\boldsymbol{\Pi}}\otimes_{\sttau}^\tau (\zeta_e)(y) = \bigotimes_{\sttau}^\tau (\zeta_e) \left [ c \mapsto \hat{\boldsymbol{\Pi}} \otimes_{\sttau}^\tau (K_{\mfl_e}^{c_e})(y) \right ]$,
\item $\hat{\Pi}_x \otimes_{\sttau}^\tau (\zeta_e)(y) = \bigotimes_{\sttau}^\tau (\zeta_e) \left [ c \mapsto \hat{\Pi}_x \otimes_{\sttau}^\tau (K_{\mfl_e}^{c_e})(y) \right ]$
\end{enumerate}
where for a given tree $\boldsymbol{\tau}$, the statements will be verified in the order that they are listed above, 
and where we have omitted the superscripts $C$ and $\mcK$ in the corresponding models $\hat{\Pi}$ for notational convenience.
As usual, we treat separately the cases where $\boldsymbol{\tau} \in \{\Xi, X^k\}$, where $\boldsymbol{\tau}$ is planted and where $\boldsymbol{\tau}$ is a product of planted trees.

In the first case $\boldsymbol{\tau} \in \{\Xi, X^k\}$ so that each of the statements above follows from the definitions immediately. Therefore it suffices to fix a tree $\boldsymbol{\tau}$ which we can assume is either planted $\boldsymbol{\tau} = \mathcal{I}_\mfl^k \boldsymbol{\sigma}$ or is a non-trivial product of planted trees $\boldsymbol{\tau} = \prod_{i=1}^n \boldsymbol{\tau}_i$ with $n > 1$ and $\boldsymbol{\tau}_i = \mathcal{I}_{\mfl_i}^k \boldsymbol{\sigma}_i$.

In the former case, statement $3$ above is trivial since both sides vanish and statements $1$ and $2$ coincide with statements $4$ and $5$. Hence in this case, we can consider only the first two statements. Since the proofs are very similar (only differing by the potential presence of recentering in the admissibility condition), we demonstrate only the first of these claims in this case. We fix a drawing $\tau$ of $\boldsymbol{\tau}$ and consider $(\zeta_e)_{e\in E_+(\tau)} \in V^{\times \tau}$. We denote by $\zeta$ the value of $\zeta_e$ for $e$ being the trunk of $\tau$. We then have
\begin{align}\label{ko01}
%    & 
\hat{\Pi}_x^{\times} \otimes_{\sttau}^\tau (\zeta_e)(y) 
%    \\
& 
= D^k A_\mfl(\zeta) \ast \hat{\Pi}_x \otimes_{\sssigma}^\sigma (\zeta_e)(y) - \sum_{|j| < |\mcI_\mfl^k \tau|_{\mfs}} \frac{(y-x)^j}{j!} D^{k+j} A_\mfl(\zeta) \ast \hat{\Pi}_x \otimes_{\sssigma}^\sigma (\zeta_e)(x).
\end{align}
By the induction hypothesis and the definition of $A_\mfl$, we can rewrite the first term on the right hand side as
\begin{align*}
\zeta(D^k K_\mfl^{\bullet}) \ast \otimes_{\sssigma}^\sigma (\zeta_e) \left [ \hat{\Pi}_x \otimes_\sssigma^\sigma (K_{\mfl_e}^{\bullet_e})\right ](y) = \zeta \otimes \bigotimes_{\sssigma}^\sigma (\zeta_e) \left [ D^k K_\mfl^{\bullet} \ast \hat{\Pi}_x \otimes_{\sssigma}^\sigma (K_{\mfl_e}^{\bullet_e}) (y) \right ]
\end{align*}
where the distributional pairing is in the variables denoted by $\bullet$ and the convolution is in the remaining variables. Rewriting the summand in the same way, we then note that the right hand side of \eqref{ko01} is
\begin{align*}
\zeta \otimes \bigotimes_{\sssigma}^\sigma (\zeta_e) \left [ D^k K_\mfl^{\bullet} \ast \hat{\Pi}_x \otimes_{\sssigma}^\sigma (K_{\mfl_e}^{\bullet_e}) (y) \right ] - \sum_{|j|_{\mfs} < |\mcI_\mfl^k \tau|}\frac{(y-x)^j}{j!} \zeta \otimes \bigotimes_{\sssigma}^\sigma (\zeta_e) \left [ D^{k+j} K_\mfl^{\bullet} \ast \hat{\Pi}_x \otimes_{\sssigma}^\sigma (K_{\mfl_e}^{\bullet_e}) (x) \right ].
\end{align*}
By admissibility of the BPHZ model on 
$W$,
we recognise this as the right hand side of the first equation in our induction hypothesis.

It remains to treat the case where $\sttau$ is a non-trivial product of planted trees. In this case, we must check all five of our induction hypotheses in order. We now consider the first of our five hypotheses. We note that $\hat{\Pi}_x^{\times} \otimes_{\sttau}^\tau (K_{\mfl_e}^{\bullet_e})(y)$ is a $\sttau$-symmetric test function in the $\bullet_e$-variables so that by Lemma~\ref{lem:symm_test}, we can write
\begin{align*}
\bigotimes_{\sttau}^\tau (\zeta_e) \left [\hat{\Pi}_x^{\times} \otimes_{\sttau}^\tau (K_{\mfl_e}^{\bullet_e})(y) \right ] &= 
\bigotimes_{e\in E_+(\tau)} (\zeta_e) \left [\hat{\Pi}_x^{\times} \otimes_{\sttau}^\tau (K_{\mfl_e}^{\bullet_{e}})(y) \right ]  = \prod_{i = 1}^n \bigotimes_{e\in E_+(\tau_i)} (\zeta_e) \left [\hat{\Pi}_x^{\times} \otimes_{\langle \boldsymbol{\tau}_i\rangle}^{\tau_i}(K_{\mfl_e}^{\bullet_{e}})(y) \right ] 
\end{align*}
wherefor the second equality we used that by the multiplicative definition of $\hat{\Pi}_x^\times$,
\begin{align*}
\left [ c \mapsto \hat{\Pi}_x^{\times} \otimes_{\langle \boldsymbol{\tau}\rangle}^{\tau}(K_{\mfl_e}^{c_{e}})(y) \right] = \otimes_{i=1}^n \left [ c \mapsto \hat{\Pi}_x^{\times} \otimes_{\langle \boldsymbol{\tau}_i\rangle}^{\tau_i}(K_{\mfl_e}^{c_{e}})(y) \right ].
\end{align*}
Lemma~\ref{lem:symm_test} then shows that 
\begin{align*}
\bigotimes_{e\in E_+(\tau_i)} (\zeta_e) \left [\hat{\Pi}_x^{\times} \otimes_{\langle \boldsymbol{\tau}_i\rangle}^{\tau_i}(K_{\mfl_e}^{\bullet_{e}})(y) \right ] = \bigotimes_{\langle \boldsymbol{\tau}_i \rangle}^{\tau_i} (\zeta_e) \left [\hat{\Pi}_x^{\times} \otimes_{\langle \boldsymbol{\tau}_i\rangle}^{\tau_i}(K_{\mfl_e}^{\bullet_{e}})(y) \right ] 
\end{align*}
so that we may deduce from the induction hypothesis that
\begin{align*}
\bigotimes_{\sttau}^\tau (\zeta_e) \left [\hat{\Pi}_x^{\times} \otimes_{\sttau}^\tau (K_{\mfl_e}^{\bullet_e})(y) \right ] &= \prod_{i = 1}^n \hat{\Pi}_x^\times \bigotimes_{e\in E_+(\tau_i)} (\zeta_e) (y)
= \hat{\Pi}_x^\times \bigotimes_\sttau^\tau (\zeta_e)(y).
\end{align*}
This completes the proof of the first claim in our induction hypothesis in this case.

Since the second claim in the induction hypothesis follows by essentially the same approach (replacing $\hat{\Pi}_x$ by $\hat{\boldsymbol{\Pi}}$) we omit the details for this part and move to the third claim in the induction hypothesis. We note that by definition of the BPHZ model, if $\mathring{\Delta}_r^- = \Delta_r^- - \id \otimes 1 - 1 \otimes \id$ then
\begin{align*}
\hat{\ell}\left (\bigotimes_\sttau^\tau (\zeta_e) \right ) &= - \mathbb{E}\left [ \boldsymbol{\Pi}^\times \bigotimes_\sttau^\tau(\zeta_e)(0)\right ] - \sum_{\tau_1, \tau_2} \hat{\ell} \left ( \bigotimes_{\langle \boldsymbol{\tau}_1 \rangle}^{\tau_1} (\zeta_e)\right )\mathbb{E} \left [ \boldsymbol{\Pi}^\times \bigotimes_{\langle \boldsymbol{\tau}_2 \rangle}^{\tau_2}(\zeta_e)(0) \right] 
\end{align*}
By the induction hypothesis and the previous parts of this induction step, the right hand side is equal to
\begin{align*}
- \mathbb{E}\left [ \bigotimes_\sttau^\tau(\zeta_e) \left ( \boldsymbol{\Pi}^\times \bigotimes_\sttau^\tau(K_{\mfl_e}^{\bullet_e}) (0) \right ) \right ] - \sum_{\tau_1, \tau_2} \bigotimes_{\langle \boldsymbol{\tau}_1 \rangle}^{\tau_1} (\zeta_e) \left ( \hat{\ell} \left ( \bigotimes_{\langle \boldsymbol{\tau}_1 \rangle}^{\tau_1} (K_{\mfl_e}^{\bullet_e})\right ) \right )\mathbb{E} \left [\bigotimes_{\langle \boldsymbol{\tau}_2 \rangle}^{\tau_2}(\zeta_e) \left ( \boldsymbol{\Pi}^\times \bigotimes_{\langle \boldsymbol{\tau}_2 \rangle}^{\tau_2}(K_{\mfl_e}^{\bullet_e})(0) \right ) \right]
\end{align*}
By Bochner integrability (which can also be checked by Noetherian induction), we may commute expectations and the application of distributions to the effect of
\begin{align*}
\hat{\ell}\left (\bigotimes_\sttau^\tau (\zeta_e) \right ) = & - \bigotimes_\sttau^\tau(\zeta_e) \left ( \mathbb{E}\left [  \boldsymbol{\Pi}^\times \bigotimes_\sttau^\tau(K_{\mfl_e}^{\bullet_e}) (0)  \right ]\right ) - \sum_{\tau_1, \tau_2} \bigotimes_{\langle \boldsymbol{\tau}_1 \rangle}^{\tau_1} (\zeta_e) \left ( \hat{\ell} \left ( \bigotimes_{\langle \boldsymbol{\tau}_1 \rangle}^{\tau_1} (K_{\mfl_e}^{\bullet_e})\right ) \right )\bigotimes_{\langle \boldsymbol{\tau}_2 \rangle}^{\tau_2}(\zeta_e) \left (\mathbb{E} \left [ \boldsymbol{\Pi}^\times \bigotimes_{\langle \boldsymbol{\tau}_2 \rangle}^{\tau_2}(K_{\mfl_e}^{\bullet_e})(0) \right] \right ).
\end{align*}
By Lemma~\ref{lem:symm_test}, we recognise the right hand side as
\begin{align*}
& - \bigotimes_\sttau^\tau(\zeta_e) \left ( \mathbb{E}\left [  \boldsymbol{\Pi}^\times \bigotimes_\sttau^\tau(K_{\mfl_e}^{\bullet_e}) (0)  \right ]\right ) - \sum_{\tau_1, \tau_2} \bigotimes_{e \in E_+(\tau_1)} (\zeta_e) \left ( \hat{\ell} \left ( \bigotimes_{\langle \boldsymbol{\tau}_1 \rangle}^{\tau_1} (K_{\mfl_e}^{\bullet_e})\right ) \right )\bigotimes_{e \in E_+(\tau_2)}(\zeta_e) \left (\mathbb{E} \left [ \boldsymbol{\Pi}^\times \bigotimes_{\langle \boldsymbol{\tau}_2 \rangle}^{\tau_2}(K_{\mfl_e}^{\bullet_e})(0) \right] \right )
\\ & = - \bigotimes_\sttau^\tau(\zeta_e) \left ( \mathbb{E}\left [  \boldsymbol{\Pi}^\times \bigotimes_\sttau^\tau(K_{\mfl_e}^{\bullet_e}) (0)  \right ]\right ) - \sum_{\tau_1, \tau_2} \bigotimes_{e \in E_+(\tau)} (\zeta_e) \left ( \hat{\ell} \left ( \bigotimes_{\langle \boldsymbol{\tau}_1 \rangle}^{\tau_1} (K_{\mfl_e}^{\bullet_e})\right ) \mathbb{E} \left [ \boldsymbol{\Pi}^\times \bigotimes_{\langle \boldsymbol{\tau}_2 \rangle}^{\tau_2}(K_{\mfl_e}^{\bullet_e})(0) \right] \right )
\\ & =
- \bigotimes_\sttau^\tau(\zeta_e) \left ( \mathbb{E}\left [  \boldsymbol{\Pi}^\times \bigotimes_\sttau^\tau(K_{\mfl_e}^{\bullet_e}) (0)  \right ]\right ) - \bigotimes_{e \in E_+(\tau)} (\zeta_e) \left ( \mathbb{E}[ \boldsymbol{\Pi}^\times (\hat{\ell}\otimes \id) \mathring{\Delta}_r^- \bigotimes_{\sttau}^\tau (K_{\mfl_e}^{\bullet_e}) (0) ]\right ).
\end{align*}
By applying Lemma~\ref{lem:symm_test} again, we see the right hand side is nothing other than $\bigotimes_{\sttau}^\tau (\zeta_e) \left ( \hat{\ell} \left ( \bigotimes_\sttau^\tau (K_{\mfl_e}^{\bullet_e} \right )\right ) $, completing our proof of the third part of our induction step.
\end{proof}

At this stage, we know that the BPHZ model 
$(\hat{\Pi}^{C}, \hat{\Gamma}^{C})$ 
on
a historic sector $W^C$ of $\mathcal{T}^{\mathrm{Ban}} ( C_w^{\alpha} )$ 
of order $\mathrm{ord} ( W^C )<\mco$, 
with respect to the kernel assignment $A$ defined in \eqref{eq:a34b}, satisfies the `semi-annealed' estimate \eqref{eq:a41}.
We now post-process this estimate into an almost-sure estimate on the model norm of $(\hat{\Pi}, \hat{\Gamma})$, by means of a Kolmogorov-type theorem for models.
This is a standard argument once \eqref{eq:a41} is known, see e.g.\ \cite[Theorem~10.7]{Hai14}
which relies on a countable characterisation of the model norm based on wavelets,
and which we (partially) reproduce for convenience.
See also \cite[Theorem~B.1]{HS23} for an alternative wavelet-free approach which could be implemented here with a bit more technical effort.

\begin{lemma}\label{l:quenched}
Let $W^C$ be a historic sector in $\mathcal{T}^{\mathrm{Ban}} \big( (C_w^{\alpha} )_{\mfl \in \mfL_+} \big)$ 
of order $\mathrm{ord} ( W^C ) < \mco$, 
with finite generating set of trees $\mcB_W$.
Let $K \in \mcA_+^{\eq; \mcO}$.
Let $\kappa > 0$.
Assume that 
 $\mcO \geq \mco + \max ( \mfs )$,
and 
that the assumption \eqref{eq:a43} of Lemma~\ref{l:semi-annealedII} holds for all $\tau \in \boldsymbol{\tau} \in \mcB_W \cap \mcT_-(R)$.
Then
for all compact sets $\mathfrak{K} \subset \mathbb{R}^d$, and all 
$p \in [1, \infty)$,
\begin{align*}
\mathbb{E}^{\frac{1}{p}} \big[ \| ( \hat{\Pi}^{C}, \hat{\Gamma}^{C} ) \|_{W^C; \mfK}^p \big]
\lesssim 
1 ,
\end{align*}
where the implicit constant can be chosen 
to depend polynomially on
\begin{align*}
\| K \|_{\mcA_+^{\eq; \mcO}} 
+ 
\sup_{\boldsymbol{\tau} \in \mcB_W}
\sup_{x \in \mfK + B ( R_{W} )} 
\sup_{\lambda \in (0, 1]}	
\sup_{\phi \in \mcB^r}	
\sup_{v \in (\mathcal{K}_{\mco}^{| \mfl |})^{\proj \sttau}}
\frac{\mathbb{E}^{\frac{1}{q_{\boldsymbol{\tau}, p}}} \big[ \big| \hat{\Pi}_x^{\mcK} v \big( \phi_x^{\lambda} \big) \big|^{q_{\boldsymbol{\tau}, p}} \big]}{\| v \|_{(\mathcal{K}_{\mco}^{|\mfl|})^{\proj \sttau}} \, \lambda^{| \boldsymbol{\tau} |_{\mfs} + \kappa}} 
\end{align*}
for some choice of radius $R_W > 0$ and integrability exponents $q = q_{\boldsymbol{\tau}, p} \in [1, \infty)$.
Furthermore, 
\begin{align*}
& \mathbb{E}^{\frac{1}{p}} \big[ \| ( \hat{\Pi}^{C; 1}, \hat{\Gamma}^{C; 1} ) ; ( \hat{\Pi}^{C;2}, \hat{\Gamma}^{C;2} ) \|_{W^C; \mfK}^p \big]
\\
& 
\lesssim 		
\| K^1 - K^2 \|_{\mcA_+^{\eq; \mcO}}
+ \sup_{\boldsymbol{\tau} \in \mcB_W} 
\sup_{x \in \mfK + B ( R_{W} )}
\sup_{\lambda \in (0, 1]}
\sup_{\phi \in \mcB^r} 	
\sup_{v \in (\mathcal{K}_{\mco}^{| \mfl |})^{\proj \sttau}}
\frac{\mathbb{E}^{\frac{1}{q_{\boldsymbol{\tau}, p}}} \big[ \big| \big( \hat{\Pi}_x^{\mcK; 1} - \hat{\Pi}_x^{\mcK; 2} \big) v \big( \phi_x^{\lambda} \big) \big|^{q_{\boldsymbol{\tau}, p}} \big]}
{\| v \|_{(\mathcal{K}_{\mco}^{|\mfl|})^{\proj \sttau}} \, \lambda^{| \boldsymbol{\tau} |_{\mfs} + \kappa}} ,
\end{align*}
where the implicit constant can be chosen 
to depend polynomially on
\begin{align*}
\| K^1 \|_{\mcA_+^{\eq; \mcO}} + \| K^2 \|_{\mcA_+^{\eq; \mcO}} 
+ 
\sup_{i \in \lbrace 1, 2 \rbrace}
\sup_{\boldsymbol{\tau} \in \mcB_W} 
\sup_{x \in \mfK + B ( R_{W} )} 
\sup_{\lambda \in (0, 1]}	
\sup_{\phi \in \mcB^r}	
\sup_{v \in (\mathcal{K}_{\mco}^{| \mfl |})^{\proj \sttau}} \frac{\mathbb{E}^{\frac{1}{q_{\boldsymbol{\tau}, p}}} \big[ \big| \hat{\Pi}_x^{\mcK; i} v \big( \phi_x^{\lambda} \big) \big|^{q_{\boldsymbol{\tau}, p}} \big]}{\| v \|_{(\mathcal{K}_{\mco}^{|\mfl|})^{\proj \sttau}} \lambda^{| \boldsymbol{\tau} |_{\mfs} + \kappa}} .
\end{align*}

\end{lemma}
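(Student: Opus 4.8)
The strategy is to combine the `semi-annealed' estimate \eqref{eq:a41} established in Lemma~\ref{l:semi-annealedII} with a Kolmogorov-type argument for models, exactly as in \cite[Theorem~10.7]{Hai14}. The key point is that by Lemma~\ref{l:semi-annealedII}, for each $\btau \in \mcB_W \cap \mcT_-(R)$ we already know that
\begin{equ}
	\mathbb{E}^{\frac1p}\Big[\Big|\sup_{v \in (C_w^\alpha)^{\proj \sttau}} \frac{|\hat\Pi_x^C v(\phi_x^\lambda)|}{\|v\|_{(C_w^\alpha)^{\proj\sttau}}}\Big|^p\Big] \lesssim \|K\|_{\mcA_+^{\eq;\mcO}}^{|E_+(\tau)|}\sup_{v}\frac{\mathbb{E}^{\frac1p}[|\hat\Pi_x^{\mcK}v(\phi_x^\lambda)|^p]}{\|v\|_{(\mcK_\mco^{|\mfl|})^{\proj\sttau}}},
\end{equ}
and for the positive-degree trees in $\mcB_W$ the analogous bound (together with the $\Gamma$-bounds) follows from Lemma~\ref{lem:Gam_from_Pi} and a Noetherian induction, since admissibility on $W^C$ lets us reconstruct those components and their structure-group action from the lower-degree data plus the kernel $K$; this is where the dependence on $\|K\|_{\mcA_+^{\eq;\mcO}}$ and on the radius $R_W$ (the maximal spatial `reach' of the finitely many trees in $\mcB_W$ under recentering/extension) enters. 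First I would fix such a radius $R_W$ so that all recentering and Extension-Theorem manipulations on the sector $W^C$ stay within $\mfK + B(R_W)$.

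Next I would set up the countable wavelet characterisation of the model seminorms $\|(\hat\Pi^C,\hat\Gamma^C)\|_{W^C;\mfK}$, precisely as recalled in \cite[Sec.~10.4]{Hai14}: the full seminorm is controlled by a supremum over a wavelet basis at dyadic scales of quantities of the form $|\hat\Pi_x v(\psi_x^{2^{-n}})|$ with $v$ ranging over a fixed finite-dimensional piece of $V^{\proj\sttau}$ (here infinite-dimensional, but the supremum over $v$ is \emph{already inside} the semi-annealed bound, so it causes no further trouble). Summing the geometric series over scales $n$ — which forces passing from exponent $p$ to a larger exponent $q = q_{\btau,p}$ to absorb the loss $\kappa>0$ in the scaling exponent, and which requires $p$ (hence $q$) large enough that the wavelet sum converges — converts the pointwise-in-$(x,\lambda,\phi)$ estimate into the stated $L^p(\Omega)$ bound on the full seminorm. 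The polynomial dependence of the implicit constant on the listed quantities comes from tracking the $\Gamma$-bounds through Lemma~\ref{lem:Gam_from_Pi}, where products of model norms appear, and from the finitely many multiplicative steps in reconstructing higher components.

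For the two-model statement one runs the identical argument replacing $\hat\Pi^C$ by $\hat\Pi^{C;1}-\hat\Pi^{C;2}$, using the difference estimate \eqref{eq:a41b} of Lemma~\ref{l:semi-annealedII} in place of \eqref{eq:a41}, the difference estimate in Lemma~\ref{lem:Gam_from_Pi} for the $\Gamma$-component (which produces the extra $\|K^1-K^2\|_{\mcA_+^{\eq;\mcO}}$ term), and the elementary fact that differences of products of bounded quantities are bounded by the differences times the bounds — explaining the polynomial dependence on $\|K^i\|$ and on the individual model quantities. I expect the main obstacle to be purely bookkeeping rather than conceptual: one must carefully choose the exponents $q_{\btau,p}$ and verify that the finitely many Noetherian-induction steps reconstructing the positive-degree components of the sector do not require kernel smoothness beyond $\mcO \ge \mco + \max(\mfs)$ and the hypothesis \eqref{eq:a43}, and that all of this is uniform over the compact set $\mfK$ after enlarging to $\mfK + B(R_W)$. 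Since, as the excerpt notes, this is `a standard argument once \eqref{eq:a41} is known', I would state the wavelet reduction, cite \cite[Theorem~10.7]{Hai14} and \cite[Theorem~B.1]{HS23} for the scale-summation mechanics, and only spell out the points where the infinite-dimensional components or the kernel-dependence genuinely differ from the scalar-valued case.
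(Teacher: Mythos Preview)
Your proposal is correct and follows essentially the same approach as the paper: Noetherian induction over $\mcB_W$ with respect to $\prec$, using Lemma~\ref{lem:Gam_from_Pi} with H\"older's inequality for the $\Gamma$-bounds, \cite[Proposition~3.31]{Hai14} for positive-degree $\Pi$-bounds, and for negative-degree trees the wavelet decomposition of \cite[Proposition~3.32]{Hai14} with the sup over $v$ handled by the semi-annealed estimate \eqref{eq:a41}. The only cosmetic difference is that the paper makes the Cauchy--Schwarz step and the summability condition $p > |\mfs|/\kappa$ explicit, whereas you gesture at ``summing the geometric series''; otherwise the arguments match.
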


\begin{proof}
In the proof, we suppress the superscripts $C$ and $\mcK$ from the notations for convenience.
We only deal with the case of one model, since the adaptation to the comparison of two models proceeds analogously.
More precisely, we prove by Noetherian induction on $\mcB_W$ with respect to $\prec$ the property, 
which we call $\Phi ( \boldsymbol{\tau} )$,
that for all $p \in [ 1, \infty )$ and $\mfK \subset \mathbb{R}^d$, it holds that
$\mathbb{E}^{1/p} [ \| \hat{\Pi} \|_{\boldsymbol{\tau}; \mfK}^p ] \lesssim 1$ and $\mathbb{E}^{1/p} [ \| \hat{\Gamma} \|_{\boldsymbol{\tau}; \mfK}^p ] \lesssim 1$ , 
with the implicit prefactor being as in the statement.
Let us denote 
by $\Phi^{\Pi} ( \boldsymbol{\tau} )$ the claim on $\hat{\Pi}$ 
and by $\Phi^{\Gamma} ( \boldsymbol{\tau} )$ the claim on $\hat{\Gamma}$
in $\Phi ( \boldsymbol{\tau} )$.
We first observe that for all $\boldsymbol{\tau} \in \mcB_W$, 
$\lbrace \Phi ( \boldsymbol{\sigma} ) \rbrace_{\boldsymbol{\sigma} \prec \boldsymbol{\tau}}$
implies $\Phi^{\Gamma} ( \boldsymbol{\tau} )$: indeed, this is a straightforward consequence of Lemma~\ref{lem:Gam_from_Pi}
in combination with H\"older's inequality.

Thus, it remains to prove that for all $\boldsymbol{\tau} \in \mcB_W$, 
the properties 
$\lbrace \Phi ( \boldsymbol{\sigma} ) \rbrace_{\boldsymbol{\sigma} \prec \boldsymbol{\tau}}$ and $\Phi^{\Gamma} ( \boldsymbol{\tau} )$ 
imply the property $\Phi^{\Pi} ( \boldsymbol{\tau} )$.
To that effect, we divide into cases depending on the sign of $| \boldsymbol{\tau} |_{\mfs}$.
In the case where $| \boldsymbol{\tau} |_{\mfs} = 0$, then $\boldsymbol{\tau} = \mathbf{1}$ and the model bound is clear.
If $| \boldsymbol{\tau} |_{\mfs} > 0$, then by tracking for which trees\footnote{here we use the triangularity of $\hat{\Gamma}$ with respect to $\prec$} we require control of the model in the proof
of \cite[Proposition~3.31]{Hai14}, 
we have
\begin{align*}
\| \hat{\Pi} \|_{\boldsymbol{\tau}; \mfK}
& \lesssim \sup_{\boldsymbol{\sigma} \in \mcB_W, \boldsymbol{\sigma} \prec \boldsymbol{\tau}} \| \hat{\Pi} \|_{\boldsymbol{\sigma}; \bar{\mfK}} \, \|\hat{\Gamma} \|_{\boldsymbol{\tau}; \bar{\mfK}} ,
\end{align*}
whence the desired $\mathbb{E}^{1/p} [ \| \hat{\Pi} \|_{\boldsymbol{\tau}; \mfK}^p ] \lesssim 1$ by our induction hypothesis and H\"older's inequality.

Finally, consider the case where $| \boldsymbol{\tau} |_{\mfs} < 0$.
Then, 
by \cite[Proposition~3.32]{Hai14} (again tracking for which trees we need to control the model in the proof)
\begin{align*}
\| \hat{\Pi} \|_{\boldsymbol{\tau}; \mathfrak{K}}
& \lesssim \big( 1 + \| \hat{\Gamma} \|_{\boldsymbol{\tau}; \mathfrak{K}} \big) \,
\sup_{n \geq 0}  \,
\sup_{x \in \Lambda_{\mathfrak{s}}^n \cap \bar{\mathfrak{K}} } \,
\sup_{\substack{\boldsymbol{\sigma} \in \mcB_W, \\ |\boldsymbol{\sigma}|_{\mfs} \leq | \boldsymbol{\tau} |_{\mfs}}} \,
\sup_{v \in (C_w^{\alpha})^{\proj \sssigma}} 
2^{n |\boldsymbol{\sigma}|_{\mfs} } 
\frac{\big| \hat{\Pi}_x v \big( \varphi_x^{2^{-n}} \big) \big|}{\| v \|_{( C_w^{\alpha} )^{\proj \sssigma}}} .
\end{align*}
We recall that in the above
$\varphi$ denotes the scaling function of some fixed sufficiently regular multiresolution analysis,
$\Lambda_{\mathfrak{s}}^{n} = \lbrace \sum_{j = 1}^d 2^{- n \mathfrak{s}_j} k_j e_j , k_j \in \mathbb{Z} \rbrace$ is the dyadic lattice associated to the scaling $\mathfrak{s}$, 
and $\bar{\mathfrak{K}}$ is the 1-fattening of the compact set $\mathfrak{K}$.
Now taking the $p$-th power, replacing the suprema over $n, x, \boldsymbol{\sigma}$ by sums, and applying Cauchy--Schwarz, we deduce that
\begin{align*}
\mathbb{E} \big[ \| \hat{\Pi} \|_{\boldsymbol{\tau}; \mathfrak{K}}^p \big]
& \lesssim
\mathbb{E}^{\frac{1}{2}} \big( 1 + \| \hat{\Gamma} \|_{\boldsymbol{\tau}; \mathfrak{K}} \big)^{2p} \,
\sum_{n \geq 0}  \,
\sum_{x \in \Lambda_{\mathfrak{s}}^n \cap \bar{\mathfrak{K}} } \,
\sum_{\substack{\boldsymbol{\sigma} \in \mcB_W, \\ |\boldsymbol{\sigma}|_{\mfs} \leq | \boldsymbol{\tau} |_{\mfs}}} \,
2^{n p |\boldsymbol{\sigma}|_{\mfs} } 
\,
\mathbb{E}^{\frac{1}{2}} \left[ \left| \sup_{v \in (C_w^{\alpha})^{\proj \sssigma}} \frac{\big| \hat{\Pi}_x v \big( \varphi_x^{2^{-n}} \big) \big|}{\| v \|_{( C_w^{\alpha} )^{\proj \sssigma}}} \right|^{2p} \right] .
\end{align*}
On the one hand, by the induction hypothesis, 
$\mathbb{E}^{1/2} \big[ ( 1 + \| \hat{\Gamma} \|_{\boldsymbol{\tau}; \mathfrak{K}} \big)^{2p} \big] \lesssim 1$.
On the other hand, by assumption, 
we may apply Lemma~\ref{l:semi-annealedII} to $\boldsymbol{\sigma} \in \mcB_W \cap \mcT_- ( R )$, 
yielding
\begin{align*}
\mathbb{E} \big[ \| \hat{\Pi} \|_{\boldsymbol{\tau}; \mathfrak{K}}^p \big]
& \lesssim
\sum_{n \geq 0} 
2^{n ( |\mfs| - p \kappa )} ,
\end{align*}
which is finite for $p$ large enough in function of $\kappa$, which concludes the inductive step.
\end{proof}

\begin{remark}
We note that this result is actually not restricted to the choice of the BPHZ model but would apply 
more generally to any pair 
of models related by the expression \eqref{eq:a39b}.
\end{remark}
\section{(Pointed) Modelled Distributions}\label{sec:mod_dist}

In this section we discuss the behaviour of modelled distributions on $\mcT^{\mathrm{Ban}}$. 
Throughout, we will use 
the definition of modelled distributions given in \cite[Definition 3.1]{Hai14}, 
which we emphasise
is blind to the dimension of the homogeneous components of the given regularity structure and so makes perfect sense in the setting of $\mathcal{T}^\mathrm{Ban}$.
In particular, for a model $Z = ( \Pi, \Gamma )$ and a sector $W$, 
we will denote by 
$\mcD^{\gamma} ( Z, W )$
the space of modelled distributions 
with respect to $Z$ valued in $W$,
and will use the shorter notation 
$\mcD^{\gamma} ( Z)$, 
$\mcD^{\gamma} ( W)$
or simply $\mcD^{\gamma}$
whenever the sector and/or model is clear from the context.
We will denote by $\mcR$ the corresponding reconstruction map.

\subsection{Multiplication and Multi-Level Schauder on \texorpdfstring{$\mcT^{\Ban}$}{T}}

In what follows, we show that the usual calculus of modelled distributions is compatible with the norm structure on $\mcT^\mathrm{Ban}$.
Since the multiplication results of \cite[Theorem 4.7, Proposition 4.10]{Hai14} apply verbatim in our setting, we only provide a slight adaptation of the usual multi-level Schauder estimates.
\begin{theorem}\label{theo: multi-level Schauder}
	Let $V = ( V_{\mfl} )_{\mfl \in \mfL_+}$ be a Banach space assignment, 
	and let $A \in \mcA_+^{\mco}$ be a kernel assignment of order $\mco \in \mathbb{N}$.
	Let $\zeta \in V_\mfl$ with $|\mfl| = \beta$, 
	let $W \subset \mcT^{\Ban} ( V )$ be a sector of regularity $a \in \mathbb{R}$ supporting the abstract integration map $\mcI^{\zeta}$.
	Fix an admissible model $Z = (\Pi, \Gamma)$
	on $\mcT^{\Ban} ( V )$
	and $\gamma > 0$.
	For $f \in \cD^\gamma(Z, W)$, let $\cK_\gamma^\zeta f$ be defined by
    \begin{equ}
        \mcK_\gamma^\zeta f(x) = \mcI^\zeta f(x) + J^\zeta(x) f(x) + (\mcN_\gamma^\zeta f)(x)
    \end{equ}
    where $J^\zeta$, $\mcN_\gamma^\zeta$ are defined as in \cite[Equations (5.11), (5.16)]{Hai14} with respect to the kernel $A_\mfl(\zeta)$. 
    Suppose that $\gamma + \beta \not \in \mathbb{N}$
    and that 
  $\mco > \max \lbrace \gamma + \beta + \max ( \mfs ), - a \rbrace$.
    Then 
    \begin{equ}
  	  \|\cK_\gamma^\zeta f\|_{\gamma + \beta; \mfK} 
  	  \lesssim
  	  	\|\zeta\|_{V_\mfl} \|f\|_{\gamma; \bar \mfK} 
  	  	\|\Pi\|_{\gamma; \bar{\mfK}} 
  	  	(1+ \|\Gamma\|_{\gamma + \beta; \bar \mfK}) 
  	  	( 1 + \| A \|_{\mcA_+^{\mco}} ) ,
    \end{equ}
    and $\mcR \mcK_\gamma^\zeta f = A_\mfl(\zeta) \ast \mcR f$.

    Furthermore, given a second 
    kernel assignment $\bar{A} \in \mcA_+^{\mco}$,
    admissible model $\bar{Z} = (\bar \Pi, \bar \Gamma)$, 
   	modelled distribution $\bar{f} \in \cD^\gamma(\bar{Z}, W)$ 
   	and $\bar{\zeta} \in V_\mfl$ we have that
    \begin{equ}\label{eq:2ModInt}
        \|\mcK_\gamma^{\zeta} f ; \bar{\mcK}_\gamma^{\bar{\zeta}} \bar{f}\|_{\gamma + \beta; \mfK} 
        \lesssim 
        	\|f; \bar{f}\|_{\gamma; \bar \mfK} 
        	+ \|\Pi - \bar{\Pi}\|_{\gamma; \bar \mfK} + \|\Gamma - \bar{\Gamma}\|_{\gamma + \beta; \bar \mfK} 
        	+ \| \zeta - \bar \zeta\|_{V_\mfl} 
        	+ \| A - \bar{A} \|_{\mcA_+^{\mco}}
    \end{equ}
where the implicit constant can be chosen 
to depend polynomially on
    \begin{equs}
        \|f\|_{\gamma; \bar \mfK} + \|\bar{f}\|_{\gamma; \bar \mfK} + \|\Pi\|_{\gamma; \bar\mfK} + \|\bar \Pi\|_{\gamma ; \bar \mfK} 
        + \|\Gamma\|_{\gamma + \beta; \bar \mfK} 
        + 
        \|\bar \Gamma\|_{\gamma + \beta; \bar \mfK} 
        + \|\zeta\|_{V_\mfl} + \|\bar \zeta\|_{V_\mfl}
        + \| A \|_{\mcA_+^{\mco}} + \| \bar{A} \|_{\mcA_+^{\mco}}.
    \end{equs}
\end{theorem}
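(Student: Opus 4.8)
The plan is to reduce Theorem~\ref{theo: multi-level Schauder} to the scalar-valued multi-level Schauder estimate of \cite[Theorem 5.12, Proposition 5.18]{Hai14} by exploiting the universal property of Definition~\ref{def:tensor_product}. First I would fix $\btau \in \mcB_W$ with $\mcI^\zeta \btau \in \mcB_W$ and observe that, since $\mcQ_{\bsigma} \mcK_\gamma^\zeta f(x)$ depends multilinearly and $\sttau$-symmetrically on the tensor entries once one plugs in a symmetric elementary tensor and the distribution associated to the trunk, the maps $J^\zeta$ and $\mcN_\gamma^\zeta$ are built from $\mcI^\zeta$, evaluation of $A_\mfl(\zeta)\ast\Pi_x v$ and its derivatives at points, and the recentering $\Gamma$; each of these was shown in Section~\ref{s: tensor_oper} and Section~\ref{ss: alg renorm} to factor through $V^{\proj\sttau}$ with the expected operator norm. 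Concretely I would verify that for $v = \otimes_\sttau^\tau(\zeta_e)_{e\in E(\tau)}$ the quantity $\mcK_\gamma^\zeta f(x)$ evaluated on such elements coincides with the scalar construction of \cite{Hai14} performed on the regularity structure $T^\tau$ of Section~\ref{ss:annealed} with kernel assignment $A^\tau(\mfl,e) = A_\mfl(\zeta_e)$ (this is the exact analogue of Lemma~\ref{lem:up_down}), so that the scalar estimate applies verbatim and produces the bound with $\|v\|$ replaced by $\prod_e\|\zeta_e\|$.

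Next I would upgrade this elementary-tensor bound to the full norm bound by invoking the uniqueness and norm statement in Definition~\ref{def:tensor_product}: since $x\mapsto \mcQ_\alpha\mcK_\gamma^\zeta f(x)$ and its increments $\mcQ_\alpha(\mcK_\gamma^\zeta f(x) - \Gamma_{xy}\mcK_\gamma^\zeta f(y))$ are, for each fixed $x,y,\alpha$, continuous multilinear $\sttau$-symmetric maps of the tensor entries into a Banach space, they factor through $V^{\proj\sttau}$ with operator norm equal to the norm of the factored family; the scalar estimate controls the latter uniformly in $x,y,\lambda$. Summing over the finitely many $\btau\in\mcB_W$ (recall $\mcB_W$ is finite by Lemma~\ref{l:finite}) and over the finitely many components $\alpha<\gamma+\beta$ then yields $\|\mcK_\gamma^\zeta f\|_{\gamma+\beta;\mfK}$ with the claimed dependence on $\|\zeta\|_{V_\mfl}$, $\|f\|_{\gamma;\bar\mfK}$, $\|\Pi\|_{\gamma;\bar\mfK}$, $\|\Gamma\|_{\gamma+\beta;\bar\mfK}$ and $\|A\|_{\mcA_+^\mco}$; the linear dependence on $\|\zeta\|$ is precisely the homogeneity coming from a single trunk entry, exactly as $\mcI^\zeta$ has norm $\|\zeta\|$ in Definition~\ref{d:int_tban}. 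The reconstruction identity $\mcR\mcK_\gamma^\zeta f = A_\mfl(\zeta)\ast\mcR f$ follows from the scalar reconstruction identity on each $T^\tau$ together with the compatibility of $\otimes_\sttau^\tau$-evaluation with convolution, since $\mcR$ is characterized by its pairing against test functions.

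For the difference estimate \eqref{eq:2ModInt}, the strategy is the standard telescoping/multilinearity argument: write the difference $\mcK_\gamma^\zeta f - \bar\mcK_\gamma^{\bar\zeta}\bar f$ as a sum of terms in each of which exactly one of the data $(\zeta, A, Z, f)$ is replaced by its barred counterpart, so that each term is estimated either by the single-model bound applied to a difference of modelled distributions/models/kernel-assignments, or by a Lipschitz-in-$\zeta$ estimate. The kernel-assignment difference contributes the term $\|A-\bar A\|_{\mcA_+^\mco}$ — here one needs the minor adaptation, already used in Lemma~\ref{lem:Gam_from_Pi} and in \cite{Hai14}'s proof of continuity of the Schauder map, that the multi-level Schauder estimate holds for two models admissible with respect to different kernels; examining the proof of \cite[Theorem 5.12]{Hai14} shows the extra contribution is controlled linearly by the kernel difference. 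At the tensor level each such term is again a continuous multilinear $\sttau$-symmetric map, so the universal property transfers the scalar difference bound, and the polynomial dependence on the listed norms is inherited from the scalar statement.

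The main obstacle I anticipate is bookkeeping rather than conceptual: one must check carefully that every auxiliary object appearing in the definitions of $J^\zeta$ and $\mcN_\gamma^\zeta$ (the finite Taylor jets of $A_\mfl(\zeta)\ast\Pi_x(\cdot)$ at a point, cf.\ \cite[Eqs.~(5.11),(5.16)]{Hai14}) is well-defined under our weaker assumption that $A$ has only order $\mco$ rather than infinite order — this is exactly the role of the hypothesis $\mco > \max\{\gamma+\beta+\max(\mfs), -a\}$, which guarantees enough derivatives of the kernel exist and that the anisotropic Taylor remainder (Lemma~\ref{Tay}) is of the right order on the sector $W$, just as in Remark~\ref{rem:order} and Definition~\ref{def:goodasdf} — and that the $\sttau$-symmetry of all these intermediate expressions is genuinely preserved so that the universal property applies at each stage. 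Once one is comfortable that the finite-order truncation does not break any of the scalar estimates on $W$, the transfer through the universal property is essentially automatic given the groundwork of Sections~\ref{s:Reg_Struc}--\ref{s:Kolmogorov}.
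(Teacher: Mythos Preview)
Your approach takes a significant detour through the universal property and the auxiliary structures $T^\tau$, but this machinery is not needed here and in fact does not apply cleanly. The issue is that the modelled distribution $f$ is the \emph{input} to $\mcK_\gamma^\zeta$, not an object being constructed via the universal property: $f(x)$ is an arbitrary element of the sector $W$, not an elementary symmetric tensor, and $\mcK_\gamma^\zeta f(x)$ depends on $f$ in a nonlocal way (through $\mcN_\gamma^\zeta$), so there is no natural way to ``plug in an elementary tensor'' and reduce to a scalar problem on $T^\tau$. The $T^\tau$ construction of Section~\ref{ss:annealed} is designed to compare \emph{models} evaluated at elementary tensors, not modelled distributions, and Lemma~\ref{lem:up_down} has no analogue for a general $f \in \mcD^\gamma$.

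The paper's route is much more direct and is the right way to see it: \cite[Theorem~5.12]{Hai14} is already stated for an \emph{abstract} regularity structure equipped with an abstract integration map $\mcI$ and a $\beta$-regularising kernel $K$, and its proof only uses the operator norm of $\mcI$ and the kernel norm $\|K\|_{\mcK^\beta}$. On $\mcT^\Ban(V)$ one simply takes $\mcI = \mcI^\zeta$ and $K = A_\mfl(\zeta)$; since $\|\mcI^\zeta\| \le \|\zeta\|_{V_\mfl}$ (Definition~\ref{d:int_tban}) and $\|A_\mfl(\zeta)\|_{\mcK_\mco^\beta} \le \|A\|_{\mcA_+^\mco}\|\zeta\|_{V_\mfl}$ (Definition~\ref{def:ka}), the scalar estimate transfers verbatim with the stated dependence---no tensor unwinding is required. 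For the two-model bound \eqref{eq:2ModInt}, the only new point beyond \cite{Hai14} is that expressions of the type \cite[(5.47)]{Hai14} now produce an extra term involving $D^k K_n - D^k \bar K_n$; writing $K - \bar K = A_\mfl(\zeta - \bar\zeta) + (A_\mfl - \bar A_\mfl)(\bar\zeta)$ immediately gives the contributions $\|\zeta - \bar\zeta\|_{V_\mfl}$ and $\|A - \bar A\|_{\mcA_+^\mco}$. The condition on $\mco$ plays exactly the role you identify---it ensures that only derivatives of the kernel up to the available order are used in Hairer's proof---but this is purely a matter of tracking constants in the scalar argument, not a place where the tensor structure intervenes.
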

\begin{proof}
	This is mostly just a minor adaptation of the proof of \cite[Theorem 5.12]{Hai14} making use of the fact that 
$\|\mcI^\zeta\| + \|A_\mfl(\zeta)\|_{\mcK_{\mco}^{\beta}} \lesssim ( 1 + \| A \|_{\mcA_+^{\mco}} ) \|\zeta\|_{V_\mfl}$ 
and being careful to track the norms of the trees involved. The only real difference when proving \eqref{eq:2ModInt} is that terms in the difference of kernels appear. For example, when applying expressions of the type \cite[Equation 5.47]{Hai14} one will pick up an extra term of the form 
	\begin{equ}
		\bar{\Pi}_x \big (\mcQ_\delta(\bar{f}(x) - \Gamma_{xy} \bar{f}(y)) \big ) \big(D^k K_n(x - \cdot) - D^k \bar{K}_n(x - \cdot)\big).
	\end{equ} 
Since 
$\|K - \bar{K}\|_{\mcK_{\mco}^{\beta}} = \|A_\mfl(\zeta - \bar{\zeta}) + (A_{\mfl} - \bar{A}_{\mfl}) ( \bar{\zeta} )\|_{\mcK_{\mco}^{\beta}} \le \| A \|_{\mcA_+^{\mco}} \|\zeta - \bar{\zeta}\|_{V_\mfl} + \| \bar{\zeta} \|_{V_{\mfl}} \| A - \bar{A} \|_{\mcA_+^{\mco}}$,
the only effect of such terms is contributing instances of the final two terms on the right hand side of \eqref{eq:2ModInt}.

Finally, let us note that the condition 
$\mco > \max \lbrace \gamma + \beta + \max ( \mfs ), - a \rbrace$
does not appear in \cite{Hai14}, since therein the kernel is assumed to belong to $\bigcap_{\mco \in \mathbb{N}} \mcK_{\mco}^{\beta}$, 
but it follows from tracking the regularity of kernels that is needed in the computations (see also \cite[Theorem~4.13]{BCZ24} where such a condition appears in the case of the Euclidean scaling).
\end{proof}

We record the following corollary, 
which immediately follows 
by applying \eqref{eq:2ModInt} with $Z = \bar{Z}$ and $f = \bar{f}$.
\begin{corollary} \label{cor:sl}
In the setting of
Theorem~\ref{theo: multi-level Schauder}, 
the map 
    	\begin{align*}
		\begin{array}[t]{lrcl}
		& L ( V_{\mfl}, \mcK_{\mco}^{| \mfl |} ) \times V_{\mfl} & \longrightarrow & L(\mcD^\gamma( Z, W ) , \mcD^{\gamma + | \mfl |}( Z, W ) ) \\
		& ( A_{\mfl} , \zeta ) & \longmapsto & \mcK_{\gamma}^{\zeta} ,
		\end{array}  
		\end{align*} 
is
affine\footnote{The reason why this map is affine and not linear in $A_{\mfl}$ is due to the presence of the term $\mcI^{\zeta}$ in the definition of $\mcK_{\gamma}^{\zeta}$.}
in $A_{\mfl}$, 
linear in $\zeta$ 
and continuous.
\end{corollary}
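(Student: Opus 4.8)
The plan is to deduce Corollary~\ref{cor:sl} directly from the two estimates in Theorem~\ref{theo: multi-level Schauder}, so essentially no new computation is required. First I would fix $Z = (\Pi, \Gamma)$, the sector $W$, and $\gamma$ as in the statement, and observe that for $(A_\mfl, \zeta) \in L(V_\mfl, \mcK_\mco^{|\mfl|}) \times V_\mfl$ the map $\mcK_\gamma^\zeta$ is well-defined as an element of $L(\mcD^\gamma(Z,W), \mcD^{\gamma + |\mfl|}(Z,W))$ precisely because the first bound of Theorem~\ref{theo: multi-level Schauder} (specialised to a fixed model) reads $\|\mcK_\gamma^\zeta f\|_{\gamma + |\mfl|; \mfK} \lesssim \|\zeta\|_{V_\mfl}\|f\|_{\gamma; \bar\mfK}(1 + \|A\|_{\mcA_+^\mco})$, with the implicit constant depending only on the (fixed) model norms, so the operator norm of $\mcK_\gamma^\zeta$ is finite and controlled.

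Next I would establish the affine dependence on $A_\mfl$ and linear dependence on $\zeta$. These are purely algebraic statements about the defining formula $\mcK_\gamma^\zeta f = \mcI^\zeta f + J^\zeta f + \mcN_\gamma^\zeta f$: the term $\mcI^\zeta f(x) = \mcI_\mfl(\zeta \otimes f(x))$ is linear in $\zeta$ and does not involve $A_\mfl$ (this is exactly the reason for the footnote on why the dependence on $A_\mfl$ is affine rather than linear), while $J^\zeta$ and $\mcN_\gamma^\zeta$ are built, via \cite[Equations (5.11), (5.16)]{Hai14}, from the kernel $A_\mfl(\zeta)$ by operations (convolution against $D^k A_\mfl(\zeta)$, Taylor-type subtractions, pairing against $\Pi$) that are each linear in the kernel $A_\mfl(\zeta)$; since $(A_\mfl, \zeta) \mapsto A_\mfl(\zeta)$ is bilinear, $J^\zeta$ and $\mcN_\gamma^\zeta$ depend linearly on each of $A_\mfl$ and $\zeta$ separately. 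Combining, $\mcK_\gamma^\zeta$ is the sum of a term linear in $\zeta$ and independent of $A_\mfl$ (namely $\mcI^\zeta f$) and a term bilinear in $(A_\mfl, \zeta)$, which is exactly the asserted structure: linear in $\zeta$ for fixed $A_\mfl$, and affine in $A_\mfl$ for fixed $\zeta$.

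Finally, for continuity of the map $(A_\mfl, \zeta) \mapsto \mcK_\gamma^\zeta \in L(\mcD^\gamma(Z,W), \mcD^{\gamma+|\mfl|}(Z,W))$, I would apply the second estimate \eqref{eq:2ModInt} of Theorem~\ref{theo: multi-level Schauder} with $Z = \bar Z$ and $f = \bar f$, which yields $\|\mcK_\gamma^\zeta f - \mcK_\gamma^{\bar\zeta} f\|_{\gamma + |\mfl|; \mfK} \lesssim (\|\zeta - \bar\zeta\|_{V_\mfl} + \|A_\mfl - \bar A_\mfl\|_{\mcA_+^\mco})$, with implicit constant depending polynomially on $\|f\|_{\gamma; \bar\mfK}$, $\|\zeta\|_{V_\mfl}$, $\|\bar\zeta\|_{V_\mfl}$, $\|A\|_{\mcA_+^\mco}$, $\|\bar A\|_{\mcA_+^\mco}$ and the (fixed) model norms. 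Taking the supremum over $\|f\|_{\gamma;\bar\mfK} \le 1$ gives local Lipschitz continuity of $(A_\mfl,\zeta) \mapsto \mcK_\gamma^\zeta$ as a map into the operator-norm topology, hence in particular continuity. I do not anticipate a genuine obstacle here: the only mild point of care is that $\|A - \bar A\|_{\mcA_+^\mco}$ in \eqref{eq:2ModInt} refers to the full kernel-assignment norm whereas here only the single component $A_\mfl$ varies, but since $\|A_\mfl - \bar A_\mfl\|_{L(V_\mfl, \mcK_\mco^{|\mfl|})} \le \|A - \bar A\|_{\mcA_+^\mco}$ and conversely one may embed a single-component perturbation into a kernel assignment, the two are interchangeable for the purposes of this statement. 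One should also note that the hypotheses $\gamma + |\mfl| \notin \mathbb{N}$ and $\mco > \max\{\gamma + |\mfl| + \max(\mfs), -a\}$ are inherited verbatim from Theorem~\ref{theo: multi-level Schauder} and are implicitly assumed in the corollary, as indicated by the phrase ``In the setting of Theorem~\ref{theo: multi-level Schauder}''.
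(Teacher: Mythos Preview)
Your proposal is correct and follows essentially the same approach as the paper, which simply states that the corollary ``immediately follows by applying \eqref{eq:2ModInt} with $Z = \bar{Z}$ and $f = \bar{f}$.'' You have merely spelled out in more detail the algebraic verification of affinity/linearity and the passage from the two-model estimate to continuity in operator norm, but the key step is identical.
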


We now recall from \cite{HS23} the notion of pointed modelled distribution, which will be the key analytic tool to push models on $\mcT^\mathrm{Ban}$ to models on $\mcT^\mathrm{eq}$. Here, our setting is in one way somewhat simpler than that of \cite{HS23} since we only need H\"older pointed modelled distributions.

\begin{definition}
\label{d:pointed_md}
    Given $x \in \mathbb{R}^d$, $\gamma, \nu \in \mathbb{R}$ and a model $Z = (\Pi, \Gamma)$, we define $\mcD^{\gamma, \nu; x}(Z)$ to be the space of those $f \in \mcD^\gamma(Z)$ such that\footnote{in line with the language of \cite{HL17}, we will refer to the leftmost estimate here as the `local bound' and the rightmost estimate as the `translation bound'}  
    \begin{equ}
        \sup_{y \in B_{\mfs} (x, \lambda)} \|f(y)\|_{\zeta} \lesssim \lambda^{\nu - \zeta}, \qquad \sup_{y \in B_{\mfs}(x, \lambda)} \sup_{|h|_{\mfs} \le \lambda} \frac{\|f(y+h) - \Gamma_{y+h, y} f(y) \|_{\zeta}}{|h|_{\mfs}^{\gamma - \zeta}} \lesssim \lambda^{\nu - \gamma} 
    \end{equ}
    uniformly over $\lambda \in (0,1]$. We define $\|f\|_{\gamma, \nu; x}$ to be the smallest implicit constant in the above.

    Given a second model $\bar{Z} = (\bar \Pi, \bar \Gamma)$ and $\bar{f} \in \mcD^{\gamma, \nu; x}(\bar{Z})$, we write
      \begin{equ}
        \|f; \bar{f}\|_{\gamma, \nu; x} 
        = \sup_{\zeta < \gamma} \sup_{\lambda \in (0,1]} \sup_{y \in B_{\mfs}(x, \lambda)} \frac{\|(f - \bar{f})(y)\|_{\zeta}}{\lambda^{\nu - \zeta}} 
        	+ \sup_{\zeta < \gamma} \sup_{\lambda \in (0,1]} \sup_{y \in B_{\mfs}(x, \lambda)} \sup_{|h|_{\mfs} \le \lambda} \frac{\|\bar{\Delta}_h f(y) \|_{\zeta}}{|h|_{\mfs}^{\gamma - \zeta} \lambda^{\nu - \gamma} }  
    \end{equ}
    where 
    $$\bar{\Delta}_h f(y) = f(y+h) - \Gamma_{y+h, y} f(y) - \bar{f}(y+h) + \bar{\Gamma}_{y+h, y} f(y).$$
\end{definition}

As in the non-pointed case, the standard multiplication results apply verbatim in our setting
since the proof and statement of \cite[Proposition 3.11]{HS23} apply even in the case of Banach components.
We now reproduce the contents of 
this statement
for the convenience of the reader, 
and refer to \cite{HS23} for the proof.
\begin{theorem}
Let $V = ( V_{\mfl} )_{\mfl \in \mfL_+}$ be a Banach space assignment.
Let $\gamma_1, \nu_1, \gamma_2, \nu_2 \in \mathbb{R}$.
Let $W_1, W_2$ be sectors of $\mcT^{\Ban} ( V )$ of regularity $a_1$ and $a_2 \in \mathbb{R}$
such that $(W_1, W_2)$ is $\gamma$-regular, where 
$\gamma=(\gamma_1+a_2) \wedge (\gamma_2+a_1)$.
Given a model $Z = (\Pi, \Gamma)$
and modelled distributions 
$f_1 \in \mcD^{\gamma_1, \nu_1; x} (Z, W_1)$, 
$f_2 \in \mcD^{\gamma_2, \nu_2; x} (Z, W_2)$, 
then
	\begin{align*}
		f_1 \star_{\gamma} f_2 
		\coloneqq \mcQ_{< \gamma} ( f_1 \star f_2 ) 
		\in \mcD^{\gamma, \nu_1+\nu_2; x} ( Z ) ,
	\end{align*}
with 
	\begin{align*}
		\| f_1 \star_{\gamma} f_2 \|_{\gamma, \nu_1+\nu_2; x}
		& \lesssim 
			\| f_1 \|_{\gamma_1, \nu_1; x} \| f_2 \|_{\gamma_2, \nu_2; x} ( 1 + \| \Gamma \|_{\gamma_1 \vee \gamma_2, B_{\mfs} ( x, 2 )} )^2 .
	\end{align*}
Furthermore, given a second model 
$\bar{Z} = ( \bar{\Pi}, \bar{\Gamma} )$
and modelled distributions
$\bar{f}_1 \in \mcD^{\gamma_1, \nu_1; x} (\bar{Z}, W_1)$, 
$\bar{f}_2 \in \mcD^{\gamma_2, \nu_2; x} (\bar{Z}, W_2)$,
then 
	\begin{align*}
		\| f_1 \star_{\gamma} f_2;  \bar{f}_1 \star_{\gamma} \bar{f}_2 \|_{\gamma, \nu_1+\nu_2; x}
		& \lesssim 
			\| f_1; \bar{f}_1 \|_{\gamma_1, \nu_1; x} 
			+ \| f_2 ; \bar{f}_2 \|_{\gamma_2, \nu_2; x}
			+ \| \Gamma - \bar{\Gamma} \|_{\gamma_1 \vee \gamma_2, B_{\mfs} ( x, 2 )} ,
	\end{align*}
where the implicit constant can be chosen 
to depend polynomially on
	\begin{align*}
	\| f_1 \|_{\gamma_1, \nu_1; x} + \| \bar{f}_1 \|_{\gamma_1, \nu_1; x} 
			+ \| f_2 \|_{\gamma_2, \nu_2; x} + \| \bar{f}_2 \|_{\gamma_2, \nu_2; x} 
			+ \| \Gamma \|_{\gamma_1 \vee \gamma_2, B_{\mfs} ( x, 2 )} + \| \bar{\Gamma} \|_{\gamma_1 \vee \gamma_2, B_{\mfs} ( x, 2 )}.
	\end{align*}
\end{theorem}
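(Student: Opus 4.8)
The statement to be proven is the multiplication result for H\"older pointed modelled distributions on $\mcT^{\Ban}$, asserting that the truncated product of two pointed modelled distributions remains pointed (with added exponents) together with the corresponding stability estimate in the model and in the two modelled distributions. Since the excerpt explicitly says that ``the proof and statement of \cite[Proposition 3.11]{HS23} apply even in the case of Banach components'', the plan is essentially to explain why the existing proof transfers, identifying exactly the places where the infinite-dimensionality of the components could conceivably intervene and checking that it does not.

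\medskip

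\noindent\textbf{Plan of proof.} First, I would recall that the ordinary (non-pointed) product $\star$ on $\mcT^{\Ban}$ has been constructed in Lemma~\ref{lemma: product} and that its restriction to $V^{\proj \sttau} \proj V^{\proj \sssigma}$ is a norm-$1$ bilinear map which respects the grading $\mcA$. This is the only structural input needed from the Banach-valued setting: in the scalar case one works with a basis of trees, but here the crucial facts — that $\star$ is bilinear, continuous with operator norm $1$ on each graded piece, and grading-compatible — are exactly what replace the combinatorial bookkeeping with trees. Consequently, for $v \in W_1$, $w \in W_2$ one has $\|\mcQ_{\le \zeta}(v \star w)\|_\zeta \le \sum_{\zeta_1 + \zeta_2 = \zeta} \|\mcQ_{\zeta_1} v\|_{\zeta_1} \|\mcQ_{\zeta_2} w\|_{\zeta_2}$, and more generally all the estimates on $\star$ used in \cite[Proposition 3.11]{HS23} hold verbatim once one reads $\|\cdot\|_\zeta$ as the norm on $\oplus_{|\bsigma|_\mfs = \zeta} V^{\proj \bsigma}$ rather than the norm induced by a basis. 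Next, I would recall that the statement and proof of the multiplication theorem for \emph{non-pointed} modelled distributions on $\mcT^{\Ban}$ (\cite[Theorem 4.7, Proposition 4.10]{Hai14}) have already been granted as applying verbatim; so $f_1 \star_\gamma f_2 \in \mcD^\gamma(Z)$ together with the non-pointed bounds is already in hand. The remaining content is therefore purely the \emph{pointed} part: verifying the local bound and the translation bound at the base point $x$ with the exponent $\nu_1 + \nu_2$.

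\medskip

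\noindent For the local bound, the argument is identical to the scalar case: for $y \in B_\mfs(x,\lambda)$ write $\mcQ_\zeta(f_1 \star f_2)(y) = \sum_{\zeta_1 + \zeta_2 = \zeta} \mcQ_{\zeta_1} f_1(y) \star \mcQ_{\zeta_2} f_2(y)$ (using that $\star$ respects the grading), estimate $\|\mcQ_{\zeta_i} f_i(y)\|_{\zeta_i} \lesssim \|f_i\|_{\gamma_i,\nu_i;x}\lambda^{\nu_i - \zeta_i}$ from the definition of $\mcD^{\gamma_i,\nu_i;x}$, and use the norm-$1$ bilinearity of $\star$ to conclude $\lesssim \lambda^{\nu_1 + \nu_2 - \zeta}$. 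For the translation bound one introduces the usual telescoping: $\bar\Delta_h$ applied to the product is expanded by adding and subtracting $\Gamma_{y+h,y}$-translates, exploiting that $\Gamma$ is multiplicative with respect to $\star$ (Lemma~\ref{lemma: product} plus the fact that the structure-group elements act as algebra morphisms, which is part of the Hopf-comodule structure of Lemma~\ref{lem:Hopf}), together with the local bounds on the factors and the translation bounds on the factors. Each resulting term is controlled by a product of $\|f_i\|_{\gamma_i,\nu_i;x}$, $\|f_j\|_{\gamma_j,\nu_j;x}$ and powers of $(1 + \|\Gamma\|_{\gamma_1 \vee \gamma_2; B_\mfs(x,2)})$, yielding the $(1 + \|\Gamma\|)^2$ prefactor; this is exactly the computation in \cite[Proposition 3.11]{HS23} with $\|\cdot\|_\zeta$ reinterpreted. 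Finally, the two-model difference estimate follows by the standard trilinear/multilinear telescoping: one writes $f_1 \star_\gamma f_2 - \bar f_1 \star_\gamma \bar f_2$ as a sum of terms each of which contains exactly one ``difference'' factor — either $f_i - \bar f_i$, or the difference of the $\Gamma$'s hidden in a $\bar\Delta_h$ term — and estimates the difference factor in the appropriate difference norm while estimating the remaining factors by their individual norms, producing the stated polynomial dependence on the quantities listed.

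\medskip

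\noindent\textbf{Main obstacle.} There is no genuinely new analytic difficulty; the entire point is that none of the scalar-case arguments used the finite-dimensionality of the components or the availability of a distinguished tree basis — they only used bilinearity, continuity with operator norm $1$ on graded pieces, grading-compatibility, and multiplicativity of $\Gamma$, all of which are available on $\mcT^{\Ban}$ by the results of Section~\ref{s:Reg_Struc}. The one point requiring a moment's care is purely notational: in \cite{HS23} expressions like $\|f(y)\|_\zeta$ are implicitly sums over basis trees of homogeneity $\zeta$, and here they must be read as a single Banach-space norm on $\oplus_{|\bsigma|_\mfs = \zeta} V^{\proj \bsigma}$; once this identification is made, the inequality $\|\mcQ_{\le\gamma}(v\star w)\|_\zeta \le \sum_{\zeta_1+\zeta_2=\zeta}\|\mcQ_{\zeta_1}v\|_{\zeta_1}\|\mcQ_{\zeta_2}w\|_{\zeta_2}$ (which replaces the scalar product-of-coefficients bound) is immediate from Lemma~\ref{lemma: product}, and every subsequent step goes through unchanged. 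Accordingly I would state the theorem and then simply indicate that the proof is obtained from that of \cite[Proposition 3.11]{HS23} by the substitutions described above, spelling out only the local bound and one representative term of the translation bound to reassure the reader.
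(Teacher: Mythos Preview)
Your proposal is correct and matches the paper's approach exactly: the paper does not give any proof of this statement at all, simply writing ``we refer to \cite{HS23} for the proof'' after noting that \cite[Proposition~3.11]{HS23} applies even with Banach components. Your write-up is in fact more detailed than what the paper provides, since you spell out precisely which structural facts about $\star$ (norm-$1$ bilinearity, grading compatibility, multiplicativity of $\Gamma$) replace the basis-dependent reasoning and you sketch the local and translation bounds.
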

In this work, we will need a variant of the reconstruction theorem that is not included in \cite{HS23}. This is because \cite{HS23} is primarily interested in a $\mcB_{2, \infty}^\gamma$-type variant of pointed modelled distributions and needs to obtain strong estimates for H\"older type norms of the reconstruction which can be expected only for a limited range of values of $\gamma$ (see \cite[Theorem 3.15]{HS23} for a precise statement). 
Since here we use H\"older-type norms for both distributions and modelled distributions that restriction can be lifted.
\begin{theorem}\label{theo: pointed recon}
    Suppose that $\gamma > 0$, $\nu \in \mbR$ and $f \in \mcD^{\gamma, \nu; x}$ takes values in a sector of regularity $a \le 0$. 
    Then
    \begin{equs}
        |\langle \mcR f, \phi_x^\lambda \rangle | 
        \lesssim \lambda^{\nu} \|f\|_{\gamma, \nu; x} \|\Pi\|_{\gamma; B_{\mfs} (x, 2)} \big( 1 + \|\Gamma\|_{\gamma; B_{\mfs} (x, 2 )}^2 \big) ,
    \end{equs}
uniformly over $\lambda \in (0, 1]$ and $\phi \in \mcB^r$, 
provided $r > - a$.

Furthermore, given a second model $\bar{Z} = ( \bar{\Pi}, \bar{\Gamma} )$
with associated reconstruction operator $\bar{\mcR}$, 
and a second modelled distribution $\bar{f} \in \mcD^{\gamma, \nu; x}$, then
 	\begin{equs}
        |\langle \mcR f - \bar{\mcR} \bar{f}, \phi_x^\lambda \rangle | 
        \lesssim \lambda^{\nu}
        	\big(  \|f; \bar{f} \|_{\gamma, \nu; x} + \|\Pi - \bar{\Pi} \|_{\gamma; B_{\mfs} (x, 2)}
        	+ \|\Gamma - \bar{\Gamma} \|_{\gamma; B_{\mfs} (x, 2 )}
        	\big) ,
    \end{equs}
    where the implicit constant can be chosen 
to be uniform
over $\lambda \in (0, 1]$, $\phi \in \mcB^r$ 
(again provided $r > - a$) and
to depend polynomially on
	\begin{align*}
		\|f\|_{\gamma, \nu; x} + \|\bar{f}\|_{\gamma, \nu; x} + \|\Pi\|_{\gamma; B_{\mfs}(x,2)} + \|\bar \Pi\|_{\gamma ; B_{\mfs}(x,2)} + \|\Gamma\|_{\gamma; B_{\mfs}(x,2)} + \|\bar \Gamma\|_{\gamma; B_{\mfs}(x,2)}.
	\end{align*}
\end{theorem}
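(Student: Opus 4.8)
### Proof proposal

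The plan is to follow the classical proof of the reconstruction theorem as in \cite[Theorem 3.10]{Hai14} (or more precisely its pointed analogue from \cite[Theorem 3.14, 3.15]{HS23}), but tracking carefully that the only place where finite-dimensionality of homogeneous components is used is in turning the abstract bounds into bounds on $\langle \mcR f, \phi_x^\lambda\rangle$, and that this step is unaffected by passing to Banach components since all the relevant quantities are already phrased in terms of the seminorms $\|\cdot\|_\zeta$, $\|\Pi\|_\gamma$, $\|\Gamma\|_\gamma$. First I would recall that the reconstruction operator $\mcR$ on $\mcT^{\Ban}$ exists and is characterised by the usual local bound $|\langle \mcR f - \Pi_y f(y), \phi_y^\lambda\rangle| \lesssim \lambda^\gamma \|f\|_{\gamma; B_\mfs(y,2)} \|\Pi\|_{\gamma; B_\mfs(y,2)}(1 + \|\Gamma\|_{\gamma; B_\mfs(y,2)})$ uniformly over $y$ in a compact set; this follows from \cite[Theorem 3.10]{Hai14} applied to $(W,\mcG)$, which as noted in the section preamble is insensitive to the dimension of the homogeneous components.

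Then the core of the argument is a multiscale decomposition: writing $\phi_x^\lambda$ against a wavelet (or dyadic partition of unity) basis adapted to scale $\lambda$, one splits $\langle \mcR f, \phi_x^\lambda\rangle$ into a sum over scales $2^{-n} \le \lambda$ of contributions $\langle \mcR f - \Pi_{y_n} f(y_n), \psi_{y_n}^{2^{-n}}\rangle$ plus the ``main'' term $\langle \Pi_{y_n} f(y_n), \psi_{y_n}^{2^{-n}}\rangle$ for appropriate grid points $y_n \in B_\mfs(x, C\lambda)$. For the first type of term one uses the local reconstruction bound above, but now with the \emph{pointed} local bound $\|f(y)\|_\zeta \lesssim \lambda^{\nu - \zeta}\|f\|_{\gamma,\nu;x}$ for $y \in B_\mfs(x,\lambda)$ to replace $\|f\|_{\gamma; B_\mfs(y,2)}$ by $\lambda^{\nu - \gamma}\|f\|_{\gamma,\nu;x}$ and get an extra $\lambda^{\nu-\gamma}$; combined with the $\lambda^\gamma$ from reconstruction and summed over $n$ (using $\gamma > 0$ so the geometric series converges) this gives the desired $\lambda^\nu$. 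For the main term $\langle \Pi_{y_n} f(y_n), \psi_{y_n}^{2^{-n}}\rangle$ one expands $\Pi_{y_n} f(y_n) = \sum_{\zeta < \gamma} \Pi_{y_n} \mcQ_\zeta f(y_n)$, bounds $|\Pi_{y_n}\mcQ_\zeta f(y_n)(\psi_{y_n}^{2^{-n}})| \lesssim 2^{-n\zeta}\|\Pi\|_{\gamma; B_\mfs(x,2)}\|f(y_n)\|_\zeta \lesssim 2^{-n\zeta}\|\Pi\|_{\gamma}\lambda^{\nu-\zeta}\|f\|_{\gamma,\nu;x}$ by the pointed local bound again, and sums over $n$ and over $\zeta$ — here one uses $\zeta \ge a$ together with $r > -a$ to ensure convergence and to know the test functions are admissible. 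The powers of $\|\Gamma\|_\gamma$ enter exactly as in \cite{Hai14}: one appears from the local reconstruction bound and one more from re-expressing $f(y_{n+1}) = \Gamma_{y_{n+1},y_n}f(y_n) + (\text{translation error})$ when comparing grid points across scales, giving the $(1 + \|\Gamma\|_\gamma^2)$ prefactor. The translation bound in Definition~\ref{d:pointed_md} is precisely what controls those translation errors with the right power of $\lambda$.

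For the second (difference) estimate I would run the same decomposition for $\langle \mcR f - \bar{\mcR}\bar{f}, \phi_x^\lambda\rangle$, inserting and subtracting the mixed term $\langle \bar{\mcR}f - \Pi_y f(y) + \bar\Pi_y f(y), \cdot\rangle$ in the standard way, so that every difference that appears is one of: a difference of local reconstruction errors (controlled by $\|f;\bar f\|_{\gamma,\nu;x}$, $\|\Pi - \bar\Pi\|_\gamma$, $\|\Gamma-\bar\Gamma\|_\gamma$ via the two-model reconstruction bound of \cite[Theorem 3.10]{Hai14}), a difference $\Pi_y - \bar\Pi_y$ applied to $f(y)$, or $\Pi_y$ applied to $(f - \bar f)(y)$, or a difference of $\Gamma$'s arising from comparing grid points. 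Each is bounded using the pointed local and translation seminorms and the appropriate $\lambda^{\nu-\zeta}$ scaling, with all remaining factors ($\|f\|_{\gamma,\nu;x}$, $\|\bar f\|_{\gamma,\nu;x}$, $\|\Pi\|_\gamma$, $\|\bar\Pi\|_\gamma$, $\|\Gamma\|_\gamma$, $\|\bar\Gamma\|_\gamma$) collected into a polynomial prefactor exactly as stated; the $\bar\Delta_h$ quantity in the definition of $\|f;\bar f\|_{\gamma,\nu;x}$ is what lets one control the cross-scale translation differences in the two-model setting.

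The main obstacle — really the only point requiring genuine care rather than bookkeeping — is verifying that nothing in this argument secretly used finite-dimensionality of the $V^{\proj \sttau}$: in the scalar-valued proof one sometimes picks a basis of each homogeneous component and estimates coordinate-wise, and one must instead argue uniformly in the Banach norm. But since $\mcR$ is built abstractly (it is characterised by testing against $\Pi_y f(y)$), and since all hypotheses are already stated as norm bounds $\|f(y)\|_\zeta \lesssim \cdots$ and $\|\Pi\|_{\gamma}, \|\Gamma\|_\gamma < \infty$ with $\|\Gamma_{xy}\tau\|_\beta \le \|\Gamma\|_\gamma |x-y|_\mfs^{\alpha-\beta}\|\tau\|$ holding for \emph{all} $\tau$ in the component (not just basis elements), every estimate goes through verbatim with $\|\cdot\|_\zeta$ in place of $|\cdot|$. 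I would state this once at the start of the proof and then simply say the remaining steps are identical to \cite[Theorem 3.10]{Hai14} and \cite[Theorem 3.14]{HS23} modulo the pointed weights, rather than reproducing the multiscale computation in full.
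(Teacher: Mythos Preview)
Your approach differs from the paper's and, as written, has a gap at the key step. The paper does not perform a multiscale decomposition; instead it reduces to the standard reconstruction theorem via a \emph{localisation} trick (from the proof of \cite[Theorem~3.15]{HS23}): one constructs $f_\lambda \in \mcD^{\gamma,\nu;x}$ supported in $B_\mfs(x, 2\lambda)$, equal to $f$ on $B_\mfs(x, \lambda)$, with
\[
\|f_\lambda\|_{\gamma; B_\mfs(x,2)} \lesssim \lambda^{\nu-\gamma}\, \|f\|_{\gamma,\nu;x}\, (1 + \|\Gamma\|_{\gamma; B_\mfs(x,2)}^2).
\]
Then \cite[Theorem~3.10]{Hai14} applied as a black box to $f_\lambda$ gives $|\langle \mcR f - \Pi_x f(x), \phi_x^\lambda\rangle| = |\langle \mcR f_\lambda - \Pi_x f_\lambda(x), \phi_x^\lambda\rangle| \lesssim \lambda^\gamma \cdot \lambda^{\nu-\gamma} = \lambda^\nu$, and $\langle \Pi_x f(x), \phi_x^\lambda\rangle$ is bounded directly after noting that the local bound forces $\mcQ_{<\nu} f(x) = 0$. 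Both powers of $\|\Gamma\|$ arise from the localisation step, not from telescoping across scales.

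The gap in your argument is the claim that the pointed local bound lets you ``replace $\|f\|_{\gamma; B_\mfs(y,2)}$ by $\lambda^{\nu-\gamma}\|f\|_{\gamma,\nu;x}$''. The constant in the black-box reconstruction bound you invoke is $\|f\|_{\gamma; \bar\mfK}$ with $\bar\mfK$ a compact of size $O(1)$ (the fattening in \cite[Theorem~3.10]{Hai14} is by a fixed amount, not by $O(\lambda)$), and over such a compact the pointed bounds at $\lambda' = 1$ only yield $\|f\|_{\gamma; \bar\mfK} \lesssim \|f\|_{\gamma,\nu;x}$ with no extra power of $\lambda$. When $\nu > \gamma$ (which does occur in the applications of Section~\ref{s:ren_models}) this leaves you with $\lambda^\gamma$, not $\lambda^\nu$. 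To rescue your approach you would have to reopen the proof of reconstruction and use that the telescoping only touches $f$ on $B_\mfs(x, C\lambda)$, where the pointed translation bound does carry the factor $\lambda^{\nu-\gamma}$ --- this is essentially correct but is precisely what the localisation trick packages into a one-line reduction. Your concern about infinite-dimensional components is valid but not the crux: neither argument uses a basis, and the paper does not even comment on this point.
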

\begin{proof}
We give the proof only in the case of a single model since that of two models follows similarly.
    As in the proof of \cite[Theorem 3.15]{HS23}, by localisation we can find $f_\lambda \in \mcD^{\gamma, \nu; x}$ such that $f_\lambda$ is supported in $B_{\mfs}(x,2\lambda)$, $f_\lambda = f$ on $B_{\mfs}(x,\lambda)$ and 
$$\|f_\lambda\|_{\gamma; B_{\mfs}(x, 2)} \lesssim \lambda^{\nu - \gamma} \|f\|_{\gamma, \nu; x} (1 + \|\Gamma\|_{\gamma; B_{\mfs} (x, 2 )}^2)$$ 
where $\|\cdot\|_{\gamma; \mfK}$ is the usual seminorm for H\"older modelled distributions.

    Then by the Reconstruction Theorem in its original form \cite[Theorem 3.10]{Hai14}, we deduce that
    \begin{equs}
        |\langle \mcR f - \Pi_x f(x), \phi_x^\lambda \rangle | 
        &= |\langle \mcR f_\lambda - \Pi_x f_\lambda(x), \phi_x^\lambda \rangle | 
        \lesssim \lambda^\nu \| \Pi \|_{\gamma; B_{\mfs}(x,2)} \|f\|_{\gamma, \nu; x} \big( 1 + \|\Gamma\|_{\gamma; B_{\mfs} (x, 2 )}^2 \big).
    \end{equs}
    We then note that the local bound for pointed modelled distributions implies that $\mcQ_{< \nu} f(x) = 0$. 
    Meanwhile, for $\zeta \ge \nu$ the local bound with $\lambda = 1$ implies that $\|f(x)\|_{\zeta} \le \|f\|_{\gamma, \nu; x}$. In particular,
    we can deduce that
    \begin{equs}
        |\langle \Pi_x f(x) , \phi_x^\lambda \rangle | 
        &\lesssim \sum_{\nu \le \zeta < \gamma} \lambda^{\zeta} \| \Pi \|_{\gamma; B_{\mfs} ( x, 2 )} \|f\|_{\gamma, \nu; x}
        \lesssim \lambda^\nu \| \Pi \|_{\gamma; B_{\mfs}(x, 2)} \|f\|_{\gamma, \nu; x} .
    \end{equs}
    The result then follows by combining these two estimates and the triangle inequality.
\end{proof}

We now provide an adaptation of the
pointed multi-level Schauder estimates of\footnote{we note that the condition $0<\gamma<\min(W\setminus\mathbb{N})$ appearing in \cite[Theorem~3.19]{HS23} is superfluous in our setting due to the absence of the corresponding parameter restriction in our reconstruction result.} \cite[Theorem~3.19]{HS23}.
\begin{theorem} \label{thm:pointed_mls} 
	Let $V = ( V_{\mfl} )_{\mfl \in \mfL_+}$ be a Banach space assignment, 
	and let 
	$A \in \mcA_+^{\mco}$
	be a kernel assignment of order $\mco \in \mathbb{N}$.
	Let $\zeta \in V_\mfl$ with $|\mfl| = \beta$, 
	let $W \subset \mcT^{\Ban} ( V )$ be a sector of regularity $a \leq 0$ supporting the abstract integration map $\mcI^{\zeta}$.
	Fix an admissible model $Z = (\Pi, \Gamma)$
	on $\mcT^{\Ban} ( V )$
	and $\gamma > 0$, $\nu \in \mathbb{R}$. 
	For $f \in  \cD^{\gamma, \nu; x}(Z)$ let $\cK_{\gamma, \nu}^{\zeta, x} f$ be defined by
    \begin{equ}
        \mcK_{\gamma,\nu}^{\zeta,x} f(y) = \mcI^\zeta f(y) + J^\zeta(y) f(y) + (\mcN_\gamma^\zeta f)(y) - T_{\gamma, \nu}^{\zeta, x} f(y)
    \end{equ}
    where
    \begin{equ} 
        T_{\gamma, \nu}^{\zeta, x} f(y) = \mcQ_{< \gamma + \beta} \left ( \sum_{|k|_{\mfs} < \nu + \beta} \frac{(X + y - x)^k}{k!} (D^k A_\mfl(\zeta) \ast \mcR f)(x) \right ) .
    \end{equ}
    Suppose that $\gamma + \beta \not \in \mathbb{N}$
    and that 
$\mco > \max \lbrace \gamma + \beta + \max ( \mfs ), \nu + \beta + \max ( \mfs ) , - a \rbrace$.
    Then
    \begin{equ}
		\|\cK_{\gamma, \nu}^{\zeta,x} f\|_{\gamma + \beta, \nu + \beta; x} 
		\lesssim
		\|\zeta\|_{V_\mfl} 
		\|f\|_{\gamma, \nu; x} 
		\|\Pi\|_{\gamma; B_{\mfs}(x,2)}
		(1+ \|\Gamma\|_{\gamma + \beta; B_{\mfs}(x,2)}) 
		( 1 + \| A \|_{\mcA_+^{\mco}}), 
	\end{equ}
	and 
	\begin{equs}
		\mcR \cK_{\gamma, \nu}^{\zeta,x} f
		& = A_{\mfl} ( \zeta ) \ast \mcR f
			- \sum_{| k |_{\mfs} < \nu + \beta} \frac{(\cdot - x)^k}{k!} (D^k A_\mfl(\zeta) \ast \mcR f)(x) .
	\end{equs}
	
    Furthermore, given a second 
    kernel assignment $\bar{A} \in \mcA_+^{\mco}$,
    admissible model $\bar{Z} = (\bar \Pi, \bar \Gamma)$, 
	modelled distribution $\bar{f} \in \cD^{\gamma, \nu; x}(\bar{Z})$ 
	and $\bar{\zeta} \in V_\mfl$ we have that
    \begin{equs}
        \|\cK_{\gamma, \nu}^{\zeta,x} f ; \bar{\cK}_{\gamma, \nu}^{\bar{\zeta},x} \bar{f}\|_{\gamma + \beta, \nu + \beta; x} 
        \lesssim 
        	\|f; \bar{f}\|_{\gamma, \nu; x} 
        	+ \|\Pi - \bar{\Pi}\|_{\gamma; B_{\mfs}(x,2)} 
        	+ \|\Gamma - \bar{\Gamma}\|_{\gamma + \beta; B_{\mfs}(x,2)} 
        	+ \| \zeta - \bar \zeta\|_{V_\mfl}
        	+ \| A - \bar{A} \|_{\mcA_+^{\mco}} ,
    \end{equs}
    where the implicit constant can be chosen 
to depend polynomially on
    \begin{equs}
        \|f\|_{\gamma, \nu; x} + \|\bar{f}\|_{\gamma, \nu; x} 
        + \|\Pi\|_{\gamma; B_{\mfs}(x,2)} + \|\bar \Pi\|_{\gamma ; B_{\mfs}(x,2)} 
        + & \|\Gamma\|_{\gamma + \beta; B_{\mfs}(x,2)} + \|\bar \Gamma\|_{\gamma + \beta; B_{\mfs}(x,2)} 
        \\
        & 
        + \|\zeta\|_{V_\mfl} + \|\bar \zeta\|_{V_\mfl} 
		+ \| A \|_{\mcA_+^{\mco}} + \| \bar{A} \|_{\mcA_+^{\mco}}
        .
    \end{equs}
\end{theorem}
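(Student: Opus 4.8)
The plan is to reduce Theorem~\ref{thm:pointed_mls} to two already-available inputs: (i) the pointed multi-level Schauder estimate of \cite[Theorem~3.19]{HS23} for finite-dimensional regularity structures, together with the observation that its proof is blind to the dimension of the homogeneous components, and (ii) the bookkeeping of kernel and model norms that was already carried out in Theorem~\ref{theo: multi-level Schauder} and Theorem~\ref{theo: pointed recon} above. The only genuinely new features compared with \cite{HS23} are the need to track the dependence on $\|\zeta\|_{V_\mfl}$ and on $\|A\|_{\mcA_+^\mco}$ (rather than working with a single fixed translation-invariant kernel), the removal of the parameter restriction $0 < \gamma < \min(W \setminus \mathbb{N})$, and the quantitative order condition $\mco > \max\{\gamma + \beta + \max(\mfs), \nu + \beta + \max(\mfs), -a\}$ on the kernel. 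None of these require a new idea: the first is handled exactly as in the proof of Theorem~\ref{theo: multi-level Schauder} (multilinearity of $\mcI^\zeta$, linearity of $A_\mfl$, the bound $\|\mcI^\zeta\| + \|A_\mfl(\zeta)\|_{\mcK_\mco^\beta} \lesssim (1 + \|A\|_{\mcA_+^\mco})\|\zeta\|_{V_\mfl}$); the second is possible precisely because we are now using H\"older-type norms on both distributions and modelled distributions and may invoke our Theorem~\ref{theo: pointed recon} in place of \cite[Theorem~3.15]{HS23}; the third is the same tracking of kernel regularity already done in Theorem~\ref{theo: multi-level Schauder} (cf.\ also \cite[Theorem~4.13]{BCZ24}).

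\textbf{Key steps.} First I would verify that $\mcK_{\gamma,\nu}^{\zeta,x} f \in \mcD^{\gamma+\beta,\nu+\beta;x}(Z)$ with the stated norm bound. This decomposes into: (a) the non-pointed part $\mcI^\zeta f + J^\zeta f + \mcN_\gamma^\zeta f = \mcK_\gamma^\zeta f$ is controlled by Theorem~\ref{theo: multi-level Schauder}, which gives membership in $\mcD^{\gamma+\beta}$ and the non-pointed norm bound; (b) the correction term $T_{\gamma,\nu}^{\zeta,x} f$ is an explicit polynomial in $X + y - x$ whose coefficients are $(D^k A_\mfl(\zeta) \ast \mcR f)(x)$ for $|k|_\mfs < \nu + \beta$, and these coefficients are estimated using the pointed reconstruction bound of Theorem~\ref{theo: pointed recon} (which produces the factor $\lambda^\nu$, hence after convolving with $D^k A_\mfl(\zeta) \in \mcK_\mco^{\beta - |k|_\mfs}$, one gets the right homogeneity) — this is where the condition $\mco > \nu + \beta + \max(\mfs)$ enters, to guarantee that $D^k A_\mfl(\zeta)$ is still a regularising kernel of the required order for all relevant $k$; (c) checking the local and translation bounds of Definition~\ref{d:pointed_md} for the difference: this is the content of the proof of \cite[Theorem~3.19]{HS23}, where one splits into the regime $\lambda \ge |y - x|_\mfs$ (use the non-pointed bound from (a) plus a Taylor expansion of $T_{\gamma,\nu}^{\zeta,x}$ around $x$, controlled via (b)) and $\lambda < |y-x|_\mfs$ (combine the $\mcD^\gamma$-estimate on $f$ with the recentering map and the kernel estimates). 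Second, I would establish the reconstruction identity $\mcR\mcK_{\gamma,\nu}^{\zeta,x} f = A_\mfl(\zeta) \ast \mcR f - \sum_{|k|_\mfs < \nu + \beta} \tfrac{(\cdot - x)^k}{k!}(D^k A_\mfl(\zeta)\ast\mcR f)(x)$ by noting $\mcR\mcK_\gamma^\zeta f = A_\mfl(\zeta)\ast\mcR f$ from Theorem~\ref{theo: multi-level Schauder} and that $\mcR$ applied to the polynomial $T_{\gamma,\nu}^{\zeta,x} f$ simply returns that polynomial. Third, the two-model estimate follows by running the same argument on differences, where the only new terms are those involving $K - \bar K = A_\mfl(\zeta - \bar\zeta) + (A_\mfl - \bar A_\mfl)(\bar\zeta)$, whose norm is bounded by $\|A\|_{\mcA_+^\mco}\|\zeta - \bar\zeta\|_{V_\mfl} + \|\bar\zeta\|_{V_\mfl}\|A - \bar A\|_{\mcA_+^\mco}$, contributing exactly the last two terms on the right-hand side — precisely as in the proof of Theorem~\ref{theo: multi-level Schauder}.

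\textbf{Main obstacle.} The main technical point — though not conceptually deep — is the careful tracking in step (c) of the translation bound at scales $\lambda$ larger than $|y - x|_\mfs$, since this is where the subtraction of the polynomial $T_{\gamma,\nu}^{\zeta,x}$ is essential and where one must simultaneously invoke the pointed reconstruction of Theorem~\ref{theo: pointed recon} (to control the coefficients of $T_{\gamma,\nu}^{\zeta,x}$) and the non-pointed Schauder estimate of Theorem~\ref{theo: multi-level Schauder}, matching up the powers of $\lambda$ correctly so that the net homogeneity is $\nu + \beta$. I expect this to go through verbatim as in \cite[Theorem~3.19]{HS23} once the reconstruction input is replaced by our Theorem~\ref{theo: pointed recon}, the only vigilance required being the homogeneity bookkeeping and the verification that $\mco$ as assumed is large enough at every place a kernel is differentiated — which is exactly what the stated condition on $\mco$ encodes. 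For brevity, in the write-up I would carry out the argument for a single model in detail and indicate that the two-model bounds follow by the now-routine differencing argument, as is done throughout the paper.
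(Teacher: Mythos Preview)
Your proposal is correct and takes essentially the same approach as the paper: both reduce the statement to a minor adaptation of \cite[Theorem~3.19]{HS23}, with the kernel- and model-norm bookkeeping handled exactly as in the proof of Theorem~\ref{theo: multi-level Schauder}, and with the parameter restriction of \cite{HS23} lifted via the H\"older reconstruction Theorem~\ref{theo: pointed recon}. In fact your write-up is considerably more detailed than the paper's own proof, which simply cites \cite[Theorem~3.19]{HS23} and notes that the well-definedness of the convolution in $T_{\gamma,\nu}^{\zeta,x}$ follows from the assumption on $\mco$ (cf.\ \cite[Remark~3.17]{HS23}).
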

\begin{proof}
    This follows by a minor adaptation of the proof of \cite[Theorem 3.19]{HS23} in the same way that the proof of Theorem~\ref{theo: multi-level Schauder} is an adaptation of the proof of \cite[Theorem 5.12]{Hai14}.
    We recall that the fact that the convolution in $T_{\gamma, \nu}^{\zeta, x}$ is well-defined is not immediately obvious, however it can be justified thanks to the assumption on $\mco$,
    see \cite[Remark~3.17]{HS23}.
\end{proof}

\subsection{Variable Coefficient Abstract Integration on \texorpdfstring{$\mcT^{\Ban} ( C_w^{\alpha})$}{T}}\label{ss:VML_Schauder}

Our eventual goal is to construct a model on $\mcT^\mathrm{eq}$ by defining $\Pi^\mathrm{eq}$ as the reconstruction of suitable modelled distributions on $\mcT^\mathrm{Ban} ( C_w^{\alpha} )$. 
Admissibility of this model on $\mcT^\mathrm{eq}$ will be defined with respect to 
a given
kernel assignment $K \in \mcA_+^{\eq}$.
As a result, we need to lift convolution with 
$K_{\mfl}$
(as a function of $z, \bar{z})$ to an operation on modelled distributions on $\mcT^\mathrm{Ban}$. 
In this subsection,
we assume to have fixed a regularity exponent $\alpha \in \mathbb{R}\setminus \mathbb{N}$ and a weight $w$,
and define the corresponding 
Banach space assignment  
$V = (C_w^\alpha)_{\mfl \in \mfL_+}$, 
as introduced in Subsection~\ref{ss:quenched} above.
We also fix a kernel assignment $K \in \mcA_+^{\eq; \mcO}$ in $\mcT^{\eq}$ of given order $\mcO \in \mathbb{N}$.
We then endow the
Banach space assignment $V = (C_w^\alpha)_{\mfl \in \mfL_+}$ 
with the kernel assignment $A$ defined
from $K$ by 
Lemma~\ref{l:ka}.
We recall
that it is well-defined and of order $\mco \in \mathbb{N}$
provided 
$\mcO \geq \mco + \max (\mfs)$, $\mcO \geq - \alpha + \max (\mfs)$ and $\int w < \infty$,
which we assume to hold throughout this subsection.
We also fix some $\mfl \in \mfL_+$ and denote $\beta = |\mfl|$.

\begin{theorem}\label{theo: VML Schauder}
Let $Z = (\Pi, \Gamma)$ be an admissible model
on $\mcT^{\Ban} ( C_w^\alpha )$ with respect to the kernel assignment $A$, 
and let $W$ be a sector of regularity $a \le 0$ supporting the abstract integration maps $\mcI^{\zeta}$.
	Fix $\gamma > 0$ 
	and $m \in \mathbb{N}$. 
	For $f \in \cD^{\gamma}(Z, W)$,
	define $\cK_{\gamma, m} f$ via
	\begin{equ} \label{eq:a44}
		\cK_{\gamma, m} f(x) = \mcQ_{<\bar{\gamma}} \left ( \sum_{|k|_{\mfs} < m} \frac{X^k}{k!} \cK_{\gamma}^{(-\partial)^k \delta_x} f(x) \right ) . 
	\end{equ}
    where $\bar{\gamma} = \min\{\gamma + \beta, m,  m + a + \beta\}$.
    Assume also that $\gamma + \beta \notin \mathbb{N}$, 
    $\bar{\gamma} > 0$,
   $0<m\leq - \alpha - |\mfs| - \max ( \mfs )$
    and 
$\mco > \max \lbrace \gamma+\beta + \max ( \mfs ), -a \rbrace$.
	  Then
	\begin{align}
		\|\cK_{\gamma, m} f\|_{{\bar\gamma}; \mfK} \lesssim \|f\|_{\gamma; \bar \mfK} \|\Pi\|_{\gamma; \bar\mfK} (1+ \|\Gamma\|_{\gamma + \beta; \bar \mfK}) 
		( 1 + \| K \|_{\mcA_+^{\eq; \mcO}})
		\label{eq:1ModVInt}
	\end{align}
    and 
	\begin{align*}
		\mcR \mcK_{\gamma,m} f(\phi) 
		= K_{\mfl} \mcR f(\phi) 
		\coloneqq \mcR f \left ( \int K_{\mfl}^{z}(z-\cdot) \phi(z) \, \mathrm{d}z \right ).
	\end{align*}	    

    Furthermore, given a second
   	kernel assignment $K \in \mcA_+^{\eq; \mcO}$,
    admissible model $\bar{Z} = (\bar \Pi, \bar \Gamma)$
    and modelled distribution $\bar{f} \in \cD^\gamma(\bar{Z})$ we have that
    \begin{equ}\label{eq:2ModVInt}
        \|\mcK_{\gamma, m} f ; \bar{\mcK}_{\gamma, m} \bar{f}\|_{{\bar{\gamma}}; \mfK} 
        \lesssim 
        	\|f; \bar{f}\|_{\gamma; \bar \mfK} 
        	+ \|\Pi - \bar{\Pi}\|_{\gamma; \bar \mfK} 
        	+ \|\Gamma - \bar{\Gamma}\|_{\gamma + \beta; \bar \mfK}
        	+ \| K - \bar{K} \|_{\mcA_+^{\eq; \mcO}}
    \end{equ}
    where the implicit constant can be chosen 
to depend polynomially on
    \begin{equ}
        \|f\|_{\gamma; \bar \mfK} + \|\bar{f}\|_{\gamma; \bar \mfK} 
        + \|\Pi\|_{\gamma; \bar\mfK} + \|\bar \Pi\|_{\gamma ; \bar \mfK} 
        + \|\Gamma\|_{\gamma + \beta; \bar \mfK} + \|\bar \Gamma\|_{\gamma + \beta; \bar \mfK}
        + \| K \|_{\mcA_+^{\eq; \mcO}} + \| \bar{K} \|_{\mcA_+^{\eq; \mcO}}.
    \end{equ}
\end{theorem}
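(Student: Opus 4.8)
\emph{Setup and the one new ingredient.} First I would record that, under the hypothesis $0<m\le -\alpha-|\mfs|-\max(\mfs)$, for every multi-index $k$ with $|k|_\mfs<m$ the distribution $(-\partial)^k\delta_x$ lies in $C_w^\alpha=V_\mfl$ with $\|(-\partial)^k\delta_x\|_{C_w^\alpha}$ bounded locally uniformly in $x$: this is the elementary computation that $\partial^k\delta_x(\varphi_y^\lambda)$ has size $\lambda^{-|\mfs|-|k|_\mfs}$ and is supported in $B_\mfs(x,\lambda)$, so the bound in Definition~\ref{def:wHoel} holds since $-|\mfs|-|k|_\mfs>\alpha$. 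Consequently each $\mcK_\gamma^{(-\partial)^k\delta_x}$ is, by Theorem~\ref{theo: multi-level Schauder} (equivalently Corollary~\ref{cor:sl}), a bounded operator $\mcD^\gamma(Z,W)\to\mcD^{\gamma+\beta}(Z,W)$ realising convolution with the translation invariant kernel $A_\mfl((-\partial)^k\delta_x)=\partial_c^k K_\mfl^c\big|_{c=x}$, so that $\mcK_{\gamma,m}f(x)$ is well defined. I would also note once and for all that $\mcO\ge\mco+\max(\mfs)$ and $\mcO\ge-\alpha+\max(\mfs)$ force $\mcO\ge m+\max(\mfs)$, so that $c\mapsto K_\mfl^c\in\mcK_\mco^\beta$ admits an anisotropic Taylor expansion (Lemma~\ref{Tay}) to order $m$ with remainder of $\mcK_\mco^\beta$-norm $\lesssim|h|_\mfs^m$; this is the mechanism by which the frozen point is allowed to vary, and is the single genuinely new ingredient beyond the translation invariant multi-level Schauder estimate.

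\emph{The modelled distribution bound \eqref{eq:1ModVInt}.} The local bound for $\mcK_{\gamma,m}f$ is immediate from Theorem~\ref{theo: multi-level Schauder} and the previous paragraph. For the translation bound I would estimate $\|\mcK_{\gamma,m}f(x)-\Gamma_{xy}\mcK_{\gamma,m}f(y)\|_\ell$: using $\Gamma_{xy}(X^k v)=(X+x-y)^k\Gamma_{xy}v$ and the binomial formula to reorganise $\Gamma_{xy}\mcK_{\gamma,m}f(y)$ by powers of $X$, the component in $X^j/j!$ of the increment becomes, modulo the projection $\mcQ_{<\bar\gamma}$, a combination of
\[ \mcK_\gamma^{(-\partial)^j\delta_x}f(x)-\sum_{k\ge j,\ |k|_\mfs<m}\tfrac{(x-y)^{k-j}}{(k-j)!}\,\Gamma_{xy}\mcK_\gamma^{(-\partial)^k\delta_y}f(y) . \]
I would split $\partial_c^j K_\mfl^c\big|_{c=y}$ into its order-$(m-|j|_\mfs)$ Taylor jet at $x$ plus the remainder. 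The main part, after commuting the (now \emph{fixed}) translation invariant integration through $\Gamma_{xy}$ exactly as in the proof of \cite[Theorem 5.12]{Hai14}, reassembles precisely into the standard multi-level Schauder increment for the fixed kernel $\partial_c^j K_\mfl^c\big|_{c=x}$, contributing $|x-y|_\mfs^{\gamma+\beta-\ell}$; while the Taylor remainder has $\mcK_\mco^\beta$-norm $\lesssim|x-y|_\mfs^{m-|j|_\mfs}$ and, through its action on the relevant modelled distribution/reconstruction of regularity $\ge a$ and the degree $\beta$ of $K_\mfl$, accounts for the exponents $m$ and $m+a+\beta$. Taking the worst across $|j|_\mfs<m$ yields the exponent $\min\{\gamma+\beta,\,m,\,m+a+\beta\}-\ell=\bar\gamma-\ell$, and tracking the tree norms together with $\|(-\partial)^k\delta_x\|_{C_w^\alpha}\lesssim 1$ and $\|A\|_{\mcA_+^\mco}\lesssim\|K\|_{\mcA_+^{\eq;\mcO}}$ (Lemma~\ref{l:ka}) gives \eqref{eq:1ModVInt}.

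\emph{The reconstruction identity.} Since $\bar\gamma>0$, the reconstruction of $\mcK_{\gamma,m}f$ is characterised as the unique distribution $\Xi$ with $|\langle\Xi-\Pi_x\mcK_{\gamma,m}f(x),\phi_x^\lambda\rangle|\lesssim\lambda^{\bar\gamma}$ uniformly over $\phi\in\mcB^r$, $x\in\mfK$, $\lambda\in(0,1]$, so it suffices to verify this for $\Xi=K_\mfl\mcR f$. I would expand $\Pi_x\mcK_{\gamma,m}f(x)=\sum_{|k|_\mfs<m}\tfrac1{k!}(\cdot-x)^k\,\Pi_x\mcK_\gamma^{(-\partial)^k\delta_x}f(x)$ up to an error $\lesssim\lambda^{\bar\gamma}$ coming from $\mcQ_{\ge\bar\gamma}$ and the model bound, then replace $\Pi_x\mcK_\gamma^{(-\partial)^k\delta_x}f(x)$ by $\mcR\mcK_\gamma^{(-\partial)^k\delta_x}f=\big(\partial_c^k K_\mfl^c\big|_{c=x}\big)\ast\mcR f$ at the cost of an error $\lesssim\lambda^{\gamma+\beta}$ (Theorem~\ref{theo: multi-level Schauder} and the reconstruction theorem), and finally Taylor-expand $z\mapsto K_\mfl^z(z-\cdot)$ to order $m$ about $x$ inside $K_\mfl\mcR f(\phi_x^\lambda)=\mcR f\big(\int K_\mfl^z(z-\cdot)\phi_x^\lambda(z)\,\mathrm{d}z\big)$. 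The polynomial parts then match term by term, while the Taylor remainder, being $\lesssim\lambda^m$ in $\mcK_\mco^\beta$-norm on the support of $\phi_x^\lambda$ and paired with $\mcR f$ of regularity $a$, contributes $\lesssim\lambda^{m+a+\beta}$; all errors are $\lesssim\lambda^{\bar\gamma}$, proving $\mcR\mcK_{\gamma,m}f=K_\mfl\mcR f$.

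\emph{Two-model bound and the main obstacle.} Estimate \eqref{eq:2ModVInt} follows by rerunning the decomposition of the second paragraph for $\mcK_{\gamma,m}f-\bar{\mcK}_{\gamma,m}\bar{f}$ and invoking the difference estimate \eqref{eq:2ModInt} of Theorem~\ref{theo: multi-level Schauder} together with the continuity of $K\mapsto A$ (Lemma~\ref{l:ka}), which produces the term $\|K-\bar{K}\|_{\mcA_+^{\eq;\mcO}}$ precisely where $\|A-\bar{A}\|_{\mcA_+^\mco}$ appears in \eqref{eq:2ModInt}; the polynomial dependence on the listed norms is inherited from there. The hard part is the second paragraph: one must organise the bookkeeping so that the ``main'' Taylor part genuinely reassembles into a standard Schauder increment for a fixed translation invariant kernel, so that \cite[Theorem 5.12]{Hai14} and Theorem~\ref{theo: multi-level Schauder} can be quoted rather than reproved, while all the genuinely new error is confined to the Taylor remainder of $c\mapsto K_\mfl^c$; it is exactly at this point that Lemma~\ref{Tay}, the hypothesis on $\mcO$, and the shape of $\bar\gamma=\min\{\gamma+\beta,\,m,\,m+a+\beta\}$ are all forced.
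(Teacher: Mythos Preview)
Your proposal is correct and follows the same overall strategy as the paper: black-box the translation-invariant multi-level Schauder estimate (Theorem~\ref{theo: multi-level Schauder}) together with the linearity of $\zeta\mapsto\mcK_\gamma^\zeta$ (Corollary~\ref{cor:sl}) and the anisotropic Taylor bound $\|\zeta_{k,m}^{x,y}\|_{C_w^\alpha}\lesssim|x-y|_\mfs^{m-|k|_\mfs}$, where $\zeta_{k,m}^{x,y}=(-\partial)^k\delta_x-\sum_{|k+l|_\mfs<m}\frac{(x-y)^l}{l!}(-\partial)^{k+l}\delta_y$. The one organisational difference is in the translation bound: after isolating the coefficient of $X^k/k!$, the paper adds and subtracts $\sum_l\frac{(x-y)^l}{l!}\mcK_\gamma^{(-\partial)^{k+l}\delta_y}f(x)$, obtaining $T_{1,1}(k)=\mcK_\gamma^{\zeta_{k,m}^{x,y}}f(x)$ (the error, estimated via the \emph{local} bound of Theorem~\ref{theo: multi-level Schauder}) and $T_{1,2}(k)=$ a sum of \emph{$y$-frozen} Schauder increments; you instead recognise the sum as the Taylor polynomial, i.e.\ use linearity to write $\sum_l\frac{(x-y)^l}{l!}\mcK_\gamma^{(-\partial)^{k+l}\delta_y}=\mcK_\gamma^{(-\partial)^k\delta_x}-\mcK_\gamma^{\zeta_{k,m}^{x,y}}$, obtaining a single \emph{$x$-frozen} Schauder increment plus the error $\Gamma_{xy}\mcK_\gamma^{\zeta_{k,m}^{x,y}}f(y)$. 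Both routes give the same exponents; the paper's is marginally cleaner because the error term carries no $\Gamma_{xy}$. One phrasing issue: you write ``split $\partial_c^j K_\mfl^c|_{c=y}$ into its Taylor jet at $x$'', which taken literally treats only the $l=0$ summand; the maneuver you actually need (and which your next sentence correctly describes the outcome of) is to recognise the \emph{full} sum $\sum_l\frac{(x-y)^l}{l!}(-\partial)^{j+l}\delta_y$ as the Taylor polynomial of $c\mapsto(-\partial)^j\delta_c$ centred at $y$ and evaluated at $x$, and then invoke linearity in $\zeta$. The reconstruction argument and the two-model bound are as in the paper.
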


\begin{remark} \label{rem:on_m}
We have stated Theorem~\ref{theo: VML Schauder} in terms of an arbitrary parameter $m \in \mathbb{N}$ for the sake of generality.
Note however that the expression \eqref{eq:a44}
does not depend on $m$ as soon as 
$m \geq \gamma + \max \lbrace \beta, - a \rbrace$.
\end{remark}

\begin{remark}
The condition  $m\leq - \alpha - |\mfs| - \max ( \mfs )$ in 
Theorem~\ref{theo: VML Schauder}
is used to ensure that $(-\partial)^k \delta_x \in C_w^{\alpha}$ for all $| k |_{\mfs} < m$, 
so that in turn $\mcK_{\gamma}^{(-\partial)^k \delta_x}$ is well-defined on $\mcT^{\Ban} ( C_w^{\alpha} )$ by Theorem~\ref{theo: multi-level Schauder}.
\end{remark}

\begin{proof}[Proof of Theorem~\ref{theo: VML Schauder}]
    We begin with the single model case and first consider the local bound. For a compact set $\mfK \subseteq \mbR^{d}$, $x, y \in \mfK$ and $\eta < \bar{\gamma}$, we write, 
by definition of the modelled-distribution norm,
    \begin{equs}
        \left \| \sum_{|k|_1 < m} \frac{X^k}{k!} \cK_\gamma^{(-\partial)^k \delta_x} f(x) \right \|_\eta 
       & \le \sum_{|k|_1 < m} \sup_{x \in \mfK} \| \cK_\gamma^{(-\partial)^k \delta_x} f \|_{{\gamma + \beta}; \mfK} \\
       & 
      	\leq \sum_{|k|_1 < m} \sup_{x \in \mfK}
      	\|(-\partial)^k \delta_x\|_{C_w^{\alpha}} 
      	\|f\|_{\gamma; \bar \mfK} 
  	  	\|\Pi\|_{\gamma; \bar{\mfK}} 
  	  	(1+ \|\Gamma\|_{\gamma + \beta; \bar \mfK}) 
  	  	( 1 + \| K \|_{\mcA_+^{\eq; \mcO}} ) ,
	\end{equs}
where in the second inequality we have applied Theorem~\ref{theo: multi-level Schauder}.
The desired bound follows from noting that 
$\sup_{x \in \mfK} \| (-\partial)^k \delta_x \|_{C_w^{\alpha}} < \infty$, 
since we have assumed 
 $m\leq - \alpha - |\mfs| - \max ( \mfs )$.

    We turn to the translation bound.
    Using the multiplicativity of $\Gamma$, we first decompose
    \begin{equs}
        \cK_{\gamma, m} f(x) - \Gamma_{xy} \cK_{\gamma, m} f(y) 
        = & T_1 + T_2 ,
    \end{equs}
   	where 
   	\begin{align*}
   		T_1 & \coloneqq \mcQ_{< \bar{\gamma}} \left [ \sum_{|k|_{\mfs} < m} \frac{X^k}{k!} \cK_{\gamma}^{(-\partial)^k \delta_x} f(x) - \frac{\Gamma_{xy} X^k}{k!} \Gamma_{xy} \cK_{\gamma}^{(-\partial)^k \delta_y} f(y)\right ] , \\ 
        T_2 & \coloneqq \left (\mcQ_{<\bar{\gamma}} \Gamma_{xy} - \Gamma_{xy} \mcQ_{< \bar{\gamma}} \right ) \sum_{|k|_{\mfs} < m} \frac{X^k}{k!} \cK_{\gamma}^{(-\partial)^k \delta_y} f(y) .
   	\end{align*}
	The desired bound on the `truncation error' $T_2$ follows as for the local bound after noting that $\mcQ_{<\bar{\gamma}} \Gamma_{xy} - \Gamma_{xy} \mcQ_{< \bar{\gamma}} = \mcQ_{<\bar{\gamma}} \Gamma_{x y} \mcQ_{\geq\bar{\gamma}}$.  
    Hence we focus on $T_1$, 
    which we further decompose as
    \begin{equs}
    T_1  
    & = \sum_{|k|_{\mfs} < m} \frac{X^k}{k!} \left [ \cK_\gamma^{(-\partial)^k \delta_x} f(x) - \sum_{|k+l|_{\mfs} < m} \frac{(x-y)^l}{l!} \Gamma_{xy} \cK_\gamma^{(-\partial)^{k+l} \delta_y} f(y) \right ] \\
 	& = \sum_{| k |_{\mfs} < m} \frac{X^k}{k!} ( T_{1 , 1} ( k ) + T_{1 , 2} ( k ) ) ,
	\end{equs}
    with
    \begin{equs}
        T_{1,1}(k) & \coloneqq \cK_\gamma^{(-\partial)^k \delta_x} f(x) - \sum_{|k+l|_{\mfs} < m} \frac{(x-y)^l}{l!} \cK_\gamma^{(-\partial)^{k+l} \delta_y} f(x) 
        = \mcK_{\gamma}^{\zeta_{k, m}^{x, y}} f ( x ) , \\
        T_{1,2}(k) & \coloneqq \sum_{|k+l|_{\mfs} < m} \frac{(x-y)^l}{l!} \left ( \cK_\gamma^{(-\partial)^{k+l} \delta_y} f(x) - \Gamma_{xy} \cK_\gamma^{(-\partial)^{k+l} \delta_y} f(y) \right) ,
    \end{equs}
    where in the definition of $T_{1, 1}$ we have written
	\begin{equ} \label{eq:dz1}
    	\zeta_{k, m}^{x, y} \coloneqq (-\partial)^k \delta_x - \sum_{|k + l|_{\mfs} < m} \frac{(x-y)^l}{l!} (-\partial)^{k+l} \delta_y 
    \end{equ}    
    and used linearity of the map $\zeta \mapsto \mathcal{K}_\gamma^\zeta$.  We note that the powers of $-1$ appearing in
    \eqref{eq:dz1}
    are exactly such that this term acts on test functions via Taylor expansion.
    We bound the terms in $T_{1,1}$ and $T_{1,2}$ separately.
    On the one hand, the desired bound on $T_{1,2}$ follows from  applying Theorem~\ref{theo: multi-level Schauder} to $\cK_\gamma^{(-\partial)^k \delta_y} f$ with $y$ fixed.
	On the other hand, applying again Theorem~\ref{theo: multi-level Schauder} 
	we obtain
    \begin{equs}
        \left \| \sum_{|k|_{\mfs} < m} \frac{X^k}{k!} T_{1,1}(k) \right \|_\eta  
       & \le \sum_{|k|_{\mfs} < m} \left \| T_{1,1}(k) \right \|_{\eta - |k|} \\ 	
       & \lesssim \left \| \zeta_{k, m}^{x, y} \right \|_{C_w^\alpha} \|f\|_{\gamma; \bar{\mfK}} \|\Pi\|_{\gamma; \bar{\mfK}} (1 + \|\Gamma\|_{\gamma + \beta; \bar \mfK})
       ( 1 + \| K \|_{\mcA_+^{\eq; \mcO}} )  \\ 
        & \lesssim_{\mathfrak{K}} |x-y|_{\mfs}^{m - |k|_{\mfs}} \|f\|_{\gamma; \bar{\mfK}} \|\Pi\|_{\gamma; \bar{\mfK}} (1 + \|\Gamma\|_{\gamma + \beta; \bar \mfK}) ( 1 + \| K \|_{\mcA_+^{\eq; \mcO}} ) ,
    \end{equs}
	where
    in the third inequality we made use of the anisotropic Taylor formula of Lemma~\ref{Tay},
    the fact that $x,y \in \mfK$ 
   	and the assumption  $m\leq - \alpha - |\mfs| - \max ( \mfs )$.
     Finally, we note that the summands in the left-hand side only contribute if $\eta > |k|_{\mfs} + (a + \beta) \wedge 0$ since the latter term is a lower bound on the degree of any symbol in $X^k T_{1,1}(k)$. Hence this bound is of order $|x-y|_{\mfs}^{m + (a +\beta) \wedge 0 - \eta}$ as required.   

We turn to the proof that $\cR \cK_{\gamma, m} f = K \cR f$ 
(where we write $K$ in place of $K_{\mfl}$ for notational convenience).
Fix $\phi \in C_c^{\infty}$.
By the uniqueness part of the Reconstruction Theorem, it suffices to prove that 
$| ( \Pi_x \cK_{\gamma, m} f ( x ) - K \cR f ) ( \phi_x^{\lambda} ) | = o_{\lambda \to 0}(1)$
locally uniformly in $x$.
We write
	\begin{equs}
		\Pi_x  ( \cK_{\gamma, m} f ( x ) ) ( \phi_x^{\lambda} )
		& = \sum_{|k|_{\mfs} < m} \Pi_x \left( \frac{X^k}{k!} \cK_{\gamma}^{(-\partial)^k \delta_x} f(x) \right) ( \phi_x^{\lambda} ) 
		- \Pi_x \left( \mcQ_{\geq \bar{\gamma}} \sum_{|k|_{\mfs} < m} \frac{X^k}{k!} \cK_{\gamma}^{(-\partial)^k \delta_x} f(x) \right) ( \phi_x^{\lambda} ) .  
	\end{equs}
The second term
in the right-hand-side
is already of order $\lambda^{\bar{\gamma}}$
(locally uniformly in $x$) 
by the local bounds and the definition of a model.
Thus, we may consider only the first term, 
for which we write
by the assumption of admissibility of $(\Pi, \Gamma)$ (see Definition~\ref{d:adm_on_tban})
	\begin{equs}
		\sum_{|k|_{\mfs} < m} \Pi_x \left( \frac{X^k}{k!} \cK_{\gamma}^{(-\partial)^k \delta_x} f(x) \right) ( \phi_x^{\lambda} ) 
		& = \sum_{|k|_{\mfs} < m} \Pi_x \left( \cK_{\gamma}^{(-\partial)^k \delta_x} f(x) \right) \left( \frac{(\cdot - x)^k}{k !} \phi_x^{\lambda} (\cdot) \right) .
	\end{equs}
Now applying the reconstruction statement of the usual multi-level Schauder estimates of Theorem~\ref{theo: multi-level Schauder} at fixed $x$ and $k$,
we arrive at
	\begin{equs}
		\big( \Pi_x (\cK_{\gamma, m} f ( x ) ) -  K \cR f \big) ( \phi_x^{\lambda} )
		& = \tilde{K}^{x; m} \cR f ( \phi_x^{\lambda} ) + O ( \lambda^{\bar{\gamma}} ) ,
	\end{equs}
where we have written for $z, \bar{z} \in \mathbb{R}^d$
	\begin{equs}
		\tilde{K}^{x; m} (z, \bar{z}) 
		& \coloneqq - K^{z} ( z - \bar{z} ) + \sum_{|k|_{\mfs} < m} \frac{( z - x )^k}{k !} \partial_x^k K^{x} ( z - \bar{z} )
		= - K^{\zeta_{0, m}^{z, x}} ( z - \bar{z} ) ,
	\end{equs}
where $\zeta_{0, m}^{z, x} \in C_{w}^{\alpha}$ is defined in \eqref{eq:dz1}.
The remainder of the argument is analogous to the classical Schauder estimates, we spell out some of the details for completeness.
Using the dyadic decomposition of $K$, we express
	\begin{align*}
		\tilde{K}^{x; m} \cR f ( \phi_x^{\lambda} )
		& = \sum_{n \geq 0} \cR f \left(  \int \mathrm{d} z \, \phi_x^{\lambda} ( z ) K_n^{\zeta_{0, m}^{z, x}} ( z - \cdot ) \right) .
	\end{align*}
Thanks to our assumptions, we can readily check that 
	\begin{align*}
		\int \mathrm{d} z \, \phi_x^{\lambda} ( z ) K_n^{\zeta_{0, m}^{z, x}} ( z - \bar{z} )
		& = \lambda^m 2^{- n \beta} \, \big( \psi^{[x, \lambda, n]} \big)_{x}^{2^{- n} \vee \lambda} ( \bar{z} ) ,
	\end{align*}
for test functions $\psi^{[x, \lambda, n]}$ such that 
$\mathrm{supp} ( \psi^{[x, \lambda, n]} ) \subset B_{\mfs} ( 0, 2)$
and $\sup_{x \in \mfK} \sup_{\lambda \in (0, 1]} \sup_{n \geq 0} \| \psi^{[x, \lambda, n]} \|_{C^{\mco}} < \infty$.
Since $\mcR f \in C^{a}$ and $\mco > - a$, we deduce after summing the geometric series that 
$\tilde{K}^{x; m} \cR f ( \phi_x^{\lambda} ) = O ( \lambda^{\bar{\gamma}} )$
locally uniformly in $x$, yielding the desired reconstruction bound.

We now turn to the case of two models, where for brevity we provide only a sketch of the translation bound since the local bound follows by easier applications of the same ideas. We write
\begin{equs}
   & \mcK_{\gamma, m} f(x) - \Gamma_{xy} \mcK_{\gamma, m} f(y) - \bar{\mcK}_{\gamma, m} \bar{f}(x) + \bar{\Gamma}_{xy} \bar{\mcK}_{\gamma, m} \bar{f}(y) \\ 
    & = \mcQ_{<\bar{\gamma}} \sum_{|k|_{\mfs}< m} \Bigg[ \frac{X^k}{k!} (\mcK_\gamma^{(-\partial)^k \delta_x} f(x) - \bar{\mcK}_\gamma^{(-\partial)^k \delta_x} \bar{f}(x)) 
    	- \frac{\Gamma_{xy} X^k}{k!} (\Gamma_{xy} \mcK_\gamma^{(-\partial)^k \delta_y} f(y) - \bar{\Gamma}_{xy} \bar{\mcK}_\gamma^{(-\partial)^k \delta_y} \bar{f}(y)) \Bigg] \\
    & \quad + (\mcQ_{< \bar{\gamma}} \Gamma_{xy} - \Gamma_{xy} \mcQ_{<\bar{\gamma}}) \mcK_{\gamma, m}f(y) 
    - (\mcQ_{< \bar{\gamma}} \bar{\Gamma}_{xy} - \bar{\Gamma}_{xy} \mcQ_{<\bar{\gamma}}) \bar{\mcK}_{\gamma, m}\bar{f}(y)
\end{equs}
where we have used that $\Gamma$ and $\bar{\Gamma}$ have the same action on the polynomial structure.

Since the first 
term in the right-hand side
is treated via the same strategy as in the single model case (making use of the parts of Theorem~\ref{theo: multi-level Schauder} that apply to the case of two models 
and two kernel assignments), 
we treat only the 
last line of the above display.
We can write
\begin{equs}
    \big\|(\mcQ_{< \bar{\gamma}} \Gamma_{xy} - \Gamma_{xy} \mcQ_{<\bar{\gamma}}) \mcK_{\gamma, m}f(y) & - (\mcQ_{< \bar{\gamma}} \bar{\Gamma}_{xy} - \bar{\Gamma}_{xy} \mcQ_{<\bar{\gamma}}) \bar{\mcK}_{\gamma, m}\bar{f}(y) \big\|_\eta \\ 
	    & \le \big\| (\mcQ_{< \bar{\gamma}} \Gamma_{xy} - \Gamma_{xy} \mcQ_{<\bar{\gamma}} - \mcQ_{< \bar{\gamma}} \bar{\Gamma}_{xy} + \bar{\Gamma}_{xy} \mcQ_{<\bar{\gamma}}) \mcK_{\gamma, m} f(y) \big\|_\eta \\ 
   		& \quad + \big\| (\mcQ_{< \bar{\gamma}} \bar{\Gamma}_{xy} - \bar{\Gamma}_{xy} \mcQ_{<\bar{\gamma}}) (\mcK_{\gamma, m}f(y) - \bar{\mcK}_{\gamma, m} \bar{f}(y)) \big\|_\eta .
\end{equs}
Noting that $\mcQ_{< \bar{\gamma}} \Gamma_{xy} - \Gamma_{xy} \mcQ_{< \bar{\gamma}} = \mcQ_{<\bar{\gamma}} \Gamma_{xy} \mcQ_{\ge \bar{\gamma}}$ (and similarly for the first term on the right hand side of the above display), we conclude via the local bounds that the right hand side of the previous display is bounded by a term of order 
\begin{equ}
    \big(
    \| \Pi - \bar{\Pi} \|_{\gamma, \bar{\mfK}} +
    \|\Gamma - \bar{\Gamma}\|_{\gamma + \beta; \bar\mfK} + \|f; \bar{f}\|_{\gamma; \bar{\mfK}}
    + \| K - \bar{K} \|_{\mcA_+^{\eq; \mcO}}
    \big) \, |x-y|_{\mfs}^{\bar{\gamma} - \eta}
\end{equ}
as required.
\end{proof}

We now turn to the corresponding result for pointed modelled distributions. We note that the proof of the above result relied only on the constant coefficient abstract integration and the fact that the map $\zeta \mapsto \mcK_\gamma^\zeta$ is a continuous linear map. This suggests that one should define
\begin{align*}
	\tilde{\cK}_{\gamma, \nu, m}^{x} f(y) 
	= \mcQ_{<\bar{\gamma}} \left ( \sum_{|k|_{\mfs} < m} \frac{X^k}{k!} \cK_{\gamma, \nu}^{(-\partial)^k \delta_y, x} f(y) \right ) .
\end{align*}
However, we are ultimately interested in defining a model on $\mcT^\mathrm{eq}$ by setting
	$\Pi_x \boldsymbol{\tau} = \mathcal{R} f_x^{\boldsymbol{\tau}}$
for a suitably defined pointed modelled distribution $f_x^{\boldsymbol{\tau}}$ on $\mcT^\mathrm{Ban} ( C_w^{\alpha} )$.
Given $f_x^{\boldsymbol{\tau}}$, it is natural to postulate that $f_x^{\mcI \boldsymbol{\tau}} = \tilde{\cK}_{\gamma, \nu, m}^{x} f_x^{\boldsymbol{\tau}}$. We note that with the above definition, this would lead to
\begin{align*}
	\Pi_x \mcI \boldsymbol{\tau} ( y )
	= K^y \ast \Pi_x \boldsymbol{\tau} (y) - \sum_{|k|_{\mfs} < |\mcI \boldsymbol{\tau}|_{\mfs}} \frac{(y-x)^k}{k!} D^k K^y \ast \Pi_x \boldsymbol{\tau} (x)
\end{align*}
whilst the correct admissibility condition would be
\begin{align*}
	\Pi_x \mcI \boldsymbol{\tau} ( y ) &= [K\Pi_x \boldsymbol{\tau}](y) - \sum_{|k|_{\mfs} < |\mcI \boldsymbol{\tau}|_{\mfs}} \frac{(y-x)^k}{k!} D^k [K \Pi_x \boldsymbol{\tau}] (x) \\ &= K^y \ast \Pi_x \boldsymbol{\tau}(y) - \sum_{|k|_{\mfs} < |\mcI \boldsymbol{\tau}|_{\mfs}} \frac{(y-x)^k}{k!} D^k [z \mapsto K^z \ast \Pi_x \boldsymbol{\tau} (z)]_{z = x}
	\\ & = K^y \ast \Pi_x \boldsymbol{\tau} (y) - \sum_{|k|_{\mfs} < |\mcI \boldsymbol{\tau}|_{\mfs}} \frac{(y-x)^k}{k!} \sum_{j \le k} {k \choose j} D^j K^{(-\partial)^{k-j} \delta_x} \ast \Pi_x \boldsymbol{\tau} (x).
\end{align*}
This means that we will have to tweak the definition of our variable coefficient abstract integration to realise the correct Taylor jet. Since it will be convenient to use $\tilde{K}_{\gamma, \nu, m}^x$ as a reference point when obtaining the desired estimates, we first record the multi-level Schauder estimate for $\tilde{K}_{\gamma, \nu, m}^x$. Since the result follows from the constant coefficient case in essentially the same way as the proof of Theorem~\ref{theo: VML Schauder}, we omit the proof.

\begin{theorem}\label{theo: VML Pointed Schauder 1}
Let $Z = (\Pi, \Gamma)$ be an admissible model
on $\mcT^{\Ban} ( C_w^\alpha )$ with respect to the kernel assignment $A$, 
and let $W$ be a sector of regularity $a \le 0$ supporting the abstract integration maps $\mcI^{\zeta}$.
	Fix $\gamma > 0$, $\nu \in \mathbb{R}$ 
	and $m \in \mathbb{N}$. 
	For $f \in \cD^{\gamma, \nu; x}(Z, W)$,
	define $\tilde{\cK}_{\gamma, \nu, m}^x f$ via
	\begin{equ}
		\tilde{\cK}_{\gamma, \nu, m}^{x} f(y) = \mcQ_{<\bar{\gamma}} \left ( \sum_{|k|_{\mfs} < m} \frac{X^k}{k!} \cK_{\gamma, \nu}^{(-\partial)^k \delta_y, x} f(y) \right ) .
	\end{equ}
	where $\bar{\gamma} = \min\{\gamma + \beta, m,  m + a + \beta\}$.
	Assume also that $\gamma + \beta \notin \mathbb{N}$, 
    $\bar{\gamma} > 0$,
$0<m\leq -\alpha - | \mfs | - \max ( \mfs )$
    and 
$\mco > \max \lbrace \gamma+\beta + \max (\mfs) , \nu+\beta + \max ( \mfs ), -a \rbrace$.
	Then
	\begin{equ}
	\|\tilde{\cK}_{\gamma, \nu, m}^x f\|_{{\bar\gamma}, \nu + \beta; x} \lesssim \|f\|_{\gamma, \nu; x} \|\Pi\|_{\gamma; B_{\mfs}(x,2)} (1+ \|\Gamma\|_{\gamma + \beta; B_{\mfs}(x,2)}) 
	( 1 + \| K \|_{\mcA_+^{\eq; \mcO}}) .
	\end{equ}
	
	 Furthermore, given a second
	 kernel assignment $\bar{K} \in \mcA_+^{\eq; \mcO}$,
    admissible model $\bar{Z} = (\bar \Pi, \bar \Gamma)$
    and modelled distribution $\bar{f} \in \cD^{\gamma, \nu; x}(\bar{Z})$ we have that
	\begin{equ}
		\|\tilde{\mcK}_{\gamma, \nu, m}^x f ; \tilde{\bar{\mcK}}_{\gamma, \nu, m}^x \bar{f}\|_{{\bar\gamma}, \nu + \beta; x} 
		\lesssim 
			\|f; \bar{f}\|_{\gamma, \nu; x} 
			+ \|\Pi - \bar{\Pi}\|_{\gamma; B_{\mfs}(x,2)} 
			+ \|\Gamma - \bar{\Gamma}\|_{\gamma + \beta; B_{\mfs}(x,2)} 
			+ \| K - \bar{K} \|_{\mcA_+^{\eq; \mcO}}
	\end{equ}
	where the implicit constant can be chosen to depend polynomially on
	\begin{equ}
		\|f\|_{\gamma, \nu; x} + \|\bar{f}\|_{\gamma, \nu; x} 
		+ \|\Pi\|_{\gamma; B_{\mfs}(x,2)} + \|\bar \Pi\|_{\gamma ; B_{\mfs}(x,2)} 
		+ \|\Gamma\|_{\gamma + \beta; B_{\mfs}(x,2)} + \|\bar \Gamma\|_{\gamma + \beta; B_{\mfs}(x,2)}
		+ \| K \|_{\mcA_+^{\eq; \mcO}} + \| \bar{K} \|_{\mcA_+^{\eq; \mcO}}.
	\end{equ}
\end{theorem}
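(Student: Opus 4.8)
The plan is to reduce everything to the already-established constant-coefficient pointed multi-level Schauder estimates of Theorem~\ref{thm:pointed_mls} (applied to the Banach space assignment $V = (C_w^\alpha)_{\mfl \in \mfL_+}$ with the kernel assignment $A$ coming from $K$ via Lemma~\ref{l:ka}), together with the linearity of $\zeta \mapsto \mcK_{\gamma,\nu}^{\zeta,x}$ recorded in Corollary~\ref{cor:sl}. Concretely, $\tilde{\cK}_{\gamma,\nu,m}^x f(y) = \mcQ_{<\bar\gamma} \sum_{|k|_\mfs < m} \frac{X^k}{k!} \cK_{\gamma,\nu}^{(-\partial)^k \delta_y, x} f(y)$ is a finite sum of polynomial-weighted copies of $\cK_{\gamma,\nu}^{\zeta,x}$, so the analysis parallels exactly the proof of Theorem~\ref{theo: VML Schauder}, with two modifications: the model space now carries the additional `pointed' translation bound, and the base point $x$ is frozen throughout. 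First I would check that the parameter hypotheses are what is needed: the condition $0 < m \le -\alpha - |\mfs| - \max(\mfs)$ ensures $(-\partial)^k \delta_x \in C_w^\alpha$ for all $|k|_\mfs < m$ (as in Remark following Theorem~\ref{theo: VML Schauder}, using $w(x) = (1+|x|_\mfs)^l$ with $l$ large enough so that $\int w < \infty$ and the $m$-th derivatives of deltas are $w$-integrable against test functions), and the condition $\mco > \max\{\gamma+\beta+\max(\mfs), \nu+\beta+\max(\mfs), -a\}$ is precisely the hypothesis of Theorem~\ref{thm:pointed_mls}, so that $\cK_{\gamma,\nu}^{(-\partial)^k\delta_y, x} f$ is well-defined on $\mcT^\Ban(C_w^\alpha)$.

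The local bound for $\tilde{\cK}_{\gamma,\nu,m}^x f$ follows immediately: for $y \in B_{\mfs}(x,\lambda)$ and $\eta < \bar\gamma$,
\begin{equs}
\|\tilde{\cK}_{\gamma,\nu,m}^x f(y)\|_\eta
\le \sum_{|k|_\mfs < m} \lambda^{|k|_\mfs} \big\| \cK_{\gamma,\nu}^{(-\partial)^k \delta_y, x} f(y) \big\|_{\eta - |k|_\mfs}
\lesssim \sum_{|k|_\mfs < m} \lambda^{|k|_\mfs} \cdot \lambda^{(\nu+\beta) - (\eta - |k|_\mfs)} \cdot C_k
\end{equs}
where $C_k = \|(-\partial)^k \delta_y\|_{C_w^\alpha} \|f\|_{\gamma,\nu;x} \|\Pi\|_{\gamma; B_{\mfs}(x,2)} (1 + \|\Gamma\|_{\gamma+\beta; B_{\mfs}(x,2)})(1 + \|K\|_{\mcA_+^{\eq;\mcO}})$ by Theorem~\ref{thm:pointed_mls}; since $\sup_{y \in B_{\mfs}(x,1)} \|(-\partial)^k \delta_y\|_{C_w^\alpha} < \infty$ (a translation of a fixed distribution, using local-uniformity of $w$), summing the geometric series in $\lambda$ gives the bound $\lesssim \lambda^{(\nu+\beta) - \eta}$ with the stated constant. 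For the translation bound, decompose exactly as in the proof of Theorem~\ref{theo: VML Schauder}: write $\tilde{\cK}_{\gamma,\nu,m}^x f(y+h) - \Gamma_{y+h,y} \tilde{\cK}_{\gamma,\nu,m}^x f(y)$ into a `truncation error' term $(\mcQ_{<\bar\gamma}\Gamma_{y+h,y} - \Gamma_{y+h,y}\mcQ_{<\bar\gamma}) \sum \frac{X^k}{k!}\cK_{\gamma,\nu}^{(-\partial)^k\delta_y,x} f(y) = \mcQ_{<\bar\gamma}\Gamma_{y+h,y}\mcQ_{\ge\bar\gamma}(\cdots)$ handled by the local bound, plus a main term which after a Taylor rearrangement in the base point of the delta splits into $T_{1,1}(k) = \cK_{\gamma,\nu}^{\zeta_{k,m}^{y+h,y}, x} f(y+h)$ with $\zeta_{k,m}^{y+h,y} = (-\partial)^k\delta_{y+h} - \sum_{|k+l|_\mfs<m}\frac{h^l}{l!}(-\partial)^{k+l}\delta_y$, and $T_{1,2}(k)$ involving $\cK_{\gamma,\nu}^{(-\partial)^{k+l}\delta_y,x} f(y+h) - \Gamma_{y+h,y}\cK_{\gamma,\nu}^{(-\partial)^{k+l}\delta_y,x} f(y)$. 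The term $T_{1,2}(k)$ is controlled directly by the translation part of the estimate in Theorem~\ref{thm:pointed_mls} applied to $\cK_{\gamma,\nu}^{(-\partial)^{k+l}\delta_y,x} f$ (with $y$ a fixed base point of the delta — this is legitimate since $y \in B_{\mfs}(x,\lambda)$), and $T_{1,1}(k)$ is controlled by the local part together with the anisotropic Taylor estimate of Lemma~\ref{Tay} giving $\|\zeta_{k,m}^{y+h,y}\|_{C_w^\alpha} \lesssim |h|_\mfs^{m-|k|_\mfs}$; tracking that the polynomial symbol $X^k T_{1,1}(k)$ only contributes to $\|\cdot\|_\eta$ when $\eta > |k|_\mfs + (a+\beta)\wedge 0$ yields the power $|h|_\mfs^{\bar\gamma - \eta}$, uniformly over $\lambda$-balls, which is the pointed translation bound.

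The two-model bound follows by the same decomposition, replacing each application of the single-model estimate in Theorem~\ref{thm:pointed_mls} by its difference counterpart and using that $\Gamma, \bar\Gamma$ act identically on the polynomial sector, so that the commutator-type terms $(\mcQ_{<\bar\gamma}\Gamma - \Gamma\mcQ_{<\bar\gamma}) - (\mcQ_{<\bar\gamma}\bar\Gamma - \bar\Gamma\mcQ_{<\bar\gamma})$ are controlled via the local bounds and $\|\Gamma - \bar\Gamma\|$; the extra term $\|K - \bar K\|_{\mcA_+^{\eq;\mcO}}$ enters precisely through the $\|A - \bar A\|_{\mcA_+^\mco}$ slot of Theorem~\ref{thm:pointed_mls}, which is bounded by $\|K - \bar K\|_{\mcA_+^{\eq;\mcO}}$ times the operator norm of the continuous linear map $K \mapsto A$ of Lemma~\ref{l:ka}, and the polynomial dependence of the constant is inherited from there. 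I expect the only mildly delicate point — and hence what the paper elides by saying the proof is omitted — is the bookkeeping of which weights $w^*$ and which regularity exponent $\alpha$ make $(-\partial)^k\delta_y \in C_w^\alpha$ with norms uniformly bounded in $y$ over compacts, and confirming that $\bar\gamma > 0$ together with $\bar\gamma = \min\{\gamma+\beta, m, m+a+\beta\}$ is consistent with the degrees that actually appear after applying $\mcQ_{<\bar\gamma}$; everything else is a verbatim transcription of the arguments already carried out for Theorem~\ref{theo: VML Schauder} and Theorem~\ref{thm:pointed_mls}.
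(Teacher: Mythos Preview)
Your proposal is correct and takes essentially the same approach as the paper: the paper's own proof simply states that the result follows from the constant-coefficient pointed Schauder estimate (Theorem~\ref{thm:pointed_mls}) in the same way that Theorem~\ref{theo: VML Schauder} follows from Theorem~\ref{theo: multi-level Schauder}, and then omits the details. Your write-up carries out exactly this reduction, with the same $T_{1,1}/T_{1,2}$ decomposition and the same Taylor-remainder distribution $\zeta_{k,m}^{y+h,y}$, so you have reconstructed precisely what the authors had in mind. (One cosmetic point: in your local-bound display the leading factor $\lambda^{|k|_\mfs}$ is superfluous, since $\|X^k v\|_\eta = \|v\|_{\eta-|k|_\mfs}$ without a $\lambda$ prefactor; the extra factor is harmless because $\lambda\le 1$, but you may want to clean it up.)
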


With this preparatory result in hand, we now turn to correcting the $x$-centered Taylor jet realised by the abstract integration map. 

\begin{theorem}\label{theo: VML Pointed Schauder 2}
	Let $Z = (\Pi, \Gamma)$ be an admissible model
	on $\mcT^{\Ban} ( C_w^\alpha )$ with respect to the kernel assignment $A$, 
and let $W$ be a sector of regularity $a \le 0$ supporting the abstract integration maps $\mcI^{\zeta}$.
	Fix $\gamma > 0$, $\nu \in \mathbb{R}$ 
	and $m \in \mathbb{N}$. 
	For $f \in \cD^{\gamma, \nu; x}(Z, W)$,
	define $\cK_{\gamma, \nu, m}^x f$ via
	\begin{equ}
		\cK_{\gamma, \nu, m}^{x} f(y) = \mcQ_{<\bar{\gamma}} \left ( \sum_{|k|_{\mfs} < m} \frac{X^k}{k!} \cK_{\gamma}^{(-\partial)^k \delta_y} f(y)  - \sum_{j, l: |j+l|_{\mfs} < \nu + \beta} \frac{\Gamma_{yx} X^{j+l}}{j! l!} D^j K^{(- \partial)^{l} \delta_x} \ast \mathcal{R}f(x) \right ) .
	\end{equ}
	where $\bar{\gamma} = \min\{\gamma + \beta, m,  m + a + \beta\}$.
	Assume also that $\gamma + \beta \notin \mathbb{N}$, 
    $\bar{\gamma} > 0$,
 $\nu + \beta \leq m\leq -\alpha - | \mfs | - \max ( \mfs )$
    and 
$\mco > \max \lbrace \gamma+\beta + \max ( \mfs ), \nu + \beta + \max ( \mfs ) , - a \rbrace$.
	Then
	\begin{equ}
	\|\cK_{\gamma, \nu, m}^x f\|_{{\bar\gamma}, \nu + \beta; x} \lesssim \|f\|_{\gamma, \nu; x} \|\Pi\|_{\gamma; B_{\mfs}(x,2)} (1+ \|\Gamma\|_{\gamma + \beta; B_{\mfs}(x,2)}) ( 1 + \| K \|_{\mcA_+^{\eq; \mcO}}) , 
	\end{equ}
	and
	\begin{equ}
		\mcR \cK_{\gamma, \nu, m}^x f
		= K_{\mfl} \mcR f -\sum_{| k |_{\mfs}<\nu+\beta} \frac{( \cdot - x )^k}{k!} D^k ( K_{\mfl} \mcR f ) ( x ) .
	\end{equ}

	 Furthermore, given a second
	 kernel assignment $\bar{K} \in \mcA_+^{\eq; \mcO}$,
    admissible model $\bar{Z} = (\bar \Pi, \bar \Gamma)$
    and modelled distribution $\bar{f} \in \cD^{\gamma, \nu; x}(\bar{Z})$ we have that
	\begin{equ}
		\|\mcK_{\gamma, \nu, m}^x f ; \bar{\mcK}_{\gamma, \nu, m}^x \bar{f}\|_{{\bar\gamma}, \nu + \beta; x} 
		\lesssim 
			\|f; \bar{f}\|_{\gamma, \nu; x} 
			+ \|\Pi - \bar{\Pi}\|_{\gamma; B_{\mfs}(x,2)} 
			+ \|\Gamma - \bar{\Gamma}\|_{\gamma + \beta; B_{\mfs}(x,2)} 
			+ \| K - \bar{K} \|_{\mcA_+^{\eq; \mcO}}
	\end{equ}
	where the implicit constant can be chosen to depend polynomially on
	\begin{equ}
		\|f\|_{\gamma, \nu; x} + \|\bar{f}\|_{\gamma, \nu; x} 
		+ \|\Pi\|_{\gamma; B_{\mfs}(x,2)} + \|\bar \Pi\|_{\gamma ; B_{\mfs}(x,2)} 
		+ \|\Gamma\|_{\gamma + \beta; B_{\mfs}(x,2)} + \|\bar \Gamma\|_{\gamma + \beta; B_{\mfs}(x,2)} 
		+ \| K \|_{\mcA_+^{\eq; \mcO}} + \| \bar{K} \|_{\mcA_+^{\eq; \mcO}}.
	\end{equ}
\end{theorem}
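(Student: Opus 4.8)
The plan is to deduce Theorem~\ref{theo: VML Pointed Schauder 2} from Theorem~\ref{theo: VML Pointed Schauder 1} by writing $\cK_{\gamma, \nu, m}^x f$ as $\tilde{\cK}_{\gamma, \nu, m}^x f$ plus a correction that involves only the finitely many numbers $D^j K^{(-\partial)^l \delta_x} \ast \mcR f(x)$ and the polynomial part of the structure. Concretely, the difference
\begin{equ}
	\cK_{\gamma, \nu, m}^x f(y) - \tilde{\cK}_{\gamma, \nu, m}^x f(y)
	= \mcQ_{< \bar\gamma} \left( \sum_{|k|_\mfs < m} \frac{X^k}{k!} \big( \cK_\gamma^{(-\partial)^k \delta_y} f(y) - \cK_{\gamma, \nu}^{(-\partial)^k \delta_y, x} f(y) \big) - \sum_{|j+l|_\mfs < \nu + \beta} \frac{\Gamma_{yx} X^{j+l}}{j! l!} D^j K^{(-\partial)^l \delta_x} \ast \mcR f(x) \right)
\end{equ}
should be shown to be a finite $\mbR$-linear combination of terms $\Gamma_{yx} X^p$ whose coefficients are exactly the numbers $D^p(K_\mfl \mcR f)(x)$ reorganised via the Leibniz rule $D^p(K^z \ast g(z))|_{z=x} = \sum_{j \le p} \binom{p}{j} D^j K^{(-\partial)^{p-j}\delta_x} \ast g(x)$. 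The first step is therefore to recall from the definition (see \cite[Equation (5.16)]{Hai14} adapted to the pointed setting, c.f.\ Theorem~\ref{thm:pointed_mls}) that $\cK_\gamma^\zeta f - \cK_{\gamma, \nu}^{\zeta, x} f = T_{\gamma, \nu}^{\zeta, x} f$ where $T_{\gamma, \nu}^{\zeta, x} f(y) = \mcQ_{< \gamma + \beta}\big( \sum_{|k|_\mfs < \nu + \beta} \frac{(X + y - x)^k}{k!} (D^k A_\mfl(\zeta) \ast \mcR f)(x)\big)$, and that $A_\mfl((-\partial)^k \delta_y) = D^k K^{(\cdot)}(y - \cdot)$ when paired appropriately, i.e.\ $D^l A_\mfl((-\partial)^k \delta_y) \ast \mcR f(x) = D^{k+l} K^{(-\partial)^? \delta} \ast \mcR f$ evaluated correctly. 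Expanding $(X + y - x)^k = \Gamma_{yx} X^k$ on the polynomial sector and carefully bookkeeping the binomial coefficients, one checks that the full sum over $|k|_\mfs < m$ of $\frac{X^k}{k!} T_{\gamma,\nu}^{(-\partial)^k\delta_y, x} f(y)$ telescopes precisely to the subtracted jet term in the definition of $\cK_{\gamma,\nu,m}^x$.

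\textbf{Key steps.} (i) Establish the algebraic identity that $\cK_{\gamma,\nu,m}^x f = \tilde{\cK}_{\gamma,\nu,m}^x f$ as elements of $\mcT^\Ban$; this is a purely combinatorial manipulation of polynomial symbols and the definition of $T_{\gamma,\nu}^{\zeta,x}$. Once this identity holds, the norm bound $\|\cK_{\gamma,\nu,m}^x f\|_{\bar\gamma, \nu+\beta; x} \lesssim \|f\|_{\gamma,\nu;x} \|\Pi\|_{\gamma; B_\mfs(x,2)}(1 + \|\Gamma\|_{\gamma+\beta; B_\mfs(x,2)})(1 + \|K\|_{\mcA_+^{\eq;\mcO}})$ and the two-model bound follow \emph{verbatim} from Theorem~\ref{theo: VML Pointed Schauder 1}, so no further analytic work is needed for the estimates. (ii) For the reconstruction identity, use that $\mcR \tilde{\cK}_{\gamma,\nu,m}^x f$ can be computed from the reconstruction statement of Theorem~\ref{thm:pointed_mls} applied to each $\cK_{\gamma,\nu}^{(-\partial)^k\delta_y, x}$; after the sum over $k$, the argument runs as in the proof of Theorem~\ref{theo: VML Schauder} (the dyadic decomposition of $K$, the anisotropic Taylor formula of Lemma~\ref{Tay}, and the fact that $\mcR f \in C^a$ with $\mco > -a$), producing $\mcR \cK_{\gamma,\nu,m}^x f = K_\mfl \mcR f - \sum_{|k|_\mfs < \nu+\beta} \frac{(\cdot - x)^k}{k!} D^k(K_\mfl \mcR f)(x)$, where the subtracted terms are exactly the contribution of the corrected jet. (iii) Verify the hypotheses match: the extra assumption $\nu + \beta \le m$ is what guarantees that all jet terms $D^j K^{(-\partial)^l \delta_x}$ with $|j+l|_\mfs < \nu+\beta$ are captured by the sum over $|k|_\mfs < m$, and $m \le -\alpha - |\mfs| - \max(\mfs)$ ensures $(-\partial)^k \delta_x \in C_w^\alpha$ for all $|k|_\mfs < m$ as in Theorem~\ref{theo: VML Schauder}.

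\textbf{Main obstacle.} I expect the bookkeeping in step (i) to be the delicate point: one must match the double sum $\sum_{|k|_\mfs < m}\frac{X^k}{k!}\sum_{|p|_\mfs < \nu + \beta}\frac{(X+y-x)^p}{p!}(D^p A_\mfl((-\partial)^k\delta_y)\ast \mcR f)(x)$ against $\sum_{|j+l|_\mfs<\nu+\beta}\frac{\Gamma_{yx}X^{j+l}}{j!l!}D^j K^{(-\partial)^l\delta_x}\ast\mcR f(x)$, using $D^p A_\mfl((-\partial)^k \delta_y) \ast \mcR f(x) = D^{k+p}[z \mapsto K^z \ast \mcR f(z)]$-type relations together with the Leibniz expansion, and confirm that the range constraints ($|k|_\mfs < m$ together with $|p|_\mfs < \nu + \beta$ versus $|j + l|_\mfs < \nu + \beta$) are compatible precisely under $\nu + \beta \le m$. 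One subtlety is that $A_\mfl((-\partial)^k \delta_y)$ is the kernel $z \mapsto D_y^k K^z(y - \cdot)$ evaluated via the pairing of Lemma~\ref{l:pa}, so the identification of derivatives in the \emph{upper} slot $z$ with derivatives of $\delta_y$ in the \emph{base} point must be done with care; this is where I would spend the bulk of the detailed writing, and everything downstream then reduces to already-established theorems. I would also remark, as in Remark~\ref{rem:on_m}, that the final expression is independent of $m$ once $m$ is large enough, so no generality is lost.
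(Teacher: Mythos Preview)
Your step (i) contains a genuine error: the identity $\cK_{\gamma,\nu,m}^x f = \tilde{\cK}_{\gamma,\nu,m}^x f$ is \emph{false}, and this is precisely the point of introducing $\cK_{\gamma,\nu,m}^x$ in the first place. Unwinding the definitions, the difference is
\begin{align*}
H(y) \coloneqq \mcQ_{<\bar\gamma}\Bigg(
\sum_{|k|_{\mfs}<m}\sum_{|j|_{\mfs}<\nu+\beta}\frac{X^k}{k!}\frac{\Gamma_{yx}X^j}{j!}\,D^j K^{(-\partial)^k\delta_y}\ast\mcR f(x)
- \sum_{|j+l|_{\mfs}<\nu+\beta}\frac{\Gamma_{yx}X^{j+l}}{j!\,l!}\,D^j K^{(-\partial)^l\delta_x}\ast\mcR f(x)
\Bigg),
\end{align*}
and the two sums do \emph{not} cancel: the first involves derivatives $(-\partial)^k\delta_y$ in the upper slot at the \emph{running} point $y$, while the second involves $(-\partial)^l\delta_x$ at the \emph{fixed} base point $x$. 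No amount of Leibniz bookkeeping makes these equal; the mismatch is exactly a Taylor remainder of the map $w\mapsto K^{(-\partial)^\bullet\delta_w}$ between $y$ and $x$. The paper makes this explicit in the discussion preceding Theorem~\ref{theo: VML Pointed Schauder 1}: $\tilde{\cK}$ realises the \emph{wrong} Taylor jet (with $K^y$ in the subtracted terms), and $\cK$ is the corrected version.

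Consequently the paper's proof does begin, as you suggest, by invoking Theorem~\ref{theo: VML Pointed Schauder 1} for $\tilde{\cK}$, but then the bulk of the work is a direct estimate of $H(y)$ in the pointed norm. For the local bound one splits $H$ into a piece $T_1$ controlled via the Taylor-remainder distributions $\zeta_{k,j}^{x,y}=(-\partial)^k\delta_y-\sum_{|k+j+l|_{\mfs}<\nu+\beta}\frac{(y-x)^l}{l!}(-\partial)^{k+l}\delta_x$, whose $C_w^\alpha$-norm is of order $|y-x|_{\mfs}^{\nu+\beta-|k+j|_{\mfs}}$ (this uses $\nu+\beta\le -\alpha-|\mfs|-\max(\mfs)$), and a piece $T_2$ coming from the mismatch of summation ranges ($|k|_{\mfs}<m$ versus $|j+k|_{\mfs}<\nu+\beta$). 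Both are combined with the pointed reconstruction bound $|D^jK^\zeta\ast\mcR f(x)|\lesssim\|\zeta\|_{C_w^\alpha}$ of Theorem~\ref{theo: pointed recon}. The translation bound requires a further decomposition of $\mathring H$ and a separate treatment of the truncation error $(\mcQ_{<\bar\gamma}\Gamma_{yz}-\Gamma_{yz}\mcQ_{<\bar\gamma})\mathring H(z)$. None of this is captured by your proposal, so as written the argument would not close.
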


\begin{remark} 
Just as in Remark~\ref{rem:on_m}, 
note that although we have stated 
Theorems~\ref{theo: VML Pointed Schauder 1} and \ref{theo: VML Pointed Schauder 2} 
in terms of an arbitrary parameter $m \in \mathbb{N}$,
these statements do not depend on $m$ anymore as soon as
$m \geq \max \lbrace \gamma + \beta, \gamma - a, \nu + \beta \rbrace$.
\end{remark}

\begin{proof}
We give the proof only in the case of a single model since that of two models follows similarly.
	We note that we can rewrite
	\begin{align*}
		\mcK_{\gamma, \nu, m}^x f(y) = \tilde{\mcK}_{\gamma, \nu, m}^x f(y) + \mcQ_{< \bar{\gamma}} \Bigg ( \sum_{|k|_{\mfs}< m} & \sum_{|j|_{\mfs} < \nu + \beta} \frac{X^k}{k!} \frac{\Gamma_{yx} X^j}{j!} D^j K^{(-\partial)^k \delta_y} \ast \mathcal{R} f(x) \\ & - \sum_{j,l: |j+l|_{\mfs} < \nu + \beta} \frac{\Gamma_{yx} X^{j+l}}{j! l!} D^j K^{(- \partial)^{l} \delta_x} \ast \mathcal{R}f(x) \Bigg ).
	\end{align*}
	By Theorem~\ref{theo: VML Pointed Schauder 1}, it suffices for us to consider $H(y) = \mcK_{\gamma, \nu, m}^x f(y) - \tilde{\mcK}_{\gamma, \nu, m}^x f(y)$.
	
	For this, we first consider the local bounds. Since $H(y)$ is valued in the polynomial part of the structure, this means that we want to estimate its coefficients in each $X^\eta$ term for $|\eta|_{\mfs} < \bar{\gamma}$. We write $\mcQ_{X^\eta} H(y) = T_1 + T_2$ where
	\begin{align*}
		T_1 &= \mcQ_{X^\eta} \Bigg( \sum_{|j|_{\mfs} < \nu + \beta} \frac{\Gamma_{xy} X^j}{j!} \sum_{k : |k+j|_{\mfs} < \nu + \beta} \frac{X^k}{k!} \Bigg [ D^j K^{(-\partial)^k \delta_y} \ast \mathcal{R} f(x) 
		\\
		& \qquad \qquad \qquad \qquad \qquad \qquad 
		- \sum_{l : |k+j+l|_{\mfs} < \nu + \beta} \frac{(y-x)^l}{l!} D^j K^{(-\partial)^{k+l} \delta_x} \ast \mathcal{R} f(x) \Bigg ] \Bigg )  \\
		T_2 & = \mcQ_{X^\eta} \left (\sum_{|j|_{\mfs} < \nu + \beta} \sum_{\nu + \beta \le |k+j|_{\mfs} < m + | j |_{\mfs} } \frac{X^k}{k!} \frac{\Gamma_{yx} X^j}{j!} D^j K^{(- \partial)^k 
			\delta_y} \ast \mathcal{R}f(x) \right ).
	\end{align*}
	We treat the terms $T_1$ and $T_2$ separately. For $T_1$, we write
	\begin{align*}
		T_1 = \sum_{j,k: |j+k|_{\mfs} < \nu + \beta} {j \choose \eta - k} \frac{(y-x)^{j + k - \eta}}{j!k!} D^j K^{\zeta_{k,j}^{x,y}} \ast \mathcal{R}f(x) 
	\end{align*}
	with 
	\begin{align} \label{mls04}
		\zeta_{k,j}^{x,y} = (- \partial)^k \delta_y - \sum_{l: |k+j + l|_{\mfs}< \nu + \beta} \frac{(y-x)^l}{l!} (-\partial)^{k+l} \delta_x.
	\end{align}
	We now note that for any $\zeta \in C_w^\alpha$, by
	recalling 
	Definition~\ref{d:rk2n}
	and
	Theorem~\ref{theo: pointed recon} we have 
	for $| j |_{\mfs}<\nu+ \beta$
	that 
	\begin{equs}
		|D^j K^\zeta \ast \mathcal{R}f(x)| &= \left |\sum_{n \ge 0} \mathcal{R}f (D^j K_n^\zeta(x - \cdot))\right |
		\\
		& \lesssim \sum_{n \ge 0} 2^{-n ( \beta + \nu - |j|_{\mfs})} \|\Pi\|_{\gamma; B_{\mfs} (x, 2 )}
		( 1 +\| \Gamma \|_{\gamma; B_{\mfs} (x, 2 )} )
		\|f\|_{\gamma, \nu; x} \|K^\zeta\|_{\mcK_r^\beta} \\
		&
		\lesssim \|\Pi\|_{\gamma; B_{\mfs} (x, 2 )}
		( 1 +\| \Gamma \|_{\gamma; B_{\mfs} (x, 2 )} )
		\|f\|_{\gamma, \nu; x} 
		\| K \|_{\mcA_+^{\eq; \mcO}} 
		\| \zeta \|_{C_w^{\alpha}}, \label{mls03}
	\end{equs}
	where we used Lemma~\ref{l:ka} and also the 
	assumption on $\mco$
	so that we could appeal to Theorem~\ref{theo: pointed recon}. 
 	
	Since
	$\|\zeta_{k,j}^{x,y}\|_{C_w^\alpha} \lesssim |y-x|_{\mfs}^{\nu + \beta - |k + j|_{\mfs}}$ 
	uniformly in $y \in B_{\mfs} ( x, 1 )$, 
	by a straightforward computation using the 
	assumption 
$\nu + \beta \leq - \alpha - | \mfs | - \max ( \mfs )$,
	we get
	\begin{align}\label{mls01}
		|D^j K^{\zeta_{k,j}^{x,y}} \ast \mathcal{R}f(x) | 
		\lesssim 
		\|\Pi\|_{\gamma; B_{\mfs} (x, 2 )}
		( 1 +\| \Gamma \|_{\gamma; B_{\mfs} (x, 2 )} )
		\|f\|_{\gamma, \nu; x}
		\| K \|_{\mcA_+^{\eq; \mcO}} \,
		|y-x|_{\mfs}^{\nu + \beta - |k+j|_{\mfs}} ,
	\end{align}
	which gives us	
	\begin{align*}
		|T_1| 
		&\lesssim \sum_{j,k: |j+k|_{\mfs} < \nu + \beta} 
		\|\Pi\|_{\gamma; B_{\mfs} (x, 2 )}
		( 1 +\| \Gamma \|_{\gamma; B_{\mfs} (x, 2 )} )
		\|f\|_{\gamma, \nu; x}
		\| K \|_{\mcA_+^{\eq; \mcO}} \,
		|y-x|_{\mfs}^{\nu + \beta - |k+j|_{\mfs}} |y-x|_{\mfs}^{|j + k|_{\mfs} - \eta} 
		\\
		& \lesssim 
		\|\Pi\|_{\gamma; B_{\mfs} (x, 2 )}
		( 1 +\| \Gamma \|_{\gamma; B_{\mfs} (x, 2 )} )
		\|f\|_{\gamma, \nu; x}
		\| K \|_{\mcA_+^{\eq; \mcO}} \,
		|y-x|_{\mfs}^{\nu + \beta - \eta} 
	\end{align*}
	as required. To complete the proof of the local bounds it remains to treat $T_2$. 
	To this effect, we write
	\begin{align*}
		|T_2| &= \left | \sum_{|j|_{\mfs} < \nu + \beta} \sum_{k : \nu + \beta \le |j+k|_{\mfs} < m + | j |_{\mfs}} {j \choose \eta - k} \frac{(y-x)^{j + k - \eta}}{j! k!} D^j K^{(- \partial)^k \delta_y} \ast \mathcal{R}f(x) \right |
		\\
		& \lesssim |y-x|_{\mfs}^{\nu + \beta - \eta} 
			\|\Pi\|_{\gamma; B_{\mfs} (x, 2 )}
			( 1 +\| \Gamma \|_{\gamma; B_{\mfs} (x, 2 )} )
			\|f\|_{\gamma, \nu; x} 
			\| K \|_{\mcA_+^{\eq; \mcO}}
	\end{align*}
	as required,
	where we used \eqref{mls03} and the fact that $\|(- \partial)^{k} \delta_y\|_{C_w^\alpha} \lesssim 1$ over $y \in B_{\mfs}(x, 1)$.
	
	It now remains to treat the translation bounds. 
	We define
	\begin{align*}
		\mathring{H}(y) = \sum_{|k|_{\mfs}< m} & \sum_{|j|_{\mfs} < \nu + \beta} \frac{X^k}{k!} \frac{\Gamma_{yx} X^j}{j!} D^j K^{(-\partial)^k \delta_y} \ast \mathcal{R} f(x)  
		- \sum_{j,k: |j+k|_{\mfs} < \nu + \beta} \frac{\Gamma_{yx} X^{j+k}}{j!k!} D^j K^{(- \partial)^{k} \delta_x} \ast \mathcal{R}f(x) 
	\end{align*}
	so that
	$H = \mcQ_{< \bar{\gamma}} \mathring{H}$ and
	\begin{align}\label{mls02}
		\mcQ_{X^\eta} [H(y) - \Gamma_{yz}H(z)] 
		& = \mcQ_{X^{\eta}} [\mathring{H}(y) - \Gamma_{yz}\mathring{H}(z)]
		+ \mcQ_{X^\eta} (\mcQ_{< \bar{\gamma}} \Gamma_{yz} - \Gamma_{yz}\mcQ_{< \bar \gamma}) \mathring{H}(z). 
	\end{align}
	We now estimate each term on the right hand side 
	separately, 
	starting with the `truncation error'
	\begin{align*}
		\mcQ_{X^\eta} (\mcQ_{< \bar{\gamma}} \Gamma_{yz} - \Gamma_{yz}\mcQ_{< \bar \gamma}) \mathring{H}(z) 
		= \mcQ_{X^\eta} \Gamma_{yz} \mcQ_{\ge \bar{\gamma}} \mathring{H}(z). 
	\end{align*}
	We now note that we can further decompose $\mathring{H}(y) = \mathring{H}_1(y) + \mathring{H}_2(y)$ where
	\begin{align*}
		\mathring{H}_1(y) &= \sum_{\substack{j,k \\ |j+k|_{\mfs} < \nu + \beta}} \frac{X^k}{k!} \frac{\Gamma_{yx} X^j}{j!} D^j K^{(-\partial)^k \delta_y} \ast \mathcal{R} f(x) - \frac{\Gamma_{yx} X^{j+k}}{j! k!} D^j K^{(- \partial)^{l} \delta_x} \ast \mathcal{R}f(x), 
		\\
		\mathring{H}_2(y) &= \sum_{|j|_{\mfs} < \nu + \beta} \sum_{k: \nu + \beta \le |j + k|_{\mfs} \le m + |j|_{\mfs}} \frac{X^k}{k!} \frac{\Gamma_{yx}X^j}{j!} D^j K^{(-\partial)^k \delta_y} \ast \mathcal{R} f(x). 
	\end{align*}
	We will again estimate each of these terms separately. 
	For $\mathring{H}_1$, we note that
	\begin{align*}
		\mathring{H}_1(y) 
		= \sum_{\substack{j,k \\ |j+k|_{\mfs} < \nu + \beta}} \frac{X^k}{k!} \frac{\Gamma_{yx} X^j}{j!} D^j K^{\zeta_{k,j}^{x,y}} \ast \mathcal{R} f(x) 
	\end{align*}
	with $\zeta_{k,j}^{x,y}$ defined in \eqref{mls04}.
	A short computation shows that
	\begin{align*}
		\mcQ_{X^\eta} \Gamma_{yz} \mcQ_{\ge \gamma} \mathring{H}_1(z) = \sum_{\substack{j,k \\ |j+k|_{\mfs} < \nu + \beta}} \sum_{\substack{l \le j \\ |l|_{\mfs} \ge \bar{\gamma} - |k|_{\mfs}}} \sum_{\substack{\hat{k}, \hat{l} \\ |\hat{k} + \hat{l}|_{\mfs} = \eta}} \frac{1}{\hat{k}! \hat{l}!} \frac{(y-z)^{k+l - \hat{k} - \hat{l}}}{(k-\hat{k})! (l - \hat{l})!} \frac{(z-x)^{j - l}}{(j-l)!}
		D^j K^{\zeta_{k,j}^{x,z}} \ast \mathcal{R} f(x) . 
	\end{align*}
	Therefore, making use of \eqref{mls01}, we obtain the estimate
	\begin{align*}
		|\mcQ_{X^\eta} \Gamma_{yz} \mcQ_{\ge \gamma} \mathring{H}_1(z)|
		& \lesssim \|\Pi\|_{\gamma; B_{\mfs} (x, 2 )}
			( 1 +\| \Gamma \|_{\gamma; B_{\mfs} (x, 2 )} )
			\|f\|_{\gamma, \nu; x}
			\| K \|_{\mcA_+^{\eq; \mcO}} \, \\
		& \qquad \qquad \sum_{\substack{j,k \\ |j+k|_{\mfs} < \nu + \beta}} \sum_{\substack{l \le j \\ |l|_{\mfs} \ge \bar{\gamma} - |k|_{\mfs}}} \sum_{\substack{\hat{k}, \hat{l} \\ |\hat{k} + \hat{l}|_{\mfs} = \eta}}
		|y-z|_{\mfs}^{|k+l|_{\mfs} - \eta} |z-x|_{\mfs}^{|j-l|_{\mfs} + \nu + \beta - |j+k|_{\mfs}}.
	\end{align*}
	Since we are interested in the regime $|y-z|_{\mfs}, |z-x|_{\mfs} \le \lambda$, we have the bound
	\begin{align*}
			|\mcQ_{X^\eta} \Gamma_{yz} \mcQ_{\ge \gamma} \mathring{H}_1(z)| 
			\lesssim 
			\|\Pi\|_{\gamma; B_{\mfs} (x, 2 )}
			( 1 +\| \Gamma \|_{\gamma; B_{\mfs} (x, 2 )} )
			\|f\|_{\gamma, \nu; x}
			\| K \|_{\mcA_+^{\eq; \mcO}} \,
			|y-z|_{\mfs}^{\bar{\gamma} - \eta} \lambda^{\nu + \beta - \bar{\gamma}}
	\end{align*}
	which was the desired estimate. Therefore to complete our estimate of the term coming from the truncation error, it remains to estimate the term coming from $\mathring{H}_2$. 
	We can again compute that
	\begin{align*}
		& \mcQ_{X^\eta} \Gamma_{yz}\mcQ_{\ge \bar{\gamma}} \mathring{H}_2(z) \\
		& = \sum_{|j|_{\mfs} \le \nu + \beta} \sum_{\substack{k \\ \nu + \beta \le |j+k|_{\mfs} \le m + |j|_{\mfs}}} \sum_{\substack{l \le j \\ |l|_{\mfs} \ge \bar{\gamma} - |k|_{\mfs}}} \sum_{|n|_{\mfs} = \eta} {l+k \choose n} 
		\frac{(z-x)^{j-l} (y-z)^{k+l - \eta}}{k! l! (j-l)!} 
		D^j K^{(-\partial)^k \delta_z} \ast \mathcal{R}f(x).
	\end{align*}
From this, we immediately obtain the estimate
\begin{align*}
	|\mcQ_{X^\eta} \Gamma_{yz}\mcQ_{\ge \bar{\gamma}} \mathring{H}_2(z)| 
	\lesssim 
	\|\Pi\|_{\gamma; B_{\mfs} (x, 2 )}
			( 1 +\| \Gamma \|_{\gamma; B_{\mfs} (x, 2 )} )
			\|f\|_{\gamma, \nu; x}
			\| K \|_{\mcA_+^{\eq; \mcO}} \,
	|y-z|_{\mfs}^{\bar{\gamma} - \eta} \lambda^{\nu + \beta - \bar{\gamma}}
\end{align*}
on the domain of $y,z$ of interest. This completes our treatment of the truncation error. 

We are now left to treat the first term on the right hand side of \eqref{mls02}. This term can be rewritten as
\begin{align*}
	\mcQ_{X^{\eta}} [\mathring{H}(y) - \Gamma_{yz}\mathring{H}(z)]
	&
	=
	\mcQ_{X^\eta} \Bigg ( \sum_{|j|_{\mfs}< \nu + \beta} \sum_{|k|_{\mfs}< m} \frac{X^k}{k!} \frac{\Gamma_{yx} X^j }{j!} \Bigg [ D^j K^{\tilde{\zeta}_{k,j}^{y,z}} \ast \mathcal{R}f(x)\Bigg ] \Bigg )
	\\
	& = \sum_{|j|_{\mfs}< \nu + \beta} \sum_{|k|_{\mfs}< m} \sum_{\substack{0 \le l \le j \\ |l+k|_{\mfs} = \eta}} \frac{(y-x)^{j-l} }{k! l! (j-l)!} D^j K^{\tilde{\zeta}_{k,j}^{y,z}} \ast \mathcal{R}f(x) ,
\end{align*}
where
\begin{align*}
 	\tilde{\zeta}_{k,j}^{y,z} = (-\partial)^k \delta_y - \sum_{l: |k+j+l|_{\mfs} < m} \frac{(y-z)^l}{l!} (-\partial)^{k+l} \delta_z.
\end{align*}
We deduce that
	\begin{align*}
		& | \mcQ_{X^{\eta}} [\mathring{H}(y) - \Gamma_{yz}\mathring{H}(z)] | \\
		& \lesssim \|\Pi\|_{\gamma; B_{\mfs} (x, 2 )}
			( 1 +\| \Gamma \|_{\gamma; B_{\mfs} (x, 2 )} )
			\|f\|_{\gamma, \nu; x} 
			\| K \| _{\mcA_+^{\eq; \mcO}}
			\sum_{|j|_{\mfs} < \nu + \beta} \sum_{|k|_{\mfs}< m} \sum_{\substack{0 \le l \le j \\ |l+k|_{\mfs} = \eta}}
			|y-x|_{\mfs}^{|j-l|_{\mfs}} |y-z|_{\mfs}^{m- |k|_{\mfs}} ,
	\end{align*}
which concludes the proof since
the assumption 
$m \geq \nu + \beta$
implies that
each of the summands is of order
$| y - z |_{\mfs}^{\bar{\gamma} - \eta} \lambda^{\nu + \beta - \bar{\gamma}}$.
\end{proof}
\section{Renormalised models on \texorpdfstring{$\mcT^{\mathrm{eq}}$}{T eq}} \label{s:ren_models}

In this section, we show that one can leverage the calculus of pointed modelled distributions provided in the previous section to obtain models on the usual scalar-valued regularity structure $\mcT^\mathrm{\eq}$ associated to a complete, subcritical rule $R$ from models on an appropriate instance of $\mcT^\mathrm{Ban}(C_w^\alpha)$.

 As in Section~\ref{s:Alg_Models}, in order to work with kernels for which we assume control on only finite many derivatives, we need to restrict our attention to carefully chosen sectors. We fix a finite set of trees $\mcB \subset \mcT(R)$ that is the generating set for a historic sector. The reader should have in mind that we aim to construct $\hat{\Pi}_x \btau$ for $\btau \in \mcB$ where $\hat{\Pi}_x$ is (part of) a model coming from a preparation map on $\mcT(R)$. Our strategy will be to use the results of the previous section to construct a family of modelled distributions $f_x^{\btau}$ on $\mcT^\mathrm{Ban}(C_w^\alpha)$ indexed by $x \in \mbR^d$ and $\btau \in \mcB$ and define $\hat{\Pi}_x \btau = \mcR f_x^{\btau}$. In order to do this we will need to ensure that we are given a model on a good sector of $\mcT^\mathrm{Ban}(C_w^\alpha)$ that contains the range of each $f_x^{\btau}$. Whilst there is room to optimise here, for simplicity we will appeal to a relatively crude construction involving only the regularity of the modelled distributions.
	
\begin{definition}\label{def: gamma_def}
	Given $\gamma_0 > 0$, we define $\gamma_{\btau} \in \mathbb{R}$ recursively via
	\begin{equs}
	 \gamma_\tau = \gamma_0 \text{ for } \tau \in \{\Xi, X^k\} , \qquad
	 \gamma_{\tau \sigma} = \min \{ \gamma_\tau + a_*, \gamma_\sigma + a_*\}, \qquad
	 \gamma_{\mcI_\mfl^k \tau} = \gamma_\tau + |\mfl|_\mfs - |k|_\mfs 
	\end{equs} 
	where $a_* = \min \mcA$ where $\mcA$ is the grading for $\mcT^\eq$. We say that $\gamma_0$ is admissible for $\mcB$ if $\gamma_{\btau} \in \mbR_+\setminus\mbN$ for all $\btau \in \mcB$.
\end{definition}

At this point, we would like to mimic the construction of \cite[Equations (4.1 - 4.4)]{HS23} adapting for the fact that we wish to describe a model itself rather than the corresponding Malliavin derivative as is done there. One subtlety in comparison to \cite{HS23} is that due to the definition of the variable coefficient abstract integration map given in Theorem~\ref{theo: VML Pointed Schauder 2}, in the setting considered here, the pointed modelled distribution associated to a tree $\mathcal{I}\tau$ (defined below) will take values in the sector spanned by polynomials, planted trees and products of polynomials with a single planted tree rather than the sector spanned by only the first two of these. In particular, it is necessary to extend the abstract gradient to act on trees of the form $X^k \mathcal{I}_\mfl \tau$. At the analytic level, we can simply enforce the Leibniz rule which causes no problems in the construction of the model since trees of the form $X^k \mathcal{I}_\mfl \tau$ see no renormalisation at their root. This means that one only has to check that the trees that result from formally applying the Leibniz rule conform to the resulting rule. Since derivatives appearing when applying the Leibniz rule may hit the polynomials rather than the edge itself, this requires us to know that our rule is stable under reducing the derivative decoration on edges.

\begin{definition}\label{def: derivative_completion}
	Given a rule $R$, we define its derivative completion $\bar{R}$ to be the rule defined by
	\begin{align*}
		\bar{R}(\mft) = \{ (\mft_i, k_i)_{i=1}^n : \text{ there exist } j_i \in \mathbb{N^d} \text{ such that } (\mft_i, j_i)_{i=1}^n \in R(\mft) \text{ and } k_i \le j_i \}.
	\end{align*}
\end{definition}

Replacing $R$ by its derivative completion causes no problems due to the following technical result. Since details about rules are not the main focus of this paper and the main challenge of the proof is in appropriately reformulating definitions of \cite{BHZ19}, we provide only a sketch of proof here.

\begin{lemma}
	Suppose that $R$ is a complete and subcritical rule. Then so is its derivative completion $\bar{R}$.
\end{lemma}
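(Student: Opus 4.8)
The statement asserts that the derivative completion $\bar R$ of a complete, subcritical rule $R$ is again complete and subcritical. The plan is to verify each property in turn, working directly from the definitions of \cite[Definitions 5.7, 5.14, 5.22]{BHZ19}. The key observation throughout is that replacing an edge decoration $j_i$ by a smaller one $k_i \le j_i$ \emph{increases} the degree of the corresponding tree (since the degree of a kernel edge of type $\mft$ with decoration $k$ is $|\mft|_\mfs - |k|_\mfs$, which is monotone decreasing in $|k|_\mfs$), and strictly increases the number of planted subtrees only via the polynomial decoration, which does not affect conformity. So morally $\bar R$ only produces trees that are `more regular' than those produced by $R$, and the nontrivial content is purely bookkeeping about how the definitions of \cite{BHZ19} interact with this operation.

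First I would record the elementary monotonicity facts: if $\btau$ is a tree all of whose edges have had their decorations reduced (in the sense above) relative to some $\btau' \in \mcT(R)$, then $|\btau|_\mfs \ge |\btau'|_\mfs$, and $\btau$ has the same underlying combinatorial structure and the same node decorations. Next, for subcriticality, recall that subcriticality of $R$ is the existence of a degree assignment making a certain `reg' functional strictly positive on all edge types; one checks that the same degree assignment witnesses subcriticality of $\bar R$ because every node-type in $\bar R(\mft)$ arises from one in $R(\mft)$ by reducing decorations, hence has degree $\ge$ the original, so the infimum defining the reg functional can only increase. For completeness, the content is that $\bar R$ is normal and that every tree strongly conforming to $\bar R$ has all its `subtree contractions' again conforming; here the main point is that the derivative completion operation commutes appropriately with the extraction/contraction operations used to define completeness — reducing a decoration on an edge of $\btau$ and then extracting a subtree yields the same result as extracting first and then reducing, up to the reshuffling of decorations already built into the coproduct formulas.

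The main obstacle I anticipate is the completeness verification, specifically checking that $\bar R$ is closed under the operation of taking node-types appearing after negative renormalisation (the condition that for each tree conforming to $\bar R$ and each admissible cut, the extracted and contracted pieces still conform). The subtlety is that when one contracts a subtree, edge decorations get pushed onto node decorations via the map $\pi\varepsilon_C$ appearing in $\underline\Delta^-_r$, and one must confirm that a decoration reduction performed \emph{before} contraction still leaves the contracted tree conforming to $\bar R$. I would handle this by a careful case analysis: given $\btau$ strongly conforming to $\bar R$, lift it to a witness $\btau'$ strongly conforming to $R$ (obtained by raising each reduced edge decoration back up), apply completeness of $R$ to $\btau'$ to get conformity of the relevant extracted/contracted pieces, and then push the decoration reductions back down through the contraction, using that $\bar R$ is defined precisely to absorb such reductions. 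The polynomial (node-decoration) parts require only that the relevant combinatorial identities for $\underline\Delta^-_r$ are decoration-linear, which they are. Since, as noted in the excerpt, the genuinely new ideas here are minimal and the work is in re-expressing the definitions of \cite{BHZ19}, I would present this as a sketch, stating the monotonicity lemma precisely and indicating the lift-apply-descend strategy for completeness, referring the reader to \cite[Section 5]{BHZ19} for the routine verifications.
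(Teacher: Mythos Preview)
Your proposal is correct and follows essentially the same approach as the paper's own sketch: normality is immediate, subcriticality is witnessed by the same regularity assignment (since reducing edge decorations only increases degrees), and completeness is handled by characterising trees conforming to $\bar R$ as those obtained from $R$-conforming trees by lowering edge decorations, then using a lift-apply-descend argument against the completeness of $R$. The paper is equally brief, explicitly flagging the $\Theta$-completeness step as the tedious one and, like you, deferring the routine unpacking of \cite[Section 5]{BHZ19} definitions.
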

\begin{proof}[Sketch Proof]
	Normality (cf. \cite[Definition 5.7]{BHZ19}) of $\bar{R}$ is immediate from the definition. To check subcriticality, one simply notes that any regularity assignment  as in \cite[Definition 5.14]{BHZ19} exhibiting subcriticality of $R$ also exhibits subcriticality of $\bar{R}$ since removing derivatives increases regularity. Checking $\Theta$-completeness of $\bar{R}$ (cf. \cite[Definition 5.20]{BHZ19}) is more tedious. Roughly speaking, one can prove that a normal rule is $\Theta$-complete if and only if the set of trees conforming to the rule is closed under the operation of contracting subtrees of negative degree and distributing derivatives appropriately on the outgoing edges of the contracted subtree. Since one can also prove that the set of trees conforming to $\bar{R}$ are precisely those trees that can be obtained by reducing edge decorations of a tree conforming to $R$, the result then follows in a straightforward manner. 
\end{proof}

We recall that we have fixed a complete subcritical rule $R$ generating the regularity structure $\mcT^\eq$. We fix $\bar{R}$ to be the derivative completion of $R$. All instances of $\mcT^\Ban$ in what follows will be built over the underlying set of combinatorial trees $\mcT(\bar{R})$. 

Given $\gamma_0$ that is admissible for $\mcB$, we define the historic sector $W_{\gamma_0}$ of $\mcT^\mathrm{Ban}$ to be the historic sector with generating set of trees given by $\mathrm{Hist}(\{\btau \in \mcT(\bar{R}): |\btau|_{\mfs} < \gamma_*\})$ where $\gamma_* = \max\{\gamma_{\btau}: \exists j \text{ such that } D^j\btau \in \mcB\}$ where we have extended the definition of $\gamma_\btau$ via $\gamma_{\mcI_\mfl^k \btau} = \gamma_{\mcI_\mfl^{j} \btau} + |j - k|_{\mfs}$ if $k \le j$ and $\mcI_\mfl^j \btau \in \mcB$. This construction accounts for the fact that $\mcB$ need not be closed under the operation of removing derivatives at the trunk. We also define $a_*$ to be the regularity of the sector $W_{\gamma_0}$ and $m_* = \max\{\gamma_* - a_*, |\mcI_\mfl \btau|_{\mfs} : \exists k \in \mathbb{N}^d, \mcI_\mfl^k \btau \in \mcB \}$.

         \begin{prop}\label{prop: PMD def}
Let $\gamma_0$ be admissible for $\mcB$ and suppose that 
 $- \alpha > m_* + | \mfs | + \max ( \mfs )$.
Then there exists $\mco > 0$ depending only on $\gamma_*$ and the rule $R$ such that if
$Z = (\Pi, \Gamma)$ is a model on $W_{\gamma_0}$ that is admissible with respect to $A \in \mcA_+^\mco$ and $\xi \in \mcA_-^\eq$ then there exists a unique family of pointed modelled distributions $f_x^{\btau} \in \mcD^{\gamma_{\btau}, |\btau|_\mfs; x}$ satisfying the recursive relations
     \begin{equs}
         &f_x^{\btau} = \mcQ_{< \gamma_0} \Gamma_{yx} \tau \text{ for } \tau \in \{\Xi, X^k\} , \qquad          
        &&f_x^{\btau \bsigma} =      
          f_x^{\btau} \star_{\gamma_{\btau \bsigma}} f_x^{\bsigma} , \qquad
     	&& f_x^{\mcI_\mfl^k \btau} = D^k \mcK^{x; \mfl}_{\gamma_{\btau}, | \btau |_\mfs, m_*} f_x^{\btau}
    \end{equs} 
   where $\mcK^{x; \mfl}_{\gamma, \nu,m}$ denotes the integration operator as in Theorem~\ref{theo: VML Pointed Schauder 2} corresponding to $\mcI_\mfl$. Furthermore, given a second model $\bar{Z}$ on $W_{\gamma_0}$ which is admissible for $\bar{A} \in \mcA_+^\mco$ and $\bar{\xi} \in \mcA_-^\eq$, denoting by $\bar{f}_x^{\btau}$ the respective modelled distributions, there is an $r>0$ 
 such that for every 
 $\btau \in \mcB$ 
it holds that
$$
 \|f_x^{\btau}\|_{\gamma_{\btau}, |\btau|_\mfs; x} \lesssim 1, \qquad
\|f_x^{\btau}; \bar{f}_x^{\btau} \|_{\gamma_{\btau}, |\btau|_\mfs; x} \lesssim {\|Z; \bar{Z}\|_{W_{\gamma_0}, B_{\mfs}(x,r)}} + \|A - \bar{A}\|_{\mcA_+^\mco}
$$
where the implicit constants can be chosen to depend polynomially on
$\|Z\|_{W_{\gamma_0}, B_{\mfs}(x,r)} +  \|\bar{Z}\|_{W_{\gamma_0}, B_{\mfs}(x,r)} + \|A\|_{\mcA_+^\mco} + \|\bar{A}\|_{\mcA_+^\mco}$.
\end{prop}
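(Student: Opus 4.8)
The plan is to carry out a Noetherian induction over $\mcB$ with respect to the partial order $\prec$, simultaneously establishing existence and uniqueness of the family $(f_x^{\btau})_{\btau \in \mcB}$, the bound $\|f_x^{\btau}\|_{\gamma_{\btau}, |\btau|_\mfs; x} \lesssim 1$, and the two-model difference bound. The underlying point is that the three recursive relations only ever refer to modelled distributions indexed by trees strictly smaller than $\btau$ in $\prec$ (for products, the factors $\btau, \bsigma$ satisfy $\btau, \bsigma \prec \btau \bsigma$; for $\mcI_\mfl^k \btau$, one has $\btau \prec \mcI_\mfl^k \btau$; the base cases $\Xi, X^k$ are minimal), so that by the well-foundedness of $\prec$ the recursion is legitimate and we may assume the conclusions for all $\bsigma \prec \btau$.

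\textbf{Key steps.} First I would fix the value of $\mco$: tracking the hypotheses of Theorem~\ref{theo: VML Pointed Schauder 2} (and of the pointed multiplication theorem and of Theorem~\ref{theo: pointed recon}) across all trees in $\mcB$, one sees that it suffices to take $\mco$ larger than a quantity determined by $\gamma_*$, $a_*$, $m_*$, $|\mfs|$ and $\max(\mfs)$; since $a_*, m_*, \gamma_*$ themselves depend only on $\mcB$ and $R$, this is a valid choice of $\mco$ depending only on $\gamma_*$ and $R$. The role of the standing assumption $-\alpha > m_* + |\mfs| + \max(\mfs)$ is precisely to guarantee (via Remark after Theorem~\ref{theo: VML Pointed Schauder 2}) that $(-\partial)^k \delta_y \in C_w^\alpha$ for all $|k|_\mfs < m_*$, and hence that the operators $\mcK_{\gamma, \nu, m_*}^{x;\mfl}$ appearing in the recursion are well-defined on $\mcT^\Ban(C_w^\alpha)$; one also checks $\nu + \beta \le m_*$ and $\gamma_{\btau} + |\mfl|_\mfs \notin \mbN$ (the latter by admissibility of $\gamma_0$) so all the numerical side conditions of that theorem hold. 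Next, for the inductive step:
\begin{itemize}
\item[(i)] \emph{Base cases $\btau \in \{\Xi, X^k\}$.} Here $f_x^{\btau}(y) = \mcQ_{<\gamma_0}\Gamma_{yx}\tau$; that this defines an element of $\mcD^{\gamma_0, |\btau|_\mfs; x}$ with the required bounds is a direct consequence of the definition of a model and the analytic bounds \eqref{eq:mod_bounds_1}--\eqref{eq:mod_bounds_2}, exactly as in \cite{HS23}, and the difference bound follows by replacing $\Gamma$ by $\Gamma - \bar\Gamma$.
\item[(ii)] \emph{Products.} Given $f_x^{\btau} \in \mcD^{\gamma_{\btau}, |\btau|_\mfs; x}$ and $f_x^{\bsigma} \in \mcD^{\gamma_{\bsigma}, |\bsigma|_\mfs; x}$ by the inductive hypothesis, apply the pointed multiplication theorem with $\gamma = \gamma_{\btau\bsigma} = \min\{\gamma_{\btau} + a_*, \gamma_{\bsigma} + a_*\}$ (one checks $(W_1, W_2)$ is $\gamma$-regular because $a_* = \min \mcA$), obtaining $f_x^{\btau\bsigma} = f_x^{\btau}\star_{\gamma_{\btau\bsigma}} f_x^{\bsigma} \in \mcD^{\gamma_{\btau\bsigma}, |\btau|_\mfs + |\bsigma|_\mfs; x}$ with the stated bounds; note $|\btau\bsigma|_\mfs = |\btau|_\mfs + |\bsigma|_\mfs$.
\item[(iii)] \emph{Integration.} Given $f_x^{\btau} \in \mcD^{\gamma_{\btau}, |\btau|_\mfs; x}$, apply Theorem~\ref{theo: VML Pointed Schauder 2} with $\gamma = \gamma_{\btau}$, $\nu = |\btau|_\mfs$, $m = m_*$ to get $\mcK_{\gamma_{\btau}, |\btau|_\mfs, m_*}^{x;\mfl} f_x^{\btau} \in \mcD^{\bar\gamma, |\btau|_\mfs + |\mfl|_\mfs; x}$, then apply the abstract gradient $D^k$ (which is an isometry by Definition~\ref{def:der_of_planted}, extended to trees $X^k\mcI_\mfl\btau$ via Leibniz as discussed) to obtain $f_x^{\mcI_\mfl^k\btau} \in \mcD^{\gamma_{\mcI_\mfl^k\btau}, |\mcI_\mfl^k\btau|_\mfs; x}$; here $\gamma_{\mcI_\mfl^k\btau} = \gamma_{\btau} + |\mfl|_\mfs - |k|_\mfs$ and $|\mcI_\mfl^k\btau|_\mfs = |\btau|_\mfs + |\mfl|_\mfs - |k|_\mfs$, and one checks that $\bar\gamma - |k|_\mfs \ge \gamma_{\mcI_\mfl^k\btau}$ (using $\bar\gamma = \min\{\gamma_{\btau}+|\mfl|_\mfs, m_*, m_* + a_* + |\mfl|_\mfs\}$ together with $m_* \ge \gamma_* - a_* \ge \gamma_{\btau} - a_*$ and $m_* \ge |\mcI_\mfl\btau|_\mfs$) so that truncating down to $\gamma_{\mcI_\mfl^k\btau}$ loses no information. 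Throughout we must verify that all trees produced by the Leibniz rule conform to $\bar R$, which holds precisely because $\bar R$ is the derivative completion of $R$.
\end{itemize}
Uniqueness at each stage is immediate since the three relations \emph{define} $f_x^{\btau}$ outright in terms of the $f_x^{\bsigma}$, $\bsigma \prec \btau$. The two-model estimates follow line by line from the difference estimates in the multiplication theorem and in Theorem~\ref{theo: VML Pointed Schauder 2}: at each step the right-hand side acquires, in addition to the difference norms of the constituent modelled distributions (controlled by the inductive hypothesis by $\|Z;\bar Z\|_{W_{\gamma_0}, B_\mfs(x,r)} + \|A - \bar A\|_{\mcA_+^\mco}$), a term $\|\Pi - \bar\Pi\|_{\gamma; \cdot} + \|\Gamma - \bar\Gamma\|_{\gamma+\beta; \cdot}$ (absorbed into $\|Z;\bar Z\|_{W_{\gamma_0}, B_\mfs(x,r)}$) and, in the integration step, a term $\|K - \bar K\|_{\mcA_+^{\eq;\mcO}}$; but $A, \bar A$ are the kernel assignments induced from $K, \bar K$ via Lemma~\ref{l:ka}, and that lemma's continuity statement gives $\|K - \bar K\|_{\mcA_+^{\eq;\mcO}} \lesssim \|A - \bar A\|_{\mcA_+^\mco}$ — wait, the direction needed is the other one. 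Here one simply works throughout with the $\mcA_+^{\eq;\mcO}$-norm of $K$ (which is the genuine input), so the integration step of Theorem~\ref{theo: VML Pointed Schauder 2} already phrases everything in terms of $\|K - \bar K\|_{\mcA_+^{\eq;\mcO}}$ directly. The polynomial dependence of the implicit constants on $\|Z\|_{W_{\gamma_0}} + \|\bar Z\|_{W_{\gamma_0}} + \|A\|_{\mcA_+^\mco} + \|\bar A\|_{\mcA_+^\mco}$ is propagated through the induction: each application of one of the cited theorems multiplies constants by a polynomial factor in these norms, and there are only finitely many trees in $\mcB$ (hence finitely many in $W_{\gamma_0}$, since $\mcB$ is finite, by Lemma~\ref{l:finite}), so the composite constant remains polynomial. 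The radius $r$ is chosen to accommodate the balls $B_\mfs(x,2)$ appearing in the cited estimates, enlarged finitely many times through the recursion — one can take $r = 2$ since all the relevant estimates are already stated on $B_\mfs(x,2)$ and $\bar{\mfK}$-type neighbourhoods are absorbed into a single such ball by monotonicity in the compact set.

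\textbf{Main obstacle.} The genuinely delicate point is the bookkeeping in the integration step (iii): one must verify that the regularity index $\bar\gamma$ delivered by Theorem~\ref{theo: VML Pointed Schauder 2}, after applying $D^k$ and truncating, is at least $\gamma_{\mcI_\mfl^k\btau}$ \emph{and} that the target sector is exactly $W_{\gamma_0}$ (equivalently, that every tree appearing in $f_x^{\mcI_\mfl^k\btau}$ lies in the generating set $\mathrm{Hist}(\{\btau' : |\btau'|_\mfs < \gamma_*\})$). The former is the numerical juggling of $\gamma_*, a_*, m_*$ recorded above, designed into Definition~\ref{def: gamma_def} and the definitions of $a_*, m_*$ precisely so that it works; the latter requires using that $f_x^{\mcI_\mfl^k\btau}$ takes values in the span of polynomials, planted trees $\mcI_\mfl^j(\cdot)$ with the relevant argument, and products of a polynomial with one such planted tree (the novel feature relative to \cite{HS23}, forced by the form of $\mcK_{\gamma,\nu,m}^{x;\mfl}$ in Theorem~\ref{theo: VML Pointed Schauder 2}), and that all such trees conform to $\bar R$ and have degree below $\gamma_*$, hence lie in the historic completion defining $W_{\gamma_0}$. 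Once these inclusions and inequalities are checked, the estimates themselves are routine applications of the cited theorems.
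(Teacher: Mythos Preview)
Your proposal is correct and follows essentially the same approach as the paper's own proof: a Noetherian induction over $\mcB$ (with respect to $\prec$), with the base cases $\Xi, X^k$ handled directly from the model bounds, the product case by the pointed multiplication theorem (\cite[Proposition~3.11]{HS23}), and the integration/differentiation case by Theorem~\ref{theo: VML Pointed Schauder 2} combined with the abstract gradient. The paper's proof is considerably terser than yours but the logical structure is identical; your additional bookkeeping (checking $\bar\gamma = \gamma_{\btau} + |\mfl|_\mfs$ via the definition of $m_*$, verifying that Leibniz outputs conform to $\bar R$, tracking the radius $r$) is exactly what is implicit in the paper's appeal to ``stability under integration and differentiation follows from Theorem~\ref{theo: VML Pointed Schauder 2}''. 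Your self-caught observation about $\|K-\bar K\|_{\mcA_+^{\eq;\mcO}}$ versus $\|A-\bar A\|_{\mcA_+^\mco}$ is a genuine minor ambiguity in the proposition's phrasing (the operator $\mcK^{x;\mfl}_{\gamma,\nu,m_*}$ is only defined once $K$ is fixed, and Theorem~\ref{theo: VML Pointed Schauder 2} bounds differences in terms of $K$), and your resolution---work throughout with $K$ as the primitive input---is the correct one and is how the paper uses this proposition downstream in Section~\ref{s:Proof_Main}.
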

We remind the reader that for $\btau \in \{X^k, \Xi_\mfl\}$, $V^{\proj \sttau} \simeq \mbR$ so that it is legitimate to identify $\btau$ with $1 \in V^{\proj \sttau}$ (which matches the usual scalar valued setting).

\begin{proof}
    At this point, we have all the ingredients to reduce the proof to a (Noetherian) induction over $\mcB$. For $\btau \in \{\Xi, X^k\}$, one can check that $\mcQ_{< \gamma} \Gamma_{\cdot, x} \tau \in \mcD^{\gamma, |\tau|_{\mfs}; x}$ for every $\gamma > 0$. The claim in the case of two models for these values of $\btau$ follows by noting that
    \begin{align*}
    	f_x^{\btau}(y) - \bar{f}_x^{\btau}(y) =  \mcQ_{<\gamma_0} (\Gamma_{yx}- \bar{\Gamma}_{yx})\btau.
    \end{align*} 
    Stability under integration and differentiation follows from Theorem~\ref{theo: VML Pointed Schauder 2}  (recalling Assumption~\ref{ass:reg}) whilst stability under multiplication follows from \cite[Proposition 3.11]{HS23}. Here we make use of the fact that $\mcB$ generates a good sector to appeal to the induction hypothesis. 
\end{proof}

For the remainder of this section, we assume that a finite set of trees $\mcB \subset \mcT(R)$ and an admissible\footnote{Note that such a $\gamma_0$ exists since we have assumed that $\mcB$ is finite.} $\gamma_0$ for $\mcB$ have been fixed. We assume also that 
$- \alpha > m_* + | \mfs | + \max ( \mfs )$.

\begin{definition}
	Given a model $Z$ on $W_{\gamma_0}$, we define $\Phi_x(Z) : \mcB \to \mcD^\prime(\mbR^d)$ via $\Phi_x(Z)\btau = \mcR f_x^{\btau}$. 
\end{definition}

The idea is that we will eventually show that $\Phi_x(Z) = \Pi_x^\mathrm{eq} |_\mcB$ for a model $Z^\mathrm{eq} = (\Pi_x^\mathrm{eq}, \Gamma^{\mathrm{eq}})$ on $ \langle \mcB \rangle \subset \mcT^\mathrm{eq}$.  In particular, bounds obtained for $\Phi_x(Z)$ will also immediately imply bounds for the model $Z^\mathrm{eq}$. With this in mind, we note here that combining the reconstruction theorem for pointed modelled distributions given in Theorem~\ref{theo: pointed recon} with the result of Proposition~\ref{prop: PMD def} immediately yields the following quantitative estimates. In the following statement $\|\cdot\|_{\btau; \mfK}$ is as defined in \eqref{refined_pi_norm}.

\begin{lemma}\label{lem:transfer_down}
	Given a pair of models $Z, \bar{Z}$ on $W_{\gamma_0}$ that are admissible with respect to $A, \bar{A} \in \mcA_+^\mco$ and $\xi, \bar{\xi} \in \mcA_-^\eq$ respectively with $\mco$ as in Proposition~\ref{prop: PMD def}, for each $\btau \in \mcB$
	\begin{align*}
		\|\Phi(Z)\|_{\btau; \mfK} \lesssim 1, \qquad  \|\Phi(Z) - \Phi(\bar{Z}) \|_{\btau; \mfK} \lesssim {\|Z; \bar{Z}\|_{W_{\gamma_0}, \mfK + B_{\mfs}(x,r+1)}} + \|A - \bar{A}\|_{\mcA_+^\mco}
	\end{align*}
	where the implicit constants 
	can be chosen to depend polynomially on $\|Z\|_{W_{\gamma_0}, B_{\mfs}(x,r)} +  \|\bar{Z}\|_{W_{\gamma_0}, B_{\mfs}(x,r)} + \|A\|_{\mcA_+^\mco} + \|\bar{A}\|_{\mcA_+^\mco}$.
\end{lemma}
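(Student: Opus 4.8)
The plan is to deduce this as an essentially immediate consequence of the reconstruction theorem for H\"older pointed modelled distributions (Theorem~\ref{theo: pointed recon}) combined with the bounds on the family $f_x^{\btau}$ established in Proposition~\ref{prop: PMD def}. First I would fix $\btau \in \mcB$ and recall that by definition $\Phi_x(Z)\btau = \mcR f_x^{\btau}$, where $f_x^{\btau} \in \mcD^{\gamma_{\btau}, |\btau|_\mfs; x}$ takes values in the sector $W_{\gamma_0}$ which has regularity $a_* \le 0$. The key point is that the seminorm $\|\cdot\|_{\btau; \mfK}$ from \eqref{refined_pi_norm} is, after the identification of $\btau \in \mcB$ with $1 \in V^{\proj \sttau} \simeq \mbR$ for $\btau \in \{X^k, \Xi_\mfl\}$ and more generally after testing against $\varphi_x^\lambda$, nothing other than a supremum over $x \in \mfK$ and $\lambda \in (0,1]$ of $\lambda^{-|\btau|_\mfs} |\langle \mcR f_x^{\btau}, \varphi_x^\lambda\rangle|$. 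Applying Theorem~\ref{theo: pointed recon} with $\nu = |\btau|_\mfs$ (which requires $r > -a_*$, a condition that is built into the ambient regularity structure as recalled in Section~\ref{ss:notations}) gives
\begin{equs}
	|\langle \mcR f_x^{\btau}, \varphi_x^\lambda\rangle| \lesssim \lambda^{|\btau|_\mfs} \|f_x^{\btau}\|_{\gamma_{\btau}, |\btau|_\mfs; x} \|\Pi\|_{\gamma_{\btau}; B_{\mfs}(x,2)} \big(1 + \|\Gamma\|_{\gamma_{\btau}; B_{\mfs}(x,2)}^2\big),
\end{equs}
uniformly over $\lambda \in (0,1]$, $\varphi \in \mcB^r$ and $x \in \mfK$. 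Dividing by $\lambda^{|\btau|_\mfs}$ and taking suprema, the first bound $\|\Phi(Z)\|_{\btau; \mfK} \lesssim 1$ follows directly from the bound $\|f_x^{\btau}\|_{\gamma_{\btau}, |\btau|_\mfs; x} \lesssim 1$ of Proposition~\ref{prop: PMD def}, with the implicit constant inheriting the stated polynomial dependence on $\|Z\|_{W_{\gamma_0}, B_{\mfs}(x,r)} + \|A\|_{\mcA_+^\mco}$ (absorbing the $\gamma_{\btau}$-seminorms of $\Pi, \Gamma$ into the model seminorm $\|Z\|$; note that $r$ here can be taken to be the radius from Proposition~\ref{prop: PMD def} enlarged by $2$).

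For the difference bound, I would apply the two-model version of Theorem~\ref{theo: pointed recon} to the pair $f_x^{\btau}, \bar{f}_x^{\btau}$, which gives
\begin{equs}
	|\langle \mcR f_x^{\btau} - \bar{\mcR} \bar{f}_x^{\btau}, \varphi_x^\lambda\rangle| \lesssim \lambda^{|\btau|_\mfs}\big(\|f_x^{\btau}; \bar{f}_x^{\btau}\|_{\gamma_{\btau}, |\btau|_\mfs; x} + \|\Pi - \bar{\Pi}\|_{\gamma_{\btau}; B_{\mfs}(x,2)} + \|\Gamma - \bar{\Gamma}\|_{\gamma_{\btau}; B_{\mfs}(x,2)}\big),
\end{equs}
with the implicit constant depending polynomially on the $\gamma_{\btau}$-seminorms of both models and both modelled distributions. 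Bounding $\|\Pi - \bar\Pi\|_{\gamma_{\btau}; B_{\mfs}(x,2)} + \|\Gamma - \bar\Gamma\|_{\gamma_{\btau}; B_{\mfs}(x,2)} \le \|Z; \bar Z\|_{W_{\gamma_0}, B_{\mfs}(x,2)}$ and inserting the second estimate of Proposition~\ref{prop: PMD def} for $\|f_x^{\btau}; \bar{f}_x^{\btau}\|_{\gamma_{\btau}, |\btau|_\mfs; x}$, namely $\lesssim \|Z; \bar Z\|_{W_{\gamma_0}, B_{\mfs}(x,r)} + \|A - \bar A\|_{\mcA_+^\mco}$, and then taking the supremum over $\lambda, \varphi$ and $x \in \mfK$ (which replaces $B_{\mfs}(x,r)$ by $\mfK + B_{\mfs}(0, r+1)$, the extra $+1$ coming from the fact that the reconstruction against $\varphi_x^\lambda$ with $\lambda \le 1$ only sees a neighbourhood of $x$ of size $2$, already contained once we fatten by $r$, but we keep the stated radius for safety) yields exactly the claimed estimate $\|\Phi(Z) - \Phi(\bar Z)\|_{\btau; \mfK} \lesssim \|Z; \bar Z\|_{W_{\gamma_0}, \mfK + B_{\mfs}(x, r+1)} + \|A - \bar A\|_{\mcA_+^\mco}$. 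The polynomial dependence of the implicit constant on $\|Z\|_{W_{\gamma_0}, B_{\mfs}(x,r)} + \|\bar Z\|_{W_{\gamma_0}, B_{\mfs}(x,r)} + \|A\|_{\mcA_+^\mco} + \|\bar A\|_{\mcA_+^\mco}$ follows by composing the polynomial dependence in Theorem~\ref{theo: pointed recon} with that in Proposition~\ref{prop: PMD def}.

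There is essentially no genuine obstacle here: the lemma is a packaging statement, and all the work has been done in establishing the pointed multi-level Schauder estimates (Theorems~\ref{theo: VML Pointed Schauder 1} and \ref{theo: VML Pointed Schauder 2}), the pointed reconstruction theorem, and Proposition~\ref{prop: PMD def}. The only mild care needed is bookkeeping of the various radii of fattening (one must check that $2 \le r+1$ or simply enlarge $r$ once and for all so that $B_{\mfs}(x,2) \subset B_{\mfs}(x,r)$ and that the supremum over $x \in \mfK$ is absorbed into $\mfK + B_{\mfs}(0, r+1)$), and of the fact that the $\gamma_{\btau}$ appearing in the reconstruction bounds are $\le \gamma_*$ for all $\btau \in \mcB$ so that the model seminorms on $W_{\gamma_0}$ at level $\gamma_*$ dominate all the quantities appearing. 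I would state these radius choices explicitly at the start of the proof and then write the two displays above and conclude.
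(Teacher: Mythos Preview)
Your proposal is correct and follows exactly the approach the paper takes: the paper states (without a displayed proof) that the lemma follows immediately by combining the pointed reconstruction theorem (Theorem~\ref{theo: pointed recon}) with the bounds on $f_x^{\btau}$ from Proposition~\ref{prop: PMD def}, and your argument spells this out in detail. The bookkeeping you mention regarding radii and the identification of the seminorm with the scalar-valued version of \eqref{refined_pi_norm} is precisely what is needed to make the ``immediately yields'' rigorous.
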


\subsection{An Exact Formula for Models on \texorpdfstring{$\mcT^\mathrm{eq}$}{T eq}}\label{ss: mod_eq}
In this section,
we assume to have fixed parameters $\mcO \geq \mco + \max (\mfs)$, $\mcO \geq - \alpha + \max (\mfs)$, $\int w < \infty$,
a smooth noise assignment $\xi \in \mcA_-^{\infty; \eq}$ and a kernel assignment $K \in \mcA_+^{\eq, \mcO}$ on $\mcT^{\eq}$. 
Furthermore we recall that if $\mcT^\Ban = \mcT^\Ban_{\bar{R}}(C_w^\alpha)$ then the kernel assignment $K$ also induces a kernel assignment $A: \bigoplus_{\mfl \in \mfL_+} C_w^\alpha \to \mcK_\mco^{|\mfl|_{\mfs}}$ as described in Section~\ref{ss:quenched}. This kernel assignment is such that $K_\mfl^y(y-x) = A_\mfl(\delta_y)(y-x)$. Given this data, we aim to exhibit a preparation map on the historic sector\footnote{We note that the usual scalar valued regularity structures are a special case of our vectorial construction with the Banach space assignment $V_\mfl = \mbR$ so that it makes sense to talk about historic sectors in this context.} of $\mcT^\eq$ generated by $\mcB$ such that if we denote the corresponding model by $(\Pi^\eq, \Gamma^\eq)$ then we have $\Pi_x^\eq \btau = \Phi_x(Z^\Ban)\btau$ for all $\btau \in \mcB$ where $Z^\Ban = (\Pi^\Ban, \Gamma^\Ban)$ is the BPHZ model on the historic sector of $\mcT^\Ban$ generated by $\mcB$. In fact, the results of this section would apply to any preparation map on $\mcT^\Ban$ of the form specified in Lemma~\ref{l:pfl}.

We recall here that the correct notion of preparation map on $\mcT^\eq$ is that of a state-space dependent preparation map as is defined in \cite[Definition 5.1]{BB21}. We remark that in principle the construction of a model from a state-space dependent preparation map in our setting does not quite follow from \cite{BB21} due to our weaker assumptions on kernel assignments (required to deal with the coefficient field in $C^r$ rather than $C^\infty$ topologies). However, the adaptation to \cite{BB21} runs along similar lines to the adaptation to the translation invariant setting considered in Section~\ref{s:Alg_Models}. The only significant difference is that one does not expect $\Pi_x \btau$ to be smooth in this setting since we cannot appeal to integration by parts to move derivatives off of the kernels $K_\mfl$. However this issue can be circumvented by instead constructing $\Pi_x \btau \in L_\mathrm{loc}^{\infty} \Big (\frac{dy}{|y-x|_{\mfs}^{|\btau|_{\mfs}}} \Big )$ and noting that in place of appealing to Taylor expansion to control $\Pi_x \mcI_\mfl^k \btau$, one can instead appeal to a straightforward adaptation of the proof of the Extension Theorem given in \cite[Theorem 5.14]{Hai14} to obtain the right mapping properties of $K_\mfl$ on these spaces. In order to avoid significant repetition, we leave the details to the reader. 

For our purposes, the most important class of state-space dependent preparation maps are those arising from $\Delta_r^-$ in a similar way to Lemma~\ref{l:pfl}. We recall that given a state-space dependent functional $\ell_y : \mcT_-^\eq \to \mbR$ such that the dependence on $y$ is continuous, $P(y, \tau) = (\ell_y \otimes \id) \Delta_r^-$ defines a state-space dependent (strong) preparation map. This means that in order to exhibit a preparation map on $\mcT^\eq$, it suffices to exhibit $\ell_y^\eq$. 

To do this, in order to capture the difference between taking derivatives on $\mcT^\Ban$ where the ``upper slot'' is frozen and on $\mcT^\eq$ where derivatives may hit the ``upper slot'', it is convenient to introduce an intermediary class of trees with an extra decoration that accounts for the use of the Leibniz rule on $\mcT^\eq$ to split derivatives between the upper and lower slot before passing to $\mcT^\Ban$. 

\begin{definition}
	An over-decorated tree is a decorated tree\footnote{Strictly speaking, a decorated tree will be an isomorphism class of such trees where the relevant notion of isomorphism also preserves the `over-decoration'} $\btau = (\btau, \mfn, \mfe)$ equipped with an additional edge decoration $\mck : E_+(\tau) \to \mbN^d$ such that for each $e \in E_+(\btau)$, 
	$|k_e|_{\mfs} < m_*$. 
	We will often write $\btau = (\btau, \mfn, \mfe, \mck)$ to denote an over-decorated tree and will write 
	$\tilde{\mcT}(R) =  \{(\btau, \mfn, \mfe, \mck): (\btau, \mfn, \mfe) \in \mcT({R}) \}.$ We extend the degree map to over-decorated trees by writing 
	$$|(\btau, \mfn, \mfe, \mck)|_\mfs = |(\btau, \mfn, \mfe)|_\mfs$$
	and write $\tilde{\mcT}_-(R)$ for the set of elements of $\tilde{\mcT}(R)$ of negative degree.
\end{definition}

We then define a linear map $\iota_x: \langle \tilde{\mcT} \rangle \to \mcT^\Ban$ via
\begin{equs} \label{eq:iota}
	\iota_x 
(\btau, \mfn, \mfe, \mck) =  \otimes_{\langle \btau \rangle}^{\tau} \big[\big((-\partial)^{\mck_e}\delta_x \big)_{e\in E_+(\tau)}\big] 
\end{equs}
which we note is independent of the choice of $\tau \in (\btau, \mfn, \mfe, \mck)$.

With this notation in hand, we are ready to define our state-space dependent preparation map on $\mcT^\eq$. We define $\ell_y^\eq$ on $\mcT_-^\eq$ via $\ell_y^\eq = \ell_y^\Ban \circ \iota_y \circ \mfD$ where
\begin{align}\label{eq:D_def}
	\mfD (\tau, \mfn, \mfe) = \sum_{\mck, \ell} \frac{1}{k!} {\mfe \choose \ell} \mfp_{\tilde{\mcT}_-} (\tau, \mfn + \pi \mck, \mfe - \ell, \mck + \ell)
\end{align}
where the sum ranges over all edge decorations $\mck, \ell$ on $(\btau,\mfn, \mfe)\in \mathcal{T}^{\mathrm{eq}}_-\subset\mcT^{\eq}$. We note that the sum appearing above is a finite sum due to the binomial coefficient and the projection onto $\tilde{\mcT}_-(R)$. We remind the reader that the map $\pi$ converts edge decorations of a tree to node decorations via $$\pi \mck(v) = \sum_{\substack{w \in N (\tau): v < w \, \\ \, e = (v, w) \in E ( \tau )}} \mck ( e )$$
where $<$ denotes the usual partial order on the vertices of a rooted tree. 
Our goal in this section is then to establish the following proposition.
\begin{prop}\label{prop:f_algebraic_renormalisation}
		Suppose that $\mcB \subset \mcT(R)$ is the generating set of an historic sector in $\mcT^\eq$. Fix a smooth noise assignment $\xi \in \mcA_-^{\infty; \eq}$ and a kernel assignment $K \in \mcA_+^{\eq; \mcO}$. 
		Denote by $A \in \mcA_+^\mco$ the kernel assignment on $\mcT^\Ban(C_w^\alpha)$ associated to $K$ as described in Section~\ref{ss:quenched}. 
		
		Fix $\gamma_0$ that is admissible for $\mcB$ and suppose that $\mco > \mathrm{ord}(W_{\gamma_0})$
		 and that $\mcO > \mathrm{ord}(\langle \mcB \rangle)$. Let $Z^\eq = (\Pi^\eq, \Gamma^\eq)$ denote the model on $\langle \mcB \rangle$ associated to the preparation map $P_y = (\ell_y^\eq \otimes \id) \Delta_r^-$, the noise assignment $\xi$ and the kernel assignment $K$. Then we have that
		\begin{align*}
			\Pi_x^\eq \btau  = \Phi_x(Z^\Ban) \btau
		\end{align*}
		for every $\btau \in \mcB$ where $Z^\Ban$ is the BPHZ model on $W_{\gamma_0}$ with noise assignment $\xi$ and kernel assignment $A$. 
\end{prop}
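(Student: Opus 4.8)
The plan is to prove the identity $\Pi_x^\eq \btau = \Phi_x(Z^\Ban)\btau$ by a Noetherian induction over $\mcB$ with respect to $\prec$, simultaneously tracking the relationship between the two models and their multiplicative/pre-model variants. Concretely, I would introduce the intermediate objects $\boldsymbol{\Pi}^\eq$, $\boldsymbol{\Pi}^{\eq, \times}$ on $\mcT^\eq$ (associated to $P_y = (\ell_y^\eq \otimes \id)\Delta_r^-$ in the sense of the variable-coefficient analogue of Definition~\ref{def:mod_from_Pmap}), and the corresponding $\hat{\boldsymbol{\Pi}}^\Ban$, $\hat{\boldsymbol{\Pi}}^{\Ban, \times}$, $\hat{\Pi}^{\Ban, \times}$ on $W_{\gamma_0}$ coming from the BPHZ preparation map $\hat{P} = (\hat\ell^\Ban \otimes \id)\Delta_r^-$. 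The induction hypothesis for a tree $\btau$ should assert, for all $x$: (i) $\Pi_x^\eq \btau = \mcR f_x^{\btau}$; and in order to make the induction close, also (ii) relations comparing the $\eq$-side multiplicative functional $\boldsymbol{\Pi}_x^{\eq, \times}\btau(y)$ and the $\Ban$-side evaluated modelled-distribution data — in practice the statement that $f_x^{\btau}$ admits the algebraic description in terms of the coproduct-type operation $\tilde{\Delta}$ and the frozen characters $\ell_y^\Ban$, which is exactly the content of Sections~\ref{ss:alg_rep} and \ref{ss:mod_ident} that this proposition is meant to be the culmination of.

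The induction proceeds by the usual case split into $\btau \in \{\Xi_\mfl, X^k\}$, $\btau$ planted, and $\btau$ a product of planted trees. The cases $\btau = X^k, \Xi_\mfl$ are immediate from admissibility of both models and the fact that the renormalisation functionals vanish on such trees, combined with the definition $f_x^{\btau} = \mcQ_{<\gamma_0}\Gamma_{\cdot x}\btau$ from Proposition~\ref{prop: PMD def}. The product case reduces to multiplicativity: $\mcR(f_x^{\btau}\star_{\gamma} f_x^{\bsigma})$ should be identified with the product of reconstructions via Theorem~\ref{theo: pointed recon} and the multiplication results, while on the $\eq$-side $\Pi_x^\eq$ acts multiplicatively modulo the preparation map $P_y$; since $P_y$ of the form $(\ell_y^\eq \otimes \id)\Delta_r^-$ and the corresponding $\Ban$-side preparation map only extract at the root, Lemma~\ref{l:age_ind_prop} and the fact that $\mathring\Delta_r^-$ strictly reduces the age lets one invoke the induction hypothesis on all $\bsigma \prec \btau$, paralleling the argument of Lemma~\ref{lem:up_down}. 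The planted case $\btau = \mcI_\mfl^k \bsigma$ is where the real work lies: here one must match the admissibility relation for $\Pi_x^\eq \mcI_\mfl^k \bsigma$ (which convolves with $K_\mfl$ as a genuine variable-coefficient kernel and subtracts the $x$-centred Taylor jet of $z \mapsto [K_\mfl \Pi_x^\eq \bsigma](z)$) against $\mcR(D^k \mcK_{\gamma_{\bsigma}, |\bsigma|_\mfs, m_*}^{x;\mfl} f_x^{\bsigma})$, whose reconstruction is given by the formula $K_\mfl \mcR f_x^{\bsigma} - \sum_{|j|_\mfs < \nu+\beta} \frac{(\cdot-x)^j}{j!} D^j(K_\mfl \mcR f_x^{\bsigma})(x)$ from Theorem~\ref{theo: VML Pointed Schauder 2}. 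The key computation — already foreshadowed in the displayed calculation preceding Theorem~\ref{theo: VML Pointed Schauder 2}, where $D^k[z \mapsto K^z \ast \Pi_x\btau(z)]_{z=x}$ is expanded via Leibniz into $\sum_{j\le k}\binom{k}{j} D^j K^{(-\partial)^{k-j}\delta_x}\ast\Pi_x\btau(x)$ — is that the over-decoration bookkeeping encoded in $\iota_x$ and the map $\mfD$ of \eqref{eq:D_def} is precisely what translates the Leibniz splitting of derivatives on the $\eq$-side into the frozen-coefficient evaluations $(-\partial)^{\mck_e}\delta_x$ on the $\Ban$-side; this is what forces the definition $\ell_y^\eq = \ell_y^\Ban \circ \iota_y \circ \mfD$ and the corresponding matching of the two renormalisation functionals.

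The main obstacle, and the heart of the proof, is establishing that the renormalisation functionals are consistent, i.e.\ that $\ell_y^\eq$ as defined by $\ell_y^\Ban\circ\iota_y\circ\mfD$ is exactly the functional needed so that the $\eq$-model obtained from $P_y = (\ell_y^\eq \otimes \id)\Delta_r^-$ reconstructs to $\Phi(Z^\Ban)$. This amounts to proving an approximate intertwining relation between $\Delta_r^-$ (acting on $\mcT^\eq$, and on the over-decorated trees) and the coproduct-type operation $\tilde\Delta$ governing the algebraic description of the $f_x^{\btau}$ — concretely, that $\mfD$, $\iota$, $\Delta_r^-$ and $\tilde\Delta$ fit into a commuting (up to terms that are killed by the relevant projections / land in the polynomial sector) diagram. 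Here the subtlety is threefold: the Leibniz rule on $\mcT^\eq$ redistributes derivative decorations between the edge decoration $\mfe$ and the node decorations, which must be tracked through the derivative completion $\bar R$ (this is why $\bar R$ was introduced and shown to remain complete and subcritical); the contraction $\Delta_r^-$ only extracts subtrees at the root, and one must verify that freezing the coefficient at $y$ versus at interior variables of integration agrees up to the Taylor-jet corrections built into $\mcK^{x;\mfl}_{\gamma,\nu,m_*}$; and the projections onto $\mcT_-^\eq$ and $\tilde{\mcT}_-(R)$ must be shown to be compatible. Once this algebraic identity is in hand, the induction closes by the uniqueness statements in Propositions~\ref{prop:mod_prep_1} and \ref{prop:mod_prep_2} together with the reconstruction identities of Section~\ref{sec:mod_dist}. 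I would also flag that one should check integrability/well-definedness of all objects uniformly in $x$, but since $\xi$ is a fixed \emph{smooth} noise assignment here this is routine and follows from the bounds already established in Proposition~\ref{prop: PMD def} and Lemma~\ref{lem:transfer_down}.
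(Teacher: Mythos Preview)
Your overall architecture is right --- induction over $\mcB$, algebraic description of $f_x^{\btau}$ via $\tilde\Delta$, and an intertwining relation between $\Delta_r^-$ and $\tilde\Delta$ as the crux --- but you have inverted where the difficulty lies, and this matters for how the argument actually closes.

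The planted case is in fact the \emph{easy} one. Once you have the induction hypothesis $\Pi_x^\eq \bsigma = \mcR f_x^{\bsigma}$, the reconstruction identity in Theorem~\ref{theo: VML Pointed Schauder 2},
\[
\mcR \mcK_{\gamma,\nu,m_*}^{x;\mfl} f_x^{\bsigma} = K_\mfl \mcR f_x^{\bsigma} - \sum_{|j|_\mfs < |\bsigma|_\mfs + |\mfl|_\mfs} \frac{(\cdot - x)^j}{j!} D^j (K_\mfl \mcR f_x^{\bsigma})(x),
\]
is already \emph{exactly} the admissibility relation for $\Pi_x^\eq \mcI_\mfl^k \bsigma$. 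The Leibniz bookkeeping you describe is built into the definition of $\mcK_{\gamma,\nu,m_*}^{x;\mfl}$ (this is precisely why the paper performed that computation before stating the theorem), so nothing further is required here. The over-decoration machinery and the map $\mfD$ do not enter the planted case at all, because $P_y$ acts trivially on planted trees.

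The hard case is the product $\btau = \prod_i \btau_i$, and specifically the identity for $\Pi_x^\eq$ (not $\Pi_x^{\eq,\times}$). Multiplicativity alone does not suffice: $\Pi_x^\eq = \Pi_x^{\eq,\times}(\ell_y^\eq \otimes \id)\Delta_r^-$ involves the functional $\ell_y^\eq = \ell^\Ban \circ \iota_y \circ \mfD$, and on the other side $\mcR f_x^{\btau}(y) = \Pi_y^\Ban f_x^{\btau}(y)(y) = (\ell^\Ban \otimes \Pi_y^{\Ban,\times})\Delta_r^- \iota_y [\cdot](y) \circ (\iota_y \otimes \chi_y^x)\tilde\Delta \btau$. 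Matching these requires the identity (this is Lemma~\ref{lem: Delta_comb} in the paper)
\[
(\ell_y^\eq \otimes [\Pi_y^{\Ban,\times}\!\circ\iota_y](y) \otimes \chi_y^x)(\id \otimes \tilde\Delta)\Delta_r^- \btau
= (\ell^\Ban\!\circ\iota_y \otimes [\Pi_y^{\Ban,\times}\!\circ\iota_y](y) \otimes \chi_y^x)(\Delta_r^- \otimes \id)\tilde\Delta \btau.
\]
This is \emph{not} a clean commutation of $\Delta_r^-$ and $\tilde\Delta$: it only holds after applying the specific functionals shown, and the proof exploits that $\Pi_y^{\Ban,\times}[\iota_y \sigma](y) = 0$ whenever $\sigma$ has non-zero polynomial decoration at the root, which kills the mismatched multinomial coefficients arising from the fact that $\tilde\Delta$ carries the projection $\mcQ_{<\gamma_\tau}$ while $\Delta_r^-$ does not. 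The actual computation is a lengthy exercise in Chu--Vandermonde identities and careful reindexing of cuts, node decorations, and the over-decorations $\mck$, $\ell$ introduced by $\mfD$; it is in this product-case computation, not in the planted case, that the definition of $\mfD$ is justified.

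Two smaller corrections: the paper's enhanced induction hypothesis is $\Pi_x^{\eq,\times}\btau = \mcR^\times f_x^{\btau}$ (reconstruction with respect to the model $(\Pi^{\Ban,\times},\Gamma^\Ban)$), not a statement about $\boldsymbol{\Pi}^\eq$ or $\boldsymbol{\Pi}^{\eq,\times}$; and the induction is in $\mathrm{Age}(\btau)$ rather than $\prec$ directly, though this is a minor technical point.
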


We note that it straightforward to establish this proposition when $Z^\Ban$ and $Z^\eq$ are replaced with the corresponding canonical lifts. As such, the main challenge is dealing with renormalisation at the algebraic level.  We note that the definition of $\Phi_x(Z^\Ban)$ via the recursive definition of the corresponding $f_x^{\btau}$ is not particularly convenient for dealing with this challenge. As a result, before turning to the proof of Proposition~\ref{prop:f_algebraic_renormalisation}, we will reformulate the definition of $f_x^{\btau}$ in terms of a coproduct-type operation that can be formulated non-recursively. 

\subsection{An Algebraic Representation of \texorpdfstring{$f_x^{\tau }$}{f x tau}}\label{ss:alg_rep}

In this subsection we derive an explicit algebraic expression of 
the pointed modelled distribution
$f^{\btau}$
defined in Proposition~\ref{prop: PMD def}, 
based on a new
character $\chi$
and a new map $\tilde{\Delta}$ which acts on trees by a suitable
cut operations reminiscent of positive renormalisation. We assume that all objects as in the statement of Proposition~\ref{prop:f_algebraic_renormalisation} are fixed.
We define 
 $$ \mathcal{T}_{\sim}(R) = \big\{ \btau=X^j \prod_{i} \mcI_\mfl^{k_i} \btau_i \ : \ \btau \text{ conforms to } R , \  \gamma_{\mathcal{I}_\mfl^{k_i} \btau}>0 \big\} $$
  and 
denote by $\mfp_{\mcT_{\sim}}:  \langle \mathcal{T} ( R ) \rangle \to \langle \mathcal{T}_{\sim}(R) \rangle $ the associated projection. We note that $\langle \mcT_\sim \rangle$ is an algebra for the tree product. Furthermore, we define $\mcB_\sim \subset \mcT_\sim$ to consist of those $\tau$ as above such that each $\btau_i$ lies in $\mcB$. The following definition will be used to describe the coefficients of $f_x^{\btau}$ in the polynomial part of the regularity structure.

\begin{definition}\label{def:chi}
	Given $x, y \in \mathbb{R}^d$, we define an algebra morphism
	$\chi_y^x: \langle \mathcal{B}_{\sim} \rangle \to \mathbb{R}$
	by setting
	$
	\chi_{y}^x(\mathbf{X}_i)=y_i-x_i $
	and 
	\begin{equs}
		& 
		\chi_y^x ( \mcI_\mfl^{k}\btau )
		\\
		& 
		= k ! \, \mcQ_{X^k} \Bigg( & \sum_l \frac{X^l}{l!} \bigg( J^{(- \partial)^l \delta_y; \mfl} ( y ) f_x^{\tau} ( y ) + \mcN_{\gamma_{\tau}}^{(- \partial)^l \delta_y; \mfl} f_x^{\tau} ( y ) \bigg)
		- \sum_{| j + l |_{\mfs} < | \tau |_{\mfs} + |\mfl|_\mfs} \frac{\Gamma_{y x} X^{j + l}}{j! l!} \big( D^j A_\mfl((-\partial)^l \delta_x) * \mcR f_x^{\tau} \big) ( x ) \Bigg) .
	\end{equs}
\end{definition}

Note that we can write
\begin{align*}
	f_x^{\mcI_\mfl^j \btau}(y) = \mcQ_{<\gamma_{\mcI_\mfl^j \btau}} \left ( \sum_{|k|_{\mfs} < m_*} \left [ D^j \left ( \frac{X^k}{k!} \mcI_{\mfl}^{(- \partial)^k \delta_y} f_x^{\btau}(y) \right ) + \frac{D^j X^k}{k!} \chi_y^x(\mcI_\mfl^k \btau) \right ]\right )
\end{align*}
which has polynomial part
\begin{align*}
	\mcQ_{< \gamma_{\mcI_\mfl^j \btau}} \left ( \sum_{|k+j|_{\mfs} < m_*} \frac{X^k}{k!} \chi_y^x (\mcI_\mfl^{k+j} \btau ) \right).
\end{align*}

For a tree $\btau \in \tilde{\mcT}$, we will write $\mcI_\mfl^{e, k} \btau$ for its planted version where the trunk carries decoration $e$ and over-decoration $k$.

\begin{definition}\label{def:delta_tilde}
	Let $\tilde{\Delta}: \langle \mcT(R) \rangle \to \langle \tilde{\mcT}(\bar{R}) 
	\rangle \otimes \langle {\mcT}_{\sim}(R) \rangle$ be the linear map recursively defined by
	\begin{enumerate}
		\item $\tilde{\Delta} \Xi = \Xi \otimes \mathbf{1}$ and $\tilde{\Delta} \mathbf{X}_i= \mathbf{X}_i\otimes \mathbf{1}+ \mathbf{1}\otimes \mathbf{X}_i$,
		\item $\tilde{\Delta} (\btau \cdot \bsigma)=  \big( \mcQ_{< \gamma_{\btau \cdot \bsigma}} \otimes \id \big) \, \big( \tilde{\Delta} \btau \cdot \tilde{\Delta} \bsigma \big)$,
		\item\label{item:integ}
		$\tilde{\Delta} \mathcal{I}_\mfl^j \btau = \big( \mcQ_{< \gamma_{\mcI_\mfl^j \btau}} \otimes \id \big) \bigg( \sum_{l \le j} \sum_{|k+l|_{\mfs} < m_*} {j \choose l} \big(\frac{X^k}{k!}  \mathcal{I}_\mfl^{j-l,k+l} \otimes \id \big) \tilde{\Delta} \btau + \sum_{|k+j|_{\mfs} < m_*} \frac{X^{k}}{k!} \otimes \mathcal{I}_\mfl^{k+j} \btau \bigg)$,
	\end{enumerate}
\end{definition}
We remark that a straightforward induction shows that $\tilde{\Delta} (\mcB) \subset \langle \tilde{\mcT}(\bar{R}) \rangle \otimes \langle \mcB_\sim \rangle$ so that in particular, the expression $(\id \otimes \chi_y^x) \tilde{\Delta}$ is well defined on $\langle \mcB \rangle$. 

\begin{lemma}\label{lemma:f_and_delta}
	For any
	$x, y \in \mathbb{R}^d$
	and any
	$\btau \in \mcB$,
	the following identity holds 
	\begin{equ}\label{eq:f_and_delta}
		f^{\btau}_x(y)= (\iota_{y} \otimes \chi_{y}^x ) \tilde{\Delta} \btau \ .
	\end{equ}
\end{lemma}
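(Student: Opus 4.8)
The plan is to prove \eqref{eq:f_and_delta} by Noetherian induction over $\mcB$ with respect to the partial order $\prec$ (using that $\mcB$ generates a good, in fact historic, sector so that the recursive definitions of $f_x^\btau$ and $\tilde\Delta\btau$ only reference trees $\bsigma\prec\btau$). First I would dispatch the base cases $\btau\in\{\Xi,X^k\}$: for $\btau=\Xi$ both sides equal $\Xi$ (using $\iota_y\Xi=\Xi\in V^{\proj\langle\Xi\rangle}\simeq\mbR$ and $\chi_y^x\mathbf1=1$), and for $\btau=X_i$ the left side is $\mcQ_{<\gamma_0}\Gamma_{yx}X_i=(X+ y-x)_i=X_i+(y_i-x_i)\mathbf1$, which matches $(\iota_y\otimes\chi_y^x)(\mathbf X_i\otimes\mathbf1+\mathbf1\otimes\mathbf X_i)$ since $\chi_y^x(\mathbf X_i)=y_i-x_i$; more generally $X^k$ follows by the multiplication step once that is established.

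Next I would handle the product case $\btau=\btau_1\btau_2$. Here $f_x^{\btau_1\btau_2}=f_x^{\btau_1}\star_{\gamma_{\btau_1\btau_2}}f_x^{\btau_2}=\mcQ_{<\gamma_{\btau_1\btau_2}}(f_x^{\btau_1}\star f_x^{\btau_2})$, and by the induction hypothesis $f_x^{\btau_i}(y)=(\iota_y\otimes\chi_y^x)\tilde\Delta\btau_i$. The key compatibility to check is that $\iota_y$ is multiplicative for $\star$ in the appropriate sense: for over-decorated trees $\bsigma_1,\bsigma_2$ one has $\iota_y\bsigma_1\star\iota_y\bsigma_2=\iota_y(\bsigma_1\bsigma_2)$, which follows directly from the definition \eqref{eq:iota} of $\iota_y$ as an elementary symmetric tensor together with the characterisation of $\star$ as factoring $(\otimes^{\tau\sigma}_{\langle\boldsymbol{\tau\sigma}\rangle})$; and that $\chi_y^x$ is an algebra morphism by Definition~\ref{def:chi}. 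Combining these with the fact that $(\mcQ_{<\gamma_{\btau_1\btau_2}}\otimes\id)$ appears in the definition of $\tilde\Delta(\btau_1\btau_2)$ in matching position gives the identity, modulo the bookkeeping that the truncation $\mcQ_{<\gamma_{\btau_1\btau_2}}$ on the tree side corresponds after applying $\iota_y$ to the truncation $\mcQ_{<\gamma_{\btau_1\btau_2}}$ on $\mcT^\Ban$ (true because $\iota_y$ respects the grading).

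The main obstacle, and the longest case, will be integration $\btau=\mcI_\mfl^j\bsigma$. Using the reformulation recorded just before Definition~\ref{def:delta_tilde}, namely
$$f_x^{\mcI_\mfl^j\bsigma}(y)=\mcQ_{<\gamma_{\mcI_\mfl^j\bsigma}}\Big(\sum_{|k|_\mfs<m_*}\Big[D^j\big(\tfrac{X^k}{k!}\mcI_\mfl^{(-\partial)^k\delta_y}f_x^\bsigma(y)\big)+\tfrac{D^jX^k}{k!}\chi_y^x(\mcI_\mfl^k\bsigma)\Big]\Big),$$
I would expand $D^j$ on the first summand by the Leibniz rule, splitting the derivative between the $X^k$ factor (producing the $\sum_{l\le j}\binom{j}{l}$ terms with the abstract gradient hitting the monomial) and the planted edge $\mcI_\mfl^{(-\partial)^k\delta_y}$ (producing edge-decoration $j-l$). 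Applying the induction hypothesis to $f_x^\bsigma$ and using that $\mcI_\mfl^{(-\partial)^k\delta_y}\iota_y\bsigma'=\iota_y(\mcI_\mfl^{?,k}\bsigma')$ turns the first group of terms into exactly the first sum in Definition~\ref{def:delta_tilde}\ref{item:integ} after applying $\iota_y$; the over-decoration $k+l$ bookkeeps the fact that on $\mcT^\Ban$ the "upper slot" is frozen at $y$ and a shift of $(-\partial)^l\delta_y$ came from differentiating the frozen point. The second summand, involving $\chi_y^x(\mcI_\mfl^k\bsigma)$, is by construction (comparing Definition~\ref{def:chi} with the polynomial part displayed after it) exactly the contribution $\sum_{|k+j|_\mfs<m_*}\tfrac{X^k}{k!}\otimes\mcI_\mfl^{k+j}\bsigma$ under $(\id\otimes\chi_y^x)$. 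The delicate points here are: (i) that the parameters $m_*$ and $\gamma_*$ were chosen large enough that no truncation is lost (so the finite sums over $k$ genuinely capture $\mcK_{\gamma,\nu,m_*}^{x;\mfl}$ of Theorem~\ref{theo: VML Pointed Schauder 2}, by Remark after that theorem); (ii) matching the $T_{\gamma,\nu}^{\zeta,x}$-type correction term in $\mcK^{x;\mfl}_{\gamma,\nu,m}$ (the $\sum_{|j+l|_\mfs<\nu+\beta}\Gamma_{yx}X^{j+l}D^jK^{(-\partial)^l\delta_x}\ast\mcR f$ piece) against the corresponding subtraction built into $\chi_y^x$; and (iii) verifying that all over-decorated trees produced lie in $\tilde\mcT(\bar R)$, i.e.\ conform to the derivative completion $\bar R$, which is precisely why $\bar R$ was introduced. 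Once these are matched term by term and the outer $\mcQ_{<\gamma_{\mcI_\mfl^j\bsigma}}$ is pulled through $\iota_y$, the induction closes.
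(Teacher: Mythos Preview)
Your proposal is correct and follows essentially the same approach as the paper: Noetherian induction over $\mcB$, with the product case handled by multiplicativity of $\iota_y$ and $\chi_y^x$, and the planted case via the Leibniz expansion of $D^j$ together with the reindexing $\bar k=k+l$ that matches the two sums in the recursive definition of $\tilde\Delta\mcI_\mfl^j\btau$ against the display for $f_x^{\mcI_\mfl^j\btau}$ preceding Definition~\ref{def:delta_tilde}. The only cosmetic difference is that the paper computes from the $\tilde\Delta$ side toward $f_x^{\btau}$ whereas you go the other way; the substance (the identity $\iota_y\circ\mcI_\mfl^{j-l,k+l}=D^{j-l}\mcI_\mfl^{(-\partial)^{k+l}\delta_y}\circ\iota_y$, the Leibniz recombination, and the use of $\chi_y^x$ to absorb the polynomial part) is identical.
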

\begin{proof}
	We proceed by Noetherian induction over $\mcB$. It is clear that \eqref{eq:f_and_delta} holds if $\btau \in \{\Xi_\mfl, X^k\}$ so that it suffices to consider separately the cases where $\btau$ is planted and where $\btau$ is a product of planted trees. In the latter case, the result is straightforward since all objects involved are multiplicative. Therefore, we consider only the case of planted trees. In this case, we write
	\begin{align*}
		& (\iota_y \otimes \chi_y^x)\tilde{\Delta} \mcI_\mfl^k \btau  = 	(\iota_y \otimes \chi_y^x)\big( \mcQ_{< \gamma_{\mcI_\mfl^j \btau}} \otimes \id \big) \bigg( \sum_{l \le j} \sum_{|k+l|_{\mfs} < m_*} {j \choose l} \Big(\frac{X^k}{k!}  \mathcal{I}_\mfl^{j-l,k+l} \otimes \id \Big) \tilde{\Delta} \btau + \sum_{|k+j|_{\mfs} < m_*} \frac{X^{k}}{k!} \otimes \mathcal{I}_\mfl^{k+j} \btau \bigg)
		\\
		&= \big( \mcQ_{< \gamma_{\mcI_\mfl^j \btau}} \otimes \id \big) \bigg( \sum_{l \le j} \sum_{|k+l|_{\mfs} < m_*} {j \choose l} \Big(\frac{X^k}{k!} D^{j-l} \mathcal{I}_\mfl^{(-\partial)^{k+l} \delta_y} \otimes \id \Big) (\iota_y \otimes \chi_y^x) \tilde{\Delta} \btau + \sum_{|k+j|_{\mfs} < m_*} \frac{X^{k}}{k!} \chi_y^x \big ( \mathcal{I}_\mfl^{k+j} \btau \big ) \bigg).
	\end{align*}
	We then write
	\begin{align*}
		 \sum_{l \le j} \sum_{|k+l|_{\mfs} < m_*} {j \choose l} \Big(\frac{X^k}{k!} D^{j-l} \mathcal{I}_\mfl^{(-\partial)^{k+l} \delta_y} \otimes \id \Big) (\iota_y \otimes \chi_y^x) \tilde{\Delta} \btau &=  \sum_{l \le j} \sum_{|\bar{k}|_{\mfs} < m_*} {j \choose l} 
		  \frac{D^l X^{\bar{k}}}{\bar{k}!} D^{j-l} \mathcal{I}_\mfl^{(-\partial)^{\bar{k}} \delta_y}f_x^{\btau}(y)
		 \\
		 & = D^j \bigg ( \sum_{|\bar{k}|_{\mfs} < m_*} \frac{X^{\bar{k}}}{\bar{k}!}  \mathcal{I}_\mfl^{(-\partial)^{\bar{k}} \delta_y}f_x^{\btau}(y) \Bigg )
	\end{align*}
	where we made use of the induction hypothesis to pass to the second line.
	Treating the term with $\chi_y^x(\mcI_\mfl^{k+j}\btau)$ similarly, we see that the induction hypothesis implies that
	\begin{align*}
		(\iota_y \otimes \chi_y^x)\tilde{\Delta} \mcI_\mfl^k \btau  
		&= D^j \mcQ_{< \gamma_{\mcI_\mfl \btau}}\Bigg ( \sum_{|\bar{k}|_{\mfs} < m_*} \frac{X^{\bar{k}}}{\bar{k}!}  \mathcal{I}_\mfl^{(-\partial)^{\bar{k}} \delta_y}f_x^{\btau}(y) + \sum_{|k|_{\mfs} < m_*} \frac{X^{k}}{k!} \chi_y^x \big ( \mathcal{I}_\mfl^{k} \btau \big ) \Bigg )
		\\
		&
		= D^j \mcK_{\gamma_\btau, |\btau|_{\mfs}, m_*}^{x; \mfl} f_x^\btau(y) = f_x^{\mcI_\mfl^j \btau}(y)
	\end{align*}
	as desired.
\end{proof}

We now show that in a similar way to $\Delta$ and $\Delta^+$, it is possible to reformulate the definition of $\tilde{\Delta}$ non-recursively. This will be essential to identify how it interacts with $\Delta_r^-$. Since the operation $\tilde{\Delta}$ acts solely at the level of combinatorial trees, rather than on $\mcT^\Ban$, in the remainder of this section we are in a setting analogous to the one considered in \cite{BHZ19} where it suffices to work with the distinguished basis of (isomorphism classes of) combinatorial trees. As such, the difference between drawings and isomorphism classes becomes less important in the arguments that follow. In order to simplify notations and bring our proofs more in line with the similar arguments given in \cite{BHZ19} we will therefore make sense of operations on isomorphism classes at the level of drawings in the remainder of this section with the implicit understanding that the operations considered are independent of the choice of drawing and thus define operations on isomorphism classes.

Similarly to \cite{BHZ19}, it will be convenient to formulate the action of $\tilde{\Delta}$ in terms of colourings. 
A coloured tree is then an (over-)decorated tree $\tau$ together with a distinguished subset of edges $\hat{\tau}\subset E(\tau)$
which is the edge set of a subtree of $\tau$ containing its root. We will write $(\tau, \hat{\tau}, \mfn, \mfe)$ and $ (\tau, \hat{\tau}, \mfn, \mfe, \mck)$ for coloured decorated and over-decorated trees respectively. 
Finally, we define the contraction operation $\mathfrak{C}$ via
\begin{align}\label{eq:K_def}
	\mathfrak{C} (\tau, \hat{\tau}, \mfn, \mfe, \mck) = (\tau / \hat{\tau} , [\mfn]_{\hat{\tau}}, \mfe, \mck)
\end{align}
where $[\mfn]_{\hat{\tau}}(v) = \sum_{w \in \cup_{e \in \hat{\tau} } e } \mfn(w)$. Here we implicitly identify $E(\tau / \sigma)$ as a subset of $E(\tau)$ in order to restrict edge decorations to the quotient graph. We extend this definition to decorated coloured trees by identifying a decorated coloured tree with its over-decorated analogue with $\mck \equiv 0$.

\begin{prop}\label{prop: tDelta_form}
	The map $\tilde{\Delta}$ has the explicit form 
\begin{equs}
	\tilde{\Delta} (\tau, \mfn,\mfe)
	= \sum_{\tilde{C}\in \mathbb{C}_+[\tau]} \sum_{n_{ \ng \tilde{C}}}
	\sum_{\ell_{\ng \tilde{C}}, \mck_{\ng \tilde{C} }}  
	\sum_{\eps_{\tilde{C} }}  
	& {\mfn \choose n_{\ng \tilde{C}} } {\mfe \choose \ell_{\ng \tilde{C}}}  
	\frac{1}{\mck_{\ng \tilde{C}} !} \frac{1}{\eps_{\tilde{C}} !} 
	\label{eq:dt_expl}
	\\ 
	&
\mcQ_{< \gamma_{(\tau, \mfn,\mfe)}} (\tau_{\ng \tilde{C}} , n_{\ng \tilde{C}} + \pi (\mck_{\ng \tilde{C}} +{\eps_{\tilde{C}}}) , \mfe - \ell_{\ng \tilde{C}} , \mck_{\ng \tilde{C}} + \ell_{\ng \tilde{C}} ) 
	\\
	& \otimes \mfp_{\mcT_{\sim}} \mathfrak{C} (\tau, \tau_{\ng \tilde{C}}, \mfn- n_{\ng \tilde{C}}, \mfe +\eps_{\tilde{C}}  )  \ ,
\end{equs}
where $\mathbb{C}_+[\tau] = \mathbb{C}[\tau] \cap \mcP(E_+(\tau))$, the second sum is over maps ${n_{\ng \tilde{C}}}: N_{\tau_{\ng \tilde{C}}}\to \mathbb{N}^d$, the third sum is over maps 
$\ell_{\ng \tilde{C}}, \mck_{\ng \tilde{C} }:  E({\tau_{\ng \tilde{C}}}) \to \mathbb{N}^d$, and the last sum is over maps $\eps_{\tilde{C} }: \tilde{C}\to \mathbb{N}^d$.
\end{prop}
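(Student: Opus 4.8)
\textbf{Proof proposal for Proposition~\ref{prop: tDelta_form}.}

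The plan is to proceed by Noetherian induction over $\mcB$ with respect to $\prec$, exactly as in the proof of Lemma~\ref{lemma:f_and_delta}, and to verify that the right-hand side of \eqref{eq:dt_expl} satisfies the three defining recursive relations of Definition~\ref{def:delta_tilde}. The strategy is entirely analogous to the passage from the recursive to the non-recursive description of $\Delta$ and $\Delta^+$ in \cite{BHZ19}: the sum over $\tilde{C} \in \mathbb{C}_+[\tau]$ over cuts consisting only of kernel edges plays the role of the usual sum over subtrees containing the root, the decorations $n_{\ng\tilde{C}}, \ell_{\ng\tilde{C}}$ account for the binomial coefficients produced by splitting node and edge decorations via the Leibniz-type identities, the decoration $\mck_{\ng\tilde{C}}$ records the over-decoration generated each time one applies the Leibniz rule on $\mcT^{\eq}$ (which is precisely the content of the map $\mfD$ in \eqref{eq:D_def}), and $\eps_{\tilde{C}}$ is the familiar edge decoration created on the cut edges when contracting the subtree below the cut, exactly as in the formula for $\underline{\Delta}$.

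First I would dispose of the base cases $\tau \in \{\Xi, X^k\}$: for these trees the only cut is the empty cut, all decoration sums are trivial, and the formula reduces to $\Xi \otimes \mathbf{1}$ and to $\sum_{n \le k} \binom{k}{n} X^n \otimes X^{k-n}$ respectively, matching item (1) of Definition~\ref{def:delta_tilde}. For the product case, I would observe that a cut of $\tau\sigma$ decomposes uniquely as a disjoint union of a cut of $\tau$ and a cut of $\sigma$ (since the root edges of $\tau$ and $\sigma$ are distinct and $E_+(\tau\sigma) = E_+(\tau) \sqcup E_+(\sigma)$), and likewise for the decoration sums; the projection $\mcQ_{<\gamma_{\tau\sigma}}$ in item (2) then matches the $\mcQ_{< \gamma_{(\tau,\mfn,\mfe)}}$ appearing in \eqref{eq:dt_expl} applied to the product. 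The factor $\mfp_{\mcT_\sim}$ on the right component is compatible with the tree product since $\langle \mcT_\sim \rangle$ is a subalgebra. For the integration case, I would split the sum over $\tilde{C} \in \mathbb{C}_+[\mcI_\mfl^j \tau]$ according to whether the trunk $e_*$ lies in $\tilde{C}$ or not. When $e_* \notin \tilde{C}$, the cut is a cut of $\tau$ and the left component retains the planted structure $\mcI_\mfl^{j-l, k+l}$ — here the sum over $l \le j$ arises from splitting $j$ between the residual derivative on the trunk and the over-decoration produced by Leibniz, and the sum over $k+l$ with $|k+l|_\mfs < m_*$ comes from $\pi\mck_{\ng\tilde{C}}$ together with the over-decoration bound; this reproduces the first sum in item (3). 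When $e_* \in \tilde{C}$, the entire tree $\mcI_\mfl^j\tau$ is contracted below the cut, the left component becomes a pure monomial $X^k/k!$, and the right component becomes $\mcI_\mfl^{k+j}\tau$ (with the over-decoration on $e_*$ contributing to the derivative $k+j$); this reproduces the second sum in item (3).

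The main obstacle I anticipate is the bookkeeping needed to check that the \emph{combinatorial prefactors match exactly} in the integration step — in particular, confirming that the combination of $\binom{j}{l}$, the $1/\mck_{\ng\tilde{C}}!$, and the reindexing between the recursive over-decoration parameters $(l, k)$ in Definition~\ref{def:delta_tilde}(3) and the non-recursive parameters $(\mck_{\ng\tilde{C}}, \eps_{\tilde{C}})$ in \eqref{eq:dt_expl} produces no spurious multiplicities. The cleanest way to handle this is to phrase the induction so that one shows $\tilde{\Delta}$ as defined recursively and the right-hand side of \eqref{eq:dt_expl} both satisfy the same recursion, rather than attempting to manipulate the explicit sums directly; the recursion for the explicit formula is obtained precisely by the "peel off the root edges" case analysis sketched above, where one identifies cuts of $\tau$ extended by root-edge data, exactly paralleling \cite[Section~4]{Bru18} and the corresponding computations for $\Delta^+$. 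One should also verify at the end that the image lands in $\langle\tilde{\mcT}(\bar R)\rangle \otimes \langle\mcT_\sim(R)\rangle$: the left factor conforms to $\bar{R}$ rather than $R$ because reducing edge decorations (as happens via $\mfe - \ell_{\ng\tilde{C}}$) can only be guaranteed to conform to the derivative completion, which is why $\bar{R}$ was introduced; and the right factor lies in $\mcT_\sim$ by the projection $\mfp_{\mcT_\sim}$ and the constraint $\gamma_{\mcI_\mfl^{k_i}\btau_i} > 0$ built into the definition.
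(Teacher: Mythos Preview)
Your proposal is correct and follows essentially the same approach as the paper: verify that the explicit formula satisfies the three recursive identities of Definition~\ref{def:delta_tilde}, treating base cases trivially, the product case by factoring cuts as $\mathbb{C}_+[\tau\sigma] = \mathbb{C}_+[\tau] \oplus \mathbb{C}_+[\sigma]$, and the integration case by the dichotomy $\tilde{C} = \{e_*\}$ versus $\tilde{C} \subset E(\tau)$. The one detail your sketch glosses over is that in the product case the node decoration at the shared root does \emph{not} split cleanly---the paper handles this via the Chu--Vandermonde identity $\binom{\mfn_\tau + \mfn_\sigma}{n^\rho} = \sum_{n^\rho = n_\tau^\rho + n_\sigma^\rho} \binom{\mfn_\tau}{n_\tau^\rho}\binom{\mfn_\sigma}{n_\sigma^\rho}$, exactly as in \cite{BHZ19}; your phrase ``likewise for the decoration sums'' should be read as including this step.
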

\begin{proof}
We prove that 
the right-hand side of \eqref{eq:dt_expl}
satisfies the inductive identities of Definition~\ref{def:delta_tilde}.
We cover separately the various cases, leaving the cases where $(\tau,\mfn,\mfe)$ is either a noise or monomial to the reader.

\medskip
\noindent
\textit{Case of $\tilde{\Delta} (\tau \sigma)$}:
We assume to be given $\tau = ( \tau, \mfn_{\tau}, \mfe_{\tau} )$
and $\sigma = ( \sigma, \mfn_{\sigma}, \mfe_{\sigma} )$,
such that
$\tilde{\Delta}\tau$ and $\tilde{\Delta} \sigma$
satisfy
the explicit expression \eqref{eq:dt_expl}. In the definition of $\tilde{\Delta} \tau \sigma$, we note that we can write $\mbC_+[\tau \sigma] = \mbC_+[\tau] \oplus \mbC_+[\sigma]$ where we set $\mbC_+[\tau] \oplus \mbC_+[\sigma] = \{C_1 \sqcup C_2: C_1 \in \mbC_+[\tau], C_2\in \mbC_+[\sigma] \}$. Furthermore, for a fixed $C = C_\tau \sqcup C_\sigma \in \mbC_+[\tau] \oplus \mbC_+[\sigma]$, we can write $\ell_{\ng C} = \ell_{\ng C_\tau} + \ell_{\ng C_{\sigma}}$ with $\ell_{\ng C_\tau}(e) = \ell_{\ng C}(e) \mathbf{1}_{e \ng C_\tau, e \in E(\tau)}$ and similarly for $\mck_{\ng C}$. We also write $\eps_C = \eps_{C_\tau} + \eps_{C_\sigma}$ with $\eps_{C_\tau}(e) = \eps_{C}(e) \mathbf{1}_{e \in C_\tau}$. Finally, we write $n_{\ng C} = \mathring{n}_{\ng C_\tau} + \mathring{n}_{\ng C_\sigma} + n^{\rho}$ where $n^{\rho}(v) = n_{\ng C}(\rho) \mathbf{1}_{v = \rho}$ and $\mathring{n}_{\ng C_\tau}^\tau (v) = n_{\ng C}(v) \mathbf{1}_{v \ng C_\tau, v \in N(\tau) \setminus \{\rho\}}$ where $\rho$ denotes the root of $\tau \sigma$.

Substituting in these decompositions, and adopting the usual convention that sums run over decorations with the correct support, the expression \eqref{eq:dt_expl} can be rewritten as
\begin{equs}
	\tilde{\Delta} ( \tau\sigma )
	& = \sum_{\substack{C_{\tau} \in \mathbb{C}_+[\tau] \\ C_{\sigma} \in \mathbb{C}_+[\sigma]}} 
	\sum_{\substack{\mathring{n}_{ \ng C_{\tau}} \\
			\mathring{n}_{\ng C_{\sigma}} \\ n^{\rho}}}
	\sum_{\substack{\ell_{\ng C_{\tau}}, \mck_{\ng C_{\tau}} \\ \ell_{\ng C_{\sigma}}, \mck_{\ng C_{\sigma}} }}
	\sum_{\substack{\eps_{C_{\tau}} \\ \eps_{C_{\sigma}}}}  
	{\mfn_{\tau} \choose \mathring{n}_{\ng C_{\tau}} }
	{\mfn_{\sigma} \choose \mathring{n}_{\ng C_{\sigma}} }
	{\mfn_{\tau} + \mfn_{\sigma}\choose n^{\rho}  } 	{\mfe_{\tau} \choose \ell_{\ng C_{\tau}}}
	{\mfe_{\sigma} \choose \ell_{\ng C_{\sigma}}} 
	\frac{1}{\mck_{\ng C_{\tau}} !} 
	\frac{1}{\mck_{\ng C_{\sigma}} !} 
	\frac{1}{\eps_{C_{\tau}} !} 
	\frac{1}{\eps_{C_{\sigma}} !} 
	\\ 
	& \quad
	\mcQ_{< \gamma_{\tau \sigma}} 
	\Big( \tau_{\ng C_{\tau}}  \sigma_{\ng C_{\sigma}} , 
	\mathring{n}_{\ng C_{\tau}} + \mathring{n}_{\ng C_{\sigma}} + n^\rho
	+ \pi (\mck_{\ng C_{\tau}} +{\eps_{C_{\tau}}})
	+ \pi (\mck_{\ng C_{\sigma}} +{\eps_{C_{\sigma}}}) , 
	\\
	& 
	\phantom{\quad \mcQ_{< \gamma_{\tau \sigma}} \Big(} 
	\quad
	( \mfe_{\tau} - \ell_{\ng C_{\tau}} )
	+( \mfe_{\sigma} - \ell_{\ng C_{\sigma}} ), 
	(\mck_{\ng C_{\tau}} + \ell_{\ng C_{\tau}})
	+ (\mck_{\ng C_{\sigma}} + \ell_{\ng C_{\sigma}}) \Big) 
	\\
	& 
	\quad
	\otimes \mfp_{\mcT_{\sim}} 
	\mathfrak{C} \Big( \tau \sigma
	,\tau_{\ng C_{\tau}} \cup \sigma_{\ng C_{\sigma}}, 
	(\mfn_{\tau} - \mathring{n}_{\ng C_{\tau}}) 
	+ (\mfn_{\sigma} - \mathring{n}_{\ng C_{\sigma}}), 
	( \mfe_{\tau} +\eps_{C_{\tau}} )
	+ ( \mfe_{\sigma} +\eps_{C_{\sigma}} ) \Big) \ .
\end{equs}
We now use the Chu--Vandermonde identity in the form
\begin{equ}
	{\mfn_{\tau}  + \mfn_{\sigma} \choose n^\rho }
	= \sum_{n^\rho = n_\tau^\rho + n_\sigma^\rho}  
	{\mfn_{\tau} \choose n_{\tau}^\rho} \, 
	{\mfn_{\sigma} \choose n_\sigma^\rho}.
\end{equ}
The result then follows by substituting in the result of Chu-Vandermonde, again re-indexing the summation via $n_{\ng C_\tau} = \mathring{n}_{\ng C_\tau} + n_\tau^\rho$ and similarly for $\sigma$ and noting the multiplicativity
properties
\begin{equs}
	& \big( \tau_{\ng C_{\tau}} \sigma_{\ng C_{\sigma}} , 
	\mathring{n}_{\ng C_{\tau}} + \mathring{n}_{\ng C_{\sigma}} + n^\rho
	+ \pi (\mck_{\ng C_{\tau}} +{\eps_{C_{\tau}}})
	+ \pi (\mck_{\ng C_{\sigma}} +{\eps_{C_{\sigma}}}) , 
	\\
	& 
	\phantom{\quad Q_{< \gamma_{\tau \cdot \sigma}} \big(} 
	\quad
	( \mfe_{\tau} - \ell_{\ng C_{\tau}} )
	+( \mfe_{\sigma} - \ell_{\ng C_{\sigma}} ), 
	(\mck_{\ng C_{\tau}} + \ell_{\ng C_{\tau}})
	+ (\mck_{\ng C_{\sigma}} + \ell_{\ng C_{\sigma}}) \big) 
	\\
	& = \big( \tau_{\ng C_{\tau}} , 
	n_{\ng C_{\tau}}
	+ \pi (\mck_{\ng C_{\tau}} +{\eps_{C_{\tau}}}) , 
	\mfe_{\tau} - \ell_{\ng C_{\tau}}, 
	\mck_{\ng C_{\tau}} + \ell_{\ng C_{\tau}} \big) \\
	& \quad \cdot 
	\big( \sigma_{\ng C_{\sigma}} , 
	n_{\ng C_{\sigma}}
	+ \pi (\mck_{\ng C_{\sigma}} +{\eps_{C_{\sigma}}}) , 
	\mfe_{\sigma} - \ell_{\ng C_{\sigma}} , 
	\mck_{\ng C_{\sigma}} + \ell_{\ng C_{\sigma}} \big) ,
\end{equs}
and 
\begin{equs}
	& \mathfrak{C} \big( \tau \sigma, 
	\tau_{\ng C_{\tau}} \sqcup \sigma_{\ng C_{\sigma}}), 
	(\mfn_{\tau} - n_{\ng C_{\tau}}) 
	+ (\mfn_{\sigma} - n_{\ng C_{\sigma}}), 
	( \mfe_{\tau} +\eps_{C_{\tau}} )
	+ ( \mfe_{\sigma} +\eps_{C_{\sigma}} ) \big)
	\\	& = \mathfrak{C} \big( \tau , 
\tau_{\ng C_{\tau}}, 
	\mfn_{\tau} - n_{\ng C_{\tau}}, 
	\mfe_{\tau} +\eps_{C_{\tau}} \big) 
	\cdot 
	\mathfrak{C} \big( \sigma , 
	\sigma_{\ng C_{\sigma}}, 
	\mfn_{\sigma} - n_{\ng C_{\sigma}}, 
	\mfe_{\sigma} +\eps_{C_{\sigma}} \big) ,
\end{equs}
along with the multiplicativity of $\mfp_{\tilde{\mcT}}$ and the identity 
\begin{equ}
	\mcQ_{< \gamma_{\tau \sigma}} ( \tilde{\tau} \tilde{\sigma} )
	= \mcQ_{< \gamma_{\tau \sigma}} \big( ( \mcQ_{< \gamma_{\tau}} \tilde{\tau} ) \cdot ( \mcQ_{< \gamma_{\sigma}} \tilde{\sigma} ) \big) ,
\end{equ}
which follows from the definition of $\gamma_{\tau \sigma}$.
\\ \\
\textit{Case of $\tilde{\Delta} \mcI_\mfl^j \tau$:}
We assume $\tau = (\tau, \mfn_{\tau}, \mfe_{\tau})$ is 
such that 
$\tilde{\Delta} \tau$ satisfies \eqref{eq:dt_expl}. We write $\rho$ for the root of $\mcI_\mfl^j \tau$ and $e_*$ for the unique edge adjacent to $\rho$. In particular, $e_*$ has type $\mfl$ and decoration $j$ so that we can write 
$\mcI_\mfl^j \tau = ( \mcI_\mfl \tau, \mfn_{\tau}, \mfe_{\tau} + j \mathbf{1}_{e_*} )$ where we extend $\mfn_\tau, \mfe_\tau$ to $\mcI_\mfl^j \tau$ by $0$. We then divide the contribution of the sum over cuts in the expression \eqref{eq:dt_expl} for $\mcI_\mfl^j \tau$ into the case where the cut $C = \{e_*\}$ and the remaining cases where $C \subset E(\tau)$. 

In the former case, since $\mfn(\rho) = 0$ we have that the sums over $\ell_{\ng C}, \mfk_{\ng C}$ and $n_{\ng C}$ are trivial. Furthermore, for this cut $((\mcI_\mfl \tau)_{\ng C}, \mfn + \pi \eps_{C}, \mfe, 0) = X^\mck$. Therefore this case contributes the term
\begin{align*}
	\sum_{k} \frac{1}{k!} \mcQ_{<\gamma_{\mcI_\mfl^j \tau}} X^k \otimes \mfp_{\tilde{\mcT}} (\mcI_\mfl \tau, \mfn, \mfe + k + j) = \sum_{k: |k+j|_{\mfs} < m_*} \frac{1}{k!} \mcQ_{< \gamma_{\mcI_\mfl^j \tau}} X^k \otimes  (\mcI_\mfl \tau, \mfn, \mfe + k + j) 
\end{align*}
where the projection $\mfp_{\tilde{\mcT}}$ can be removed since the same condition is enforced by the projection $\mcQ_{< \gamma_{\mcI_\mfl^j \btau}}$ on the other term and the fact that the effective range of summation is over $k$ which satisfy $|k+j|_{\mfs}< m_*$ follows from the fact that the projection on the first term enforces that $|k+j|_{\mfs} < \gamma_{\mcI_\mfl \tau} < m_*$ by the definition of $m_*$. We recognise this as the second summation in the recursive definition \eqref{item:integ} so it remains to show that the contribution where the cut is above the trunk contributes the remaining terms in that recursive definition.

The contribution where the cut is above the trunk can be written as
\begin{align}
	\nonumber \sum_{C\in \mbC_+[\tau]} \sum_{n_{\ng C}} \sum_{\mck_{\ng C}, \ell_{\ng C}} \sum_{\eps_C} {\mfn_\tau \choose n_{\ng C}} {\mfe_\tau + j \mathbf{1}_{e_*} \choose \ell_{\ng C}} \frac{1}{\mck_{\ng {C}}!} \frac{1}{\eps_C!} & \mcQ_{\gamma_{\mcI_\mfl^j \tau}} (\mcI_\mfl \tau_{\ng C}, n_{\ng C} + \pi(\mck_{\ng C} + \eps_{C}), \mfe_\tau + j \mathbf{1}_{e*} - \ell_{\ng C}, \mck_{\ng C}+ \ell_{\ng C}) 
	\\
	& \otimes \mfp_{\tilde{\mcT}} \mathfrak{C} (\tau,  \tau_{\ng C}, \mfn_\tau - n_{\ng C}, \mfe_\tau + \eps_C)  \label{eq:cut_above}
\end{align}
where to exclude the trunk in the second component of the tensor, we used the fact that the node decoration at the root of $\mcI_\mfl^j \tau$ vanishes so that removing the trunk leads to the same tree after contraction.

We now decompose $\ell_{\ng C} = \ell_{\ng C}^\tau + \ell^*$ where $\ell^* = \ell_{\ng C}\mathbf{1}_{e_*}$ and $\mck_{\ng C} = \mck_{\ng C}^\tau + \mck^*$ where $\mck^* = \mck_{\ng C}\mathbf{1}_{e_*}$. Due to the disjoint supports, we can write 
\begin{align*}
	{\mfe_\tau + j \mathbf{1}_{e_*} \choose \ell_{\ng C}} = {\mfe_\tau  \choose \ell_{\ng C}^\tau} {j \choose \ell^*}, \qquad \frac{1}{\mck_{\ng C}!} = \frac{1}{\mck_{\ng C}^\tau!} \frac{1}{\mck^*!}.
\end{align*}
Furthermore, we have that 
\begin{align*}
	(\mcI_\mfl \tau_{\ng C},  n_{\ng C} + \pi(\mck_{\ng C} + \eps_{C}), \mfe_\tau + & j \mathbf{1}_{e*} - \ell_{\ng C}, \mck_{\ng C}+ \ell_{\ng C}) \\ &= X^{\mck^*} \mcI_\mfl^{j- \ell^*, \mck^* + \ell^*} ( \tau_{\ng C}, n_{\ng C} + \pi(\mck_{\ng C}^\tau + \eps_{C}), \mfe_\tau - \ell_{\ng C}^\tau, \mck_{\ng C}^\tau + \ell_{\ng C}^\tau).
\end{align*}
Therefore, we can rewrite \eqref{eq:cut_above} as
\begin{align*}
	(\mcQ_{\gamma_\mfl^j \tau} \otimes \id) \left [ \sum_{\mck^*, \ell^*} {j \choose \mck^*} \left ( \frac{X^{\mck^*}}{\mck^*!} \mcI_\mfl^{j - \ell^*, \mck^* + \ell^*} \otimes \id \right ) \tilde{\Delta} \tau \right ].
\end{align*}
To complete the proof, it remains to see that the effective range of summation satisfies $|\mck^* + \ell^*|_{\mfs} < m_*$. We note that the projection on the first component of the tensor enforces that $a_* + \mck^* + \ell^* + |\mfl|_\mfs - |j|_{\mfs} < \gamma_{\mcI_\mfl^j \tau}$. Since $|\mfl|_\mfs - |j|_{\mfs} > 0$ by Assumption~\ref{ass:reg} and $\gamma_{\mcI_\mfl^j \tau} \le m_* + a_*$ by assumption, the desired restriction on the range of summation follows.
\end{proof}

\subsection{The Proof of Proposition~\ref{prop:f_algebraic_renormalisation}}\label{ss:mod_ident}

With the ingredients of the previous subsection in place, we are in position to prove Proposition~\ref{prop:f_algebraic_renormalisation}. The proof will be by induction in $\mathrm{Age}(\tau)$ over $\tau \in \mcB$. The part of the inductive step that deals with the case of products of planted trees will involve a lengthy combinatorial computation which we separate out into a lemma which is stated and proven after the remainder of the proof which is presented immediately below.

\begin{proof}[Proof of Proposition~\ref{prop:f_algebraic_renormalisation}]
	We proceed by induction in the age and also enhance the induction hypothesis to include the statement
	\begin{align*}
		\Pi_x^{\mathrm{eq},\times} \btau = \mathcal{R}^{\times} f_x^\btau 
	\end{align*}
	where $\mathcal{R}^\times$ denotes the reconstruction operator associated to the model $Z^{\Ban, \times} = (\Pi^{\Ban, \times}, \Gamma^{\Ban})$. We note that since both models have second component $\Gamma^\Ban$, the notion of modelled distributions agree for $Z^\Ban$ and $Z^{\Ban, \times}$ so that $\mathcal{R}^\times f_x^{\btau}$ is well-defined.
	
	In any of the cases where $\btau \in \{X^k, \Xi_\mfl\}$ (which includes the base case $\btau = \mathbf{1}$), the result follows immediately from the definitions. Therefore, as usual, we need only consider separately the case of planted trees and products of planted trees. 
	
	In the case of planted trees, we note that $\Pi_x^{\eq, \times}$ and $\Pi_x^\eq$ agree on such trees and that $f_x^\btau$ takes values in the subsector of $W_{\gamma_0}$ generated by trees of the form $X^k \mcI_\mfl \sigma, X^k$. Since $\Pi^\Ban$ and $\Pi^{\Ban, \times}$ agree on this sector it suffices only to prove the result comparing $\Pi^\eq$ and $\Pi^\Ban$ for such trees.
	
	For this, we write
	\begin{align*}
		\mcR f_x^{\mcI_\mfl^j \btau} = \mcR D^j \mcK_{\gamma_\btau, |\tau|_{\mfs}, m_*}^{x; \mfl} f_x^\btau = D^j \mcR \mcK_{\gamma_\btau, |\btau|_{\mfs}, m_*}^{x; \mfl} f_x^\btau
	\end{align*}
	By Theorem~\ref{theo: VML Pointed Schauder 2}, this is nothing but 
	\begin{align*}
		\mcR f_x^{\mcI_\mfk^j \btau} = D^jK_\mfl \mcR f_x^\btau -  \sum_{|k|_{\mfs} < |\mcI_\mfl^j \btau |_{\mfs}} \frac{(\cdot - x)^k}{k!} D^{k+j} K_\mfl \mcR f_x^\btau(x)
	\end{align*}
	so that by the induction hypothesis and admissibility of $\Pi_x^\eq$, the result for the case of planted trees follows.
	
	In the case where $\btau = \prod_{i=0}^n \btau_i$ is a product of planted trees, we treat $\Pi_x^{\eq, \times}$ and $\Pi_x^\eq$ separately. In the former case, we write
	\begin{align*}
		\Pi_x^{\eq, \times} \btau(y) = \prod_{i=0}^n \Pi_x^{\eq, \times}\btau_i (y) = \prod_{i=0}^n \Pi_y^{\Ban, \times} f_x^{\btau_i}(y) (y) = \Pi_y^{\Ban, \times} f_x^{\btau_i}(y)(y) = \mcR^\times f_x^\btau (y)
	\end{align*}
	where we have made use of the fact that for smooth models, the reconstruction is given by diagonal evaluation of the model applied to the modelled distribution.
	
	Therefore, it remains only to consider $\Pi_x^\eq$. To this end, we use the definition of $\Pi_x^\eq$, the induction hypothesis and Lemma~\ref{lemma:f_and_delta} to write
	\begin{align*}
		\Pi_x^\eq \btau (y) &= (\ell_y^\eq \otimes \Pi_x^{\eq, \times}) \Delta_r^- \btau(y)  = (\ell_y^\eq \otimes \Pi_y^{\Ban, \times} f_x^{\cdot}(y)(y)) \Delta_r^- \btau
		\\
		&= (\ell_y^\eq \otimes [\Pi_y^{\Ban, \times} \circ \iota_y [\cdot] (y)] \otimes \chi_y^x)(\id \otimes \tilde{\Delta})\Delta_r^- \btau.
	\end{align*}
	On the other hand, we have
	\begin{align*}
		\mcR f_x^\btau(y) = ([\Pi_y^\Ban \circ \iota_y] \otimes \chi_x^y) \tilde{\Delta}\btau(y) = ([\ell^\Ban \otimes \Pi_y^{\Ban, \times}) \Delta_r^- \iota_y] \otimes \chi_y^x) \tilde{\Delta} \btau(y).
	\end{align*}
	We then extend $\Delta_r^-$ to $\tilde{\mcT}(\bar{R})$ by simply having it ignore\footnote{If $\Delta_r^- \btau = \bsigma^1 \otimes \bsigma^2$ in Sweedler's notation, then the edge sets of $\bsigma^i$ can be naturally identified with those of $\btau$. We simply carry the over-decoration along this identification.} the over-decoration. We note that with this definition,
	\begin{align*}
		\Delta_r^- \iota_y (\btau, \mfn, \mfe, \mck) = \Delta_r^- \bigotimes_\sttau^\tau [(-\partial)^{\mck_e} \delta_y]_{e \in E_+(\tau)} = \mcL_\tau^{(2)} [\Delta_r^- \tau] ((-\partial)^{\mck_e} \delta_y)_{e \in E_+(\tau)} = (\iota_y \otimes \iota_y) \Delta_r^- (\btau, \mfn, \mfe, \mck).
	\end{align*}
	Therefore, we obtain
	\begin{align*}
		\mcR f_x^\btau(y) = (\ell^\Ban \circ \iota_y \otimes \Pi_y^{\Ban, \times} \circ \iota_y [\cdot](y) \otimes \chi_y^x) (\Delta_r^- \otimes \id) \tilde{\Delta} \btau.
	\end{align*}
	Therefore the result follows if we can show that
	\begin{align*}
		(\ell_y^\eq \otimes [\Pi_y^{\Ban, \times} \circ \iota_y [\cdot] (y)] \otimes \chi_y^x)(\id \otimes \tilde{\Delta})\Delta_r^- \btau = \mcR f_x^\btau(y) = (\ell^\Ban \circ \iota_y \otimes \Pi_y^{\Ban, \times} \circ \iota_y [\cdot](y) \otimes \chi_y^x) (\Delta_r^- \otimes \id) \tilde{\Delta} \btau.
	\end{align*}
	This is the result of a combinatorial computation that we separate out in to Lemma~\ref{lem: Delta_comb} below.
\end{proof}

\begin{lemma}\label{lem: Delta_comb}
	In the context of Proposition~\ref{prop:f_algebraic_renormalisation}, suppose that $\btau \in \mcB$. Then
	\begin{align*}
		(\ell_y^\eq \otimes [\Pi_y^{\Ban, \times} \circ \iota_y [\cdot] (y)] \otimes \chi_y^x)(\id \otimes \tilde{\Delta})\Delta_r^- \btau = (\ell^\Ban \circ \iota_y \otimes \Pi_y^{\Ban, \times} \circ \iota_y [\cdot](y) \otimes \chi_y^x) (\Delta_r^- \otimes \id) \tilde{\Delta} \btau.
	\end{align*}
\end{lemma}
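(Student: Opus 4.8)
\textbf{Proof plan for Lemma~\ref{lem: Delta_comb}.}

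The plan is to reduce the claimed identity to a purely combinatorial statement about the interaction of $\Delta_r^-$ (extraction of subtrees at the root of negative degree) and $\tilde{\Delta}$ (the cut operation of Proposition~\ref{prop: tDelta_form}). Both sides of the claimed equation are obtained by pairing a suitable coproduct of $\btau$ against the three characters $\ell^\Ban\circ\iota_y$ (or $\ell_y^\eq = \ell_y^\Ban\circ\iota_y\circ\mfD$), $\Pi_y^{\Ban,\times}\circ\iota_y[\cdot](y)$, and $\chi_y^x$. The key point is that the tree $\btau\in\mcB$ is an element of $\mcT(R)$ \emph{without} over-decoration, while the intermediate trees produced by $\tilde{\Delta}$ carry over-decorations recording how derivatives were split between the ``upper'' and ``lower'' slots via the Leibniz rule. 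First I would use the explicit formula \eqref{eq:dt_expl} for $\tilde{\Delta}$ together with the definition of $\underline{\Delta}_r^-$ (Definition~\ref{d:delta_r_-}) to expand each side as a sum over pairs consisting of a cut $\tilde C\in\mbC_+[\tau]$ and a cut $C\in\mbC[\tau]$ (the latter responsible for the root extraction in $\Delta_r^-$), decorated by the various node/edge decoration data. The goal is to match these two expansions summand-by-summand.

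The crucial structural input is a coassociativity-type relation $(\Delta_r^-\otimes\id)\tilde{\Delta} = (\id\otimes\tilde{\Delta})(\id\otimes\mfp_{\tilde{\mcT}_-})(\mfD\otimes\id)\cdots$, i.e. that extracting a negative-degree subtree commutes (up to reorganising the derivative decorations through $\mfD$) with the cut operation $\tilde{\Delta}$. Concretely I would argue as follows. On the right-hand side, $\tilde{\Delta}\btau$ produces terms $\mcQ_{<\gamma_\btau}(\tau_{\ng\tilde C},\dots,\mck_{\ng\tilde C}+\ell_{\ng\tilde C})\otimes\mfp_{\mcT_\sim}\mathfrak{C}(\dots)$, and then $\Delta_r^-$ acts on the left leg; since the left leg carries an over-decoration but $\Delta_r^-$ was extended to $\tilde{\mcT}(\bar R)$ to ignore it (as noted just before Lemma~\ref{lem: Delta_comb} in the proof of Proposition~\ref{prop:f_algebraic_renormalisation}), the subtrees extracted are exactly subtrees at the root of $\tau_{\ng\tilde C}$ of negative degree, which — because $\tilde C\subset E_+(\tau)$ and by Assumption~\ref{ass:reg} — can be identified with negative-degree subtrees at the root of $\tau$ itself that are disjoint from $\tilde C$. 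On the left-hand side, $\Delta_r^-\btau$ first extracts such a subtree, then $\tilde{\Delta}$ cuts what remains; the over-decoration and node-decoration bookkeeping that appears is precisely encoded by $\mfD$ in the definition $\ell_y^\eq = \ell_y^\Ban\circ\iota_y\circ\mfD$, where \eqref{eq:D_def} records the redistribution of the edge decoration $\mfe$ of the extracted subtree into an over-decoration $\mck$ together with a shifted node decoration $\mfn+\pi\mck$. I would show that $\iota_y\circ\mfD$ applied to an extracted negative-degree subtree agrees with $\iota_y$ applied to the corresponding extracted over-decorated subtree appearing on the right, using the relation $\Delta_r^-\iota_y = (\iota_y\otimes\iota_y)\Delta_r^-$ established in the proof of Proposition~\ref{prop:f_algebraic_renormalisation} and the compatibility of $\iota_y$ with the derivative-splitting identity $(-\partial)^{\mfe}\delta_y$ against Taylor expansion (the same ``powers of $-1$'' observation made after \eqref{eq:dz1} and \eqref{mls04}).

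With these identifications, both sides become the same sum indexed by (subtree at root, cut of the complement, decoration data), paired against the same three characters, and the proof reduces to checking that the combinatorial prefactors agree. This is a Chu--Vandermonde / multinomial bookkeeping computation of exactly the type carried out in the proof of Proposition~\ref{prop: tDelta_form} (splitting binomial coefficients ${\mfn\choose\cdot}$, ${\mfe\choose\cdot}$ across the root and across the trunk, and reindexing sums over node decorations at the root), so I would only indicate the key splittings and cite that proof for the routine manipulations. The main obstacle I anticipate is purely notational: organising the four layers of decoration data ($\mfn, \mfe, \mck, \eps$) across \emph{two} simultaneous cut operations and verifying that the projections $\mcQ_{<\gamma_\bullet}$, $\mfp_{\mcT_\sim}$ and $\mfp_{\tilde{\mcT}_-}$ are consistently applied on both sides (in particular that the age strictly decreases so that the induction hypothesis of Proposition~\ref{prop:f_algebraic_renormalisation} may be invoked for the factors $\btau_i$ of $\btau$, via Lemma~\ref{l:age_ind_prop}). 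Once the bookkeeping is set up carefully, no genuinely new idea beyond those already present in Propositions~\ref{prop: tDelta_form} and the coassociativity of $\Delta^+$ is required.
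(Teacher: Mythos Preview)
Your overall strategy---expand both sides using the explicit formula \eqref{eq:dt_expl} and the definition of $\underline{\Delta}_r^-$, then match via Chu--Vandermonde manipulations---is indeed how the paper proceeds, and your identification of $\mfD$ as the device that reconciles the over-decoration bookkeeping between the two sides is correct. However, you have a genuine gap in your assessment that ``no genuinely new idea beyond those already present in Proposition~\ref{prop: tDelta_form} and the coassociativity of $\Delta^+$ is required'' and that the obstacle is ``purely notational''.

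The paper is explicit that the BHZ-style coassociativity argument does \emph{not} close here: negative renormalisation does not commute with the projection $\mcQ_{<\gamma_\tau}$, and there is no extended decoration $\mfo$ to absorb the resulting mismatch. Two specific mechanisms are needed which your plan does not identify. First, the projections $\mcQ_{<\gamma_\bullet}$ appearing in $\tilde{\Delta}$ are removed by splitting off terms where the middle leg has positive degree (the $L$ and $R$ pieces in the paper); these vanish only after pairing against $\Pi_y^{\Ban,\times}\circ\iota_y[\cdot](y)$, not at the level of trees. Second---and this is the step you miss entirely---even after all the Chu--Vandermonde reindexing, the two expanded expressions still differ in a multinomial coefficient evaluated at vertices of the contracted (coloured) subtree. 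This residual mismatch is \emph{not} resolved combinatorially: it disappears only because $\Pi_y^{\Ban}\circ\iota_y[\sigma](y)=0$ whenever $\sigma$ carries a nonzero polynomial decoration at the root, which forces the problematic multinomial coefficient to equal $1$ on the surviving range of summation. In other words, the identity holds only after applying all three characters, not as an identity between elements of $\langle\tilde{\mcT}(\bar R)\rangle^{\otimes 3}$, and the middle character is doing real work beyond bookkeeping. Your plan to ``match summand-by-summand'' at the tree level will fail at exactly this point.
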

Before proving this lemma, let us introduce some pieces of notation which we will need.
For a given tree $\tau$ and an edge decoration $f$ (i.e. a function $f: E_{\tau}\to \mathbb{N}^{d+1}$) we shall write for a given set of edges $C\subset E $
\begin{equ}\label{eq:notation edge dec}
 f^{>C}(e):= f(e) \mathbf{1}_{e\in \cup_{c\in C} E_{>c}}, \qquad f^{=C}(e):= f(e) \mathbf{1}_{e\in C}  
 \end{equ}
as well as 
$f^{\geq C}= f^{>E}+f^{=C}$ and $ f^{\ng C}(e)= f- f^{\geq C}$.

Next we define an extension of $\tilde{\Delta}$ to coloured trees by 
\begin{equs}
\tilde{\Delta} (\tau, \hat{\tau}, \mfn, \mfe )
= \sum_{\substack{
\tilde{C}\in \mathbb{C}_+[\tau]\\
 \tilde{C}\cap E_{\hat{\tau}}= \emptyset }
} \sum_{n_{\ng \tilde{C}}}
			\sum_{\ell^{\geq C}_{\ng \tilde{C}}, \mfk^{\geq C}_{\ng \tilde{C} }}  
			\sum_{\mck_{\tilde{C} }}  &
			 {\mfn \choose n_{\ng \tilde{C}} } {\mfe \choose \ell^{\geq C}_{\ng \tilde{C}}}  
			\frac{1}{\mfk^{\geq C}_{\ng \tilde{C}} !} \frac{1}{\mck_{\tilde{C}} !}  \label{eq:triangle extenstion colored} \\ 
			&
			\mcQ_{< \gamma_{\mathfrak{C}(\tau, {\tau'}, \mfn, \mfe )}} (\tau_{\ng \tilde{C}} ,\hat{\tau},  n_{\ng \tilde{C}} + \pi (\mfk^{\geq C}_{\ng \tilde{C}} +{\mck_{\tilde{C}}}) , \mfe - \ell^{\geq C}_{\ng \tilde{C}} , \mfk^{\geq C}_{\ng \tilde{C}} + \ell^{\geq C}_{\ng \tilde{C}} ) 
			\\
			 & \otimes \mfp_{\mcT_{\sim}} \mathfrak{C} (\tau, {\tau_{\ng \tilde{C}}}, \mfn- n_{\ng \tilde{C}}, \mfe +\mck_{\tilde{C}}  )  \ ,
\end{equs}
where in the middle line we used the extension of the projection maps $\mcQ_{< \gamma}$ to coloured trees given by
\begin{equ}\label{eq:proj extsionsion}
\mcQ_{<\gamma}(\tau,\hat{\tau}, \mfn, \mfe )= \mathbf{1}_{| \mathfrak{C} (\tau,\hat{\tau}, \mfn, \mfe )|<\gamma  }  (\tau,\hat{\tau}, \mfn, \mfe)\ .
\end{equ} These extensions are chosen such that the following lemma holds.
\begin{lemma}\label{lem:identity contractions}
For every coloured tree $(\tau, \hat{\tau}, \mfn, \mfe)$, such that $(\tau, \mfn, \mfe) \in \mathcal{B}$
\begin{equ}
\tilde{\Delta} \circ \mathfrak{C} (\tau, \hat{\tau}, \mfn, \mfe)= (\mathfrak{C} \otimes \id)\tilde{\Delta} (\tau, \hat{\tau}, \mfn, \mfe) \ .
\end{equ} 
\end{lemma}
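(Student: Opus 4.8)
The plan is to prove the identity by induction on the structure of the tree $(\tau, \mfn, \mfe)$, mirroring the case division used in the proof of Proposition~\ref{prop: tDelta_form}: the cases of noises and monomials (where both sides are immediate), the case of a product of planted trees, and the case of a single planted tree $\mcI_\mfl^j \tau$. Since the contraction operation $\mathfrak{C}$ only acts on the coloured subtree $\hat{\tau}$ (contracting it to the root and pushing its node decoration up), and $\tilde{\Delta}$ on coloured trees is defined in \eqref{eq:triangle extenstion colored} so as to only sum over cuts $\tilde{C}$ disjoint from $E_{\hat{\tau}}$, the compatibility should follow from carefully matching the summation indices on both sides. On the left-hand side, we first contract $\hat{\tau}$ and then sum over cuts of the contracted tree; on the right-hand side, we sum over cuts of the original tree (automatically disjoint from $\hat{\tau}$) and then contract. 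The key observation is that cuts of $\tau / \hat{\tau}$ are in bijection with cuts of $\tau$ that avoid $E_{\hat{\tau}}$, and under this bijection the subtrees $(\tau/\hat{\tau})_{\ng \tilde{C}}$ and $\tau_{\ng \tilde{C}}$ (together with their decorations) are related exactly by a further contraction of (the image of) $\hat{\tau}$, while the quotients $\mathfrak{C}(\tau/\hat\tau, (\tau/\hat\tau)_{\ng\tilde C}, \dots)$ and $\mathfrak{C}(\tau, \tau_{\ng\tilde C}, \dots)$ coincide as elements of $\langle \mcT_\sim \rangle$.

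\textbf{Key steps.} First I would record the basic compatibility of $\mathfrak{C}$ with the tree product and with planting (analogous to the multiplicativity identities established in the proof of Proposition~\ref{prop: tDelta_form}), together with the commutation of the projection $\mcQ_{<\gamma}$ with $\mathfrak{C}$ which is built into the definition \eqref{eq:proj extsionsion} — indeed $\mcQ_{<\gamma}$ on a coloured tree is defined precisely in terms of the degree of its contraction, so $\mathfrak{C} \circ \mcQ_{<\gamma} = \mcQ_{<\gamma} \circ \mathfrak{C}$ tautologically. Second, for the product case $\tau = \tau_1 \tau_2$ with $\hat{\tau} = \hat{\tau}_1 \sqcup \hat{\tau}_2$, I would decompose the cut $\tilde{C} = \tilde{C}_1 \sqcup \tilde{C}_2$ and all the edge/node decorations exactly as in the product case of Proposition~\ref{prop: tDelta_form}, apply the induction hypothesis to each factor, and reassemble using the same Chu--Vandermonde / multiplicativity bookkeeping; the additional constraint $\tilde{C} \cap E_{\hat\tau} = \emptyset$ splits correctly across the two factors. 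Third, for the planted case $\mcI_\mfl^j \tau$, the coloured subtree either is $\{e_*\}$ together with a coloured subtree of $\tau$, or does not contain $e_*$; in either case contracting $\hat\tau$ commutes with the cut-sum because $e_*$ is the unique edge at the root and $\mathfrak{n}(\rho) = 0$, so the argument parallels the planted case of Proposition~\ref{prop: tDelta_form} with $\hat\tau$ carried along passively.

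\textbf{Main obstacle.} The main difficulty will be the bookkeeping of the decorations in the product case — specifically, verifying that the node-decoration transfer $[\mfn]_{\hat\tau}$ performed by $\mathfrak{C}$ interacts correctly with the subsequent transfers $\pi(\mck_{\ng \tilde C} + \eps_{\tilde C})$ and $[\mfn - n_{\ng \tilde C}]$ appearing in \eqref{eq:dt_expl}, and that the binomial coefficients match after the Chu--Vandermonde rearrangement. Concretely one must check that ``contract $\hat\tau$ then split $\mfn$ between the cut pieces'' yields the same multiset of decorated trees (with the same multiplicities) as ``split $\mfn$ between the cut pieces then contract $\hat\tau$'' — this is true because the root always lies in the subtree $\tau_{\ng\tilde C}$ (cuts are nonempty only below the root, and $\hat\tau$ is disjoint from $\tilde C$), so the node decoration moved by $\mathfrak{C}$ never crosses the cut. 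Once this is pinned down, the remaining manipulations are the same routine algebra already carried out in the proof of Proposition~\ref{prop: tDelta_form}, and I would simply refer to that proof rather than repeating it. The projection $\mfp_{\mcT_\sim}$ is harmless throughout since $\mathfrak{C}$ commutes with it (contraction does not change which planted subtrees at the root lie in $\mcB$, as these are unaffected by contracting a subtree containing the root).
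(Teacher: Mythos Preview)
Your core ideas are correct and match the paper's argument, but your inductive packaging is unnecessary overhead. The paper's proof is a direct one-shot comparison: it writes out $(\mathfrak{C}\otimes\id)\tilde{\Delta}(\tau,\hat\tau,\mfn,\mfe)$ using the explicit formula \eqref{eq:triangle extenstion colored}, commutes $\mathfrak{C}$ past $\mcQ_{<\gamma}$ (which you correctly note is tautological from \eqref{eq:proj extsionsion}), and then compares term-by-term with $\tilde{\Delta}\mathfrak{C}(\tau,\hat\tau,\mfn,\mfe)$ computed via the explicit formula \eqref{eq:dt_expl} on the contracted tree. The match is immediate from the canonical bijection between edges of $\mathfrak{C}(\tau,\hat\tau)$ and $E(\tau)\setminus E(\hat\tau)$ (your ``key observation'') together with a single application of Chu--Vandermonde to reconcile the node decoration at the root --- exactly the obstacle you isolate in your final paragraph.

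The difference is that you propose to structure this via induction on the tree (noise/monomial, product, planted), mirroring the proof of Proposition~\ref{prop: tDelta_form}. But $\tilde{\Delta}$ on \emph{coloured} trees is only defined by the explicit formula \eqref{eq:triangle extenstion colored}, not recursively, so an inductive proof would first require you to establish recursive relations for coloured $\tilde{\Delta}$ analogous to Definition~\ref{def:delta_tilde} --- essentially reproving Proposition~\ref{prop: tDelta_form} in the coloured setting. The paper sidesteps this entirely by working directly with the explicit sums on both sides. Your approach would go through, but at the cost of repeating the Proposition~\ref{prop: tDelta_form} machinery; the paper's direct route is shorter because the whole point of that proposition was to make such identities a matter of matching indices.
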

\begin{proof}
First note that by definition $\mathfrak{C} \mcQ_{<\gamma}(\tau,\hat{\tau}, \mfn, \mfe )= \mcQ_{<\gamma} \mathfrak{C}(\tau,\hat{\tau}, \mfn, \mfe )$. By unraveling the definition of the right hand side we then find that
\begin{align*}
(\mathfrak{C} \otimes \id) \tilde{\Delta} (\tau, \hat{\tau}, \mfn, \mfe )
&= \sum_{\substack{
\tilde{C}\in \mathbb{C}_+[\tau]\\
 \tilde{C}\cap E(\hat{\tau})= \emptyset }
} \sum_{n_{\ng \tilde{C}}}
			\sum_{\ell^{\geq C}_{\ng \tilde{C}}, \mfk^{\geq C}_{\ng \tilde{C} }}  
			\sum_{\mck_{\tilde{C} }}  
			 {\mfn \choose n_{\ng \tilde{C}} } {\mfe \choose \ell^{\geq C}_{\ng \tilde{C}}}  
			\frac{1}{\mfk^{\geq C}_{\ng \tilde{C}} !} \frac{1}{\mck_{\tilde{C}} !}  \\ 
			&
			\mcQ_{< \gamma_{\mathfrak{C}(\tau, \hat{\tau}, \mfn, \mfe )}} \mathfrak{C} (\tau_{\ng \tilde{C}} , \hat{\tau},  n_{\ng \tilde{C}} + \pi (\mfk^{\geq C}_{\ng \tilde{C}} +{\mck_{\tilde{C}}}) , \mfe - \ell^{\geq C}_{\ng \tilde{C}} , \mfk^{\geq C}_{\ng \tilde{C}} + \ell^{\geq C}_{\ng \tilde{C}} ) 
			\\
			 & \otimes \mfp_{\mcT_{\sim}} \mathfrak{C} (\tau, {\tau_{\ng \tilde{C}}}, \mfn- n_{\ng \tilde{C}}, \mfe +\mck_{\tilde{C}}  )  \ .			 
\end{align*}
Comparing to $\tilde{\Delta}  \mathfrak{C} (\tau,\hat{\tau}, \mfn, \mfe )$ the claim follows directly from the formula  \eqref{eq:dt_expl} 
by recalling the canonical bijection between the edges of the tree $\mathfrak{C} (\tau, \hat{\tau})$ and the set $E\setminus E(\hat{\tau})$ and a simple application of the Chu-Vandermonde identity in order to see that the node decoration at the root agrees on both sides.
\end{proof}

\begin{proof}[Proof of Lemma~\ref{lem: Delta_comb}]

Our strategy of proof is to unravel the definitions of $(\Delta_r^-  \otimes \id) \tilde{\Delta}\tau $ and
$ \big(\id  \otimes \tilde{\Delta}\big) \Delta_r^- \tau$ respectively and perform several changes of variables in the resulting sums. This step is analagous to the proof of \cite[Proposition 3.11]{BHZ19} with the main difference being that we do not have the extended decoration $\mfo$ considered there but do have an additional `over-decoration'. Unlike in \cite{BHZ19}, it is not true that a procedure of repeated substitutions will show that the two terms match since negative renormalisation does not commute with the projection $\mcQ_{< \gamma_\tau}$ and we do not introduce an extended decoration to handle this mismatch. Instead, we show that due to the definition of $\gamma_\tau$, this mismatch vanishes when $\Pi_y^\Ban [\cdot](y)$ is applied. 

Throughout the proof, we will adopt the convention that $C$ denotes a generic element of $\mbC[\tau]$ and $\tilde{C}$ denotes a generic element of $\mbC_+[\tau]$. For a pair of cuts $C_1, C_2$, we write $C_1 \ge C_2$ if for every $e_1 \in C_1$ there exists an $e_2 \in C_2$ with $e_1 \ge e_2$. We define $C_1 \le C_2$ similarly. We warn the reader that the quantifiers appearing mean that it is not true that $C_1 \ge C_2$ if and only if $C_2 \le C_1$. By Proposition~\ref{prop: tDelta_form} and Definition~\ref{d:delta_r_-}, we have 
\begin{align*} 
	&(\Delta_r^-  \otimes \id)\tilde{\Delta} (\tau, \mfn,\mfe)\\  &=  \sum_{C \ng \tilde{C}} \sum_{\substack{n_{\ng \tilde{C}} \\ n_{\ng C \cup \tilde{C}}}}\sum_{ \mck_{\ng \tilde{C} } ,\ell_{\ng \tilde{C}}}
	\sum_{\eps_C, \eps_{\tilde{C} } }   
	{\mfn \choose n_{\ge \tilde{C}} }  {n_{\ng \tilde{C}} + \pi (\mck_{\ng \tilde{C}} +{\eps_{\tilde{C}}})  \choose n_{\ng {C \cup \tilde{C}}} }
	{\mfe \choose \ell_{\ng \tilde{C}}}  
	\frac{1}{\mck_{\ng \tilde{C}} !}  \frac{1}{\eps_C!} \frac{1}{\eps_{\tilde{C}} !} 
	\mathbf{1}_{|(\tau_{\ng \tilde{C}}, n_{\ng \tilde{C}} + \pi(\mck_{\ng \tilde{C}} + \eps_{\tilde{C}}, \mfe - \ell_{\ng \tilde{C}})|_\mfs < \gamma_{(\tau, \mfn, \mfe)}}
	\\
	& \qquad \qquad\qquad \qquad\qquad \qquad 
	\mfp_{\tilde{\mcT}_{-}}(\tau_{\ng C \cup \tilde{C}} , n_{\ng C \cup \tC} + \pi\eps_C
	, \mfe - \ell_{\ng \tilde{C}} , \mck_{\ng \tilde{C}} + \ell_{\ng \tilde{C}} ) \\
	& \qquad \qquad\qquad \qquad\qquad \qquad 
	\otimes
	\mathfrak{C} (\tau_{\ng \tilde{C}} ,\tau_{\ng C} , n_{\ng \tilde{C}} - n_{\ng C \cup \tilde{C}} + \pi (\mck_{\ng \tilde{C}} +{\eps_{\tilde{C}}}) 
	, \mfe - \ell_{\ng\tilde{C}} +\eps_C , \mck_{\ng \tilde{C}} + \ell_{\ng \tilde{C}} ) \\
	& \qquad \qquad\qquad \qquad\qquad \qquad 
	\otimes \mfp_{\mcT_{\sim}} \mathfrak{C} (\tau , \tau_{\ng \tilde{C}}, \mfn- n_{\ng \tilde{C}}, \mfe 
	+\eps_{\tilde{C}}  ) .
\end{align*}
We define $L(\tau, \mfn, \mfe) = (\id \otimes \mcQ_{> 0} \otimes \id)(\Delta_r^- \otimes \id)\tilde{\Delta}(\tau, \mfn, \mfe)$ and note that $(\ell^\Ban \circ \iota_y \otimes \Pi_y^{\Ban, \times} \circ \iota_y [\cdot](y) \otimes \chi_y^x) L(\tau, \mfn, \mfe) = 0$ so that omitting $L(\tau, \mfn, \mfe)$ will not affect the final result. We then have that
\begin{align*} 
	&(\Delta_r^-  \otimes \id)\tilde{\Delta} (\tau, \mfn,\mfe) - L(\tau, \mfn, \mfe)  \\ & = \sum_{C \ng \tilde{C}} \sum_{\substack{n_{\ng \tilde{C}} \\ n_{\ng C \cup \tilde{C}}}}\sum_{ \mck_{\ng \tilde{C} } ,\ell_{\ng \tilde{C}}}
	\sum_{\eps_C, \eps_{\tilde{C} } }   
	{\mfn \choose n_{\ge \tilde{C}} }  {n_{\ng \tilde{C}} + \pi (\mck_{\ng \tilde{C}} +{\eps_{\tilde{C}}})  \choose n_{\ng {C \cup \tilde{C}}} }
	{\mfe \choose \ell_{\ng \tilde{C}}}  	\frac{1}{\mck_{\ng \tilde{C}} !}  \frac{1}{\eps_C!} \frac{1}{\eps_{\tilde{C}} !} 
	\mathbf{1}_{|(\tau_{\ng \tilde{C}}, n_{\ng \tilde{C}} + \pi(\mck_{\ng \tilde{C}} + \eps_{\tilde{C}}, \mfe - \ell_{\ng \tilde{C}})|_\mfs < \gamma_{(\tau, \mfn, \mfe)}}
	\\
	& \qquad \qquad\qquad \qquad\qquad \qquad 
	\mfp_{\tilde{\mcT}_{-}}(\tau_{\ng C \cup \tilde{C}} , n_{\ng{C} \cup \tC} + \pi\eps_C
	, \mfe - \ell_{\ng \tilde{C}} , \mck_{\ng \tilde{C}} + \ell_{\ng \tilde{C}} ) \\
	& \qquad \qquad\qquad \qquad\qquad \qquad 
	\otimes
	\mcQ_{\le 0} \mathfrak{C} (\tau_{\ng \tilde{C}} ,\tau_{\ng C} , n_{\ng \tilde{C}} - n_{\ng C \cup \tilde{C}} + \pi (\mck_{\ng \tilde{C}} +{\eps_{\tilde{C}}}) 
	, \mfe - \ell_{\ng\tilde{C}} +\eps_C , \mck_{\ng \tilde{C}} + \ell_{\ng \tilde{C}} ) \\
	& \qquad \qquad\qquad \qquad\qquad \qquad 
	\otimes \mfp_{\mcT_{\sim}} \mathfrak{C} (\tau , \tau_{\ng \tilde{C}}, \mfn- n_{\ng \tilde{C}}, \mfe 
	+\eps_{\tilde{C}}  ) .
\end{align*}
We then note that since $\Delta_r^- \tau = \sum \tau^1 \otimes \tau^2$ implies that $|\tau^2|_\mfs \ge |\tau|_\mfs$, we have that 
\begin{align*}
	|	\mathfrak{C} (\tau_{\ng \tilde{C}} ,\tau_{\ng C} , n_{\ng \tilde{C}} - n_{\ng C \cup \tilde{C}} + \pi (\mck_{\ng \tilde{C}} +{\eps_{\tilde{C}}}) 
	, \mfe - \ell_{\ng\tilde{C}} +\eps_C , \mck_{\ng \tilde{C}} + \ell_{\ng \tilde{C}} ) |_\mfs \ge |(\tau_{\ng \tilde{C}}, n_{\ng \tilde{C}} + \pi(\mck_{\ng \tilde{C}} + \eps_{\tilde{C}}, \mfe - \ell_{\ng \tilde{C}})|_\mfs. 
\end{align*}
Since $\gamma_{(\tau, \mfn, \mfe)} > 0$, this implies that 
\begin{align} \label{MT_exp} 
	&(\Delta_r^-  \otimes \id)\tilde{\Delta} (\tau, \mfn,\mfe) - L(\tau, \mfn, \mfe)  \\ \nonumber & = \sum_{C \ng \tilde{C}} \sum_{\substack{n_{\ng \tilde{C}} \\ n_{\ng C \cup \tilde{C}}}}\sum_{ \mck_{\ng \tilde{C} } ,\ell_{\ng \tilde{C}}}
	\sum_{\eps_C, \eps_{\tilde{C} } }   
	{\mfn \choose n_{\ge \tilde{C}} }  {n_{\ng \tilde{C}} + \pi (\mck_{\ng \tilde{C}} +{\eps_{\tilde{C}}})  \choose n_{\ng {C \cup \tilde{C}}} }
	{\mfe \choose \ell_{\ng \tilde{C}}}  	\frac{1}{\mck_{\ng \tilde{C}} !}  \frac{1}{\eps_C!} \frac{1}{\eps_{\tilde{C}} !} 
	\\ \nonumber
	& \qquad \qquad\qquad \qquad\qquad \qquad 
	\mfp_{\tilde{\mcT}_{-}}(\tau_{\ng C \cup \tilde{C}} , n_{\ng{C} \cup \tC} + \pi\eps_C
	, \mfe - \ell_{\ng \tilde{C}} , \mck_{\ng \tilde{C}} + \ell_{\ng \tilde{C}} ) \\ \nonumber
	& \qquad \qquad\qquad \qquad\qquad \qquad 
	\otimes
	\mcQ_{\le 0} \mathfrak{C} (\tau_{\ng \tilde{C}} ,\tau_{\ng C} , n_{\ng \tilde{C}} - n_{\ng C \cup \tilde{C}} + \pi (\mck_{\ng \tilde{C}} +{\eps_{\tilde{C}}}) 
	, \mfe - \ell_{\ng\tilde{C}} +\eps_C , \mck_{\ng \tilde{C}} + \ell_{\ng \tilde{C}} ) \\ \nonumber
	& \qquad \qquad\qquad \qquad\qquad \qquad 
	\otimes \mfp_{\mcT_{\sim}} \mathfrak{C} (\tau , \tau_{\ng \tilde{C}}, \mfn- n_{\ng \tilde{C}}, \mfe 
	+\eps_{\tilde{C}}  ) .
\end{align}

We now turn to consider $(\id \otimes \tilde{\Delta})\Delta_r^-(\tau, \mfn, \mfe)$. By Proposition~\ref{prop: tDelta_form} and Definition~\ref{d:delta_r_-}, we have
\begin{align*}
	&(\id \otimes \tilde{\Delta})\Delta_r^-(\tau, \mfn, \mfe) = 
	\\ & \sum_{\tilde{C} \ge C} \sum_{n_{\ng C}, n_{\ng \tilde{C}}} \sum_{\eps_{C}, \eps_{\tilde{C}}} \sum_{\ell_{\ng \tilde{C}}^{\ge C}} \sum_{\mck_{\ng \tilde{C}}^{\ge C}} {\mfn \choose n_{\ng C}} {n - n_{\ng C} \choose n_{\ng \tC}} {\mfe+ \eps_C \choose \ell_{\ng \tC}^{\ge C}} \frac{1}{\eps_C!}\frac{1}{\eps_{\tC}!} \frac{1}{\mck_{\ng \tC}^{\ge C}!} \mathbf{1}_{| 	\mathfrak{C} (\tau_{\ng \tilde{C}} ,\tau_{\ng C} , n_{\ng \tilde{C}}  + \pi (\mck_{\ng \tilde{C}}^{\ge C} +{\eps_{\tilde{C}}}) |_\mfs < \gamma_{\mfC (\tau, \tau_{\ng C}, n - n_{\ng C}, \mfe + \eps_C)}}
		\\
	& \qquad \qquad\qquad \qquad\qquad \qquad 
	\mfp_{\mcT_{-}}(\tau_{\ng C} , n_{\ng{C}} + \pi\eps_C
	, \mfe) \\
	& \qquad \qquad\qquad \qquad\qquad \qquad 
	\otimes
	\mathfrak{C} (\tau_{\ng \tilde{C}} ,\tau_{\ng C} , n_{\ng \tilde{C}}  + \pi (\mck_{\ng \tilde{C}}^{\ge C} +{\eps_{\tilde{C}}}) 
	, \mfe - \ell_{\ng\tilde{C}}^{\ge C} +\eps_C , \mck_{\ng \tilde{C}}^{\ge C} + \ell_{\ng \tilde{C}}^{\ge C} ) \\
	& \qquad \qquad\qquad \qquad\qquad \qquad 
	\otimes \mfp_{\mcT_{\sim}} \mathfrak{C} (\tau , \tau_{\ng \tilde{C}}, \mfn- n_{\ng C} - n_{\ng \tC}, \mfe 
	+\eps_{{C}}  + \eps_{\tC}) .
\end{align*}
We again separate the right hand side into two parts depending on the sign of the degree of the middle tree in the tensor product. We note that since since $\gamma_\sigma$ was constructed recursively starting from $\gamma_0$ that is admissible for $\mcB$ and $\mcB$ is the generating set of a historic sector, we have that $\gamma_{\mfC (\tau, \tau_{\ng C}, n - n_{\ng C}, \mfe + \eps_C)} > 0$ .  Therefore, as before we can drop the indicator function in the part of the sum where the middle tree in the tensor is of negative degree. This leads us to
\begin{align*}
	&(\id \otimes \tilde{\Delta})\Delta_r^-(\tau, \mfn, \mfe) - R(\tau, \mfn, \mfe) = 
	\\ & \sum_{\tilde{C} \ge C} \sum_{n_{\ng C}, n_{\ng \tilde{C}}} \sum_{\eps_{C}, \eps_{\tilde{C}}} \sum_{\ell_{\ng \tilde{C}}^{\ge C}} \sum_{\mck_{\ng \tilde{C}}^{\ge C}} {n \choose n_{\ng C}} {n - n_{\ng C} \choose n_{\ng \tC}} {\mfe + \eps_C \choose \ell_{\ng \tC}^{\ge C}} \frac{1}{\eps_C!}\frac{1}{\eps_{\tC}!} \frac{1}{\mck_{\ng \tC}^{\ge C}!} 
	\\
	& \qquad \qquad\qquad \qquad\qquad \qquad 
	\mfp_{\mcT_{-}}(\tau_{\ng C} , n_{\ng{C}} + \pi\eps_C
	, \mfe) \\
	& \qquad \qquad\qquad \qquad\qquad \qquad 
	\otimes
	\mcQ_{\le 0} \mathfrak{C} (\tau_{\ng \tilde{C}} ,\tau_{\ng C} , n_{\ng \tilde{C}}  + \pi (\mck_{\ng \tilde{C}}^{\ge C} +{\eps_{\tilde{C}}}) 
	, \mfe - \ell_{\ng\tilde{C}}^{\ge C} +\eps_C , \mck_{\ng \tilde{C}}^{\ge C} + \ell_{\ng \tilde{C}}^{\ge C} ) \\
	& \qquad \qquad\qquad \qquad\qquad \qquad 
	\otimes \mfp_{\mcT_{\sim}} \mathfrak{C} (\tau , \tau_{\ng \tilde{C}}, \mfn- n_{\ng C} - n_{\ng \tC}, \mfe 
	+\eps_{C}  + \eps_{\tC}) 
\end{align*}
where $(\ell^\Ban \circ \iota_y \otimes \Pi_y^{\Ban, \times} \circ \iota_y [\cdot](y) \otimes \chi_y^x) R(\tau, \mfn, \mfe) = 0$. We now perform a series of substitutions into the above expression to bring $(\id \otimes \tilde{\Delta})\Delta_r^-(\tau, \mfn, \mfe) - R(\tau, \mfn, \mfe)$ closer to the form of $(\Delta_r^-  \otimes \id)\tilde{\Delta} (\tau, \mfn,\mfe) - L(\tau, \mfn, \mfe)$.

We begin by writing $C$ appearing in the sum above as $C^* \sqcup C^\dagger$ where $C^\dagger = C \cap \tC$. We note that this implies that $C^* \ng \tC$ and that given $C^*$ and $\tC$ we can uniquely recover $C^\dagger = \{e \in \tC: \not \exists e' \in C^*, e > e'\}$. We also decompose $\eps_C = \eps_{C^*} + \eps_{\dC}$. We can then write
\begin{align*}
	&(\id \otimes \tilde{\Delta})\Delta_r^-(\tau, \mfn, \mfe) - R(\tau, \mfn, \mfe) = 
	\\ & \sum_{C^* \ng \tC} \sum_{n_{\ng C^* \cup \tC}, n_{\ng \tilde{C}}} \sum_{\eps_{C^*}, \eps_{\dC}, \eps_{\tilde{C}}} \sum_{\ell_{\ng \tilde{C}}^{\ge C^*}} \sum_{\mck_{\ng \tilde{C}}^{\ge C^*}} {\mfn \choose n_{\ng C^* \cup \tC}} {\mfn - n_{\ng C^* \cup \tC} \choose n_{\ng \tC}} {\mfe + \eps_{C^*} \choose \ell_{\ng \tC}^{\ge C^*}} \frac{1}{\eps_{\dC}! \eps_{C^*}!}\frac{1}{\eps_{\tC}!} \frac{1}{\mck_{\ng \tC}^{\ge C^*}!} 
	\\
	& \qquad \qquad\qquad \qquad\qquad \qquad 
	\mfp_{\mcT_{-}}(\tau_{\ng C^* \cup \tC} , n_{\ng{C^* \cup \tC}} + \pi (\eps_{C^*} + \eps_{\dC})
	, \mfe) \\
	& \qquad \qquad\qquad \qquad\qquad \qquad 
	\otimes
	\mcQ_{\le 0} \mathfrak{C} (\tau_{\ng \tilde{C}} ,\tau_{\ng C^* \cup \tC} , n_{\ng \tilde{C}}  + \pi (\mck_{\ng \tilde{C}}^{\ge C^*} +{\eps_{\tilde{C}}}) 
	, \mfe - \ell_{\ng\tilde{C}}^{\ge C^*} +\eps_{C^*} , \mck_{\ng \tilde{C}}^{\ge C^*} + \ell_{\ng \tilde{C}}^{\ge C^*} ) \\
	& \qquad \qquad\qquad \qquad\qquad \qquad 
	\otimes \mfp_{\mcT_{\sim}} \mathfrak{C} (\tau , \tau_{\ng \tilde{C}}, \mfn- n_{\ng \tilde{C}} - n_{\ng \tC}, \mfe 
	+\eps_{\dC}  + \eps_{\tC}) 
\end{align*}
where we made use of the supports of $\eps_{C^*}$ and $\eps_{\dC}$ and the fact that the contraction operator is invariant under changing the edge decoration on the coloured component in order to simplify expressions.

We now make the substitutions $\bar{\eps}_{\tC} = \eps_{\tC} + \eps_{\dC}$, $\bar{n}_{\ng \tC} = n_{\ng C^* \cup \tC} + n_{\ng \tC}$ and $\bar{n}_{\ng C^* \cup \tC} = n_{\ng C^* \cup \tC} + \pi \eps_{\dC}$. We introduce the convention that $\frac{1}{k!} = 0$ if $k < 0$ so that the resulting constraints on the domains of summation are automatically encoded in the multinomial coefficients. This leads to
\begin{align*}
	&(\id \otimes \tilde{\Delta})\Delta_r^-(\tau, \mfn, \mfe) - R(\tau, \mfn, \mfe) = 
	\\ & \sum_{C^* \ng \tC} \sum_{\bar{n}_{\ng C^* \cup \tC},  \bar{n}_{\ng \tilde{C}}} \sum_{\eps_{C^*}, \eps_{\dC}, \bar{\eps}_{\tilde{C}}} \sum_{\ell_{\ng \tilde{C}}^{\ge C^*}} \sum_{\mck_{\ng \tilde{C}}^{\ge C^*}} {\mfn \choose \bar{n}_{\ng C^* \cup \tC} - \pi \eps_{\dC}} {\mfn - \bar{n}_{\ng C^* \cup \tC} + \pi \eps_{\dC} \choose \bar{n}_{\ng \tC} - \bar{n}_{\ng C^* \cup \tC} + \pi \eps_{\dC}} {\mfe + \eps_{C^*} \choose \ell_{\ng \tC}^{\ge C^*}} \frac{1}{\eps_{\dC}! \eps_{C^*}!}\frac{1}{(\bar{\eps}_{\tC} - \eps_{\dC})!} 
	\\
	& \qquad \qquad\qquad \qquad\qquad \qquad 
	\frac{1}{\mck_{\ng \tC}^{\ge C^*}!}  \mfp_{\mcT_{-}}(\tau_{\ng C^* \cup \tC} , \bar{n}_{\ng{C^* \cup \tC}} + \pi (\eps_{C^*})
	, \mfe) \\
	& \qquad \qquad\qquad \qquad\qquad \qquad 
	\otimes
	\mcQ_{\le 0} \mathfrak{C} (\tau_{\ng \tilde{C}} ,\tau_{\ng C^* \cup \tC} , \bar{n}_{\ng \tilde{C}} -  \bar{n}_{\ng{C^* \cup \tC}}  + \pi (\mck_{\ng \tilde{C}}^{\ge C^*} +{\bar{\eps}_{\tilde{C}}}) 
	, \mfe - \ell_{\ng\tilde{C}}^{\ge C^*} +\eps_{C^*} , \mck_{\ng \tilde{C}}^{\ge C^*} + \ell_{\ng \tilde{C}}^{\ge C^*} ) \\
	& \qquad \qquad\qquad \qquad\qquad \qquad 
	\otimes \mfp_{\mcT_{\sim}} \mathfrak{C} (\tau , \tau_{\ng  \tilde{C}}, \mfn- \bar{n}_{\ng \tilde{C}}, \mfe +
	\bar{\eps}_{\tC}) 
\end{align*}

We then note that
\begin{align*}
	{\mfn \choose \bar{n}_{\ng C^* \cup \tC} - \pi \eps_{\dC}}  {n - \bar{n}_{\ng C^* \cup \tC} + \pi \eps_{\dC} \choose \bar{n}_{\ng \tC} - \bar{n}_{\ng C^* \cup \tC} + \pi \eps_{\dC}} = {\mfn \choose \bar{n}_{\ng \tC}} {\bar{n}_{\ng \tC} \choose \bar{n}_{\ng C^* \cup \tC} - \pi \eps_{\dC}}, \quad \frac{1}{\eps_{\dC}!}\frac{1}{(\bar{\eps}_{\tC} -  \eps_{\dC})!} = {\bar{\eps}_{\tC} \choose \eps_{\dC}} \frac{1}{\bar{\eps}_{\tC}!}.
\end{align*}
Substituting in these identities and noting that the trees do not depend on $\eps_{\dC}$, we see that the sum over $\eps_{\dC}$ factors out in the form
\begin{align*}
	\sum_{\eps_{\dC}} {\bar{n}_{\ng \tC} \choose \bar{n}_{\ng C^* \cup \tC} - \pi \eps_{\dC}} {\bar{\eps}_{\tC} \choose \eps_{\dC}} = {\bar{n}_{\ng \tC} + \pi \bar{\eps}_{\tC} \choose \bar{n}_{\ng C^* \cup \tC}}
\end{align*}
where the equality follows from the Chu-Vandermonde identity (see also \cite[Lemma 2.1] {BHZ19}). Therefore, we obtain that
\begin{align*}
&(\id \otimes \tilde{\Delta})\Delta_r^-(\tau, \mfn, \mfe) - R(\tau, \mfn, \mfe) = 
\\ & \sum_{C^* \ng \tC} \sum_{\bar{n}_{\ng C^* \cup \tC},  \bar{n}_{\ng \tilde{C}}} \sum_{\eps_{C^*}, \bar{\eps}_{\tilde{C}}} \sum_{\ell_{\ng \tilde{C}}^{\ge C^*}} \sum_{\mck_{\ng \tilde{C}}^{\ge C^*}} {\mfn \choose \bar{n}_{\ng \tC}} {\bar{n}_{\ng \tC} + \pi \bar{\eps}_{\tC} \choose \bar{n}_{\ng C^* \cup \tC}} {\mfe + \eps_{C^*} \choose \ell_{\ng \tC}^{\ge C^*}} \frac{1}{ \eps_{C^*}!}\frac{1}{\bar{\eps}_{\tC} !} \frac{1}{\mck_{\ng \tC}^{\ge C^*}!}  
\\
& \qquad \qquad\qquad \qquad\qquad \qquad 
 \mfp_{\mcT_{-}}(\tau_{\ng C^* \cup \tC} , \bar{n}_{\ng{C^* \cup \tC}} + \pi (\eps_{C^*})
, \mfe) \\
& \qquad \qquad\qquad \qquad\qquad \qquad 
\otimes
\mcQ_{\le 0} \mathfrak{C} (\tau_{\ng \tilde{C}} ,\tau_{\ng C^* \cup \tC} , \bar{n}_{\ng \tilde{C}} -  \bar{n}_{\ng{C^* \cup \tC}}  + \pi (\mck_{\ng \tilde{C}}^{\ge C^*} +{\bar{\eps}_{\tilde{C}}}) 
, \mfe - \ell_{\ng\tilde{C}}^{\ge C^*} +\eps_{C^*} , \mck_{\ng \tilde{C}}^{\ge C^*} + \ell_{\ng \tilde{C}}^{\ge C^*} ) \\
& \qquad \qquad\qquad \qquad\qquad \qquad 
\otimes \mfp_{\mcT_{\sim}} \mathfrak{C} (\tau , \tau_{\ng\tilde{C}}, \mfn- \bar{n}_{\ng \tilde{C}}, \mfe +
\bar{\eps}_{\tC}). 
\end{align*}
We now observe that the left-most tree in the above expression and in \eqref{MT_exp} differ. This is accounted for by the fact that we apply $\ell^\Ban \circ \iota_y$ to the left-most tree in the latter expression but $\ell_y^\eq = \ell^\Ban \circ \iota_y \circ \mfD$ to the left-most slot in the former expression where we recall that $\mfD$ was defined in \eqref{eq:D_def}.
We have that that $\mfD \circ \mfp_{\mcT_-} = \mfD$ since $|(\tau, \mfn + \pi \mck, \mfe - \ell, \mck + \ell)|_\mfs \ge |(\tau, \mfn, \mfe)|_\mfs$. Therefore, 
\begin{align*}
	&(\mfD \otimes \id \otimes \id) \big ((\id \otimes \tilde{\Delta})\Delta_r^-(\tau, \mfn, \mfe) - R(\tau, \mfn, \mfe) \big )= 
	\\ & \sum_{C^* \ng \tC} \sum_{\bar{n}_{\ng C^* \cup \tC},  \bar{n}_{\ng \tilde{C}}} \sum_{\eps_{C^*}, \bar{\eps}_{\tilde{C}}} \sum_{\substack{\ell_{\ng \tilde{C}}^{\ge C^*} \\ \ell_{\ng C^* \cup \tC}}} \sum_{\substack{\mck_{\ng \tilde{C}}^{\ge C^*} \\ \mck_{\ng C^* \cup \tC}}}  {\mfn \choose \bar{n}_{\ng \tC}} {\bar{n}_{\ng \tC} + \pi \bar{\eps}_{\tC} \choose \bar{n}_{\ng C^* \cup \tC}} {\mfe + \eps_{C^*} \choose \ell_{\ng \tC}^{\ge C^*}} {\mfe \choose \ell_{\ng C^* \cup \tC}} \frac{1}{ \eps_{C^*}!}\frac{1}{\bar{\eps}_{\tC}!} \frac{1}{\mck_{\ng \tC}^{\ge C^*}!}   \frac{1}{\mck_{\ng C^* \cup \tC}!}
	\\
	& \qquad \qquad\qquad \qquad\qquad \qquad 
	\mfp_{\tilde{\mcT}_{-}}(\tau_{\ng C^* \cup \tC} , \bar{n}_{\ng{C^* \cup \tC}} + \pi (\eps_{C^*} + \mck_{\ng C^* \cup \tC})
	, \mfe - \ell_{\ng C^* \cup \tC}, \ell_{\ng C^* \cup \tC} + \mck_{\ng C^* \cup \tC}) \\
	& \qquad \qquad\qquad \qquad\qquad \qquad 
	\otimes
	\mcQ_{\le 0} \mathfrak{C} (\tau_{\ng \tilde{C}} ,\tau_{\ng C^* \cup \tC} , \bar{n}_{\ng \tilde{C}} -  \bar{n}_{\ng{C^* \cup \tC}}  + \pi (\mck_{\ng \tilde{C}}^{\ge C^*} +{\bar{\eps}_{\tilde{C}}}) 
	, \mfe - \ell_{\ng\tilde{C}}^{\ge C^*} +\eps_{C^*} , \mck_{\ng \tilde{C}}^{\ge C^*} + \ell_{\ng \tilde{C}}^{\ge C^*} ) \\
	& \qquad \qquad\qquad \qquad\qquad \qquad 
	\otimes \mfp_{\mcT_{\sim}} \mathfrak{C} (\tau , \tau_{\ng\tilde{C}}, \mfn- \bar{n}_{\ng \tilde{C}}, \mfe +
	\bar{\eps}_{\tC}). 
\end{align*}
We now write
\begin{align*}
	{\mfe + \eps_{C^*} \choose \ell_{\ng \tC}^{\ge C^*}} = {\mfe \choose \ell_{\ng \tC}^{> C^*}} {\mfe + \eps_{C^*} \choose \ell_{\ng \tC}^{=C^*}} = {\mfe \choose \ell_{\ng \tC}^{> C^*}} \sum_{ \ell_{\ng \tC}^{=C^*}  = \hat{\ell}_{\ng \tC}^{=C^*} + \tilde{\ell}_{\ng \tC}^{=C^*}} {\mfe \choose \hat{\ell}_{\ng \tC}^{=C^*} } {\eps_{C^*} \choose \tilde{\ell}_{\ng \tC}^{=C^*} }
\end{align*}
where the last equality follows by Chu-Vandermonde. Writing $\ell_{\ng \tC} = \hat{\ell}_{\ng \tC}^{= C^*} + \ell_{\ng \tC}^{> C^*} + \ell_{\ng C^* \cup \tC}$, this yields
\begin{align*}
	&(\mfD \otimes \id \otimes \id) \big ((\id \otimes \tilde{\Delta})\Delta_r^-(\tau, \mfn, \mfe) - R(\tau, \mfn, \mfe) \big )= 
	\\ & \sum_{C^* \ng \tC} \sum_{\bar{n}_{\ng C^* \cup \tC},  \bar{n}_{\ng \tilde{C}}} \sum_{\eps_{C^*}, \bar{\eps}_{\tilde{C}}} \sum_{\substack{\ell_{\ng \tilde{C}} \\ \tilde{\ell}_{\ng \tC}^{=C^*}}} \sum_{\substack{\mck_{\ng \tilde{C}}^{\ge C^*} \\ \mck_{\ng C^* \cup \tC}}} 
	{\mfn \choose \bar{n}_{\ng \tC}} {\bar{n}_{\ng \tC} + \pi \bar{\eps}_{\tC} \choose \bar{n}_{\ng C^* \cup \tC}} {\mfe  \choose \ell_{\ng \tC}} {\eps_{C^*} \choose \tilde{\ell}_{\ng \tC}^{=C^*}} \frac{1}{ \eps_{C^*}!}\frac{1}{\bar{\eps}_{\tC}!} \frac{1}{\mck_{\ng \tC}^{\ge C^*}!}   \frac{1}{\mck_{\ng C^* \cup \tC}!}
	\\
	&  \qquad\qquad \qquad 
	\mfp_{\tilde{\mcT}_{-}}(\tau_{\ng C^* \cup \tC} , \bar{n}_{\ng{C^* \cup \tC}} + \pi (\eps_{C^*} + \mck_{\ng C^* \cup \tC})
	, \mfe - \ell_{\ng C^* \cup \tC}, \ell_{\ng  \tC} + \mck_{\ng C^* \cup \tC}) \\
	&  \qquad\qquad \qquad 
	\otimes
	\mcQ_{\le 0} \mathfrak{C} (\tau_{\ng \tilde{C}} ,\tau_{\ng C^* \cup \tC} , \bar{n}_{\ng \tilde{C}} -  \bar{n}_{\ng{C^* \cup \tC}}  + \pi (\mck_{\ng \tilde{C}}^{\ge C^*} +{\bar{\eps}_{\tilde{C}}}) 
	, \mfe - \ell_{\ng\tilde{C}} - \tilde{\ell}_{\ng \tC}^{=C^*} +\eps_{C^*} , \mck_{\ng \tilde{C}}^{\ge C^*} + \ell_{\ng \tilde{C}} + \tilde{\ell}_{\ng \tC}^{=C^*}) \\
	&  \qquad\qquad \qquad 
	\otimes \mfp_{\mcT_{\sim}} \mathfrak{C} (\tau , \tau_{\ng\tilde{C}}, \mfn- \bar{n}_{\ng \tilde{C}}, \mfe +
	\bar{\eps}_{\tC}). 
\end{align*}
To prepare to perform the sum over $\tilde{\ell}_{\ng \tC}^{=C^*}$, we substitute $\bar{\eps}_{C^*} = \eps_{C^*} - \tilde{\ell}_{\ng \tC}^{=C^*}$, $\hat{n}_{\ng C^* \cup \tC} = \bar{n}_{\ng C^* \cup \tC} + \pi \tilde{\ell}_{\ng \tC}^{=C^*}$ and $\hat{\mck}_{\ng \tC}^{\ge C^*} = \mck_{\ng \tC}^{\ge C^*} + \tilde{\ell}_{\ng \tC}^{=C^*}$. This yields
\begin{align*}
	&(\mfD\otimes \id \otimes \id) \big ((\id \otimes \tilde{\Delta})\Delta_r^-(\tau, \mfn, \mfe) - R(\tau, \mfn, \mfe) \big )= 
	\\ & \sum_{C^* \ng \tC} \sum_{\hat{n}_{\ng C^* \cup \tC},  \bar{n}_{\ng \tilde{C}}} \sum_{\bar{\eps}_{C^*}, \bar{\eps}_{\tilde{C}}} \sum_{\substack{\ell_{\ng \tilde{C}} \\ \tilde{\ell}_{\ng \tC}^{=C^*}}} \sum_{\substack{\hat{\mck}_{\ng \tilde{C}}^{\ge C^*} \\ \mck_{\ng C^* \cup \tC}}} 
	{\mfn \choose \bar{n}_{\ng \tC}} {\bar{n}_{\ng \tC} + \pi \bar{\eps}_{\tC} \choose \hat{n}_{\ng C^* \cup \tC} - \pi \tilde{\ell}_{\ng \tC}^{=C^*}} {\mfe  \choose \ell_{\ng \tC}} {\bar{\eps}_{C^*} + \tilde{\ell}_{\ng \tC}^{=C^*} \choose \tilde{\ell}_{\ng \tC}^{=C^*}} \frac{1}{ (\bar{\eps}_{C^*} + \tilde{\ell}_{\ng \tC}^{=C^*})!}\frac{1}{\bar{\eps}_{\tC}!} \frac{1}{(\bar{\mck}_{\ng \tC}^{\ge C^*} - \tilde{\ell}_{\ng \tC}^{=C^*})!}  
	\\
	&  \qquad\qquad \qquad  \frac{1}{\mck_{\ng C^* \cup \tC}!}
	\mfp_{\tilde{\mcT}_{-}}(\tau_{\ng C^* \cup \tC} , \hat{n}_{\ng{C^* \cup \tC}} + \pi (\bar{\eps}_{C^*} + \mck_{\ng C^* \cup \tC})
	, \mfe - \ell_{\ng C^* \cup \tC}, \ell_{\ng \tC} + \mck_{\ng C^* \cup \tC}) \\
	&  \qquad\qquad \qquad 
	\otimes
	\mcQ_{\le 0} \mathfrak{C} (\tau_{\ng \tilde{C}} ,\tau_{\ng C^* \cup \tC} , \bar{n}_{\ng \tilde{C}} -  \hat{n}_{\ng{C^* \cup \tC}}  + \pi (\hat{\mck}_{\ng \tilde{C}}^{\ge C^*} +{\bar{\eps}_{\tilde{C}}}) 
	, \mfe - \ell_{\ng\tilde{C}} + \bar{\eps}_{C^*} , \hat{\mck}_{\ng \tilde{C}}^{\ge C^*} + \ell_{\ng \tilde{C}}) \\
	&  \qquad\qquad \qquad 
	\otimes \mfp_{\mcT_{\sim}} \mathfrak{C} (\tau , \tau_{\ng\tilde{C}}, \mfn- \bar{n}_{\ng \tilde{C}}, \mfe +
	\bar{\eps}_{\tC})
\end{align*}
where we note that the constraints on the domain of summation appearing due to our substitutions are encoded by our convention on the multinomial coefficients. We note that
\begin{align*}
	{\bar{\eps}_{C^*} + \tilde{\ell}_{\ng \tC}^{=C^*} \choose \tilde{\ell}_{\ng \tC}^{=C^*}} \frac{1}{ (\bar{\eps}_{C^*} + \tilde{\ell}_{\ng \tC}^{=C^*})!} \frac{1}{(\bar{\mck}_{\ng \tC}^{\ge C^*} - \tilde{\ell}_{\ng \tC}^{=C^*})!}  = {\hat{\mck}_{\ng \tC}^{\ge C^*} \choose \tilde{\ell}_{\ng \tC}^{=C^*}} \frac{1}{\bar{\eps_{C^*}}} \frac{1}{\hat{\mck}_{\ng \tC}^{\ge C^*}!}.
\end{align*}
Substituting this in, we see that the sum over $\tilde{\ell}_{\ng \tC}^{=C^*}$ factors out in the form
\begin{align*}
	\sum_{\tilde{\ell}_{\ng \tC}^{=C^*}} {\hat{\mck}_{\ng \tC}^{\ge C^*} \choose \tilde{\ell}_{\ng \tC}^{=C^*}} {\bar{n}_{\ng \tC} + \pi \bar{\eps}_{\tC} \choose \hat{n}_{\ng C^* \cup \tC} - \pi \tilde{\ell}_{\ng \tC}^{=C^*}}  = {\bar{n}_{\ng \tC} + \pi(\bar{\eps}_{\tC} + \hat{\mck}_{\ng \tC}^{\ge C^*}) \choose \hat{n}_{\ng C^* \cup \tC}} 
\end{align*}
where the equality follows by Chu-Vandermonde. In total, we have obtained that
\begin{align*}
	&(\mfD \otimes \id \otimes \id) \big ((\id \otimes \tilde{\Delta})\Delta_r^-(\tau, \mfn, \mfe) - R(\tau, \mfn, \mfe) \big )= 
	\\ & \sum_{C^* \ng \tC} \sum_{\hat{n}_{\ng C^* \cup \tC},  \bar{n}_{\ng \tilde{C}}} \sum_{\bar{\eps}_{C^*}, \bar{\eps}_{\tilde{C}}} \sum_{\ell_{\ng \tilde{C}} } \sum_{\substack{\hat{\mck}_{\ng \tilde{C}}^{\ge C^*} \\ \mck_{\ng C^* \cup \tC}}} 
	{\mfn \choose \bar{n}_{\ng \tC}} {\bar{n}_{\ng \tC} + \pi(\bar{\eps}_{\tC} + \hat{\mck}_{\ng \tC}^{\ge C^*}) \choose \hat{n}_{\ng C^* \cup \tC}}  {\mfe  \choose \ell_{\ng \tC}} 
	 \frac{1}{\bar{\eps}_{C^*}!}\frac{1}{\bar{\eps}_{\tC}!} \frac{1}{\bar{\mck}_{\ng \tC}^{\ge C^*}!}   \frac{1}{\mck_{\ng C^* \cup \tC}!}
	\\
	&  \qquad\qquad \qquad 
	\mfp_{\tilde{\mcT}_{-}}(\tau_{\ng C^* \cup \tC} , \hat{n}_{\ng{C^* \cup \tC}} + \pi (\bar{\eps}_{C^*} + \mck_{\ng C^* \cup \tC})
	, \mfe - \ell_{\ng C^* \cup \tC}, \ell_{\ng  \tC} + \mck_{\ng C^* \cup \tC}) \\
	&  \qquad\qquad \qquad 
	\otimes
	\mcQ_{\le 0} \mathfrak{C} (\tau_{\ng \tilde{C}} ,\tau_{\ng C^* \cup \tC} , \bar{n}_{\ng \tilde{C}} -  \hat{n}_{\ng{C^* \cup \tC}}  + \pi (\hat{\mck}_{\ng \tilde{C}}^{\ge C^*} +{\bar{\eps}_{\tilde{C}}}) 
	, \mfe - \ell_{\ng\tilde{C}} + \bar{\eps}_{C^*} , \hat{\mck}_{\ng \tilde{C}}^{\ge C^*} + \ell_{\ng \tilde{C}}) \\
	&  \qquad\qquad \qquad 
	\otimes \mfp_{\mcT_{\sim}} \mathfrak{C} (\tau , \tau_{\ng\tilde{C}}, \mfn- \bar{n}_{\ng \tilde{C}}, \mfe +
	\bar{\eps}_{\tC}).
\end{align*}
We now substitute $\check{n}_{\ng C^* \cup \tC} = \hat{n}_{\ng C^* \cup \tC} + \pi \mck_{\ng C^* \cup \tC}$ and set $\mck_{\ng \tC} = \hat{\mck}_{\ng \tC}^{\ge C^*} + \mck_{\ng C^* \cup \tC}$. We note that since $\mck_{\ng C^* \cup \tC}$ and $\hat{\mck}_{\ng \tC}^{\ge C^*} $ have disjoint supports, both can be uniquely recovered from $\mck_{\ng \tC}$. This gives us
\begin{align*}
	&(\mfD \otimes \id \otimes \id) \big ((\id \otimes \tilde{\Delta})\Delta_r^-(\tau, \mfn, \mfe) - R(\tau, \mfn, \mfe) \big )= 
	\\ & \sum_{C^* \ng \tC} \sum_{\substack{\bar{n}_{\ng \tilde{C}} \\ \check{n}_{\ng C^* \cup \tC}}}
	\sum_{\mck_{\ng \tC}, \ell_{\ng \tilde{C}} }  \sum_{\bar{\eps}_{C^*}, \bar{\eps}_{\tilde{C}}} 
	{\mfn \choose \bar{n}_{\ng \tC}} {\bar{n}_{\ng \tC} + \pi(\bar{\eps}_{\tC} + \mck_{\ng \tC} - \mck_{\ng C^* \cup \tC}) \choose \check{n}_{\ng C^* \cup \tC} - \pi \mck_{\ng C^* \cup \tC}}  {\mfe  \choose \ell_{\ng \tC}} 
	\frac{1}{{\mck}_{\ng \tC}!}    \frac{1}{\bar{\eps}_{C^*}!}\frac{1}{\bar{\eps}_{\tC}!} 
	\\
	&  \qquad\qquad \qquad 
	\mfp_{\tilde{\mcT}_{-}}(\tau_{\ng C^* \cup \tC} , \check{n}_{\ng{C^* \cup \tC}} + \pi \bar{\eps}_{C^*}
	, \mfe - \ell_{\ng C^* \cup \tC}, \ell_{\ng  \tC} + \mck_{\ng  \tC}) \\
	&  \qquad\qquad \qquad 
	\otimes
	\mcQ_{\le 0} \mathfrak{C} (\tau_{\ng \tilde{C}} ,\tau_{\ng C^* \cup \tC} , \bar{n}_{\ng \tilde{C}} -  \check{n}_{\ng{C^* \cup \tC}}  + \pi ({\mck}_{\ng \tilde{C}} +{\bar{\eps}_{\tilde{C}}}) 
	, \mfe - \ell_{\ng\tilde{C}} + \bar{\eps}_{C^*} , {\mck}_{\ng \tilde{C}} + \ell_{\ng \tilde{C}}) \\
	&  \qquad\qquad \qquad 
	\otimes \mfp_{\mcT_{\sim}} \mathfrak{C} (\tau , \tau_{\ng\tilde{C}}, \mfn- \bar{n}_{\ng \tilde{C}}, \mfe +
	\bar{\eps}_{\tC})
\end{align*}
where for the edge decorations in the middle tree, we made use of the fact that the contraction is invariant under changing either edge decoration on the coloured component. We now note that (up to relabelling) this matches the expression \eqref{MT_exp} except for in the second multinomial coefficient evaluated at vertices in $\tau_{\ng C^* \cup \tC}$. 

However, we can observe that
\begin{align*}
	\Pi_y^\Ban \circ \iota_y \mathfrak{C} (\tau_{\ng \tilde{C}} ,\tau_{\ng C^* \cup \tC} , \bar{n}_{\ng \tilde{C}} -  \check{n}_{\ng{C^* \cup \tC}}  + \pi ({\mck}_{\ng \tilde{C}} +{\bar{\eps}_{\tilde{C}}}) 
	, \mfe - \ell_{\ng\tilde{C}} + \bar{\eps}_{C^*} , {\mck}_{\ng \tilde{C}} + \ell_{\ng \tilde{C}}) = 0
\end{align*}
unless $\bar{n}_{\ng \tilde{C}} -  \check{n}_{\ng{C^* \cup \tC}}  + \pi ({\mck}_{\ng \tilde{C}} +{\bar{\eps}_{\tilde{C}}}) = 0$ on $\tau_{\ng C^* \cup \tC}$ since $\Pi_y^\Ban [ \iota_y \sigma ](y) = 0$ if $\sigma$ has a non-vanishing polynomial decoration at the root. This implies that the problematic part of the second multinomial coefficient is identically $1$ on the region of summation that contributes to $(\ell_y^\eq \otimes \Pi_y^\Ban \circ \iota_y[\cdot](y) \otimes \chi_y^x) (\id \otimes \tilde{\Delta})\Delta_r^- (\tau, \mfn, \mfe)$. 

By a similar argument, the contribution of vertices in $\tau_{\ng C}$ in the second multinomial coefficient of \eqref{MT_exp} is identically $1$ on the region of summation that contributes to $(\ell^\Ban \circ \iota_y \otimes \Pi_y^\Ban \circ \iota_y[\cdot](y) \otimes \chi_y^x) (\Delta_r^- \otimes \id)\tilde{\Delta}(\tau, \mfn, \mfe))$. Therefore we have established that
\begin{align*}
	(\ell_y^\eq \otimes \Pi_y^\Ban \circ \iota_y[\cdot](y) \otimes \chi_y^x) (\id \otimes \tilde{\Delta})\Delta_r^- (\tau, \mfn, \mfe) = (\ell^\Ban \circ \iota_y \otimes \Pi_y^\Ban \circ \iota_y[\cdot](y) \otimes \chi_y^x) (\Delta_r^- \otimes \id)\tilde{\Delta}(\tau, \mfn, \mfe)).
\end{align*}
\end{proof}
\section{Proof of Theorem~\ref{theo:main_result_1} - Convergence of Renormalised Models}\label{s:Proof_Main}

The goal of this subsection is to assemble the ingredients of the rest of the paper to obtain the proof of Theorem~\ref{theo:main_result_1}. Since the vast majority of the hard work has been performed in earlier sections, this mostly involves appropriately tracking the parameters and arranging the individual parts in the right way.

\begin{proof}
	We fix $\mcB$ as in the statement of the result and a value of $\mcO > 0$ which is assumed to satisfy a finite number of constraints that are collected throughout the proof. We fix also an arbitrary choice of $\gamma_0$ that is admissible for $\mcB$ (as defined in Definition~\ref{def: gamma_def}). We fix a regularity parameter 
$\alpha < - m_* - | \mfs | - \max (\mfs)$
	where $m_*$ is defined above Proposition~\ref{prop: PMD def} and we fix a weight $w$ that is to be specified later in the proof. We let $W_{\gamma_0}$ be the corresponding sector of $\mcT^\Ban(C_w^\alpha)$ as defined above Proposition~\ref{prop: PMD def}. We remind the reader that $\mcT^\Ban(C_w^\alpha)$ is defined with respect to the rule $\bar{R}$ arising as the derivative completion of $R$ as defined in Definition~\ref{def: derivative_completion}. For $K_1, K_2 \in \mcA_+^{\eq; \mcO}$, we let $A^1, A^2 \in \mcA_+^\mco$ denote the corresponding kernel assignments on $\mcT^\Ban(C_w^\alpha)$ where we require throughout the proof that 
$\mco + \max ( \mfs ) \leq \mcO$.
	
	 For fixed $\varepsilon > 0$, we define the (state-space dependent) preparation map $P^\varepsilon$ on $\langle \mcB \rangle$ via
	\begin{align*}
		P^{\varepsilon; i}(y, \btau) = (\ell_y^{\eq; i} \otimes \id) \Delta_r^-
	\end{align*}
	where $\ell^{\eq; i}$ is defined as in Section~\ref{ss: mod_eq} (see \eqref{eq:D_def} and the preceeding discussion) where $\ell^\Ban$ appearing there denotes the BPHZ functional on $W_{\gamma_0}$ with respect to the kernel assignment $A^i$ as  and the noise assignment $\xi^\varepsilon$ as defined in Lemma~\ref{lem: BPHZ exists}. We denote by $(\Pi^{\varepsilon, i}, \Gamma^{\varepsilon, i})$ the corresponding models on $\mcT(R)$. 
	
	We first obtain a uniform in $\varepsilon$ bound in $L^p(\Omega)$ for $\|Z^\varepsilon\|_{\langle \mcB \rangle; \mfK}$. For notational convenience, we temporarily suppress the decoration $i$. We note that the proof of Lemma~\ref{lem:Gam_from_Pi} did not use that the kernel assignments were translation invariant. As a consequence, it suffices to obtain bounds on $\|\Pi^\varepsilon\|_{\langle \mcB \rangle ; \mfK}$.   
	
	As a consequence of Lemma~\ref{lem:transfer_down} and Proposition~\ref{prop:f_algebraic_renormalisation}, if $\mco$ (and hence $\mcO$) are sufficiently large in a way that depends only on the parameter $\gamma_0$ and the rule $R$, we find that for every compact set $\mfK$ there exists a compact set $\bar{\mfK}$ and $k \in \mathbb{N}$ such that
	\begin{align*}
		\mathbb{E}^{\frac{1}{p}} [\|\Pi^\varepsilon \|_{\langle \mcB \rangle; \mfK}^p] \lesssim 1 + \|A\|_{\mcA_+^\mco}^k + \left ( \mathbb{E}^{\frac{1}{\bar{p}}} \Big [ \|\hat{Z}^\varepsilon\|_{W_{\gamma_0}; \bar{\mfK}}^{\bar{p}} \Big ] \right )^k 
	\end{align*}
	where we have set $\bar{p} = kp$ and where $\hat{Z}^{\varepsilon}$ denotes the BPHZ model on $W_{\gamma_0}$ with respect to the kernel assignment $A$ and noise assignment $\xi^\varepsilon$. Therefore, for our uniform estimate, it suffices to bound the expectation appearing on the right hand side above. By Lemma~\ref{l:quenched}, for this purpose it is sufficient to bound
	\begin{align*}
			\sup_{\boldsymbol{\tau} \in \mcB_{W_{\gamma_0}}}
		\sup_{x \in \tilde{\mfK} } 
		\sup_{\lambda \in (0, 1]}	
		\sup_{\phi \in \mcB^r}	
		\sup_{v \in (\mathcal{K}_{\mco}^{| \mfl |_\mfs})^{\proj \sttau}}
		\frac{\mathbb{E}^{\frac{1}{q_{\boldsymbol{\tau}, \bar{p}}}} \big[ \big| \hat{\Pi}_x^{\varepsilon; \mcK} v \big( \phi_x^{\lambda} \big) \big|^{q_{\boldsymbol{\tau}, \bar{p}}} \big]}{\| v \|_{(\mathcal{K}_{\mco}^{|\mfl|})^{\proj \sttau}} \, \lambda^{| \boldsymbol{\tau} |_{\mfs} + \kappa}} 
	\end{align*}
	uniformly in $\varepsilon$ where $q_{\btau, \bar{p}} \in [1, \infty]$ is a fixed parameter depending on $\btau$ and $\bar{p}$, $\tilde{\mfK}$ is a compact set depending on $\bar{\mfK}$ and $\mcB_{W_{\gamma_0}}$ and $\kappa > 0$ is a parameter that we are free to choose (so long as we do so uniformly in $\varepsilon$). Here $\hat{Z}^{\varepsilon, \mcK} = (\hat{\Pi}^{\varepsilon; \mcK}, \hat{\Gamma}^{\varepsilon; \mcK})$ denotes the BPHZ model associated to the noise assignment $\xi^\varepsilon$ and kernel assignment given by the identity map on the historic sector of $\mcT^\Ban(\mcK_\mco^{|\mfl|_\mfs})$ generated by $\mcB_{W_{\gamma_0}}$.
	
	The required bound on this quantity follows from Lemma~\ref{l:annealed_mb} so long as we can bound
	\begin{align*}
			\sup_{\boldsymbol{\tau} \in \mcB_{W_{\gamma_0}}}
		\sup_{x \in \tilde{\mfK} } 
		\sup_{\lambda \in (0, 1]}	
		\sup_{\phi \in \mcB^r}	\sup_{A^\tau} \frac{\mathbb{E}^{\frac{1}{q_{\btau, \bar{p}}}} \left [ \big| \hat{\Pi}_x^{\tau; \varepsilon} \tau^* \big( \phi_x^{\lambda} \big) \big|^{q_{\btau, \bar{p}}} \right ]}{ \lambda^{|\btau|_\mfs + \kappa} \prod_{e \in E_+(\tau)} \|A^\tau(\mfl(e))\|_{\mathcal{K}_{\mco}^{|\mfl(e)|}}}
	\end{align*}
	uniformly in $\varepsilon$ where for each $\btau$ we fixed an arbitrary choice of $\tau \in \btau$ and denoted by $\hat{Z}^{\tau ; \varepsilon} = (\hat{\Pi}^{\tau; \varepsilon}, \hat{\Gamma}^{\tau; \varepsilon})$ the BPHZ model associated to the kernel assignment $A^\tau$ and the noise assignment induced by $\xi^\varepsilon$ as defined in \eqref{eq:induced assigment} on the regularity structure $T^\tau$ defined above Lemma~\ref{lem:up_down} with respect to the rule $\bar{R}$. We recall that the supremum over $A^\tau$ is a supremum over all kernel assignments of order $\mco$ on $T^\tau$. 
	
	We note that since $T^\tau$ is the regularity structure with one-dimensional components corresponding to a complete subcritical rule and $A^\tau$ are translation invariant kernel assignments, we are in the setting of any of \cite{CH16, HS23, BH23} and therefore the required uniform bound will follow from any of these papers\footnote{Strictly speaking, those papers yield the bound without the extra $\kappa$ in the power of $\lambda$. Since $\mcB$ is a finite set, this can be absorbed into redefining the degree assignments on $T^\tau$. Since this kind of argument is standard in the literature, we omit the details. Furthermore, we note that \cite{HS23, BH23} do not track the requirements on kernels as finely as we require here. However, the proofs do yield the input required here in a straightforward manner so long as $\mco$ is sufficiently large. In the case of \cite{HS23} (which shares more analytic tools with this paper), the required modifications are very close to those that we perform in Sections~\ref{sec:mod_dist} and \ref{s:ren_models}.} as soon as we verify their hypotheses on each regularity structure $T^\tau$. It is then a straightforward exercise to check that for each of these papers, the required assumptions on $\mcT(\bar{R})$ imply the same assumptions on $T^\tau$. This is essentially because $T^\tau$ differs from $\mcT(\bar{R})$ only by introducing duplicate copies of trees allowing edges of the same type to correspond to different kernels. Since the important detail from the perspective of \cite{CH16, HS23, BH23} is the regularising effect of the kernel rather than the precise choice of kernel, this does not affect the assumptions. Since $\mcT(\bar{R})$ differs from $\mcT(R)$ only by removing derivatives from edges, it is in turn straightforward to see that any of the required sets of assumptions are stable under passing from $\mcT(R)$ to $\mcT(\bar{R})$. This completes our proof of the uniform in $\varepsilon$ bounds.
	
	To prove convergence of $Z^{\varepsilon}$ as $\varepsilon \to 0$, one follows the same line of argumentation for the quantity $\mathbb{E}^{1/p} [\|Z^\varepsilon, Z^\delta\|_{\langle \mcB \rangle; \mfK}^p]$. Indeed, by Lemma~\ref{lem:transfer_down} and Proposition~\ref{prop:f_algebraic_renormalisation} in combination with H\"older's inequality and the uniform bounds on moments of $\|\hat{Z}^{\varepsilon; i}\|_{W_{\gamma_0}; \bar{\mfK}}$ established in the previous part of the proof, we obtain the existence of $\bar{p} \in [1, \infty]$ and a compact set $\tilde{\mfK}$ such that
	\begin{align*}
		\mathbb{E}^{\frac{1}{p}} \Big [ \|\Pi^{\varepsilon; 1} - \Pi^{\delta; 2} \|_{\langle \mcB \rangle; \mfK}^{p} \Big ] \lesssim \|A^1 - A^2\|_{\mcA_+^\mco} + \mathbb{E}^{\frac{1}{\bar{p}}} \Big [ \|Z^{\varepsilon;1} ; Z^{\delta; 2}\|_{W_{\gamma_0}; \tilde{\mfK}}^{\bar{p}} \Big ]
	\end{align*}
	where the implict constant depends polynomially on $\|A^i\|_{\mcA_+^\mco}$. 
	
	As a consequence of Lemma~\ref{l:ka} and Lemma~\ref{l:quenched}, we therefore deduce that there exists a compact set $\tilde{\mfK}$ such that
	\begin{align}\label{eq:mod_lips_est}
		& \mathbb{E}^{\frac{1}{p}} \Big [ \|\Pi^{\varepsilon; 1} - \Pi^{\delta; 2} \|_{\langle \mcB \rangle; \mfK}^{p} \Big ] \nonumber \\
		& \lesssim \|K^1 - K^2\|_{\mcA_+^{\eq; \mcO}} + \sup_{\boldsymbol{\tau} \in \mcB_{W_{\gamma_0}}}
		\sup_{x \in \tilde{\mfK}}
		\sup_{\lambda \in (0, 1]}
		\sup_{\phi \in \mcB^r} 	
		\sup_{v \in (\mathcal{K}_{\mco}^{| \mfl |})^{\proj \sttau}}
		\frac{\mathbb{E}^{\frac{1}{q_{\boldsymbol{\tau}, \bar{p}}}} \big[ \big| \big( \hat{\Pi}_x^{\mcK; \varepsilon} - \hat{\Pi}_x^{\mcK; \delta} \big) v \big( \phi_x^{\lambda} \big) \big|^{q_{\boldsymbol{\tau}, \bar{p}}} \big]}
		{\| v \|_{(\mathcal{K}_{\mco}^{|\mfl|})^{\proj \sttau}} \, \lambda^{| \boldsymbol{\tau} |_{\mfs} + \kappa}}
	\end{align}
	where the implicit constant depends polynomially on $\|K^i\|_{\mcA_+^{\eq; \mcO}}$ and where we emphasise the fact that the models appearing on the right hand side are the BPHZ models associated to noise assignments $\xi^\varepsilon, \xi^\delta$ respectively and the kernel assignment given by the identity map. By taking $K^1 = K^2$ in the above and appealing to the last estimate of Lemma~\ref{l:annealed_mb} in combination with any one of \cite{CH16, HS23, BH23} as in the uniform bounds part, we see that for any compact set $\mfK$
	\begin{align}\label{eq:conv_eq}
		\mathbb{E}^{\frac{1}{p}} [ \|Z^{\eps}; Z^{\delta}\|_{\langle \mcB \rangle; \mfK}^p] \to 0, \qquad \text{ as } \eps, \delta \to 0.
	\end{align}
	By Markov's inequality, it follows that $Z^\eps$ is Cauchy in probability in $\mcM(\langle \mcB \rangle)$ and hence converges in probability to a limiting model $Z$. The fact that 
		$\mathbb{E}[\|Z^\varepsilon; Z\|_{\langle \mcB \rangle; \mfK}^p] \to 0$ for each $\mfK$ then follows by passing to an almost surely convergent subsequence in $\delta$ in \eqref{eq:conv_eq} and sending $\delta \to 0$ with the help of Fatou's lemma.

	 With the convergence result in hand, the locally Lipschitz dependence on the kernel assignment follows by sending $\varepsilon, \delta \to 0$ along almost surely convergent subsequences in \eqref{eq:mod_lips_est}, again with the help of Fatou's lemma. The final statement of Theorem~\ref{theo:main_result_1} pertaining to the form of the counterterm is immediate from the definition of $\ell^\eq$ given in Section~\ref{ss: mod_eq}.
	\end{proof}
\section{Decompositions of Green's Kernels for Parabolic Operators}\label{s:green_kernel}

	In this section, we provide the remaining ingredient to show that Corollary~\ref{theo:main_result_2} covers the case of suitably nice variable coefficient heat operators. 
More precisely,
we prove that Assumption~\ref{ass:locality} holds 
in the setting of 
heat kernels of 
second-order
differential operators on\footnote{in this section, contrary to the rest of this paper, we work in the ambient space $\mathbb{R}^{1 + d}$ instead of $\mathbb{R}^d$, in order to capture the different role of the time and space directions} 
$\mathbb{R}^{1+d}$ of the form
\begin{equation}\label{eq:main_diff_operator}
	L = \partial_t - \sum_{i,j = 1}^d a_{i j}(t,x) \partial_i \partial_j - \sum_{i = 1}^d b_{i}(t,x) \partial_i - c (t,x) .
\end{equation}
Due to the assumed form of the operator, we now assume that the scaling $\mfs$ is given by $\mfs = (2, 1, \dots, 1)$.
\subsection{Setting and Statement of Heat Kernel Decomposition}
\label{ss:statement_hk}

Throughout this section, we make the following assumptions on the coefficient fields $a$, $b$, $c$. 
We fix an ellipticity radius $\lambda > 0$.
\begin{assumption}\label{ass:elliptic_and_bounds}
	For some $\varrho \in \mathbb{N}$,
	the coefficient fields 
	$a \colon \mathbb{R}^{1 + d} \to \mathrm{Sym} ( d )$, $b \colon \mathbb{R}^{1 + d} \to \mathbb{R}^{1 + d}$ and $c \colon \mathbb{R}^{1+d}\to\mathbb{R}$, satisfy:
	\begin{enumerate}
		\item (Uniform parabolicity): 
		For all $z\in \mathbb{R}^{1 + d}$, and all $\zeta \in \mathbb{R}^d$,
		\begin{align*}
			\lambda | \zeta |^{2} 
			\leq \sum\limits_{i, j = 1}^d a_{i j} ( z ) \, \zeta_i \zeta_j
			\leq \lambda^{-1} | \zeta |^2 , 
			\qquad \text{ where } \qquad | \zeta |^2 = \sum_{i = 1}^d \zeta_i^2 .
		\end{align*}
		
		\item (Regularity): For $i, j \in \lbrace 1, \cdots, d\rbrace$, the functions $a_{i j}$, $b_i$, and $c$, are of class $C^{\varrho}$, with
		\begin{align*}
			\| a \|_{\varrho} 
			& = \max\limits_{i, j \in \lbrace 1, \cdots , d \rbrace}  \| a_{i j} \|_{C^{\varrho}}	< \infty , 
			\qquad
			\| b \|_{\varrho} 
			= \max\limits_{i \in \lbrace 1, \cdots , d \rbrace} \| b_{i} \|_{C^{\varrho}}
			< \infty ,
			\qquad
			\| c \|_{\varrho} 
			= \| c \|_{C^{\varrho}}
			< \infty 
		\end{align*}
		where $C^\varrho$ denotes the space of (globally) $\varrho$-H\"older functions with respect to the scaling $\mfs$.
	\end{enumerate} 
\end{assumption}

\begin{definition}
	We will denote by $\mathbf{A}_{\varrho}$ the vector space of triples $(a, b, c)$ satisfying Assumption~\ref{ass:elliptic_and_bounds}, which we endow with the corresponding norm $\| (a, b, c) \|_{\varrho} = \|a\|_{\varrho}+\|b\|_{\varrho}+\|c\|_{\varrho}$.
	We also define $\mathbf{A} = \bigcap_{\varrho \in \mathbb{N}} \mathbf{A}_{\varrho}$, which we endow with the family of semi-norms 
	$(\| \cdot \|_{\varrho})_{\varrho \in \mathbb{N}}$.
\end{definition}

It is well-known (see e.g.\ \cite{Fri08, Grieser}) that the heat kernel of $L$ can be expressed as the `Volterra series'
\begin{align}\label{eq:volterra_series}\Gamma = Z * \sum_{k = 0}^{\infty} ( - E)^{*k} ,
\end{align}
where:
\begin{enumerate}
	
	\item $*$ refers to the space-time convolution operator, defined by the expression
	\begin{align} \label{eq:a4}
		u * v ( z , \bar{z} ) = \int_{\mathbb{R}^{1 + d}} d \zeta \, u ( z, \zeta ) v ( \zeta, \bar{z} ) , \qquad \text{for } z, \bar{z} \in \mathbb{R}^{1 + d} 
	\end{align}
	whenever the integral converges.
	\item $Z \colon \mathbb{R}^{1 + d} \times \mathbb{R}^{1 + d} \to \mathbb{R}$ is explicitly expressed as 
	\begin{align} \label{eq:a18}
		Z ( z, \bar{z} ) = W^{\bar{z}} ( z - \bar{z} ) ,
	\end{align}
	where $W^{w}$ denotes the fundamental solution of the differential operator $L^{w} = \partial_t - \sum_{i,j} a_{i j}(w) \partial_i \partial_j$ whose coefficient is ``frozen'' at $w \in \mathbb{R}^{1 + d}$, 
	which is in turn given by
	\begin{equation}\label{eq:exlicit Z_0}
		W^{w}(t,x):= 
		\frac{1}{(4\pi)^{d/2} \det(a(w))^{1/2}} \, 
		\frac{\mathbf{1}_{ \{ t>0 \} }}{ t^{d/2}} 
		\exp \left(\frac{- \sum_{i,j=1}^d a^{i j}(w) x_i x_j}{4t}\right) .
	\end{equation}
	Note that here and throughout we denote by $a^{i j}$ the entries of the inverse matrix $a^{-1}$ of $a$.
	
	\item $E$ is equal to $LZ$ away from the diagonal where by convention the differential operator $L$ acts on the first $1 + d$ variables of $Z$.
	Explicitly, we have that
	\begin{align}\label{eq:a20} 
		E(z, \bar{z})&= \sum_{i,j = 1}^d (a_{i j}(\bar{z})-a_{i j}(z ) ) \, \partial_i \partial_j Z(z, \bar{z}) 
		- \sum_{i = 1}^d b_i ( z ) \, \partial_i Z ( z, \bar{z} ) 
		- c ( z ) Z ( z, \bar{z} ) . 
	\end{align}
\end{enumerate}

In order to construct a kernel assignment satisfying  
Assumption~\ref{ass:locality} 
we will need a more refined decomposition 
based on Taylor expansion in the coefficient field. For the first term in the Volterra series and to first order in the expansion, a similar Taylor expansion in the coefficient field was used in \cite[Lemma 2.7]{Sin23} in order to show that due to symmetry assumptions $Z \ast (-E)$ does not contribute a need for renormalisation in the situations considered there. At the level of generality considered here, appealing to qualitative symmetry assumptions is not possible and so we instead use Taylor expansion in the coefficient field for a different purpose. Namely, we show that all parts of the kernel that contribute a genuine need for renormalisation can be written in the precise form described in Definition~\ref{d:p} below. To the best of our knowledge, no decomposition to arbitrary order and obtaining a form such as in Definition~\ref{d:p} is available in the literature.
Therefore, the remainder of this subsection is dedicated to providing such a decomposition, see Proposition~\ref{prop:ker_dec} below.

We introduce a class $\Lambda$ of (families of) translation-invariant kernels, which encode a property of `locality in the coefficient field'.

\begin{definition}\label{d:p}
	Let $a$, $b$, $c$ satisfy Assumption~\ref{ass:elliptic_and_bounds} with respect to $\varrho \in \mathbb{N}$, and let $r \leq \varrho$.
	We say that a family $(K^w)_{w \in \mathbb{R}^{1 + d}}$ of translation-invariant kernels belongs to the space $\Lambda_{r}$ (we simply write $\Lambda$ when $r, a, b, c$ are clear from context), if it 
	can be expressed as a (finite) linear combination of kernels of the form
	\begin{align}
		K_{F, N, P}^{w} ( z - \bar{z} ) \label{eq:a1} 
		& = F(w) \,  
		\int \mathrm{d} y_1 \cdots \mathrm{d} y_{N} \, \prod_{i = 0}^{I} P^{[i]} ( y_{i} - y_{i+1} ) W^{w} ( y_{i} - y_{i+1} ) , 
	\end{align}
	for $I \in \mathbb{N}$,  with the convention $y_0 = z$, $y_{N + 1} = \bar{z}$.
	Furthermore, in \eqref{eq:a1} we require that
	\begin{enumerate}
		\item the $P^{[i]}$ are rational fractions of the form 
		\begin{align*}
			P^{[i]} ( t, x ) &= t^{-1/2} \, Q^{[i]} ( t^{1/2}, t^{-1/2} x ) ,
			\quad \text{for some polynomial $Q^{[i]}$,}
		\end{align*}
		\item the function $F$ is a polynomial in the entries of $a$, $a^{-1}$, $b$, $c$ and their derivatives 
		of degree up to $r$ i.e.\ 
		\begin{align} \label{eq:a1b}
			F ( w ) & = p \big( (\partial^l a_{i, j} ( w ), \partial^l a^{i, j} ( w ), \partial^l b_i ( w ), \partial^l c ( w ))_{| l |_\mfs \leq r; i, j \in \lbrace 1, \cdots, d \rbrace} \big) , 
			\quad 
			\text{for some polynomial $p$.}
		\end{align}
	\end{enumerate}
\end{definition}

\begin{remark}
	We note that the spaces $\Lambda$ thus defined depend on $a, b, c$ through \eqref{eq:a1b} and the appearance of $W^w$ in \eqref{eq:a1}.
	One may readily prove that the integral in \eqref{eq:a1} is (absolutely) convergent outside the diagonal $z = \bar{z}$.
\end{remark}

\begin{remark}
	We note that by the transpose-of-comatrix formula expressing the entries of $a^{-1}$ as rational fractions in the entries of $a$, the function $F$ in \eqref{eq:a1b} is also a rational fraction in the entries of $a$, $b$, $c$ and their derivatives, with nowhere vanishing denominator.
	In particular, it is thus a smooth function thereof.
\end{remark}

\begin{example}\label{ex:e1}
	As a simple example, note that the family $(W^w)_{w \in \mathbb{R}^{1 + d}}$ defined by \eqref{eq:exlicit Z_0}, belongs to $\Lambda_{r = 0}$.
	In this case, $N = 0$ and $P^{[0]} = F = 1$.
\end{example}

Recall that the spaces $\mcK_r^{\beta}$ of translation-invariant regularising kernels
from Definition~\ref{d:rk2n}
are not strictly speaking spaces of functions $F$, but rather spaces of dyadic decompositions $(F_n)_{n \in \mathbb{N}}$.
As already noted in 
Remark~\ref{rem:dec2ker}, 
one can pass from the decompositions to a function $F$ by summing the coefficients $F_n$.
In the converse direction there is no unique way of constructing the coefficients $F_n$ given a function $F$.
In this section, we will prescribe a way of performing such a decomposition by fixing once and for all a suitable dyadic partition of unity.
More precisely,
we now fix once and for all a smooth cut-off function
\begin{align} \label{eq:a33b}
	\chi \in C_c^{\infty} ( \mathbb{R}^{1 + d} ) , 
	\qquad \text{such that} 
	\qquad \mathbf{1}_{B_{\mathfrak{s}} (0, 1/2)} \leq \chi \leq \mathbf{1}_{B_{\mathfrak{s}} (0, 1)} ,
\end{align}
and define for $z = (t, x) \in \mathbb{R}^{1 + d}$, $\phi ( z ) = \chi ( z ) - \chi (4 t, 2 x)$, along with the dyadically rescaled $\phi_n (z) = \phi ( 2^{2n} t, 2^n x )$.
Thus, by telescoping, 
one has
$1 = (1 - \chi) + \sum\nolimits_{n \geq 0} \phi_n$ 
on $\mathbb{R}^{1 + d} \setminus \lbrace 0 \rbrace$,
whence the decomposition
$F = R + \sum_{n = 0}^{+ \infty} F_n$
with
$R = ( 1 - \chi ) F$ and $F_n = \phi_n F$.
Note that by construction, one automatically has
for all $n \in \mathbb{N}$
that 
$\operatorname{supp}( F_n ) \subset \lbrace (z, \bar{z}) \colon \left| \bar{z} - z \right|_{\mathfrak{s}} \leq 2^{-n} \rbrace$,
so that in the remainder of this section when we write that $F \in \mcK_r^{\beta}$ we
will mean the corresponding statement for the family $(F_n)_{n \in \mathbb{N}}$.

\begin{definition}
	We define $\chi \Lambda_r$ to be the space of
	kernels of the form $K^w ( z - \bar{z} ) = \chi ( z - \bar{z} ) \tilde{K}^w ( z - \bar{z} )$ with $\tilde{K} \in \Lambda_r$.
\end{definition}

Given $r \in \mathbb{N}$, 
we will denote by $C_{\mathrm{loc},t}^r$ the space of functions on $\mathbb{R}^{1 + d} \times \mathbb{R}^{1 + d}$ of class $C^{r}$ (for the scaling $\mfs$), endowed with the family of semi-norms
\begin{align*}
	\| F \|_{C_{\mathrm{loc},t}^r ( T )} 
	& = \sup_{| j_1 |_\mfs + |j_2|_\mfs \leq r} \sup_{x, y \in \mathbb{R}^d} \sup_{| t - s  | \leq T} \big| \partial_1^{j_1} \partial_2^{j_2} F ( z, \bar{z} ) \big| ,
	\qquad \text{for $T>0$}.
\end{align*}

We are now ready to state the main result of this section. 
\begin{prop}
	\label{prop:ker_dec}
	For all $r, M \in \mathbb{N}$, there exists $\varrho = \varrho ( r, M, d ) \in \mathbb{N}$ such that if the coefficient fields $a$, $b$ and $c$ satisfy Assumption~\ref{ass:elliptic_and_bounds} with respect to $\varrho$, then
	the kernel $\Gamma$ defined by the series \eqref{eq:volterra_series} can be decomposed
	in two ways,
	as
	\begin{align}
		\Gamma ( z, \bar{z} ) & = K^{z} ( z - \bar{z} ) + R ( z, \bar{z} ) , \label{eq:a2} \\
		& = \bar{K}^{\bar{z}} ( z - \bar{z} ) + \bar{R} ( z, \bar{z} ) , \label{eq:a2b}
	\end{align}
	where 
	$K, \bar{K}, R, \bar{R}$ depend only on $r$ (and $a, b, c$, but \emph{not} on $M$)
	and
	\begin{align*}
		K, \bar{K} \in C^M ( \mathcal{K}_M^2 ) \cap \chi \Lambda_{\varrho} , 
		\qquad R, \bar{R} \in C_{\mathrm{loc},t}^r .
	\end{align*}
	Furthermore, the decomposition can be performed in such a way that the following maps are continuous:
	\begin{align} 
		\begin{array}[t]{lrcl}
			& \mathbf{A}_{\varrho} & \longrightarrow & C^M ( \mathcal{K}_M^2 ) \oplus C_{\mathrm{loc},t}^r \\
			& (a, b, c) & \longmapsto & ( K , R ) ,
		\end{array}
		\qquad 
		\begin{array}[t]{lrcl}
			& \mathbf{A}_{\varrho} & \longrightarrow & C^M ( \mathcal{K}_M^2 ) \oplus C_{\mathrm{loc},t}^r \\
			& (a, b, c) & \longmapsto & ( \bar{K}, \bar{R} ) .
		\end{array}
		\label{eq:a55}
	\end{align}
\end{prop}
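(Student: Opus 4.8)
The plan is to establish Proposition~\ref{prop:ker_dec} by first recalling (with enough detail to track continuity and regularity) the construction of $\Gamma$ as the Volterra series \eqref{eq:volterra_series}, then postprocessing it by a Taylor expansion in the ``upper slot'' of both $Z$ and $E$. First I would verify convergence of \eqref{eq:volterra_series} together with quantitative kernel bounds: using the Gaussian form \eqref{eq:exlicit Z_0} one sees $Z$ is a $2$-regularising kernel (in the parabolic scaling $\mfs = (2,1,\dots,1)$) that is $C^\varrho$ in $w$, while $E$ from \eqref{eq:a20} gains H\"older regularity from the difference $a_{ij}(\bar z) - a_{ij}(z)$ and is thus $(2 + \varrho\wedge 1 - |\mfs|)$-improving as a convolution kernel. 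A standard iteration (following \cite{Grieser}, adapted to $\mathbb{R}^{1+d}$ with time-dependent coefficients and only $C^\varrho$ regularity) shows $\sum_k Z * (-E)^{*k}$ converges and that the tail $\sum_{k \ge k_0} Z * (-E)^{*k}$ is as regular as we like once $k_0 = k_0(r,M,d)$ is large, hence can be absorbed into the remainder $R$. So only finitely many terms $Z * (-E)^{*k}$, $k < k_0$, need to be treated, and each depends continuously on $(a,b,c) \in \mathbf{A}_\varrho$ in the relevant norms.

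Next I would expand each surviving term $Z * (-E)^{*k}$ by Taylor-expanding every coefficient field appearing in it around the base point $z$ (for the decomposition \eqref{eq:a2}; around $\bar z$ for \eqref{eq:a2b}). Concretely, in $Z(y_i, y_{i+1}) = W^{y_{i+1}}(y_i - y_{i+1})$ and in the factors $a_{ij}(\bar z) - a_{ij}(z)$, $b_i(z)$, $c(z)$, $a^{ij}(w)$, $\det a(w)$ appearing inside $E$, write $f(w) = \sum_{|l|_\mfs \le r} \frac{(w - z)^l}{l!}\partial^l f(z) + (\text{Taylor remainder of order } r)$. Since $w$ ranges over the integration variables $y_1,\dots,y_N$, a factor $(w-z)^l$ against $W^w(y_i - y_{i+1})\cdots$ produces, after telescoping $w - z = \sum (y_j - y_{j+1})$, exactly a kernel of the form $P^{[i]}(y_i - y_{i+1}) W^{y_{i+1}}(y_i - y_{i+1})$ with $P^{[i]}$ a rational fraction of the prescribed type (polynomial times $t^{-1/2}$, homogeneous under parabolic scaling), and the prefactor becomes a polynomial $F(z)$ in the jet of $(a,b,c,a^{-1})$ at $z$ of degree $\le r$. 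Thus the ``main'' part — all terms with no Taylor remainder — lies in $\Lambda_r$, and after multiplying by the cutoff $\chi$ as in Definition~\ref{d:p} gives the local kernel $K \in \chi\Lambda_\varrho$; the dyadic decomposition fixed via \eqref{eq:a33b} puts it in $C^M(\mathcal{K}_M^2)$ (order $M$ and regularising index $2$, using that $W$ is $2$-regularising and that differentiating/multiplying by polynomials in the parabolic scaling costs the right powers). Every term containing at least one Taylor remainder is more regularising: a remainder of order $r$ contributes an extra factor behaving like $|y_j - y_{j+1}|_\mfs^{r+1}$ (smeared over the chain), which improves the convolution kernel by $r+1$ in scaling, so for $r$ large enough relative to the desired $C^r_{\mathrm{loc},t}$ regularity these terms are $C^r$ across the diagonal and go into $R$; the far-from-diagonal piece $(1-\chi)\Gamma$ is smooth and also absorbed into $R$.

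Finally, continuity of the maps \eqref{eq:a55}: each of the finitely many building blocks — the frozen Gaussian $W^{\bullet}$, its derivatives in the lower and upper slots, the polynomial prefactors $F$ (which are polynomials, hence smooth, in the $C^\varrho$ data by the remark on the transpose-of-comatrix formula), the convolutions $*$, the Taylor remainders, and the dyadic localisation — is a continuous (indeed locally Lipschitz, polynomial) operation of $(a,b,c)$ in the stated norms, and continuity is preserved under the finite sums and the convergent tail estimate. The analogous statement for $(\bar K, \bar R)$ follows by symmetry, expanding around $\bar z$ instead. The main obstacle I expect is the bookkeeping in the second paragraph: organising the telescoping $w - z = \sum_j (y_j - y_{j+1})$ across an $N$-fold convolution so that the resulting kernels are genuinely of the product form in \eqref{eq:a1} with $P^{[i]}$ of the required homogeneous rational type, and simultaneously tracking that every remainder term gains \emph{uniformly} enough regularity (as a space-time convolution kernel, not just pointwise) to land in $C^r_{\mathrm{loc},t}$ — this requires careful use of the parabolic scaling degree count and of the convolution-improves-regularity lemma applied along the chain. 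Quantifying $\varrho(r,M,d)$ explicitly (it must dominate $r$, $M$, the number $k_0$ of retained Volterra terms, and the loss $|\mfs| = d+2$ incurred at each convolution) is the other place where care is needed, but it is a finite combinatorial bound once the scaling estimates are in place.
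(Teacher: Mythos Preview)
Your proposal is correct and follows essentially the paper's approach: convergence of the Volterra series with the tail absorbed into $R$ (Lemma~\ref{lem:convergence}), Taylor expansion of $Z$ and $E$ in the upper slot around a common point $w$ (Propositions~\ref{prop:Z_decomp} and~\ref{prop:E_decomp}), and the telescoping $(\zeta_{i+1}-w)=\sum_j(\zeta_{j+i+1}-\zeta_{j+i+2})+(\bar z - w)$ to bring the main part into $\Lambda_r$ while remainder terms gain enough regularity to land in $C^r_{\mathrm{loc},t}$ (Lemma~\ref{lem:conv_decomp}). The only notable difference is that the paper establishes \eqref{eq:a2b} directly --- the heat calculus $\Psi_r^\alpha$ is set up with $\bar z$ as base point, matching $Z(z,\bar z)=W^{\bar z}(z-\bar z)$ --- and then derives \eqref{eq:a2} via the adjoint operator $L^*$ (Remark~\ref{rem:adjoint}), whereas you propose to treat both cases by expanding around the appropriate endpoint; your route works but requires a mirrored version of the $\Psi$-spaces with the roles of $t,\bar t$ swapped, which is exactly what the adjoint reformulation provides.
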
	

We note that the output of Proposition~\ref{prop:ker_dec} implies that the locality property encoded in Assumption~\ref{ass:locality} holds for a value of $N$ which depends only on $r, M$ and $d$.
Indeed, $K \in \chi \Lambda_r$ implies that
$K^z = f ( ( \partial^l a ( z ) , \partial^l b ( z ), \partial^l c ( z ) )_{| l |_\mfs \leq r} )$ for some function $f$, 
and similarly for derivatives in the `upper slot' of $K$ since $\Lambda_r$ is stable under differentiation.

\begin{remark} \label{rem:adjoint}
	The decomposition \eqref{eq:a2b} is not an immediate consequence of \eqref{eq:a2}.
	Indeed, since our strategy is based on the Volterra series \eqref{eq:volterra_series}, each summand of which is \emph{not} symmetric in $z$ and $\bar{z}$ (see e.g.\ \eqref{eq:a18}, \eqref{eq:a20}), 
	inverting the roles of $z$ and $\bar{z}$ in our proof requires an argument.
	To that effect, we use the fact that $\Gamma ( z, \bar{z} ) = \Gamma^* ( \bar{z}, z )$, where $\Gamma^*$ is the heat kernel of the adjoint differential operator
	\begin{align*}
		L^* 
		& = 
		- \partial_t 
		- \sum_{i, j=1}^d a_{ij} ( z ) \partial_i \partial_j 
		- \sum_{i = 1}^d \Big( 2 \sum_{j= 1}^d \partial_j a_{ij} (z ) - b_i ( z ) \Big) \partial_i 
		- \Big( c ( z ) - \sum_{i=1}^d \partial_i b_i ( z ) + \sum_{i, j=1}^d \partial_{i} \partial_j a_{i j} ( z ) \Big) .
	\end{align*}
	See \cite[Chapter~1,Section~8]{Fri08} for a relevant discussion of the adjoint equation.
	In particular, one may construct $\Gamma^*$ as 
	the
	series $\Gamma^* = \sum_{k \geq 0} Z^* * ( - L^* Z^* )^{*k}$, 
	where $Z^{*} ( z, \bar{z} ) = W^{\bar{z}} ( \bar{z} - z )$, where $W$ is as in \eqref{eq:a18}.
	Up to interchanging the role of $t$ and $\bar{t}$ in the various
	scales of 
	spaces 
	$\Psi$, $\Phi$, $C ( \Psi )$, $C ( \Phi )$ 
	which we shall introduce in this section, 
	the decomposition \eqref{eq:a2}
	can be established in the same way as \eqref{eq:a2b}.
	Thus, in what follows we concentrate on establishing the latter decomposition.
\end{remark}

Let us also record the following 
(straightforward)
corollary
in the case where all derivatives of the coefficient field are controlled.
We define the space
$C^{\infty} (\mcK_{\infty}^{\beta} ) = \bigcap_{M\in \mathbb{R}} C^M ( \mathcal{K}_M^2 )$, 
which we endow with its natural Fr\'echet topology.
\begin{corollary}
	In the context of Proposition~\ref{prop:ker_dec}, 
	for any $r \in \mathbb{N}$,
	the following map is continuous
	\begin{align*} 
		\begin{array}[t]{lrcl}
			& \mathbf{A} & \longrightarrow & C^{\infty} (\mcK_{\infty}^{\beta} ) \oplus C_{\mathrm{loc},t}^r \\
			& (a, b, c) & \longmapsto & ( K , R ) .
		\end{array}
	\end{align*}
\end{corollary}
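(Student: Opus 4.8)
The plan is to deduce the corollary directly from Proposition~\ref{prop:ker_dec} by exploiting the fact that the decomposition $\Gamma = K + R$ produced there depends on the regularity parameter $r$ but \emph{not} on the smoothness parameter $M$. Fix $r \in \mathbb{N}$. For each $M \in \mathbb{N}$, Proposition~\ref{prop:ker_dec} provides an integer $\varrho = \varrho(r, M, d)$ such that, whenever $(a, b, c)$ satisfies Assumption~\ref{ass:elliptic_and_bounds} with respect to $\varrho$, one has $\Gamma(z,\bar z) = K^z(z - \bar z) + R(z, \bar z)$ with $K \in C^M(\mathcal{K}_M^2)$ and $R \in C_{\mathrm{loc},t}^r$, and moreover the pair $(K, R)$ is the same for every $M$. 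Since any $(a, b, c) \in \mathbf{A} = \bigcap_{\varrho} \mathbf{A}_{\varrho}$ satisfies Assumption~\ref{ass:elliptic_and_bounds} for all $\varrho$, this single kernel $K$ therefore lies in $C^M(\mathcal{K}_M^2)$ for every $M \in \mathbb{N}$, i.e.\ $K \in C^{\infty}(\mcK_{\infty}^{\beta}) = \bigcap_{M} C^M(\mathcal{K}_M^2)$, while $R \in C_{\mathrm{loc},t}^r$ by applying the proposition for any fixed $M$.

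It remains to upgrade the continuity statements \eqref{eq:a55} to continuity of $(a, b, c) \mapsto (K, R)$ from $\mathbf{A}$ into $C^{\infty}(\mcK_{\infty}^{\beta}) \oplus C_{\mathrm{loc},t}^r$, which is purely a matter of unwinding the projective-limit topologies on both sides. On the one hand, $\mathbf{A}$ carries the Fr\'echet topology generated by the seminorms $(\| \cdot \|_{\varrho})_{\varrho \in \mathbb{N}}$, so for each $\varrho$ the canonical inclusion $\mathbf{A} \hookrightarrow \mathbf{A}_{\varrho}$ is continuous. On the other hand, $C^{\infty}(\mcK_{\infty}^{\beta})$ carries its natural Fr\'echet topology, which is the coarsest one making each inclusion $C^{\infty}(\mcK_{\infty}^{\beta}) \hookrightarrow C^M(\mathcal{K}_M^2)$ continuous; hence a map valued in $C^{\infty}(\mcK_{\infty}^{\beta})$ is continuous as soon as its composition with each such inclusion is. Now for a fixed $M$ the map $(a, b, c) \mapsto (K, R)$ factors as
\[
\mathbf{A} \hookrightarrow \mathbf{A}_{\varrho(r, M, d)} \longrightarrow C^M(\mathcal{K}_M^2) \oplus C_{\mathrm{loc},t}^r ,
\]
where the first arrow is the continuous inclusion above and the second is the continuous map furnished by \eqref{eq:a55}. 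Composing, $(a, b, c) \mapsto (K, R)$ is continuous into $C^M(\mathcal{K}_M^2) \oplus C_{\mathrm{loc},t}^r$ for every $M$, and therefore continuous into $C^{\infty}(\mcK_{\infty}^{\beta}) \oplus C_{\mathrm{loc},t}^r$, as claimed.

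Since the substantive analytic work --- constructing the $M$-independent decomposition of the Volterra series by Taylor-expanding in the coefficient field and tracking the regularity and continuity of each term --- is already carried out in the proof of Proposition~\ref{prop:ker_dec}, there is no genuine obstacle here. The only point requiring care, and it is part of the statement of Proposition~\ref{prop:ker_dec} rather than of this corollary, is that the decomposition can be and is chosen uniformly in $M$, so that one is free to let $M \to \infty$ while keeping $K$ fixed; granting this, the present argument is a routine manipulation of Fr\'echet topologies.
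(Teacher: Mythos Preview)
Your proof is correct and is precisely the argument the paper has in mind: the authors do not give a proof but label the corollary ``straightforward'', and the natural way to make it straightforward is exactly your observation that $(K,R)$ is independent of $M$ together with the projective-limit descriptions of $\mathbf{A}$ and $C^{\infty}(\mcK_{\infty}^{\beta})$.
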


\begin{remark}
	The remainder $R$ does not play a crucial role in the rest of this paper since it is not included in the construction of the model. 
	However, if one wishes to apply the corresponding solution theory of \cite{Hai14,BCCH20} it must enjoy a number of properties, some of which we now briefly comment upon.
	
	For example, in the setting of spatially periodic problems, to directly apply the results of the papers mentioned above, $R$ must be itself periodic in space and of sufficiently fast decay at infinity in time.
	If the coefficient field is assumed to be periodic in space then both of these properties can be read off analagous properties of $\Gamma$ itself by writing $R ( z, \bar{z}) = \Gamma ( z , \bar{z}  ) - K^{z } ( z - \bar{z} )$.
	
	Furthermore, we note that \cite{Hai14,BCCH20} assume that $R$ is smooth, while Proposition~\ref{prop:ker_dec} only provides a remainder of class $C^r$.
	This is not problematic 
	since the results of those papers remain valid provided $r$ is large enough in function of the equation under consideration, 
	which can be guaranteed in our context provided the coefficient field is itself chosen regular enough.
\end{remark}

\subsection{Proof of Heat Kernel Decomposition} \label{ss:proof_hk}

This section is devoted to the proof of Proposition~\ref{prop:ker_dec} which proceeds by first showing suitable convergence of the Volterra series and then by post-processing it to obtain the locality statement.

For the convergence, we find it convenient to follow the approach of \cite{Grieser}.  We introduce in Definition~\ref{d:hc} a scale $\Psi = (\Psi_r^{\alpha})_{r \in \mathbb{N}, \alpha > 0}$ of spaces which are a minor adaptation of \cite[Definition 2.1]{Grieser} and are, sometimes called a `heat calculus'. These spaces satisfy various stability properties, in particular with respect to multiplication and convolution which allow us to control each term of the Volterra series \eqref{eq:volterra_series}. Since this part of the proof is only a minor adaptation of the construction given in \cite{Grieser}, we only briefly sketch the details.

The main point of this section is that the heat calculus mentioned above is insufficient to account for the locality property that we demand in the form $K \in \chi \Lambda_{\rho}$.
Indeed, on the one hand, while $Z$ satisfies the claimed decomposition by Example~\ref{ex:e1}, the same is \emph{not} true of $LZ$, due to the presence of the terms $a_{i j} ( z )$, $b_i ( z )$, $c ( z )$ in the right-hand-side of \eqref{eq:a20}.
Furthermore, the following property of a kernel $F$,
``$F ( z  , \bar{z}) = F^{\bar{z}} ( z - \bar{z} )$ for some $(F^{w})_{w} \in \Lambda$'', is \emph{not} stable by convolution, since $F * F ( z, \bar{z} ) = \int \mathrm{d} \zeta \, F^{\zeta} ( z - \zeta ) F^{\bar{z}} ( \zeta - \bar{z} )$
needs not be of the desired form, due to
the presence of $\zeta$ instead of $\bar{z}$ in the `upper slot' of
the first factor in the integral.

To resolve this difficulty, 
the idea is to perform Taylor expansions in the coefficient field within each term of the Volterra series, which leads to expressions of the desired form modulo terms coming from the Taylor remainders which we show to be much more regularising.

We now recall the definition of the `heat calculus' scale $\Psi = (\Psi_r^{\alpha})_{r \in \mathbb{N}, \alpha > 0}$.
We note that our definition slightly differs from the one in \cite{Grieser}, since here we allow for time-dependent coefficient-fields and work on $\mbR^d$ rather than a compact Riemannian manifold.
In comparison to \cite{Grieser}, we have also changed the convention for the exponent $\alpha$ so that here $\alpha > 0$ represents the regularising degree of the kernel.

\begin{definition}
	\label{d:hc}
	Let $\alpha \in \mbR$ and $r \in \mbN$. We define $\Psi_r^\alpha$ to be the space of functions $F: \mbR^{1+d} \times \mbR^{1+d} \to \mbR$ such that there exists a function $\tilde{F} : \mbR^{1+d} \times \mbR_+^{1+d} \to \mbR$ such that				
	\begin{align}\label{eq:decomp}
		F(z,\bar{z}) = \mathbf{1}_{t > \bar{t}} (t- \bar{t})^{\frac{\alpha - d - 2}{2}} \tilde{F} \left (\bar{z}, (t-\bar{t})^{1/2}, \frac{x - \bar{x}}{(t - \bar{t})^{1/2}} \right )
	\end{align}
	and such that for each $T \in \mbR_+$, $n \in \mbN$ we have that
	\begin{align}\label{eq:a3}
		\|F\|_{\alpha, r; T, n} = \sup_{z \in \mbR^{1+d}} \sup_{{t} \in [0, T^{1/2}]} \sup_{\bar{x} \in \mbR^d} \sup_{|j|_\mfs + |k|_\mfs \le r} (1 + |\bar{x}|)^n |\partial_1^j \partial_2^k \tilde{F}(\bar{z},z)| < \infty.
	\end{align}
	In particular, the family of semi-norms \eqref{eq:a3} turn $\Psi_r^{\alpha}$ into a Fr\'echet space.
\end{definition}

We will also consider the subspace of $\Psi_r^{\alpha}$ consisting of translation invariant kernels.
\begin{definition}
	We define $\Phi_r^{\alpha}$ 
	to be the subspace of $\Psi_r^{\alpha}$ consisting of translation invariant kernels (equivalently, $\tilde{F}$ defined in \eqref{eq:decomp} depends only on $(t - \bar{t})^{1/2}$ and $\frac{(x - \bar{x})}{(t- \bar{t})^{1/2}}$), 
	endowed with the induced family of semi-norms.
\end{definition}

We note that contrary to the spaces $\mcK_r^{\alpha}$, the spaces $\Phi_r^{\alpha}$ do not come with the assumption of having support in the unit ball.
Examples of functions belonging to $\Psi_r^{\alpha}$ with suitable $\alpha$
are provided in the following lemma, which can be established by elementary computations involving the chain rule and Taylor's theorem. The most subtle term treated below is the contribution of $E$ as defined in \eqref{eq:a20}.

The important point for this term is that whilst taking two derivatives typically loses two degrees of regularising effect, the term $a_{ij}(\bar{z}) - a_{ij}(z)$ above restores a degree of regularising effect by Taylor expansion at the cost of the loss of one derivative (which corresponds to up to two degrees of regularity due to the parabolic scaling). For brevity, we leave the remaining elementary computations required for the following lemma to the reader.
\begin{lemma} \label{l:ex_e2}
	Under Assumption~\ref{ass:elliptic_and_bounds}, 
	we have that 
	$Z \in \Psi_{\varrho}^{2}$,
	$\partial_i Z \in \Psi_{\varrho}^{1}$,
	$\partial_i \partial_j Z \in \Psi_{\varrho}^0$
	for $i, j \in \lbrace 1, \cdots, d \rbrace$, 
	and 
	$E \in \Psi_{\varrho - 2}^{1}$.
	Furthermore the following maps are continuous\footnote{ Here we slightly abuse notation by identifying $Z$ with the map from $(a, b, c)$ to $Z$ and similarly for the other maps.}
	\begin{align*}
		Z \colon \mathbf{A}_{\varrho} \to \Psi_{\varrho}^2 , 
		\qquad 
		\partial_i Z \colon \mathbf{A}_{\varrho} \to \Psi_{\varrho}^{1} ,
		\qquad 
		\partial_i \partial_j Z \colon \mathbf{A}_{\varrho} \to \Psi_{\varrho}^0 ,
		\qquad 
		E \colon \mathbf{A}_{\varrho} \to \Psi_{\varrho-2}^{1} .
	\end{align*}
\end{lemma}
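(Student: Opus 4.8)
The plan is to treat the four objects one at a time, in each case writing down explicitly the profile $\tilde{F}$ occurring in the representation \eqref{eq:decomp} and then reading off the seminorms \eqref{eq:a3}. For $Z$, I would insert the explicit formula \eqref{eq:exlicit Z_0} into \eqref{eq:a18} and carry out the parabolic change of variables $s = (t-\bar t)^{1/2}$, $\xi = (x-\bar x)(t-\bar t)^{-1/2}$, obtaining
\begin{equation*}
	Z(z,\bar z)=\mathbf{1}_{t>\bar t}\,(t-\bar t)^{-d/2}\,\tilde F_Z(\bar z,\xi),\qquad
	\tilde F_Z(w,\xi)=\frac{1}{(4\pi)^{d/2}\det(a(w))^{1/2}}\exp\Big(-\tfrac14\textstyle\sum_{i,j}a^{ij}(w)\xi_i\xi_j\Big),
\end{equation*}
which is of the form \eqref{eq:decomp} with $\alpha=2$ and with $\tilde F_Z$ independent of $s$. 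Uniform parabolicity (item~(1) of Assumption~\ref{ass:elliptic_and_bounds}) forces the eigenvalues of $a^{-1}(w)$ into $[\lambda,\lambda^{-1}]$, so $\sum_{i,j}a^{ij}(w)\xi_i\xi_j\ge\lambda|\xi|^2$ and $\det(a(w))^{1/2}\in[\lambda^{d/2},\lambda^{-d/2}]$; hence $\tilde F_Z$ and each of its $\xi$-derivatives is bounded by a polynomial times $e^{-\lambda|\xi|^2/4}$, which dominates every weight $(1+|\xi|)^n$. The $w$-derivatives land only on $\det(a(w))^{-1/2}$ and on the $a^{ij}(w)$, which by the transpose-of-comatrix formula are $C^\varrho$ functions of $w$ with norms controlled by $\lambda$, $d$ and $\|a\|_\varrho$. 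This yields $Z\in\Psi^2_\varrho$, and the continuity of $(a,b,c)\mapsto Z$ follows because $\tilde F_Z$ is assembled from $a$ by composition with $a\mapsto a^{-1}$ and $a\mapsto(\det a)^{-1/2}$ (smooth on the ellipticity set) and with the exponential, each continuous from $C^\varrho$ into the relevant $\Psi$-seminorm.

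For $\partial_iZ$ and $\partial_i\partial_jZ$ the spatial derivatives fall explicitly on the Gaussian, and the chain rule produces the $s$-independent profiles $-\tfrac12(\sum_k a^{ik}(w)\xi_k)\tilde F_Z$ and $(\tfrac14(\sum_k a^{ik}(w)\xi_k)(\sum_l a^{jl}(w)\xi_l)-\tfrac12 a^{ij}(w))\tilde F_Z$, still polynomial in $\xi$ times $\tilde F_Z$ and consuming no extra regularity of the coefficient field; the accompanying powers $(t-\bar t)^{-(d+1)/2}$ and $(t-\bar t)^{-(d+2)/2}$ match \eqref{eq:decomp} with $\alpha=1$ and $\alpha=0$, so $\partial_iZ\in\Psi^1_\varrho$ and $\partial_i\partial_jZ\in\Psi^0_\varrho$ with the same bounds and continuity as for $Z$. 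I would then decompose $E$ according to \eqref{eq:a20}. In $-c(z)Z$ and $-b_i(z)\partial_iZ$ the prefactors are $c$, $b_i$ evaluated at $z=(\bar t+s^2,\bar x+\xi s)$, a polynomial image of $(\bar z,s,\xi)$; since $b,c$ and their derivatives up to order $\varrho$ are globally bounded, these terms lie in $\Psi^2_\varrho$, $\Psi^1_\varrho$ respectively, hence — via the elementary inclusions $\Psi^\alpha_r\subset\Psi^{\alpha'}_{r'}$ for $\alpha\ge\alpha'$, $r\ge r'$ with the finite-$T$ seminorms — in $\Psi^1_{\varrho-2}$.

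The term requiring an idea is $\sum_{i,j}(a_{ij}(\bar z)-a_{ij}(z))\partial_i\partial_jZ$: here $\partial_i\partial_jZ\in\Psi^0_\varrho$ is already as singular as the scale permits, so the coefficient difference must furnish the missing factor $(t-\bar t)^{1/2}=s$ that lifts the exponent from $0$ to $1$. A first-order Taylor expansion does exactly this: by the fundamental theorem of calculus,
\begin{equation*}
	a_{ij}(\bar z)-a_{ij}(\bar t+s^2,\bar x+\xi s)=-s\int_0^1\Big(2\sigma s\,\partial_t a_{ij}+\textstyle\sum_k\xi_k\,\partial_{x_k}a_{ij}\Big)(\bar t+\sigma s^2,\bar x+\sigma\xi s)\,\mathrm{d}\sigma,
\end{equation*}
so after dividing by $s$ one is left with a profile that is bounded (all the coefficient derivatives being globally bounded), grows at most linearly in $\xi$ — absorbed by the Gaussian decay of $\tilde F_{\partial_i\partial_jZ}$ — and involves, besides the harmless spatial derivative $\partial_{x_k}a_{ij}$, the time-derivative $\partial_t a_{ij}$ of $\mfs$-degree $2$; this is what costs up to two units of H\"older regularity of the coefficient field relative to $\partial_i\partial_jZ\in\Psi^0_\varrho$, placing the term in $\Psi^1_{\varrho-2}$. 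Continuity of $E\colon\mathbf{A}_\varrho\to\Psi^1_{\varrho-2}$ then follows as before, the profiles being built from $a,a^{-1},b,c$ by products, composition with polynomial maps, the exponential, and integration over $\sigma\in[0,1]$.

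The only genuinely delicate point — and the one I would expect to require care — is this last bookkeeping: making precise how the single power of $s$ gained from Taylor-expanding $a_{ij}(\bar z)-a_{ij}(z)$, equivalently one unit of regularising exponent, is paid for by a bounded loss in the H\"older index of the coefficient field (the loss being driven by the $\partial_t a_{ij}$ term through its parabolic weight $2$), while simultaneously checking at every step that the polynomial growth in $\xi$ introduced both by this expansion and by the explicit spatial derivatives of $Z$ is always dominated by the Gaussian decay coming from uniform parabolicity. Everything else is routine differentiation under the integral sign and the chain rule.
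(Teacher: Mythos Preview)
Your proposal is correct and follows precisely the approach the paper itself only sketches: the paper leaves the elementary chain-rule computations for $Z$ and its derivatives to the reader and singles out exactly the mechanism you describe for $E$, namely that a first-order Taylor expansion of $a_{ij}(\bar z)-a_{ij}(z)$ restores one unit of regularising exponent at the price of one coefficient derivative, whose parabolic weight is at most~$2$ (driven by $\partial_t a_{ij}$), which is why one lands in $\Psi_{\varrho-2}^1$. You have in fact supplied considerably more detail than the paper does.
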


In what follows, a key role is played by families $(F_{w})_{w \in \mathbb{R}^{1 + d}}$ of functions uniformly belonging to $\Psi_r^{\alpha}$, which we introduce now.
\begin{definition}\label{d:fhc}
	Let 
	$\alpha \in \mathbb{R}$, 
	$r \in \mathbb{N}$.
	We say that 
	a function
	$F = F_{w} ( z, \bar{z} )$ of the variables $w, z, \bar{z} \in \mathbb{R}^{1 + d}$,
	belongs to $C^r ( \Psi^{\alpha}_r )$, 
	if furthermore for all $m \in \mathbb{N}^{1+d}$ with $| m |_{\mfs} \leq r$ and all $w \in \mathbb{R}^{1+d}$
	one has $\partial_{w}^m F_{w} \in \Psi_r^{\alpha}$, 
	and if
	for all $n \in \mathbb{N}$ and $T > 0$,
	\begin{align}
		\| F \|_{\alpha, r; T, n}
		& = \sup_{w \in \mathbb{R}^{1+d}} \sup_{| m |_\mfs \leq r} \| \partial_{w}^{m} F_{w} \|_{\alpha, r; T, n}
		< \infty . \label{eq:a30}
	\end{align}
	The family of semi-norms \eqref{eq:a30} turns $C^r ( \Psi_r^{\alpha})$ into a Fr\'echet space. 
	
	Similarly, 
	the space $C^{r} ( \Phi_r^{\alpha} )$
	is the space of $C^r$ curves on $\mathbb{R}^{1+d}$ (with the scaling $\mfs$) valued into $\Phi_r^{\alpha}$, 
	which we endow
	with the family of semi-norms induced by \eqref{eq:a30}.
\end{definition}

\begin{example}
	One can prove 
	similarly to Lemma~\ref{l:ex_e2} 
	that the function $F_w ( z, \bar{z} ) = W^w ( z - \bar{z} )$ belongs to $C^{\varrho} ( \Phi_{\infty}^2 )$.
	Furthermore the corresponding map
	$\mathbf{A}_{\varrho} \ni (a, b, c) \mapsto W \in C^{\varrho} ( \Phi_{\infty}^2 )$
	is continuous.
\end{example}

The most important property of these spaces is their stability under convolution.
\begin{prop}
	\label{prop:con}
	Let $r \in \mathbb{N}$, $\alpha, \beta > 0$.
	Then 
	the convolution map 
	\eqref{eq:a4}
	defines a bilinear and continuous map from $\Psi_r^{\alpha} \times \Psi_r^{\beta} \to \Psi_r^{\alpha + \beta}$
	(and hence from $\Phi_r^{\alpha} \times \Phi_r^{\beta} \to \Phi_r^{\alpha + \beta}$),
	as well as from $C^r ( \Psi_r^{\alpha} ) \times C^r ( \Psi_r^{\beta} ) \to C^r ( \Psi_r^{\alpha + \beta} )$
	(and hence from $C^r ( \Phi_r^{\alpha} ) \times C^r ( \Phi_r^{\beta} ) \to C^r ( \Phi_r^{\alpha + \beta} )$).
	Furthermore,
	we have the following estimate,
	valid for both
	families of seminorms
	\eqref{eq:a3} and \eqref{eq:a30}:
	for all $\alpha > d$ and $n \in \mathbb{N}$,
	\begin{align} \label{eq:a5}
		\| F * G \|_{\alpha + \beta, 0; T, n}
		& \lesssim_{n, d} \mathrm{B} \Big( \frac{\alpha - d}{2} , \frac{\beta}{2} \Big) \, \| F \|_{\alpha, 0; T, n} \| G \|_{\beta, 0; T, n + d + 1} ,
	\end{align}
	where 
	$\mathrm{B} ( a, b ) = \int_{0}^1 \mathrm{d} s \, (1 - s)^{a - 1} \, s^{b - 1}$
	is the beta function.
\end{prop}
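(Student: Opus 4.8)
The plan is to establish \eqref{eq:a5} first and then bootstrap to the full statement. The starting point is the explicit form of elements of $\Psi_r^{\alpha}$ and $\Psi_r^{\beta}$: writing $F(z,\zeta) = \mathbf{1}_{t > s} (t-s)^{\frac{\alpha - d - 2}{2}} \tilde{F}\big(\zeta, (t-s)^{1/2}, \tfrac{x - \xi}{(t-s)^{1/2}}\big)$ and similarly for $G$ with exponent $\beta$, I would substitute into the convolution integral \eqref{eq:a4}. The inner integral over $\zeta = (s,\xi)$ splits into an integral over the time variable $s \in (\bar t, t)$ and a Gaussian-type integral over $\xi \in \mathbb{R}^d$. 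After the change of variables $s = \bar t + u(t - \bar t)$ in time (so $u \in (0,1)$) and a corresponding rescaling $\xi \mapsto \bar x + (s - \bar t)^{1/2} y$ or $(t-s)^{1/2} y$ in space, one checks by elementary bookkeeping of the powers of $(t-s)$ and $(s - \bar t)$ that the resulting kernel is again of the form \eqref{eq:decomp} with exponent $\alpha + \beta$, i.e.\ belongs to $\Psi_r^{\alpha + \beta}$; the key identity is that $\frac{\alpha - d - 2}{2} + \frac{\beta - d - 2}{2} + \frac{d}{2} + 1 = \frac{(\alpha + \beta) - d - 2}{2}$, where the extra $\frac d2 + 1$ comes from the Jacobian of the spatial rescaling and the time substitution. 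The weight $(1 + |\bar x|)^n$ is controlled by splitting $|\bar x| \le |\bar x - \xi| + |\xi - x| + |x|$, absorbing the first two into the polynomial decay of $\tilde F$ and $\tilde G$ (which is why the right-hand side of \eqref{eq:a5} involves $\|G\|_{\beta, 0; T, n + d + 1}$, the extra $d + 1$ giving room to integrate the Gaussian tails), and keeping $|x|$ in $\bar x$'s slot. The time integral $\int_0^1 u^{\frac{\alpha - d}{2} - 1}(1-u)^{\frac{\beta}{2} - 1}\,du = \mathrm{B}\big(\frac{\alpha - d}{2}, \frac{\beta}{2}\big)$ is finite precisely under the hypothesis $\alpha > d$ (and $\beta > 0$), which is what forces that assumption.

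Next I would handle the higher-regularity statement, namely continuity of $* : \Psi_r^{\alpha} \times \Psi_r^{\beta} \to \Psi_r^{\alpha + \beta}$ for general $r$. Here the point is that differentiating the convolution $F * G$ in $z$ or $\bar z$ produces, via the chain rule applied to the rescaled representation, a finite linear combination of terms each of which is a convolution of a derivative of (the rescaled profile of) $F$ with a derivative of $G$, possibly with extra polynomial factors in the rescaled spatial variable; these polynomial factors are harmless because the profiles decay faster than any polynomial. Crucially, differentiating in the ``upper slot'' $\bar z$ of the second factor and in the ``lower slot'' $z$ of the first factor only hits the profiles, not the singular prefactors $(t-\cdot)^{\text{power}}$, so no worsening of the exponent occurs; the remaining derivatives — those hitting the convolution variable $\zeta$ — can be integrated by parts onto the other factor. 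Combining these manipulations with the $r = 0$ estimate \eqref{eq:a5} applied to each resulting pair (with $n$ replaced by $n + d + 1$ etc.) yields boundedness of all seminorms $\|F * G\|_{\alpha + \beta, r; T, n}$ by products of seminorms of $F$ and $G$, hence continuity. The extension to the families $(F_w)_w$ in $C^r(\Psi_r^{\alpha})$ is immediate since the $w$-derivatives act only on the profiles $\tilde F_w$, commute with the convolution in $z, \bar z$, and are controlled uniformly in $w$ by definition of the seminorms \eqref{eq:a30}; the restriction to translation-invariant kernels giving the $\Phi$-statements is then automatic.

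The main obstacle I anticipate is purely organisational rather than conceptual: carefully choosing the change of variables in the time and space integrals so that \emph{both} factors end up in standard ``rescaled profile'' form simultaneously (the first factor naturally wants to be rescaled by $(s - \bar t)^{1/2}$ at its own base point $\zeta$, the second by $(t - s)^{1/2}$), and then tracking the resulting polynomial pre-factors and the exact powers needed in the weight exponent to close the estimate. One clean way to organise this is to fix $\bar z$, substitute $s - \bar t = u (t - \bar t)$, write everything in terms of the single rescaled spatial variable $y = \frac{\xi - \bar x}{(s - \bar t)^{1/2}}$, express $\frac{x - \xi}{(t - s)^{1/2}}$ in terms of $y$, $u$, and $\frac{x - \bar x}{(t - \bar t)^{1/2}}$, and bound the integrand pointwise by $C (1 + |y|)^{-n - d - 1} (1 + |\tfrac{x - \bar x}{(t-\bar t)^{1/2}}|)^{-n}$ times the appropriate power of $u(1-u)$, using the super-polynomial decay of the profiles to absorb all the cross terms. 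After that, \eqref{eq:a5} and the continuity claims follow by integrating in $y$ and in $u$, and the $C^r$ version follows by the chain-rule bookkeeping described above. I would present the $r = 0$ computation in moderate detail and then indicate that the general $r$ and the $C^r(\Psi)$, $C^r(\Phi)$ cases follow by the same argument together with differentiation under the integral sign.
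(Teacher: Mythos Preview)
Your approach is essentially the same as the paper's: substitute the rescaled representations into the convolution, change variables in time and space, identify the resulting $\widetilde{F*G}$, and read off the beta function from the time integral while handling the spatial decay via the polynomial weights. The paper also leaves the higher-$r$ case to ``elementary calculations'' after obtaining the explicit formula, just as you propose.

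One point worth noting: your first description of the weight control (splitting $|\bar x|\le |\bar x-\xi|+|\xi-x|+|x|$) is not quite right, since the weight $(1+|\cdot|)^n$ in \eqref{eq:a3} sits on the \emph{rescaled} spatial variable, not on $\bar x$; your later, corrected plan (expressing $\frac{x-\xi}{(t-s)^{1/2}}$ in terms of $y$, $u$, and $v=\frac{x-\bar x}{(t-\bar t)^{1/2}}$ and bounding by $(1+|y|)^{-n-d-1}(1+|v|)^{-n}$) is the right one. Also, with your convention $s-\bar t=u(t-\bar t)$ and spatial Jacobian $(s-\bar t)^{d/2}$, the time integrand comes out as $u^{\beta/2-1}(1-u)^{(\alpha-d)/2-1}$ rather than what you wrote; this is harmless by symmetry of $\mathrm{B}$. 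The paper uses a slightly different spatial substitution, recentring at the affine combination $sx+(1-s)\bar x$ and rescaling by $(s(1-s)(t-\bar t))^{1/2}$, which has the pleasant feature that the two rescaled arguments $v\sqrt{1-s}-y\sqrt s$ and $y\sqrt{1-s}+v\sqrt s$ satisfy the exact identity $|\cdot|^2+|\cdot|^2=|v|^2+|y|^2$, making the decay estimate a one-liner; your version works too via $|v|\le |w_F|+|y|$.
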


\begin{proof}
	Since this is similar to \cite[Proposition 2.6]{Grieser} we remain brief here.
	Taking $F_{w} ( z, \bar{z} ) = F ( z, \bar{z} )$, one sees that the result in $C^r ( \Psi_r^{\alpha} ) \times C^r ( \Psi_r^{\beta} ) \to C^r ( \Psi_r^{\alpha + \beta} )$ implies the one on $\Psi_r^{\alpha} \times \Psi_r^{\beta} \to \Psi_r^{\alpha + \beta}$.
	Thus, we consider the former case throughout.
	Writing explicitly the integral expression \eqref{eq:a4} of convolution,
	with integration variable $\zeta = (s, y)$,
	and performing the successive changes of variables
	$s\leftarrow(s - \bar{t})/(t - \bar{t})$,
	$y\leftarrow y - s x - (1 - s) \bar{x}$,
	and $y\leftarrow\big( s ( 1 - s ) ( t - \bar{t} ) \big)^{-1/2} \, y$,
	we obtain an expression of the form
	\begin{align*}
		F_{w} * G_{w} (z, \bar{z}) 
		& = \mathbf{1}_{\lbrace t > \bar{t} \rbrace} 
		\, (t - \bar{t})^{\frac{\alpha + \beta - (d + 2)}{2}} 
		\, \widetilde{F * G}_{w} \Big( \bar{t}, \bar{x} , \sqrt{t - \bar{t}} , \frac{x - \bar{x}}{\sqrt{t - \bar{t}}} \Big) ,
	\end{align*}
	for the function $\widetilde{F * G}_{w}$ of the variables $w \in \mathbb{R}^{1 +d}$, $(t, x) \in \mathbb{R}^{1 + d}$, $u \in \mathbb{R}_+$, and $v \in \mathbb{R}^d$,
	explicitly given by
	\begin{align}
		\widetilde{F * G}_{w} ( t, x , u, v ) \label{eq:a6}
		& 
		= \int_{0}^{1} \mathrm{d} s \, 
		( 1 - s )^{\frac{\alpha}{2} - 1} s^{\frac{\beta}{2} - 1} \nonumber \\
		& \quad \int_{\mathbb{R}^{d}} \mathrm{d} y \, 
		\tilde{G}_{w} \Big( t, x , u \sqrt{s} , y \sqrt{1 - s} + v \sqrt{s} \Big) \nonumber \\ 
		& \quad 
		\tilde{F}_{w} \Big( t + u^2 s, u y \sqrt{s ( 1 - s )} + x + u v s , u \sqrt{1 - s} , 
		v \sqrt{1 - s} - y \sqrt{s} \Big) .
	\end{align}
	From that expression one may readily insert the bounds \eqref{eq:a30} on $F$ and $G$ and deduce the claimed continuity after some elementary calculations.
	We rather prove \eqref{eq:a5} in detail, since it will be an important estimate later.
	Plugging the assumptions on $F$ and $G$ in the explicit expression \eqref{eq:a6}, 
	\begin{align*}
		\big| \widetilde{F * G}_{w} ( t, x , u, v ) \big|
		& 
		\leq 
		\| F \|_{\alpha, 0; T, n} \| G \|_{\beta, 0; T, n + d + 1}
		\int_{0}^{1} \mathrm{d} s \, 
		( 1 - s )^{\frac{\alpha}{2} - 1} s^{\frac{\beta}{2} - 1} \\
		& \quad
		\int_{\mathbb{R}^{d}} \mathrm{d} y \,	
		\frac{1}
		{( 1 + | v \sqrt{1 - s} - y \sqrt{s} |^{n} ) ( 1 + | y \sqrt{1 - s} + v \sqrt{s} |^{n + d + 1} )} .
	\end{align*}
	Noting that 
	$( 1 + | v \sqrt{1 - s } - y \sqrt{s} |^{2} ) ( 1 + | y \sqrt{1 - s} + v \sqrt{s} |^{2} ) \geq 1 + | v |^2$,
	the latter spatial integrand may be bounded by
	a constant multiple of 
	$(1 + | v |^n)^{-1} ( 1 + | y \sqrt{1 - s} + v \sqrt{s} |^{d + 1} )^{-1}$.		
	Plugging into the integral and performing the change of variable $y \leftarrow y \sqrt{1 - s} + v \sqrt{s}$ therein,
	we obtain \eqref{eq:a5}.
\end{proof}
We note that the convolution map defined above is associative in the obvious sense as a corollary of Fubini's Theorem since all integrals in the proof are absolutely convergent. Iterating the final statement in the above result then yields the following corollary.
\begin{lemma}\label{lem:it_con}
	Suppose that $\beta > 0$ and that $M$ is such that $M\beta > d$. Then for all $j \in \mbN$,
	\begin{align*}
		\|R^{\ast (M+j)}\|_{(M+j)\beta, 0;T, r} \lesssim \|R^{\ast M}\|_{M\beta, 0; T, r} \|R\|_{\beta, 0; T, r + d + 1}^j \prod_{i = 1}^j \mathrm{B} \left ( \frac{(M+ 
		i
		-1)\beta - d}{2}, \frac{\beta}{2} \right ) 
	\end{align*}
	with the convention that the product is $1$ if $j < 1$.
\end{lemma}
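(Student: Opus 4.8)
The plan is to prove Lemma~\ref{lem:it_con} by induction on $j \ge 0$, using the iterated convolution estimate \eqref{eq:a5} from Proposition~\ref{prop:con} together with the associativity of $\ast$ noted immediately before the statement. The base case $j = 0$ is trivial since the claimed bound reads $\|R^{\ast M}\|_{M\beta, 0; T, r} \lesssim \|R^{\ast M}\|_{M\beta, 0; T, r}$ with an empty product equal to $1$.

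For the inductive step, suppose the estimate holds for some $j \ge 0$. Writing $R^{\ast(M + j + 1)} = R^{\ast(M+j)} \ast R$ (legitimate by associativity), I would apply \eqref{eq:a5} with $F = R^{\ast(M+j)} \in \Phi_r^{(M+j)\beta}$ and $G = R \in \Phi_r^{\beta}$. Here one must check the hypothesis $\alpha := (M+j)\beta > d$, which holds since $M\beta > d$ and $j \ge 0$, $\beta > 0$. This gives
\begin{align*}
	\|R^{\ast(M+j+1)}\|_{(M+j+1)\beta, 0; T, r}
	& \lesssim_{r, d} \mathrm{B}\Big( \frac{(M+j)\beta - d}{2}, \frac{\beta}{2} \Big) \, \|R^{\ast(M+j)}\|_{(M+j)\beta, 0; T, r} \, \|R\|_{\beta, 0; T, r + d + 1} .
\end{align*}
Inserting the inductive hypothesis for $\|R^{\ast(M+j)}\|_{(M+j)\beta, 0; T, r}$ and relabelling the new beta-function factor as the $i = j+1$ term of the product yields exactly the claimed bound at level $j+1$; note the power of $\|R\|_{\beta, 0; T, r+d+1}$ increases from $j$ to $j+1$ as required, and the product over $i$ runs from $1$ to $j+1$.

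One small point to be careful about is that \eqref{eq:a5} is stated only for the seminorm index $0$ (i.e.\ with $r = 0$ in the notation $\|\cdot\|_{\alpha, 0; T, n}$), whereas the lemma is stated with a general index $r$; however, the lemma as written also only uses the index $0$ on the left-hand side (all three factors on the right carry $0$ as their differentiability index, and $r$ appears only in the slot counting spatial decay), so there is no mismatch — one simply takes $n = r$ on the left and $n = r$, $n = r + d + 1$ on the right in \eqref{eq:a5}. There is genuinely no obstacle here: the statement is a clean iteration of Proposition~\ref{prop:con}, and the only thing to track is the bookkeeping of which beta-function factor appears at which step and the verification that the regularising-degree hypothesis $(M + i - 1)\beta > d$ is met for each $i$ in the range, which follows from $M\beta > d$. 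I would therefore present this as a two-line induction without elaborating the routine substitution of seminorm bounds.
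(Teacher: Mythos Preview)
Your proof is correct and follows exactly the same approach as the paper, which also proceeds by induction on $j$ with the base case trivial and the inductive step given by a single application of Proposition~\ref{prop:con}. Your additional remarks on the hypothesis $(M+j)\beta > d$ and on the role of the index $r$ as the spatial-decay parameter $n$ in \eqref{eq:a5} are accurate and make explicit what the paper leaves to the reader.
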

\begin{proof}
	The proof is by induction in $j$. The base case $j = 0$ is trivial whilst the induction step follows by applying Proposition~\ref{prop:con} and the induction hypothesis.
\end{proof}
With this result in hand, we provide the required convergence result for the Volterra series defining $\Gamma$.
\begin{lemma}\label{lem:convergence}
	Suppose that $\varrho \ge 2$. Then the series $$\Gamma = \sum_{k \ge 0} Z \ast (-E)^{\ast k}$$ converges in $C^{\varrho - 2}(\mbR^d \times \mbR^d \setminus \Delta)$ for $\Delta = \{(z, \bar{z}): t \neq \bar{t}\}$ to a heat kernel for $L$. Furthermore, there exists $K$ such that the map
	$$\mathbf{A}_\varrho \ni (a,b,c) \mapsto \sum_{k \ge K} Z \ast (-E)^{\ast k} \in C_{\mathrm{loc}, t}^{\varrho - 2}$$
	is continuous.
\end{lemma}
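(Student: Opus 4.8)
The plan is to prove convergence term-by-term using the heat-calculus machinery already developed, and then to extract the tail of the series into the remainder. First I would fix $\varrho \geq 2$ and invoke Lemma~\ref{l:ex_e2} to record that $Z \in \Psi_{\varrho}^{2}$ and $E \in \Psi_{\varrho-2}^{1}$, with the maps $(a,b,c) \mapsto Z$ and $(a,b,c) \mapsto E$ continuous into these spaces. By Proposition~\ref{prop:con}, convolution maps $\Psi_r^{\alpha} \times \Psi_r^{\beta} \to \Psi_r^{\alpha+\beta}$ continuously for $\alpha,\beta > 0$, so an induction on $k$ (using $r = \varrho - 2$) gives $(-E)^{\ast k} \in \Psi_{\varrho-2}^{k}$ and hence $Z \ast (-E)^{\ast k} \in \Psi_{\varrho-2}^{k+2}$, with the corresponding maps into $\Psi_{\varrho-2}^{k+2}$ continuous. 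Since $\Psi_{\varrho-2}^{\alpha} \subset C^{\varrho-2}(\mathbb{R}^{1+d} \times \mathbb{R}^{1+d} \setminus \Delta)$ (the factor $(t-\bar t)^{(\alpha-d-2)/2}$ and the smooth profile $\tilde F$ are smooth away from $\{t = \bar t\}$, and the seminorms \eqref{eq:a3} control all derivatives of order $\le \varrho-2$), each summand is $C^{\varrho-2}$ off the diagonal.

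The second step is the quantitative control needed for summability. Fix $T > 0$ and a compact $\mathfrak{K} \subset \mathbb{R}^{1+d} \times \mathbb{R}^{1+d} \setminus \Delta$, so that on $\mathfrak{K}$ the quantity $|t - \bar t|$ is bounded below by some $\delta > 0$ and above by $T$; then for $F \in \Psi_{\varrho-2}^{\alpha}$ one has $\sup_{\mathfrak{K}} |F| \lesssim_{\mathfrak K} \delta^{(\alpha-d-2)/2} \|F\|_{\alpha,0;T,0}$ (and similarly for derivatives, using the other seminorms), with the implicit constant depending on $\mathfrak K$ but not on $\alpha$ through anything worse than $\delta^{\alpha/2}$. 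Iterating the estimate \eqref{eq:a5} of Proposition~\ref{prop:con} — exactly as in Lemma~\ref{lem:it_con}, but now in the variable $k$ rather than a power of a fixed kernel — gives
\begin{equs}
	\|Z \ast (-E)^{\ast k}\|_{k+2, 0; T, n}
	&\lesssim \|Z\|_{2,0;T,n} \, \|E\|_{1,0;T,n+d+1}^{k} \prod_{i=1}^{k} \mathrm{B}\!\left( \frac{i+1-d}{2}, \frac{1}{2} \right),
\end{equs}
and since $\mathrm{B}(i/2,1/2) = O(i^{-1/2})$ as $i \to \infty$, the product is bounded by $C^k / \sqrt{k!}$ for some constant $C$ depending on $d$. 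Combining with the $\delta^{(k+2-d-2)/2} = \delta^{(k-d)/2}$ prefactor from restriction to $\mathfrak K$, one gets $\sup_{\mathfrak K}|\partial^{\le \varrho-2}(Z \ast (-E)^{\ast k})| \lesssim_{\mathfrak K} (C\|E\| \sqrt{\delta})^{k}/\sqrt{k!}$, which is summable (in fact with a factorially small tail). Thus the series converges in $C^{\varrho-2}(\mathbb{R}^{1+d}\times\mathbb{R}^{1+d}\setminus\Delta)$, and since this holds locally uniformly in the coefficient field (the seminorm bounds above are uniform on bounded subsets of $\mathbf{A}_\varrho$ via the continuity in Lemma~\ref{l:ex_e2}), the limit defines a heat kernel for $L$ — this last identification is the classical parabolic construction recalled around \eqref{eq:volterra_series}, which we may cite from \cite{Fri08, Grieser}.

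For the final claim, choose $K$ large enough that $K \cdot 1 > d$, i.e.\ $K > d$, so that $\sum_{k \geq K} Z \ast (-E)^{\ast k}$ has all its terms in spaces $\Psi_{\varrho-2}^{k+2}$ with $k+2 > d+2$; in particular, each such term has no singularity surviving on $\{t = \bar t\}$ in the sense that it extends to a $C^{\varrho-2}$ function on all of $\mathbb{R}^{1+d} \times \mathbb{R}^{1+d}$ with control by the $C^{\varrho-2}_{\mathrm{loc},t}$ seminorms: indeed for $\alpha - d - 2 \geq 0$ and $|j_1|_\mfs + |j_2|_\mfs \leq \varrho - 2$ the chain rule applied to \eqref{eq:decomp} shows $|\partial_1^{j_1}\partial_2^{j_2} F(z,\bar z)| \lesssim |t-\bar t|^{(\alpha - d - 2)/2 - (|j_1|_\mfs + |j_2|_\mfs)/2} \|F\|_{\alpha, \varrho-2; T, 0}$, which is bounded on $\{|t - \bar t| \leq T\}$ once $\alpha \geq d + 2 + (\varrho - 2) = d + \varrho$; so one should take $K \geq d + \varrho$, say $K = d + \varrho$, to be safe. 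Then the tail series converges in $C^{\varrho-2}_{\mathrm{loc},t}$ by the same factorially-decaying bound as above (now using the $n=0$ seminorm directly rather than a pointwise restriction), and its sum depends continuously on $(a,b,c) \in \mathbf{A}_\varrho$ because each summand does (composition of the continuous maps from Lemma~\ref{l:ex_e2} with the continuous convolution maps of Proposition~\ref{prop:con}, followed by the continuous inclusion $\Psi_{\varrho-2}^{k+2} \hookrightarrow C^{\varrho-2}_{\mathrm{loc},t}$), and the series converges uniformly on bounded subsets of $\mathbf{A}_\varrho$. The main obstacle is purely bookkeeping: making sure the $k$-dependence of the beta-function product is tracked correctly (so that one genuinely gets $1/\sqrt{k!}$-type decay and not merely $C^k$), and verifying that the loss of two orders of regularity from $\varrho$ to $\varrho - 2$ — inherited from $E \in \Psi_{\varrho-2}^1$ — is consistently threaded through every application of Proposition~\ref{prop:con}.
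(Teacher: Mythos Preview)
Your overall strategy is correct, but the iteration of \eqref{eq:a5} as written has a genuine gap: that estimate carries the explicit hypothesis $\alpha > d$, and your product $\prod_{i=1}^{k} \mathrm{B}\big(\tfrac{i+1-d}{2}, \tfrac{1}{2}\big)$ contains undefined (divergent) factors for $i \le d-1$. This is not a technicality one can absorb into the implicit constant --- for those early convolution steps the beta-function bound simply fails, which is precisely why Lemma~\ref{lem:it_con} carries the assumption $M\beta > d$. The paper circumvents this by a pre-processing step: one first uses the \emph{qualitative} continuity statement of Proposition~\ref{prop:con} to form the blocks $Z\ast(-E)^{\ast M}$ and $E^{\ast M}$ for a fixed $M$ large enough that all subsequent applications of \eqref{eq:a5} have first argument exceeding $d$, and only then invokes Lemma~\ref{lem:it_con} on the remaining factor $(-E)^{\ast(k-3M)}$. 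The $k$-dependence (and hence the factorial decay) then comes entirely from this middle block.

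A second, related issue is the passage to $C^{\varrho-2}$ bounds. The estimate \eqref{eq:a5} controls only the zeroth-order seminorm $\|\cdot\|_{\alpha, 0; T, n}$, so your displayed bound on $\|Z\ast(-E)^{\ast k}\|_{k+2,0;T,n}$ does not directly yield derivative control. The paper handles this by first distributing the derivatives $\partial_1^{j_1}$ and $\partial_2^{j_2}$ onto the two outermost blocks $Z\ast(-E)^{\ast M}$ and $(-E)^{\ast M}$ (at the cost of lowering their regularising index), thereby converting the higher-$r$ seminorm into an $r=0$ seminorm on a derivative with shifted $\alpha$, and only then applying \eqref{eq:a5} to glue the three pieces together. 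Your phrase ``similarly for derivatives, using the other seminorms'' and the chain-rule bound in your third paragraph both implicitly appeal to the $r = \varrho-2$ seminorm of $Z\ast(-E)^{\ast k}$, but you have only established summability of the $r=0$ seminorm.
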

\begin{proof}
	Since all bar the final statement in this result are a direct analogue of \cite[Proposition 2.10]{Grieser}, we focus on proving the final statement (which also implies the convergence statement). We fix $j_1, j_2$ with $|j_1|_\mfs + |j_2|_\mfs \le r = \varrho - 2$. 
	
	We note that if $k$ is chosen so that $k> d + r$ then
	\begin{align*}
		\sup_{\substack{z, \bar{z} \\ |t - \bar{t}| <T}}|\partial_1^{j_1} \partial_2^{j_2} (Z \ast (-E)^{\ast k})(z, \bar{z})| \lesssim T^{\frac{k - |j_1|_\mfs - |j_2|_\mfs - d}{2}} \|\partial_1^{j_1} \partial_1^{j_2} Z \ast (-E)^{\ast k}\|_{2 + k - |j_1|_\mfs - |j_2|_\mfs, 0; T, 0} .
	\end{align*}
	
	We now estimate the right hand side by use of Proposition~\ref{prop:con} and Lemma~\ref{lem:it_con}.
	We have
	\begin{align*}
		\|\partial_1^{j_1} \partial_2^{j_2} Z& \ast (-E)^{\ast k}\|_{2 + k - |j_1|_\mfs - |j_2|_\mfs, 0; T, 0} \lesssim \mathrm{B} \left ( \frac{2 + M - |j_1|_\mfs - d}{2} , \frac{(k-M) - |j_2|_\mfs}{2} \right ) \\& \qquad \qquad \qquad  \|\partial_1^{j_1} Z\ast(-E)^{\ast M} \|_{2 + M - |j_1|_\mfs, 0; T, 0} \|\partial_2^{j_2} (-E)^{\ast (k-M)}\|_{(k-M) - |j_2|_\mfs, 0; T, d+1}
	\end{align*}
	under the assumption that first $M$ is chosen large enough so that $2 + M - r > d$ and under the restriction to $k$ large enough so that $k-M > r$.
	
	It follows straightforwardly by differentiating the expression \eqref{eq:decomp} that
	\begin{align*}
		\|\partial_1^{j_1} Z\ast(-E)^{\ast M} \|_{2 + M - |j_1|_\mfs, 0; T, 0} \lesssim (1+T)^{\frac{|j_1|_\mfs}{2}} \|Z \ast (-E)^{\ast M}\|_{2 + M, |j_1|_\mfs; T, 0}.
	\end{align*}
	Meanwhile, by Proposition~\ref{prop:con} and a similar derivative bound, we have
	\begin{align*}
		&\|\partial_2^{j_2} (-E)^{\ast (k-M)}\|_{k-M - |j_2|_\mfs, 0; T, d+1} \\ & \lesssim \mathrm{B} \left ( \frac{k-2M - d}{2}, \frac{M - |j_2|_\mfs}{2 } \right ) (1+T)^{\frac{|j_2|_\mfs}{2}} \|(-E)^{\ast (k-2M)}\|_{k-2M, 0; T, d+1} \|E^{\ast M}\|_{M, |j_2|_\mfs; T, 2d + 2}  
	\end{align*}
	under the assumption that $M > r$ and restricting to $k > 2M + d$. Then by Lemma~\ref{lem:it_con}, we have that

	\begin{align*}
		\|(-E)^{\ast (k-2M)}\|_{k-2M, 0; T, d+1} \lesssim \|E^{\ast M}\|_{M, 0; T, d+1} \|E\|_{1, 0; T, 2d + 2}^{k-3M} \prod_{i = 1}^{k-3M} \mathrm{B} \left ( \frac{M+i-1 - d}{2}, \frac{1}{2} \right ) ,
	\end{align*}
	under the assumption that $M > d$ and restricting to $k \geq 3M$.
	
	Combining these estimates and absorbing all terms that do not depend on $k$ in the implicit constant (which we can do since we are interested in summability in $k$), we obtain
		\begin{align*}
		\sup_{\substack{z, \bar{z} \\ |t - \bar{t}| <T}}|\partial_1^{j_1} \partial_2^{j_2} (Z \ast (-E)^{\ast k})(z, \bar{z})| & \lesssim T^{\frac{k}{\mfs_0}} \|E\|_{1, 0; T, 2d + 2}^{k-3M} \mathrm{B} \left ( \frac{2 + M- |j_1|_\mfs - d}{2}, \frac{k-M - |j_2|_\mfs}{2} \right ) \\ &  \mathrm{B} \left ( \frac{k-2M- d}{2}, \frac{M- |j_2|_\mfs}{2} \right )  \prod_{i = 1}^{k-3M} \mathrm{B} \left ( \frac{ M+i-1-d}{2}, \frac{1}{2} \right ) .
	\end{align*}
	The right hand side is summable in $k$ (starting from a large value of $k$) locally uniformly in $Z$ and $E$ due to the fact that $\mathrm{B}(a,b) = \mathrm{B}(b,a) \sim a^{-b}$ for $b$ fixed as $a \to \infty$. This establishes convergence of the series. 
	
	Since the convergence is locally uniform in $Z,E$ and the map $(a,b,c) \mapsto (Z,E)$ is continuous, the desired continuity claim follows by use of the continuity statement in Proposition~\ref{prop:con}.
\end{proof}

Since the above result shows that the tails of the Volterra series can be included into the remainder part of the decomposition $\Gamma = K + R$, we turn to providing suitable decompositions of the remaining finitely many terms of the form $Z \ast (-E)^{\ast k}$. Here the strategy will be to apply the anisotropic Taylor's formula presented in Lemma~\ref{Tay} and show that the resulting remainder terms can be included into $R$ in our decomposition of $\Gamma$.

\begin{prop}\label{prop:Z_decomp}
	Fix $r \le \varrho$. For any $w \in \mbR^{1+d}$, we can write
	\begin{align}\label{eq:Z_decomp}
		Z(z, \bar{z}) = \sum_{|k|_\mfs < r} (\bar{z} - w)^k Z_w^{ [k]}( z- \bar{z}) + \sum_{k \in \partial_\mfs r} (\bar{z} - w)^{k_\downarrow} Z_w^{\partial, [k]}(z, \bar{z})
	\end{align}
	where we have written $\partial_\mfs r = \partial \{ j : |j|_\mfs < r\}$. Furthermore, the above decomposition can be made such that $Z^{[k]} \in C^{\varrho - |k|_\mfs}(\Phi_\infty^2) \cap \Lambda_{|k|_\mfs}$ and $Z^{\partial, [k]} \in C^{\alpha}(\Psi_{\varrho - |k_\downarrow|_\mfs - \alpha }^2)$ for any $\alpha < \varrho - |k_\downarrow|_\mfs$. In addition, we have that $(t- \bar{t})^{-1/2} Z^{[k]} \in \Lambda_r$. Furthermore, the maps $(a,b,c) \mapsto Z^{[k]}, Z^{\partial,[k]}$ are continuous.
\end{prop}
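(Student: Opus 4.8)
The plan is to apply the anisotropic Taylor formula of Lemma~\ref{Tay} to the map $w \mapsto Z(z,\bar z)$ — recalling that $Z(z,\bar z) = W^{\bar z}(z-\bar z)$ depends on $\bar z$ only through the `upper slot' $w = \bar z$ — expanding around a fixed reference point $w$ up to anisotropic order $r$. First I would fix $w$ and treat $Z$ as a function of the variable $\bar z$ in its role as the freezing point; writing $G(w) \coloneqq W^w(z - \bar z)$ with $z, \bar z$ held fixed, the Taylor formula produces
\begin{equs}
	G(\bar z) = \sum_{|k|_\mfs < r} \frac{(\bar z - w)^k}{k!} \partial_w^k G(w) + \sum_{k \in \partial_\mfs r} \frac{(\bar z - w)^{k_\downarrow}}{k_\downarrow!} \int_0^1 (\ldots)\, \partial_w^k G(w + s(\bar z - w))\, \mathrm{d}s .
\end{equs}
Setting $Z_w^{[k]}(z-\bar z) = \tfrac{1}{k!}\partial_w^k W^w(z-\bar z)\big|_{w}$ (a \emph{translation-invariant} kernel, since after freezing $w$ the dependence is only through $z - \bar z$) and defining $Z_w^{\partial,[k]}$ from the integral remainder gives the desired decomposition \eqref{eq:Z_decomp}.

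The next step is to identify the function spaces of the pieces. For the main terms: differentiating the explicit formula \eqref{eq:exlicit Z_0} in $w$, each $w$-derivative hits the prefactor $\det(a(w))^{-1/2}$ or the coefficients $a^{ij}(w)$ in the Gaussian exponent, producing a polynomial in $a(w), a^{-1}(w)$ and their derivatives (up to order $|k|_\mfs$) multiplying $W^w(z-\bar z)$ times a polynomial in $(t-\bar t)^{1/2}$ and $(t-\bar t)^{-1/2}(x-\bar x)$ divided by a power of $t-\bar t$. This is exactly the structure demanded by Definition~\ref{d:p}: one checks it is a finite linear combination of the form \eqref{eq:a1} with $N=0$, a single factor $W^w$, a rational fraction $P^{[0]}(t,x) = t^{-1/2} Q(t^{1/2}, t^{-1/2}x)$, and $F$ a polynomial of degree $\le |k|_\mfs$ in the entries of $a, a^{-1}$ and their derivatives — hence $Z^{[k]} \in \Lambda_{|k|_\mfs}$. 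The stronger claim $(t-\bar t)^{-1/2} Z^{[k]} \in \Lambda_r$ follows because multiplying $P^{[0]}$ by $t^{-1/2}$ keeps it in the allowed class of rational fractions and $|k|_\mfs \le r$ so the polynomial degree bound \eqref{eq:a1b} still holds with $r$ in place of $|k|_\mfs$. The membership $Z^{[k]} \in C^{\varrho - |k|_\mfs}(\Phi_\infty^2)$ follows from the example after Definition~\ref{d:fhc} ($W \in C^\varrho(\Phi_\infty^2)$) combined with the fact that taking $|k|_\mfs$ derivatives in $w$ costs $|k|_\mfs$ units of regularity in the curve parameter but preserves the $\alpha = 2$ regularising degree (the extra polynomial factors in $t^{\pm 1/2}, (x-\bar x)/(t-\bar t)^{1/2}$ are bounded, Schwartz-class in the spatial variable, so they do not change the $\Phi^2$ scaling).

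For the remainder terms, $Z_w^{\partial,[k]}(z,\bar z)$ involves $\partial_w^k W^w$ evaluated at the interpolated point $w + s(\bar z - w)$, integrated over $s \in [0,1]$; since $|k|_\mfs \ge r$ for $k \in \partial_\mfs r$, one has $\varrho - |k_\downarrow|_\mfs$ units of regularity available in the frozen-point variable after accounting for the lower multi-index $k_\downarrow$, and the argument in Lemma~\ref{l:ex_e2} type estimates shows the resulting kernel lies in $C^\alpha(\Psi_{\varrho - |k_\downarrow|_\mfs - \alpha}^2)$ for any $\alpha < \varrho - |k_\downarrow|_\mfs$ — the $\alpha$ derivatives in the curve variable are absorbed by the $C^\varrho$ regularity of $a,b,c$, and the $s$-integral is harmless since $\Psi_r^2$ is a Fréchet space and the integrand is continuous. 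Continuity of $(a,b,c) \mapsto Z^{[k]}, Z^{\partial,[k]}$ in the respective Fréchet topologies is inherited from continuity of $(a,b,c)\mapsto W \in C^\varrho(\Phi_\infty^2)$ together with the continuity of $w$-differentiation and of the linear remainder-extraction operators, plus the fact that polynomial combinations of the (continuous) coefficient data and their derivatives depend continuously on $(a,b,c)$ in $\mathbf A_\varrho$. The main obstacle I anticipate is purely bookkeeping: correctly tracking how the anisotropic multi-indices $k \in \partial_\mfs r$ and their `lowered' versions $k_\downarrow$ interact with the parabolic scaling when passing derivatives between the $w$-slot, the spatial slot, and the powers of $t-\bar t$, so that the claimed regularity index $\varrho - |k_\downarrow|_\mfs - \alpha$ for $Z^{\partial,[k]}$ comes out sharp; this is where one must be careful that a $w$-derivative of $W^w$ in the $i$-th spatial direction costs one $C^\varrho$-unit of the coefficient field but only contributes weight $\mfs_i$ to the anisotropic degree, while the Gaussian rescaling variables contribute the rest.
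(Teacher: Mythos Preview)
Your overall plan—Taylor expand $W^{\bar z}(z-\bar z)$ in the freezing variable around $w$—matches the paper exactly, and your identification of the main terms $Z_w^{[k]}$ and their membership in $\Lambda_{|k|_\mfs}$ is fine. The gap is in the remainder.

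You write the remainder as an integral of $\partial_w^{k} G$ for $k \in \partial_\mfs r$, i.e.\ the standard integral form of the Taylor remainder involving one \emph{extra} derivative. But you then claim $Z^{\partial,[k]} \in C^{\alpha}(\Psi_{\varrho - |k_\downarrow|_\mfs - \alpha}^2)$, a regularity count that only spends $|k_\downarrow|_\mfs$ derivatives of the coefficient field. These two things do not match: if the remainder really uses $\partial_w^k$, you have consumed $|k|_\mfs$ units of $C^\varrho$-regularity, not $|k_\downarrow|_\mfs$, and since $|k|_\mfs - |k_\downarrow|_\mfs$ can be as large as $\mfs_0 = 2$ (when $\mathfrak{m}(k)$ is the time direction), your argument loses up to two derivatives. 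In the borderline case $r = \varrho$ allowed by the hypothesis, $\partial_w^k G$ need not even exist. The phrase ``accounting for the lower multi-index $k_\downarrow$'' hides precisely the step that is missing.

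The paper avoids this by using Lemma~\ref{Tay}, whose remainder is in \emph{increment form}: it involves only $\partial^{k_\downarrow} f$ together with the discrete difference operator $\delta_k$ in the $\mathfrak{m}(k)$-th coordinate, rather than a full $\partial^k f$. This is what buys back the extra derivative and makes the index $\varrho - |k_\downarrow|_\mfs$ correct. Concretely, the paper sets
\[
Z_w^{\partial,[k]}(z,\bar z) = \frac{1}{k_\downarrow!}\,\mathbf{1}_{t>\bar t}\,(t-\bar t)^{-d/2}\!\int \delta_k\big[\partial_1^{k_\downarrow}\tilde Z\big]\Big(w + y(\bar z - w);\,(t-\bar t)^{1/2},\,\tfrac{x-\bar x}{(t-\bar t)^{1/2}}\Big)\,\mcQ^{k_\downarrow}(dy),
\]
and then splits into the cases $k_{\mathfrak{m}(k)}=1$ (where $\mcQ^{k_\downarrow}$ degenerates and the integral reduces to a single increment) and $k_{\mathfrak{m}(k)}>1$ (where one gets a genuine $\int_0^1$-integral of an increment). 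In both cases the increment $\delta_k$ supplies the needed H\"older gain without costing a derivative, which is exactly the bookkeeping you flagged as the ``main obstacle'' but did not actually carry out. Once you replace your remainder formula by the increment-form one from Lemma~\ref{Tay} and treat these two cases, the rest of your argument goes through.
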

\begin{proof}
	By applying Lemma~\ref{Tay} to the function $\tilde{Z}$ in its first argument, we obtain a decomposition of the form \eqref{eq:Z_decomp} with
	\begin{align*}
		Z_w^{[k]}(z) 
			&= \frac{1}{k!} \mathbf{1}_{t > 0} t^{-d/2} \partial_1^k \tilde{Z}\left (w ; t^{1/2}, \frac{x}{t^{1/2}} \right ), 
		\\ Z_w^{\partial, [k]}(z, \bar{z}) 
			&= \frac{1}{k_\downarrow!}\mathbf{1}_{t > \bar{t}} (t- \bar{t})^{-d/2} \int_{\mbR^{1+d}} \delta_k [\partial_1^{k_\downarrow} \tilde{Z}] \left (w + y(\bar{z}- w) ; ( t - \bar{t} )^{1/2} , \frac{(x - \bar{x})}{(t - \bar{t})^{1/2}} \right ) \mcQ^{k_\downarrow}(dy).
	\end{align*}
	It follows from the definition of $Z$ and an explicit computation of the derivatives $\partial_1^k \tilde{Z}$ that $Z^{[k]} \in C^{\varrho - |k|_\mfs}(\Phi_\infty^2) \cap \Lambda_{|k|_\mfs}$ and that $(t - \bar{t})^{-1/2} Z^{[k]} \in \Lambda_r$. The continuous dependence on $(a,b,c)$ can also be read off this computation.
	
	For the remainder, we distinguish the cases $k_{\mathfrak{m}(k)} = 1$ and $k_{\mathfrak{m}(k)} > 1$ where $\mfm(k)$ is as in Appendix~\ref{app:Tay}. In the former case, we have $\mathfrak{m}(k_\downarrow) > \mathfrak{m}(k)$ which implies that the integrand appearing is constant on the support of $\mcQ^{k_\downarrow}$. Writing out the definition, we therefore see that	
	\begin{align*}
		Z_w^{\partial, [k]}(z, \bar{z}) 
		& = \frac{1}{k_{\downarrow}!}\mathbf{1}_{t > \bar{t}} (t - \bar{t})^{-d/2} \Bigg [ \partial_1^{k_\downarrow} \tilde{Z} \Big ( \bar{z}_0, \dots, \bar{z}_{\mathfrak{m}(k)}, w_{\mathfrak{m} ( k ) + 1} \dots, w_d; (t - \bar{t})^{1/2}, \frac{(x - \bar{x})}{(t - \bar{t})^{1/2}} \Big ) \\ & \qquad  - \partial_1^{k_\downarrow}\tilde{Z} \Big ( \bar{z}_0, \dots, \bar{z}_{\mathfrak{m}(k)-1}, w_{\mathfrak{m}(k)} \dots, w_d; (t - \bar{t})^{1/2} , \frac{(x - \bar{x})}{(t - \bar{t})^{1/2}} \Big ) \Bigg ] .
	\end{align*}
	From this expression, it follows that in this case $Z^{\partial, [k]} \in  C_\mfs^{\alpha}(\Psi_{\varrho - |k_\downarrow|_\mfs - \alpha}^2)$ for any $\alpha < \varrho - |k_\downarrow|_\mfs$ and that the map sending $(a,b,c)$ to $Z^{\partial,[k]}$ is continuous.
	
	In the other case, we have
	\begin{align*}
		& Z_w^{\partial, [k]} (z, \bar{z}) 
		\\ & = \frac{1}{k_\downarrow!}\mathbf{1}_{t > \bar{t}} ( t- \bar{t})^{-d/2} \int_0^1 \Bigg[ \partial_1^{k_\downarrow} \tilde{Z}\Big (\bar{z}_0, \dots, \bar{z}_{\mathfrak{m}(k) - 1}, w_{\mathfrak{m}(k)} + y(\bar{z} - w)_{\mathfrak{m}(k)}, w_{\mathfrak{m}(k) + 1}, \dots, w_d; ( t - \bar{t} )^{1/2}, \frac{(x - \bar{x})}{(t - \bar{t})^{1/2}} \Big ) 
		\\ & \qquad \qquad - \partial_1^{k_\downarrow} \tilde{Z}\Big (\bar{z}_0, \dots, \bar{z}_{\mathfrak{m}(k) - 1}, w_{\mathfrak{m}(k)}, \dots, w_d; ( t - \bar{t} )^{1/2} , \frac{(x - \bar{x})}{(t - \bar{t})^{1/2}} \Big ) \Bigg ] n (1-y)^{n-1} dy
	\end{align*}
	where $n$ is 
	the first non-zero entry of $k_{\downarrow}$
	It again follows that $Z^{\partial, [k]} \in C_\mfs^\alpha(\Psi_{\varrho - |k_\downarrow|_\mfs - \alpha}^2)$ for any $\alpha < \varrho - |k_\downarrow|_\mfs$ and that the dependence on $(a,b,c)$ is continuous.
\end{proof}

Next, we obtain a similar expansion for the term $E$. The proof is similar but computationally much longer. 

\begin{prop}\label{prop:E_decomp}
	Fix $2 < r \le  \varrho$. Then there exists sets of multiindices $A_r^1, A_r^2$ such that for any $w \in \mathbb{R}^{1 + d}$ we can write 
	\begin{align*}
		&E(z, \bar{z}) = \sum_{k \in A_r^1} (\bar{z} - w)^k E_w^{[k]}(z - \bar{z}) + \sum_{k \in A_r^2} \sum_{l \le k_\downarrow} (\bar{z}- w)^l E_w^{\partial, [k]; l}(z, \bar{z}) 
	\end{align*}
	where $E^{[k]} \in C^{\varrho - r}(\Phi_{\infty}^1) \cap \Lambda_{|k|_\mfs}$  and $E^{\partial, [k], l} \in C^\alpha(\Psi_{\varrho -  r - \alpha}^{1 + |k_\downarrow - l|_\mfs})$ for any $\alpha \le \varrho - 
	r$. Furthermore $A_r^2$ may be chosen to have the property that $k \in A_r^2$ implies that $|k_\downarrow|_\mfs \ge \min\{|j_\downarrow|_\mfs: j \in \partial_\mfs r\}$. Furthermore, the maps $(a,b,c) \mapsto E^{[k]}, E^{\partial,[k],l}$ are continuous.
\end{prop}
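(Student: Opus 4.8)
The plan is to mimic the proof of Proposition~\ref{prop:Z_decomp}, applying it term by term to the three summands of the explicit formula \eqref{eq:a20} for $E$, but now performing \emph{two} nested layers of anisotropic Taylor expansion (Lemma~\ref{Tay}): an inner expansion in the difference $z - \bar{z}$ around $\bar{z}$, whose purpose is to extract polynomial-in-$(z - \bar{z})$ prefactors with coefficients given by derivatives of $a,b,c$ evaluated at $\bar{z}$, followed by an outer expansion of those coefficients in $\bar{z} - w$ around $w$, so as to freeze them at the base point $w$ as required by membership in $\Lambda$.

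First I would record the decompositions of $\partial_i \partial_j Z$, $\partial_i Z$ and $Z$ in the upper slot around $w$. These follow immediately from Proposition~\ref{prop:Z_decomp} upon applying $\partial_i$ or $\partial_i \partial_j$ (acting on the first slot) to both sides of \eqref{eq:Z_decomp}: since $(\bar{z} - w)^k$ is independent of the first variable, the derivative commutes with it and hits only the translation-invariant main parts $Z^{[k]}(z-\bar{z})$ and the remainders $Z^{\partial,[k]}(z,\bar{z})$. By stability of $\Lambda$ under differentiation this produces main parts $\partial_i \partial_j Z^{[k]}, \partial_i Z^{[k]}, Z^{[k]} \in \Lambda_{|k|_\mfs}$ of regularizing degree $0$, $1$, $2$ respectively (the loss coming from the parabolic scaling), together with remainders in $C^\alpha(\Psi^{0}), C^\alpha(\Psi^{1}), C^\alpha(\Psi^{2})$ with the Hölder exponents inherited from Proposition~\ref{prop:Z_decomp}; the continuity in $(a,b,c)$ is also inherited.

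Next I would expand the coefficient factors. For $-b_i(z)\partial_i Z$ and $-c(z) Z$: write $b_i(z) = b_i(\bar{z} + (z-\bar{z}))$, Taylor expand in $z - \bar{z}$ to total order $\le r$ to get $\sum_{|m|_\mfs < r}\tfrac{1}{m!}\partial^m b_i(\bar{z})(z-\bar{z})^m$ plus a Lemma~\ref{Tay}-remainder, then Taylor expand each $\partial^m b_i(\bar{z})$ in $\bar{z}-w$ to turn it into $\sum_l \tfrac{1}{l!}\partial^{m+l}b_i(w)(\bar{z}-w)^l$ plus a remainder; similarly for $c$. For the crucial term $\sum_{i,j}(a_{ij}(\bar{z}) - a_{ij}(z))\partial_i\partial_j Z$ I would expand $a_{ij}(z) = a_{ij}(\bar{z}+(z-\bar{z}))$ in $z - \bar{z}$; the order-zero term cancels against $a_{ij}(\bar{z})$, so the difference equals $-\sum_{1\le |m|_\mfs < r}\tfrac{1}{m!}\partial^m a_{ij}(\bar{z})(z-\bar{z})^m$ plus a remainder that is $O(|z-\bar{z}|_\mfs)$, and I would then expand $\partial^m a_{ij}(\bar{z})$ around $w$ as before. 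Multiplying everything out, the polynomial prefactors $(z-\bar{z})^m$ are absorbed into the $Z$-type kernels: for the main terms this uses closure of $\Lambda$ under multiplication by polynomials in $z - \bar{z}$ (which is exactly what the $P^{[i]}$ slot of Definition~\ref{d:p} permits, since a monomial $(z-\bar z)^m$ with $|m|_\mfs\ge1$ fits the rational-fraction form $t^{-1/2}Q(t^{1/2},t^{-1/2}x)$) and by polynomials in the jet of $(a,a^{-1},b,c)$ at $w$ (the $F$ slot), together with $(t-\bar t)^{-1/2}Z^{[k]}\in\Lambda_r$ from Proposition~\ref{prop:Z_decomp}; for the remainder terms, multiplying a $\Psi$-kernel by $(z-\bar{z})^m$ simply raises its regularizing degree by $|m|_\mfs$. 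Collecting the $(\bar{z}-w)$-powers defines $A_r^1$ (fully-frozen main terms) and $A_r^2$ (the boundary multiindices produced by the Lemma~\ref{Tay} remainders, together with their sub-multiindices $l \le k_\downarrow$). The degree count then works out: in the $a$-difference term the two spatial derivatives on $Z$ drop the degree to $0$, but the factor $(z-\bar{z})^m$ with $|m|_\mfs \ge 1$ restores it, so every contribution has regularizing degree $\ge 1$, matching $E \in \Phi^1$; the remainder kernels land in $C^\alpha(\Psi^{1+|k_\downarrow - l|_\mfs})$ as stated, and the total number of derivatives placed on the coefficient fields across both expansions being $\le r$ yields the $C^{\varrho - r}$ regularity in $w$. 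Continuity of $(a,b,c)\mapsto E^{[k]},E^{\partial,[k],l}$ follows by combining the continuity statements of Proposition~\ref{prop:Z_decomp} and Lemma~\ref{l:ex_e2} with continuity of differentiation/evaluation of $a,b,c$ and of pointwise multiplication.

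The main obstacle will be the bookkeeping of the two nested layers of Taylor expansion and the resulting ``remainder of a remainder'' terms: one must organize them so that every term is unambiguously either a main term in $\chi\Lambda$ or a remainder of the precise form $(\bar{z}-w)^l E^{\partial,[k],l}$ with $l\le k_\downarrow$, and verify that the boundary multiindices that actually appear satisfy $|k_\downarrow|_\mfs \ge \min\{|j_\downarrow|_\mfs : j \in \partial_\mfs r\}$, which is what later makes the aggregated remainder regularizing enough to be absorbed into $R$. The only genuinely non-routine points are this degree-restoration mechanism for the $a$-difference term and the verification that the polynomial-prefactor operations preserve $\Lambda$; the remainder of the argument is a (lengthy but routine) elaboration of the computation already carried out in Proposition~\ref{prop:Z_decomp}.
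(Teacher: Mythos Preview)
Your overall strategy is sound and close in spirit to the paper's, but there is one concrete error. You assert that $\Lambda$ is stable under $z$-differentiation, hence that $\partial_i\partial_j Z^{[k]}_w \in \Lambda_{|k|_\mfs}$. This is false: two spatial derivatives of $W^w$ produce a prefactor proportional to $(t-\bar t)^{-1}$, and writing $(t-\bar t)^{-1} = (t-\bar t)^{-1/2}\,Q\big((t-\bar t)^{1/2},(t-\bar t)^{-1/2}(x-\bar x)\big)$ forces $Q(u,v)=u^{-1}$, which violates the polynomial requirement in Definition~\ref{d:p}. (The remark after Proposition~\ref{prop:ker_dec} only asserts stability of $\Lambda$ under differentiation in the \emph{upper} slot $w$.) The fix is simple: what you actually need is that the \emph{product} $(z-\bar z)^m\,\partial_i\partial_j Z^{[k]}_w$ lies in $\Lambda$ for $|m|_\mfs\ge 1$, and this holds because multiplying by any monomial of parabolic degree $\ge 1$ restores the missing factor of $u$ in $Q$. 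Since the Taylor expansion of $a_{ij}(\bar z)-a_{ij}(z)$ has vanishing zeroth-order term, every main contribution carries such a factor, and the argument survives once you state the membership for the product rather than for $\partial_i\partial_j Z^{[k]}_w$ alone.

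For comparison, the paper organises the computation differently: it first computes $\partial_i\partial_j Z(z,\bar z)$ explicitly as $Z(z,\bar z)$ times a rational factor involving $a^{ij}(\bar z)$, and then Taylor expands \emph{three} factors around $w$ --- the $a$-difference, the $a^{-1}(\bar z)$ factor arising from the derivative, and $Z$ via Proposition~\ref{prop:Z_decomp} --- using the binomial rewriting $(z-w)^k=\sum_l\binom{k}{l}(z-\bar z)^l(\bar z-w)^{k-l}$ in place of two nested expansions. Your route, once patched, is equivalent but slightly leaner: differentiating the already-decomposed $Z$ automatically evaluates the $a^{-1}$ factors at $w$, so you avoid one of the paper's three expansion steps.
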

\begin{proof}
	For brevity, we prove only the decomposition. The continuity statement follows from the explicit formulae provided for the terms in the decomposition. 
	
	We first assume for simplicity that $b = 0$ and $c= 0$ and focus on the leading order part.  In this case, we can compute that
	\begin{align*}
		E(z, \bar{z}) = (t-\bar{t})^{-1/2} \sum_{i,j} \frac{(a_{ij}(\bar{z}) - a_{ij}(z))}{(t- \bar{t})^{1/2}} \Bigg [  \sum_{i^\prime,j^\prime} a^{i i^\prime}(\bar{z}) a^{j j^\prime}(\bar{z}) \frac{(x - \bar{x})_{i^\prime}(x - \bar{x})_{j^\prime}}{t- \bar{t}}  - a^{ij}(\bar{z}) \Bigg ] Z(z, \bar{z}).
	\end{align*}
	We now expand each term separately. First
	by applying Lemma~\ref{Tay} to $a_{i j}$
	we write
	\begin{align*}
		& a_{ij}(\bar{z}) - a_{ij}(z) = \sum_{|k|_\mfs < r} \frac{(\bar{z} - w)^k - (z-w)^k}{k!} \partial^k a_{ij}(w) 
		\\ & 
		+ \sum_{k \in \partial_\mfs r} \underbrace{\left (\frac{(\bar{z} - w)^{k_\downarrow}}{k_{\downarrow}!} \int \delta_k[\partial^{k_\downarrow} a_{ij}](w + (\bar{z} - w)y) {\mcQ}^{k_\downarrow}(dy) - \frac{({z} - w)^{k_\downarrow}}{k_{\downarrow}!} \int \delta_k[\partial^{k_\downarrow} a_{ij}](w + ({z} - w)y) {\mcQ}^{k_\downarrow}(dy) \right )}_{R_{ij}^{1,k}(w, z, \bar{z})}.
	\end{align*}
	By writing $(z-w)^k = (z - \bar{z} + \bar{z} - w)^k$ and binomially expanding, we rewrite this as
	\begin{align*}
		a_{ij}(\bar{z}) - a_{ij}(z) = - \sum_{|k|_\mfs< r} \sum_{0 < l \le k} \frac{(\bar{z} - w)^{k-l} (z - \bar{z})^l}{(k-l)! l!} \partial^k a_{ij}(w) + \sum_{k \in \partial_\mfs r} R_{ij}^{1,k}(w, z, \bar{z}) .
	\end{align*}

	We next turn to the middle term. We first note that
	\begin{align*}
		& \sum_{i^\prime, j^\prime} a^{ii^\prime}(\bar{z}) a^{j j^{\prime}}(\bar{z}) \frac{(x - \bar{x})_{i^\prime}(x - \bar{x})_{j^\prime}}{t- \bar{t}} 
		= \sum_{i^\prime, j^\prime} \sum_{|k|_\mfs < r} \frac{(\bar{z}- w)^k}{k!} \partial^k (a^{ii^\prime} a^{jj^\prime})(w)  \frac{(x - \bar{x})_{i^\prime}(x - \bar{x})_{j^\prime}}{t- \bar{t}}
		\\ 
		& 
		\qquad \qquad + \sum_{i^\prime, j^\prime} \sum_{k \in \partial_\mfs r} \frac{(\bar{z} - w)^{k_\downarrow}}{k_\downarrow !} \underbrace{\int \delta_k [\partial^{k_\downarrow} (a^{ii^\prime} a^{jj^\prime})] (w + (\bar{z} - w)y) {\mcQ}^{k_\downarrow}(dy)  \frac{(x - \bar{x})_{i^\prime}(x - \bar{x})_{j^\prime}}{t- \bar{t} }}_{R_{2, 1}^{i,j,i^\prime, j^\prime, k}(w, z, \bar{z})}
	\end{align*}
	and
	\begin{align*}
		a^{ij}(\bar{z}) = \sum_{|k|_\mfs < r} \frac{(\bar{z} - w)^k}{k!} \partial^k a^{ij}(w) + \sum_{k \in \partial_\mfs r} \frac{(\bar{z} - w)^{k_\downarrow}}{k_\downarrow !} \underbrace{\int \delta_k [\partial^{k_\downarrow} a^{ij} ] (w + (\bar{z} - w)y) {\mcQ}^{k_\downarrow}(dy)}_{R_{2, 2}^{i, j, k}(w, z, \bar{z})}.
	\end{align*}
	In total, we obtain that
	\begin{align*}
		& \sum_{i^\prime,j^\prime} a^{i i^\prime}(\bar{z}) a^{j j^\prime}(\bar{z}) \frac{(x - \bar{x})_{i^\prime}(x - \bar{x})_{j^\prime}}{t- \bar{t}}  - a^{ij}(\bar{z}) \\ & = \sum_{|k|_\mfs < r} \frac{(\bar{z}- w)^k}{k!} \Bigg [ \sum_{i^\prime, j^\prime}   \partial^k (a^{ii^\prime} a^{jj^\prime})(w)  \frac{(x - \bar{x})_{i^\prime}(x - \bar{x})_{j^\prime}}{t- \bar{t}} - \partial^k a^{ij}(w) \Bigg ]
		\\ & \qquad + \sum_{k \in \partial_\mfs r}  \frac{(\bar{z} - w)^{k_\downarrow}}{k_\downarrow !} \Bigg [ \underbrace{\sum_{i^\prime, j^\prime} R_{2,1}^{i,j,i^\prime, j^\prime, k}(w, z, \bar{z}) + R_{2,2}^{i,j,k} (w, z, \bar{z})}_{R_{ij}^{2,k}(w, z, \bar{z})} \Bigg ]
	\end{align*}
	With these expansions and the decomposition of Proposition~\ref{eq:Z_decomp} for $Z$ in hand, we can write
	\begin{align*}
		E(z, \bar{z}) = &  (t- \bar{t})^{-1/2}\sum_{\substack{|k^\alpha|_\mfs < r \\ \alpha = 1, 2,3}} \sum_{0< l \le k^1} \sum_{i,j}  \frac{(\bar{z}-w)^{k^1 + k^2 + k^3 - l}}{(k^1-l)! k^2! k^3 ! l!}\frac{(z - \bar{z})^l}{(t- \bar{t})^{1/2}} \partial^{k^1} a_{ij}(w)
		\\ &
		\Bigg [ \sum_{i^\prime, j^\prime}   \partial^{k^2} (a^{ii^\prime} a^{jj^\prime})(w)  \frac{(x - \bar{x})_{i^\prime}(x - \bar{x})_{j^\prime}}{t- \bar{t}} - \partial^{k^2} a^{ij}(w) \Bigg ] Z_w^{[k^3]}(z- \bar{z})
		+ (t- \bar{t})^{-1/2} \sum_{\substack{A\subset \{1,2,3\} \\ A \neq \emptyset}} R_A(w, z, \bar{z})
	\end{align*}
	where the sum over $A$ includes all terms that include at least one of the terms $R_{ij}^{l,k}$ or $Z^{\partial,[k]}$ in place of the terms appearing above. Here the set $A$ indexes which of the three terms is of the remainder type so that the first sum on the right hand side above corresponds to the case $A = \emptyset$. For brevity, we don't write each $R_A$ explicitly here, but we will write $R_{\{1\}}$ below to demonstrate how these terms are dealt with. We will show that the first sum on the right hand side contributes the sum over $A_r^1$ in the statement whilst the second sum on the right hand side contributes the sum over $A_r^2$.
	
	For the part corresponding to $A_r^1$, we simply note that we can rewrite the first sum as
	\begin{align*}
		\sum_{|k|_\mfs < 3r} (\bar{z} - w)^k \Bigg [ \sum_{\substack{|k^\alpha|_\mfs < r \\ \alpha = 1, 2,3}} \sum_{0< l \le k^1} \sum_{i,j}  & \frac{\mathbf{1}_{k = k^1 + k^2 + k^3 - l}}{(k^1-l)! k^2! k^3 ! l!}\frac{(z - \bar{z})^l}{(t- \bar{t})^{1/2}} \partial^{k^1} a_{ij}(w)
		\\ &
		\Bigg ( \sum_{i^\prime, j^\prime}   \partial^{k^2} (a^{ii^\prime} a^{jj^\prime})(w)  \frac{(x - \bar{x})_{i^\prime}(x - \bar{x})_{j^\prime}}{t- \bar{t}} - \partial^{k^2} a^{ij}(w) \Bigg ) (t- \bar{t})^{-1/2}Z_w^{[k^3]}(z- \bar{z})\Bigg ]
	\end{align*}
	We then define $A_1^r = \{k: |k|_\mfs < 3r\}$ and define $E_w^{[k]}(z - \bar{z})$ to be the term appearing in the square bracket above (noting that it may be possible that this is $0$ for some values of $k$).
	
	With this definition, we do indeed have that $E^{[k]} \in C_\mfs^{\varrho - r}(\Phi_{\infty}^1) \cap \Lambda_r$. This follows from the fact that $(t- \bar{t})^{-1/2} Z^{[k_3]} \in C_\mfs^{\varrho - r}(\Phi_{\infty}^1) \cap \Lambda_r$ and that this space is stable under multiplication with polynomials in $\frac{(x-\bar{x})}{(t-\bar{t})^{1/2}}$, polynomials in ${(t- \bar{t})^{1/2}}$ and polynomials in  $\partial^k a_{ij}(w), \partial^k a^{ij}(w)$ for $|k|_\mfs \le r$. 
	
	We now demonstrate how to deal with the term $R_A$ when $A = \{1\}$ by way of example for all the remainder terms. This term already demonstrates the techniques for the remaining terms and the appearance of the sum over $l$ in the statement.
	We write
	\begin{align*}
		R_{\{1\}}(w, z, \bar{z}) = \sum_{k^1 \in \partial_\mfs r} \sum_{\substack{|k^\alpha|_\mfs < r \\ \alpha =  2,3}} \sum_{i,j}R_{ij}^{1,k}(w,z,\bar{z}) \frac{(\bar{z} - w)^{k^2}}{k^2!} & \Bigg [ \sum_{i^\prime, j^\prime} \partial^{k^2} (a^{ii^\prime} a^{jj^\prime})(w) \frac{(x- \bar{x})_{i^\prime}(x - \bar{x})_{j^\prime}}{t - \bar{t}} - \partial^{k^2}a^{ij}(w) \Bigg ]\\ &(\bar{z}- w)^{k^3}Z_w^{[k^3]}(z - \bar{z}).
	\end{align*}
	We now rewrite
	\begin{align*}
		R_{ij}^{1,k}(w,z,\bar{z}) &= \frac{(\bar{z}-w)^{k_\downarrow^1}}{k_\downarrow^1 !} \int \Bigg [ \delta_{k^1} [\partial^{k_\downarrow^1} a_{ij}]((\bar{z} - w) y) - \delta_{k^1} [\partial^{k_\downarrow^1} a_{ij}](w + ({z} - w) y) \Bigg ] {\mcQ}^{k_\downarrow^1}(dy)
		\\
		& \quad + \frac{(\bar{z}-w)^{k_\downarrow^1} -({z}-w)^{k_\downarrow^1} }{k_\downarrow^1 !} \int  \delta_{k^1} [\partial^{k_\downarrow^1} a_{ij}](w + ({z} - w) y)  {\mcQ}^{k_\downarrow^1}(dy)
	\end{align*}
	and treat the parts of the expression for $R_{\{1\}}$ corresponding to each term in the above expression separately. It is straightforward to check that the contributions from the first term can be included in the sum over $A_r^2$ in the statement with $l = k_\downarrow$. For the latter 
	term,
	the same result follows by binomially expanding $(z-w)^{k_\downarrow^1} = (z- \bar{z} + \bar{z}-w)^{k_\downarrow^1}$ and using that $(z - \bar{z})^\eta$ improves the regularising effect by $|\eta|_\mfs$.
	
	It remains to treat the case where $b,c$ may be non-zero. For brevity, we demonstrate only how to treat the term $c(z) Z(z,\bar{z})$. The other term is similar. Here, we write
	\begin{align*}
		c(z) Z(z, \bar{z}) = \sum_{\substack{|k^\alpha|_\mfs < r \\ \alpha = 1, 2}}\sum_{l \le k^1} \frac{(\bar{z}- w)^{k^1 + k^2 - l}(\bar{z}- z)^l}{l! (k^1 - l)! k^2!} \partial^{k^1} c(w) Z_w^{[k^2]}(z - \bar{z}) + \sum_{\substack{A \subset \{1,2\} \\ A \neq \emptyset}} R_A(w,z,\bar{z})
	\end{align*}
	by use of 
	Proposition~\ref{prop:Z_decomp} 
	for $Z$ and 
	Lemma~\ref{Tay}
	for $c$. Here we have binomially expanded $(z - w)^\eta = (z- \bar{z} + \bar{z}- w)^\eta$. The summands of the first sum are clearly of the form required to be included in the sum over $A_r^1$ so we consider only an illustrative example of the term $R_A$. We again demonstrate the computation in the case $A = \{1\}$. In this case we have
	\begin{align*}
		R_{\{1\}}(w,z, \bar{z}) = \sum_{k^1 \in \partial_\mfs r} \sum_{|k^2|_\mfs < r} \sum_{l \le k^1} \frac{(\bar{z} - w)^{k_\downarrow^1 + k^2 - l} (z - \bar{z})^l}{l! (k_\downarrow^1 - l)! k^2!} Z_w^{[k^2]}(z - \bar{z}) \int \delta_{k^1}[\partial^{k_\downarrow^1} c](w + (z-w)y) {\mcQ}^{k_\downarrow^1}(dy) 
	\end{align*}
	which we again see if of the correct form for the sum over $A_r^2$.
\end{proof}

With these two results in hand, we now show that $Z \ast (-E)^{\ast k}$ can be decomposed in a way that is compatible with our main assumption. 

\begin{lemma}\label{lem:conv_decomp}
	Fix $r > 2$ and suppose that $\varrho \ge 3r$. Then for each $n \in \mbN$, we can write
	$$Z \ast (-E)^{\ast n}(z, \bar{z}) = K_n^{\bar{z}}(z- \bar{z}) + R_n(z, \bar{z})$$
	where
	$K_n \in C^r(\Phi_r^2) \cap \Lambda_r$ and $R_n \in \Psi_r^{ r}$. Furthermore the maps $(a,b,c) \mapsto K_n, R_n$ are continuous.
\end{lemma}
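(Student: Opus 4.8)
The plan is to prove Lemma~\ref{lem:conv_decomp} by induction on $n$, at each stage peeling off the leading-order (locality-compatible) part of the kernel and absorbing the Taylor-remainder contributions into an increasingly regularising error term. I will use the two expansion results Proposition~\ref{prop:Z_decomp} and Proposition~\ref{prop:E_decomp}, the stability of the scales $\Psi, \Phi, C^r(\Psi), C^r(\Phi)$ under convolution from Proposition~\ref{prop:con}, and the stability of $\Lambda_r$ under the operations that appear (multiplication by the polynomial fractions $P^{[i]}$, multiplication by $C^r$ functions of the jet of $(a,b,c)$, and convolution of kernels of the form $W^w$). The key bookkeeping point is that convolving a kernel in $\Psi_r^\alpha$ with one in $\Psi_r^\beta$ produces one in $\Psi_r^{\alpha+\beta}$, so that once a term carries enough ``regularising degree'' it lies in $\Psi_r^{r}$ and can be put into the remainder $R_n$; the job is to show that every term coming from a Taylor remainder is of this type, while the main terms assemble into something of the form $K_n^{\bar z}(z-\bar z)$ with $K_n \in \chi$-free $\Lambda_r$ (the cutoff $\chi$ only being reinstated later, in the proof of Proposition~\ref{prop:ker_dec}).

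First, for the base case $n=0$ we have $Z\ast(-E)^{\ast 0} = Z$, and Proposition~\ref{prop:Z_decomp} (applied with $w = \bar z$) gives exactly $Z(z,\bar z) = \sum_{|k|_\mfs < r}(\bar z - \bar z)^k Z^{[k]}_{\bar z}(z-\bar z) + \sum_{k\in\partial_\mfs r}(\bar z - \bar z)^{k_\downarrow} Z^{\partial,[k]}_{\bar z}(z,\bar z)$, i.e. only the $k=0$ main term survives, $K_0 = Z^{[0]} \in C^r(\Phi_\infty^2)\cap\Lambda_0 \subset C^r(\Phi_r^2)\cap\Lambda_r$, and $R_0 = 0 \in \Psi_r^r$ vacuously; the remainder terms with $k\neq 0$ vanish because $\bar z - \bar z = 0$. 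For the inductive step, write $Z\ast(-E)^{\ast(n+1)} = (Z\ast(-E)^{\ast n})\ast(-E)$ and insert the inductive decomposition $Z\ast(-E)^{\ast n}(z,\zeta) = K_n^{\zeta}(z-\zeta) + R_n(z,\zeta)$. This splits the convolution into $K_n\ast(-E)$ and $R_n\ast(-E)$. For the term $R_n\ast(-E)$: by Lemma~\ref{l:ex_e2}, $E\in\Psi_{\varrho-2}^1 \subset \Psi_r^1$ (using $\varrho \ge 3r > r+2$), so by Proposition~\ref{prop:con} $R_n\ast(-E) \in \Psi_r^{r+1}\subset \Psi_r^r$ and is put into $R_{n+1}$; continuity in $(a,b,c)$ comes from the continuity statements in Lemma~\ref{l:ex_e2}, Proposition~\ref{prop:con} and the inductive hypothesis.

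The main work is the term $K_n\ast(-E)$. Here one first expands $E$ itself using Proposition~\ref{prop:E_decomp} with $w$ chosen as the outer ``upper slot'' $\bar z$: since the convolution is $\int d\zeta\, K_n^\zeta(z-\zeta)\,(-E)(\zeta,\bar z)$, and $E(\zeta,\bar z) = \sum_{k\in A^1_r}(\bar z - \bar z)^k E^{[k]}_{\bar z}(\zeta - \bar z) + \sum_{k\in A^2_r}\sum_{l\le k_\downarrow}(\bar z - \bar z)^l E^{\partial,[k];l}_{\bar z}(\zeta,\bar z)$ — wait, this again kills everything — so instead one must expand $E(\zeta,\bar z)$ with $w=\bar z$ but keeping the $(\zeta - \bar z)^0$ main term: the point is that Proposition~\ref{prop:E_decomp} is stated with a \emph{generic} point $w$, and we pick $w=\bar z$, so the ``polynomial in $(\bar z - w)$'' coefficients collapse to the $k=0$ (resp. $l=0$) terms, leaving $E(\zeta,\bar z) = E^{[0]}_{\bar z}(\zeta-\bar z) + \sum_{k\in A^2_r, 0\in\{l\le k_\downarrow\}} E^{\partial,[k];0}_{\bar z}(\zeta,\bar z)$, with $E^{[0]}\in C^r(\Phi_\infty^1)\cap\Lambda_0$ (more precisely $\Lambda_{|0|}=\Lambda_0$, and on re-grading $\subset\Lambda_r$) and $E^{\partial,[k];0}\in C^\alpha(\Psi_{\varrho-r-\alpha}^{1+|k_\downarrow|_\mfs})$. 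For the main part, $\int d\zeta\, K_n^\zeta(z-\zeta) E^{[0]}_{\bar z}(\zeta - \bar z)$ is \emph{not} yet of the form $(\text{something})^{\bar z}(z-\bar z)$ because of the $\zeta$ in the upper slot of $K_n^\zeta$, so one applies the inductive expansion to $K_n$ once more — but $K_n \in \Lambda_r$ already, so it is a finite sum of terms $K_{F,N,P}^\zeta$; the dependence of $K^\zeta_{F,N,P}$ on $\zeta$ is polynomial-in-the-jet through $F(\zeta)$ and through the $W^\zeta$ factors, and one Taylor-expands $F(\zeta)$ and each $W^\zeta$ in $\zeta$ around $\bar z$ to order $r$ (using Lemma~\ref{Tay}), which turns the $\zeta$-dependence into powers of $(\zeta-\bar z)$ times jet-of-$(a,b,c)$-at-$\bar z$ coefficients, plus remainders. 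Each power of $(\zeta - \bar z)$ hitting the convolution integrand can be absorbed: $(\zeta - \bar z)^\eta$ times a kernel in $\Psi^\beta$ is in $\Psi^{\beta + |\eta|_\mfs}$, so it improves the regularising degree. Carefully, the terms where no Taylor remainder is used assemble into a finite sum of kernels of the $K_{F,N,P}^{\bar z}$ form — the new $W^{\bar z}$-factor coming from $E^{[0]}_{\bar z}$ (which is itself of $\Lambda$-type, hence expressible through $W^{\bar z}$ and rational $P$'s), composed via convolution with the $N$ existing $W^{\bar z}$-factors of $K_n$ — and the new $F$ is a polynomial in the jets of $a,a^{-1},b,c$ at $\bar z$; this sum is $K_{n+1}\in\Lambda_r$, and lies in $C^r(\Phi_r^2)$ by Proposition~\ref{prop:con} applied to the $C^r(\Phi_r^\bullet)$ scale ($K_n\in C^r(\Phi_r^2)$ convolved with the degree-$1$ kernel $E^{[0]}$ gives degree $\ge 2$; one keeps only the $\Phi_r^2$-graded part of the output, a finite sum, pushing higher-degree pieces into the remainder, exactly as in Proposition~\ref{prop:ker_dec}'s proof sketch). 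All the Taylor remainder terms — those involving $E^{\partial,[k];0}$, or a Taylor remainder in the $F$ or $W$ expansions — carry at least $\varrho - r$ extra derivatives' worth of regularity or an extra factor of $(z-\zeta)$ or $(\zeta-\bar z)$ of positive $\mfs$-degree, and hence after convolution live in $\Psi_r^{\ge r}$ (here we use $\varrho\ge 3r$ so that $\varrho - r \ge 2r \ge r$ comfortably), so they go into $R_{n+1}$.

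The main obstacle I anticipate is the last point: organising the combinatorics of the double Taylor expansion (of $E$ in its upper slot, and then of $K_n$ — really of its constituent $F$ and $W$ factors — in the intermediate variable $\zeta$) so that one can \emph{cleanly separate} the ``pure main'' terms, which must reassemble into a single element of $\Lambda_r$ with a well-defined $F$ and a well-defined chain of $W^{\bar z}$'s, from every term containing at least one Taylor remainder. This is the analogue of the remainder-tracking in the proofs of Proposition~\ref{prop:Z_decomp} and Proposition~\ref{prop:E_decomp} but now iterated through a convolution; the key quantitative fact making it work is the convolution estimate \eqref{eq:a5} of Proposition~\ref{prop:con}, which shows the Beta-function prefactors are harmless and, crucially, that convolving kernels adds regularising degrees, so any term that has ``used up a remainder'' only ever becomes more regular and never worse. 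Once that separation is in place, the statements $K_{n+1}\in C^r(\Phi_r^2)\cap\Lambda_r$ and $R_{n+1}\in\Psi_r^r$ follow, and the continuity of $(a,b,c)\mapsto K_n, R_n$ is inherited at every step from the continuity of $Z$, $E$ and all their expansion pieces in Lemma~\ref{l:ex_e2}, Proposition~\ref{prop:Z_decomp}, Proposition~\ref{prop:E_decomp}, together with the continuity of convolution in Proposition~\ref{prop:con}.
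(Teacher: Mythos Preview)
Your inductive strategy differs from the paper's direct approach, and it has a genuine gap at the point you yourself flag as the main obstacle. The paper does \emph{not} induct on $n$; instead it expands all $n+1$ factors $Z(\zeta_0,\zeta_1), E(\zeta_1,\zeta_2),\dots,E(\zeta_n,\zeta_{n+1})$ simultaneously around a generic point $w$ using Propositions~\ref{prop:Z_decomp} and~\ref{prop:E_decomp}, producing polynomial prefactors $(\zeta_{i+1}-w)^{k^i}$. These are handled by the telescoping identity $\zeta_{i+1}-w = \sum_{j\ge i+1}(\zeta_j-\zeta_{j+1}) + (\bar z - w)$ followed by binomial expansion: the contributions carrying a nontrivial power of $(\bar z - w)$ are then discarded by specialising $w=\bar z$, while the remaining contributions are genuine convolutions of the translation-invariant pieces $Z_w^{[k^0]}, E_w^{[k^i]}$ (each in $\Lambda_r$, since the jets appearing in Propositions~\ref{prop:Z_decomp} and~\ref{prop:E_decomp} are of order $<r$) multiplied by factors $(\zeta_j-\zeta_{j+1})^\beta$ that only improve regularising degree. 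This gives $K_n\in\Lambda_r$ uniformly in $n$.

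Your induction cannot achieve this. The problem is that the inductive hypothesis $K_n\in C^r(\Phi_r^2)\cap\Lambda_r$ is too weak to be self-improving. When you Taylor expand $K_n^\zeta$ (or equivalently its constituent $F(\zeta)$ and $W^\zeta$ factors) around $\bar z$, the main terms involve $\partial_w^j K_n^{\bar z}$ for $|j|_\mfs<r$. Since $F$ already depends on the $r$-jet of $(a,b,c)$, the derivative $\partial_w^j F$ depends on the $(r+|j|_\mfs)$-jet, so $\partial_w^j K_n\in\Lambda_{r+|j|_\mfs}$ rather than $\Lambda_r$. You therefore face a dichotomy: either (i) you include all main terms in $K_{n+1}$, in which case $K_{n+1}\in\Lambda_{2r}$ and the induction does not close; or (ii) you push the $j\neq 0$ main terms into $R_{n+1}$, but their regularising degree after convolution with $E^{[0]}_{\bar z}$ is only $3+|j|_\mfs$, which for small $|j|_\mfs$ falls short of the required $r$. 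Neither option proves the lemma as stated. The paper's simultaneous expansion avoids this precisely because it works with the primitive factors $Z^{[k]}, E^{[k]}$ (whose jet order is bounded by $r$ once and for all) rather than with the composite $K_n$.
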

\begin{proof}
	We write
	\begin{align*}
		A_{w, i}^{[k]}(z- \bar{z}) = \begin{cases}
			Z_w^{[k]}(z - \bar{z}) \qquad & \text{ if } i = 0
			\\
			E_w^{[k]}(z - \bar{z}) \qquad & \text{ otherwise}
		\end{cases}
	\end{align*}
	along with
	\begin{align*}
		A_{w, i}^{\partial, [k], l}(z, \bar{z}) = \begin{cases}
			Z_w^{\partial, [k]}(z, \bar{z}) \qquad & \text{ if } i = 0, l = k_\downarrow
			\\
			0 \qquad & \text{ if } i = 0, l \neq k_\downarrow
			\\ 
			E_w^{\partial, [k], l}(z, \bar{z}) \qquad & \text{ otherwise.}
		\end{cases}
	\end{align*}
	By Proposition~\ref{prop:Z_decomp} and Proposition~\ref{prop:E_decomp}, we then have
	for all $w \in \mathbb{R}^{1 + d}$
	\begin{align}\label{eq:conv_decomp}
		&Z \ast (-E)^{\ast n}(z, \bar{z})\\ & \nonumber = \sum_{|k^0|_\mfs < r} \sum_{\substack{k^\alpha \in A_r^1 \\ \alpha = 1, \dots, n}}  \int \prod_{i = 0}^n A_{w,i}^{[k^i]}(\zeta_i - \zeta_{i+1}) (\zeta_{i+1} - w)^{k^i} \zeta_1 \dots d\zeta_{n}
		\\ \nonumber
		& + \sum_{\substack{A\subset \{0, \dots, n\} \\ A \neq \emptyset}} \sum_{k^0} \sum_{\substack{k^\alpha \in A_r^{1 + \mathbf{1}_A(\alpha)} \\ \alpha = 1, \dots, n}} \sum_{\substack{l^\alpha \le k_\downarrow^\alpha \\ \alpha \in A}} \int \prod_{i \in A} A_{w, i}^{\partial, [k^i], l^i}(\zeta_i, \zeta_{i+1}) (\zeta_{i+1} - w)^{l^i} \prod_{i \not \in A} A_{w,i}^{[k^i]} (\zeta_i - \zeta_{i+1}) (\zeta_{i+1} - w)^{k^i} d\zeta_1 \dots d\zeta_n						\end{align}
	where we have adopted the convention that $\zeta_0 = z, \zeta_{n+1} = \bar{z}$ and where the sum over $k^0$ on the second line runs over $|k^0|_\mfs < r$ if $0 \in A$ and $k^0 \in \partial_\mfs r$ otherwise.
	
	We now write $(\zeta_{i+1} - w) = \sum_{j = 0}^{n - i - 1} (\zeta_{j+i + 1} - \zeta_{j+i+2}) + (\zeta_{n+1} - w)$ on the first line on the right hand side and binomially expand. This yields that this line is equal to
	\begin{align*}
		\sum_{|k^0|_\mfs < r} \sum_{\substack{k^\alpha \in A_r^1 \\ \alpha = 1, \dots, n}} \sum_{\substack{\beta_0^i + \dots + \beta_{n-i}^i = k^i \\ i = 0, \dots, n}} {k^i \choose \beta_0^i \dots \beta_{n-i}^i} \int &  \prod_{i = 0}^{n-1} A_{w,i}^{[k^i]} ( \zeta_i - \zeta_{i+1}) (\zeta_{i+1}-\zeta_{i+2})^{\sum_{i^\prime \le i} \beta_{i - i^\prime}^{i^\prime}}
		\\
		& A_{w, n}^{[k^n]}(\zeta_{n} - \bar{z}) (\bar{z}- w)^{\sum_{i^\prime \le n} \beta_{n - i^\prime}^{i^\prime}}			d\zeta_1 \dots d\zeta_n.	\end{align*}
	We then define $K^w(z - \bar{z})$ to consist of the part of this sum where $\beta_{n - i^\prime}^{i^\prime} = 0$ for each $i^\prime \le n$. We note that we can discard the remaining terms since they vanish when $w = \bar{z}$. We then note that $K_n^w$ is a linear combination of convolutions of one term in $C^{r}(\Phi_\infty^2) \cap \Lambda_{|k|_\mfs}$ with a number of terms in $C^r(\Phi_\infty^1) \cap \Lambda_r$. By Proposition~\ref{prop:con} and the stability of $\Lambda_r$ under convolution, we see that $K_n$ has the desired form. 
	
	It remains to show that the sum appearing in the second line of \eqref{eq:conv_decomp} can be written as $R_n(z, \bar{z}) + \text{Remainder}$ where the remainder vanishes upon setting $w = \bar{z}$ and where $R_n$ is of the form required in the statement. Since this argument is similar to the one for the first term but notationally more heavy, we simply sketch the  required changes. 
	
	By again binomially expanding $(\zeta_{i+1} - w)^{k^i}$ as above, we see that the typical summand on the second line of \eqref{eq:conv_decomp} can be written as a linear combination of terms that vanish when setting $w = \bar{z}$ and terms of the form
	\begin{align*}
		\int \prod_{i \in A} A_{w, i}^{\partial, [k^i], l^i}(\zeta_i, \zeta_{i+1}) (\zeta_{i+1} - \zeta_{i+2})^{\tilde{l}^i} \prod_{i \not \in A} A_{w,i}^{[k^i]} (\zeta_i - \zeta_{i+1}) (\zeta_{i+1} - \zeta_{i+2})^{\tilde{k}^i} d\zeta_1 \dots d\zeta_n		
	\end{align*}
	where ${\tilde{l}^i}, \tilde{k^i} = 0$ if $i = n$ and 
	$\sum_{i \in A} \tilde{l}^i + \sum_{i \not \in A} \tilde{k}^i = \sum_{i \in A} l^i + \sum_{i \not \in A} k^i$. 
	Since $A_w(z, \bar{z}) \in C^r(\Psi_r^\alpha)$ implies that $(z-\bar{z})^l \in C^r(\Psi_r^{\alpha + |l|_\mfs})$ and $|k^\alpha_\downarrow|_\mfs \ge r - 2$ for each $\alpha \in A$, by Proposition~\ref{prop:con} we see that this term defines an element of $C^r(\Psi_r^\beta)$ with 
	\begin{align*}
		\beta 
		&
		= 2 + \sum_{\substack{i \in A \\ i \neq 0}} (1 + |k_\downarrow^i|_\mfs - |l^i|_\mfs) + \sum_{\substack{i \not \in A \\ i \neq 0}} 1 + \sum_{i \in A} |\tilde{l}^i|_\mfs + \sum_{i \not \in A} |\tilde{k^i}|_\mfs
		\\ &
		\ge 2 + |l^0|_\mfs \mathbf{1}_{0 \in A} + \sum_{\substack{i \in A \\ i \neq 0}} (1 + |k_\downarrow^i|_\mfs) 
		\\ &
		\ge 2 + (r-2) \mathbf{1}_{0 \in A}  + \sum_{\substack{i \in A \\ i \neq 0}} (r-1) \ge r
	\end{align*}
	where to reach the second line we used the identity 	$\sum_{i \in A} \tilde{l}^i + \sum_{i \not \in A} \tilde{k}^i = \sum_{i \in A} l^i + \sum_{i \not \in A} k^i$. 
	The desired form of $R_n$ then follows from the continuous map $C^r(\Psi_r^r) \hookrightarrow \Psi_r^r$ given by setting $w = \bar{z}$.
	Finally, continuity of the maps $(a,b,c) \mapsto K_n, R_n$ follows from the continuity statements in Proposition~\ref{prop:Z_decomp}, Proposition~\ref{prop:E_decomp} and Proposition~\ref{prop:con}.
\end{proof}
We are now ready to provide a proof of Proposition~\ref{prop:ker_dec}.

\begin{proof}[Proof of Proposition~\ref{prop:ker_dec}]
	Without loss of generality, we may assume that $M \ge 2r$. Let $\chi$ be the smooth cut-off function fixed in \eqref{eq:a33b}. Choosing $\varrho$ large enough as a function of $r, d$ and $M$, by Lemma~\ref{lem:convergence} and Lemma~\ref{lem:conv_decomp}, we can write
	\begin{align*}
		\Gamma(z, \bar{z}) = \sum_{n \le N} \chi(z - \bar{z}) K_n^{\bar{z}}(z- \bar{z}) + \sum_{n \le N} (1- \chi(z- \bar{z})) K_n^{\bar{z}}(z- \bar{z}) + \sum_{n \le N} R_n(z, \bar{z}) + \sum_{n > N} Z \ast (-E)^{\ast n}(z, \bar{z})
	\end{align*}
	where $N$ is chosen to be large enough so that the final term defines an element of $C_{\mathrm{loc},t}^r$, $R_n \in \Psi_r^{r + d + 2}$ and $K_n \in C^M(\Phi_M^2)$.
	We define $K^w(z) = \sum_{n \le K} \chi(z) K^w(z)$ and $R(z,\bar{z}) = \Gamma(z,\bar{z}) - K^{\bar{z}}(z - \bar{z})$.
	The result then follows by noting that multiplication by $(1 - \chi)$ is a continuous operation from $C^r(\Phi_r^2)$ to $C_{\mathrm{loc},t}^r$ and multiplication by $\chi$ is a continuous operation from $C^{M}(\Phi_{M}^2)$ to $C^r(\mcK_r^2)$. 
	
	Continuity of the decomposition as a function of the coefficient field follows from the corresponding continuity statements in Lemma~\ref{lem:convergence} and Lemma~\ref{lem:conv_decomp}.
\end{proof}
\appendix
\section{Existence of Tensor Products for Symmetric Sets}\label{app: tensor}

	\begin{proof}[Proof of Proposition~\ref{prop:Symm_Char}]
	For uniqueness, we note that if $(\bar{V}^{\proj \mcs}, (\bar{\otimes}_\mcs^a)_{a \in \mcs})$ is a second $\mcs$-symmetric tensor product then by applying the universal properties, we obtain the existence of unique $f, \bar{f}$ such that the $a$-indexed family of diagrams
	\begin{center}
		\begin{tikzcd}
			V^{\times a} \arrow[dr, "\bar \otimes_\mcs^a"'] \arrow[r, "\otimes_\mcs^a"] & V^{\proj \mcs} \arrow[d, bend left, dotted, "f"] \\
			& \bar{V}^{\proj \mcs} \arrow[u, bend left, dotted, "\bar f"]
		\end{tikzcd}
	\end{center}
	commute.
	In turn, this yields the commutative diagrams
	\begin{center}
		\begin{tikzcd}
			V^{\times a} \arrow[dr, "\otimes_\mcs^a"'] \arrow[r, "\otimes_\mcs^a"] & V^{\proj \mcs} \arrow[d, bend left, "\bar f \circ f"]  \arrow[d, bend right, "\id"'] \\
			&  V^{\proj \mcs}.
		\end{tikzcd}
	\end{center}
	By uniqueness of the factorisation, we deduce that $\bar{f} \circ f = \id$. Similarly, we see that $f \circ \bar{f} = \id$. Since $\|f\| = \|\otimes_\mcs^a\| = 1$ and similarly for $\bar{f}$, we deduce that $f$ is the desired isometric isomorphism.
	
	For existence, we adapt the definition given in \cite[Equation 5.5]{CCHS22} to appropriately include topology
	and show that the resulting object satisfies the universal property described here. We first recall that given $a \in \Ob(\mcs)$, the non-symmetrised projective tensor product $V^{\proj a}$ is characterised by the fact that multilinear map $f: V^{\times a} \to Z$ there exists a unique bounded linear map $f^\otimes : V^{\proj a} \to Z$ with $\|f^\otimes\| = \|f\|$ such that the diagram
		\begin{center}
			\begin{tikzcd}
				V^{\times a} \arrow[dr, "f"'] \arrow[r, "\otimes_a"] & V^{\proj a} \arrow[d, dotted,  "f^{\otimes}"] \\
				&  Z
			\end{tikzcd}
		\end{center}
		commutes
	where $\otimes_a : V^{\times a} \to V^{\proj a}$ denotes the usual tensorisation map. 
	In particular, if $\gamma : a \to b$ is an isomorphism of typed sets then by considering the commutative diagrams
	\begin{center}
		\begin{tikzcd}
			V^{\times a} \arrow[dr, "\otimes_b \circ \gamma"'] \arrow[r, "\otimes_a"] & V^{\proj a} \arrow[d, dotted,  "\gamma"] \\
			&  V^{\proj b}
		\end{tikzcd}
	\end{center}
	we see that $\gamma$ can also be viewed as an isometric isomorphism between $V^{\proj a}$ and $V^{\proj b}$, which we denote by the same character since the meaning should always be clear from context. The fact that the map $\gamma$ constructed here is an isometric isomorphism follows similarly to the analogous fact for the maps in the uniqueness part of this proof. 
	
	We now define
	\begin{align*}
		V^{\proj \mcs} = \left \{ (v_a)_{a \in \Ob(\mcs)} : v_a \in V^{\proj a}, \gamma \in \hom_\mcs(a,b) \implies \gamma(v_a) = v_b \right \} \subset \prod_{a \in \Ob(\mcs)} V^{\proj a} \ .
	\end{align*}
	We equip $V^{\proj \mcs}$ with the norm
	$\|(v_a)_{a \in \Ob(\mcs)} \|_{V^{\proj \mcs}} = \|v_b\|_{V^{\proj b}}$
	for any choice\footnote{Since any symmetry in $\mcs$ induces an isometry and $\mcs$ is a groupoid (and in particular, is connected), this definition is independent of the choice of $b$.} of $b \in \Ob(\mcs)$.  
	A natural symmetrisation map $\pi_{\mcs, a} : V^{\proj a} \to V^{\proj \mcs}$ is given by setting for $b \in \Ob(\mcs)$ 
	\begin{align*}
		(\pi_{\mcs, a} (v_a))_b	= |\hom_\mcs(a,b)|^{-1} \sum_{\gamma \in \hom_\mcs(a,b)} \gamma(v_a).
	\end{align*}
	We then define $\otimes_\mcs^a = \pi_{\mcs, a} \circ \otimes_a$. 
	
	We now show that $(V^{\proj \mcs}, (\otimes_\mcs^a)_{a \in \Ob(\mcs)})$ satisfies the universal property. To this end, we fix an $\mcs$-symmetric family of multilinear maps $f^a : V^{\times a} \to Z$. By the universal property for the tensor product $V^{\proj a}$, each $f^a$ factors through a linear map $f^{a, \otimes} : V^{\proj a} \to Z$ which is of the same norm in the case where $Z$ is a Banach space. We claim that for any $\gamma \in \hom_\mcs(a,b)$, $f^{a, \otimes} = f^{b, \otimes} \circ \gamma$. 
	
	To see this, we note that $f^{b, \otimes} \circ \gamma \circ \otimes_a = f^{b, \otimes} \circ \otimes_b \circ \gamma$ since the action of $\gamma$ on $V^{\proj a}$ was defined via the universal property. Then by definition of $f^{b, \otimes}$, $$f^{b, \otimes} \circ \otimes_b \circ \gamma = f^b \circ \gamma = f^a$$ where the last equality follows since $(f^a)_{a \in \Ob(\mcs)}$ is $\mcs$-symmetric. This shows that $f^{b, \otimes} \circ \gamma$ factors $f^a$ so that by uniqueness of the factoring map for the usual projective tensor product, the two are equal thus proving the claim. In particular, it follows that $f: V^{\proj \mcs} \to Z$ defined by
		$f((v_a)_{a \in \Ob(\mcs)}) = f^{b, \otimes} (v_b)$
	is independent of the choice of $b \in \Ob(\mcs)$. 
	
	To see that $f$ defined in this way does factor the family $(f^a)_{a \in \Ob(\mcs)}$ via $(\otimes_\mcs^a)_{a \in \Ob(\mcs)}$, we note that 
	\begin{align*}
		f( \otimes_{\mcs}^a (v_x)_{x \in a})  & = f ( \pi_{\mcs, a}  \otimes_a (v_x)_{x \in a}) = |\hom_\mcs(a,a)|^{-1} \sum_{\gamma \in \hom_\mcs(a,a)} f^{a, \otimes} ( \gamma (\otimes_a (v_x)_{x \in a}))
		\\
		& = f^{a, \otimes} (\otimes_a (v_x)_{x \in a})
		 = f^a ((v_x)_{x \in a})
	\end{align*}
	where the second to last line follows by symmetry invariance of $f^{a, \otimes}$ and the last line follows by definition of $f^{a, \otimes}$. 
	
	In the case where $Z$ is a Banach space, we note that since $f((v_a)_{a \in \Ob(\mcs)}) = f^{b, \otimes}(v_b)$, $\|(v_a)_{a \in \Ob(\mcs)}\|_{V^{\proj \mcs}} = \|v_b\|_{V^{\proj b}}$ and $\|f^{b, \otimes} \| = \| f^b \|$, it is immediate that $\|f\| \le \|f^b\|$ for any $b \in \Ob(\mcs)$. For the converse inequality, we note that if $v_b \in V^{\proj b}$ satisfies
	\begin{align*}
		|f^{b, \otimes}(v_b)| \ge \|v_b\|_{V^{\proj b}} (\|f^b\| - \varepsilon)
	\end{align*}
	then by symmetry invariance of $f^{b, \otimes}$
	\begin{align*}
		|f(\pi_{\mcs, b} v_b)| &= |f^{b, \otimes}(v_b)| \ge \|v_b\|_{V^{\proj b}} (\|f^b\| - \varepsilon) \ge  \|\pi_{\mcs, b} v_b\|_{V^{\proj \mcs}} (\|f^b\| - \varepsilon) 
	\end{align*}	
	where the last inequality follows by the triangle inequality. Since $\varepsilon > 0$ was arbitrary, we conclude that $\|f\| = \|f^{b, \otimes}\| = \|f^b\|$ for any $b \in \Ob(\mcs)$.
	
	To complete the proof that $(V^{\proj \mcs}, (\otimes_\mcs^a)_{a \in \Ob(\mcs)})$ satisfies the universal property, it remains only to show that the factoring map $f$ is unique. This follows from the fact that the linear span of the range of $\otimes_\mcs^a$ is dense in $V^{\proj \mcs}$ which in turn follows from the fact that the linear span of the range of $\otimes_a$ is dense in $V^{\proj a}$ and the fact that $\pi_{\mcs, a}: V^{\proj a} \to V^{\proj \mcs}$ is a continuous linear surjection.

	To complete the proof of the proposition, it remains to establish the representation of its norm in terms of symmetric tensors. By the uniqueness statement, it suffices to do this for the explicit $(V^{\proj \mcs}, (\otimes_\mcs^a)_{a \in \Ob(\mcs)})$ constructed above. 
	Recall that by definition of the usual projective cross norm we can write 
	\begin{align*}
		\|(v_a)_{a \in \Ob(\mcs)} \|_{V^{\proj \mcs}} &= \|v_b\|_{V^{\proj b}}  = \inf \left \{ \sum_{i = 0}^\infty \prod_{x \in b} \|v_x^i\|_{V_{\mfl(x)}} : v_b = \sum_{i = 0}^\infty \otimes_b (v_x^i)_{x \in b} \right \}
	\end{align*}
	for any $b \in \Ob(\mcs)$. On the one hand, if $v_b = \sum_{i = 0}^\infty \otimes_b (v_x^i)_{x \in b}$ then since $v_b$ is symmetry invariant, 
	\begin{align*}
		(v_a)_{a \in \Ob(\mcs)} = \pi_{\mcs, b} v_b = \sum_{i = 0}^\infty \otimes_\mcs^b (v_x^i)_{x \in v_b}
	\end{align*}
	where we used the explicit representation $\otimes_{\mcs}^b = \pi_{\mcs, b} \circ \otimes_b$. This establishes that
	\begin{align*}
		\inf \left \{ \sum_{i = 1}^\infty \prod_{x \in a} \|v_x^i \|_{V_{\mfl(x)}} : v = \sum_{i = 1}^\infty \otimes_\mcs^a (v_x^i)_{x \in a} \right \} \le \|v_b\|_{V^{\proj b}} = \|(v_a)_{a \in \Ob (\mcs)} \|_{V^{\proj \mcs}}.
	\end{align*}
	For the converse inequality, we note that if $(v_a)_{a \in \Ob(\mcs)} = \sum_{i = 0}^\infty \otimes_\mcs^b (v_x^i)_{x \in b}$ then
	\begin{align*}
		v_b = \sum_{i = 0}^\infty |\hom_\mcs(b,b)|^{-1} \sum_{\gamma \in \hom_\mcs(b,b)} \gamma ( \otimes_b (v_x^i)_{x \in b}).
	\end{align*}
	Therefore
	\begin{align*}
		\|v_b\|_{V^{\proj b}} \le \sum_{i= 0}^\infty |\hom_\mcs(b,b)|^{-1} \sum_{\gamma \in \hom_\mcs(b,b)}  \prod_{x \in b} \|v_{\gamma^{-1}(x)}^i \|_{V_{\mfl(x)}}= \sum_{i = 0}^\infty \prod_{x \in b} \|v_x^i\|_{V_{\mfl(x)}}.
	\end{align*}
	Taking an infimum over such representation of $(v_a)_{a \in \Ob(\mcs)}$ in this inequality establishes the bound $\|v_b\|_{V^{\proj b}} \le \|(v_a)_{a \in \Ob(\mcs)}\|_{V^{\proj \mcs}}$ which completes the proof.
\end{proof}
\section{Some Facts about Topological Hopf Algebras}\label{Hopf}
In this paper, the construction of regularity structures from Hopf algebras that is well-understood in the literature was applied in an infinite dimensional setting in which issues of continuity of maps are less automatic. This requires some minor adaptations to the standard construction to appropriately introduce topology. These modifications are fairly routine and as such we simply gather the necessary modifications to definitions and results in the algebraic setting that are important to us for the reader's convenience.
\begin{definition}\label{def: top_hopf}
	We call a tuple $(\mathcal{H}, \mathcal{M}, \mathbf{1}, \Delta, \varepsilon)$ a topological Hopf Algebra\footnote{In the wider literature, it seems that the object we define here would be called a `Hopf monoid in the category of complete locally convex topological vector spaces with monoidal structure induced by the projective tensor product'.  Whilst this way of thinking unites the introduction of topology to the various objects under consideration, for brevity we choose a simpler naming convention here.} if
	\begin{enumerate}
	\item $\mcH$ is a complete locally convex topological vector space,
	\item  $(\mathcal{H}, \mathcal{M}, \mathbf{1}, \Delta, \varepsilon)$ satisfies all of the defining properties of an (algebraic) Hopf algebra but with all tensor products replaced by projective tensor products, 
	\item the maps $\mcM, \Delta, \varepsilon, \mathbf{1}$ are continuous.
	\end{enumerate}
	We define topological algebras, coalgebras, bialgebras and comodules by adapting the definition of their algebraic counterparts in the obvious analagous way. We will often write $x \ast y = \mcM(x,y)$. We say that a topological (bi-/co-/Hopf) algebra is graded if $\mcH = \bigoplus_{n \in \mathbb{N}} \mcH_n$ is a graded topological vector space (where the direct sum is given the inductive limit topology) and all the operations respect the grading in the same way as in the purely algebraic case.
\end{definition}

For our purposes, the important fact will be that the structure group can be realised as an appropriate group of characters over a Hopf algebra and that its action on a given regularity structure will be continuous. The lemma below adapts the standard properties in the purely algebraic setting to appropriately include topology.

\begin{lemma}
	Suppose that $\mcH$ is a topological Hopf algebra. We define $\mcG = \{g \in \mcH^* : g(x \ast y) = g(x) g(y) \}$ where we emphasise that $\mcH^*$ denotes the topological dual of $\mcH$. Then $\mcG$ is a group for the product
    $g \ast h = (g\otimes h) \Delta$ and inverse $g^{-1} = g \circ \mcA$. Furthermore, if $(\mcV, \rho: \mcV \to \mcV \proj \mcH)$ is a topological comodule over $\mcH$ then 
    $$g \cdot v = (\id \otimes g) \rho$$ 
    defines a continuous action of $\mcG$ on $\mcV$.
\end{lemma}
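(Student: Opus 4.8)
The statement to prove is the standard fact that the continuous characters of a (topological) Hopf algebra form a group under convolution and that this group acts on any comodule; the only content beyond the purely algebraic situation is bookkeeping of continuity, which throughout I would handle by the universal property of $\proj$: a (separately, equivalently jointly) continuous bilinear map on $\mcH\times\mcH$ extends uniquely to a continuous linear map on $\mcH\proj\mcH$, and two continuous linear maps out of a completed projective tensor product that agree on elementary tensors agree everywhere by density of their span. The plan is therefore to (i) verify the group axioms for $(\mcG,\ast,\varepsilon)$, and (ii) verify that $g\mapsto\Gamma_g:=(\id\otimes g)\rho$ is a homomorphism from $\mcG$ into the group of continuous linear automorphisms of $\mcV$, i.e.\ a left action by such automorphisms.

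For the group structure: closure holds because $g\ast h=(g\otimes h)\circ\Delta$ is a composite of continuous maps (here $g\otimes h\in(\mcH\proj\mcH)^*$ by the universal property), and $g\ast h$ is multiplicative since the two continuous maps $(g\ast h)\circ\mcM$ and $(g\ast h)\otimes(g\ast h)$ on $\mcH\proj\mcH$ agree on elementary tensors $x\otimes y$ — this uses that $\Delta$ is an algebra morphism and that the tensor product of two multiplicative functionals is multiplicative. Associativity of $\ast$ on $\mcG$ follows from coassociativity of $\Delta$, the algebraic identities now being read as equalities of continuous maps out of $\mcH\proj\mcH\proj\mcH$. The counit $\varepsilon$ lies in $\mcG$ (it is a continuous algebra morphism, a standard consequence of the bialgebra axioms) and is the $\ast$-unit by the counit axioms $(\id\otimes\varepsilon)\Delta=\id=(\varepsilon\otimes\id)\Delta$. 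Finally $g^{-1}:=g\circ\mcA$ is multiplicative (the antipode is an algebra anti-morphism, and since the values are scalars the reversal of order is harmless), and one checks $g\ast(g\circ\mcA)=g\circ\mcM\circ(\id\otimes\mcA)\circ\Delta=g\circ\mathbf 1\circ\varepsilon=\varepsilon$ — and symmetrically $(g\circ\mcA)\ast g=\varepsilon$ — from the antipode axiom $\mcM(\id\otimes\mcA)\Delta=\mathbf 1\circ\varepsilon=\mcM(\mcA\otimes\id)\Delta$ together with $g(\mathbf 1)=1$ (the only non-unital multiplicative functional being $0$, which is excluded from $\mcG$). For $g\circ\mcA$ to define an element of $\mcH^*$ one needs $\mcA$ continuous; this is either part of the standing hypotheses on a topological Hopf algebra or, in the graded connected case relevant to the applications here, follows from the recursive (Takeuchi-type) formula expressing $\mcA$ on each homogeneous component as a finite composite of copies of $\mcM$ and $\Delta$.

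For the comodule action: $\Gamma_g=(\id\otimes g)\circ\rho$ is continuous as a composite, with $\id\otimes g:\mcV\proj\mcH\to\mcV$ continuous by the universal property. One has $\Gamma_\varepsilon=(\id\otimes\varepsilon)\rho=\id_\mcV$ by the comodule counit axiom. To obtain $\Gamma_g\Gamma_h=\Gamma_{g\ast h}$, I would first record the identity $\rho\circ(\id\otimes h)=(\id\otimes\id\otimes h)\circ(\rho\otimes\id)$ of continuous maps $\mcV\proj\mcH\to\mcV\proj\mcH$ (checked on elementary tensors), giving $\Gamma_g\Gamma_h=(\id\otimes g\otimes h)(\rho\otimes\id)\rho$; the comodule coassociativity $(\rho\otimes\id)\rho=(\id\otimes\Delta)\rho$ then rewrites this as $(\id\otimes(g\otimes h)\Delta)\rho=\Gamma_{g\ast h}$. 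Since $\mcG$ is a group, this shows $g\mapsto\Gamma_g$ is a group homomorphism into the automorphism group of $\mcV$ (with $\Gamma_g^{-1}=\Gamma_{g^{-1}}$), i.e.\ $(g,v)\mapsto\Gamma_g v$ is a left action of $\mcG$ on $\mcV$ by continuous linear maps.

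There is no genuine difficulty here; the only points requiring real care are the systematic recasting of the familiar Sweedler-notation manipulations as identities of continuous maps out of completed projective tensor products (legitimised by the universal property and density), and, relatedly, the continuity of the antipode so that $g\circ\mcA$ is a bona fide element of $\mcH^*$.
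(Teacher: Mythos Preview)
Your proof is correct and follows essentially the same approach as the paper's: both observe that the algebraic content is the standard Hopf-algebra story and that the only new ingredient is checking continuity, which is immediate from continuity of $\Delta$, $\mcA$ and $\rho$. The paper's proof is a two-line sketch that defers the algebraic identities entirely to the purely algebraic case, whereas you spell out the verification of each group and action axiom; your added care about the continuity of $\mcA$ and the exclusion of the zero functional from $\mcG$ are points the paper leaves implicit.
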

\begin{proof}
	Since $\mcG$ is a subset of the algebraic character group of $\mcH$ with the induced product and inverse, to see that $\mcG$ is a group we need only check that it is closed under the operations. This is immediate from continuity of $\Delta$ and $\mcA$. Similarly, to see that $\mcG$ acts continuously on $\mcV$, we need only check continuity since the fact that the map defined is a group action is proved in the same way as in the algebraic case. However, continuity is immediate from continuity of $\rho$ and $g$.
\end{proof}

The last ingredient on Hopf algebras required in this paper is a method to show that suitable topological bialgebras admit a continuous antipode. We expect that the algebraic analogue of our construction is well-known but we are only aware of references covering the case of a connected graded bialgebra which corresponds to the case $d = 0$ in Lemma~\ref{bi_to_hopf} below. We thus provide a construction appropriate for our purposes for completeness. In preparation for Lemma~\ref{bi_to_hopf}, we need one preparatory result. 
\begin{lemma}\label{bi_1}
	Suppose that $\mcH = \bigoplus_{n \in \mbN} \mcH_n$ is a graded topological Hopf algebra such that $\mcH_0$ is the free commutative topological algebra over\footnote{We insist on a finite generating set only to make the topological structure unambiguous and continuity properties simpler to obtain.} $X = \langle g_i: i = 1, \dots, d\rangle$ where for each $i$, $g_i$ is primitive. Then $\varepsilon |_{\mcH_n} = 0$ for $n \ge 1$ and $\varepsilon|_{\mcH_0}$ is the natural projection onto the span of the unit in $\mcH_0$ induced by viewing $\mcH_0$ as the (topological) symmetric tensor algebra over $X$. 
\end{lemma}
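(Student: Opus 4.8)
The plan is to establish the two assertions separately, the first being an immediate consequence of the grading and the second the substantive (though still short) point. The vanishing $\varepsilon|_{\mcH_n} = 0$ for $n \geq 1$ requires no computation: since $\mcH$ is a \emph{graded} topological Hopf algebra in the sense of Definition~\ref{def: top_hopf}, all of its structure maps, in particular the counit $\varepsilon \colon \mcH \to \mathbb{R}$, respect the grading, where $\mathbb{R}$ carries the trivial grading concentrated in degree $0$. Hence $\varepsilon$ must annihilate every homogeneous component $\mcH_n$ with $n \geq 1$.

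For the second assertion I would argue as follows. First, using that each $g_i$ is primitive together with the counit axiom $(\id \otimes \varepsilon) \circ \Delta = \id$ (under the canonical identification $\mcH \proj \mathbb{R} \cong \mcH$), one computes
\[
	g_i = (\id \otimes \varepsilon)\Delta g_i = (\id \otimes \varepsilon)\bigl(g_i \otimes \mathbf{1} + \mathbf{1} \otimes g_i\bigr) = \varepsilon(\mathbf{1})\, g_i + \varepsilon(g_i)\, \mathbf{1} = g_i + \varepsilon(g_i)\, \mathbf{1} ,
\]
so that $\varepsilon(g_i) = 0$ for all $i \in \{1, \dots, d\}$. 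Since $\varepsilon$ is a continuous algebra morphism (being the counit of a topological Hopf algebra it satisfies $\varepsilon(\mathbf{1}) = 1$ and $\varepsilon(x \ast y) = \varepsilon(x)\varepsilon(y)$, and it is continuous by Definition~\ref{def: top_hopf}), it therefore annihilates every monomial $g_{i_1} \ast \cdots \ast g_{i_k}$ with $k \geq 1$, hence by linearity and continuity the whole augmentation ideal $I \subset \mcH_0$, i.e.\ the part of the symmetric tensor algebra $\mcH_0 \cong S(X)$ of strictly positive polynomial degree in the $g_i$. Combining this with $\varepsilon(\mathbf{1}) = 1$ and the decomposition $\mcH_0 = \mathbb{R}\mathbf{1} \oplus I$, the functional $\varepsilon|_{\mcH_0}$ and the natural projection onto $\mathbb{R}\mathbf{1}$ along $I$ agree on $\mathbb{R}\mathbf{1}$ and on $I$, hence on all of $\mcH_0$.

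The only point I expect to require a sentence of care rather than real work is the topological bookkeeping in the last step: one must check that the linear span of $\mathbf{1}$ together with the positive-degree monomials in the $g_i$ is dense in $\mcH_0$ — equivalently, that $I$ is the closure of the span of those monomials — so that the two continuous linear maps $\varepsilon|_{\mcH_0}$ and the projection, which coincide on this span, must coincide everywhere. This is built into the definition of the free commutative topological algebra over the finite-dimensional space $X$ equipped with its inductive-limit topology, so it poses no genuine obstacle; the remainder of the argument is purely algebraic and is essentially the standard fact that on a Hopf algebra the counit kills the augmentation ideal generated by primitive elements.
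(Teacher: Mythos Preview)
Your proof is correct and follows essentially the same approach as the paper's. Both establish $\varepsilon(g_i)=0$ from primitivity via the counit identity, then extend to all of $\mcH_0$ using that $\varepsilon$ is a (continuous) algebra morphism; your added sentence on density in the inductive-limit topology is a welcome clarification that the paper leaves implicit.

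One small remark on the first assertion: you deduce $\varepsilon|_{\mcH_n}=0$ directly from $\varepsilon$ being a graded map, whereas the paper phrases it as following from ``$\Delta$ respects the grading and the counital property''. Since $\mcH_0$ here is not the ground field, the latter derivation is not entirely immediate, and your reading (that gradedness of $\varepsilon$ is part of the definition of a graded bialgebra, as Definition~\ref{def: top_hopf} stipulates ``all the operations respect the grading'') is the cleaner justification.
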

\begin{proof}
	The former property is a standard fact about graded bialgebras which follows the fact that $\Delta$ respects the grading and the counital property. For the latter fact, we note that $g_i = \varepsilon(g_i) \mathbf{1} + \varepsilon(\mathbf{1}) g_i$ by the counital property and the fact that $g_i$ is primitive. Since $\varepsilon$ is an algebra morphism, we deduce that $\varepsilon(g_i) = 0$. Again using that $\varepsilon$ is an algebra morphism and that $\mcH_0$ is the free commutative topological algebra generated by $X$, the result follows.
\end{proof}
\begin{lemma}\label{bi_to_hopf}
    Suppose that $(\mcH = \bigoplus_{n \in \mbN} \mcH_n, \mcM, 1, \Delta, \eps)$ is a graded topological bialgebra such that $\mcH_0$ is the free commutative topological algebra over $\langle g_i: i = 1, \dots, d \rangle$ where for each $i$, $g_i$ is primitive. Then $\mcH$ is a graded topological Hopf algebra.
\end{lemma}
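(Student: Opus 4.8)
The plan is to construct the antipode $\mcA$ explicitly as a continuous linear map and then verify it satisfies the antipode axioms. First I would reduce the problem to constructing $\mcA$ on each graded piece $\mcH_n$ by induction on $n$, exploiting the grading and the fact that $\Delta$ respects it. For the base case $n=0$, since $\mcH_0$ is the free commutative topological algebra generated by the primitive elements $g_1, \dots, g_d$, one has an explicit antipode: setting $\mcA(g_i) = -g_i$ and extending multiplicatively (which is forced, since the antipode of a commutative Hopf algebra is an algebra morphism) yields a continuous map $\mcA|_{\mcH_0}$, and one checks directly on generators that $\mcM(\mcA \otimes \id)\Delta = \mathbf{1}\eps = \mcM(\id \otimes \mcA)\Delta$ on $\mcH_0$, using Lemma~\ref{bi_1} to identify $\eps|_{\mcH_0}$. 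The continuity here follows from the universal property of the free commutative topological algebra (i.e. the symmetric tensor algebra over the finite-dimensional space $X$) together with continuity of $\mcM$.

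For the inductive step, the standard algebraic formula for the antipode on a graded bialgebra applies. Writing $\Delta x = x \otimes \mathbf{1} + \mathbf{1} \otimes x + \Delta' x$ where $\Delta' x \in \bigoplus_{0 < k < n} \mcH_k \proj \mcH_{n-k}$ for $x \in \mcH_n$ (this decomposition is where we use the grading and the counital property, exactly as in Lemma~\ref{bi_1}), one defines
\begin{equ}
	\mcA(x) = -x - \mcM(\mcA \otimes \id)\Delta' x ,
\end{equ}
which makes sense because $\Delta' x$ only involves components of strictly lower degree on which $\mcA$ has already been defined. The key points to check are: (i) this $\mcA|_{\mcH_n}$ is continuous, which follows since $\Delta'$ is continuous (it is a corestriction of $\Delta$ composed with the continuous projections onto graded pieces, and the direct sum carries the inductive limit topology so these projections are continuous), $\mcM$ is continuous, and $\mcA$ restricted to lower-degree pieces is continuous by the induction hypothesis; (ii) this $\mcA$ satisfies $\mcM(\mcA \otimes \id)\Delta = \mathbf{1}\eps$ on $\mcH_n$, which is immediate from the definition and the fact that $\eps$ vanishes on $\mcH_n$ for $n \geq 1$; and (iii) the ``other side'' identity $\mcM(\id \otimes \mcA)\Delta = \mathbf{1}\eps$ also holds. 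For (iii) I would use the standard argument that in the convolution algebra $L(\mcH, \mcH)$ (or rather its continuous analogue $\hom(\mcH,\mcH)$ with convolution product $f \star g = \mcM(f\otimes g)\Delta$), a left inverse and a right inverse of $\id$ must coincide; one constructs a right antipode $\mcA'$ by the mirror-image recursion and then shows $\mcA = \mcA \star \id \star \mcA' = \mcA'$ using associativity of the convolution product (which holds since $\Delta$ is coassociative and $\mcM$ is associative).

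Finally I would assemble the graded pieces: $\mcA = \bigoplus_n \mcA|_{\mcH_n}$ is well-defined and continuous on $\mcH = \bigoplus_n \mcH_n$ precisely because of the inductive limit topology on the direct sum, so a linear map is continuous iff its restriction to each $\mcH_n$ is. Together with the already-assumed topological bialgebra structure, this exhibits $(\mcH, \mcM, \mathbf{1}, \Delta, \eps, \mcA)$ as a topological Hopf algebra in the sense of Definition~\ref{def: top_hopf}.

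The main obstacle I anticipate is not any deep algebra — the recursion is classical — but rather the bookkeeping of continuity at each stage, in particular verifying that the ``reduced coproduct'' $\Delta'$ and its iterates land in the correct finite partial direct sums $\bigoplus_{0<k<n}\mcH_k \proj \mcH_{n-k}$ and are continuous into these spaces when the latter are equipped with the projective tensor topology. This requires knowing that the grading projections $\mcH \to \mcH_n$ are continuous (true by definition of the inductive limit topology on a direct sum) and that $\proj$ behaves well with respect to finite direct sums, which is standard but should be spelled out. A secondary subtlety is ensuring that the commutativity of $\mcH_0$ is genuinely used only to get the base-case antipode as an algebra morphism, and that no commutativity of the whole of $\mcH$ is needed for the inductive step — the recursion above is valid in any graded bialgebra with the stated degree-zero structure.
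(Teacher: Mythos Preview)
Your approach matches the paper's: same base case via the universal property of $\mcH_0$, same recursive formula $\mcA x = -x - \mcM(\mcA\otimes\id)\mathring\Delta x$ for the inductive step, and the same continuity bookkeeping via the inductive limit topology. The one point of departure is step~(iii), the right-antipode identity: you construct a right antipode $\mcA'$ by the mirror recursion and argue $\mcA = \mcA \star \id \star \mcA' = \mcA'$ via the convolution-algebra trick (a left and a right inverse of $\id$ under $\star$ must coincide), whereas the paper instead proves by a direct induction on degree that $\mcM(\id\otimes\mcA)\mathring\Delta\tau = \mcM(\mcA\otimes\id)\mathring\Delta\tau$, substituting the defining recursion for $\mcA$ into the left side and using associativity of $\mcM$ together with coassociativity of $\mathring\Delta$ to reduce to the induction hypothesis. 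Both arguments are standard and equally valid; yours is more conceptual and reusable, the paper's is slightly more self-contained since it avoids introducing the auxiliary $\mcA'$ and running a second recursion.
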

\begin{proof}
    It remains to construct a continuous antipode $\mcA$ which we do by induction in the degree. In the base case $n=0$, we first note that on $\langle g_i : i = 1, \dots, d \rangle$ we must set $\mcA g_i = - g_i$ since $\Delta g_i = g_i \otimes \mathbf{1} + \mathbf{1} \otimes g_i$ so that the defining property of an antipode (in combination with the requirement that it is an algebra morphism on $\mcH_0$) imply that $\mcA g_i + g_i = \varepsilon(g_i) \mathbf{1} = 0$ where we made use of Lemma~\ref{bi_1}. We note that $\mcA$ is automatically continuous on $\langle g_i: i = 1, \dots, d\rangle$ by finite dimensionality. Then by the universal property of the free commutative topological algebra over a topological vector space $X$, $\mcA$ has a unique extension to a continuous algebra morphism on $\mcH_0$. Since $\Delta$ and $\varepsilon$ are algebra morphisms, this extension satisfies the required property that 
    $ \mcM (\mcA \otimes \id) \Delta = \eps(\cdot) 1 = \mcM(\id \otimes \mcA) \Delta.$
    
    For $n > 0$, we proceed inductively and construct $\mcA$ according to the first identity above. We will then show a posteriori that it satisfies the second automatically. 

    We assume that $\tau \in \mcH_n$ for $n > 0$. 
	We note that by Lemma~\ref{bi_1} and the counital property, we have that 
    $$\Delta \tau = \tau \otimes 1 + 1 \otimes \tau + \mathring{\Delta} \tau$$
    where $\mathring{\Delta} : \mcH_n \to \mcH_{< n} \otimes \mcH_{< n}$. Therefore the first identity holds on $\mcH_{n}$ if and only if define $\mcA \tau = - \tau - \mcM(\mcA \otimes \id) \mathring{\Delta} \tau$
    which is well-defined by the induction hypothesis. Since this is a linear combination of compositions of continuous linear maps, it is continuous. 
    It then remains only to show that with this definition $\mcM(\id \otimes \mcA) \Delta \tau = \mcM(\mcA \otimes \id) \Delta \tau$ which can be equivalently written as
    $\mcM(\id \otimes \mcA) \mathring{\Delta} \tau = \mcM(\mcA \otimes \id) \mathring{\Delta} \tau$ by definition of $\mathring{\Delta}$.
    We again proceed induction in $n$ where the base case $n = 0$ was handled already above.

    We insert  the definition of $\mcA$ into the expression on the left hand side to obtain
    \begin{equs}
        \mcM(\id \otimes \mcA) \mathring{\Delta} \tau &= \mcM [ \id \otimes ( - \id - \mcM(\mcA \otimes \id) \mathring{\Delta})] \mathring{\Delta} \tau
        \\ & = - \mcM \mathring{\Delta} \tau - \mcM(\id \otimes \mcM)(\id \otimes \mcA \otimes \id) (\id \otimes \mathring{\Delta}) \mathring{\Delta}\tau
        \\
        & =  - \mcM \mathring{\Delta} \tau - \mcM(\mcM \otimes \id)(\id \otimes \mcA \otimes \id) (\mathring{\Delta} \otimes \id) \mathring{\Delta}\tau
    \end{equs}
    where we applied associativity of $\mcM$ and coassociativity of $\mathring{\Delta}$ (which follows from coassociativity of $\Delta$) to obtain the final line. Since $\mathring{\Delta} : \mcH_n \to \mcH_{< n} \otimes \mcH_{< n}$, we may apply the induction hypothesis to rewrite the right hand side as
    \begin{equs}
    - \mcM \mathring{\Delta} \tau - \mcM(\mcM \otimes \id)(\mcA \otimes \id \otimes \id) (\mathring{\Delta} \otimes \id) \mathring{\Delta}\tau = - \mcM \mathring{\Delta} \tau - \mcM (\mcM \otimes \id) [(- \mcA \otimes 1 - 1 \otimes \id) \otimes \id]\mathring{\Delta} \tau 
    \end{equs}
    where we applied the definition of $\mcA$. The right hand side is nothing but $\mcM(\mcA \otimes \id) \mathring{\Delta} \tau$, as was required.
\end{proof}
\begin{remark}\label{rem:pol_bi}
In this work, the main application of Lemma~\ref{bi_to_hopf} is in the proof of Lemma~\ref{lem:Hopf}. To apply the result in this setting, we note that when $V_{\mcT_+(R)}$ is graded by the number of edges, its degree $0$ component is $V_{\mcT_{\mathrm{poly}}}$. Since $X^k$ is a tree with no edges, for any Banach space assignment $V$, $V_{\mcT_{\mathrm{poly}}}$ is nothing other than the space of polynomials with real coefficients in commuting variables $X_1, \dots, X_d$ equipped with the inductive limit topology arising from viewing it as the limit of the finite dimensional spaces of polynomials of degree at most $N$. To see that this is the free commutative topological algebra over $\langle X_1, \dots, X_d \rangle$, we recall that the space of polynomials in commuting variables is the free commutative algebra over that set. Therefore, for every linear map $T: \langle X_1, \dots, X_d \rangle \to A$ into a commutative algebra $A$, there exists a unique algebra morphism $\bar{T} : V_{\mcT_{\mathrm{poly}}} \to A$ extending $T$. It remains to show that if $T$ is a continuous linear map into a topological algebra then $\bar{T}$ is continuous since this would imply that $V_{\mcT_{\mathrm{poly}}}$ satisfies the universal property of the free commutative topological algebra. By definition of the inductive limit topology, this is true if and only if the restriction of $\bar{T}$ to the space of polynomials of degree at most $N$ is continuous for each $N$. This is in turn automatic by finite dimensionality.
\end{remark}
\section{An Anisotropic Taylor Formula}\label{app:Tay}

We state here an anisotropic Taylor's formula with a remainder that is in increment form.  The statement and proof are a minor modification of \cite[Proposition A.1]{Hai14} essentially following by being more restrictive in applying the Fundamental Theorem of Calculus in the inductive proof given there and by making a change of variables to rewrite the measures. In comparison to \cite{Hai14} the statement here is convenient both because it often allows us to save a derivative in favour of a H\"older bound and also because we have rewritten the measures to more easily access the polynomial contributions. In particular, we implicitly made use of this property in Section~\ref{s:green_kernel}.

We define measures on $\mbR^{d}$ via
\begin{align*}
	\mcQ^k(dy) = \left (\prod_{i=1}^{\mathfrak{m}(k) - 1} \delta_1(dy_i) \right ) l \mathbf{1}_{[0,1]} (1-y_{\mathfrak{m}(k)})^{l-1} dy_{\mathfrak{m}(k)} \left ( \prod_{i = \mathfrak{m}(k) +1}^d \delta_0(dy_i) \right )
\end{align*}
where we have written $\mathfrak{m}(k) = \min\{j : k_j \neq 0\}$. We additionally define for $k \neq 0$, $k_\downarrow = k - e_{\mathfrak{m}(k)}$. We then have the following statement.

\begin{lemma}\label{Tay}
	Let $A \subset \mbN^{d}$ be such that $k \in A$ and $l \le k$ implies that $l \in A$. Define $\partial A = \{ k \not \in A: k_\downarrow \in A\}$. Then if $f \in C^r$ and for all $k \in A$, $|k|_\mfs < r$ we have that
	\begin{align*}
		f(x) = \sum_{k \in A} \frac{D^k f(0)}{k!} x^k + \sum_{k \in \partial A} \frac{x^{k_\downarrow}}{k_\downarrow !} \int_{\mbR^d} \delta_k [ \partial^{k_\downarrow} f]((x_i y_i)_{i = 1}^d) \mcQ^{k_\downarrow}(dy)
	\end{align*}
	where $\delta_k[f](y) = f(y_1, \dots, y_{e_{\mathfrak{m}(k)}}, 0, \dots, 0) -  f(y_1, \dots, y_{e_{\mathfrak{m}(k) - 1}}, 0, \dots, 0)$.
\end{lemma}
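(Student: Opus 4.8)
\textbf{Proof proposal for Lemma~\ref{Tay} (anisotropic Taylor formula with increment remainder).}

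The plan is to mirror the inductive argument of \cite[Proposition~A.1]{Hai14}, but to organize the recursion so that the ``lowest active coordinate direction'' $\mathfrak{m}(k)$ is peeled off one at a time, and so that every application of the fundamental theorem of calculus is made only along that single direction. First I would set up the induction on the cardinality of $A$ (equivalently, on $\max\{|k|_\mfs : k \in A\}$, refined lexicographically). For the base case $A = \{0\}$ one has $\partial A = \{e_1, \dots, e_d\}$ in the sense that $k_\downarrow = 0 \in A$ forces $\mathfrak{m}(k)$ to range over $\{1,\dots,d\}$ with $k = e_j$; here the claimed identity is exactly the statement
\[
	f(x) - f(0) = \sum_{j=1}^d \big( f(x_1,\dots,x_j,0,\dots,0) - f(x_1,\dots,x_{j-1},0,\dots,0) \big),
\]
which is a telescoping sum, and each summand is precisely the term indexed by $k = e_j$ once one unpacks the measure $\mcQ^{0}(dy) = l\mathbf 1_{[0,1]}(1-y_1)^{l-1}\,dy_1 \prod_{i\ge 2}\delta_0(dy_i)$ with $l = 1$ (so $\mcQ^0(dy) = \mathbf 1_{[0,1]}\,dy_1\prod_{i\ge2}\delta_0$) against $\delta_{e_j}[\partial^0 f]$, after the substitution $y_j \mapsto x_j y_j$ inside the integral.

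For the inductive step, suppose the formula holds for all admissible down-closed sets of strictly smaller size, and let $A$ be given. Pick a maximal element $k^* \in A$ and write $A' = A \setminus \{k^*\}$, which is still down-closed. Apply the induction hypothesis to $A'$: this produces the polynomial part over $A'$ plus a sum of integral remainders over $\partial A'$. Now $\partial A' = (\partial A \setminus \{ \text{successors of } k^* \}) \cup \{k^* \}$, so exactly the terms indexed by $k^*$ and by those $k$ with $k_\downarrow = k^*$ need to be reshuffled. For the single term indexed by $k^*$ in the $A'$-expansion, I would apply the telescoping/fundamental-theorem identity one more time along the direction $\mathfrak m(k)$ for each $k$ with $k_\downarrow = k^*$, together with an extra Taylor step in that direction (integration by parts in $y_{\mathfrak m(k^*)}$ against the Beta-type weight, which is exactly what upgrades the exponent $l-1$ of $(1-y)^{l-1}$) to convert
\[
	\frac{x^{k^*}}{k^*!}\int_{\mathbb R^d}\delta_{k'}[\partial^{k^*_\downarrow}f]\big((x_iy_i)_i\big)\,\mcQ^{k^*_\downarrow}(dy)
\]
into the new polynomial coefficient $\frac{D^{k^*}f(0)}{k^*!}x^{k^*}$ plus the integral remainders indexed by the $k$'s with $k_\downarrow = k^*$. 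The bookkeeping of which measure appears is forced: going from $\mcQ^{k^*_\downarrow}$ to $\mcQ^{k}$ for $k = k^* + e_{\mathfrak m(k^*)}$ replaces the weight $(1-y_{\mathfrak m(k^*)})^{l-1}$ by $(1-y_{\mathfrak m(k^*)})^{l}$ up to the normalizing constant, which is precisely the effect of one integration by parts; and going to $k$ with $\mathfrak m(k) > \mathfrak m(k^*)$ replaces the running density by a Dirac mass at $1$ in coordinate $\mathfrak m(k^*)$, which is the telescoping evaluation.

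The main obstacle, and the only place requiring genuine care rather than routine manipulation, is the combinatorial verification that after this one reshuffling step the remainder terms assemble exactly into the family indexed by $\partial A$ with the correct weights $\frac{x^{k_\downarrow}}{k_\downarrow!}$ and the correct $\delta_k$ increments — i.e.\ that the recursion closes with the stated measures $\mcQ^{k_\downarrow}$ rather than some other weights. I would handle this by checking directly that the right-hand side of the claimed formula, viewed as a functional of $f$, satisfies the same one-step recursion in $A$ (remove a maximal element, apply the direction-$\mathfrak m(k^*)$ identity), so that by uniqueness of the solution of that recursion (the left-hand side $f(x)$ being manifestly independent of $A$) the two sides agree. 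Regularity of $f$ enters only to guarantee $\partial^{k_\downarrow}f \in C^{r - |k_\downarrow|_\mfs}$ is at least continuous along the relevant direction for each $k \in \partial A$, which is ensured by $|k_\downarrow|_\mfs < r$ (since $k \in \partial A$ means $k_\downarrow \in A$, hence $|k_\downarrow|_\mfs < r$ by hypothesis); I would note this but not belabor it.
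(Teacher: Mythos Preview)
The paper does not actually give a proof of this lemma; it only states the result and remarks that the proof is a minor modification of \cite[Proposition~A.1]{Hai14}, ``essentially following by being more restrictive in applying the Fundamental Theorem of Calculus in the inductive proof given there and by making a change of variables to rewrite the measures.'' Your inductive strategy (add one maximal index at a time, use telescoping plus one application of the fundamental theorem of calculus in the lowest active direction) is precisely the route the paper points to, so at the level of approach you are aligned with what the authors intend.

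There is, however, a concrete error in your base case. You claim $\mcQ^{0}(dy) = \mathbf 1_{[0,1]}\,dy_1 \prod_{i\ge 2}\delta_0(dy_i)$, but with that measure the term indexed by $k = e_j$ for $j > 1$ vanishes identically: the measure forces $y_2 = \dots = y_d = 0$, so $\delta_{e_j}[f]\big((x_i y_i)_i\big) = f(x_1 y_1, 0, \dots, 0) - f(x_1 y_1, 0, \dots, 0) = 0$, and the telescoping identity you wrote down cannot be recovered. The point is that $\mathfrak m(0)$ is undefined, and the consistent convention (visible in how the paper uses this lemma, e.g.\ in the proof of Proposition~\ref{prop:Z_decomp} when $k_{\mathfrak m(k)} = 1$ and hence $\mathfrak m(k_\downarrow) > \mathfrak m(k)$, where the integrand is observed to be constant on $\operatorname{supp}\mcQ^{k_\downarrow}$) is that $\mcQ^{k_\downarrow}$ places Dirac masses at $1$ on all coordinates $i < \mathfrak m(k_\downarrow)$; in particular $\mcQ^0$ should be read as the point mass at $(1,\dots,1)$. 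With that correction your base case reduces exactly to the telescoping sum you wrote. Your inductive step is only sketched, but the mechanism you describe --- integration by parts in direction $\mathfrak m(k^*)$ to bump the exponent of $(1-y)^{l-1}$, together with telescoping in the directions $j < \mathfrak m(k^*)$ --- is the right one.
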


%We define measures on $\mbR^{1+d}$ via
%\begin{align*}
%	\mcQ^k(dy) = \left (\prod_{i=0}^{\mathfrak{m}(k) - 1} \delta_1(dy_i) \right ) l \mathbf{1}_{[0,1]} (1-y_{\mathfrak{m}(k)})^{l-1} dy_{\mathfrak{m}(k)} \left ( \prod_{i = \mathfrak{m}(k) +1}^d \delta_0(dy_i) \right )
%\end{align*}
%in which we have written $\mathfrak{m}(k) = \min\{j : k_j \neq 0\}$. We additionally define for $k \neq 0$, $k_\downarrow = k - e_{\mathfrak{m}(k)}$. We then have the following statement.
%
%
%\begin{lemma}\label{Tay}
%	Let $A \subset \mbN^{1+d}$ be such that $k \in A$ and $l \le k$ implies that $l \in A$. Define $\partial A = \{ k \not \in A: k_\downarrow \in A\}$. Then if $f \in C^r$ and for all $k \in A$, $|k|_\mfs < r$ we have that
%	\begin{align*}
%		f(x) = \sum_{k \in A} \frac{D^k f(0)}{k!} x^k + \sum_{k \in \partial A} \frac{x^{k_\downarrow}}{k_\downarrow !} \int_{\mbR^d} \delta_k [ \partial^{k_\downarrow} f]((x_i y_i)_{i = 1}^d) \mcQ^{k_\downarrow}(dy)
%	\end{align*}
%	where $\delta_k[f](y) = f(y_1, \dots, y_{e_{\mathfrak{m}(k)}}, 0, \dots, 0) -  f(y_1, \dots, y_{e_{\mathfrak{m}(k) - 1}}, 0, \dots, 0)$.
%\end{lemma}

\section{Index of Notations} \label{s:index_notations}

\begin{longtable}
	[H]
	{l l l}
	\hline
	\textbf{Symbol} & \textbf{Meaning} & \textbf{Page} \\ \hline \endfirsthead 
	\hline       
	\textbf{Symbol} & \textbf{Meaning} & \textbf{Page} \\ \hline \endhead % Header repeated from page to page.
	%% Alphabetical order. %%
	%% A %%
	\vspace{1em}
	$\mcA_+^{\mathrm{eq}; \mcO}$ & Space of kernel assignments of order $\mcO$ on $\mcT^{\mathrm{eq}}$ & \pageref{d:kernel_ass_teq} \\
	$\mcA_-^{\mathrm{eq}}$ & Space of noise assignments on $\mcT^{\mathrm{eq}}$ & \pageref{d:noise_ass} \\
	$\mcA_-^{\infty, \mathrm{eq}}$ & Space of smooth noise assignments on $\mcT^{\mathrm{eq}}$ & \pageref{d:noise_ass} \\
	$\mcA_+^{\mco}$ & Space of kernel assignments of order $\mco$ on $\mcT^{\Ban}$ & \pageref{def:ka} \\
	$\mcA_-$ & Space of noise assigments on $\mcT^{\Ban}$ & \pageref{d:noise_ass_TBan}\\
	$\mcA_-^{\infty}$ & Space of smooth noise assigments on $\mcT^{\Ban}$ & \pageref{d:noise_ass_TBan} \\
	$\mathrm{Age} ( \boldsymbol{\tau} )$ & Age of the tree $\boldsymbol{\tau}$ & \pageref{d:age} \\
	%% B %%
	$\mathcal{B}^r$ & Ball of test functions & \pageref{ss:notations} \\
	%% C %%
	$C^r ( \mcK_r^{\beta} )$ & Space of $C^r$ curves valued in $\mathcal{K}_r^{\beta}$ & \pageref{d:rk3} \\
	$C_w^{\alpha}$ & H\"older space of regularity $\alpha$ with weight $w$ & \pageref{def:wHoel} \\
	$\mathbb{C} [ \tau ]$ & Set of cuts of combinatorial tree $\tau$ & \pageref{def:cut} \\
	$\mathcal{D}^{\gamma}$ & Space of modelled distributions & \pageref{sec:mod_dist} \\
	%% D %%
	$\mathcal{D}^{\gamma, \nu; x}$ & Space of (H\"older) pointed modelled distributions & \pageref{d:pointed_md} \\
	$\underline{\Delta}$ & Coaction on combinatorial trees & \pageref{eq:underline_delta} \\
	$\Delta$ & Coaction on $V_{\mathcal{T}}$ & \pageref{def:triangle+} \\
	$\underline{\Delta}^+$ & Positive coproduct on combinatorial trees & \pageref{def:triangle+} \\
	$\Delta^+$ & Positive coproduct on $V_{\mathcal{T}}$ & \pageref{def:triangle+} \\
	$\underline{\Delta}_r^-$ & Negative rooted `coproduct' on combinatorial trees & \pageref{d:delta_r_-} \\
	$\Delta_r^-$ & Negative rooted `coproduct' on $V_{\mathcal{T}}$ & \pageref{d:delta_r_-} \\
	$\tilde{\Delta}$ & `Coproduct' on combinatorial trees related to $f_x^\btau$ & \pageref{def:delta_tilde} \\
	%% E %%
	%% F %%
	%% G %%
	%% H %%
	$\mathrm{Hist} ( \mcS)$ & History of set of trees $\mcS$ & \pageref{d:history} \\
	%% I %%
	$\mcI^\zeta$ & Abstract integration map on $\mcT^{\Ban}$ & \pageref{d:int_tban} \\
	%% J %%
	$J^\zeta$ & Component of abstract convolution map on $\mcT^{\Ban}$ & \pageref{theo: multi-level Schauder} \\
	%% K %%
	$\mathfrak{K}$ & Generic compact subset of $\mathbb{R}^d$ & \pageref{ss:notations} \\
	$\bar{\mathfrak{K}}$ & 1-fattening of compact set $\mathfrak{K}$ & \pageref{ss:notations} \\
	$| k |_{\mfs}$ & Rescaled degree of multi-index $k \in \mathbb{N}^d$ & \pageref{ss:notations} \\
	$\mathcal{K}_r^{\beta}$ & Space of translation-invariant $\beta$-regularising kernels & \pageref{d:rk2n} \\
	$\mathcal{K}_{\gamma}^{\zeta}$ & Abstract convolution & \pageref{theo: multi-level Schauder} \\
	$\mathcal{K}_{\gamma, m}$ & Variable coefficient abstract convolution & \pageref{theo: VML Schauder} \\
	$\mathcal{K}_{\gamma, \nu}^{\zeta, x}$ & Pointed abstract convolution & \pageref{thm:pointed_mls} \\
	$\mathcal{K}_{\gamma, \nu, m}^{x}$ & Pointed variable coefficient abstract convolution & \pageref{theo: VML Pointed Schauder 2} \\
	$K^{\zeta}$ & Translation-invariant kernel obtained by testing $K$ against $\zeta$ in the `upper slot' & \pageref{eq:a34b} \\
	%% L %%
	$| \mfl |_{\mfs}$ & Degree of the type $\mfl$ or of edges of type $\mfl$ & \pageref{def:deg_assignment} \\
	$L ( E, F )$ & Bounded linear maps between normed spaces $E$ and $F$ & \pageref{ss:notations} \\
	$\mfL$ & Generic set of types & \pageref{eq:SPDEsystem} \\
	$\mfL_+$ & Set of kernel types & \pageref{eq:SPDEsystem} \\
	$\mfL_-$ & Set of noise types & \pageref{eq:SPDEsystem} \\
	$\mathcal{L}_{\tau}^{(k)}$ & Lifting map & \pageref{def:identified_tree} \\
	%% M %%
	$\mcM(\mcT)$ & Space of admissible models on regularity structure $\mcT$ & \pageref{def:admissible_model_Teq} \\
	$\mcM_{0}(\mcT)$ & Space of smooth admissible models on regularity structure $\mcT$ & \pageref{def:admissible_model_Teq} \\
	%% N %%
	$\mcN_\gamma^\zeta$ & Component of abstract convolution map on $\mcT^{\Ban}$ & \pageref{theo: multi-level Schauder} \\
	%% O %%
	$\mathrm{ord} ( W )$ & Order of a good sector $W$ & \pageref{def:goodasdf} \\
	%% P %%
	%% Q %%
	$\mcQ_{\gamma}$ & Projection onto components of homogeneity $\gamma$ & \pageref{def:prep} \\
	$\mcQ_{< \gamma}$ & Projection onto components of homogeneity $< \gamma$ & \pageref{def:prep} \\
	$\mcQ_{\boldsymbol{\tau}}$ & Projection onto $V^{\proj \sttau}$ & \pageref{def:prep} \\
	%% R %%
	$R$ & Complete and subcritical rule & \pageref{def:BHZreduced} \\
	$\mathcal{R}$ & Reconstruction map & \pageref{sec:mod_dist} \\
	%% S %%
	$\langle \mcS \rangle$ & Free $\mathbb{R}$-vector space over a set $\mcS$ & \pageref{ss:notations} \\
	$\mfs$ & Generic scaling & \pageref{ss:notations} \\
	$\mcs$ & Generic symmetric set & \pageref{def: SymSet} \\
	%% T %%
	$\tau$ & Concrete combinatorial tree & \pageref{def:decorated_tree} \\
	$\boldsymbol{\tau}$ & Isomorphism class of combinatorial trees $\tau$ & \pageref{def:Tree_SSet} \\
	$\sttau$ & Symmetric set associated to the isomorphism class $\boldsymbol{\tau}$ & \pageref{def:Tree_SSet} \\
	$| \boldsymbol{\tau} |_{\mfs}$ & Degree of $\boldsymbol{\tau}$ & \pageref{def:sets_of_trees} \\
	$| \boldsymbol{\tau} |_{\#}$ & Number of noise edges in $\boldsymbol{\tau}$ & \pageref{def:prep} \\
	$\mathcal{T}$ & Set of (isomorphism classes of) decorated trees & \pageref{def:sets_of_trees} \\
	$\mathcal{T} ( R )$ & Set of trees that strongly conform to the rule $R$ & \pageref{def:sets_of_trees} \\
	$\mathcal{T}_+ ( R )$ & Monoid of positive degree trees & \pageref{def:sets_of_trees} \\
	$\mathcal{T}_- ( R )$ & Set of negative degree trees & \pageref{def:sets_of_trees} \\
	$\mathcal{T}_{\mathrm{plant}}$ & Set of trees of the form $X^j \mcI_\mfl^k \btau$ & \pageref{def:der_of_planted} \\
	$\mathcal{T}_{\mathrm{poly}}$ & Set of monomials & \pageref{lem:Gamma_Poly} \\
	$\mathcal{T} ( R )$ & BHZ reduced regularity structure for the rule $R$ & \pageref{def:BHZreduced} \\
	$\mathcal{T}^{\eq}$ & BHZ reduced regularity structure for the given equation & \pageref{def:BHZreduced} \\
	$\mathcal{T}^{\mathrm{Ban}} (V )$ & \multicolumn{1}{p{.7\textwidth}}{Infinite-dimensional regularity structure over Banach space assigment $V$} & \pageref{d:tban} \\
	$T_{\gamma, \nu}^{\zeta, x}$ & Component of abstract (pointed) convolution map on $\mcT^{\Ban}$ & \pageref{thm:pointed_mls} \\
	%% U %%
	%% V %%
	$V$ & Generic Banach space assigment & \pageref{def:Ban_Ass} \\
	$V_{\mathcal{W}}$ & Structure space over set of trees $\mathcal{W}$ associated to Banach space assignment $V$ & \pageref{eq:banach_space over sets of trees} \\
	%% W %%
	$W$ & Generic sector & \pageref{sec:mod_dist} \\
	%% X %%
	%% Y %%
	%% Z %%
	%% Other %%
	$\prec$ & Inductive ordering on trees & \pageref{def:prec} \\
	%%%%%%%%%%%%%
\end{longtable}

\endappendix 

\bibliographystyle{Martin}
\bibliography{VarBPHZ.bib}

\end{document}